\documentclass[toc=bib,parskip=half,openany]{scrbook}

\usepackage[utf8]{inputenc}
\usepackage[UKenglish]{babel}

\usepackage{amsmath,amsthm,amssymb} 
\usepackage{mathabx}

\usepackage{enumerate}
\usepackage{enumitem}
\usepackage{mathdots}
\usepackage[margin=1in]{geometry}
\usepackage{latexsym}
\usepackage[dvipdfm]{graphicx} 
\usepackage{bmpsize}
\usepackage{mathtools}
\usepackage{subfig}
\usepackage{xfrac}
\usepackage{fix-cm}
\usepackage{color,transparent}
\usepackage{verbatim}
\usepackage[edge-length=7mm]{dynkin-diagrams}
\usepackage{amsmath, amssymb, etoolbox, expl3, mathtools, pgfkeys, pgfopts, xparse, xstring}

\usepackage{hyperref}
\hypersetup{
  colorlinks,
 linkcolor={red!70!black},
 citecolor={blue!50!black},
 urlcolor={blue!80!black}
}
\usepackage{doi}
\usepackage{stmaryrd}
\usepackage{multirow}
\usepackage{float}
\usepackage{array}
\usepackage{pdfsync}
\usepackage{tikz}
\usepackage{tikz-cd}
\usepackage{float}
\usepackage[T1]{fontenc}
\usepackage[utf8]{inputenc}
\usepackage[textsize=small]{todonotes}

\usepackage{hyphenat}
\usepackage{booktabs}
\usepackage{adforn}
\usepackage{import}

\graphicspath{{mpdraws/}{draws/}}

\setenumerate[1]{label=(\arabic*)}

\setcounter{tocdepth}{1}
\numberwithin{equation}{chapter}
\numberwithin{figure}{chapter}

\newtheorem{theorem}{Theorem}[chapter]
\newtheorem{lemma}[theorem]{Lemma}

\newtheorem{corollary}[theorem]{Corollary}
\newtheorem{proposition}[theorem]{Proposition}
\newtheorem{exercise}{Exercise}[chapter]
\newtheorem{hint}{Hint}[chapter]
\newtheorem{ex*}[theorem]{Exercise}
\newtheorem{remark}[theorem]{Remark}

\theoremstyle{definition}
\newtheorem{remarks}[theorem]{Remarks}
\newtheorem{example}[theorem]{Example}
\newtheorem{examples}[theorem]{Examples}
\newtheorem{definition}[theorem]{Definition}

\newtheorem{conjecture}[theorem]{Conjecture}
\newtheorem{notation}[theorem]{Notation}
\newtheorem{fact}[theorem]{Fact}
\newtheorem{warning}[theorem]{Warning}

\newtheorem{upshot}[theorem]{Upshot}

\theoremstyle{plain}

\newcommand*\from{\colon}
\newcommand{\Z}{\mathbb{Z}}

\newcommand{\C}{\mathbb{C}}

\newcommand{\N}{\mathbb{N}}
\newcommand{\R}{\mathbb{R}}

\newcommand{\fraka}{\mathfrak{a}}
\newcommand{\frakg}{\mathfrak{g}}

\newcommand{\rk}{\mathrm{rk}}
\newcommand{\Hit}{\mathrm{Hit}}
\newcommand{\Flag}{\mathrm{Flag}}
\newcommand{\Mat}{\mathrm{Mat}}

\def\H{\ensuremath{\mathbb{H}}} 

\DeclareMathOperator{\Sp}{\mathrm{Sp}}

\renewcommand{\Re}{\operatorname{Re}}
\renewcommand{\Im}{\operatorname{Im}}

\DeclareMathOperator{\tr}{tr}

\DeclareMathOperator{\Hom}{Hom}
\DeclareMathOperator{\End}{End}

\DeclareMathOperator{\id}{id}

\DeclareMathOperator{\Sym}{Sym}

\newcommand{\del}{\partial}

\newcommand{\tran}[1]{\prescript{t}{}{\!#1}} 

\renewcommand{\epsilon}{\varepsilon}


\DeclareMathOperator{\OO}{\mathrm{O}}
\DeclareMathOperator{\UU}{\mathrm{U}}
\DeclareMathOperator{\SO}{\mathrm{SO}}
\DeclareMathOperator{\GL}{\mathrm{GL}}

\DeclareMathOperator{\SL}{\mathrm{SL}}
\DeclareMathOperator{\PSL}{\mathrm{PSL}}
\DeclareMathOperator{\SU}{\mathrm{SU}}

\let\sl\relax
\DeclareMathOperator{\sl}{\mathfrak{sl}}
\DeclareMathOperator{\gl}{\mathfrak{gl}}

\DeclareMathOperator{\mfg}{\mathfrak{g}}
\DeclareMathOperator{\mfk}{\mathfrak{k}}
\DeclareMathOperator{\mfp}{\mathfrak{p}}
\DeclareMathOperator{\mfa}{\mathfrak{a}}
\DeclareMathOperator{\mfu}{\mathfrak{u}}
\DeclareMathOperator{\mfl}{\mathfrak{l}}
\DeclareMathOperator{\mfz}{\mathfrak{z}}
\DeclareMathOperator{\stab}{Stab}
\DeclareMathOperator{\Ad}{Ad}
\DeclareMathOperator{\ad}{ad}
\DeclareMathOperator{\visb}{\partial_{vis}X}
\DeclareMathOperator{\cham}{\overline{\mathfrak{a}^+}}
\DeclareMathOperator{\Span}{Span}


\DeclareMathOperator{\Tr}{Tr}

\DeclareMathOperator{\rank}{rank}

\let\P\relax 
\DeclareMathOperator{\P}{\mathbb{P}}
\DeclareMathOperator{\im}{im}
\DeclareMathOperator{\diag}{diag}
\newcommand{\diff}{\mathrm{d}}
\DeclarePairedDelimiterX\IP[2]\langle\rangle{#1,#2}

\newcommand{\hgline}[3]{
\pgfmathsetmacro{\thetaone}{#1}
\pgfmathsetmacro{\thetatwo}{#2}
\pgfmathsetmacro{\theta}{(\thetaone+\thetatwo)/2}
\pgfmathsetmacro{\phi}{abs(\thetaone-\thetatwo)/2}
\pgfmathsetmacro{\close}{less(abs(\phi-90),0.0001)}
\ifdim \close pt = 1pt
    \draw[#3] (\theta+180:1) -- (\theta:1);
\else
    \pgfmathsetmacro{\R}{tan(\phi)}
    \pgfmathsetmacro{\distance}{sqrt(1+\R^2)}
    \draw[#3] (\theta:\distance) circle (\R);
\fi
}


\DeclarePairedDelimiter\abs{\lvert}{\rvert}
\DeclarePairedDelimiter\norm{\lVert}{\rVert}


\newcommand{\Utheta}{U_{\Theta}^{>0}}

\hyphenation{Teich-müller}

\usetikzlibrary{arrows}
\tikzset{
    labl/.style={anchor=south, rotate=90, inner
    sep=.5mm}
}
\tikzstyle{every picture}=[> = to]
\tikzset{cdlabel/.style={execute at begin node=$\scriptstyle,execute at end node=$}}
\tikzset{implication/.style={double equal sign distance, -implies}}
\tikzset{biimplication/.style={double equal sign distance, implies-implies}}

\makeindex

\begin{document}

\frontmatter

\thispagestyle{empty}

\begin{center}

\Huge{\textit{Proceedings of the Young Researchers Workshop on}}
    
\vspace{6cm}
    
\Huge{\textbf{Positivity in Lie Groups}}

\vfill

\large{\textit{Edited by}}

\vspace{.5cm}

\LARGE  \textbf{Xenia Flamm}

\vspace{.1cm}

\large ETH Zürich, Switzerland

\vspace{.5cm}

\LARGE \textbf{Arnaud Maret}
\vspace{.1cm}

\large Ruprecht-Karls-Universit\"at Heidelberg, Germany

\vspace{2cm}
    
\Large November 2022

\end{center}

\thispagestyle{empty}

\chapter*{Foreword}
\addcontentsline{toc}{chapter}{Foreword}

The workshop on Theta-positivity (hereafter abbreviated $\Theta$-positivity) and Higher Teichm\"uller Theory took place in January 2022 in Heidelberg. The original initiative to host a workshop on $\Theta$-positivity was put forward by Xenia Flamm and Mareike Pfeil in the fall of 2019. Soon after, the COVID-19 pandemic started and the event had to be postponed a couple of times. By the time it was possible to host the workshop in person, Mareike already defended her PhD thesis and had left Heidelberg. I then offered to help Xenia with the local organization in Heidelberg in replacement of Mareike. 

There were twenty-five participants, mostly PhD students, who all took an active part by either giving a talk or preparing an exercise class. The event took place at the Internationales Wissenschaftsforum Heidelberg (IWH) which provided both lecture rooms and bedrooms for the participants. The location is ideal. It is quiet, with bright rooms and a large garden. On the last day of the workshop, we had the pleasure of welcoming Anna Wienhard for a Q\&A session. Anna took all our questions for two hours and provided some expertise on many aspects of $\Theta$-positivity.

Xenia and I later had the idea of writing up a set of notes about the workshop. We are extremely pleased that all the speakers accepted to type in the content of their presentations. After some editing work, we were able to compile all the contributions in a single document. We tried our best to homogenize notations. Shall there remain mistakes or typos, we would greatly appreciate these to be communicated to us.

I want to express my gratitude to Xenia and Mareike for the initial work they put in when they first attempted to organise the workshop during the pandemic. The workshop could have never taken place without the financial contributions from STRUCTURES - Cluster of Excellence Young Researcher’s Convent to which we are extremely grateful.

\bigskip

November 2022 \hfill Arnaud Maret

\begin{center}
\adforn{11}
\end{center}
\bigskip

The idea for this workshop was born in the fall 2019, when Mareike Pfeil and I were visiting MSRI for the semester program \textit{Holomorphic Differentials in Mathematics and Physics}.
Together with Ivo Slegers we started to learn about Lusztig's positivity.
Studying together inspired us to organize a workshop to create an environment in which we can share our knowledge to learn something new.
Unfortunately, both Mareike and Ivo could not attend the final workshop, but I am confident they would have enjoyed it as much as I did.

At this point I must thank Arnaud Maret who so willingly offered to help organize the workshop when Mareike could not do it anymore.
Without his bold decisions the workshop would probably not have taken place at the time it did.
Thank you so much, Arnaud!

I follow Arnaud's words when thanking Anna Wienhard, the IWH and its team, as well as the STRUCTURES - Cluster of Excellence Young Researcher’s Convent.

Lastly I need to express my deep gratitude to all the participants of the workshop, without who it would not have been possible.
Thank you for the time, work and effort you put into your preparations, presentations and now in these final notes.
I hope they will provide a great reference for us and others in the future.

\bigskip

November 2022 \hfill Xenia Flamm
\tableofcontents

\mainmatter

\thispagestyle{empty}
\chapter*{Introduction}
\addcontentsline{toc}{chapter}{Introduction}

These notes transcribe a workshop about the notion of total positivity and $\Theta$-positivity and its relation to Higher Teichm\"uller Theory. $\Theta$-positivity is a notion of positivity in semisimple Lie groups and was recently introduced by Guichard and Wienhard in \cite{GuichardWienhard18} as a generalization of Lusztig's total positivity. It is believed to be the cathartic notion to classify higher Teichmüller spaces. Without doubt, substantial progress will be achieved in the near future on the study of $\Theta$-positive structures. These notes provide an account of the state of the art as of 2021.
For the latest developments and further references (some of which were not treated in the workshop), we refer to \cite{GuichardWienhard_GeneralizingLusztigTotalPositivity}.

Working in representation theory, one often comes across the concept of ``positivity''. It appears in different contexts, for example the positive reals, the order on the circle, total positivity for matrices, positivity of triples in flag varieties and the Maslov index. An especially important role is played by Lusztig's total positivity in the context of split real Lie groups. The new notion of $\Theta$-positivity generalizes this to other types of semisimple Lie groups and, in particular, includes the notion of positivity for Lie groups of Hermitian type.

The notes are organized into four chapters. The first two chapters are reminders of classical material. Chapter~\ref{chap1} is an introduction to \emph{Higher Teichmüller Theory}: the study of connected components of discrete and faithful representations of surface groups into semisimple Lie groups. It focuses on some classical examples of higher Teichmüller spaces consisting of maximal and Hitchin representations. Chapter~\ref{chap2} recalls some important definitions from Lie theory, including root systems, Dynkin diagrams, and representation theory of Lie algebras. The notions of split real Lie groups and Lie groups of Hermitian type are introduced along with an in-depth study of the particular case of the Lie group $\SO(p,q)$.

The last two chapters deal with the various notions of positivity in Lie groups. Chapter~\ref{chap3} introduces the already known concepts of positivity with a special focus on Lusztig's positivity. This is a preparation for the definition of $\Theta$-positivity for semisimple Lie groups which is treated in Chapter~\ref{chap4}. The main motivation for studying positivity lies in its close relation to representation theory and the question of existence of higher Teichmüller spaces. The two well-known family of examples of higher Teichm\"uller spaces - given by maximal representations and by Hitchin representations - can in fact be characterized by their positive structures. The guiding conjecture of Guichard and Wienhard is that so-called $\Theta$-positive representations provide a new class of higher Teichm\"uller spaces. The conjecture has, by now, been proven for the most part in \cite{GLW21} and \cite{BeyrerPozzetti21}, building up on results of \cite{BradlowCollierGarciaPradaGothen} on magical $\mathfrak{sl}_2$-triples.


\part[Higher Teichm{\"u}ller Theory]{Higher Teichm{\"u}ller Theory
\textnormal{
\begin{minipage}[c]{15cm}
\begin{center}
    \vspace{2cm}
    {\Large Sofia Amontova}\\
    \vspace{-4mm}
    {\large \textit{Université de Genève}}\\
    \vspace{.5cm}
    {\Large Lisa Ricci}\\
    \vspace{-4mm}
    {\large \textit{ETH Zürich}}\\
    \vspace{.5cm}
    {\Large Thomas Le Fils}\\
    \vspace{-4mm}
    {\large \textit{Sorbonne Université}}\\
    \vspace{.5cm}
    {\Large Marta Magnani}\\
    \vspace{-4mm}
    {\large \textit{Ruprecht-Karls-Universit\"at Heidelberg}}\\
    \vspace{.5cm}
     {\Large Arnaud Maret}\\
    \vspace{-4mm}
    {\large \textit{Ruprecht-Karls-Universit\"at Heidelberg}}\\
    \vspace{.5cm}
     {\Large Enrico Trebeschi}\\
    \vspace{-4mm}
    {\large \textit{Università di Pavia}}
\end{center}
\end{minipage}
}}\label{chap1}

\thispagestyle{empty}

\chapter[Introduction]{Introduction \\ {\Large\textnormal{\textit{by Sofia Amontova}}}}
\addtocontents{toc}{\quad\quad\quad \textit{Sofia Amontova}\par}

\emph{Higher Teichm{\"u}ller theory} can be roughly speaking understood as the study of representations 
\[
\fbox{\text{surface groups}} \quad \longrightarrow \quad \fbox{\text{interesting Lie groups}}.
\]
The purpose of this introductory section is to present the \emph{classical Teichm{\"u}ller theory} as a prototype: we first characterize it on the one hand via its standard definition as a geometric object (see Section \ref{section: geometric realization}) and on the other hand as an algebraic object (see Section \ref{section: algebraic realization}). The latter perspective allows then to detect the classical Teichm{\"u}ller space as a special object in the representation-theoretical setting (see Section \ref{section: special object}) which is precisely the viewpoint that motivates to define the more general notion of a higher Teichm{\"u}ller space (see Section \ref{section: generalisation to higher}) and serves as a transition to the next section on maximal representations by Lisa Ricci.

\textbf{Setting:} For the entirety of this section, let $\Sigma$ be a closed connected oriented surface of genus $g \geq 2$.

\section{Geometric realization}\label{section: geometric realization}
We first remind of the geometric picture of the Teichm{\"u}ller space and start with its folklore characterization:

\begin{definition}
The \emph{(classical) Teichm{\"u}ller space} $\mathcal{T}(\Sigma)$ is given by
\[
\mathcal{T}(\Sigma) = 
	\{\text{marked conformal structure} \; (X,f) \; \text{on} \; \Sigma\}_{\big/ \thicksim}
\]
where 
\begin{itemize}
	\item $X$ is a Riemann surface, 
	\item with the homotopy equivalence $f: \Sigma \to X$ as its marking,
	\item $(X,f) \thicksim (X,f')$ is an equivalence of two marked conformal structures if and only if  there exists a biholomorphism $\phi: X \to X'$ so that
\[
	\begin{tikzcd}[row sep=tiny]
& X \arrow[dd, "\phi"] \\
\Sigma \arrow[ur, "f"] \arrow[dr, "f'"] & \\
& X'
\end{tikzcd}
\]
homotopy-commutes.
\end{itemize}
\end{definition}

Notably, the diversity of Teichm\"uller theory is for one due to different perspectives of the Teichm\"uller space $\mathcal{T}(\Sigma)$ allowing to endow it
with various structures, such as complex, hyperbolic, symplectic, algebraic structures, including 
several Riemannian metrics, e.g.
\begin{itemize}
	\item \emph{Weil-Petersson metric} which is a K\"ahler metric,
	\item unique K\"ahler-Einstein metric,
	\item Bergmann metric
\end{itemize}
and non-Riemannian metrics, e.g.
\begin{itemize}
	\item \emph{Teichm\"uller metric} (which coincides with the Kobayashi metric) that roughly measures the distortion between conformal structures,
	\item \emph{Thurston metric} that roughly measures the distortion between hyperbolic structures,
	\item Carath\'eodory metric.
\end{itemize}
Moreover, there is a natural discrete action by the mapping class group on the Teichm\"uller space, interesting geodesic and horocyclic flows on its quotient Riemann moduli space, a quantization theory of its Poisson structure, to mention a few. As such Classical Teichm\"uller theory is a rich theory where techniques from geometry, analysis and dynamics confluence; it is an active field in both pure mathematics and theoretical physics.

Now, for our purposes, regarding $\mathcal{T}(\Sigma)$ as the parameter space of conformal structures is not the viewpoint we would like to consider but rather the analogue parameter space of hyperbolic structures known as the Fricke space:

\begin{definition}\label{fricke}
The \emph{Fricke space} $\mathcal{F}(\Sigma)$ is given by 
\[
\mathcal{F}(\Sigma) = 
	\{\text{marked hyperbolic structure} \; (M,h) \; \text{on} \; \Sigma\}_{\big/ \thicksim}
\]
where 
\begin{itemize}
	\item $M$ is a complete hyperbolic surface, 
	\item with the orientation preserving homeomorphism $h\colon \Sigma \to M$ as its marking,
	\item $(M,h) \thicksim (M',h')$ is an equivalence of two marked conformal structures if and only if  there exists a isometry $i\colon M \to M'$ so that
\[
	\begin{tikzcd}[row sep=tiny]
& M \arrow[dd, "i"] \\
\Sigma \arrow[ur, "h"] \arrow[dr, "h'"] & \\
& M'
\end{tikzcd}
\]
isotopy-commutes.
\end{itemize}
\end{definition}

More precisely, we can make the change of viewpoints thanks to the uniformization theorem which implies that $\mathcal{T}(\Sigma)$ can be identified with $\mathcal{F}(\Sigma)$. 

\fbox{
\textbf{Abuse of notation:} From now and on the Teichm\"uller space $\mathcal{T}(\Sigma)$ refers to the Definition \ref{fricke}. 
}

We shall take a closer look at complete hyperbolic structures in the next subsection.

\begin{remark}
To get a better geometric feel for the object $\mathcal{T}(\Sigma)$ we outline the idea of the following fact
\begin{center}
The Teichm\"uller space $\mathcal{T}(\Sigma)$ is homeomorphic to $\R^{6g-6}$. 
\end{center}
\begin{proof}[Idea of proof.]
Using hyperbolic geometry one can see this for instance by parametrising the Teichm\"uller space by \emph{Fenchel-Nielsen coordinates}:
Let $(M,h)$ be a marked hyperbolic structure. 
The closed hyperbolic surface $M$ has a collection of pairwise disjoint simple closed geodesics that decompose the surface into a union of $3g-3$ disjoint pairs of pants\footnote{A pair of pants is a compact hyperbolic surface homeomorphic to a sphere with three boundary components.} (see Figure \ref{figure pants}). 
\begin{figure}[H]
	\centering
    \includegraphics[width=\textwidth]{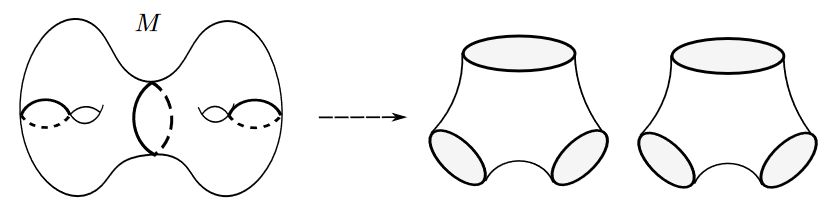}
	\caption{Pants decomposition.}
	\label{figure pants}
	\end{figure}

We call the corresponding lengths by $l_i$  \emph{length parameters} and the twists that determine the gluings pairwise $\tau_i$ \emph{twist parameters} for $1 \leq i \leq 3g-3$. In fact these parameters determine the hyperbolic structure of $M$.
In particular, it holds that the following map is a homeomorphism
\begin{align*}
\mathcal{T}(\Sigma) &\to \R_{>0}^{3g-3} \times \R^{3g-3}, \\
[(M,h)] &\mapsto (l_1,\dots,l_{3g-3},\tau_1,\dots,\tau_{3g-3}),
\end{align*}
where the tuple $(l_1,\dots,l_{3g-3},\tau_1,\dots,\tau_{3g-3})$ is called the \emph{Fenchel-Nielsen coordinates}
 associated to $(M,h)$ following the procedure above.
\end{proof}
\end{remark}

\section{Algebraic realization}\label{section: algebraic realization}

The goal of this subsection is essentially to explain the following 

\fbox{
\textbf{Idea:}
Any hyperbolic structure on $\Sigma$ gives rise to a representation $\rho \from \pi_1(\Sigma) \to  \PSL(2,\R)$.
}

This then permits to view the Teichm\"uller space $\mathcal{T}(\Sigma)$ as an algebraic object by identifying it with a connected component of the \emph{character variety} 
\[
\chi(\Sigma,\PSL(2,\R)):=\mathrm{Hom}(\pi_1(\Sigma), \PSL(2,\R))/{\PSL(2,\R)}.
\]

\paragraph{Ingredients to realise the idea above.}
We first fix a marked hyperbolic structure $(M,h)$ on $\Sigma$; we refer to it as merely $h$ for short. Further, 
\begin{itemize}
	\item
let $s$ be a basepoint of $M$ and 
	\item
let $p\colon \widetilde{M}\to M$ be a universal cover. 
\end{itemize}
Also recall that the group of orientation-preserving isometries of the hyperbolic plane $\mathrm{Isom}^+(\mathbb{H}^2)$ can be identified with $\PSL(2,\R)$.

\paragraph{Motto: Globalization.}
Recall that $M$ comes equipped with the coordinate atlas $\{(U_i, \psi_i)\}_{i \in I}$.  In particular, for each connected $U_i \cap U_j \neq \emptyset$ there exists a unique transition map $g_{i,j} \in \mathrm{Isom}^+(\mathbb{H}^2)$ such that for the coordinate charts one has that
	\begin{equation}\label{transition functions}
		\psi_{i|_{U_i \cap U_j}} = g_{i,j} \circ \psi_{j|_{U_i \cap U_j}}.
	\end{equation}
The strategy now is to globalize the coordinate charts in terms of the universal cover $p\colon \widetilde{M} \to M$ in order to produce an orientation preserving isometry \[f_h\colon \widetilde{M} \to \mathbb{H}^2,\] the so-called \emph{developing map}. This in turn will then induce a representation \[\rho_h\colon\pi_1(M) \to \mathrm{Isom}^+(\mathbb{H}^2).\] 
The construction of the pair $(f_h, \rho_h)$ can be outlined in following two steps:
\begin{enumerate}
	\item \textbf{Construct $f_h$ as a local isometry.} Recall that $\widetilde{M}$ is the set of homotopy classes of paths starting at the basepoint $s$ in $M$. 
	Let $[\gamma]\in \widetilde{M}$ and the path $\gamma \in M$ with endpoints $s$ and $e$ be a choice of representative of $[\gamma]$. 

	We now formalize the idea of globalising the local hyperbolic structure by extending a coordinate chart containing the basepoint $s$ along the path $\gamma$ with help of transition maps $g_{i,j}$ (see Figure \ref{figure developing}):

	We cover $\gamma$ with a finite collection of coordinate charts $\{(U_i,\psi_i)\}_{1 \leq i \leq n}$ and suppose w.l.o.g. $s \in U_1$ and $U_i\cap U_j \neq \emptyset$ is connected for any consecutive $i$ and $j$.
	Using \eqref{transition functions} we can extend $\psi_1\colon U_1 \cap \gamma \to \mathbb{H}^2$ to an isometry $\psi\colon U_1 \cup U_2 \cap \gamma \to \mathbb{H}^2$
	such that 
	\[\psi(x) = \begin{cases}
 	\psi_1(x),  &  \text{if } x \in U_1 \cap \gamma \\
  	g_{1,2} \circ \psi_2(x), & \text{if } x \in U_2\cap \gamma.
	\end{cases}
	\]
	We continue this procedure inductively for the remaining coordinate charts $\{(U_i,\psi_i)\}_{3 \leq i \leq n}$ until we reach the chart $U_n$ containing the endpoint $e$ and we set
	\[
	\psi(e)=g_{1,2} \circ \dots \circ g_{n-1,n} \circ \psi_{n|_{U_n \cap \gamma}}(e) \in \mathbb{H}^2.
	\]

	Finally, we define the map $f_h\colon \widetilde{M} \to \mathbb{H}^2$ by setting
	\[
	f_h([\gamma]):= \psi(e).
	\]
 	
	\begin{figure}[H]
	\centering
    \includegraphics[width=\textwidth]{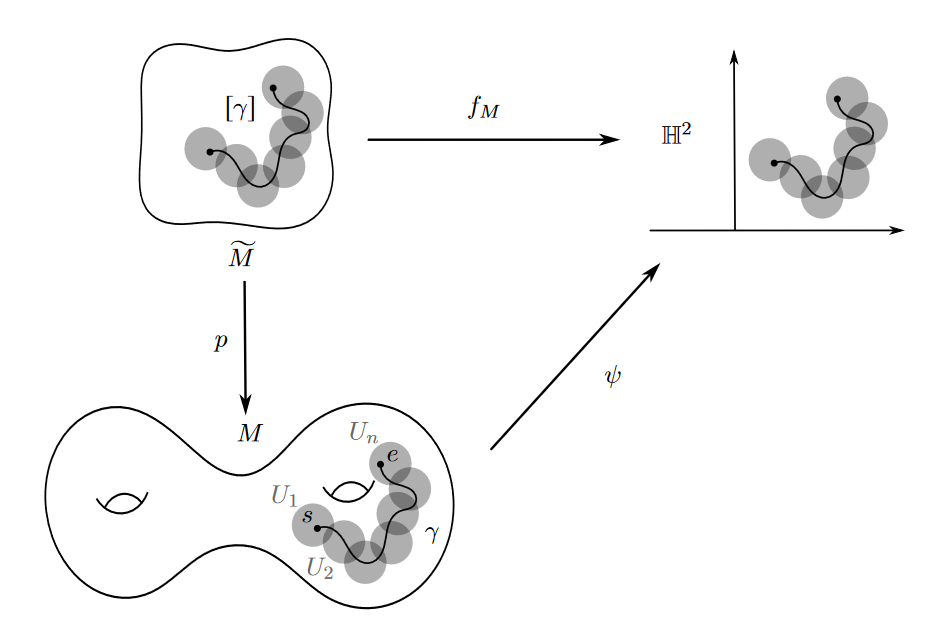}
	\caption{Construction of a developing map $f_h$ as a local isometry.}
	\label{figure developing}
	\end{figure}

	\begin{fact}
	The map $f_h$ depends only on the initial chart (composition by isometry) and the homotopy class of paths (look at succession of small homotopies).
	\end{fact}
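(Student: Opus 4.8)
The plan is to unpack the statement into three independent assertions: (i) the value $f_h([\gamma]) = \psi(e)$ does not depend on the finite chain of charts $\{(U_i,\psi_i)\}_{1\le i\le n}$ chosen to cover $\gamma$; (ii) it depends only on the homotopy class of $\gamma$ rel endpoints, so $f_h$ is genuinely a function on $\widetilde M$; and (iii) replacing the initial chart around $s$ alters $f_h$ only by post-composition with a fixed element of $\mathrm{Isom}^+(\mathbb{H}^2)$. The workhorse for all three is the cocycle identity for transition maps: on a connected triple overlap $U_i\cap U_j\cap U_k$, equation \eqref{transition functions} gives both $\psi_i = g_{i,j}\circ g_{j,k}\circ\psi_k$ and $\psi_i = g_{i,k}\circ\psi_k$, and since $\psi_k$ is a local diffeomorphism (two isometries of $\mathbb{H}^2$ agreeing on a nonempty open set coincide) this forces
\[
g_{i,k} = g_{i,j}\circ g_{j,k}.
\]

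First I would establish (i). Any two finite chains covering the compact arc $\gamma([0,1])$ admit a common refinement — intersect the two covers and invoke a Lebesgue number for the resulting open cover — so it suffices to see that inserting one extra chart between two consecutive charts of a chain does not change $\psi(e)$; but this insertion merely replaces a factor $g_{i,k}$ by $g_{i,j}\circ g_{j,k}$ in the composition $\psi(e)=g_{1,2}\circ\dots\circ g_{n-1,n}\circ\psi_{n}(e)$, which by the cocycle relation changes nothing. The same formula yields (iii) at once: swapping the initial chart $(U_1,\psi_1)$ for another chart $(U_1',\psi_1')$ around $s$ multiplies $\psi(e)$ on the left by the single transition map $g_{1',1}$, so globalising over $\widetilde M$ the developing map is replaced by $A\circ f_h$ for one fixed $A\in\mathrm{Isom}^+(\mathbb{H}^2)$.

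For (ii) I would argue by subdivision. Given a homotopy $H\colon[0,1]^2\to M$ rel endpoints between representatives $\gamma_0$ and $\gamma_1$ of $[\gamma]$, compactness and a Lebesgue-number argument produce a grid on $[0,1]^2$ so fine that the $H$-image of each small square — and of each pair of adjacent small squares — lies inside a single coordinate chart. Reading off the developing construction along the broken paths given by successive rows of the grid, each elementary move from one broken path to the next is supported inside finitely many of these small squares; inside a single chart the extension $\psi$ is literally an isometry of $\mathbb{H}^2$ and is insensitive to homotopies whose image stays in that chart, so each elementary move leaves the terminal value $\psi(e)$ unchanged. A finite induction over the rows then gives $f_h([\gamma_0]) = f_h([\gamma_1])$, and using (i) for the chart chains this is independent of all auxiliary choices.

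I expect the main obstacle to be the bookkeeping in this last step: one has to track precisely how the string $g_{1,2}\circ\dots\circ g_{n-1,n}$ of transition maps changes when a single elementary square of the grid is ``flipped'', and check via the cocycle relation that the newly inserted maps telescope away. Arranging that adjacent squares (not merely individual squares) lie in common charts is exactly what makes this cancellation available; the remaining ingredients — compactness, Lebesgue numbers, common refinements of covers — are routine and I would not belabour them.
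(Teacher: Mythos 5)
Your decomposition into (i) chart-chain independence, (ii) homotopy-invariance via grid subdivision, and (iii) initial-chart dependence by a fixed post-composed isometry, all driven by the cocycle relation $g_{i,k}=g_{i,j}\circ g_{j,k}$, is exactly the argument the paper's parenthetical hints (``composition by isometry'', ``succession of small homotopies'') are pointing at; the paper itself offers no written proof. The proposal is correct and matches the intended approach.
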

	We conclude that $f_h\colon\widetilde{M} \to M$ is well-defined (in particular independent of chart-refinement) and a local orientation-preserving isometry with respect to the hyperbolic structure on $\widetilde{M}$ inherited from $M$.
	\item \textbf {Obtain holonomy $\rho_h$ via $f_h$.}
	Recall that $\pi_1(M)$ can be identified with the group of deck transformations of the universal cover $p\colon \widetilde{M} \to M$ that acts on $\mathbb{H}^2$ by isometries (see exercise 1 for more details). 

	Now for any deck transformation $\gamma \in \pi_1(M)$ there exists a unique $g_\gamma \in \mathrm{Isom}^+(\mathbb{H}^2)$ so that the following diagram commutes
	\[
	\begin{tikzcd}
	\widetilde{M} \arrow[r, "f_h"] \arrow[d, "\gamma"]
	& \mathbb{H}^2 \arrow[d, "g_\gamma"] \\
	\widetilde{M} \arrow[r,"f_h"]
	& \mathbb{H}^2
	\end{tikzcd}.
	\]
	That is $f_h$ is a $\rho_h$-equivariant map, where $\rho_h$ is given by
	\begin{align*}
		\rho_h\colon \pi_1(M) &\to \mathrm{Isom}^+(\mathbb{H}^2), \\
		\gamma &\mapsto g_\gamma.
	\end{align*}
	In fact $\rho_h$ is a homomorphism called the \emph{holonomy representation} of the hyperbolic structure $(M,h)$.
	\begin{remark}\label{remark: global}\;
	\begin{enumerate}
		\item The pair $(f_h,\rho_h)$ is unique up to a $\PSL(2,\R)$-action defined by
		\[
			(f_h, \rho_h) \xrightarrow{g} (g \circ f_h, g  \rho_h g^{-1}).
		\]
		\item In accordance with our motto stated above notice that indeed
		\begin{itemize}
			\item $f_h$ globalizes the coordinate charts,
			\item $\rho_h$ globalizes the coordinate changes.
		\end{itemize}
	\end{enumerate}
	\end{remark}
\end{enumerate}
	Finally, observe that completeness of $M$ implies that $f_h \from \widetilde{M} \to \mathbb{H}^2$ is in fact a \emph{global} orientation preserving isometry.
	\begin{proof}[Idea of the proof.]
	Completeness of $M$ implies completeness of $\widetilde{M}$ so that $f_h\colon\widetilde{M} \to \mathbb{H}^2$ is a surjective covering via the path-lifting property. Since $\widetilde{M}$ is simply connected and $f_h$ is a homeomorphism and a local isometry, then indeed $f_h$ is a global isometry.
	\end{proof}
	Conversely, it is an exercise to prove that $M \cong \mathbb{H}^2/\rho_h(\pi_1(M))$.

\begin{upshot}
We have constructed the injective map
\begin{align*}
	\mathrm{hol} \from \mathcal{T}(\Sigma) &\to \chi (\Sigma,\PSL(2,\R))\\
	[(M,h)] &\mapsto [\rho:=\rho_h \circ h_* \from \pi_1(\Sigma)\cong\pi_1(M) \to \PSL(2,\R)].
\end{align*}
\end{upshot}

The following important fact is to be checked
\begin{fact}
$\mathcal{T}(\Sigma)$ can be identified with a connected component consisting entirely of 
\begin{equation}\label{special}
\fbox{
\text{faithful and discrete} 
}
\end{equation}
representations in the representation variety $\chi (\Sigma,\PSL(2,\R))$, i.e.\ every holonomy is a discrete embedding into $\PSL(2,\R)$.
\end{fact}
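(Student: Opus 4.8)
The plan is to realise the image of $\mathrm{hol}$ as a connected component of $\chi(\Sigma,\PSL(2,\R))$ all of whose points are discrete and faithful, and then to transport the identification through $\mathrm{hol}$. Since $\mathrm{hol}$ is already known to be injective, it suffices to establish three things: (i) every representation in the image of $\mathrm{hol}$ is discrete and faithful; (ii) the image of $\mathrm{hol}$ is open and closed in $\chi(\Sigma,\PSL(2,\R))$; and (iii) the image of $\mathrm{hol}$ is connected. Granting these, the image is a non-empty, clopen, connected subset of $\chi(\Sigma,\PSL(2,\R))$, hence a connected component; it consists entirely of discrete faithful representations by (i); and $\mathrm{hol}$, being a continuous bijection onto it, identifies it with $\mathcal{T}(\Sigma)$.

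First I would check (i) for $\rho = \rho_h \circ h_*$. Faithfulness should come straight out of the developing-map construction: completeness of $M$ makes $f_h\colon \widetilde{M} \to \mathbb{H}^2$ a \emph{global} isometry, so the defining relation $g_\gamma \circ f_h = f_h \circ \gamma$ forces $g_\gamma = \id$ only when $\gamma$ acts trivially on $\widetilde{M}$, i.e.\ when $\gamma = \id$; hence $\rho_h$, and therefore $\rho$, is injective. Discreteness should follow because $\pi_1(M)$ acts on $\widetilde{M} \cong \mathbb{H}^2$ properly discontinuously as its deck group, and a subgroup of $\mathrm{Isom}^+(\mathbb{H}^2) = \PSL(2,\R)$ that acts properly discontinuously on $\mathbb{H}^2$ is necessarily discrete.

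Next I would treat (ii). For openness I would invoke the Ehresmann--Thurston deformation principle: the holonomy map from the space of marked $(\PSL(2,\R),\mathbb{H}^2)$-structures on $\Sigma$ to $\chi(\Sigma,\PSL(2,\R))$ is a local homeomorphism, so its image is open; alternatively, one notes that $\dim_{\R}\mathcal{T}(\Sigma)=6g-6$ matches the local dimension of $\chi$ at such a representation and uses Weil's infinitesimal rigidity to make $\mathrm{hol}$ a local homeomorphism onto an open set. For closedness, take $[\rho_n]=\mathrm{hol}([M_n,h_n])$ with $[\rho_n]\to[\rho_\infty]$ in $\chi$. By the Chuckrow--J\o rgensen theorem a limit of discrete faithful representations of a non-elementary group is again discrete and faithful, and $\pi_1(\Sigma)$ is non-elementary (it contains a non-abelian free subgroup for $g\geq 2$), so $\rho_\infty$ is discrete and faithful. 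Then $N := \mathbb{H}^2/\rho_\infty(\pi_1(\Sigma))$ is a complete hyperbolic surface with $\pi_1(N)\cong\pi_1(\Sigma)$; since $\pi_1(\Sigma)$ is not free while the fundamental group of a non-compact complete hyperbolic surface is free, $N$ is closed, hence homeomorphic to $\Sigma$. Finally, by Dehn--Nielsen--Baer the induced isomorphism $\pi_1(\Sigma)\cong\pi_1(N)$ is realised by a homeomorphism, which is orientation-preserving after choosing the orientation of $N$ compatibly (this choice is forced by continuity along the path $[\rho_n]$), so $[\rho_\infty]=\mathrm{hol}([N,h_\infty])$ lies in the image.

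Finally, (iii) follows because $\mathcal{T}(\Sigma)$ is connected — the earlier remark identifies it with $\R^{6g-6}$ — and $\mathrm{hol}$ is continuous for the natural topologies, a small perturbation of the hyperbolic structure producing $C^\infty$-close transition maps $g_{i,j}$, hence close developing maps and close holonomies. I expect the closedness in (ii) to be the main obstacle: steps (i), (iii) and openness are essentially formal consequences of the constructions already set up, whereas closedness genuinely requires external input — the persistence of discreteness and faithfulness under algebraic limits of non-elementary representations (Chuckrow--J\o rgensen), together with the topological rigidity of closed surfaces.
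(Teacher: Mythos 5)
Your proposal is sound, but it takes a genuinely different route from the one the paper has in mind, and it is worth spelling out the difference. The paper does not give a self-contained proof of this Fact; instead it defers to Goldman's theorem, stated in the very next subsection. Goldman's theorem says that the Toledo (Euler) number $\tau$ is constant on connected components, distinguishes them, and characterizes holonomy representations of hyperbolic structures as exactly those with $\tau(\rho)=2g-2$. From this the Fact follows immediately: $\mathrm{hol}(\mathcal{T}(\Sigma))$ is the level set $\{\tau=2g-2\}$, which is a connected component, and every element of it is a holonomy, hence discrete and faithful by the developing-map argument you sketch in (i). This buys you the clopen-ness and the "entirely discrete and faithful" statement in one stroke, at the cost of having to cite a deep classification theorem.

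Your route replaces Goldman's theorem by two general-purpose inputs: Ehresmann--Thurston for openness, and Chuckrow--J{\o}rgensen together with the topological rigidity of closed surfaces (Dehn--Nielsen--Baer plus a continuity argument to pin down the orientation) for closedness. This is longer, but it isolates exactly the three properties (open, closed, connected) that one wants, and each input is a statement independent of the surface-specific component count. It is also worth noting that the paper's own Exercise~4 in the exercises chapter proves closedness of the set of discrete and faithful representations not by Chuckrow--J{\o}rgensen but by the Margulis (Margulis--Zassenhaus) Lemma, which is the approach that generalizes to higher-rank targets — so there are in fact two alternatives to your Chuckrow--J{\o}rgensen step already latent in the text. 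The small points of care in your argument — freeness of the deck action (needed for $N$ to be a manifold rather than an orbifold, guaranteed because $\pi_1(\Sigma)$ is torsion-free), the non-elementarity hypothesis in Chuckrow--J{\o}rgensen, and the orientation bookkeeping that separates $\mathcal{T}(\Sigma)$ from $\mathcal{T}(\overline{\Sigma})$ — are all handled correctly, if briefly.
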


\begin{remark}
$\mathcal{T}(\Sigma)$ and $\mathcal{T}(\overline{\Sigma})$ (Teichm\"uller space associated to a closed surface with reversed orientation $\overline{\Sigma}$) are the only two connected components with the special property \eqref{special}.
\end{remark}

\section{Tool to detect Teichm\"uller space as a special connected component in the character variety}
\label{section: special object}
 
We shall introduce a tool that will later generalize to a broader setting to single out special analogues of $\mathcal{T}(\Sigma)$ in the sense of \eqref{special} when passing from $\PSL(2,\R)$ to some more general Lie group $G$. 

Let $\rho \from \pi_1(\Sigma) \to \PSL(2,\R)$ be a 
representation and $p \from \widetilde{\Sigma} \to \Sigma$ a universal cover. 

\begin{fact}\label{fact: flat bundle construction}
There exists a smooth $\rho$-equivariant map $f \from \widetilde{\Sigma} \to \mathbb{H}^2$. 
\end{fact}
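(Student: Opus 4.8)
The plan is to recast the statement bundle-theoretically and then build the required map by hand. Since $\rho$ lands in $\mathrm{Isom}^+(\mathbb{H}^2) \cong \PSL(2,\R)$, the fundamental group $\pi_1(\Sigma)$ acts on $\widetilde{\Sigma} \times \mathbb{H}^2$ by $\gamma \cdot (\tilde x, y) = (\gamma\tilde x, \rho(\gamma)y)$, where it acts on $\widetilde{\Sigma}$ as the deck group of $p$. This action is free and properly discontinuous, so the quotient $E_\rho := \widetilde{\Sigma}\times_\rho \mathbb{H}^2$ is a smooth fibre bundle over $\Sigma$ with fibre $\mathbb{H}^2$, and a smooth $\rho$-equivariant map $f\from \widetilde{\Sigma}\to\mathbb{H}^2$ is precisely the same datum as a smooth section of $E_\rho$. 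So it suffices to exhibit a smooth section of $E_\rho$.

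One quick route is obstruction theory: give $\Sigma$ a finite CW structure; the successive obstructions to extending a section over the skeleta lie in $H^{k+1}(\Sigma; \pi_k(\mathbb{H}^2))$, and these all vanish because $\mathbb{H}^2$ is contractible. This yields a continuous section, which can then be homotoped to a smooth one since $E_\rho$ is a smooth bundle over a compact manifold. Equivalently, one can argue by hand over a set of orbit representatives of cells: define $f$ on the $0$-cells arbitrarily, extend over $1$-cells using path-connectedness of $\mathbb{H}^2$, extend over $2$-cells using $\pi_1(\mathbb{H}^2)=0$, and then translate everything by $\rho$ to obtain an equivariant map on all of $\widetilde\Sigma$.

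The version I would actually write out, since it delivers smoothness and equivariance in one stroke, uses the Cartan barycentre on the Hadamard manifold $\mathbb{H}^2$. Fix a basepoint $o \in \mathbb{H}^2$. Using compactness of $\Sigma$, choose a smooth compactly supported $\phi\from\widetilde{\Sigma}\to\R_{\geq 0}$ with
\[
  \sum_{\gamma\in\pi_1(\Sigma)} \phi(\gamma^{-1}\tilde x) = 1 \qquad\text{for all }\tilde x\in\widetilde{\Sigma},
\]
obtained by normalising a bump function whose support projects onto $\Sigma$, the sum being locally finite by proper discontinuity. Then set
\[
  f(\tilde x) := \mathrm{bar}\Big(\sum_{\gamma\in\pi_1(\Sigma)} \phi(\gamma^{-1}\tilde x)\,\delta_{\rho(\gamma)o}\Big),
\]
the centre of mass in $\mathbb{H}^2$ of this finitely supported probability measure. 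Smoothness of $f$ follows from smoothness of $\phi$ and of the barycentre map, while $\rho$-equivariance, $f(\eta\tilde x) = \rho(\eta)f(\tilde x)$, follows from the substitution $\gamma\mapsto\eta\gamma$ in the sum together with the naturality $\mathrm{bar}(g_*\mu) = g\cdot\mathrm{bar}(\mu)$ for $g$ an isometry.

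All of the computations are routine; the two points deserving a word are (i) the existence of the equivariant partition-of-unity function $\phi$, which is exactly where compactness of $\Sigma$ enters, and (ii) the well-definedness and smoothness of the barycentre map on $\mathbb{H}^2$, which is the only genuinely non-elementary ingredient: it rests on strict convexity of $y\mapsto d(y,\cdot)^2$ on a complete simply connected manifold of nonpositive curvature, so the minimiser defining $\mathrm{bar}$ exists, is unique, and depends smoothly on the data. If one prefers to avoid (ii) altogether, the cell-by-cell construction of the second paragraph is the obstacle-free alternative, at the cost of a separate smoothing step.
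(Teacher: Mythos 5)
Your first route is exactly the paper's: form the flat $(\PSL(2,\R),\mathbb{H}^2)$-bundle $E_\rho = \widetilde{\Sigma}\times_\rho\mathbb{H}^2\to\Sigma$, observe that sections of $E_\rho$ correspond to $\rho$-equivariant maps $\widetilde{\Sigma}\to\mathbb{H}^2$, and conclude a section exists because the fibre is contractible. You simply unpack the contractibility step via obstruction theory (obstruction classes live in $H^{k+1}(\Sigma;\pi_k(\mathbb{H}^2))=0$), which the paper takes as read, followed by a smoothing step. That part is the same proof in finer detail.

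Your second route, the one you say you would actually write out, is genuinely different and has some virtues worth noting. Rather than appealing to contractibility and then smoothing a continuous section afterward, you build $f$ directly and smoothly: pick a smooth equivariant partition of unity $\phi$ on $\widetilde{\Sigma}$ (local finiteness from proper discontinuity, normalisation from cocompactness), push a basepoint $o$ around by $\rho$, and take the Cartan barycentre of the resulting finitely supported measure. Equivariance then drops out of naturality of the barycentre under isometries, and smoothness is automatic because the barycentre on a Hadamard manifold depends smoothly on its inputs. What this buys, compared with the paper's argument, is an explicit formula and a proof that works verbatim for any Hadamard target (so it already covers the generalisation to symmetric spaces $X=G/K$ used later in the maximal-representations chapter), at the cost of importing convexity of $y\mapsto d(y,\cdot)^2$ and existence and uniqueness of the centre of mass. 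The paper's one-line argument is softer and shorter; yours is longer but constructive and gives smoothness for free. Both are correct.
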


\begin{proof}[Idea of the proof.]
We construct the associated flat $(\PSL(2,\R),\mathbb{H}^2)$-bundle 
\[
	E_\rho:=\widetilde{\Sigma} \times \mathbb{H}^2/\pi_1(\Sigma) \to \Sigma.
\]
Since the fibres $\mathbb{H}^2$ are contractible, there exists a smooth 
section $\Sigma \to E_\rho$ which lifts to a smooth
$\rho$-equivariant
map $f \from \widetilde{\Sigma} \to \widetilde{\Sigma} \times \mathbb{H}^2 \to \mathbb{H}^2$. 
\end{proof}

Since $f$ is $\rho$-equivariant, the volume form $\omega_{\mathbb{H}^2} \in \Omega^2(\mathbb{H}^2,\R)^{\PSL(2,\R)}$ pullbacks to $f^*\omega_{\mathbb{H}^2} \in \Omega^2(\widetilde{\Sigma},\R)^{\pi_1(\Sigma)}$.
We then may take the pushforward $\overline{f^*\omega_{\mathbb{H}^2}}:=p_*f^*\omega_{\mathbb{H}^2}$  summarizing in the following diagram:
\begin{center}
\begin{tikzcd}
(\widetilde{\Sigma},f^*\omega_{\mathbb{H}^2})\arrow{d}{p} \arrow{r}{f}
& (\mathbb{H}^2,\omega_{\mathbb{H}^2})  \\
(\Sigma,\overline{f^*\omega_{\mathbb{H}^2}}).
\end{tikzcd}
\end{center}

We now may define our detection tool:
\begin{definition}
The \emph{Toledo number} associated to the representation $\rho$ is given by
\[
	\tau(\rho) = \frac{1}{2\pi}\int_\Sigma \overline{f^*\omega_{\mathbb{H}^2}} \in \R.
\]
\begin{remark}
Notice that $\tau(\rho)$ is well-defined as any two $\rho$-equivariant maps are homotopic.
\end{remark}
\end{definition}
Now in case $\rho=\rho_h$ is a \emph{holonomy} representation associated to a hyperbolic structure $(M,h)$ and we rerun the procedure above, then one can check that in the Fact \ref{fact: flat bundle construction} we obtain a $\rho_h$-equivariant map $f_h$ as the unique \emph{developing} map (and thus an isometry) resulting from a lift of a \emph{developing} section $\Sigma \to E_{\rho_h}$.

\begin{remark}
As a continuation of the philosophy in Remark \ref{remark: global}, notice that 
the \emph{developing} section can be obtained as a graph by globalizing the coordinate atlas. 
\end{remark}

In particular, we can take $f_M$ as the lift of the orientation-preserving homeomorphism (marking) $h \from \Sigma \to M$ and the diagram above extends to the following:

\begin{center}
\begin{tikzcd}
(\widetilde{\Sigma},f^*\omega_{\mathbb{H}^2})\arrow{d}{p} \arrow{r}{f_M}
& (\mathbb{H}^2,\omega_{\mathbb{H}^2}) \arrow{d}  \\
(\Sigma,\overline{f^*\omega_{\mathbb{H}^2}}) \arrow{r}{h}&(M,\omega_{\mathbb{H}^2}),
\end{tikzcd}
\end{center}
which together with Gau{\ss}-Bonnet gives that
\[
\tau(\rho_M) = \frac{1}{2\pi}\int_{f(\Sigma)} \omega_{\mathbb{H}^2} = \frac{1}{2\pi}\int_M \omega_{\mathbb{H}^2} =-\frac{2\pi}{2\pi}|\chi(\Sigma)|=2g-2.
\]
This is in fact the maximal absolute number the Toledo number can take: 
\begin{theorem}[Milnor-Wood inequality]
For any representation $\rho \from \pi_1(\Sigma) \to \PSL(2,\R)$ we have that
\[
|\tau(\rho)| \leq 2g-2.
\]
\end{theorem}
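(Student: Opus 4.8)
The plan is to bound the Toledo number by estimating the area integral $\int_\Sigma \overline{f^*\omega_{\mathbb H^2}}$ directly, exploiting that the hyperbolic area form has a primitive that is bounded on $\mathbb H^2$ in a suitable sense. First I would fix a smooth triangulation of $\Sigma$ with a single vertex, or more conveniently work with the standard $4g$-gon presentation $\pi_1(\Sigma)=\langle a_1,b_1,\dots,a_g,b_g \mid \prod_i [a_i,b_i]\rangle$. Pull the $\rho$-equivariant map $f\colon\widetilde\Sigma\to\mathbb H^2$ back to this polygon: the integral $\int_\Sigma\overline{f^*\omega_{\mathbb H^2}}$ equals $\int_P f^*\omega_{\mathbb H^2}$ over the fundamental polygon $P$. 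Since $\omega_{\mathbb H^2}=d\lambda$ is exact on the disk, by Stokes this becomes a boundary integral $\int_{\partial P} f^*\lambda$, and the boundary edges are identified in pairs by the $\rho(a_i),\rho(b_i)$, so the whole expression collapses to a sum of $2g$ contributions of the form $\int_{e}(f^*\lambda - \rho(g)^*f^*\lambda)$ plus the failure of the relator to close up.

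Next I would make this quantitative using the standard fact that the area of a geodesic triangle in $\mathbb H^2$ is its angle defect, hence bounded by $\pi$ in absolute value. Concretely, pick a point $x_0\in\mathbb H^2$ and straighten: replace $f$ on each edge by the geodesic between the images of the endpoints and on each $2$-simplex by the straightened (geodesic) filling; since any two $\rho$-equivariant maps are homotopic (the remark after the definition of $\tau$) and $\omega_{\mathbb H^2}$ is closed, this does not change $\tau(\rho)$. After straightening, $\overline{f^*\omega_{\mathbb H^2}}$ is a sum of signed areas of geodesic triangles, one for each $2$-simplex in the one-vertex triangulation of $\Sigma$, which has $4g-2$ triangles. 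Each contributes at most $\pi$ in absolute value, giving $|2\pi\,\tau(\rho)|\le (4g-2)\pi$, i.e. $|\tau(\rho)|\le 2g-1$ — close but off by one, which is the usual symptom that one must be more careful with the combinatorics.

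To get the sharp constant $2g-2 = |\chi(\Sigma)|$ I would instead use the bounded-cohomology / barycentric formulation: the pullback class $[\overline{f^*\omega_{\mathbb H^2}}]\in H^2(\Sigma;\R)$ is $\rho^*$ of the bounded Euler class $e_b\in H^2_b(\PSL(2,\R);\R)$, whose Gromov norm is exactly $\tfrac12$. Then $|\tau(\rho)| = \tfrac{1}{2\pi}|\langle \rho^*e_b,[\Sigma]\rangle| \le \tfrac{1}{2\pi}\,\|e_b\|_\infty\cdot\|[\Sigma]\|_1$, where $\|[\Sigma]\|_1 = 4g-4$ is the simplicial volume of a genus-$g$ surface (Gromov), and $2\pi\cdot\tfrac12\cdot(4g-4)/(2\pi)$... here one must track the normalization so that the area form and the Euler cocycle are compared correctly; with the convention that $\int_{\mathbb H^2/\Gamma}\omega_{\mathbb H^2} = -2\pi\chi$ one arrives at $|\tau(\rho)|\le 2g-2$. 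Alternatively, and more elementarily, one avoids bounded cohomology entirely by noting that in the one-vertex triangulation the straightened geodesic triangles all share the vertex $f(\tilde v)$, their signed angles at that vertex sum to a multiple of $2\pi$, and summing the angle-defect formula over all $4g-2$ triangles causes the interior angles to telescope, leaving exactly $(4g-2)\pi - (\text{total angle}) \le (4g-2)\pi - 2\pi = (4g-4)\pi$, hence $|\tau(\rho)|\le 2g-2$.

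The main obstacle is precisely this last bookkeeping step: the naive triangle-by-triangle bound overcounts by exactly one unit of $\pi$, and one needs either the telescoping of interior angles at the shared cone point (which forces using a one-vertex triangulation and orienting everything consistently) or the sharp value $\tfrac12$ of the Gromov norm of the bounded Euler class to squeeze out the optimal constant. I expect the cleanest writeup to be the bounded-cohomology one, citing Gromov's computation of the simplicial volume of surfaces, with the elementary angle-telescoping argument relegated to a remark.
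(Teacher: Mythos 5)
The paper itself does not prove the Milnor--Wood inequality; it is only stated (with the Euler-number versions attributed to Milnor, Wood, and Goldman elsewhere in the section), so there is no internal proof to compare against. I will therefore assess your proposal on its own.

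Your ultimate plan --- bound $\tau(\rho)$ via bounded cohomology, using that the bounded Euler class has Gromov norm $\tfrac12$ and that the simplicial volume of a genus-$g$ surface is $\|\,[\Sigma]\,\|_1 = -2\chi(\Sigma) = 4g-4$ --- is a correct and standard route (Gromov, Ghys, Burger--Iozzi--Wienhard), and your normalisation check does come out to $|\tau(\rho)| \le \tfrac12(4g-4) = 2g-2$. Provided you cite the computation of the simplicial volume and the norm of the bounded Euler class, that write-up would be fine, and it is actually more robust than what the chapter proves about $\Sp(2n,\R)$ because it works verbatim for $\mathrm{Homeo}^+(S^1)$-bundles.

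However, the elementary ``angle-telescoping'' argument you propose to relegate to a remark is not correct as stated, and the gap is not mere bookkeeping. After straightening an arbitrary $\rho$-equivariant map $f$ over a one-vertex triangulation, there is no reason for the (unsigned) interior angles of the $4g-2$ geodesic triangles at the image of the shared vertex to sum to $2\pi$: the straightened triangles need not be positively oriented, their corners at $f(\tilde v)$ need not arrange into an embedded fan, and the sum of those angles can be anything from $0$ (e.g.\ $\rho$ trivial and $f$ constant, where every triangle degenerates) to arbitrarily large (if the link of $\tilde v$ winds several times around $f(\tilde v)$). So the telescoping step that is supposed to recover the extra factor of $\pi$ simply fails in general; this is exactly why the sharp constant requires either the real-coefficient minimisation encoded in the simplicial volume (not the integral one-vertex triangulation) or, in Wood's original argument, translation numbers for the lifts of the $\rho(a_i),\rho(b_i)$ to $\widetilde{\mathrm{Homeo}}^+(S^1)$. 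If you want a genuinely elementary proof alongside the bounded-cohomology one, the translation-number argument is the one to write out, not the angle-defect telescope.
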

Crucially, the following theorem by Goldman characterizes this numerical invariant as a detection tool for the special property \eqref{special}:
\begin{theorem}[\cite{Gol88}]
\begin{itemize}
	\item $\tau(\rho)$ distinguishes connected components in $\chi(\Sigma,\PSL(2,\R))$ and has values in $\Z \cap [\chi(\Sigma),-\chi(\Sigma)]$.
	That is, there are $4g-3$ components.
	\item $\rho$ is a holonomy representation of a hyperbolic structure if and only if $\tau(\rho)=2g-2$.
\end{itemize}
\end{theorem}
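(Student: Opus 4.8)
I would establish the two bullet points separately. The first amounts to three claims: (i) $\tau$ is integer-valued; (ii) its range is exactly $\Z\cap[2-2g,2g-2]$, which has $4g-3$ elements; and (iii) every level set of $\tau$ is connected. Granting (i)--(iii), $\tau$ is continuous and integer-valued, hence locally constant, so constant on connected components, and by (iii) distinct components take distinct values, which is the first bullet. The crux of (i) is to identify $\tau(\rho)$ with the Euler number $e(\rho)\in\Z$ of the flat oriented disc bundle $E_\rho\to\Sigma$ --- equivalently, of the flat circle bundle associated to the $\PSL(2,\R)$-action on $\partial_\infty\mathbb{H}^2\cong S^1$. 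This is an integer because it is the primary obstruction to a section, a class in $H^2(\Sigma;\Z)\cong\Z$. To see that this obstruction equals $\frac{1}{2\pi}\int_\Sigma\overline{f^*\omega_{\mathbb{H}^2}}$, one triangulates $\Sigma$, lifts to $\widetilde\Sigma$, replaces $f$ up to $\rho$-equivariant homotopy by the map that is totally geodesic on each $2$-simplex, and applies Gauss--Bonnet to the resulting hyperbolic geodesic triangles: the angles around each lifted vertex sum to an integer multiple of $2\pi$ (a winding number) and, since a triangulation of a closed surface has an even number of faces, this forces $\tau(\rho)\in\Z$. Continuity of $\tau$ in $\rho\in\Hom(\pi_1(\Sigma),\PSL(2,\R))$ is immediate from the integral formula.

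\textbf{Range, and the main obstacle.} For (ii), the inequality $|\tau(\rho)|\le 2g-2$ is the Milnor--Wood bound already available above, and the range is symmetric because post-composing $\rho$ with the automorphism of $\PSL(2,\R)$ induced by an orientation-reversing isometry of $\mathbb{H}^2$ negates $\tau$ while being a homeomorphism of $\Hom(\pi_1(\Sigma),\PSL(2,\R))$. To realise every intermediate integer I would write down explicit representations --- for instance the holonomies of branched hyperbolic structures on $\Sigma$ with a single cone point of prescribed order, whose Euler numbers sweep out the whole Milnor--Wood range by Gauss--Bonnet. The \textbf{main obstacle} is (iii): that each level set $\tau^{-1}(k)$ is connected. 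This is the technical heart of Goldman's paper, and I would attack it by induction on the genus together with a stratification of the representation variety. The scheme is: deform an arbitrary $\rho$, keeping $\tau$ fixed, until its image becomes non-elementary (the reducible and elementary loci are of lower dimension, and when $k\ne 0$ they are empty since a representation with $\tau\ne 0$ is automatically non-elementary); then connect two non-elementary representations of equal Euler number by first bringing them into a normal form compatible with the standard presentation $\langle a_1,b_1,\dots,a_g,b_g\mid\prod_i[a_i,b_i]=1\rangle$ and then using connectedness of the generic fibres of the commutator map $\PSL(2,\R)^{2g}\to\PSL(2,\R)$ to move between them.

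\textbf{Fuchsian characterisation.} One implication, ``$\rho$ is a holonomy of a hyperbolic structure $\Rightarrow\tau(\rho)=2g-2$'', is precisely the Gauss--Bonnet computation carried out just above the theorem. For the converse, granting (iii) I would argue that $\mathrm{hol}(\mathcal{T}(\Sigma))$ is open and closed in the connected set $\tau^{-1}(2g-2)$ (it is nonempty and contained in $\tau^{-1}(2g-2)$ by the excerpt), hence all of it. Openness is the Ehresmann--Weil--Thurston stability theorem for $(\PSL(2,\R),\mathbb{H}^2)$-structures on the closed surface $\Sigma$. Closedness is the delicate point: a limit $\rho_\infty$ of cocompact Fuchsian representations with $\tau=2g-2$ again has $\tau(\rho_\infty)=2g-2$, hence is non-elementary and therefore discrete; and a discrete representation of $\pi_1(\Sigma)$ that is not faithful, or whose image is not cocompact, would factor through a free group or a Fuchsian group of strictly smaller complexity and so would have $|\tau|<2g-2$; thus $\rho_\infty$ is again cocompact Fuchsian. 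A more hands-on alternative that bypasses (iii) for this implication: equality in Milnor--Wood forces the $\rho$-equivariant map $f\from\widetilde\Sigma\to\mathbb{H}^2$ to have everywhere-positive Jacobian, so after a smoothing (or harmonic-map) argument $f$ can be taken to be a diffeomorphism, which then descends to a hyperbolic structure on $\Sigma$ with holonomy $\rho$.
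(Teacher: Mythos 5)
The paper does not prove this theorem: it only cites Goldman \cite{Gol88}. What the surrounding text supplies is precisely the forward implication (holonomy of a hyperbolic structure $\Rightarrow \tau = 2g-2$) via Gau{\ss}--Bonnet, and the Milnor--Wood inequality stated as a black box. So there is no internal proof to compare against; your proposal is instead an outline of Goldman's argument (and of Hitchin's later harmonic-maps refinement), and on the whole it has the right skeleton. In particular you correctly identify the genuinely hard step --- connectedness of every level set $\tau^{-1}(k)$ --- as the crux, and the route you sketch (non-elementary reduction, normal forms adapted to $\prod_i[a_i,b_i]=1$, and connectedness of fibres of the product-of-commutators map $\PSL(2,\R)^{2g}\to\PSL(2,\R)$) is the right road map, though it is far too compressed to audit in detail, as you yourself note.

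The one concrete step that would fail as written is the ``more hands-on alternative'' you offer for the converse. It is \emph{not} true that $\tau(\rho)=2g-2$ forces an arbitrary $\rho$-equivariant smooth map $f\colon\widetilde\Sigma\to\mathbb{H}^2$ to have everywhere-positive Jacobian: a generic section of the flat disc bundle will have Jacobian vanishing on a large set, and its signed area still integrates to $2\pi\,\tau(\rho)$ because contributions of opposite sign cancel. The correct order of operations (Schoen--Yau, Sampson, Hitchin) is to \emph{first} fix a conformal structure on $\Sigma$ and replace $f$ by the $\rho$-equivariant \emph{harmonic} map (which exists because $\tau\neq 0$ forces $\rho$ to be reductive), and only \emph{then} invoke a nontrivial Bochner-type rigidity argument to conclude that a harmonic map saturating Milnor--Wood is an orientation-preserving diffeomorphism. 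As phrased, your argument asserts the output of that rigidity theorem before the harmonicity hypothesis is in place. Your first route for the converse --- Ehresmann--Thurston openness, plus closedness of the discrete--faithful locus and connectedness of $\tau^{-1}(2g-2)$ --- is the one Goldman actually follows and is the safer one to flesh out.
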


\begin{upshot}
$\mathcal{T}(\Sigma)$ is the connected component singled out by the maximal Toledo number (in analogy $\mathcal{T}(\overline{\Sigma})$ by the minimal Toledo number). 
\end{upshot}

\begin{remark}
The theorems above were actually proven for the so-called Euler number $e(\rho)$ of a representation, but in fact $\tau(\rho)=e(\rho)$.
\end{remark}

The maximality property of the Toledo invariant holds in a more general setting leading to the study of maximal representations (see next Chapter~\ref{lisa}).

\section{Generalization}\label{section: generalisation to higher}

Let $G$ be a Lie group.
This section culminates in generalizing the special property~\eqref{special} in this more general setting:
\begin{definition}
A \emph{higher Teichm{\"u}ller space} is a subset of $\chi(\Sigma,G):=\mathrm{Hom}(\pi_1(\Sigma), G)/G$ which is a union of connected components that consists entirely of discrete and faithful representations.
\end{definition}

\begin{remark}
The existence of higher Teichm{\"u}ller spaces is a prior not clear.
In fact unless $G$ is locally isometric to $\PSL(2,\R)$, the set of discrete and faithful representations is only a closed  set in $\mathrm{Hom}(\pi_1(\Sigma), G)/{G}$. 
A non-example is when $G$ is a simply-connected complex Lie group.  
\end{remark}

\newpage

\thispagestyle{empty}

\chapter[Maximal Representations]{Maximal Representations \\ {\Large\textnormal{\textit{by Lisa Ricci}}}}
\addtocontents{toc}{\quad\quad\quad \textit{Lisa Ricci}\par}\label{lisa}

\section{Hermitian Lie groups: two examples}

In the following $G$ is a Lie group of Hermitian type. In particular, it has an associated symmetric space $X=G/K$, where $K$ is a maximal compact subgroup, such that
\begin{enumerate}
    \item $X$ admits a $G$-invariant metric $\langle\cdot ,\cdot\rangle$,
    \item $X$ has a complex manifold structure with an almost complex structure $J$ such that
    \begin{itemize}
    \item the metric is Hermitian: $\langle v,w\rangle =\langle J_xv,J_xw \rangle$ for all $v,w\in T_xX$,
    \item $J$ is $G$-invariant: $d_x L_g \circ J_x = J_{gx}\circ d_xL_g$.
    \end{itemize}
\end{enumerate}
Recall that an almost complex structure $J$ assigns to each $x\in X$ an endomorphism $J_x\in\End(T_x)$ such that $J_x^2=-\id$.

It follows that if one sets $\omega_X(\cdot,\cdot)\coloneqq \langle J\cdot,\cdot\rangle $ one obtains the so called \textit{K\"ahler form} $\omega_X\in\Omega^2(X)$, which is
\begin{itemize}
    \item $G$-invariant: $L_g^\ast \omega = \omega$ for all $g\in G$,
    \item non-degenerate,
    \item closed.
\end{itemize}

\begin{example}
\begin{enumerate}
    \item $G=\SL(2,\R)$, $K=\OO(2)$, $X=G/K\simeq\H^2$. The Riemannian metric is given by $ds^2=\frac{dx^2+dy^2}{y^2}$ and identifying $T_x\H^2\simeq \R^2$ the almost complex structure is $J_x\begin{pmatrix}v\\w
    \end{pmatrix}=\begin{pmatrix}-w\\v
    \end{pmatrix}$.
    Therefore in $i\in\H^2$
    \begin{align*}
        (\omega_{\H^2})_i\left(\begin{pmatrix}v_1\\w_1
    \end{pmatrix},\begin{pmatrix}v_2\\w_2
    \end{pmatrix}\right) 
        &=\left\langle J_i\begin{pmatrix}v_1\\w_1
    \end{pmatrix},\begin{pmatrix}v_2\\w_2
    \end{pmatrix}\right\rangle_i
        = \begin{pmatrix}-w_1\\v_1
    \end{pmatrix}\cdot \begin{pmatrix}v_2\\w_2
    \end{pmatrix}\\
        &=v_1w_2-w_1v_2
        = (dx\wedge dy) \left(\begin{pmatrix}v\\w
    \end{pmatrix},\begin{pmatrix}v\\w
    \end{pmatrix}\right)        
    \end{align*}
    
    \item $G=\Sp(2n,\R)$, $K=\Sp(2n,R)\cap \OO(2n,\R) \cong \UU(n)$ and the associated symmetric space is the \textit{Siegel upper half-space}
    \[
    \mathcal{X}_n=\lbrace A+iB : A,B\in\Sym(n,\R), B\gg 0\rbrace.
    \]
    
    The $G$-action on $\mathcal{X}_n$ is by M\"obius transformations: given $g=\begin{pmatrix}A&B\\C&D
    \end{pmatrix}\in\Sp(2n,\R)$ and $Z\in \mathcal{X}_n$: 
    \[g\cdot Z= (AZ+B)(CZ+D)^{-1}.
    \]
    Under the identification $T_{A+iB}\mathcal{X}_n\simeq\lbrace V+iW : V,W\in\Sym(n,\R)\rbrace$ the Riemannian metric is given by
    \[\langle U_1,U_2\rangle _{iI_n}=\tfrac{1}{2}\Tr(U_1\overline{U_2}+\overline{U_2}U_1).
    \]
    The almost complex structure is multiplication by $i$. Then the K\"ahler form is
    \[(\omega_X)_{iI_n}(V_1+iW_1,V_2+iW_2) = \Tr(V_1W_2 - W_1V_2).
    \]
\end{enumerate}
\end{example}

\section{Toledo number: definition and examples}

Let $\Sigma$ be a closed\footnote{That is compact and without boundary.} surface of genus $g\geq 2$, $G$ be a Lie group of Hermitian type with associated symmetric space $X$ and let $\rho\colon\pi_1(\Sigma)\rightarrow G$ be a homomorphism. Let $p\colon\widetilde{\Sigma}\rightarrow \Sigma$ be the universal cover of $\Sigma$.

\begin{fact}
There exists a smooth $\rho$-equivariant map $f_\rho\colon\widetilde{\Sigma}\rightarrow X$.
\end{fact}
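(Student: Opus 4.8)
The plan is to mimic exactly the argument already used for $\PSL(2,\R)$ in Fact~\ref{fact: flat bundle construction}, since nothing there was special to $\PSL(2,\R)$ beyond the contractibility of the symmetric space. First I would form the flat $(G,X)$-bundle associated to $\rho$, namely
\[
E_\rho := \bigl(\widetilde{\Sigma} \times X\bigr)/\pi_1(\Sigma) \longrightarrow \Sigma,
\]
where $\pi_1(\Sigma)$ acts diagonally, by deck transformations on $\widetilde{\Sigma}$ and via $\rho$ on $X$. This is a fibre bundle over $\Sigma$ with fibre $X$.

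The key point is that $X = G/K$, being a symmetric space of non-compact type, is diffeomorphic to a Euclidean space and in particular contractible. Since $\Sigma$ is a $2$-dimensional CW-complex (it is a closed surface) and the fibre $X$ is contractible, the bundle $E_\rho \to \Sigma$ admits a global smooth section $\sigma\colon \Sigma \to E_\rho$: one builds it cell by cell over a CW-structure on $\Sigma$, extending over each cell because the obstruction to extending a section over the $k$-cells lies in $H^k(\Sigma; \pi_{k-1}(X))$, and all higher homotopy groups of the contractible fibre vanish. (Alternatively, one can invoke that a fibre bundle with contractible fibres over a paracompact base always has a section.)

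Finally I would lift $\sigma$ through the covering $\widetilde{\Sigma}\times X \to E_\rho$. Pulling back $\sigma$ along $p$ gives a section of the trivial bundle $\widetilde{\Sigma}\times X \to \widetilde{\Sigma}$, i.e.\ a smooth map $\widetilde{\sigma}\colon \widetilde{\Sigma} \to \widetilde{\Sigma}\times X$; composing with the projection to $X$ yields a smooth map $f_\rho\colon \widetilde{\Sigma}\to X$. The fact that $\sigma$ is well-defined on the quotient $E_\rho$ translates precisely into the $\rho$-equivariance $f_\rho(\gamma\cdot \tilde x) = \rho(\gamma)\cdot f_\rho(\tilde x)$ for all $\gamma\in\pi_1(\Sigma)$, $\tilde x\in\widetilde{\Sigma}$.

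I do not anticipate a serious obstacle here: the only input beyond formal bundle theory is that the symmetric space $X$ of a Hermitian Lie group is contractible, which is standard (it is a Hadamard manifold, diffeomorphic to $\mathfrak{p}\cong\R^n$ via the exponential map at a basepoint). The mildly delicate point worth stating carefully is why the section exists — namely the obstruction-theoretic argument using $\dim\Sigma = 2$ together with the vanishing of $\pi_0(X)$, $\pi_1(X)$, $\pi_2(X)$ — and why lifting the section and projecting produces an equivariant map rather than merely a map; both are exactly as in the $\PSL(2,\R)$ case treated above.
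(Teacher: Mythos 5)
Your proof is correct and follows exactly the strategy the paper itself invokes: the $\PSL(2,\R)$ case (Fact~\ref{fact: flat bundle construction}) and the equivalence in Fact~\ref{fact - toledo is well-def}(1) both point to constructing the flat $(G,X)$-bundle, using contractibility of the Hadamard manifold $X=G/K$ to get a smooth section, and lifting the section to obtain a $\rho$-equivariant map. The paper states this fact without proof precisely because the argument is the verbatim generalization you give, so nothing more needs to be said.
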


The pullback $f_\rho^\ast\omega_X\in\Omega^2(\widetilde{\Sigma})$ of the K\"ahler form is $\pi_1(\Sigma)$-invariant and therefore defines a $2$-form $\overline{f_\rho^\ast\omega_X}\in\Omega^2(\Sigma)$, which satisfies
\[p^\ast\overline{f_\rho^\ast\omega_X} = f_\rho^\ast\omega_X.
\]

\begin{definition} Retain the above notation. The \textit{Toledo number} of $\rho$ is 
\[
\tau(\rho)\coloneqq \frac{1}{2\pi}\int_\Sigma \overline{f_\rho^\ast\omega_X}.
\]
\end{definition}

\begin{fact}\label{fact - toledo is well-def}
\begin{enumerate}
    \item The existence of a $\rho$-equivariant map is equivalent to the existence of a section of the bundle
    \[\pi_1(\Sigma)\setminus(\widetilde{\Sigma}\times X)\rightarrow \pi_1(\Sigma)\setminus\widetilde{\Sigma}\cong \Sigma
    \]
    with contractible fiber $X$.
    \item $\tau(\rho)$ is well-defined since any two $\rho$-equivariant maps $\widetilde{\Sigma}\rightarrow X$ are homotopic via a $\rho$-equivariant homotopy (this follows from the fact that $X$ is contractible).
    \item For all $g\in G$ it holds $\tau(g\rho g^{-1})=\tau(\rho)$.
\end{enumerate}
\end{fact}

\begin{example}[The holonomy representation $\pi_1(\Sigma)\rightarrow \PSL(2,\R)$]
Let $\Sigma$ be a closed surface of genus $g\geq 2$, $p\from \widetilde{\Sigma}\rightarrow\Sigma$ the universal cover, $h$ an hyperbolic structure on $\Sigma$ with associated developing map $f_h \from \widetilde{\Sigma}\rightarrow \H^2$.
This is an orientation-preserving isometry, in particular $f_h\in\PSL(2,\R)$ and the holonomy representation is
\begin{align*}
    \rho_h \from \pi_1(\Sigma)&\rightarrow \PSL(2,\R)\\
    \gamma &\mapsto f_h\circ \gamma \circ f_h^{-1},
\end{align*}
where $\gamma\in\PSL(2,\R)$ denotes the action of $\pi_1(\Sigma)$ on $\widetilde{\Sigma}$ by deck-transformations.

We see immediately that $f_h$ is $\rho_h$-equivariant: for all $\gamma\in \pi_1(\Sigma)$, $x\in\widetilde{\Sigma}$ it holds $f_h(\gamma\cdot x)=\rho_h(\gamma)\cdot f(x)$.

Let $\omega_\Sigma,\omega_{\widetilde{\Sigma}}$ and $\omega_{\H^2}$ be the volume forms on $\Sigma$, $\widetilde{\Sigma}$ and $\H^2$, respectively. Then since local isometries preserve the volume form it holds $p^\ast\omega_\Sigma=\omega_{\widetilde{\Sigma}}$ and $f_h^\ast\omega_{\H^2}=\omega_{\widetilde{\Sigma}}$. Therefore
\[p^\ast\omega_\Sigma=\omega_{\widetilde{\Sigma}}=f_h^\ast\omega_{\H^2}.
\]
Therefore
\[\tau(\rho_h) = \frac{1}{2\pi}\int_\Sigma \overline{f_h^\ast\omega_{\H^2}} = \frac{1}{2\pi}\int_\Sigma\omega_\Sigma = \frac{2\pi}{2\pi}\vert \chi(\Sigma)\vert = 2g-2.
\]
In the second-to-last equality we used the Gau{\ss}-Bonnet theorem.
\end{example}

\begin{example}[The diagonal embedding] Recall that 
\[
\Sp(2n,\R)=\lbrace x\in\SL(2n,\R) : \tran{x}J_{n,n}x = J_{n,n}\rbrace,
\]
where $J_{n,n}=\begin{pmatrix} 0 & I_n \\ -I_n & 0\end{pmatrix}$. Consider the diagonal embedding
\begin{align*}
    d:\SL(2,\R) &\rightarrow \Sp(2n,\R)\\
    A=\begin{pmatrix}a&b\\c&d
    \end{pmatrix}   &\mapsto \begin{pmatrix} aI_n&bI_n\\cI_n&dI_n
    \end{pmatrix}.
\end{align*}
Let $X$ be the Siegel upper half space and define $\varphi\from \H^2\rightarrow X$, $z\mapsto zI_n$. Then $\varphi$ is $d$-equivariant:
\begin{align*}
    d(A)\varphi(z) &=(aI_n\varphi(z)+bI_n)(cI_n\varphi(z)+dI_n)^{-1} = (az+b)I_n((cz+d)I_n)^{-1}=\frac{az+b}{cz+d}I_n
    =\varphi(Az).
\end{align*}
Let $\rho\colon\pi_1(\Sigma)\rightarrow\SL(2,\R)$ be any holonomy with developing map $f\colon\widetilde{\Sigma}\rightarrow \H^2$. We compute the Toledo number $T(d\circ \rho)$. It is easy to check that $\varphi\circ f\colon\widetilde{\Sigma}\rightarrow X$ is ($d\circ \rho$)-equivariant. Moreover, $\varphi^\ast\omega_X=n\omega_{\H^2}$.
Indeed,
\begin{align*}
    (\varphi^\ast\omega_X)_i\left(\begin{pmatrix}v_1\\w_1\end{pmatrix},\begin{pmatrix}v_2\\w_2\end{pmatrix}\right)
    &=(\omega_X)_{iI_n}\left(d_i\varphi\begin{pmatrix}v_1\\w_1\end{pmatrix},d_i\varphi\begin{pmatrix}v_2\\w_2\end{pmatrix}\right)\\
    &=(\omega_X)_{iI_n}\left((v_1+iw_1)I_n, (v_2+iw_2)I_n\right)\\
    &=\Tr(v_1w_2I_n - w_1v_2I_n) = n(v_1w_2-w_1v_2) \\
    &= n(\omega_{\H^2})_i\left(\begin{pmatrix}v_1\\w_1\end{pmatrix},\begin{pmatrix}v_2\\w_2\end{pmatrix}\right).
\end{align*}
Thus $(\varphi\circ f)^\ast\omega_X=f^\ast\varphi^\ast\omega_X=nf^\ast\omega_{\H^2}$ and
\[\tau(d\circ \rho) = \frac{1}{2\pi}\int_\Sigma \overline{(\varphi\circ f)^\ast\omega_X} = \frac{n}{2\pi}\int_\Sigma\overline{f^\ast\omega_{\H^2}} = n\tau(\rho) = n\vert \chi(\Sigma)\vert.
\]
\end{example}

\section{Maximal representations}

The Toledo number has the following properties.

\begin{proposition}[{\cite[Corollary 5.7, Corollary 5.9]{BIW14}}] Let $G$ be a Lie group of Hermitian type and $\Sigma$ be a closed surface of genus $g\geq 2$.
\begin{enumerate}
    \item The map \begin{align*}
        \tau\colon \Hom(\pi_1(\Sigma),G)&\rightarrow \R\\
        \rho&\mapsto \tau(\rho)
    \end{align*}
    is continuous.
    \item $\tau$ takes discrete values. More precisely, it takes values in $\frac{1}{n_X}\Z$, where $n_X\in\N$ depends only on $G$.
    \item $\vert \tau(\rho) \vert \leq \rank(X) \vert\chi(\Sigma)\vert$.
\end{enumerate}
\end{proposition}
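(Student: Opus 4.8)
The plan is to establish the three properties of the Toledo number by combining the cocycle description of the Kähler class with the bounded Euler/Toledo class machinery from bounded cohomology, following the approach of Burger--Iozzi--Wienhard. The unifying idea is that $\tau(\rho)$ is the pairing of the fundamental class $[\Sigma]\in H_2(\Sigma;\R)$ with the pullback $\rho^*[\kappa_X^b]$ of the bounded Kähler class $\kappa_X^b\in H^2_{cb}(G;\R)$, which represents (a multiple of) the de Rham class of $\omega_X$. The key input is that, since $\Sigma$ is aspherical, $H^2(\Sigma;\R)\cong H^2(\pi_1(\Sigma);\R)$ and the comparison map $H^2_b(\pi_1(\Sigma);\R)\to H^2(\pi_1(\Sigma);\R)$ is an isomorphism (surjectivity being Gromov's result for surface groups), so the integral in the definition of $\tau(\rho)$ only depends on the class $\rho^*\kappa_X^b$.

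First I would make the identification $\tau(\rho)=\langle \rho^*\kappa_X^b, [\Sigma]\rangle$ precise: choose a smooth triangulation of $\Sigma$, lift to $\widetilde\Sigma$, use the $\rho$-equivariant map $f_\rho$ to send simplices into $X$, and integrate $\omega_X$ over the (geodesic or straightened) images; by Stokes and $G$-invariance this computes the pairing of a bounded cocycle with the fundamental cycle, up to the normalizing constant $\frac{1}{2\pi}$. For \textbf{continuity} (1), the point is that $f_\rho$ can be chosen to depend continuously on $\rho$ (e.g. by averaging or by an explicit section construction as in Fact~\ref{fact - toledo is well-def}), so the integral over the fixed compact $\Sigma$ varies continuously; alternatively, continuity follows from the fact that pullback of bounded classes is continuous for the topology on $\Hom(\pi_1(\Sigma),G)$ and the norm of $\kappa_X^b$ is finite.

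For \textbf{discreteness} (2), I would invoke that the bounded Kähler class $\kappa_X^b$, after suitable normalization, lies in the image of $H^2_{cb}(G;\Z)$ (or rather that $n_X\kappa_X^b$ is integral, where $n_X$ is determined by the index of the lattice generated by periods of $\omega_X$ over $2$-cycles in $X$ — equivalently by the structure of $\pi_1$ of the associated compact dual or by the formula for $\kappa_X$ in terms of the generator of $H^2$ of the maximal compact); then $\langle n_X\rho^*\kappa_X^b,[\Sigma]\rangle\in\Z$ for every $\rho$, giving $\tau(\rho)\in\frac{1}{n_X}\Z$. For the \textbf{Milnor--Wood-type bound} (3), I would use the Gromov norm estimate: $|\tau(\rho)|=\frac{1}{2\pi}|\langle \rho^*\kappa_X^b,[\Sigma]\rangle|\le \frac{1}{2\pi}\|\kappa_X^b\|_\infty\cdot\|[\Sigma]\|_1$, where $\|[\Sigma]\|_1=2|\chi(\Sigma)|$ is the simplicial volume of the surface, together with the sharp computation $\|\kappa_X^b\|_\infty = \pi\cdot\rank(X)$ (this is the Hermitian analogue of the fact that the bounded Euler class of $\PSL(2,\R)$ has norm $\tfrac12$; it follows from Domic--Toledo / Clerc--Ørsted estimates on the Kähler cocycle, namely that the symplectic area of a geodesic triangle in $X$ is bounded by $\pi\cdot\rank(X)$). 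Combining these gives exactly $|\tau(\rho)|\le \rank(X)|\chi(\Sigma)|$.

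The main obstacle is the sharp norm computation $\|\kappa_X^b\|_\infty=\pi\rank(X)$ underlying (3): the inequality $|\tau(\rho)|\le C|\chi(\Sigma)|$ for \emph{some} constant is soft, but pinning down the optimal constant $\rank(X)$ requires the Clerc--Ørsted bound on the Bergmann/Kähler cocycle (equivalently, that the maximal symplectic area of a geodesic triangle in a Hermitian symmetric space of rank $r$ equals $r\pi$), which in turn rests either on a case-by-case analysis using the structure of tube-type and non-tube-type domains, or on Clerc's uniform argument via the Shilov boundary and the generalized Maslov index. I would treat this as a citable input from \cite{BIW14} and the references therein rather than reproving it. A secondary technical point is ensuring the constant $n_X$ in (2) is correctly identified — this amounts to computing the periods of $\omega_X$, which for the classical Hermitian groups is standard but worth stating carefully.
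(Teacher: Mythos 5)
The paper gives no proof of this proposition; it is stated as a citation to Burger--Iozzi--Wienhard \cite{BIW14}, so there is no internal argument to compare against. Your sketch is a faithful outline of the approach in that cited reference: identify $\tau(\rho)$ with the pairing $\langle\rho^*\kappa_X^b,[\Sigma]\rangle$ of the pulled-back bounded K\"ahler class with the fundamental class, deduce continuity from continuity of pullback of a class of finite norm, deduce discreteness from integrality of a multiple $n_X\kappa_X^b$, and obtain the bound $|\tau(\rho)|\le\frac{1}{2\pi}\|\kappa_X^b\|_\infty\,\|[\Sigma]\|_1=\rank(X)\,|\chi(\Sigma)|$ from Gromov's duality together with $\|[\Sigma]\|_1=2|\chi(\Sigma)|$ and the sharp norm computation $\|\kappa_X^b\|_\infty=\pi\,\rank(X)$ of Domic--Toledo/Clerc--\O rsted. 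All of this is correct and is exactly what \cite{BIW14} does.

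One genuine error worth correcting: you assert that the comparison map $H^2_b(\pi_1(\Sigma);\R)\to H^2(\pi_1(\Sigma);\R)$ is an isomorphism. It is surjective (Gromov), but it is very far from injective --- for a hyperbolic surface group, $H^2_b(\pi_1(\Sigma);\R)$ is infinite-dimensional (Barge--Ghys, Mitsumatsu), while $H^2(\pi_1(\Sigma);\R)\cong\R$. Fortunately you never actually use injectivity: the quantity $\tau(\rho)$ is defined directly as the Kronecker pairing of $\rho^*\kappa_X^b\in H^2_b(\pi_1(\Sigma);\R)$ with the fundamental class in $H_2(\pi_1(\Sigma);\R)$, which makes sense without any collapsing to $H^2$. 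Concretely, since $X$ is contractible, $\omega_X$ is exact, and the integral $\frac{1}{2\pi}\int_\Sigma\overline{f_\rho^*\omega_X}$ is literally the evaluation of a (bounded) group cocycle representing $\rho^*\kappa_X^b$ on the fundamental cycle --- it depends only on that bounded class. So the argument survives, but the stated justification does not.
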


\begin{definition}
A representation $\rho\colon\pi_1(\Sigma)\rightarrow G$ is \textit{maximal} if $\tau(\rho)=\rank(X)\vert \chi(\Sigma)\vert$.
\end{definition}

Since $\tau$ is continuous and takes discrete values, it is constant on connected components of $\Hom(\pi_1(\Sigma),G)$ and the set of maximal representations is a union of connected components.

By Fact \ref{fact - toledo is well-def} (3) the Toledo number gives a well-defined continuous map on $\chi(\Sigma,G)$ and the set of maximal representations in $\chi(\Sigma,G)$ is also a union of connected components.

\begin{definition}
A \textit{higher Teichm\"uller space} is a union of connected components of $\chi(\Sigma,G)$ which consists entirely of discrete and faithful representations.
\end{definition}

For $G\neq \PSL(2,\R)$ the set of discrete and faithful representations is only a closed subset of $\chi(\Sigma,G)$, so it is not even clear that higher Teichm\"uller spaces exist. However, we have the following.

\begin{proposition}[{\cite[\S 4]{BIW10}}]\label{prop - max rep are injective and discrete image} Maximal representations are injective and have discrete image.
\end{proposition}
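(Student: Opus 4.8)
The plan is to show that a maximal representation $\rho\colon\pi_1(\Sigma)\to G$ must be injective with discrete image by contradiction: if it fails either property, then one can locate a subsurface on which $\rho$ "loses" positive area, contradicting maximality. First I would recall the key additivity property of the Toledo number under decomposition: if $\Sigma$ is cut along an essential simple closed curve $\gamma$ into subsurfaces $\Sigma_1,\Sigma_2$ with boundary, then $\tau(\rho)=\tau(\rho|_{\pi_1(\Sigma_1)})+\tau(\rho|_{\pi_1(\Sigma_2)})$, where the Toledo number of a representation of a surface-with-boundary group is defined using a boundary-compatible equivariant map (the relative/bounded version from \cite{BIW10}). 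Alongside this I would invoke the crucial Milnor--Wood-type inequality in the bounded setting: for any subsurface $S$ with $\chi(S)<0$ one has $|\tau(\rho|_{\pi_1(S)})|\le \rank(X)\,|\chi(S)|$, which is precisely \cite[Corollary 5.7 / 5.9]{BIW14} restated relatively. The point is that maximality of $\rho$ on $\Sigma$ forces maximality on \emph{every} incompressible subsurface, since a deficit on one piece cannot be compensated by the others.

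Next I would run the argument proper. Suppose $\rho$ is not injective: pick $1\ne\gamma\in\ker\rho$ represented by a simple closed curve (this reduction, that a nontrivial kernel element can be taken simple after passing to an appropriate power/conjugate, is standard). Cutting $\Sigma$ along $\gamma$ and, if $\gamma$ is separating, working on the piece that still has negative Euler characteristic (if $\gamma$ is non-separating, cut to get a single subsurface), one reduces to a representation of a surface group with boundary where the boundary loop maps to the identity; one then shows the corresponding relative Toledo number is strictly smaller in absolute value than $\rank(X)|\chi|$ of that piece — essentially because the boundary being trivial kills the contribution that a regular boundary curve would make, or because one can fill in a disk and apply the closed Milnor--Wood inequality to a surface of larger Euler characteristic. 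Summing over pieces via additivity contradicts maximality. The non-discrete case is handled similarly: if $\rho(\pi_1\Sigma)$ is not discrete, its closure $H$ is a connected Lie subgroup of positive dimension; using that $H$ is either a proper subgroup (whose associated Toledo contribution is again bounded by the rank of a \emph{smaller} symmetric space, hence strictly sub-maximal, see \cite{BIW10}) or one produces a sequence of curves whose translation lengths go to zero and applies a limiting/collar argument, one again contradicts maximality on a suitable subsurface.

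The main obstacle I anticipate is making the relative Milnor--Wood inequality and the additivity of $\tau$ genuinely rigorous, since the Toledo number as defined in the excerpt is only for \emph{closed} $\Sigma$; one needs the bounded-cohomological reformulation $\tau(\rho)=\langle \rho^*\kappa_X^b,[\Sigma]\rangle$ where $\kappa_X^b\in H^2_b(G;\R)$ is the bounded Kähler class, because it is this bounded class that behaves well under restriction to subgroups and gluing of relative fundamental classes. Once that framework is in place, the strict inequality on a subsurface with a trivial (or elliptic, or parabolic with small rotation) boundary holding element follows from the fact that a maximal relative cocycle forces the boundary to be "as hyperbolic as possible", which an identity or small element cannot satisfy. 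I would therefore structure the write-up as: (i) recall bounded Kähler class and $\tau(\rho)=\langle\rho^*\kappa_X^b,[\Sigma]\rangle$; (ii) state additivity and relative Milnor--Wood; (iii) derive that maximality passes to incompressible subsurfaces; (iv) conclude injectivity and discreteness by the cutting arguments above, citing \cite{BIW10} for the technical heart.
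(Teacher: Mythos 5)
Your proposal takes a genuinely different route from the paper. The paper proves injectivity directly via the boundary map characterization of maximal representations (the theorem of Burger--Iozzi--Wienhard quoted just above: $\rho$ is maximal iff there is a continuous $\rho$-equivariant $\varphi\colon\partial\H^2\to\widecheck{\Sigma}$ sending positively oriented triples to maximal triples). Given $1\neq\gamma\in\ker\rho$ of infinite order, one chooses an interval $I\subset\partial\H^2$ with $I,\gamma I,\gamma^2 I$ disjoint and positively ordered, takes a positive triple $(x,y,z)\in I_3\times\gamma I_2\times\gamma^2 I_1$, and notes that since $\rho(\gamma)=e$ the equivariant map assigns Maslov index $\rk(X)$ to $(\varphi(x),\varphi(y),\varphi(z))$ and simultaneously Maslov index $-\rk(X)$ to the same triple $(\varphi(x),\varphi(\gamma^{-1}y),\varphi(\gamma^{-2}z))$, since the latter preimage is negatively oriented. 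The contradiction needs no surface cutting, no additivity of the Toledo number, and no bounded cohomology. Discreteness is likewise deferred to the boundary-map analysis in \cite[\S 4.3]{BIW10}, not to a subsurface decomposition.

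Your cutting-and-additivity approach has concrete gaps. First, the asserted reduction ``that a nontrivial kernel element can be taken simple after passing to a power/conjugate'' is not standard and is false in general: a filling curve in $\pi_1(\Sigma)$ is not conjugate to any power of a simple closed curve, so knowing $\ker\rho\neq\{1\}$ does not automatically hand you an essential simple closed curve in the kernel, and extracting one (using, say, normality of the kernel) is a nontrivial step you have not supplied. Second, your discreteness argument treats the case that $H=\overline{\rho(\pi_1\Sigma)}$ is a \emph{proper} connected subgroup, but says nothing when $H=G$ (dense image), which your ``smaller symmetric space'' bound does not rule out. Third, even granting the bounded-cohomological framework, additivity, and the relative Milnor--Wood inequality (all of which are real machinery in \cite{BIW10} but are not set up in this chapter), the key claim that trivial boundary holonomy on a subsurface forces a \emph{strict} deficit in relative Toledo number is exactly the nontrivial content that would have to be proved, and it is not obviously easier than the direct Maslov-index contradiction. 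If completed, your approach would give a structural proof in line with the gluing formalism elsewhere in \cite{BIW10}; the paper's route instead yields a short, self-contained contradiction from the existence of a positive boundary map alone.
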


The proof relies on the following theorem.

\begin{theorem}[{\cite[Theorem 8]{BIW10}}]\label{theorem - max reps are positive} Let $G$ be a Lie group of Hermitian type with associate symmetric space $X$ and let $\rho\colon\pi_1(\Sigma)\rightarrow G$ be a representation. Then $\rho$ is maximal if and only if there exists a continuous $\rho$-equivariant map $\varphi\colon \partial \H^2\rightarrow \widecheck{\Sigma}$ which sends positively oriented triples in $\partial\H^2\cong S^1$ to maximal triples in the \emph{Shilov boundary} $\widecheck{\Sigma}$ of $X$.
\end{theorem}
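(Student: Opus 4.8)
The plan is to reformulate the Toledo number in bounded cohomology and then extract both implications from the equality case of the corresponding norm estimate. Recall that, up to a universal normalization, $\tau(\rho)=\langle\rho^{*}\kappa^{b}_{G},[\Sigma]\rangle$, where $\kappa^{b}_{G}\in H^{2}_{\mathrm{cb}}(G;\R)$ is the bounded K\"ahler class of $X$ and $[\Sigma]$ is the real fundamental class of $\Sigma$; the Milnor--Wood-type bound $|\tau(\rho)|\le\rank(X)\,|\chi(\Sigma)|$ is precisely the norm estimate for this pairing. The crucial structural input is that $\kappa^{b}_{G}$ is represented, through the amenable $G$-action on the Shilov boundary $\widecheck{\Sigma}$, by the \emph{Bergmann cocycle} $\beta_{\widecheck{\Sigma}}\colon\widecheck{\Sigma}^{3}\to\R$: it is a $G$-invariant bounded measurable alternating cocycle, defined and taking values in $[-\rank(X)\pi,\rank(X)\pi]$ on pairwise transverse triples, and a transverse triple is \emph{maximal} exactly when $\beta_{\widecheck{\Sigma}}$ equals $\rank(X)\pi$ on it. I would take these facts from the bounded-cohomology theory of Hermitian Lie groups as inputs.

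For the implication $(\Leftarrow)$, suppose a continuous $\rho$-equivariant $\varphi\colon\partial\H^{2}\to\widecheck{\Sigma}$ carrying positively oriented triples to maximal triples is given. Then $\varphi^{*}\beta_{\widecheck{\Sigma}}$ is a $\pi_{1}(\Sigma)$-invariant bounded cocycle on $\partial\H^{2}\cong S^{1}$ representing $\rho^{*}\kappa^{b}_{G}$, and one evaluates $\langle\rho^{*}\kappa^{b}_{G},[\Sigma]\rangle$ by pairing this representative against a fundamental cycle for $\Sigma$ coming from a geodesic ideal triangulation of $\H^{2}$ invariant under an auxiliary Fuchsian group (Gromov's description of the pairing). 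Each ideal triangle has a positively oriented triple of endpoints in $S^{1}$, hence is sent by $\varphi$ to a maximal triple, hence $\varphi^{*}\beta_{\widecheck{\Sigma}}$ attains the extremal value $\rank(X)\pi$ on it; summing over the triangulation gives $\tau(\rho)=\rank(X)\,|\chi(\Sigma)|$, i.e.\ $\rho$ is maximal.

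For the converse $(\Rightarrow)$, suppose $\rho$ is maximal. Since $\pi_{1}(\Sigma)$ acts amenably on $(S^{1},\mathrm{Leb})$ and $\widecheck{\Sigma}$ is a compact metrizable $G$-space, there is a measurable $\rho$-equivariant map $\Phi\colon S^{1}\to\mathcal{M}^{1}(\widecheck{\Sigma})$ to probability measures. Computing $\tau(\rho)$ with $\Phi$ as a measure-valued boundary datum, maximality saturates the norm estimate, so the equality case forces, for almost every positively oriented triple $(x,y,z)$, that $\Phi(x)\otimes\Phi(y)\otimes\Phi(z)$ is concentrated on triples where $\beta_{\widecheck{\Sigma}}=\rank(X)\pi$, that is, on maximal triples. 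As maximal triples are in particular pairwise transverse, a transversality argument from boundary theory, using ergodicity of the diagonal $\pi_{1}(\Sigma)$-action on $S^{1}\times S^{1}$, shows that each $\Phi(x)$ is a Dirac mass $\delta_{\varphi(x)}$; this produces a measurable $\rho$-equivariant $\varphi\colon S^{1}\to\widecheck{\Sigma}$ sending almost every positively oriented triple to a maximal triple.

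It then remains to upgrade $\varphi$ to a genuinely \emph{continuous} map, defined everywhere and sending \emph{every} positively oriented triple to a maximal triple, and this regularity step is where I expect the main difficulty to lie. The mechanism is that \emph{almost every positive triple is maximal} is a strong monotonicity constraint: exploiting that maximal triples in $\widecheck{\Sigma}$ vary compatibly with a natural cyclic order, together with transversality, one shows that at every $x\in S^{1}$ the one-sided limits of $\varphi$ along full-measure sequences exist and define left- and right-continuous $\rho$-equivariant maps agreeing with $\varphi$ almost everywhere; because $\pi_{1}(\Sigma)$ acts minimally on $S^{1}$ with a dense set of attracting fixed points, these one-sided maps coincide with a single continuous equivariant map, and the closed condition of maximality passes to the limit so that all positive triples are sent to maximal triples. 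Combining $(\Leftarrow)$ and $(\Rightarrow)$ gives the equivalence; the delicate point throughout is the interplay between the combinatorics of maximal triples in the Shilov boundary (extremality, transversality, cyclic order) and the ergodic and dynamical features of the surface-group action on $S^{1}$.
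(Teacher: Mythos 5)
The notes you are commenting on do not actually contain a proof of this statement: it is quoted from Burger--Iozzi--Wienhard \cite{BIW10} (Theorem~8 there) and used as a black box to deduce that maximal representations are injective and discrete, so there is no in-paper argument to compare yours against. That said, your sketch is essentially the proof from \cite{BIW10} itself: the Toledo number as the pairing of $\rho^*$ of the bounded K\"ahler class with the fundamental class, the realization of that class on the Shilov boundary by the Bergmann/Maslov cocycle through the amenable boundary action, the ideal-triangulation (Gromov pairing) computation for the ``if'' direction, and, for the ``only if'' direction, an amenability-produced measurable map into probability measures on the Shilov boundary, with the equality case of the Milnor--Wood bound plus ergodicity of the diagonal action on $S^1\times S^1$ forcing Dirac masses, followed by a monotonicity argument upgrading the a.e.-defined map to a continuous equivariant one. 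So the route is the canonical one. The only caution is that the steps you yourself flag as delicate are precisely where the bulk of \cite{BIW10} lies: the identification of the bounded K\"ahler class with the Bergmann cocycle representative on the Shilov boundary, the regularity upgrade from ``almost every positive triple is maximal'' to a genuinely continuous map sending every positive triple to a maximal triple, and (for non-tube-type targets) the fact that extremality of the cocycle forces the essential image into the Shilov boundary of a maximal tube-type subdomain; your outline correctly locates this work but does not carry it out, which is acceptable for a sketch that declares those inputs as imported from the bounded-cohomology theory.
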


\begin{remark}
\begin{enumerate}
    \item Here the action of $\pi_1(\Sigma)$ on $\partial\H^2$ is given by choosing an hyperbolization of $\pi_1(\Sigma)$ as a lattice in $\PSL(2,\R)$.
    \item In the setting of $G$ being an Hermitian Lie group, a triple of elements $(L_1,L_2,L_3)\in\widecheck{\Sigma}^3$ is maximal if its Maslov index $M_{\widecheck{\Sigma}}(L_1,L_2,L_3)$ takes the maximal value possible, which is $\rk(X)$.
    
    For example, when $G=\Sp(2n,\R)$ the Shilov boundary $\widecheck{\Sigma}$ consists of the Lagrangian subspaces of $\R^{2n}$ and a triple of Lagrangian subspaces $(L_1,L_2,L_3)$ is maximal if and only if its Maslov index is equal to $n$.
\end{enumerate}
\end{remark}

We now sketch the proof of Proposition \ref{prop - max rep are injective and discrete image}.

\begin{proposition}[{\cite[\S 4.4]{BIW10}}] Let $G$ be a Lie group of Hermitian type and $\Sigma$ a closed surface of genus $g\geq 2$. Then maximal representations $\rho \from \pi_1(\Sigma)\rightarrow G$ are injective.
\end{proposition}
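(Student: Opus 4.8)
The plan is to deduce injectivity directly from the equivariant boundary map furnished by Theorem \ref{theorem - max reps are positive}. Fix a hyperbolization of $\pi_1(\Sigma)$ as a cocompact lattice in $\PSL(2,\R)$, so that $\pi_1(\Sigma)$ acts on $\partial\H^2\cong S^1$, and let $\varphi\colon\partial\H^2\to\widecheck\Sigma$ be the continuous $\rho$-equivariant map sending positively oriented triples to maximal triples in the Shilov boundary. The whole argument then reduces to two observations.

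\textbf{Step 1: $\varphi$ is injective.} First I would check that a maximal triple in $\widecheck\Sigma$ necessarily consists of three pairwise transverse points: if two entries of the triple coincided, its Maslov index would be forced below the maximal value $\rk(X)$. For $G=\Sp(2n,\R)$ this is the standard fact that a triple of Lagrangians with Maslov index $n$ is pairwise transverse, and the general Hermitian case is the analogous statement for the Shilov boundary. Granting this, pick any two distinct points $x\ne y$ in $\partial\H^2\cong S^1$ and choose $z$ so that $(x,y,z)$ is positively oriented; then $(\varphi(x),\varphi(y),\varphi(z))$ is a maximal, hence pairwise transverse, triple, so in particular $\varphi(x)\ne\varphi(y)$. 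Thus $\varphi$ is injective.

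\textbf{Step 2: invoke the dynamics on the circle.} Suppose $\gamma\in\pi_1(\Sigma)$ satisfies $\rho(\gamma)=\id$. Since the hyperbolization is faithful and $\gamma\ne\id$ would act on $\H^2$ as a non-trivial isometry — in fact a hyperbolic one, as $\pi_1(\Sigma)$ is torsion-free and contains no parabolics — there is a boundary point $x\in\partial\H^2$ with $\gamma\cdot x\ne x$. Equivariance of $\varphi$ then yields
\[
\varphi(x)=\id\cdot\varphi(x)=\rho(\gamma)\cdot\varphi(x)=\varphi(\gamma\cdot x),
\]
contradicting the injectivity established in Step 1. Hence $\gamma=\id$, and $\rho$ is injective.

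\textbf{Main obstacle.} The only genuinely delicate point is the transversality claim in Step 1: extracting honest pairwise distinctness of the image points from the purely numerical maximality condition on the Maslov index, which requires knowing enough about the structure of maximal triples in the Shilov boundary in the general Hermitian setting (and not merely in the $\Sp(2n,\R)$ model). Everything else — the faithful hyperbolic dynamics of surface-group elements on $S^1$ and the equivariance bookkeeping — is routine once the boundary map of Theorem \ref{theorem - max reps are positive} is available.
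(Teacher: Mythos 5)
Your argument is correct, and it reaches the conclusion by a genuinely different route from the paper's. The paper never shows $\varphi$ is injective; instead it takes $\gamma\in\ker(\rho)$ of infinite order, chooses a positively oriented triple $(x,y,z)\in I_3\times\gamma I_2\times\gamma^2 I_1$ drawn from nested translates of a small arc, and compares it with the negatively oriented triple $(x,\gamma^{-1}y,\gamma^{-2}z)$: equivariance and $\rho(\gamma)=e$ force the two image triples to coincide, yet one has Maslov index $\rk(X)$ and the other $-\rk(X)$ (using the alternating property of the Maslov cocycle), giving the contradiction $\rk(X)=-\rk(X)$. You instead front-load the structural fact that a maximal triple in the Shilov boundary has pairwise transverse, hence pairwise distinct, entries (which the paper does record, in the Maslov-index discussion in the chapter on Hermitian positivity, via invertibility of $y_2-y_1$), deduce injectivity of $\varphi$, and then kill $\ker\rho$ with a one-line fixed-point argument on $\partial\H^2$. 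Both arguments rely on one ancillary fact about the Maslov index — the paper uses its alternating (cocycle) property, you use ``maximal $\Rightarrow$ pairwise transverse'' — so neither is strictly more elementary; but your version isolates the stronger and more reusable statement that the limit map is injective, whereas the paper's trick gets by only with the hypothesis that positively oriented triples map to maximal ones. You correctly flagged the transversality claim as the one nontrivial input; it is indeed where the Hermitian Jordan-algebraic structure enters.
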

\begin{proof}
Suppose by contradiction that $\ker(\rho)\neq \{1\}$. Then there exists $\gamma\in\ker(\rho)$ of infinite order and we can find an open interval $I\subset \partial\H^2\cong S^1$ such that $I,\gamma I,\gamma^2 I$ are pairwise disjoint and positively oriented. Choose $I_1,I_2,I_3\subset I$ pairwise disjoint and positively oriented. 

Let $\varphi \from \partial \H^2\rightarrow \widecheck{\Sigma}$ be the continuous, $\rho$-equivariant map which preserves positivity given by Theorem \ref{theorem - max reps are positive}.

Let $(x,y,z)\in I_3\times \gamma I_2\times \gamma^2 I_1$. Since this triple is positively oriented in $\partial\H^2$, we have
\[M_{\widecheck{\Sigma}}(\varphi(x),\varphi(y),\varphi(z))=\rk(X).
\]
On the other hand, the triple $(x,\gamma^{-1}y,\gamma^{-2}z)$ is negatively oriented and therefore
\[M_{\widecheck{\Sigma}}(\varphi(x),\varphi(\gamma^{-1}y),\varphi(\gamma^{-2}z))=-\rk(X).
\]
By $\rho$-equivariance and since $\rho(\gamma)=e$:
\[\varphi(\gamma^{-1}y)=\rho(\gamma)^{-1}\varphi(y)=\varphi(y),
\]
and 
\[\varphi(\gamma^{-2}z)=\rho(\gamma^{-2})\varphi(z)=\varphi(z).
\]
But this implies $\rk(X)=-\rk(X)$, which is the desired contradiction.
\end{proof}

The proof of the fact that maximal representations have discrete image can be found in \cite{BIW10}, Section 4.3.

\section{Relation to Theta-positivity}
Lie groups of Hermitian type are an example of a broader class of Lie groups, namely those which \textit{admit a $\Theta$-positive structure}. If $G$ is such a Lie group, there is a notion of $\Theta$-positive representations $\pi_1(\Sigma)\rightarrow G$ and Theorem \ref{theorem - max reps are positive} tells us that maximal representations into Hermitian Lie groups are $\Theta$-positive.

It turns out that for any simple Lie group $G$ admitting a $\Theta$-positive structure there exist higher Teichm\"uller spaces in $\chi(\Sigma,G)$.

\begin{theorem}[{\cite[Theorem A]{GLW21}}] Let $G$ be a simple Lie group admitting a $\Theta$-positive structure. Then there exists a higher Teichm\"uller space in $\chi(\Sigma,G)$.
\end{theorem}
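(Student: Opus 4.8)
The plan is to exhibit an explicit higher Teichmüller space inside $\chi(\Sigma, G)$, namely the set of \emph{$\Theta$-positive representations}. Recall (this is developed in Chapter~\ref{chap4}) that the $\Theta$-positive structure on $G$ equips the relevant generalized flag variety $G/P_\Theta$ with a $G$-invariant notion of \emph{positive triples} — and more generally positively ordered tuples — modelled on the cyclic order of $\partial\H^2 \cong S^1$. One says that $\rho\from \pi_1(\Sigma) \to G$ is $\Theta$-positive if, after fixing a Fuchsian action of $\pi_1(\Sigma)$ on $\partial\H^2$, there is a continuous $\rho$-equivariant boundary map $\xi\from \partial\H^2 \to G/P_\Theta$ carrying positively oriented triples of $S^1$ to positive triples of $G/P_\Theta$. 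The theorem then follows from three assertions: (i) $\Theta$-positive representations exist; (ii) a $\Theta$-positive representation is $P_\Theta$-Anosov, hence discrete and faithful; (iii) the set of $\Theta$-positive representations is open and closed in $\chi(\Sigma,G)$, i.e.\ a union of connected components.

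For (i), I would begin with a Fuchsian representation $j\from \pi_1(\Sigma) \to \PSL(2,\R)$ and post-compose with a homomorphism $\PSL(2,\R) \to G$ arising from a \emph{magical $\mathfrak{sl}_2$-triple} in $\mathfrak{g}$ in the sense of \cite{BradlowCollierGarciaPradaGothen}. The composition of $\partial\H^2 \to \partial\H^2$ (the identity, via $j$) with the $G$-orbit map into $G/P_\Theta$ is a positive boundary map, essentially by the defining property of the magical triple, which is precisely tailored so that the principal-type embedding respects positivity. This places an entire Teichmüller component worth of representations into the $\Theta$-positive locus, in particular showing it is non-empty.

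For (ii), the key point is that positivity of $\xi$ forces the dynamical contraction underlying the Anosov condition. Positively ordered quadruples in $G/P_\Theta$ satisfy a quantitative transversality/nesting estimate; pushing the $\rho$-image of geodesics through the $\pi_1(\Sigma)$-action on $\partial\H^2$ and using that positive configurations remain in a compact family, one obtains uniform exponential contraction of the associated flat bundle, hence the $P_\Theta$-Anosov property. Once $\rho$ is Anosov, the standard results of Guichard--Wienhard and Kapovich--Leeb--Porti give that $\rho$ is discrete with finite kernel; since $\pi_1(\Sigma)$ is torsion-free, this yields discreteness and faithfulness.

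For (iii), openness combines stability of the Anosov condition with the fact that the boundary map of a nearby representation — obtained from structural stability of Anosov representations — stays positive, positivity of triples being an open condition. Closedness is the crux: given $\rho_n \to \rho$ with each $\rho_n$ $\Theta$-positive, one must manufacture a positive boundary map for $\rho$. Here one uses that the $\xi_n$ are uniformly equicontinuous — a consequence of uniform Anosov estimates along the sequence, via a collar-lemma argument — to extract a limit $\xi$, then checks $\xi$ sends positive triples to positive \emph{or degenerate} configurations, and finally rules out degeneration using the transversality forced by the limiting Anosov property. I expect this closedness step to be the main obstacle: controlling the boundary maps uniformly along the sequence and excluding collapse of positivity in the limit is exactly where the fine structure theory of $\Theta$-positivity — the positive semigroup $\Utheta$ and the combinatorics of positive tuples from Chapter~\ref{chap4} — is genuinely required, beyond the soft Anosov machinery.
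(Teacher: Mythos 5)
Your outline correctly isolates two of the three ingredients the paper actually uses, but it misassigns the role of the third and, in doing so, leaves a genuine gap.

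Steps (i) and (ii) match the paper's ingredients: magical $\mathfrak{sl}_2$-triples from \cite{BradlowCollierGarciaPradaGothen} and the implication $\Theta$-positive $\Rightarrow$ $\Theta$-Anosov $\Rightarrow$ discrete and faithful (Theorem~B of \cite{GLW21}), the latter proved in the paper via the diamond-metric contraction argument. The problem is step (iii). To conclude that the $\Theta$-positive locus is a higher Teichm\"uller space, you assert both openness and closedness of the positive locus in $\chi(\Sigma,G)$. Openness does follow from structural stability of Anosov representations plus openness of the positivity condition on boundary maps (Corollary~D of \cite{GLW21}, stated in the paper as Corollary~\ref{cor: theta positive representations are open}). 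But \emph{closedness of the full $\Theta$-positive locus is not a theorem}: it is exactly Wienhard's conjecture, recorded in the paper, and as the paper notes it is currently proved only when $G$ is locally isomorphic to $\SO(p,q)$, by Beyrer--Pozzetti \cite{BeyrerPozzetti21}. You flag that this is ``the crux'' and ``the main obstacle,'' which is the right instinct -- but an open conjecture cannot be a step in the proof.

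The paper sidesteps closedness entirely. Instead of showing the positive locus is a union of connected components, Theorem~E of \cite{GLW21} produces \emph{some} union of connected components of $\chi(\Sigma,G)$ lying entirely inside the positive locus; this is where the magical $\mathfrak{sl}_2$-triples and Higgs-bundle component theory are used -- not merely to show nonemptiness, as in your step (i), but to identify the closure of a deformation space of the ``Fuchsian'' locus and show every representation in that component admits a positive boundary map. Combined with Theorem~B, those components then consist of discrete and faithful representations, which is precisely the definition of a higher Teichm\"uller space. So the fix is to replace your ``open and closed'' argument with a direct component-identification argument: you do not need the positive locus to be closed, you need to carve out a specific union of connected components sitting inside it.
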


The theorem is a corollary of the following two facts.

\begin{theorem}[{\cite[Theorem B and Theorem E]{GLW21}}] Let $G$ be a simple Lie group admitting a $\Theta$-positive structure. 
\begin{enumerate}
    \item There is the following inclusion of sets of representations 
    \[\lbrace\Theta\text{-positive}\rbrace \subseteq\lbrace\Theta\text{-Anosov}\rbrace\subseteq\lbrace \text{discrete and faithful representations}\rbrace.
    \]
    \item There exists a union of connected components of $\chi(\Sigma,G)$ consisting entirely of $\Theta$-positive representations.
\end{enumerate}
\end{theorem}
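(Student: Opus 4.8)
The plan is to treat the two assertions separately; both are conjugation-invariant, so it suffices to work in $\Hom(\pi_1(\Sigma),G)$ and then pass to $\chi(\Sigma,G)$. Throughout, write $\mathcal{F}_\Theta = G/P_\Theta$ for the relevant partial flag variety, and recall that a representation $\rho\from \pi_1(\Sigma)\to G$ is $\Theta$-positive when, after fixing a Fuchsian structure identifying $\partial\pi_1(\Sigma)\cong S^1$, it admits a continuous $\rho$-equivariant map $\xi\from \partial\pi_1(\Sigma)\to \mathcal{F}_\Theta$ carrying positively oriented triples (and cyclically ordered quadruples) in $S^1$ to positive configurations of flags in the sense of $\Theta$-positivity.

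For the second inclusion in (1) I would quote the standard structure theory of Anosov representations: a $\Theta$-Anosov representation is a quasi-isometric embedding of $\pi_1(\Sigma)$ (word metric) into $G$ (a left-invariant metric), and since $\pi_1(\Sigma)$ is infinite and torsion-free this forces discrete image and trivial kernel, i.e.\ discrete and faithful. The first inclusion, $\Theta$-positive $\subseteq$ $\Theta$-Anosov, is the substantive part. First I would promote the positive equivariant boundary datum to a genuine continuous \emph{transverse} limit curve into $\mathcal{F}_\Theta$, using that positive triples are transverse and that transversality/positivity survive the limiting process that fills in the curve. Next I would check that $\xi$ is dynamics-preserving, sending the attracting/repelling fixed points of $\gamma\in\pi_1(\Sigma)$ on $S^1$ to the attracting/repelling flags of $\rho(\gamma)$; positivity is what rules out the degenerate behaviour where $\rho(\gamma)$ fails to have the required transverse fixed flags. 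Finally — and this is the technical heart of (1) — I would upgrade "transverse dynamics-preserving limit map" to the full Anosov property by establishing a uniform exponential contraction estimate, exploiting the positive semigroup $\Utheta$: positivity of $\xi$ means that holonomies, read in suitable affine charts around consecutive flags along the limit curve, act by elements of $\Utheta$ (and their closures), and the algebraic contraction properties of this semigroup, together with the fact that a definite number of such elements is composed per unit length of geodesic, yield the required uniform gap in the relevant singular values.

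For (2) the strategy is to produce a nonempty subset of $\chi(\Sigma,G)$ that is simultaneously open and closed and consists entirely of $\Theta$-positive representations; it is then automatically a union of connected components. Nonemptiness: compose a Fuchsian representation $\pi_1(\Sigma)\to\PSL(2,\R)$ with the embedding $\PSL(2,\R)\to G$ attached to the magical $\mathfrak{sl}_2$-triple of the $\Theta$-positive structure (\cite{BradlowCollierGarciaPradaGothen}), and verify directly that the image of $\partial\H^2$ under the induced flag map is a positive curve, so that this representation is $\Theta$-positive. Openness of the $\Theta$-positive locus: by part (1) these representations are Anosov, the Anosov condition is open, limit maps vary continuously over the Anosov locus, and positivity of a triple of flags is an open condition on $\mathcal{F}_\Theta^{3}$, so positivity of the limit curve persists under small deformation. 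Closedness — the main obstacle — requires showing that a limit $\rho_\infty$ of $\Theta$-positive representations $\rho_n$ is again $\Theta$-positive: one first shows the limit curves $\xi_n$ subconverge uniformly to a continuous $\rho_\infty$-equivariant map $\xi_\infty$, then observes that the positivity inequalities pass to the limit only in non-strict form, so $\xi_\infty$ sends positive triples to \emph{non-negative} flag configurations; the crux is to rule out degeneration and recover strict positivity — e.g.\ by arguing that a non-negative but non-positive configuration cannot appear along an equivariant curve from a representation in this component, or by using that the $\xi_n$ already satisfy the stronger positivity on quadruples, which combined with transversality at the two "ends" forces strict positivity in the limit.

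In summary, I expect the two genuinely hard inputs to be the uniform contraction estimate that turns a positive transverse limit curve into an honest Anosov representation in (1), and the non-degeneration argument showing $\Theta$-positivity is a closed condition in (2); both ultimately rest on a careful analysis of the positive semigroup $\Utheta$ and of the geometry of positive versus merely non-negative configurations in $\mathcal{F}_\Theta$.
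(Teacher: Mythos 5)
Your treatment of part (1) is in the right spirit but glosses over the technical heart. The second inclusion (Anosov $\Rightarrow$ discrete and faithful) is indeed just quoted from \cite{Labourie:06,GW12:AnosovDiscontinuity}, as here. For the first inclusion, however, the paper (following \cite{GLW21}) does not argue via "holonomies acting by elements of $U_\Theta^{>0}$ in suitable charts and composing to give a singular value gap"; it builds a concrete contraction mechanism: to each unit tangent vector, viewed as a positive triple $(x,y,z)\subset\partial\mathbb{H}^2$, one attaches the harmonic fourth point $w(x,y,z)$, the diamond $V_{\xi(z)}\bigl(\xi(y),\xi(w(x,y,z))\bigr)$, and the Riemannian metric on that diamond pulled back from the cone parametrizing $U_\Theta^{>0}$; positivity of quadruples gives nesting of these diamonds under the backward geodesic flow, they shrink to $\{\xi(z)\}$, and a metric-comparison proposition for nested shrinking diamonds plus a cocompactness argument yields the uniform constants in the Anosov definition. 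Your version asserts the uniform gap as a consequence of "algebraic contraction properties of the semigroup" without supplying any such mechanism; that assertion is exactly what needs proof, and uniformity (over the unit tangent bundle, not just along a single axis) is where the diamond-metric machinery is doing real work.

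Part (2) contains a genuine gap. You propose to show that the full $\Theta$-positive locus is open \emph{and closed} in $\chi(\Sigma,G)$, but closedness of the positive locus is precisely the conjecture of \cite{Wie18} which this paper states is still open in general, and which required substantial new ideas even in the $\SO(p,q)$ case \cite{BeyrerPozzetti21}; your suggested fixes (non-negative limits plus quadruple positivity plus transversality at the endpoints force strict positivity) do not rule out the degenerations that make this hard — limits of positive triples can be merely non-negative while the endpoint flags remain pairwise transverse, and excluding this along the limit curve is the whole difficulty. The actual argument in \cite{GLW21}, as summarized in the paper, is different: openness of the positive locus follows from part (1) together with openness of the Anosov condition and continuity of limit maps, and then one identifies \emph{specific} connected components — via Higgs bundle techniques and the magical $\mathfrak{sl}_2$-triples of \cite{BradlowCollierGarciaPradaGothen}, with the Fuchsian locus providing nonemptiness — and shows that every representation in those components is $\Theta$-positive. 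So the component statement is obtained by pinning down particular components, not by proving the positive locus is closed; as written, your closedness step would prove a strictly stronger open statement and cannot be accepted without a new argument.
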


It is still open whether or not the set of $\Theta$-positive representations forms higher Teichm\"uller spaces.

\begin{conjecture}[\cite{Wie18}]
The set of $\Theta$-positive representations is open and closed in $\chi(\Sigma,G)$.
\end{conjecture}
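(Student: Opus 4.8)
The plan is to prove the two halves of the statement — openness and closedness — separately, using throughout that by the displayed inclusion $\lbrace\Theta\text{-positive}\rbrace\subseteq\lbrace\Theta\text{-Anosov}\rbrace$ every $\Theta$-positive $\rho$ carries a continuous $\rho$-equivariant boundary map $\xi_\rho\colon\partial_\infty\pi_1(\Sigma)\to\Flag_\Theta(G)$, that $\rho\mapsto\xi_\rho$ is continuous for the $C^0$-topology on the ($\Theta$-Anosov, hence open) locus where it is defined, and that $\rho$ is $\Theta$-positive precisely when $\xi_\rho$ sends positively oriented triples of $\partial_\infty\pi_1(\Sigma)\cong S^1$ into the positive locus $U_{\Theta}^{>0}$ of triples of $\Theta$-flags.

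\textbf{Openness.} First I would use that the $\Theta$-Anosov locus is open in $\chi(\Sigma,G)$ and that on it $\rho\mapsto\xi_\rho$ is $C^0$-continuous. Fix a $\Theta$-positive $\rho_0$. The space of pairwise distinct, positively oriented triples in $\partial_\infty\pi_1(\Sigma)$ becomes compact after quotienting by the diagonal $\pi_1(\Sigma)$-action, and $U_{\Theta}^{>0}$ is open inside the space of pairwise transverse triples of flags; by equivariance it suffices to test positivity on that compact family. Hence for $\rho$ near $\rho_0$ the map $\xi_\rho$ is $C^0$-close to $\xi_{\rho_0}$ on these triples and still takes values in the open set $U_{\Theta}^{>0}$, so $\rho$ is $\Theta$-positive. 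This gives openness.

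\textbf{Closedness.} This is the hard direction and the reason the conjecture is still open: unlike the case of maximal representations, where a locally constant numerical invariant (the Toledo number) is available, no such invariant is known to cut out the $\Theta$-positive locus. I would argue as follows. Take $\rho_n\to\rho$ with each $\rho_n$ $\Theta$-positive, hence $\Theta$-Anosov with boundary map $\xi_n$. \emph{Step 1:} establish a properness/uniformity statement — along a convergent sequence of $\Theta$-positive representations the Anosov estimates, or a surrogate such as a collar-type inequality for the boundary curve in the spirit of Beyrer--Pozzetti, are uniform; then the $\xi_n$ are equicontinuous and uniformly transverse, so after passing to a subsequence they converge to a continuous $\rho$-equivariant map $\xi\colon\partial_\infty\pi_1(\Sigma)\to\Flag_\Theta(G)$ that is again transverse. \emph{Step 2:} by continuity $\xi$ sends positively oriented triples into the closure $\overline{U_{\Theta}^{>0}}$; since $U_{\Theta}^{>0}$ is a union of connected components of the space of transverse triples (its relative frontier there is empty), transversality of $\xi$ forces its values on positive triples back into $U_{\Theta}^{>0}$ itself. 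Hence $\rho$ admits a continuous equivariant positive boundary map, so $\rho$ is $\Theta$-positive — and, by the results of \cite{GLW21}, in fact $\Theta$-Anosov, consistently with Step 1.

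\textbf{Main obstacle.} Step 1 of the closedness argument is the crux and the ingredient that is missing in general: one must prevent the boundary maps of a convergent family of $\Theta$-positive representations from degenerating, i.e.\ prove a properness statement for the $\Theta$-positive locus inside the $\Theta$-Anosov locus, or equivalently a version of the collar lemma robust enough to survive the limit. For Hitchin and maximal representations the analogous compactness comes for free (from the Hitchin section, resp.\ from continuity and discreteness of the Toledo invariant), whereas here it would have to be extracted from positivity itself. Granting that input, openness and the upgrade from "non-negative" to "positive" in Step 2 are comparatively formal.
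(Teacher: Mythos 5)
This statement is labelled a \emph{Conjecture} in the paper, attributed to \cite{Wie18}; the paper does not prove it, and indeed it is not a theorem. Your proposal is honest about this, but because of that honesty it does not constitute a proof. The openness half is genuinely established in the literature and in these notes (it is precisely \cite[Theorem B]{GLW21}, recorded above as the corollary that the set of $\Theta$-positive representations is open in $\Hom(\pi_1(\Sigma),G)$), and your argument for it — stability of the boundary map on a compact fundamental set of positive triples, plus openness of the positive cell inside the transverse locus — is the right shape, modulo the usual care that continuity of $\rho\mapsto\xi_\rho$ is part of the Anosov package and that genericity/transversality conditions persist.

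The closedness half is where your proposal has a genuine gap, and you identify it yourself: Step~1, the equicontinuity/non-degeneration of the limit maps $\xi_n$ along a convergent sequence of $\Theta$-positive representations, is exactly what is not known in general and is exactly what makes the statement a conjecture rather than a theorem. You cannot appeal to uniform Anosov estimates along the sequence, because a priori the limit $\rho$ is not known to be Anosov, and the Anosov constants of the $\rho_n$ could degenerate. Beyrer--Pozzetti \cite{BeyrerPozzetti21} supply precisely the missing collar-type control in the $\SO(p,q)$ case, which is why closedness is known there but not in general. Also note a subtlety in your Step~2: the positive triple locus is a connected component of the space of transverse triples \emph{for a fixed pair of transverse flags}, but passing to the limit requires that the relevant transversality of $\xi$ survive — which again is part of Step~1, not free. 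So the proposal is a correct roadmap with a correctly located hole, not a proof; as written it cannot be spliced in as an argument for the conjecture.
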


\newpage

\thispagestyle{empty}

\chapter[Hitchin Representations]{Hitchin Representations \\ {\Large\textnormal{\textit{by Thomas Le Fils}}}}
\addtocontents{toc}{\quad\quad\quad \textit{Thomas Le Fils}\par}

\section{Definition}
Let $\Sigma$ be a connected closed oriented surface of genus $g \geqslant 2$. Recall that the character variety $\chi(\Sigma,\PSL (2,\R))$ has two connected components $\mathcal{T}(\Sigma)$ and $\mathcal{T}(\overline \Sigma)$ consisting of discrete and faithful representations.

For every $n\geqslant 2$, there exists a unique irreducible representation 
\[\iota_n \from \SL (2,\R)\to \SL (n,\R)\] up to conjugacy.
Namely we can take $\iota_n$ as the map that sends $A = \begin{pmatrix}
a & b\\
c & d
\end{pmatrix}$ to the matrix of the linear map $f_A$ in the basis $\mathcal B$ where $f_A$ is the following:
    \begin{align*}
        f_A\colon\R_{n-1}[X,Y]&\longrightarrow \R_{n-1}[X,Y]\\
        P&\longmapsto P(aX + cY, bX + dY)
    \end{align*}

and $\mathcal B = (X^{n-1}, X^{n-2}Y, \ldots, Y^{n-1})$.

\begin{remark}
The matrix $\begin{pmatrix}
\lambda & 0\\
0 & \lambda^{-1}
\end{pmatrix}$ is sent to the matrix $\begin{pmatrix}
\lambda^{n-1} & 0 & 0 & 0\\
0 & \lambda^{n-3} & 0 & 0\\
0 & 0 & \ddots   & 0\\
0 & 0 & 0 & \lambda^{1-n}
\end{pmatrix}$. Therefore hyperbolic matrices are sent to diagonalizable ones with pairwise distinct eigenvalues.
\end{remark}

Therefore the map $\iota_n$ induces $\iota_n\colon \PSL (2,\R)\to \mathrm{PSL}(n,\R)$.
\begin{example}
If $n=2$, then $\mathcal B = (X,Y)$.
Let $A = \begin{pmatrix}
a & b\\
c & d
\end{pmatrix}$.
The map $f_A$ sends $X$ to $aX + cY$ and $Y$ to $bX + dY$ hence $\iota_2$ is the identity. 
\end{example}

\begin{remark}
The \emph{Veronese embedding}
    \begin{align*}
        V\colon\mathbb{RP}^1&\longrightarrow \mathbb{RP}^{n-1}\\
        [x:y]&\longmapsto [x^{n-1}: x^{n-2}y:\ldots;y^{n-1}]
    \end{align*}
satisfies for all $A\in \SL (n,\R)$, for all $p\in  \mathbb{RP}^1$
\[V(A\cdot p) = \iota_n(A)\cdot V(p).\]
\end{remark}

Let us now define Fuchsian representations.
\begin{definition}
A \emph{Fuchsian representation} is a representation $\rho\colon\pi_1(\Sigma)\to \PSL(n,\R)$ of the form $\iota_n\circ j$ where the conjugacy class of $j$ belongs to $\mathcal{T}(\Sigma)$.
\end{definition}

We can now define the Hitchin component.

\begin{definition}
A \emph{Hitchin representation} is a representation $\rho\colon \pi_1(\Sigma)\to \PSL(n,\R)$ that can be deformed into a Fuchsian representation. In other words, its conjugacy class lies in the same connected component of $\chi(\Sigma,\PSL(n,\R))$ as that of a Fuchsian representation. 
\end{definition}

Let us call this component the \emph{Hitchin component} and denote it by $\Hit_n$.
Hitchin classified the connected components of $\chi(\Sigma, \PSL(n,\R))$ in \cite{Hitchin92}.
\begin{theorem}[Hitchin]
Let $n\geq 3$.
The number of connected components of $\chi(\Sigma,\PSL(n,\R))$ is 3 if $n$ is odd, 6 if $n$ is even.
\end{theorem}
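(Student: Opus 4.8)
Let $n\geq 3$. The number of connected components of $\chi(\Sigma,\PSL(n,\R))$ is $3$ if $n$ is odd, $6$ if $n$ is even.

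The plan is to follow Hitchin's original strategy: translate the problem into Higgs bundles and study the Morse theory of the Hitchin function. First, fix a complex structure $X$ on $\Sigma$. By the non-abelian Hodge correspondence (Hitchin, Donaldson, Corlette, Simpson), $\chi(\Sigma,\PSL(n,\R))$ is homeomorphic to the moduli space $\mathcal{M}$ of polystable $\PSL(n,\R)$-Higgs bundles on $X$: roughly, a holomorphic rank-$n$ bundle $E$ with a non-degenerate symmetric pairing (taken up to the relevant centre, so that the structure group reduces to $\mathrm{PSO}(n,\C)$) together with a trace-free, $K_X$-twisted Higgs field $\Phi$ symmetric for the pairing. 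Since $\pi_0$ is a homeomorphism invariant, it then suffices to count the connected components of $\mathcal{M}$, where we now have a function and a circle action to exploit.

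On $\mathcal{M}$ I would consider the Hitchin function $f([E,\Phi])=\tfrac12\,\|\Phi\|_{L^2}^2$, computed with the harmonic metric, and use Hitchin's analytic results: $f$ is proper, bounded below, and a moment map for the Hamiltonian circle action $e^{i\theta}\cdot[E,\Phi]=[E,e^{i\theta}\Phi]$. By Frankel's theorem $f$ is then a perfect Morse--Bott function whose critical submanifolds are exactly the fixed loci of the induced $\C^\times$-action, and all of its Morse indices are even; in particular every critical submanifold that is not a local minimum has index $\geq 2$. As attaching Morse handles of index $\geq 2$ neither creates new connected components nor merges existing ones, building $\mathcal{M}$ up through the sublevel sets of $f$ yields
\[
\pi_0(\mathcal{M})\;\cong\;\pi_0\bigl(\{\text{local minima of }f\}\bigr).
\]
This reduces the theorem to counting connected components of the local-minimum locus. (The passage must be justified at the singular points of $\mathcal{M}$; Hitchin does so by working on the smooth stable locus, controlling the lower strata, and using convergence of the downward gradient flow.)

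It remains to enumerate the local minima. The $\C^\times$-fixed points are the Hodge bundles, and the second variation of $f$ isolates the local minima among them; they come in two families. First, the locus $\{\Phi=0\}$, i.e.\ the moduli of polystable $\mathrm{PSO}(n,\C)$-bundles, corresponds to representations into the maximal compact $\mathrm{PSO}(n)\subset\PSL(n,\R)$, and its components are classified by the obstruction class in $H^2(\Sigma;\pi_1\mathrm{PSO}(n))$, contributing $2$ components for $n$ odd (then $\mathrm{PSO}(n)=\mathrm{SO}(n)$ and $\pi_1=\Z/2$) and $4$ for $n$ even. Second, because $\PSL(n,\R)$ is split, the Hitchin section produces contractible Hitchin-type pieces on which $\Phi$ never vanishes, hence disjoint from the first family — here one uses that Fuchsian representations, built from the principal $\mathfrak{sl}_2$ recalled above, are not conjugate into a compact subgroup. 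Hitchin's index computation shows these are the only local minima with $\Phi\neq 0$ and enumerates them: $1$ component for $n$ odd and $2$ for $n$ even, the extra one for even $n$ reflecting finer topological data available only then (a $2$-torsion datum entering the uniformizing Higgs bundle, e.g.\ through a square root of $K_X$). Adding up, $\pi_0(\mathcal{M})$ has $2+1=3$ elements for $n$ odd and $4+2=6$ for $n$ even, which is the claim.

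The hard part is this last step. The non-abelian Hodge homeomorphism and the Morse-theoretic reduction, although analytically heavy, are by now standard inputs that may be invoked wholesale; the real work is the second-variation analysis pinning down precisely which $\C^\times$-fixed loci are local minima, together with the topological bookkeeping of their connected components — in particular understanding why the count depends on the parity of $n$, and checking that the Hitchin-type minima form exactly one (resp.\ two) components genuinely separate from the compact ones.
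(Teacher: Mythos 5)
The paper itself does not prove this theorem---it simply cites Hitchin's 1992 paper \cite{Hitchin92} and records the statement---so there is no in-text argument to compare your proposal against. What you write is a faithful high-level summary of Hitchin's actual proof: pass to polystable $\PSL(n,\R)$-Higgs bundles via non-abelian Hodge theory, use $\|\Phi\|^2_{L^2}$ as a proper moment map for the circle action, invoke Frankel/Morse--Bott to see all non-minimal indices are even (so $\pi_0$ is detected at the minimum locus), and split the minima into the $\Phi=0$ stratum (flat $\mathrm{PSO}(n)$-bundles, whose components are indexed by the obstruction class in $\pi_1(\mathrm{PSO}(n))$, of order $2$ for $n$ odd and $4$ for $n$ even) and the $\Phi\neq 0$ Hitchin-type minima. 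That organisation and the resulting count $2+1=3$, $4+2=6$ are right, and you correctly flag that the second-variation/index computation identifying the minima is where the real work lies.

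The one place your explanation goes astray is the reason for the factor of two in the number of Hitchin components for even $n$. It is not a ``square root of $K_X$'' phenomenon: although the uniformizing Higgs bundle $K^{(n-1)/2}\oplus\cdots\oplus K^{-(n-1)/2}$ does require a theta characteristic when $n$ is even, the $2^{2g}$ choices all become isomorphic once you project to a $\PSL(n,\R)$-bundle, so this datum contributes nothing to $\pi_0$. The actual mechanism is the one the paper records in its remark about $\overline{\Hit}_n$: conjugation by an element $g\in\GL(n,\R)$ with $\det g<0$ carries one Hitchin section to another. For $n$ odd one can rescale $g$ to $\lambda g\in\SL(n,\R)$ (solve $\lambda^n\det g=1$), so the automorphism is inner and acts trivially on $\chi(\Sigma,\PSL(n,\R))$, giving a single Hitchin component; for $n$ even no such $\lambda$ exists, the automorphism is genuinely outer, and $\Hit_n$ and $\overline{\Hit}_n$ are distinct components. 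Replace the theta-characteristic claim with this inner/outer dichotomy and the outline is consistent with Hitchin.
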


In the same article, Hitchin showed that this component has trivial topology.
\begin{theorem}
The Hitchin component $\Hit_n$ is homeomorphic to $\R^{(2g-2)(n^2-1)}$.
\end{theorem}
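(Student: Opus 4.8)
The plan is to reproduce Hitchin's argument \cite{Hitchin92}, which identifies $\Hit_n$ with a vector space of holomorphic differentials by means of Higgs bundle theory. Fix once and for all a complex structure on $\Sigma$, making it a compact Riemann surface $X$ with canonical bundle $K$ (the homeomorphism type of $\Hit_n$ will turn out not to depend on this choice). The non-abelian Hodge correspondence of Corlette, Donaldson, Hitchin and Simpson gives a homeomorphism between $\chi(\Sigma,\PSL(n,\C))$ and the moduli space $\mathcal{M}$ of polystable rank-$n$ Higgs bundles $(E,\Phi)$ on $X$ with $\det E\cong\mathcal{O}_X$ and $\tr\Phi=0$, under which the sublocus corresponding to representations into the real form $\PSL(n,\R)$ is cut out by a natural real structure. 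The task then becomes to exhibit an explicit slice of this real locus which is at once a full connected component and an affine space.

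\textbf{Step 1 (Hitchin section).} Choose a theta characteristic $K^{1/2}$ and set $E_0 = K^{(n-1)/2}\oplus K^{(n-3)/2}\oplus\cdots\oplus K^{(1-n)/2}$, a bundle of trivial determinant. To a tuple $q=(q_2,\dots,q_n)\in\bigoplus_{k=2}^n H^0(X,K^k)$ one attaches, via the companion-matrix construction, a Higgs field $\Phi_q$ on $E_0$ whose characteristic polynomial has coefficients $q_2,\dots,q_n$; its constant part is a principal nilpotent, which makes $(E_0,\Phi_q)$ stable and compatible with the $\PSL(n,\R)$ real structure, and which at $q=0$ reproduces, through the principal embedding $\iota_n$, a Fuchsian representation.

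\textbf{Step 2 (the slice is $\Hit_n$, and is a ball).} The assignment $q\mapsto(E_0,\Phi_q)$ descends to a continuous injective map $s\from\bigoplus_{k=2}^n H^0(X,K^k)\to\chi(\Sigma,\PSL(n,\R))$ which is a section of the Hitchin fibration $h\from\mathcal{M}\to\bigoplus_{k=2}^n H^0(X,K^k)$, $(E,\Phi)\mapsto$ (characteristic coefficients of $\Phi$); as the equalizer locus of $\id$ and $s\circ h$, the image of $s$ is closed. To see that it is also open, note that a Fuchsian --- hence irreducible --- representation is a smooth point of the character variety, so near it $\chi(\Sigma,\PSL(n,\R))$ is a manifold of real dimension $(2g-2)\dim\PSL(n,\R)$; by the count below the domain of $s$ has the same dimension, so invariance of domain makes $s$ an open map. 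An open, closed, connected subset of $\chi(\Sigma,\PSL(n,\R))$ meeting the Fuchsian locus is exactly $\Hit_n$, and $s$ then exhibits $\Hit_n\cong\bigoplus_{k=2}^n H^0(X,K^k)\cong\R^N$. Finally the count: for $k\geq 2$ one has $\deg K^k = 2k(g-1)>2g-2$, so $H^1(X,K^k)=0$ and Riemann--Roch gives $\dim_\C H^0(X,K^k)=(2k-1)(g-1)$; summing over $k=2,\dots,n$ and using $\sum_{k=1}^n(2k-1)=n^2$ yields $\dim_\C\bigoplus_{k=2}^n H^0(X,K^k)=(g-1)(n^2-1)$, hence $N=(2g-2)(n^2-1)=(2g-2)\dim\PSL(n,\R)$.

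The main obstacle is everything that Steps 1--2 take as input: the non-abelian Hodge correspondence itself (existence and uniqueness of harmonic metrics, equivalently of solutions to Hitchin's self-duality equations), the verification that $(E_0,\Phi_q)$ is stable and carries the correct real structure for \emph{every} $q$, and the fact that $s$ is genuinely a section of the Hitchin map landing in the $\PSL(n,\R)$ locus. Once these are in hand, identifying the image with the component and the Riemann--Roch bookkeeping are routine. (A Higgs-bundle-free alternative is the Fock--Goncharov parametrization of $\Hit_n$ by positive coordinates, which displays it directly as a cell of the stated dimension.)
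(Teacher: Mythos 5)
The paper does not itself give a proof of this theorem; it simply attributes it to \cite{Hitchin92}, so there is no argument in the text for your sketch to be measured against. What you have written is in fact a faithful outline of Hitchin's original proof via Higgs bundles, the non-abelian Hodge correspondence, and the explicit section of the Hitchin fibration built from the companion-matrix Higgs field on $K^{(n-1)/2}\oplus\cdots\oplus K^{(1-n)/2}$, together with the Riemann--Roch count $\dim_\C\bigoplus_{k=2}^n H^0(X,K^k)=(g-1)(n^2-1)$ converting into the real dimension $(2g-2)(n^2-1)$. You also correctly identify where the real analytic and algebraic work lives (existence and uniqueness of harmonic metrics, stability of $(E_0,\Phi_q)$ for every $q$, and compatibility with the $\PSL(n,\R)$ real structure), so the ``main obstacle'' remark is apt rather than a dodge.

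The one step that is written too loosely is \emph{openness of the image of $s$}. You argue that a Fuchsian representation is irreducible, hence a smooth point of $\chi(\Sigma,\PSL(n,\R))$ of real dimension $(2g-2)(n^2-1)$, and then ``invariance of domain makes $s$ an open map.'' That only shows the image is a neighbourhood of $s(0)$: invariance of domain requires the target to be a topological manifold of the right dimension \emph{at each point of the image}, not merely at the basepoint, and a priori $\Hit_n$ could be singular away from the Fuchsian locus. The fix is already latent in your Step~1: since $(E_0,\Phi_q)$ is stable for all $q$, every point of the image corresponds to an irreducible (indeed reductive) representation, hence is a smooth point of dimension $(2g-2)(n^2-1)$; applying invariance of domain point-by-point along the image then genuinely gives that $s$ is open, hence a homeomorphism onto an open set. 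With that wording repaired, the argument --- image closed as the equalizer of $\id$ and $s\circ h$, open by the above, connected as a continuous image of a vector space, containing the Fuchsian locus --- cleanly forces the image to be the full Hitchin component and exhibits it as $\R^{(2g-2)(n^2-1)}$.
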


\begin{remark}
If $n$ is even, there exists another copy $\overline {\Hit}_n$ of the Hitchin component.
The sets $\Hit_n$ and  $\overline {\Hit}_n$ are obtained from each other by conjugation by an element of $\GL(n,\R)$ with negative determinant.
\end{remark}

\begin{ex*} Let $j\colon \pi_1(\Sigma)\to \PSL (2,\R)$ be such that  $[j]\in{\mathcal{T}(\overline \Sigma)}$.
The conjugacy class of the representation $\iota_n\circ j$ is in $\overline{\Hit}_n$ if $n \equiv 2 \pmod 4$ and in $\Hit_n$ otherwise.
\end{ex*}

Hitchin components have simple geometric interpretations for small $n$.
For example let us remark that for $n=2$ it coincides with Teichm\"uller space.
\begin{remark}
If $n = 2$ then $\iota_2 = I_2$ thus $\Hit_2 = \mathcal T(\Sigma)$.
\end{remark}

For $n=3$, a theorem of Choi-Goldman \cite{ChoiGoldman93, ChoiGoldman97} gives a geometric interpretation of the Hitchin component.
\begin{theorem}[Choi-Goldman]
The space $\Hit_3$ parametrizes the marked convex real projective structures on $\Sigma$.
Such a structure is an identification 
\[\Sigma\cong \mathcal{O}/\rho(\pi_1(\Sigma))\]
where $\mathcal{O}\subset \mathbb{RP}^2$ is a properly convex open set and $\rho\colon \pi_1(\Sigma)\to \PSL(3,\R)$ is a Hitchin representation.
\end{theorem}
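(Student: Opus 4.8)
The plan is to identify $\Hit_3$ with the deformation space $\mathcal{C}(\Sigma)$ of marked properly convex $\mathbb{RP}^2$-structures on $\Sigma$ by showing that the holonomy map
\[
\mathrm{hol}\colon \mathcal{C}(\Sigma)\longrightarrow \chi(\Sigma,\PSL(3,\R)),\qquad (\Omega/\Gamma,f)\longmapsto [\rho]
\]
is a homeomorphism onto $\Hit_3$. First I would locate a base point: the Klein--Beltrami model realises $\mathbb{H}^2$ as the interior $\Omega_0$ of a conic in $\mathbb{RP}^2$, and the projective stabiliser of that conic is exactly the image of the irreducible $\iota_3\colon\PSL(2,\R)\to\PSL(3,\R)$. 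Hence for $[j]\in\mathcal{T}(\Sigma)$ the representation $\iota_3\circ j$ is the holonomy of the convex structure $\Omega_0/j(\pi_1\Sigma)$, so the Fuchsian locus lies in $\mathrm{hol}(\mathcal{C}(\Sigma))$ and meets the connected component $\Hit_3$.

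Next I would establish openness. By the Ehresmann--Thurston holonomy principle the holonomy map on the full deformation space of $\mathbb{RP}^2$-structures on the closed surface $\Sigma$ is a local homeomorphism to $\chi(\Sigma,\PSL(3,\R))$, and by Koszul's theorem the properly convex structures form an \emph{open} subset of that deformation space; since local homeomorphisms are open maps, $\mathrm{hol}(\mathcal{C}(\Sigma))$ is open in $\chi$. Moreover $\mathcal{C}(\Sigma)$ is connected: this can be read off Goldman's generalised Fenchel--Nielsen parametrisation $\mathcal{C}(\Sigma)\cong\R^{16g-16}$ (which also matches $\dim\Hit_3=(2g-2)(3^2-1)=16g-16$, consistent with $\mathrm{hol}$ being a local homeomorphism between equidimensional manifolds). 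Being the continuous image of a connected space and meeting $\Hit_3$, the set $\mathrm{hol}(\mathcal{C}(\Sigma))$ is contained in $\Hit_3$; combined with openness, it is a nonempty open subset of $\Hit_3$.

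The crux is closedness of $\mathrm{hol}(\mathcal{C}(\Sigma))$ inside $\Hit_3$, which is the content of Choi--Goldman's contribution and the step I expect to be the main obstacle. Take $\rho_k=\mathrm{hol}(\Omega_k/\Gamma_k)\to\rho_\infty\in\Hit_3$. Normalising the $\Omega_k$ inside a fixed affine chart and passing to a subsequence, one gets a Hausdorff limit $\Omega_\infty$ which is convex and $\rho_\infty$-invariant. One must then show (i) that $\Omega_\infty$ is \emph{properly} convex --- it is not a point, a segment, a line, or a half-plane --- and (ii) that $\rho_\infty$ acts freely, properly discontinuously and cocompactly on $\Omega_\infty$, with quotient a convex $\mathbb{RP}^2$-surface marked by $\Sigma$. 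For (i) one uses that $\rho_\infty$ is discrete and faithful (a closed condition, preserved under limits) together with the strong irreducibility and proximal dynamics of representations in $\Hit_3$ --- equivalently the Anosov property, or Benoist's characterisation of groups dividing properly convex sets --- to exclude each degenerate limit, since none of a fixed point, a fixed line, or a fixed proper projective sub-simplex is compatible with $\rho_\infty\in\Hit_3$. For (ii) cocompactness passes to the limit because a Dirichlet-type fundamental polygon for $\Omega_k$ converges to one for $\Omega_\infty$. I would invoke this closedness argument rather than reproduce it.

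Finally I would conclude. The set $\mathrm{hol}(\mathcal{C}(\Sigma))$ is nonempty, open and closed in the connected space $\Hit_3$, hence equals $\Hit_3$; so every Hitchin representation in $\PSL(3,\R)$ is the holonomy of a marked convex $\mathbb{RP}^2$-structure $\Sigma\cong\Omega/\rho(\pi_1\Sigma)$. Injectivity of $\mathrm{hol}$ --- established in Goldman's and Choi--Goldman's deformation theory, and built into the convex $(G,X)$-structure package: the developing map is an embedding onto the $\rho$-invariant properly convex domain determined by $\rho$ --- then makes $\mathrm{hol}$ a continuous open bijection, hence a homeomorphism $\mathcal{C}(\Sigma)\xrightarrow{\ \sim\ }\Hit_3$. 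Thus $\Hit_3$ parametrises the marked convex real projective structures on $\Sigma$, each of the stated form.
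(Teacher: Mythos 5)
The paper does not actually prove this statement; it is quoted directly as a theorem of Choi--Goldman with citations to \cite{ChoiGoldman93, ChoiGoldman97}, so there is no in-text argument to compare your proposal against. That said, your reconstruction is sound and follows the standard open/closed strategy of Goldman and Choi--Goldman: realise the Fuchsian locus via the Klein model and $\iota_3(\PSL(2,\R))=\PO(2,1)$, use Ehresmann--Thurston and Koszul's openness theorem to get that holonomies of marked convex $\mathbb{RP}^2$-structures form an open subset of $\chi(\Sigma,\PSL(3,\R))$, use Goldman's $\R^{16g-16}$-parametrisation for connectedness and to land in $\Hit_3$, and then invoke the Choi--Goldman closedness argument to conclude the image is all of $\Hit_3$. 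One small historical remark: invoking the Anosov property or Benoist's divisible-convex machinery to rule out degenerate Hausdorff limits is anachronistic with respect to the 1993--97 papers, which instead argue directly from discreteness, faithfulness, and the purely loxodromic character of Hitchin holonomies; since you offer that route as an alternative, this is a matter of framing rather than a gap. The only genuine thinness is where you already flag it, namely that you state rather than prove that the limit domain is properly convex and the limit action remains cocompact, which is precisely the nontrivial content of the Choi--Goldman closedness theorem; citing it is reasonable in an expository context, as the paper itself does.
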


\subsection{Properties}

Let us denote by $\Flag(\R^n)$ the variety of full flags of $\R^n$: 
\[\Flag(\R^n) = \{(F_0, F_1, \ldots, F_n): F_i\subset F_{i+1},\quad \dim (F_i) = i\}.\]

We say that two flags $E$ and $F$ in $\Flag(\R^n)$ are transverse if we have $E_i\oplus F_{n-i}=\R^n$ for all $0\leqslant i\leqslant n$. The group $\mathrm{GL}(n,\R)$ acts naturally on flags and this action is transitive on pairs of transverse flags.
Therefore for any pair of transverse flags $(E,F)$, there exists a matrix $A\in \mathrm{GL}(n,\mathbb R)$ such that $(A\cdot E, A\cdot F) = (E^0, F^0)$ where $E^0_i$ is the vector space spanned by the $i$-th first vectors of the canonical basis of $\R^n$ and $F^0_i$ is the one spanned by the $i$-th last ones, for all $0\leqslant i \leqslant n$.

Let us recall the following consequence of the \v{S}varc-Milnor Lemma. Let $j\colon \pi_1(\Sigma)\to \PSL (2,\R)$ be such that $[j]\in\mathcal{T}(\Sigma)$. The map $\pi_1(\Sigma)\to\mathbb{H}^2$ defined by $\gamma\mapsto j(\gamma)\cdot x_0$ is a quasi-isometry and allows us to identify $\partial\pi_1(\Sigma)$ with $\partial\mathbb{H}^2=\mathbb{RP}^1$. The action of $\gamma\in \pi_1(\Sigma)$ on $\partial \pi_1(\Sigma)$ in this identification translates into the action of $j(\gamma)$ on $\mathbb{RP}^1$. Therefore the action of a non trivial $\gamma\in \pi_1(\Sigma)$ on the boundary $\partial \pi_1(\Sigma)$ has two fixed points that we will denote by $\gamma^+$ and $\gamma^-$.

Labourie showed in his seminal work \cite{Labourie:06} that Hitchin components form higher Teichm\" uller spaces, i.e.\ they contain only discrete and faithful representations. 
Indeed Labourie showed for each Hitchin representation the existence of special equivariant limit map, a \emph{Frenet} map.

\begin{theorem}[Labourie]
If $\rho$ is a Hitchin representation, then there exists a continuous map $\xi\colon \partial\pi_1(\Sigma)\to\Flag(\R^d)$ that is $\rho$-equivariant, $\xi=(\xi^1,\ldots,\xi^n)$ and, such that for all integers $n_1,\ldots,n_k\geqslant 1$ that add up to $\sum_i n_i=m\leqslant d$, \[\bigoplus_{i=1}^k \xi^{n_i}(x_i) \xrightarrow[x_i\neq x_j]{x_i\to x} \xi^m(x) \]
\end{theorem}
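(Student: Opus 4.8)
The plan is to exploit that the Hitchin component $\Hit_n$ is connected (indeed homeomorphic to a Euclidean space) and contains the Fuchsian locus. It therefore suffices to prove two things: (i) that the desired continuous $\rho$-equivariant Frenet map $\xi$ exists when $\rho$ is Fuchsian, and (ii) that the set of $\rho\in\Hit_n$ admitting such a $\xi$ is open and closed in $\Hit_n$. Combining (i) and (ii) yields the statement for every Hitchin representation; I write $d=n$ throughout and set $\xi^0=0$.

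\emph{The Fuchsian case.} Let $\rho=\iota_n\circ j$ with $[j]\in\mathcal{T}(\Sigma)$. By the remark on the Veronese embedding, $V\colon\mathbb{RP}^1\to\mathbb{RP}^{n-1}$ intertwines the $j$-action on $\partial\pi_1(\Sigma)\cong\mathbb{RP}^1$ with the $\rho$-action on $\mathbb{RP}^{n-1}$. I would define $\xi^i(x)$ to be the osculating $i$-plane at $V(x)$ of the rational normal curve, i.e.\ the span of $V(x)$ together with its first $i-1$ derivatives; this produces a continuous, $\rho$-equivariant map $\xi\colon\partial\pi_1(\Sigma)\to\Flag(\R^n)$. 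The Frenet relations then reduce to the classical convexity of the moment curve: for pairwise distinct $x_1,\dots,x_k$ and exponents $n_1,\dots,n_k$ with $\sum_i n_i=m\le n$, the subspaces $\xi^{n_i}(x_i)$ are in direct sum, and $\bigoplus_i\xi^{n_i}(x_i)$ converges to the osculating $m$-plane $\xi^m(x)$ as all $x_i\to x$. This is a confluent Vandermonde/Wronskian computation: the relevant determinant is a confluent Vandermonde, nonzero precisely when the $x_i$ are distinct, and it degenerates continuously to a Wronskian determinant as the points collide.

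\emph{Openness.} This is the analytic heart. Pass to the unit tangent bundle $T^1\Sigma$ with its geodesic flow and form the flat $\PSL(n,\R)$-bundle associated with $\rho$; a $\rho$-equivariant family of transverse flags $\xi$ yields a continuous, flow-invariant splitting of this bundle into line sub-bundles $\ell_i=\xi^i/\xi^{i-1}$. The key point is that when $\xi$ is a Frenet curve this splitting is \emph{dominated}: the eigenvalues of $\rho(\gamma)$ along a closed geodesic, which the Frenet property forces to be real, simple and ordered, upgrade to uniform exponential domination of $\ell_{i+1}$ over $\ell_i$ under the flow, so $\rho$ is Borel-Anosov. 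By structural stability of dominated splittings --- a contraction argument for the graph transform on an invariant cone field over $T^1\Sigma$ --- the Anosov property together with a continuous equivariant boundary flag persists for all $\rho'$ near $\rho$, and this boundary flag depends continuously on $\rho'$. Since transversality of a finite direct sum of subspaces is an open condition and the collision limits $\bigoplus_i\xi^{n_i}(x_i)\to\xi^m(x)$ are preserved under uniformly small perturbations of the (equicontinuous) limit map, the Frenet property is open in $\Hit_n$.

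\emph{Closedness, and the main obstacle.} If $\rho_k\to\rho$ in $\Hit_n$ with each $\rho_k$ Frenet, the Anosov domination constants of the $\rho_k$ stay uniformly bounded along the convergent sequence, so the boundary flags $\xi_k$ are uniformly H\"older; by Arzel\`a--Ascoli a subsequence converges uniformly to a continuous, $\rho$-equivariant $\xi$, and one checks that $\xi=(\xi^1,\dots,\xi^n)$ still satisfies the Frenet relations, the direct sums being unable to collapse in the limit thanks to the same uniform domination. I expect the main obstacle to be exactly this quantitative dictionary: showing that hyperconvexity of the limit curve is equivalent to the Anosov property with constants controllable along families, and above all handling the \emph{osculating} part of the Frenet condition --- the limits at colliding points, i.e.\ Labourie's ``property (H)'', as opposed to mere transversality of the flags of distinct points. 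Proving that this finer property is simultaneously open and closed is the delicate step: one must rule out, in every limiting argument, that $\xi^1$ degenerates into a proper subspace or ``loses a derivative'', and this is where irreducibility of Hitchin representations, inherited throughout $\Hit_n$, has to be invoked.
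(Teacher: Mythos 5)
The paper does not prove this statement: it records the theorem and attributes it to Labourie~\cite{Labourie:06}, so there is no internal proof to compare against. Your outline nevertheless follows the broad shape of Labourie's original argument --- exploit connectedness of $\Hit_n$, verify the Frenet property in the Fuchsian case via the Veronese embedding and the osculating flags of the rational normal curve, and propagate by an open--closed argument anchored on Anosov-type dynamics. The Fuchsian base case as you write it is correct, and you correctly identify the analytic heart of the matter.

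There is, however, a genuine circularity in your closedness step as written. To apply Arzel\`a--Ascoli you assert that along a convergent sequence $\rho_k\to\rho$ the Anosov domination constants of the $\rho_k$ ``stay uniformly bounded.'' The only general mechanism for locally uniform Anosov constants is openness of the Anosov locus together with continuity of the constants on it, and that presupposes that the limit $\rho$ is already Anosov --- which is precisely what you are trying to establish. Without an independent a priori bound along the sequence, the curves $\xi_k$ can degenerate and no continuous limit map need exist. The remedy in Labourie's argument is a bootstrap between hyperconvexity, irreducibility (Hitchin representations are irreducible, and irreducibility survives passage to the limit), and the Anosov property; it is the irreducibility that blocks the limiting curve from collapsing onto a proper invariant subspace. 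You mention irreducibility only as a vague safeguard in your final sentence, whereas it is the load-bearing input. A second, smaller gap: in the openness step you treat the collision limits $\bigoplus_i\xi^{n_i}(x_i)\to\xi^m(x)$ as if they were a formal consequence of stability of the dominated splitting, but the Anosov property governs transversality of the flags attached to \emph{distinct} boundary points, not the osculating behaviour as the points merge; Labourie's ``property (H)'' requires its own dynamical argument and does not come for free from structural stability of the Anosov splitting.
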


The map $\xi^1 : \partial \pi_1(\Sigma)\to \mathbb{P}(\mathbb R^d)$ is called a \emph{hyperconvex curve}: it satisfies for any $x_1, \ldots, x_d\in \partial \pi_1(\Sigma)$ that are pairwise distinct, the following sum is direct:
\[\bigoplus_{i=1}^d \xi^1(x_i) = \mathbb R^d.\]
Observe that the map $\xi$ completely determines $\xi^1$ by the continuity property.
Guichard \cite{Guichard08} showed that actually the existence of such a continuous curve that is $\rho$-equivariant implies that $\rho$ is in the Hitchin component.

\begin{theorem}[Guichard]
There exists a $\rho$-equivariant continuous hyperconvex map $\partial\pi_1(\Sigma)\to\mathbb{P}(\R^d)$ if and only if $\rho$ is Hitchin.
\end{theorem}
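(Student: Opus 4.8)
The plan is to prove both implications. The forward direction ($\rho$ Hitchin $\Rightarrow$ existence of a $\rho$-equivariant hyperconvex curve) is essentially Labourie's theorem stated just above: if $\rho$ is Hitchin, then Labourie produces a $\rho$-equivariant Frenet curve $\xi = (\xi^1,\ldots,\xi^n)$, and its first component $\xi^1 \colon \partial\pi_1(\Sigma) \to \mathbb{P}(\R^d)$ is continuous and hyperconvex by the very properties quoted (taking $n_1 = \cdots = n_d = 1$ gives the direct-sum condition $\bigoplus_{i=1}^d \xi^1(x_i) = \R^d$ for pairwise distinct points). So the content is entirely in the converse.

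For the converse, suppose $\rho \colon \pi_1(\Sigma) \to \PSL(d,\R)$ admits a $\rho$-equivariant continuous hyperconvex map $\zeta \colon \partial\pi_1(\Sigma) \to \mathbb{P}(\R^d)$. I would proceed in three steps. \textbf{Step 1: reconstruct the full flag curve.} From the hyperconvex curve $\zeta$ one builds higher-order osculating flags by taking limits of spans: for pairwise distinct $x_1,\ldots,x_k$ tending to a common point $x$, the subspace $\zeta(x_1)\oplus\cdots\oplus\zeta(x_k)$ should converge to a $k$-dimensional subspace $\xi^k(x)$; hyperconvexity guarantees the sum stays direct and of full dimension $k$ away from the diagonal, and one shows the limit exists and is independent of the approximating configuration, giving a continuous $\rho$-equivariant curve $\xi = (\xi^1,\ldots,\xi^{d-1})$ into $\Flag(\R^d)$. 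This is a "curves are automatically Frenet" argument in the spirit of Labourie and of Guichard's paper. \textbf{Step 2: positivity and Anosov.} Using that $\zeta$ (hence $\xi$) is continuous, equivariant, and transverse, show that $\rho$ is projective Anosov — the limit curve provides the transverse pair of continuous equivariant boundary maps, and hyperconvexity forces the contraction/expansion needed for the Anosov property. In particular $\rho$ is discrete and faithful, lands in a well-understood component, and the eigenvalues of $\rho(\gamma)$ for $\gamma \neq 1$ are real, positive, and distinct. \textbf{Step 3: connectedness argument.} The set of representations admitting such a curve is open (by stability of Anosov representations and persistence of hyperconvexity under small deformation) and closed (a limit of hyperconvex equivariant curves, after passing to a subsequence, is again hyperconvex — this uses properness of the limit maps). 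Since it contains the Fuchsian locus via the Veronese embedding — the Veronese curve $V \colon \mathbb{RP}^1 \to \mathbb{RP}^{d-1}$ composed with the boundary identification is hyperconvex and $(\iota_d\circ j)$-equivariant — it is a union of connected components containing the Fuchsian representations, hence contains exactly $\Hit_d$. Therefore $\rho$ is Hitchin.

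The main obstacle I expect is \textbf{Step 1}, upgrading the mere hyperconvex curve to a genuine Frenet flag curve with all the osculating-flag limits existing and continuous. Hyperconvexity is a condition only on the $1$-dimensional curve, and extracting well-defined higher osculating subspaces requires a careful limiting argument: one must show the spans $\zeta(x_1)\oplus\cdots\oplus\zeta(x_k)$ have a limit as the $x_i$ collide, that this limit is the same regardless of how they collide, and that the resulting map to $\Flag(\R^d)$ is continuous; transversality of the flag curve along distinct points then also needs checking. Once this is in place, the Anosov property and the open–closed/connectedness argument are comparatively routine given the theory quoted earlier in the chapter. A secondary technical point is verifying closedness in Step 3 — ensuring that a limit of hyperconvex curves does not degenerate — which typically uses a compactness/properness input for Anosov boundary maps.
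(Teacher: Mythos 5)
The paper does not actually prove this theorem; it states it and cites Guichard's 2008 paper, so there is no ``paper's own proof'' to compare against. Judging your proposal on its merits: the forward direction is indeed just Labourie's theorem as you say, and you correctly identify the heart of the converse as Step~1, the promotion of the purely projective hyperconvex curve $\zeta$ to a full Frenet flag curve. This is genuinely the hard part of Guichard's argument. The issue is that for a \emph{general} hyperconvex curve the osculating-span limits $\zeta(x_1)\oplus\cdots\oplus\zeta(x_k)$ need not converge as the $x_i$ collide; what Guichard proves is that the $\pi_1(\Sigma)$-equivariance and the cocompactness of the boundary action force these limits to exist and to be independent of the approximating configuration. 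A purely ``hyperconvexity implies Frenet'' claim, without invoking the group dynamics, would be false, and your sketch does not say where the equivariance enters in Step~1.

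The more serious gap is in Step~3. You argue that the set $H$ of conjugacy classes of representations admitting a $\rho$-equivariant continuous hyperconvex curve is open and closed, hence a union of connected components of the character variety, and that it contains the Fuchsian locus, ``hence contains exactly $\Hit_d$.'' But what your argument establishes is only the inclusion $\Hit_d\subseteq H$; the conclusion you need for the theorem is the opposite inclusion $H\subseteq\Hit_d$, i.e.\ that the given $\rho$ lies in a Hitchin component. From ``$H$ is a union of components and $\Hit_d$ is one of them'' it does not follow that $H=\Hit_d$: a priori $H$ could contain further components far from the Fuchsian locus. To close this gap one must show that $H$ is in fact connected, or equivalently that any representation with such a curve can be joined to a Fuchsian one through representations with such curves. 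This connectivity is the substance of Guichard's argument and does not come for free from open-closedness. (There is also a minor imprecision to flag: for $n$ even, conjugating by a matrix of negative determinant carries $\Hit_n$ to $\overline{\Hit}_n$ and preserves hyperconvexity, so $H$ contains both Hitchin components; the theorem should be read with ``Hitchin'' meaning ``in one of the Hitchin components.'' Your Veronese remark only lands you in one of them.) Finally, your closedness claim in Step~3 also needs care: a sequence of hyperconvex equivariant curves can degenerate in the limit, and preventing this requires a compactness argument that uses the Anosov estimates from Step~2, not merely the topological definition of hyperconvexity.
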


Let us now show that the existence of a $\rho$-equivariant Frenet map as in Labourie's theorem implies that $\rho$ is discrete and faithful. Let us observe before that if $x\neq y$ are in $\partial \pi_1(\Sigma)$, then $\xi(x)$ is transverse to $\xi(y)$ and in particular $\xi$ is injective.
\begin{itemize}

\item{Let us show that $\rho$ is faithful.}
Let $\gamma\in \pi_1(\Sigma)\setminus \{1\}$.
Pick any point $x\in \partial \pi_1(\Sigma)\setminus \{\gamma^+, \gamma^-\}$. 
We have $\gamma\cdot x\neq x$ thus $\xi(\gamma\cdot x) \neq \xi (x)$. But $\xi(\gamma\cdot x) = \rho(\gamma)\cdot \xi(x)$ hence $\rho(\gamma)$ is not trivial.

\item{Let us show that $\rho(\gamma)$ is diagonalizable for any $\gamma\in \pi_1(\Sigma)$.}
Let $V_i$ be the vector space $\xi^i(\gamma^+)\cap \xi^{n-i+1}(\gamma^-)$ for all $1\leqslant i \leqslant n$. Observe that for all $i$, $\dim(V_i) = 1$ and that $\mathbb R^n = \bigoplus_i V_i$.
Indeed since $\xi(\gamma^+)$ is transverse to $\xi(\gamma^-)$, there exists a basis $f_1, \ldots, f_d$ of $\mathbb R^n$ such that $\xi^i(\gamma^+) = \langle f_1, \ldots, f_i\rangle $ and $\xi^{n-i+1}(\gamma^-) = \langle f_n, \ldots, f_i\rangle $ thus $V_i = \langle f_i\rangle$.
Moreover $\rho(\gamma)$ preserves $V_i$. Indeed $\rho(\gamma)\cdot \xi^i(\gamma^+) = \xi_i(\gamma\cdot \gamma^+) = \xi_i (\gamma^+)$ and $\rho(\gamma)\cdot \xi^{n-i+1}(\gamma\cdot \gamma^-) = \xi^{n-i+1}(\gamma\cdot \gamma^-) = \xi^{n-i+1}(\gamma^-)$.

\item{Let us show that $\rho$ is discrete.}
Let $(\gamma_k)_{k \in \N}$ be such that $\rho(\gamma_k)\to \pm I_n$.
For all $x\in \partial \pi_1(\Sigma)$ we have $\gamma_k\cdot x\to x$.
Indeed suppose $\gamma_{k_\ell} \to y$.
We have  and $\xi(\gamma_{k_\ell}\cdot x)\to \xi(y)$ and $\xi(\gamma_{k_\ell} \cdot x) = \rho(\gamma_{k_\ell})\cdot \xi(x) \to \xi(x)$.
Therefore $\xi(x) = \xi(y)$ and $x=y$.
The sequence $\gamma_k\cdot x$ has only $x$ as an accumulation point thus it converges to $x$.
Let us now pick three distinct points $x_1, x_2, x_3\in \partial \pi_1(\Sigma)$.
We have $\gamma_k\cdot x_i\to x_i$ for all $1\leqslant i\leqslant 3$.
The action of $\PSL(2,\R)$ on $\mathbb{RP}^1$ is fully determined by its action on three points hence $\gamma_k\to 1$.
By discreteness we have $\gamma_k = 1$ for $k$ large enough.
\end{itemize}

\begin{corollary}
The Hitchin component $\Hit_n$ is a higher Teichm\"uller space.
\end{corollary}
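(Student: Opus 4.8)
The plan is simply to combine the two results just established in this section. By construction, $\Hit_n$ is a single connected component of $\chi(\Sigma,\PSL(n,\R))$ --- the component containing the Fuchsian representations --- and a single connected component is, in particular, a union of connected components. So, unwinding the definition of a higher Teichm\"uller space, all that remains is to check that every representation $\rho\colon\pi_1(\Sigma)\to\PSL(n,\R)$ whose conjugacy class lies in $\Hit_n$ is discrete and faithful.

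First I would apply Labourie's theorem to produce, for each such $\rho$, a continuous $\rho$-equivariant Frenet curve $\xi=(\xi^1,\dots,\xi^n)\colon\partial\pi_1(\Sigma)\to\Flag(\R^n)$, and record that $\xi$ is injective because $\xi(x)$ and $\xi(y)$ are transverse whenever $x\neq y$. Then I would invoke, essentially verbatim, the three-step argument worked out immediately above the corollary: equivariance and injectivity of $\xi$ give $\rho(\gamma)\cdot\xi(x)=\xi(\gamma\cdot x)\neq\xi(x)$ whenever $\gamma\neq 1$ and $x\notin\{\gamma^+,\gamma^-\}$, so $\rho$ is faithful; the lines $V_i=\xi^i(\gamma^+)\cap\xi^{n-i+1}(\gamma^-)$ split $\R^n$ and are preserved by $\rho(\gamma)$, so every $\rho(\gamma)$ is diagonalizable; and any sequence $\gamma_k$ with $\rho(\gamma_k)\to\pm I_n$ satisfies $\gamma_k\cdot x\to x$ for every $x\in\partial\pi_1(\Sigma)$ (by equivariance, injectivity of $\xi$, and compactness of $\partial\pi_1(\Sigma)$), whence evaluating at three distinct boundary points forces $\gamma_k\to 1$ and hence $\gamma_k=1$ eventually, meaning $\rho$ is discrete. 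Concluding: $\Hit_n$ is a connected component consisting entirely of discrete and faithful representations, i.e.\ a higher Teichm\"uller space.

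There is essentially no obstacle left at this level --- the corollary is a formal consequence of Labourie's theorem together with the bulleted discussion above. The one point worth stating carefully is hidden in the discreteness step: the word ``discreteness'' used to pass from $\gamma_k\to 1$ to $\gamma_k=1$ refers to the discreteness of the fixed Fuchsian group $j(\pi_1(\Sigma))\subset\PSL(2,\R)$ used to identify $\partial\pi_1(\Sigma)$ with $\mathbb{RP}^1$, not to the discreteness of $\rho$ that is being established, so there is no circularity. The genuinely substantial inputs are upstream and are only quoted here: the existence of the equivariant Frenet curve (Labourie) and, implicitly, Hitchin's determination of the connected components of $\chi(\Sigma,\PSL(n,\R))$ that legitimises speaking of ``the'' Hitchin component in the first place.
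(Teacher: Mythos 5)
Your proposal is correct and matches the paper's own (implicit) proof of this corollary: apply Labourie's theorem to obtain the equivariant Frenet curve, then run the three-bullet argument immediately above to conclude that every Hitchin representation is faithful and discrete, and note that $\Hit_n$ is by definition a connected component. Your remark disambiguating the word ``discreteness'' in the final step — that it refers to the auxiliary Fuchsian $j(\pi_1(\Sigma))\subset\PSL(2,\R)$ used to identify $\partial\pi_1(\Sigma)$ with $\mathbb{RP}^1$, not to $\rho$ — is a genuine point the paper leaves unstated and is worth having written down.
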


We can moreover show that the eigenvalues of an element in the image of a Hitchin representation are all distinct.

\begin{remark} Each Hitchin representation $\rho$ is purely hyperbolic.
Namely for every $\gamma\in \pi_1(\Sigma)\setminus \{1\}$, the eigenvalues $\lambda_i$ associated with $V_i$ satisfy $|\lambda_i|>|\lambda_{i+1}|$.
\end{remark}

We can define concretely $\xi$ as follows.
For $\gamma\in \pi_1(\Sigma)\setminus \{1\}$ then we can define $\xi(\gamma^+)$ to be the flag associated with the ordered eigenvalues of $\rho(\gamma)$.
This defines $\xi$ on a dense set of $\partial \pi_1(\Sigma)$ and we can then extend $\xi$ uniquely by continuity.

Let us now give examples of maps $\xi^1$ in particular settings.
\begin{example}
Suppose that $\rho\colon \pi_1(\Sigma) \to \PSL(n,\R)$ is in the Fuchsian locus: $\rho = \iota_n\circ j$, with $j\colon \pi_1(\Sigma)\to \PSL (2,\R)$ such that $[j]\in \mathcal T(\Sigma)$.
The map $j$ allows us to identify $\partial \pi_1(\Sigma)$ with $\mathbb P(\R^2) = \mathbb{RP}^1$
Then the map $\xi^1(p) = V(p)$ is a hyperconvex curve and is $\rho$-equivariant:\[\xi^1(\gamma\cdot p) = V(j(\gamma)\cdot p) = \iota_n(j(\gamma))\cdot V(p)= \rho(\gamma)\cdot \xi^1(p).\]
\end{example}

\begin{example}
In the case $n=3$, the theorem of Choi-Goldman allows us to identify $\partial \pi_1(\Sigma)$ with $\partial \mathcal O$ and thus gives a map $\xi^1\colon\partial \pi_1(\Sigma)\to \mathbb P(\R^3)$ that is a $\rho$-equivariant hyperconvex curve.
\end{example}

\section{Positivity}

\subsection{Positivity of flags}

Let us define positivity for triples of flag.
\begin{definition}
A triple of flags of the form $(E^0,T,F^0)$ with $T$ transverse to $E^0$ is said to be \emph{positive} if $T=u_T\cdot E^0$, with $u_T=\begin{pmatrix}
1 & 0 & \cdots & 0 \\ * & \ddots & \ddots & \vdots \\ \vdots & \ddots & \ddots & 0 \\ * & \cdots & * & 1
\end{pmatrix}$ that has all its minors positive, unless they are zero because of its shape.

A triple $(E, T, F)$ is positive if there exists $A\in \mathrm{SL}(n,\R)$ such that $A\cdot (E, T, F) = (E^0, A\cdot T, F^0)$ is positive.
\end{definition}

Hitchin representations are characterized by the existence of an equivariant limit map that sends positive triples to positive triples.
\begin{theorem}[Labourie, Guichard, Fock-Goncharov]
A representation $\rho\colon\pi_1(\Sigma)\to \PSL(n,\R)$ is Hitchin if and only if there exists a $\rho$-equivariant limit map $\xi\colon\partial\pi_1(\Sigma)\to\Flag(\R^n)$ that sends positive triples to positive triples.
\end{theorem}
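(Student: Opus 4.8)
The plan is to prove the two implications separately: the ``only if'' direction via Labourie's theorem, and the ``if'' direction via Guichard's theorem, the bridge in both cases being the hyperconvexity of the curve $\xi^1$ (equivalently, the Frenet/direct-sum conditions) together with a sign bookkeeping that is pinned down on the Fuchsian locus.

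For the ``only if'' direction, suppose $\rho$ is Hitchin and let $\xi = (\xi^1,\ldots,\xi^n)$ be the $\rho$-equivariant Frenet curve supplied by Labourie's theorem. For pairwise distinct $x,y,z \in \partial\pi_1(\Sigma)$ the flags $\xi(x),\xi(y),\xi(z)$ are pairwise transverse, so after acting by some $A \in \SL(n,\R)$ we may assume $\xi(x)=E^0$, $\xi(z)=F^0$ and $\xi(y) = u\cdot E^0$ with $u$ lower unitriangular. The minors of $u$ not forced to vanish are precisely the determinants encoding the direct-sum decompositions of the Frenet condition, hence nonzero; the point is to show they are positive. I would first check the Fuchsian base case $\rho = \iota_n\circ j$, where $\xi^1$ is the Veronese curve and $\xi$ its osculating flag: there the relevant minors are, up to positive factors, Vandermonde-type determinants in the coordinates of positively ordered points of $\mathbb{RP}^1$, hence positive. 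Then a connectedness argument finishes the job: these minors vary continuously with $\rho \in \Hit_n$ (the Frenet curve depends continuously on $\rho$), never vanish, and $\Hit_n$ is connected, so their signs are constant along $\Hit_n$ and therefore equal to the positive Fuchsian value. One also records that, for fixed $\rho$, the sign is independent of the triple $(x,y,z)$ by connectedness of the space of positively ordered triples in $\partial\pi_1(\Sigma)\cong S^1$ together with continuity of $\xi$.

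For the ``if'' direction, suppose $\xi$ is $\rho$-equivariant and sends positive triples to positive triples; I would show $\xi^1$ is hyperconvex and then invoke Guichard's theorem. Given $x_1,\ldots,x_d$ pairwise distinct and positively ordered on $\partial\pi_1(\Sigma)\cong S^1$, one wants $\det\!\big(\xi^1(x_1),\ldots,\xi^1(x_d)\big)\neq 0$. Normalizing with $\xi(x_1)=E^0$, $\xi(x_d)=F^0$ and feeding the positivity of all intermediate triples $(\xi(x_1),\xi(x_i),\xi(x_j))$ into this determinant, one expresses it through the positive minors of the totally positive unitriangular matrices relating successive flags, and concludes it is positive; the combinatorial heart of this is the statement that a curve whose successive points are related by totally positive unitriangular matrices is automatically hyperconvex. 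I expect this to be the main obstacle: upgrading positivity of \emph{triples} of flags to the hyperconvexity condition on all $d$-tuples of \emph{points}, i.e.\ controlling the sign of a $d\times d$ determinant from finitely many three-point normal forms in a mutually compatible way — alternatively one strengthens the hypothesis to positivity of all finite tuples in the sense of Fock–Goncharov, which is the cleaner convention and makes this step essentially definitional.
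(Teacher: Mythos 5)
The paper states this theorem without proof, crediting Labourie, Guichard, and Fock--Goncharov, so there is no in-text argument to compare against; your strategy --- Labourie's Frenet curve plus a sign calibration on the Fuchsian locus for one direction, hyperconvexity plus Guichard for the other --- is the standard route through the literature. The ``only if'' direction is essentially correct, provided you make explicit that the Frenet limit map is unique and depends continuously on $\rho\in\Hit_n$ (an Anosov-theoretic fact that deserves a citation) and that the positive triples form a connected component of the pairwise-transverse triples of flags (a theorem of Lusztig, cf.\ \cite{Lusztig94}, recorded in the chapter on positivity of flags). One small imprecision: not every non-forced minor of $u$ is a direct-sum determinant of the Frenet condition --- already for $n=3$ the entry $u_{12}$ is not of the form $\det u(a,b,c)$ with $a+b+c=3$. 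Once pairwise transversality is secured, Lusztig's component statement carries the sign of all remaining minors along the connected family $\Hit_n$, so this does not sink the argument, but the phrase ``precisely the determinants encoding the Frenet direct sums'' should be dropped.

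The ``if'' direction has a genuine gap, which you yourself flag: deducing hyperconvexity of $\xi^1$ from positivity of image triples. Your sketch of feeding the positivity of all intermediate triples into the $d\times d$ determinant through ``positive minors of the totally positive unitriangular matrices relating successive flags'' does not go through as written. With $\xi(x_1)=E^0$, $\xi(x_d)=F^0$ and $\xi(x_i)=u_iE^0$, each $u_i$ does lie in $U_{>0}$; but the positivity of the triple $(\xi(x_i),\xi(x_{i+1}),\xi(x_d))$, once normalized so as to fix $\xi(x_i)$ and $\xi(x_d)$, only places a torus conjugate $h_iu_i^{-1}u_{i+1}h_i^{-1}$ of the transition matrix in $U_{>0}$, with $h_i$ allowed to vary with $i$. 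The compatible nesting of totally positive factors that a Lindstr\"om--Gessel--Viennot or Cauchy--Binet expansion of your determinant would require is exactly what has to be proved, and it does not follow from the stated hypothesis by inspection. The correct bridge is the lemma that a continuous map $S^1\to\Flag(\R^n)$ sending positive triples to positive triples automatically sends positive $k$-tuples to positive $k$-tuples --- essentially due to Fock--Goncharov \cite{FockGoncharov06}; compare also the nested-diamond formalism of \cite{GLW21} surveyed in Part~\ref{chap4}. You should cite or prove this lemma; absent it the ``if'' direction is incomplete, and your proposed fallback of assuming $k$-tuple positivity in the hypothesis changes the statement being proved.
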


\subsection{More general setting}
One can define Hitchin representation in a more general context. Namely when $G$ is an adjoint real split semi-simple Lie group.

\begin{example}
These conditions are met for $G = \SL(n,\R)$, $\Sp(2n,\R)$, $\SO(n, n+1)$.
\end{example}

For this class of groups, there exists an embedding $\iota\colon\SL(2,\R)\to G$ unique up to conjugation.

\begin{example}
For the groups $G = \SL(n,\R)$, $\Sp(2n,\R)$, $\SO(n,n+1)$, the map $\iota$ is just the embedding $\iota_n$.
\end{example}

We can define use this map to define Fuchsian representations as representation of the form $\iota \circ j$ where $j\colon \pi_1(\Sigma)\to \PSL (2,\R)$ has its conjugacy class in $\mathcal T(\Sigma)$. Then we can also define Hitchin representations as the deformation of those. 
The theorems we saw still hold.
\begin{theorem}
The Hitchin components are homeomorphic to $\R^{(2g-2)\dim(G)}
$. Hitchin representations are discrete and faithful.
\end{theorem}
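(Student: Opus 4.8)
The statement combines two results of independent origin — Hitchin's parametrisation of the component and Labourie's dynamical rigidity — and the plan for each half is to reduce it to the $\PSL(n,\R)$-counterpart sketched above, invoking the general machinery at the point where the special case used only linear algebra.

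\emph{Topology of the component.} I would fix an auxiliary Riemann surface structure $X$ on $\Sigma$ and use the non-abelian Hodge correspondence to replace $\chi(\Sigma,G)$ by the moduli space $\mathcal{M}$ of polystable $G$-Higgs bundles on $X$. Using the principal embedding $\iota\colon\SL(2,\R)\to G$ one produces the Hitchin section, a copy of $\mathcal{B}\coloneqq\bigoplus_{i=1}^{\rk G}H^0(X,K^{d_i})$ sitting inside $\mathcal{M}$ as a section of the Hitchin map $h\colon\mathcal{M}\to\mathcal{B}$, where $K$ is the canonical bundle and $d_1,\dots,d_{\rk G}$ are the degrees of a set of homogeneous generators of the $G$-invariant polynomials on $\mathfrak{g}$, all $\geq 2$. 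First I would compute dimensions: Riemann--Roch gives $\dim_{\C}H^0(X,K^{d_i})=(2d_i-1)(g-1)$, and since $\sum_i(2d_i-1)=\dim G$ the section is a real manifold of dimension $(2g-2)\dim G$. Then I would observe that the image $s(\mathcal{B})$ is closed in $\mathcal{M}$ — it is a continuous section of $h$, so a convergent sequence in the image projects to a convergent sequence in $\mathcal{B}$ and hence converges within the image — is connected, contains the Fuchsian point $\iota\circ j$ (for $j$ uniformising $X$), and has the same dimension $(2g-2)\dim G$ as $\mathcal{M}$, hence is also open; being clopen, connected, and meeting the Fuchsian locus it is exactly the Hitchin component, and transporting this back through non-abelian Hodge yields the asserted homeomorphism. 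The inputs I would simply cite from \cite{Hitchin92} are the construction of the section as a closed embedding and the correspondence itself.

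\emph{Discreteness and faithfulness.} Here the plan is to invoke Labourie's theorem that every Hitchin representation $\rho\colon\pi_1(\Sigma)\to G$ is Anosov with respect to a Borel subgroup $B\leq G$, or equivalently admits a continuous, $\rho$-equivariant, transverse, hyperconvex limit curve $\xi\colon\partial\pi_1(\Sigma)\to G/B$ — the exact analogue of the Frenet curve in the displayed theorem above, with $\Flag(\R^n)$ replaced by $G/B$. Granting this, the argument of the itemised proof above transcribes word for word: transversality of $\xi(x)$ and $\xi(y)$ for $x\ne y$ makes $\xi$ injective; for $\gamma\ne 1$ one picks $x\notin\{\gamma^+,\gamma^-\}$, so $\rho(\gamma)\cdot\xi(x)=\xi(\gamma\cdot x)\ne\xi(x)$ and $\rho$ is faithful; and if $\rho(\gamma_k)\to 1$ then $\gamma_k\cdot x\to x$ for all $x\in\partial\pi_1(\Sigma)$, whence testing on three distinct points forces $\gamma_k\to 1$ in the $\PSL(2,\R)$-action on $\partial\pi_1(\Sigma)\cong\mathbb{RP}^1$, so $\gamma_k=1$ eventually by discreteness of $\pi_1(\Sigma)$. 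The structure of Labourie's input, which I would outline rather than reprove, is the familiar trichotomy: Fuchsian representations carry such a curve (the principal-$\mathfrak{sl}_2$/Veronese curve of the $\PSL(n,\R)$ example above); admitting one is an open condition on the Hitchin component; and it is closed — the technical heart, handled through the dynamics of the associated cross-ratio.

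The main obstacle is precisely this pair of black boxes: Hitchin's construction of the Hitchin section together with the non-abelian Hodge correspondence, and Labourie's closedness argument for the limit curve. Everything else in the proof is a dimension count via Riemann--Roch and a verbatim transcription of the linear-algebra arguments already carried out for $\PSL(n,\R)$.
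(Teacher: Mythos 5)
Your proposal is correct and follows the route the paper has in mind: the paper states this theorem without proof, merely noting that ``the theorems we saw still hold'' when the embedding $\iota_n$ is replaced by the principal $\iota\colon\SL(2,\R)\to G$. Your two halves --- Hitchin's section of the Hitchin fibration via non-abelian Hodge together with the Riemann--Roch count $\sum_i\dim_{\R}H^0(X,K^{d_i})=2(g-1)\sum_i(2d_i-1)=(2g-2)\dim G$ for the topology, and Labourie's $\rho$-equivariant hyperconvex limit curve $\xi\colon\partial\pi_1(\Sigma)\to G/B$ with a verbatim transcription of the transversality argument for discreteness and faithfulness --- are precisely that generalisation.
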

We can also generalize the notion of positivity to $G/B$ where $B$ is a Borel subgroup of $G$.
\begin{theorem}[Labourie, Guichard, Fock-Goncharov]
A representation $\rho\colon \pi_1(\Sigma)\to G$ is Hitchin if and only if there exists a $\rho$-equivariant $\xi\colon\partial\pi_1(\Sigma)\to G/B$ that sends positive triples to positive triples.
\end{theorem}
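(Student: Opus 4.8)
The plan is to identify the set $\mathcal{P}\subseteq\chi(\Sigma,G)$ of conjugacy classes of representations admitting a continuous, equivariant limit map $\xi\colon\partial\pi_1(\Sigma)\to G/B$ sending positive triples to positive triples, and to prove that $\mathcal{P}=\Hit$. I would do this by showing that $\mathcal{P}$ contains the Fuchsian locus and is open, closed and connected in $\chi(\Sigma,G)$; together these force $\mathcal{P}$ to coincide with the connected component of any Fuchsian representation. The ingredients are the principal embedding $\iota\colon\PSL(2,\R)\to G$, Lusztig's description of the totally positive unipotent semigroup $U^{>0}$ and of positivity on $G/B$, the stability of $B$-Anosov representations, and the Fock--Goncharov parametrization of positive representations.

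First I would treat the Fuchsian case. Given $\rho_0=\iota\circ j$ with $[j]\in\mathcal{T}(\Sigma)$, the principal embedding carries the Veronese-type boundary curve $\mathbb{RP}^1\cong\partial\H^2\to G/B$, $\ell\mapsto\iota(\stab(\ell))/B$, which is $\iota$-equivariant; precomposing with the Svarc--Milnor identification $\partial\pi_1(\Sigma)\cong\partial\H^2$ coming from $j$ yields a $\rho_0$-equivariant continuous map $\xi_0$. To check that $\xi_0$ sends positive triples to positive triples it suffices, by equivariance and density of triples of attracting/repelling fixed points in $\partial\pi_1(\Sigma)$, to verify positivity on one triple, and this is a direct computation with the principal $\sl_2$ using the Chevalley-generator form of $U^{>0}$. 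Since $\mathcal{T}(\Sigma)$ is connected, every Fuchsian representation lies in $\mathcal{P}$.

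Next I would establish openness and closedness. For openness: positivity of the single triple $(\xi(x),\xi(y),\xi(z))$ is an open condition, and positivity of triples forces pairwise transversality of the flags $\xi(x)$, from which the contracting dynamics of $\rho(\gamma)$ along $\partial\pi_1(\Sigma)$ can be read off; hence $\rho$ is $B$-Anosov. Anosov representations form an open set and their limit maps vary continuously, so a nearby representation still has a limit map, still positive by continuity. For closedness: given $\rho_k\to\rho$ in $\mathcal{P}$ with limit maps $\xi_k$, the uniform Anosov property on a compact neighbourhood lets me extract a limiting equivariant continuous map $\xi$, and positive triples pass to \emph{non-negative} triples in the limit; the delicate point is to upgrade non-negativity to strict positivity, which I would do by noting that the eigenvalue gaps of the $\rho(\gamma)$ persist (Anosov closedness), forcing the minors defining positivity to be nonzero. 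Finally, connectedness of $\mathcal{P}$: fixing an ideal triangulation of $\Sigma$, the Fock--Goncharov coordinates present positive equivariant limit maps — equivalently positive representations — as a cell parametrized by positive real numbers attached to edges and triangles, so $\mathcal{P}$ is connected; being also open, closed and meeting $\Hit$, it equals $\Hit$. (In the reverse direction one may equally deform any positive $\rho$ to a Fuchsian one within $\mathcal{P}$ using these coordinates.)

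The main obstacle is the closedness step: ruling out that strict positivity degenerates to mere non-negativity as one approaches the boundary of the positive locus, where the limit map itself could a priori fail to stay continuous or injective. For $G=\SL(n,\R)$ this can be handled by hands-on linear algebra (hyperconvexity arguments of Labourie and Guichard), but for general adjoint split $G$ it genuinely requires Lusztig's total positivity on $G/B$, the semigroup structure of $U^{>0}$, and the Fock--Goncharov coordinate description, which is also what supplies the connectedness needed for the inclusion $\mathcal{P}\subseteq\Hit$.
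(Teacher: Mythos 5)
The paper does not prove this theorem; it records it at the end of the Hitchin chapter as a known result of Labourie, Guichard and Fock--Goncharov, pointing to Wienhard's survey and the references therein for details. So there is no paper proof to compare against, and your proposal has to stand on its own. Its skeleton (positive locus $\mathcal{P}$ contains the Fuchsian locus; $\mathcal{P}$ is open, closed and connected; hence $\mathcal{P}=\Hit$) is the correct shape and does correspond to a synthesis one can extract from those works, and your treatment of the Fuchsian case and of connectedness via the Fock--Goncharov cell is sound. However, two of the four pillars are not adequately supported.

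For openness, the passage from ``positivity of triples forces pairwise transversality'' to ``hence $\rho$ is $B$-Anosov'' is not a formality: a continuous equivariant transverse boundary map need not carry any contraction, and what actually turns positivity into Anosov dynamics is the nested-diamond and shrinking-metric argument of Guichard--Labourie--Wienhard (Theorem B of GLW21, discussed in the later chapter of these notes on positivity in higher Teichm\"uller theory), not a direct reading of the dynamics off the transversality pattern. For closedness, the argument is circular. You invoke ``Anosov closedness'' to keep the eigenvalue gaps of $\rho(\gamma)$ bounded away from zero along a degenerating sequence, but Anosov representations form an \emph{open}, not closed, subset of the character variety; eigenvalue gaps can perfectly well collapse in a limit of Anosov representations, and ruling out exactly that collapse inside $\mathcal{P}$ is the content you need to supply, not something you may assume. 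Fock--Goncharov sidestep this by exhibiting an explicit cell parametrization of (framed) positive representations rather than taking a limit of limit maps, and in the more general $\Theta$-positive setting the closedness of the positive locus is a separate, hard theorem (established for $\SO(p,q)$ by Beyrer--Pozzetti and conjectural in general, as these notes record in the maximal representations chapter). Without a genuine argument at this step, the open/closed decomposition does not close.
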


For more information on these generalizations we refer to \cite{Wie18} and references therein.

\newpage

\thispagestyle{empty}

\chapter[Exercises]{Exercises \\ {\Large\textnormal{\textit{by Marta Magnani, Arnaud Maret, Enrico Trebeschi}}}}
\addtocontents{toc}{\quad\quad\quad \textit{Marta Magnani, Arnaud Maret, Enrico Trebeschi}\par}

We want to explore the following facts throughout this list of exercises.

\begin{enumerate}
    \item The holonomies of hyperbolic structures are discrete and faithful. Any discrete and faithful representation can be realized as a hyperbolic structure.
    \item The boundary of $\pi_1(\Sigma)$ is homeomorphic to the boundary of the hyperbolic plane.
    \item The limit map for Teichm\"uller space.
    \item The subspace of discrete and faithful representations is closed in the space of representations.
    \end{enumerate}
    
\bigskip
    
\begin{exercise}

	Let $(M,h)$ be a marked hyperbolic structure over a closed surface $\Sigma$ of genus $g\geq 2$. We recall that if $p\colon(\tilde{M},\tilde{x}_{0})\to(M,x_0)$ is a universal cover, $\pi_{1}(M,x_{0})$ acts discretely and faithfully on $(\widetilde{M},\tilde{x}_{0})$ by deck transformations. There exists an isometry $f_h\colon\widetilde M\to\mathbb{H}^2$, which induces an isomorphism $f_h^{*}\colon\mathrm{Isom}(\widetilde M)\to\mathrm{Isom}(\mathbb{H}^2)$.
	
	The holonomy of a point $[(M,h)]\in\mathcal{T}(\Sigma)$ is the data of the representation $$[\rho_h]\in\chi(\Sigma,\PSL(2,\R)),$$
	where $[\rho_h]$ is the class of conjugacy of the following homomorphism:
	$$\begin{tikzcd}[row sep=small]
		\pi_{1}(\Sigma,s_0)\arrow[r,"h_{*}"]&\pi_{1}(M,h(s_0))\arrow[r,"\textnormal{Deck}"]&\mathrm{Isom}^{+}(\widetilde M,\tilde{x}_{0})\arrow[r,"f_h^{*}"]&\mathrm{Isom}^{+}(\mathbb{H}^{2})=\mathbb{P}\mathrm{SL}(2,\mathbb{R}).
	\end{tikzcd}$$
	
	To prove that $\rho_h$ is well defined one have to check that the above construction is well defined up to conjugation.
	 
	\begin{enumerate}[label=(\alph*)]
		\item It is a fact that the isomorphism $$\mathrm{Deck}\colon\pi_{1}(M,x_0)\to\mathrm{Isom}(\widetilde M,\tilde{x}_{0})<\mathrm{Isom}(\widetilde M)$$ only depends on the basepoint $x_0$ and that changing basepoint change the conjugacy class of $\mathrm{Isom}(\widetilde M,\tilde{x}_{0})$ in $\mathrm{Isom}(\widetilde M)$. Deduce that the morphism $\pi_{1}(\Sigma)\to\mathrm{Isom}(\widetilde M)$ is well defined, up to conjugation.
		
		\item Describe explicitly the behaviour of $f_h^{*}$ to prove that the choice of the isometry $\widetilde M\to\mathbb{H}^2$ only changes the conjugacy class of the representation.
		
		\item Let $(M,h)\sim(M',h')$ be two hyperbolic structures, \emph{i.e.} $(h')^{-1}h\colon M'\to M$ is homotopic to an isometry. Use the homotopy lifting property and the previous point to deduce that the two deck transformation are conjugate by the lifted isometry.
	\end{enumerate}

Let $\rho\colon\pi_{1}(\Sigma)\to\PSL(2,\mathbb{R})$ be a discrete and faithful representation. We want to show that this is a covering action on $\mathbb{H}^{2}$, so that $\mathbb{H}^{2}/\rho(\pi_1(\Sigma))$ is a surface with fundamental group $\rho(\pi_1(\Sigma))\cong\pi_1(\Sigma)$, hence diffeomorphic to $\Sigma$ because of the classification theorem for closed surfaces, and it inherits a hyperbolic structure by the projection $p\colon\mathbb{H}^{2}\to\mathbb{H}^{2}/\rho(\pi_1(\Sigma))$.

It is a fact that an action is a covering one if and only if it is free and properly discontinuous.
\begin{enumerate}[label=(\alph*)]
	\item Prove that a discrete subgroup $G$ of $\mathrm{Isom}(X)$ acts properly discountinuously if $(X,d)$ is a complete metric space.
	
	For this point use Arzel\`a-Ascoli Theorem, that is
	\begin{theorem}[Arzel\`a-Ascoli]
 		Let $K$ be a compact Hausdorff space and $X$ a metric space. Then $F\subset C(K,X)$ is compact in the compact-open topology if and only if it is equicontinuous, pointwise relatively compact and closed. 
	\end{theorem}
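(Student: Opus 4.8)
The plan is to reduce everything to the metric space $(C(K,X),d_\infty)$, where $d_\infty(f,g)=\sup_{k\in K}d_X(f(k),g(k))$, and then treat the two implications separately. First I would record the standard fact that, because $K$ is compact, the compact--open topology on $C(K,X)$ coincides with the topology of uniform convergence, which for $X$ metric is exactly the one induced by $d_\infty$ (finite, since $k\mapsto d_X(f(k),g(k))$ is continuous on the compact space $K$). Thus $C(K,X)$ is metrizable, so for $F$ the following are equivalent: $F$ compact; $F$ sequentially compact; $F$ complete and totally bounded. I would use whichever formulation is handiest in each direction.

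For the easy implication ($\Rightarrow$), assume $F$ is compact. Then $F$ is closed, since compact subsets of a metric space are closed. For each $k$ the evaluation $\mathrm{ev}_k\colon C(K,X)\to X$, $f\mapsto f(k)$, is $1$-Lipschitz for $d_\infty$, hence continuous, so $\mathrm{ev}_k(F)$ is compact in $X$; in particular $F$ is pointwise relatively compact. For equicontinuity at $k_0\in K$ and $\varepsilon>0$, use total boundedness to cover $F$ by finitely many balls $B_{d_\infty}(g_i,\varepsilon/3)$, $i=1,\dots,N$; choose open $V_i\ni k_0$ with $d_X(g_i(k),g_i(k_0))<\varepsilon/3$ on $V_i$ and put $V=\bigcap_{i=1}^N V_i$. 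A three-term triangle inequality then gives $d_X(f(k),f(k_0))<\varepsilon$ for all $f\in F$, $k\in V$, so $F$ is equicontinuous.

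For the substantive implication ($\Leftarrow$), assume $F$ is equicontinuous, pointwise relatively compact and closed; the crux is total boundedness of $F$ in $d_\infty$. Given $\varepsilon>0$, equicontinuity provides for each $k\in K$ an open $U_k\ni k$ with $d_X(f(y),f(k))<\varepsilon/5$ for all $y\in U_k$, $f\in F$; compactness of $K$ yields a finite subcover $U_{k_1},\dots,U_{k_m}$. The product $\prod_{i=1}^m\overline{F(k_i)}$ is compact in $X^m$, hence totally bounded, so I can partition $F$ into finitely many classes according to which point of a finite $(\varepsilon/5)$-net of $\prod_{i=1}^m\overline{F(k_i)}$ the tuple $(f(k_1),\dots,f(k_m))$ lies near. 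Since the $U_{k_i}$ cover $K$, a triangle-inequality estimate shows each class has $d_\infty$-diameter $<\varepsilon$, and picking one function from each nonempty class gives a finite $\varepsilon$-net. Completeness of $F$ then follows: a $d_\infty$-Cauchy sequence in $F$ is pointwise Cauchy in the compact (hence complete) sets $\overline{F(k)}$, so it converges pointwise, the convergence is uniform, the limit is continuous, and it lies in $F$ because $F$ is closed. Total boundedness plus completeness gives compactness of $F$.

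The main obstacle I expect is purely in the $(\Leftarrow)$ direction's bookkeeping: since $K$ is assumed only compact Hausdorff and not metrizable, the familiar "diagonal argument over a countable dense subset of $K$" is unavailable, and one must instead build the finite $\varepsilon$-net directly from equicontinuity and the compactness of $K$, as above. (As an alternative one can run the product-topology argument: the closure $\overline F$ in $X^K$ sits inside the Tychonoff-compact set $\prod_k\overline{F(k)}$, equicontinuity forces its members to be continuous and makes the pointwise and uniform topologies agree on it, and closedness of $F$ identifies $F$ with $\overline F$, whence $F$ is compact. I would mention this but carry out the $\varepsilon$-net proof in detail.)
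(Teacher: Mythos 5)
Your proposal is correct, but there is nothing in the paper to compare it against: the Arzel\`a--Ascoli theorem is only \emph{quoted} there as a classical tool inside Exercise~1 (to prove that a discrete group of isometries of a complete metric space acts properly discontinuously), and no proof is given. Your argument is the standard one and all the steps check out: the identification of the compact--open topology with the $d_\infty$-topology when $K$ is compact; the forward direction via closedness of compact sets, continuity of evaluations, and a $3\varepsilon$-argument from total boundedness; and the converse via a finite $\varepsilon$-net built from an equicontinuity cover $U_{k_1},\dots,U_{k_m}$ together with total boundedness of $\prod_i\overline{F(k_i)}$, plus completeness of $F$ from pointwise convergence in the complete sets $\overline{F(k)}$, uniformity of the convergence, and closedness of $F$. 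One point worth noting in your favour: the statement allows $K$ to be merely compact Hausdorff, whereas the definition of equicontinuity recalled in the exercise uses a metric $d_K$ on $K$; your neighbourhood-based formulation of equicontinuity at each point is the right notion in this generality, and it is exactly what makes your $\varepsilon$-net construction (and the alternative Tychonoff argument you sketch) go through without any countable dense subset or diagonal extraction.
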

	We recall that $F$ is \emph{pointwise relatively compact} if $\forall x\in K$, the set $Fx=\{f(x),\ f\in F\}$ is relatively compact in $X$.
	
	$F$ is \emph{equicontinuous} if $\forall\varepsilon>0$, $\exists\delta>0$ such that $$d_{X}(f(x),f(y))<\varepsilon,\qquad \forall f\in F,\ \forall x,y\in K\ \text{such that}\ d_{K}(x,y)<\delta.$$
	
	\textit{Hint: in a complete metric space compact is equivalent to closed and bounded.}
	
	\item Deduce from the previous point that a discrete and faithful action is properly discountinuous.
	
	\item Describe the stabilizer of a point in $\mathrm{Isom}^{+}(\mathbb{H}^{2})=\PSL(2,\mathbb{R})$. Without loss of generality, consider the stabilizer of $i$ in the half-plane model.
	
	\item It is a fact that a group having a one-relator presentation $\langle s_{i},\,i\in I\,:\,r\rangle$, with $r$ cyclically reduced is with torsion if and only if the $r$ is a proper power.
	
	The fundamental group of a closed orientable surface of genus $g$ can be presented as $$\pi_{1}(\Sigma)=\langle a_1,b_1,\dots,a_g,b_g\,:\,[a_1,b_1]\dots[a_g,b_g]\rangle,$$
	hence is without torsion. Deduce from the previous point that a discrete and faithful action has to be free.
\end{enumerate}
\end{exercise}

\begin{exercise} 

\emph{Step 1}: The (Gromov) boundary of $\H^2$ is $$\partial \mathbb{H}^{2}:= \{ r\from [0,\infty) \to \mathbb{H}^{2} \text{ geodesic ray} \}/ _{finite \  distance}$$ where $$r_{1} \sim r_{2} \iff \sup_{t}d_{\mathbb{H}^{2}}(r_{1}(t),r_{2}(t)))<\infty$$ 

Convince yourself that the Gromov boundary of $\mathbb{H}^{2}$ is homeomorphic to $S^{1}$. 
A basis for the topology on $\partial \mathbb{H}^{2}$ is 

$$
\mathcal{U}_{[\alpha],c,t}=\{ \beta\from [0,\infty) \to \mathbb{H}^{2} \text{ geod.\ rays } : \ \beta(0)=\alpha(0), B_{c}(\alpha(t)) \text{ intersects } \beta \}
$$
    
\emph{Step 2}: It is well known that $\pi_{1}(\Sigma)= \langle a_{1},b_{1},\ldots,a_{g},b_{g}: \prod_{i=1}^{g}[a_{i},b_{i}]=1 \rangle$. We can define the boundary $\partial \pi_{1}(\Sigma) $ using the same definition of Step 1, where instead of $\mathbb{H}^{2}$ we consider the Cayley graph of $\pi_{1}(\Sigma)$ equipped with the word metric $d_{S}$ for a finite generating set $S$.

\emph{Step 3}: We want to show that $\partial \mathbb{H}^{2} $ and $\partial \pi_{1}(\Sigma) $ are homeomorphic. The key ingredient is the following

\begin{lemma}[Milnor-\v{S}varc] Let $(X,d)$ be a geodesic space and let $\Gamma$ be a group acting properly discontinuously, cocompactly and by isometries on $X$. Then $\Gamma$ is finitely generated and for every finite generating set $S$ and every point $x \in X$ the map  
\[
\begin{aligned}
f\from (\Gamma,d_{S}) &\to (X,d)\\
\gamma& \mapsto \gamma \cdot x
\end{aligned}
\]
is a quasi-isometry.

\end{lemma}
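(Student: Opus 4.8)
The plan is to run the classical Milnor--\v{S}varc argument: fix a basepoint, use cocompactness to manufacture a finite candidate generating set, show via a geodesic-subdivision trick that it genuinely generates $\Gamma$ with word length comparable to the distance travelled in $X$, and finally observe that the orbit map is coarsely surjective.

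First I would fix a point $x \in X$. By cocompactness there is a compact set $K \subseteq X$ with $\Gamma \cdot K = X$; enlarging $K$ we may assume $x \in K$, and we choose $R > 0$ with $K \subseteq \overline{B}(x,R)$. Put
\[
S := \{\, \gamma \in \Gamma \setminus \{1\} : d(x, \gamma x) \leq 3R \,\}.
\]
Applying the finiteness property of a properly discontinuous action to the compact set $\overline{B}(x, 3R)$ (which is compact e.g.\ when $X = \H^2$, the case of interest) shows that $S$ is finite; note also $S = S^{-1}$. I will show that $S$ generates $\Gamma$, so in particular $\Gamma$ is finitely generated.

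Let $\gamma \in \Gamma$. Using that $X$ is \emph{geodesic} — this is exactly where that hypothesis enters — pick a geodesic from $x$ to $\gamma x$ and subdivide it at points $x = p_0, p_1, \dots, p_n = \gamma x$ with $d(p_i, p_{i+1}) \leq R$ and $n = \lceil d(x,\gamma x)/R \rceil$. Since $\Gamma K = X$ with $K \subseteq \overline{B}(x,R)$, for each $i$ there is $\gamma_i \in \Gamma$ with $d(p_i, \gamma_i x) \leq R$; take $\gamma_0 = 1$ and $\gamma_n = \gamma$. Because $\Gamma$ acts by isometries,
\[
d(x,\, \gamma_i^{-1}\gamma_{i+1} x) = d(\gamma_i x, \gamma_{i+1} x) \leq d(\gamma_i x, p_i) + d(p_i, p_{i+1}) + d(p_{i+1}, \gamma_{i+1} x) \leq 3R,
\]
so $s_i := \gamma_i^{-1}\gamma_{i+1} \in S \cup \{1\}$, and $\gamma = s_0 s_1 \cdots s_{n-1}$ exhibits $\gamma$ as a word in $S$. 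Hence $\langle S\rangle = \Gamma$ and $|\gamma|_S \leq n \leq d(x,\gamma x)/R + 1$. Conversely, with $\mu := \max_{s \in S} d(x, sx) < \infty$, writing $\gamma = s_1 \cdots s_k$ with $k = |\gamma|_S$ and using isometric invariance gives $d(x,\gamma x) \leq \sum_{i=1}^k d(x, s_i x) \leq \mu\, |\gamma|_S$. Combining the two bounds, $R\,|\gamma|_S - R \leq d(x,\gamma x) \leq \mu\, |\gamma|_S$; applying this to $\gamma_1^{-1}\gamma_2$ and using $d_S(\gamma_1,\gamma_2) = |\gamma_1^{-1}\gamma_2|_S$ and $d(\gamma_1 x, \gamma_2 x) = d(x,\gamma_1^{-1}\gamma_2 x)$ yields
\[
R\, d_S(\gamma_1, \gamma_2) - R \leq d\big(f(\gamma_1), f(\gamma_2)\big) \leq \mu\, d_S(\gamma_1,\gamma_2)
\]
for all $\gamma_1,\gamma_2 \in \Gamma$, so $f$ is a quasi-isometric embedding. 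Finally, for any $y \in X$ we have $y \in \gamma K \subseteq \overline{B}(\gamma x, R)$ for some $\gamma$, so $d(y, f(\gamma)) \leq R$ and the image of $f$ is $R$-dense; hence $f$ is a quasi-isometry.

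The step I expect to be the crux is establishing that $S$ generates $\Gamma$ together with the linear word-length bound — the geodesic-subdivision argument — since this is where the geodesic structure of $X$, the isometric action, and cocompactness must all be combined at once; by contrast the finiteness of $S$ is an immediate consequence of proper discontinuity, and the two metric inequalities are just bookkeeping with the triangle inequality.
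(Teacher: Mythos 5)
This lemma is stated in the paper only as a cited ingredient for Exercise 2; no proof is given there, so there is nothing in the text to compare against. Your argument is the standard Milnor--\v{S}varc proof, and it is essentially correct: the construction of the candidate generators $S$, the geodesic-subdivision chain yielding both that $S$ generates $\Gamma$ and the lower coarse bound, the triangle-inequality upper bound, and the $R$-density of the orbit are all right.

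Two small remarks. First, you yourself flag that the finiteness of $S$ rests on $\overline{B}(x,3R)$ being compact, i.e.\ on $X$ being proper. That hypothesis is not explicitly in the statement as quoted, and the result can fail for non-proper geodesic spaces, so the argument as written really proves the lemma under the (implicit, and in the intended application $X=\H^2$ automatic) assumption that $X$ is proper; this is the standard form of the hypothesis and worth making explicit rather than parenthetical. Second, your argument produces one specific finite generating set for which the orbit map is a quasi-isometry, whereas the statement asserts this for \emph{every} finite generating set. The gap is cosmetic --- any two word metrics coming from finite generating sets are bi-Lipschitz equivalent, so the quasi-isometry conclusion transfers --- but since the statement singles out ``every finite generating set,'' you should close the loop with that one-line observation.
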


In our case $X=\H^{2}$. How does the quasi-isometry extend to the boundary? 

\begin{remark}
The intuitive thing to do would be looking at the map
\[
\begin{aligned}
\widehat{f}:\partial \Gamma &\to \partial\mathbb{H}^{2}\\
[\gamma]& \mapsto  [f \circ \gamma]
\end{aligned}
\]

but in general quasi-isometries send geodesics to quasi-geodesics, so $\widehat{f}$ is not well defined.
\end{remark}

The result will follow from the following
 
\begin{theorem}[Stability of quasi-geodesics in hyperbolic spaces]
  Let $X$ be a hyperbolic metric space, $\gamma:[0,L] \to X$ a quasi-geodesic and $\gamma':[0,L'] \to X$ a geodesic with same starting and ending point as $\gamma$. Then $\exists \Delta \geq 0$ such that
 
$$
\Im(\gamma') \subset B_{\Delta}(\Im \gamma) \text{  and  } \Im(\gamma) \subset B_{\Delta}(\Im \gamma')
$$
(that is, one is contained in the $\Delta$-neighbourhood of the other) 
\end{theorem}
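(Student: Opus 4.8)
The statement is the Morse Lemma on stability of quasi-geodesics, so throughout fix that $\gamma$ is a $(\lambda,\varepsilon)$-quasi-geodesic and that $X$ is $\delta$-hyperbolic in the thin-triangles sense. The plan is to prove it in four steps. First I would establish an auxiliary logarithmic bound: if $c\colon[0,1]\to X$ is a rectifiable path from $p$ to $q$ and $[p,q]$ is a geodesic, then every point of $[p,q]$ lies within $\delta\log_2\ell(c)+1$ of $\Im(c)$. This is proved by dyadic subdivision — cutting $c$ at its arclength midpoint $m$, thinness of the triangle with vertices $p,q,m$ puts a given $x\in[p,q]$ within $\delta$ of $[p,m]$ or of $[m,q]$, and iterating about $\log_2\ell(c)$ times lands one on a geodesic between two points of $\Im(c)$ joined by a sub-path of length $\le 1$. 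Next I would replace $\gamma$ by a \emph{continuous} quasi-geodesic $\sigma$ that agrees with $\gamma$ on $(\Z\cap[0,L])\cup\{0,L\}$ and is geodesic on each complementary interval; routine estimates show $\sigma$ is again a quasi-geodesic, that $\Im\gamma$ and $\Im\sigma$ have Hausdorff distance bounded in terms of $\lambda,\varepsilon$, and that $\sigma$ is rectifiable with $\ell(\sigma|_{[s,t]})\le k_1\,d(\sigma(s),\sigma(t))+k_2$ (using once more that $\sigma$ is a quasi-geodesic). It therefore suffices to prove the theorem for $\sigma$.

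Second, I would show the geodesic stays close to $\sigma$. Put $p=\sigma(0)$, $q=\sigma(L)$, let $[p,q]=\Im\gamma'$, and let $D_0=\max_{x\in[p,q]}d(x,\Im\sigma)$, attained at $x_0$ by compactness. Choose $y,z\in[p,q]$ on opposite sides of $x_0$ with $d(x_0,y)=d(x_0,z)=2D_0$, or the corresponding endpoint of $[p,q]$ if that side is shorter than $2D_0$; in either case $d(x_0,y),d(x_0,z)\le 2D_0$. Pick $y',z'\in\Im\sigma$ realizing $d(y,\Im\sigma)$, $d(z,\Im\sigma)$, and form the path $P=[y,y']\cdot\sigma'\cdot[z',z]$ where $\sigma'$ is the sub-arc of $\sigma$ between $y'$ and $z'$. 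Every point of $\Im P$ is at distance $\ge D_0$ from $x_0$: points of $\sigma'$ lie in $\Im\sigma$; a point $w\in[y,y']$ has $d(x_0,w)\ge d(x_0,y)-d(y,w)\ge 2D_0-D_0=D_0$, and if $y$ is an endpoint of $[p,q]$ then $y\in\Im\sigma$, so $d(x_0,y)\ge D_0$ and $[y,y']$ is trivial; similarly for $[z',z]$. Hence $D_0\le d(x_0,\Im P)$. On the other hand $d(y',z')\le 6D_0$, so $\ell(P)\le 2D_0+k_1\cdot 6D_0+k_2$, and the auxiliary bound applied to $P$ (note $x_0\in[y,z]\subseteq[p,q]$) gives $D_0\le\delta\log_2\big((2+6k_1)D_0+k_2\big)+1$. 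A linear function cannot stay below a logarithmic one, so $D_0$ is bounded by a constant $D=D(\delta,\lambda,\varepsilon)$; that is, $\Im\gamma'\subseteq B_D(\Im\sigma)$.

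Third, I would show $\sigma$ stays close to the geodesic. Let $x=\sigma(t)$; if $d(x,[p,q])\le D+1$ there is nothing to do, so assume otherwise and let $[a,b]\ni t$ be the maximal subinterval of $[0,L]$ with $\sigma([a,b])\cap B_{D+1}([p,q])=\emptyset$ (it exists since $\sigma(0)=p$). Then $\sigma(a),\sigma(b)$ are within $D+1$ of points $\alpha,\beta\in[p,q]$. Every point of the subsegment of $[p,q]$ from $\alpha$ to $\beta$ lies within $D$ of $\Im\sigma$ by the second step, and the corresponding parameter cannot lie in $(a,b)$; hence each such point lies within $D+1$ of $\sigma([0,a])$ or of $\sigma([b,L])$. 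These two conditions cut the subsegment into two closed pieces covering it and containing $\alpha$ and $\beta$ respectively, so by connectedness some $m_0$ on it is within $D+1$ of both $\sigma(a')$, $a'\le a$, and $\sigma(b')$, $b'\ge b$. Then $d(\sigma(a'),\sigma(b'))\le 2(D+1)$, and the quasi-geodesic inequality forces $|b'-a'|$, hence $|b-a|$, to be bounded in terms of $\lambda,\varepsilon,D$; consequently $d(x,\sigma(a))\le\lambda|t-a|+\varepsilon$ is bounded and $d(x,[p,q])\le d(x,\sigma(a))+D+1$ is bounded as well. Combining the second and third steps yields a $\Delta_0$ with $\Im\gamma'\subseteq B_{\Delta_0}(\Im\sigma)$ and $\Im\sigma\subseteq B_{\Delta_0}(\Im\gamma')$, and adding the Hausdorff bound between $\Im\gamma$ and $\Im\sigma$ from the first step gives the desired $\Delta$ for $\gamma$.

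The main obstacle is the second step: the theorem really turns on upgrading the merely \emph{logarithmic} control from the subdivision lemma into honest constant control, and this succeeds only because the comparison path $P$ is chosen so that it has length linear in the excursion $D_0$ while, simultaneously, every one of its points is pushed to distance $\ge D_0$ from the extremal point $x_0$. Arranging the six points $p,q,x_0,y,z,y',z'$ so that both of these hold at once — and checking the degenerate configurations in which $y$ or $z$ is forced to be an endpoint of $[p,q]$ — is the one genuinely delicate piece; the dyadic lemma of the first step and the connectedness argument of the third step are then routine.
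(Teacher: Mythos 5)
The paper does not prove this theorem: it is quoted verbatim inside Exercise~2 as a known tool for extending quasi-isometries to the Gromov boundary, with no argument given, so there is no proof in the source to compare against. Your proposal reproduces the standard Bridson--Haefliger argument for the Morse lemma (the taming of the quasi-geodesic to a rectifiable one with length control, the $\delta\log_2\ell$ excursion bound via dyadic subdivision, the ``comparison path'' construction that plays the linear length estimate against the logarithmic bound to pin down $D_0$, and the connectedness argument for the reverse inclusion), and it is correct in substance. The only nit is a $D$ versus $D+1$ imprecision in the connectedness step -- you invoke the ``within $D$'' bound from Step~2 but only know $\alpha,\beta$ are within $D+1$ of $\sigma(a),\sigma(b)$, so the two closed pieces covering $[\alpha,\beta]$ should be defined with tolerance $D+1$ throughout -- but this only shifts constants and does not affect the argument.
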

\end{exercise}

\begin{exercise}
When $G=\PSL(2,\mathbb{R})$ the Hitchin component and the space of maximal representations both agree with Teichm\"uller space.
The existence of a nice limit map tells us something about the representation.
More precisely:  
\begin{quote}
   Let $\rho \from \pi_1(\Sigma) \to \PSL(2,\mathbb{R})$ be a representation and let $\xi \from \partial \pi_1(\Sigma) \to \partial \mathbb{H}^{2}$ be an injective continuous $\rho$-equivariant map, i.e.\ $\xi(\gamma \cdot r)= \rho(\gamma)\xi(r)$.
   Then $\rho$ is injective and discrete. 
\end{quote}

The group $\pi_{1}(\Sigma)$ acts on its Cayley graph by isometries.
This action extends to the boundary (definition in Exercise 2): for $[r(t)] \text{ geodesic ray } \in \partial \pi_{1}(\Sigma)$ and $\gamma \in \pi_{1}(\Sigma)$, $\gamma \cdot [r(t)]=[\gamma \cdot r(t)]$.

\emph{Injectivity}: Since $\rho \in \Hom(\pi_{1}(\Sigma),\PSL(2,\mathbb{R}))$ it suffices to prove 
\[\gamma \in \pi_{1}(\Sigma), \gamma \neq 1 \Rightarrow \rho(\gamma) \neq I_2.\]

Use injectivity and $\rho$-equivariance of $\xi$ to prove injectivity of $\rho$.

\emph{Discreteness}: $\rho$ discrete $\iff \nexists$ sequence $\rho(\alpha_{n})$ accumulating on $I_2$.
Use properties of $\xi$ to show the non-existence of such a sequence.

\end{exercise}
\begin{exercise}

The goal of this exercise is to understand the following statement.

\begin{quote}
    \emph{The subspace of discrete and faithful representations is closed in the space of representations.}
\end{quote}

Formally, let $G$ be a Lie group and $\Gamma$ be a non-cyclic, torsion-free, hyperbolic group. The fundamental groups of hyperbolic surfaces are examples of such groups $\Gamma$.

The key ingredient is the so called Margulis Lemma (also known as Margulis-Zassenhaus Lemma).

\begin{theorem}[Margulis Lemma]
Let $G$ be a Lie group. There exists a neighbourhood $U$ of the identity such that, given a discrete subgroup $J\subset G$ such that $J\cap U$ generates $J$, then $J$ is nilpotent.
\end{theorem}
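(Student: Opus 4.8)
The final statement is the Margulis (Margulis--Zassenhaus) Lemma, and the plan is to construct the neighbourhood $U$ as a \emph{Zassenhaus neighbourhood}, i.e.\ one on which forming commutators is a contraction, and then to use discreteness to force all sufficiently iterated commutators of elements of $J \cap U$ to be trivial.

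\textbf{Step 1: the commutator estimate.} Fix a norm $\|\cdot\|$ on $\mathfrak{g} = \mathrm{Lie}(G)$ and a ball $B_{\epsilon_0}(0)$ on which $\exp$ is a diffeomorphism onto its image, with inverse $\log$. The smooth map $F(X,Y) := \log\bigl(\exp X\,\exp Y\,\exp(-X)\,\exp(-Y)\bigr)$ vanishes identically whenever $X=0$ or $Y=0$, so integrating $D_1F$ and then $D_2D_1F$ along segments (its leading bilinear term is the bracket $[X,Y]$, consistent with Baker--Campbell--Hausdorff) yields a constant $C>0$ with $\|F(X,Y)\| \le C\|X\|\,\|Y\|$ for $\|X\|,\|Y\| \le \epsilon_0$. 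Now fix $\epsilon \le \min(\epsilon_0, \tfrac{1}{2C})$ and put $U := \exp(B_\epsilon(0))$; this is symmetric and depends only on $G$ and the chosen norm, not on $J$. For $g = \exp X$, $h = \exp Y \in U$ we get $[g,h] = \exp F(X,Y)$ with $\|F(X,Y)\| \le C\epsilon\min(\|X\|,\|Y\|) \le \tfrac12\min(\|X\|,\|Y\|) < \epsilon$, so $[g,h] \in U$ and its logarithm is at most half the length of the shorter of $\log g, \log h$.

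\textbf{Step 2: iterated commutators shrink and vanish.} Let $J \subset G$ be discrete with $J = \langle J \cap U\rangle$. By discreteness there is $\delta \in (0,\epsilon)$ with $J \cap \exp(B_\delta(0)) = \{1\}$. By induction over the bracketing tree, applying the Step~1 estimate at each internal node, any iterated commutator $c$ of elements $s_1,\dots,s_w \in J \cap U$ (with an arbitrary bracketing) satisfies $c \in U$ and $\|\log c\| \le 2^{-h}\epsilon$, where $h$ is the height of the bracketing tree. Since a binary tree with $w$ leaves has height $h \ge \log_2 w$, this gives $\|\log c\| \le \epsilon/w$; hence once $w > \epsilon/\delta$ the element $c$ lies in $J \cap \exp(B_\delta(0))$ and so $c = 1$. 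Setting $N := \lceil \epsilon/\delta\rceil$, every iterated commutator of more than $N$ elements of $J \cap U$ is trivial.

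\textbf{Step 3: conclude nilpotency, and the main obstacle.} It remains to deduce $\gamma_{N+1}(J) = \{1\}$. Let $\pi\colon F \twoheadrightarrow J$ be the natural surjection from the free group $F$ on the set $J \cap U$, so $\gamma_{N+1}(J) = \pi(\gamma_{N+1}(F))$. By Hall's basis theorem, $\gamma_{N+1}(F)$ is generated as a subgroup by the basic commutators of weight $\ge N+1$ in the free generators; each of these maps under $\pi$ to an iterated commutator of at least $N+1$ elements of $J \cap U$, hence to $1$ by Step~2. Therefore $\gamma_{N+1}(J) = \{1\}$ and $J$ is nilpotent of class at most $N$. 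I expect this last step to be the delicate one: the naive assertion ``all weight-$n$ left-normed commutators in a generating set vanish implies $\gamma_n = 1$'' is \emph{not} sufficient by itself (it only gives $\gamma_n = \gamma_{n+1} = \cdots$), so one genuinely needs the basic-commutator spanning set of $\gamma_{N+1}$ of a free group (or an explicit collection argument), and one must be careful that the quantitative estimate of Step~2 is applied precisely to that spanning set rather than to conjugates which a priori need not lie in $U$. The estimates in Steps~1--2 are otherwise routine.
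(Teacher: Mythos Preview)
The paper does not actually prove the Margulis Lemma; it only states it as a known ingredient to be applied in Exercise~4. So there is no proof in the paper to compare against, and I evaluate your argument on its own merits.

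Steps~1 and~2 are the standard Zassenhaus-neighbourhood argument and are correct as written.

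In Step~3 you over-worry, and the fix you propose is not quite right. Your claim that the ``naive'' assertion (vanishing of all length-$(N{+}1)$ left-normed commutators of generators implies $\gamma_{N+1}=1$) fails is mistaken. A short induction on $n$ using the identities
\[
[ab,c]=[a,c]^{b}[b,c],\qquad [a,bc]=[a,c][a,b]^{c},\qquad [a^{-1},b]=\bigl([a,b]^{-1}\bigr)^{a^{-1}},\qquad [a^{h},b]=[a,b^{h^{-1}}]^{h}
\]
shows that for any group $J=\langle S\rangle$, the subgroup $\gamma_{n}(J)$ is the \emph{normal closure} in $J$ of the left-normed commutators $[s_{1},\ldots,s_{n}]$ with $s_{i}\in S$. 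Since Step~2 makes every such commutator equal to $1$ once $n>N$, all their conjugates are $1$ as well, and $\gamma_{N+1}(J)=\{1\}$ follows immediately. (The weaker textbook statement you have in mind---that such commutators generate $\gamma_{n}/\gamma_{n+1}$---is indeed insufficient, but the normal-generation version holds.)

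By contrast, the statement you attribute to Hall's basis theorem, that $\gamma_{N+1}(F)$ is generated \emph{as a subgroup} by the basic commutators of weight $\ge N+1$, is not what Hall's theorem says: Hall gives a free-abelian basis of each quotient $\gamma_{k}(F)/\gamma_{k+1}(F)$, not a generating set for $\gamma_{N+1}(F)$ itself. So your proposed route through Hall is both unnecessary and, as phrased, unjustified; the direct normal-generation argument closes the proof cleanly.
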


Recall that $J$ being \emph{nilpotent} means that the sequence $J_1:=[J,J]$, $J_{i+1}:=[J_i,J]$ is eventually the trivial group.\\

We will apply Margulis Lemma to prove that if $\rho_n\colon \Gamma\to G$ is a sequence of discrete and faithful representations that converge to a representation $\rho\colon \Gamma\to G$, then $\rho$ is discrete and faithful. First, assume that $\rho$ is not faithful, i.e.\ there is $g\in \Gamma\setminus \{1\}$ such that $\rho(g)=e$.

\begin{itemize}
    \item[(a)] Let $g^+:=\lim_{n\to +\infty}g^n$ in the boundary of $\Gamma$. What kind of subgroup is $\stab(g^+) < \Gamma$ ? Conclude that there exists $h\in \Gamma\setminus$ $\stab(g^+)$.
    \item[(b)] What is $\stab(g^+)\cap\stab(h^+)$ ?
\end{itemize}

We consider the subgroup of $\Gamma$ defined by $J:=\langle g, hgh^{-1}\rangle$.

\begin{itemize}
    \item[(c)] Prove that $\rho_n(J)$ is nilpotent for some $n$ large enough and deduce that $J$ is nilpotent.
\end{itemize}

It is a fact that a nilpotent subgroup of a torsion-free hyperbolic group is either trivial or isomorphic to $\mathbb{Z}$. So, we conclude that $J\cong \mathbb Z$.

\begin{itemize}
    \item[(d)] (harder) Prove that $\langle g,h^2\rangle\cong \mathbb{Z}^2$.
\end{itemize}

This is a contradiction because $\langle g,h^2\rangle$ is a nilpotent subgroup of $\Gamma$ that is not trivial or a copy of $\mathbb{Z}$. Hence, $\rho$ is faithful.

Assume now that $\rho$ is not discrete. So, there exists a sequence $g_n\in \Gamma\setminus \{1\}$ such that $\rho(g_n)$ converges to the identity.

\begin{itemize}
    \item[(e)] Convince yourself that there exists $h$ not contained in $\stab(g_n^+)$ for all $n$ (but maybe finitely many).
    \item[(f)] (somewhat tricky)  Deduce that $J_n:=\langle g_n,hg_nh^{-1}\rangle$ is nilpotent for some $n$. Conclude.
\end{itemize}
\end{exercise}
\newpage


\part[Background in Lie Theory]{Background in Lie Theory
\textnormal{
\begin{minipage}[c]{15cm}
\begin{center}
    \vspace{2cm}
    {\Large Luca De Rosa}\\
    \vspace{-4mm}
    {\large \textit{ETH Zürich}}\\
    \vspace{.5cm}
     {\Large Victor Jaeck}\\
    \vspace{-4mm}
    {\large \textit{ETH Zürich}}\\
    \vspace{.5cm}
    {\Large Max Riestenberg}\\
    \vspace{-4mm}
    {\large \textit{Ruprecht-Karls-Universit\"at Heidelberg}}\\
    \vspace{.5cm}
    {\Large Daniel Soskin}\\
    \vspace{-4mm}
    {\large \textit{The University of Notre Dame}}\\
    \vspace{.5cm}
    {\Large Jacques Audibert}\\
    \vspace{-4mm}
    {\large \textit{Sorbonne Université}}\\
    \vspace{.5cm}
     {\Large Alex Moriani}\\
    \vspace{-4mm}
    {\large \textit{Université Côte d'Azur}}\\
    \vspace{.5cm}
     {\Large Colin Davalo}\\
    \vspace{-4mm}
    {\large \textit{Ruprecht-Karls-Universit\"at Heidelberg}}
\end{center}
\end{minipage}
}}\label{chap2}

\thispagestyle{empty}

\chapter[Lie Groups, Lie Algebras and Their Symmetric Spaces]{Lie Groups, Lie Algebras and Their Symmetric Spaces\\ {\Large\textnormal{\textit{by Luca De Rosa, Victor Jaeck}}}}
\addtocontents{toc}{\quad\quad\quad \textit{Luca De Rosa, Victor Jaeck}\par}

\section{Lie groups and Lie algebras}
\begin{definition}
A \emph{Lie group} $G$ is a group endowed with the structure of smooth manifold, such that the operations of multiplication $G \times G \to G$ and inverse $G \to G$ are smooth.
\end{definition}

\begin{remark} 
In particular, a Lie group is a locally compact Hausdorff second countable topological group.
\end{remark}

In the remaining of the workshop we will probably be mostly interested in \emph{linear groups}, i.e. Lie groups arising as subgroups of $\mathrm{GL}(n, \R)$, for some $n$.

\begin{ex*} 
Let $G$ be a connected Lie group and $U \subseteq G$ a neighborhood of the identity $e \in G$. Show that $U$ generates $G$.
\end{ex*}

\begin{examples}
\begin{enumerate}
    \item $(\mathbb{R}, +)$ and $(\R \setminus \{0\} , \cdot)$.
    \item $G = \GL(n,\R)$. The linear group $G$ is an open subset of $\R^{n^2}$, and hence it inherits a smooth structure from it. The usual matrix multiplication makes it a group. Notice moreover that matrix multiplication is a polynomial function of the entries, and the inverse of a matrix is a rational function. Hence both are smooth. It follows that $G$ is a Lie group, called the \emph{linear group}.
    \item Non-example: Consider the space of homeomorphisms $\mathrm{Homeo} (X)$ with $X$, for instance, the regular tree of degree $d \geq 3$, endowed with the compact-open topology. One can show that $\mathrm{Homeo}(X)$ is not even a locally compact topological group. In fact, a neighbourhood of the identity contains maps that fix balls of $X$ of larger and larger radius.
    \item For more interesting examples (e.g. $\mathrm{SL}(n, \R)$) we will use the inverse function theorem, as we will see in the first exercise session.
\end{enumerate}
\end{examples}

\begin{definition}
A \emph{Lie algebra} $\mathfrak{g}$ is a vector space together with a skew-symmetric bilinear map
\[
[\cdot,\cdot] \from \mathfrak{g} \times \mathfrak{g} \to \mathfrak{g}
\]
satisfying the Jacobi identity: for all $X,Y,Z \in \mathfrak{g}$
\begin{equation}
\label{jacobi_identity}
[X, [Y,Z]] + [Z, [X,Y]] + [Y, [Z,X]] = 0
\end{equation}
\end{definition}

\begin{definition}
Let $\mathfrak{g, h}$ be Lie algebras. A \emph{Lie algebra homomorphism} is a linear map
$\varphi\colon \mathfrak{g} \to \mathfrak{h}$ respecting the bracket operation, i.e.
\[
\varphi([X,Y]) = [\varphi(X), \varphi(Y) ]
\]
for all $X,Y \in \mathfrak{g}$.

A \emph{Lie subalgebra} $\mathfrak{m} \subseteq \mathfrak{g}$ is a subspace of $\mathfrak{g}$ such that for all $X,Y \in \mathfrak{m}$, $[X,Y] \in \mathfrak{m}$.
\end{definition}

\begin{definition}
Let $A$ be a commutative algebra over a field $k$. A \emph{derivation} on $A$ is an endomorphism $\delta\colon A \to A $ of the underlying $k$-vector space $A$ which satisfies the \emph{Leibniz rule} $\delta(ab) = \delta (a)\, b + a \, \delta (b)$ for all $a, b \in A$.
We write $\mathrm{Der}(A)$ for the set of derivations on $A$.
\end{definition}

Note that smooth vector fields $\mathrm{Vect}^\infty (M)$ on a manifold $M$ can be identified with derivations on $\mathcal{C}^\infty (M)$. Precisely, given a point $p \in M$ and a tangent vector $X_p \in T_p M$, we can associate
\[
\delta_{X_p} \colon \mathcal{C}^\infty (p) \to \R , \quad [f] \mapsto (d_p f)(X_p),
\]
where $\mathcal{C}^\infty (p)$ denotes the space of germs of functions around $p$.
It follows that vector fields correspond to endomorphisms of $\mathcal{C}^\infty (M)$.

\begin{proposition}
Let $M$ be a smooth manifold. The map
\[
\alpha \from \mathrm{Vect}^\infty (M) \to \mathrm{End}(\mathcal{C}^\infty (M)),\quad X \mapsto \left( f \mapsto (Xf \from p \mapsto (d_p f)(X) )) \right)
\]
is an isomorphism onto its image which consists of derivations $\mathrm{Der} (\mathcal{C}^\infty (M))$ of the $\R$-algebra $\mathcal{C}^\infty (M)$
\end{proposition}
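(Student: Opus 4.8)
The plan is to verify, in order, that $\alpha$ is well defined with image contained in $\mathrm{Der}(\mathcal{C}^\infty(M))$, that it is linear and injective, and finally that every derivation arises this way. For well-definedness, given $X \in \mathrm{Vect}^\infty(M)$ and $f \in \mathcal{C}^\infty(M)$, working in a chart shows that $Xf$ is again smooth (its local expression is $\sum_i X^i\,\partial_i f$ with smooth coefficients), $\R$-linearity in $f$ is immediate, and the Leibniz rule $X(fg) = (Xf)g + f(Xg)$ follows pointwise from the ordinary product rule applied to the directional derivative $(d_pf)(X_p)$. Linearity of $\alpha$ in $X$ is clear from the definition. For injectivity, suppose $Xf = 0$ for all $f$; fixing $p$ and a chart $(U,x^1,\dots,x^n)$ around $p$, one multiplies the coordinate functions $x^i$ by a bump function to get global smooth functions agreeing with $x^i$ near $p$, and then $(d_pf)(X_p) = 0$ for these choices of $f$ forces $X_p = 0$, hence $X = 0$.

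The substantial point is surjectivity onto $\mathrm{Der}(\mathcal{C}^\infty(M))$: given $D$, I must produce $X$ with $\alpha(X) = D$. First I would record that $D$ annihilates constants, since $D(1) = D(1\cdot 1) = 2D(1)$ gives $D(1) = 0$. The key lemma is \emph{locality}: if $f$ vanishes on an open set $V$, then $Df$ vanishes on $V$. To prove it, fix $p \in V$, choose $\chi \in \mathcal{C}^\infty(M)$ with $\supp\chi \subseteq V$ and $\chi \equiv 1$ near $p$, observe that $f = (1-\chi)f$ as functions on all of $M$, and expand $Df = D(1-\chi)\,f + (1-\chi)\,Df = -(D\chi)\,f + (1-\chi)\,Df$ by Leibniz; evaluating at $p$, where both $f(p) = 0$ and $(1-\chi)(p) = 0$, gives $(Df)(p) = 0$. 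Consequently $(Df)(p)$ depends only on the germ of $f$ at $p$.

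Next I would pass to a chart $(U,x^1,\dots,x^n)$ around a point $p$, with coordinates centered so that $x^i(p) = 0$, and apply Hadamard's lemma on a suitable neighborhood: $f = f(p) + \sum_i x^i g_i$ with $g_i$ smooth and $g_i(p) = \partial_i f|_p$. After multiplying everything in sight by a bump function supported in the chart (and using locality to see the value at $p$ is unaffected), I apply $D$, use that it kills the constant $f(p)$ and satisfies Leibniz, and evaluate at $p$ where each factor $x^i$ vanishes, obtaining $(Df)(p) = \sum_i (Dx^i)(p)\,\partial_i f|_p$ (with $x^i$ tacitly replaced by a global extension; locality makes the formula independent of that extension). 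I then \emph{define} $X_p := \sum_i (Dx^i)(p)\,\partial_i|_p$. This is independent of the chart because it realizes the chart-independent functional $f \mapsto (Df)(p)$; it is smooth in $p$ because each coefficient $(Dx^i)(p)$ is the restriction to $U$ of the smooth function $D$ applied to a global extension of $x^i$; and by construction $(\alpha(X)f)(p) = (Df)(p)$ for all $f$ and $p$, i.e.\ $\alpha(X) = D$. Together with the first paragraph this identifies the image of $\alpha$ with $\mathrm{Der}(\mathcal{C}^\infty(M))$.

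I expect the main obstacle to be the bookkeeping around the tension that $\mathcal{C}^\infty(M)$ is an algebra of \emph{global} functions while a derivation is pinned down by purely \emph{local} data: every local construction — Hadamard's lemma, reading off the coefficients $(Dx^i)(p)$ — must be promoted to a statement about genuine global functions by cutting off with bump functions, and the locality lemma is exactly what guarantees that these cutoffs do not change the answer. This uses implicitly that bump functions exist, i.e.\ the standing convention that smooth manifolds are Hausdorff and second countable, hence paracompact.
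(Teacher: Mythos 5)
The paper does not prove this proposition; it is stated without justification (it is followed immediately by a remark about $\mathrm{Der}(\mathcal{C}^\infty(M))$ not being closed under composition, and then by the definition of the Lie bracket of vector fields). Your argument is the standard textbook proof of this classical fact and is correct: the locality lemma obtained from the Leibniz rule is the right pivot between global functions and the germ-level data that pins down a tangent vector, Hadamard's lemma cut off by bump functions supplies the local formula $(Df)(p) = \sum_i (Dx^i)(p)\,\partial_i f|_p$, and your closing remark that this all hinges on the existence of bump functions (Hausdorff plus second countable, hence paracompact) correctly identifies the hidden smooth-category hypothesis that makes the argument go through.
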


\begin{remark}
Notice that $\mathrm{Der} (\mathcal{C}^\infty (M))$ is not a subalgebra with respect to the natural operation of composition.
Consider for instance $M= \R$ and a derivation $X$ such that $X f = f'$.
Then $X \circ X (f) = X f' = f''$.
Hence the element $X^2$ sends $f$ to its second derivative $f''$. This endomorphism of $\mathcal{C}^\infty (M)$ clearly does not satisfy the Leibniz rule $\delta(ab) = \delta (a) b + a \delta (b)$.
Nevertheless there is another operation on $\mathrm{Der} (\mathcal{C}^\infty (M))$ that, differently from the composition, gives back a derivation.
\end{remark}
\begin{definition}
For $X, Y \in \mathrm{Vect}^\infty (M)$, define
\[
[X,Y] := [\delta_X, \delta_Y] := \delta_X \circ \delta_Y - \delta_Y \circ \delta_X
\]
where $\delta_X : \mathcal{C}^\infty (M) \to \mathcal{C}^\infty (M)$ is the derivation corresponding to $X$.
\end{definition}

\begin{ex*}
Let $\delta_1$ and $\delta_2$ be derivations. Show that $[\delta_1, \delta_2]$ is a derivation as well, i.e.\ it satisfies the Leibniz rule.
\end{ex*}

We have shown that $\mathrm{Vect}^\infty (M)$ endowed with the brackets operation above is a Lie algebra.
It is in fact a very important and somehow prototypical example of a Lie algebra, as we will see in the following subsection.

Our next goal is to define a Lie algebra $\mathfrak{g}$ canonically attached to a given Lie group $G$.

\section{Left invariant vector fields and the Lie algebra of a Lie group}
Let $G$ be a Lie group, acting smoothly on a smooth manifold $M$. Given $g \in G$ we denote with $L_g$ the \emph{left action map}
\[
L_g \from M \to M ,\quad m \mapsto g \cdot m.
\]
The induced map at the level of vector fields is given by
\[
(L_g)_* \colon \mathrm{Vect}^\infty (M) \to \mathrm{Vect}^\infty (M),\quad X \mapsto \left( m \mapsto (d_m L_g ) (X_m) \right).
\]

\begin{definition}
A vector field $X \in \mathrm{Vect}^\infty (M)$ is said to be \emph{$G$-invariant}, or more precisely $G$-\emph{left-invariant} if for any $g \in G$, we have $(L_g)_* (X) = X$. More explicitly, $X \in \mathrm{Vect}^\infty (M)$ is $G$-invariant if for any $g \in G$ and $m \in M$
\[
(d_m L_g)(X_m) = X_{L_g (m)} = X_{gm}
\]
From now on we will denote with $\mathrm{Vect}^\infty (M)^G \subseteq \mathrm{Vect}^\infty (M)$ the Lie subalgebra of $G$-invariant vector fields on $M$.
\end{definition}

Recall that a Lie group $G$ is also a smooth manifold, and its group structure provides a natural action of $G$ on itself. Precisely, we now put $G = M$, and take as smooth action $G \times M \to M$ the multiplication action
\[
G \times G \to G,\quad (g,h) \mapsto gh
\]
 The following lemma is key to the definition of a Lie algebra of a Lie group.
 
 \begin{lemma}
 Let $G$ be a Lie group. The map
 \[
 \mathrm{Vect}^\infty (G)^ G \to T_e G,\quad X \mapsto X_e
 \]
 is an isomorphism of vector spaces.
 \begin{proof}
 Injectivity: Let $X_e=0$. Then for all $g \in G$ we have $
 (d_e L_g)(X_e) = X_g = 0$ and hence the vector field $X$ is identically zero.
 
 Surjectivity: Given $v \in T_e G$, define the vector field $X$ as follows: for any $g \in G$
 \[
 X_g := (d_e L_g) (v)
 \]
 Then $X$ is left invariant by construction, and $X_e = v$.
 \end{proof}
 \end{lemma}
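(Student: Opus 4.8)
The plan is to show that evaluation at the identity, $X\mapsto X_e$, is linear, injective, and surjective. Linearity is immediate, since evaluating a vector field at a fixed point is a linear operation. Injectivity and surjectivity both reduce to the defining relation of $G$-invariance, namely $(d_mL_g)(X_m)=X_{gm}$, together with the chain rule and the identity $L_h\circ L_g=L_{hg}$.

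For injectivity, I would specialize the invariance relation to $m=e$: if $X\in\mathrm{Vect}^\infty(G)^G$ satisfies $X_e=0$, then for every $g\in G$ we get $X_g=X_{ge}=(d_eL_g)(X_e)=0$, so $X$ is the zero vector field. Hence the kernel is trivial.

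For surjectivity, given $v\in T_eG$ I would define a candidate section by $X_g:=(d_eL_g)(v)$ for all $g\in G$, and then check three things. First, since $L_e=\id_G$, we have $X_e=(d_e\id_G)(v)=v$, so the candidate does map to $v$. Second, $X$ is $G$-left-invariant: for $g,h\in G$, using $L_h\circ L_g=L_{hg}$ and the chain rule,
\[
(d_gL_h)(X_g)=(d_gL_h)\bigl((d_eL_g)(v)\bigr)=\bigl(d_e(L_h\circ L_g)\bigr)(v)=(d_eL_{hg})(v)=X_{hg}=X_{L_h(g)},
\]
which is exactly the invariance condition. Third, $X$ must be a \emph{smooth} section of $TG$, and this is the only point that is not a purely formal manipulation. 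Here I would factor through the multiplication map $\mu\colon G\times G\to G$, $(g,h)\mapsto gh$, which is smooth because $G$ is a Lie group. Since $L_g=\mu(g,\cdot\,)$, one has $X_g=(d_{(g,e)}\mu)(0_g,v)$ where $0_g\in T_gG$ is the zero vector; the assignment $g\mapsto(0_g,v)$ is a smooth section of $T(G\times G)$ along $G\times\{e\}$, and $d\mu$ is smooth, so the composite $g\mapsto X_g$ is smooth.

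The main (and essentially only) obstacle is this smoothness check; the rest follows directly from the chain rule and $L_h\circ L_g=L_{hg}$. An alternative to the $\mu$-factorization would be to pass to local coordinates around $g$ and around $e$ and observe that the partial derivatives of the multiplication map depend smoothly on the base point, but the coordinate-free argument via $d\mu$ is cleaner and avoids bookkeeping with charts.
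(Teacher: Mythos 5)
Your proof takes the same approach as the paper: restrict to the identity for injectivity, and extend by left translation for surjectivity. The one place you go further is in verifying that the candidate field $X_g=(d_eL_g)(v)$ is actually smooth (by factoring through $d\mu$), a point the paper passes over with ``left invariant by construction''; that is a genuine, if small, gap in the paper's proof and your argument fills it correctly.
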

 
 From now on, given $v \in T_e G$, let $v^L$ be the corresponding left-invariant vector field constructed above.
 
 \begin{definition}
 Let $G$ be a Lie group. The \emph{Lie algebra} $\mathfrak{g}$ \emph{of} $G$ is the vector space $T_e G \cong \mathrm{Vect}^\infty (M) ^G$ consisting of \emph{left}-invariant vector fields, endowed with the brackets operation
 \[
 [v,w] := [v^L, w^L]_e
 \]
 for $v,w \in T_e G$.
 \end{definition}
 
 Lie algebras are probably the central object in Lie theory. The Lie algebra $\mathfrak{g}$ encodes in its $G$-invariance most of the structure of the Lie group $G$. Moreover, Lie algebras are linear objects, hence much easier to study.
 
 \begin{ex*}
 The Lie algebra of $G=\GL(n, \R)$ is given by $\Mat(n,\R)$ with bracket operation given by the commutator
 \[
 [A,B] = A B - B A.
 \]
 \end{ex*}

\begin{definition}
A \emph{simple} Lie group is a connected non-abelian Lie group $G$ which does not have nontrivial closed connected normal subgroups.
\end{definition}

\begin{definition}
A Lie algebra $\mathfrak{g}$ is said \emph{semisimple} if it is direct sum of simple Lie algebras.
\end{definition}

\section{The exponential map and the adjoint representation}

\begin{definition}
Let $M$ be a manifold and $X$ a vector field on $M$. An \emph{integral curve} of $X$ through $p \in M$ is a smooth curve $\gamma \colon (-\delta, \delta) \to M $ such that $\gamma(0)=p$ and $\gamma ' (t) = X_{\gamma(t)}$.

A vector field $X$ on $M$ is \emph{complete} if for every $q \in M$ the integral curve of $X$ through $q$ is defined on all $\R$.
\end{definition}

The following result will motivate the definition of the exponential map.

\begin{theorem}
Let $G$ be a Lie group. The following hold
\begin{enumerate}
    \item Left invariant vector fields on $G$ are complete;
    \item For every $v \in T_e G$, let $\phi_v \colon \R \to G$ be the integral curve of $v^L$ through $e \in G$. Then $\phi_v$ is a smooth homomorphism $\R \to G$.
    In particular for all $t_1,t_2 \in \R$ we have
    \[
    \phi_v (t_1 + t_2) = \phi_v (t_1) \phi_v (t_2).
    \]
    \item The \emph{flow} $\Phi \colon \R \times G \to G $
    of $v^L$ is given by 
    \[
    \Phi (t, g) = g \phi_v (t).
    \]
\end{enumerate}
\end{theorem}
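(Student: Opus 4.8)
The plan is to play the defining property of left-invariance against the existence, uniqueness and smooth-dependence theorems for ordinary differential equations. The key preliminary observation is that for every $g\in G$ the diffeomorphism $L_g$ carries integral curves of $v^L$ to integral curves of $v^L$: if $\gamma$ is an integral curve of $v^L$, then
\[
\tfrac{d}{dt}(L_g\circ\gamma)(t)=(d_{\gamma(t)}L_g)(\gamma'(t))=(d_{\gamma(t)}L_g)(v^L_{\gamma(t)})=v^L_{g\gamma(t)},
\]
the last equality being exactly the identity $(d_mL_g)(v^L_m)=v^L_{gm}$ defining $v^L$. Consequently, if $\phi_v$ denotes the maximal integral curve of $v^L$ through $e$, defined on an open interval $J\ni 0$, then $t\mapsto g\,\phi_v(t)$ is the maximal integral curve of $v^L$ through $g$, and it is defined on the same interval $J$.

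I would prove (1) first. Local existence gives a $\delta>0$ with $\phi_v$ defined on $(-\delta,\delta)$; by the observation above, the integral curve through \emph{every} point of $G$ is then defined on $(-\delta,\delta)$, i.e.\ the local flow $\Phi_t$ is defined on all of $G$ for every $|t|<\delta$. A vector field with such a uniform lower bound on existence times is automatically complete: one sets $\Phi_t$ as above for $|t|<\delta$, and for arbitrary $t$ writes $t=n(\delta/2)+r$ with $0\le r<\delta/2$ and puts $\Phi_t:=\Phi_{\delta/2}^{\,n}\circ\Phi_r$; the cocycle relation $\Phi_{s+t}=\Phi_s\circ\Phi_t$, valid on the common domain where all three sides are already defined (by uniqueness of integral curves), shows this is well defined and independent of the chosen decomposition. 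Hence $v^L$ is complete and $\phi_v$ extends to all of $\R$.

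For (2), fix $s\in\R$ and compare $\alpha(t):=\phi_v(s+t)$ with $\beta(t):=\phi_v(s)\,\phi_v(t)=L_{\phi_v(s)}(\phi_v(t))$. Since the ODE $\gamma'=v^L\circ\gamma$ is autonomous, $\alpha$ is an integral curve of $v^L$; and $\beta$ is an integral curve of $v^L$ by the preliminary observation applied with $g=\phi_v(s)$. As $\alpha(0)=\phi_v(s)=\beta(0)$, uniqueness gives $\alpha\equiv\beta$, i.e.\ $\phi_v(s+t)=\phi_v(s)\phi_v(t)$ for all $s,t\in\R$, so $\phi_v$ is a group homomorphism $\R\to G$. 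Its smoothness is the smooth-dependence-on-initial-conditions part of the flow theorem, since $\phi_v(t)=\Phi(t,e)$ and $\Phi$ is smooth.

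Finally (3) is now immediate: by the preliminary observation the curve $t\mapsto g\,\phi_v(t)$ is an integral curve of $v^L$ passing through $g$ at $t=0$, so by uniqueness it \emph{is} the integral curve through $g$, whence $\Phi(t,g)=g\,\phi_v(t)$. The one step where genuine care is needed is the passage from local to global existence in (1): concatenating the uniform-time local flows must be checked to be consistent, and this is precisely where the homogeneity provided by left-invariance — the fact that every point of $G$ is carried to $e$ by some $L_g$ — does the work that is unavailable for an arbitrary vector field on a non-compact manifold.
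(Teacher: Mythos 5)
The paper states this theorem without proof (it appears as a motivating result immediately before the definition of the exponential map), so there is no proof in the source to compare against. Your argument is correct and is the standard one. The one load-bearing observation — that left translation $L_g$ carries integral curves of $v^L$ to integral curves of $v^L$, because $(d_m L_g)(v^L_m) = v^L_{gm}$ — is exactly the right place to concentrate effort, and you deploy it consistently: it gives a uniform lower bound on escape time (hence completeness by the usual concatenation argument), and it gives the homomorphism law by comparing the two integral curves $t \mapsto \phi_v(s+t)$ and $t \mapsto L_{\phi_v(s)}(\phi_v(t))$, both passing through $\phi_v(s)$ at $t=0$. Part (3) then drops out. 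A couple of purely cosmetic points: in the completeness argument you should say explicitly that $n \in \Z$ and that $\Phi_{\delta/2}^{\,n}$ means $\Phi_{-\delta/2}^{\,|n|}$ when $n<0$; and it is slightly cleaner to prove $(3)$ before $(1)$ — the formula $\Phi(t,g) = g\phi_v(t)$ holds on the local domain by uniqueness, and completeness follows because this formula manifestly extends to all of $\R \times G$ once $\phi_v$ does. None of this affects correctness.
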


We are now ready to define the exponential map.
\begin{definition}
The $G$ be a Lie group with Lie algebra $\mathfrak{g}$.
The \emph{exponential map} is defined by
\[
\exp_G \colon \mathfrak{g} \to G , \quad v \mapsto \phi_v (1).
\]
\end{definition}

\begin{examples}
If $G = \GL(n, \R)$, then $\mathfrak{g} = \mathfrak{gl}(n, \mathbb{R}) = \Mat(n,\mathbb{R})$ and the Lie exponential map turns out to be
\[
\mathrm{exp}_{\GL(n,\R)}\from \Mat(n,\R) \to \GL(n,\R) , \quad A \mapsto \sum_{n = 0} ^\infty {\frac{A^n}{n !}}
\]
\end{examples}

For the remaining of this subsection, let $G$ be a Lie group with Lie algebra $\mathfrak{g}$.
The map 
$\mathrm{int}(g) \from G \rightarrow{G}$ that sends $x$ to $gxg^{-1}$ is
a smooth automorphism of $G$ and the associated map $ G  \rightarrow \mathrm{Aut}(G)$, $g \mapsto \mathrm{int}(g)$ is a homomorphism.

\begin{definition}
The \emph{adjoint representation} of $G$ is $\mathrm{Ad}:=d_e \mathrm{int}$.
For every $g \in G$, $\mathrm{Ad}(g) := d_e\mathrm{int}(g)$ is an element of $\mathrm{GL}(\mathfrak{g})$ and the map $\mathrm{Ad} \from G \rightarrow{\GL(\mathfrak{g})}$ is a
homomorphism. 
\end{definition}

\begin{ex*}
For every $t \in \mathbb{R}$ and $X \in \mathfrak{g}$ it holds 
\[
g \exp_G(tX)g^{-1} = \exp_G\left( t \mathrm{Ad}(g) X \right).
\]
Moreover, if $G = \GL(n,\R)$ then  $\mathrm{Ad}(g)X = gXg^{-1}$.
\end{ex*}

\begin{definition}
Let $\mathfrak{g}$ be a Lie algebra.
A \emph{representation} of $\mathfrak{g}$ into a finite-dimensional vector space $V$ is a Lie algebra homomorphism from $\mathfrak{g}$ to $\End(V)$.
\end{definition}  

\begin{definition}
We also call \emph{adjoint representation} the map $\mathrm{ad} \from \mathfrak{g} \rightarrow \mathfrak{gl}(\mathfrak{g})$ that sends $X \mapsto [X, -]$. 
\end{definition}

\begin{ex*}
The adjoint representation $\mathrm{ad}$ is a representation of the Lie algebra $\mathfrak{g}$. 
Moreover, the derivative of $\mathrm{Ad}$ at $e \in G$ is the Lie algebra representation $\mathrm{ad}$.
\end{ex*}

\begin{definition}
The \emph{Killing form} of a Lie algebra $\mathfrak{g}$ is the symmetric bilinear form 
\[
B_\mathfrak{g}(x,y):=\Tr(\mathrm{ad}(x)\mathrm{ad}(y))
\]

\end{definition}

The following criterion motivates alone the relevance of the Killing form:

\begin{remark}[Cartan's Criterion]
A real Lie algebra is semisimple if and only if its Killing form is non-degenerate.
\end{remark} 

If $\mathfrak{g}$ is a semisimple Lie algebra, then any non-degenerate symmetric invariant bilinear form on $\mathfrak{g}$ is a scalar multiple of the Killing form.

\section{Symmetric spaces and the Cartan decomposition}

\begin{definition}[Riemannian viewpoint]
A Riemannian manifold $M$ is \emph{locally symmetric} if for every $p \in M$, there is a normal neighborhood $p \in U$ and an isometry $S_p : U \rightarrow{U}$ such that $S_p^2=e$ and $p$ is the only fixed point of $S_p$ in $U$.
Moreover, $M$ is \emph{globally symmetric} if each $S_p$ can be extended to an isometry of $M$.
\end{definition}

We consider now a \emph{Lie group viewpoint} of the notion of symmetric space. We are mostly interested in the case when $G$ is a semisimple Lie group with finite center. The following theorem states that one can construct symmetric spaces from a Lie group and its maximal compact subgroups.

\begin{theorem}
\label{bigtheorem}
Let $G$ be a connected semisimple Lie group with finite center, and let $K$ be a maximal compact subgroup. Then $X=G/K$ admits an essentially unique $G$-invariant metric (e.g. the one induced by the Killing form). The space $X$ endowed with this metric is a non-positively curved symmetric space, hence complete and contractible (by Cartan-Hadamard).
\end{theorem}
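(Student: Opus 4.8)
The plan is to derive everything from the \emph{Cartan decomposition} of $\mathfrak{g}$ attached to $K$. Write $\mathfrak{k}$ for the Lie algebra of $K$. The first step is to recall the structure theory: there is a Cartan involution $\theta$ of $\mathfrak{g}$ whose $+1$-eigenspace is $\mathfrak{k}$; letting $\mathfrak{p}$ be its $-1$-eigenspace one has $\mathfrak{g}=\mathfrak{k}\oplus\mathfrak{p}$ with $[\mathfrak{k},\mathfrak{k}]\subseteq\mathfrak{k}$, $[\mathfrak{k},\mathfrak{p}]\subseteq\mathfrak{p}$, $[\mathfrak{p},\mathfrak{p}]\subseteq\mathfrak{k}$, and — crucially — the Killing form $B_{\mathfrak{g}}$ is negative definite on $\mathfrak{k}$ and positive definite on $\mathfrak{p}$. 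Since $G$ has finite center (so $K$ is compact and connected), $\theta$ integrates to an involutive automorphism $\Theta$ of $G$ with fixed-point group $K$, and the global Cartan decomposition $K\times\mathfrak{p}\to G$, $(k,Y)\mapsto k\exp_G(Y)$, is a diffeomorphism. I would invoke these as the results underpinning this section rather than reprove them.

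Next I would construct the metric. Identify $T_{o}X\cong\mathfrak{g}/\mathfrak{k}\cong\mathfrak{p}$ where $o=eK$, so that the isotropy action of $K$ on $T_oX$ is $\Ad(K)|_{\mathfrak{p}}$. Because $B_{\mathfrak{g}}$ is $\Ad$-invariant and $\mathfrak{p}$ is $\Ad(K)$-stable, its restriction $B_{\mathfrak{g}}|_{\mathfrak{p}\times\mathfrak{p}}$ is a positive definite $\Ad(K)$-invariant inner product on $\mathfrak{p}$; transporting it around by the $G$-action — well defined precisely because of $K$-invariance at $o$ — yields a $G$-invariant Riemannian metric on $X$. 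For the uniqueness clause one observes that $G$-invariant metrics correspond bijectively to $\Ad(K)$-invariant inner products on $\mathfrak{p}$; since $\mathfrak{g}$ is semisimple, $\mathfrak{p}$ splits according to the simple ideals of $\mathfrak{g}$ and $\Ad(K)$ acts irreducibly on each summand, so by Schur's lemma such an inner product is determined up to one positive scalar per simple factor — i.e.\ "essentially unique".

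Then I would exhibit the symmetry and compute the curvature. As $\Theta(K)=K$, the map $s_o\colon X\to X$, $gK\mapsto\Theta(g)K$, is well defined; it is an isometry (since $\Theta$ preserves $B_{\mathfrak{g}}$, hence the metric), satisfies $s_o(o)=o$, $s_o^2=\id$, and $d_os_o=-\id$ because $\Theta$ acts by $-1$ on $\mathfrak{p}$. Thus $o$ is an isolated fixed point and $s_o$ is a geodesic symmetry at $o$; conjugating, $s_p:=g\,s_o\,g^{-1}$ for $p=g\cdot o$ gives symmetries at every point, so $X$ is globally symmetric. For the curvature, recall that on a symmetric space the Levi-Civita connection is pinned down by the symmetries and the curvature at $o$ is $R(Y,Z)W=-[[Y,Z],W]$ for $Y,Z,W\in\mathfrak{p}$; hence for $B_{\mathfrak{g}}$-orthonormal $Y,Z\in\mathfrak{p}$ the sectional curvature is $\langle R(Y,Z)Z,Y\rangle=-B_{\mathfrak{g}}([[Y,Z],Z],Y)=B_{\mathfrak{g}}([Y,Z],[Y,Z])\le 0$, using $\Ad$-invariance of $B_{\mathfrak{g}}$ and the facts that $[Y,Z]\in[\mathfrak{p},\mathfrak{p}]\subseteq\mathfrak{k}$ and $B_{\mathfrak{g}}|_{\mathfrak{k}}$ is negative definite. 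Homogeneity then propagates non-positive sectional curvature everywhere. Finally, $X$ is Riemannian homogeneous, hence geodesically complete, and the global Cartan decomposition already exhibits $X\cong\mathfrak{p}\cong\R^{\dim\mathfrak{p}}$ as a manifold, so $X$ is simply connected; Cartan–Hadamard then applies and reconfirms contractibility through $\exp_o\colon T_oX\to X$.

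\textbf{Main obstacle.} The genuine content is bundled into the structure theory quoted in the first step: the existence of the Cartan involution associated to a maximal compact $K$ (relying on conjugacy of maximal compact subgroups), the signature of $B_{\mathfrak{g}}$ on $\mathfrak{k}$ and $\mathfrak{p}$, and the global splitting $G\cong K\times\mathfrak{p}$. Once these are in hand, the metric, the symmetry $s_o$, the curvature sign, and the appeal to Cartan–Hadamard are essentially formal. The one further subtlety worth isolating is the irreducibility of $\Ad(K)$ on each simple block of $\mathfrak{p}$, which is exactly what powers the "essentially unique" statement.
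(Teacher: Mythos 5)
Your proposal is correct, and it follows the same outline as the construction the paper sketches (in the ``Constructing the associated symmetric space'' subsection and the curvature computation in ``Maximal flats'' of the following chapter): identify $T_oX\cong\mathfrak{p}$, take $B_{\mathfrak{g}}|_{\mathfrak{p}}$ as the inner product and translate it by $G$, realize the geodesic symmetry via the integrated Cartan involution, and get nonpositive curvature from $R(X,Y)Z=-[[X,Y],Z]$ together with negative-definiteness of $B$ on $\mathfrak{k}$. The paper in fact leaves this theorem unproved and explicitly declines to justify the key Cartan-decomposition inputs; you have correctly isolated those as the load-bearing structure-theory facts and supplied the remaining standard steps (the Schur argument for essential uniqueness, the map $s_o$, and the sign computation).
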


We can finally introduce the Cartan decomposition of a Lie algebra.

\begin{definition}
 An \emph{involution} on $\mathfrak {g}$ is a non-trivial Lie algebra automorphism $\theta$ of $\mathfrak {g}$ whose square is equal to the identity.
 Such an involution is called a Cartan involution on $\mathfrak {g}$ if $- B_{\mathfrak{g}} \left(X, \theta (Y) \right)$ is a positive definite bilinear form.
\end{definition}

\begin{definition}
Let $G$ and $K$ be as in theorem \ref{bigtheorem}. 
Since $\theta^2 = \mathrm{id}$, the Cartan involution $\theta$ is diagonalizable and has eigenvalues $+1$ and $-1$.
We denote with $\mathfrak{k}$ and $\mathfrak{p}$ the corresponding eigenspace, and hence obtain the decomposition
\[
\mathfrak{g} = \mathfrak{k} \oplus \mathfrak{p}
\]
named the \emph{Cartan decomposition} with respect to $\theta$.
\end{definition}

\begin{theorem}
Every real semisimple Lie algebra has a Cartan involution that is unique up to inner automorphism.
\end{theorem}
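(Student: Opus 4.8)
I would prove existence by transporting a compact real form of the complexification $\mathfrak{g}_{\C}=\mathfrak{g}\otimes_{\R}\C$ back to $\mathfrak{g}$, and uniqueness by a polar-decomposition argument inside $\operatorname{Aut}(\mathfrak{g})$. Throughout, the key soft facts I would use are: $\mathfrak{g}_{\C}$ is again semisimple (its Killing form is the $\C$-bilinear extension of $B_{\mathfrak{g}}$, hence nondegenerate, so Cartan's criterion applies); all derivations of a semisimple Lie algebra are inner, so $\operatorname{Int}(\mathfrak{g})$ is the identity component of $\operatorname{Aut}(\mathfrak{g})$; and a positive-definite self-adjoint automorphism of a Lie algebra has all its real powers again automorphisms (diagonalize it, note that the automorphism property forces $[\mathfrak{g}_{a},\mathfrak{g}_{b}]\subseteq\mathfrak{g}_{ab}$ on eigenspaces, so $\phi^{t}$, acting by $\lambda^{t}$ on $\mathfrak{g}_{\lambda}$, is still an automorphism).

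\textbf{Existence.} I would start from the existence of a compact real form $\mathfrak{u}\subset\mathfrak{g}_{\C}$, i.e.\ a real form on which the Killing form of $\mathfrak{g}_{\C}$ is negative definite — this I would import from the structure theory of complex semisimple Lie algebras (Weyl's unitarian trick / a Chevalley basis). Let $\sigma,\tau$ denote the conjugations of $\mathfrak{g}_{\C}$, regarded as a real Lie algebra, with fixed sets $\mathfrak{g}$ and $\mathfrak{u}$. If $\sigma$ and $\tau$ commuted, then $\theta:=(\sigma\tau)|_{\mathfrak{g}}$ would already work: it preserves $\mathfrak{g}=\operatorname{Fix}(\sigma)$, squares to $\id$, and since $\mathfrak{g}_{\C}=\mathfrak{u}\oplus i\mathfrak{u}$ is then $\sigma$-stable one gets $\mathfrak{g}=(\mathfrak{u}\cap\mathfrak{g})\oplus(i\mathfrak{u}\cap\mathfrak{g})$, which are the $(+1)$- and $(-1)$-eigenspaces of $\theta$ and on which $B_{\mathfrak{g}}$ is negative, resp.\ positive, definite; this is exactly positive-definiteness of $-B_{\mathfrak{g}}(\cdot,\theta\cdot)$. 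To force commutation I would use the Hermitian inner product $\langle X,Y\rangle_{\tau}:=-B_{\mathfrak{g}_{\C}}(X,\tau Y)$ (positive definite as $\mathfrak{u}$ is compact): a direct computation gives that $\sigma\tau$ is self-adjoint for it, so $N:=(\sigma\tau)^{2}$ is positive-definite self-adjoint, its real powers $N^{t}$ are automorphisms of $\mathfrak{g}_{\C}$, and $\sigma N\sigma=N^{-1}$, $\tau N\tau=N^{-1}$. Replacing $\mathfrak{u}$ by $\mathfrak{u}':=N^{1/4}\mathfrak{u}$ (a compact real form, with conjugation $\tau':=N^{1/4}\tau N^{-1/4}$), these identities yield $\sigma\tau'=\tau'\sigma$, and the recipe above applied to $\mathfrak{u}'$ produces the Cartan involution.

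\textbf{Uniqueness.} Given Cartan involutions $\theta_{1},\theta_{2}$, set $\phi:=\theta_{1}\theta_{2}\in\operatorname{Aut}(\mathfrak{g})$ and equip $\mathfrak{g}$ with $B_{\theta_{1}}(X,Y):=-B_{\mathfrak{g}}(X,\theta_{1}Y)$. Since automorphisms preserve $B_{\mathfrak{g}}$, one checks $\phi$ is self-adjoint for $B_{\theta_{1}}$ and in fact positive definite, because $B_{\theta_{1}}(\phi X,X)=-B_{\mathfrak{g}}(\theta_{2}X,X)=B_{\theta_{2}}(X,X)>0$ for $X\neq 0$. Hence all $\phi^{t}$ are automorphisms, and the path $t\mapsto\phi^{t}$ joins $\id$ to $\phi$ inside $\operatorname{Aut}(\mathfrak{g})$, so it lies in the identity component $\operatorname{Int}(\mathfrak{g})$. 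From $\theta_{2}\phi\theta_{2}=\theta_{2}\theta_{1}=\phi^{-1}$ we get $\theta_{2}\phi^{t}\theta_{2}=\phi^{-t}$, and therefore
\[
\phi^{1/2}\theta_{2}\phi^{-1/2}=\phi^{1/2}\bigl(\theta_{2}\phi^{-1/2}\theta_{2}\bigr)\theta_{2}=\phi^{1/2}\phi^{1/2}\theta_{2}=\phi\theta_{2}=\theta_{1},
\]
so $\theta_{1}=\psi\theta_{2}\psi^{-1}$ with $\psi=\phi^{1/2}\in\operatorname{Int}(\mathfrak{g})$, as claimed.

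\textbf{Main obstacle.} The only genuinely non-formal input is the existence of a compact real form of $\mathfrak{g}_{\C}$; I would simply cite it from the classification/structure theory (it is not developed in the preceding sections). The remaining analytic ingredients — self-adjointness of $\sigma\tau$, stability of $\operatorname{Aut}$ under real powers of positive self-adjoint elements, and innerness of derivations of semisimple Lie algebras — are routine but form the technical engine of both halves; the place requiring genuine care is the exponent bookkeeping in the commuting-conjugations step ($\tau':=N^{1/4}\tau N^{-1/4}$ and the identities $\sigma N\sigma=\tau N\tau=N^{-1}$) and in the final conjugation formula.
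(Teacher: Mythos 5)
Your argument is correct and it is the standard one (essentially Helgason, Chapter III, or Knapp, Chapter VI). The paper itself does not supply a proof of this theorem --- it is quoted as classical background in the Lie-theory chapter, sandwiched between the definition of the Cartan decomposition and an exercise --- so there is no ``paper proof'' to compare against. For the record, the two pieces of your argument are exactly the textbook route: existence by aligning a compact real form $\mathfrak{u}\subset\mathfrak{g}_{\C}$ with $\mathfrak{g}$ via the polar correction $N^{1/4}$, where $N=(\sigma\tau)^{2}$ is positive self-adjoint for the Hermitian form $\langle X,Y\rangle_\tau=-B_{\mathfrak{g}_\C}(X,\tau Y)$ and the identities $\sigma N\sigma=\tau N\tau=N^{-1}$ force $\sigma$ and $\tau'=N^{1/4}\tau N^{-1/4}$ to commute; and uniqueness by observing that $\phi=\theta_1\theta_2$ is positive self-adjoint for $B_{\theta_1}$, that its real powers $\phi^{t}$ stay in $\operatorname{Aut}(\mathfrak{g})$, and that the path $t\mapsto\phi^{t}$ lands in $\operatorname{Int}(\mathfrak{g})$, so $\phi^{1/2}\theta_2\phi^{-1/2}=\theta_1$. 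Every computation you display checks out, including the exponent bookkeeping $N^{-1/4}(\sigma\tau)N^{-1/4}=N^{-1/2}\sigma\tau=N^{1/2}(\tau\sigma)N^{0}\cdots$, and the one non-formal input you flag (existence of a compact real form of $\mathfrak{g}_\C$) is genuinely the only external fact needed; citing it is appropriate at this level.

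Two small remarks, neither a gap. First, in the existence step you should note explicitly that the $(\pm1)$-eigenspaces $\mathfrak{g}\cap\mathfrak{u}'$ and $\mathfrak{g}\cap i\mathfrak{u}'$ are $B_{\mathfrak{g}}$-orthogonal --- this is automatic because $\theta$ preserves $B_{\mathfrak{g}}$ --- before concluding that $-B_{\mathfrak{g}}(\cdot,\theta\cdot)$ is positive definite from its definiteness on each summand. Second, when you deduce that $\phi^{1/2}\in\operatorname{Int}(\mathfrak{g})$, the chain is: $\operatorname{Int}(\mathfrak{g})=\operatorname{Aut}(\mathfrak{g})^{\circ}$ because every derivation of a semisimple Lie algebra is inner, and the path $t\mapsto\phi^{t}$ is continuous and starts at the identity; you state all the ingredients but it is worth making the ``identity component equals inner automorphisms'' step explicit, since the theorem asserts \emph{inner} rather than merely ``connected to the identity.''
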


\begin{ex*}
Show that, with the above notations, $[\mathfrak{k},\mathfrak{k}] \subseteq \mathfrak{k}, [\mathfrak{p}, \mathfrak{p}] \subseteq \mathfrak{k}$ and $[\mathfrak{k}, \mathfrak{p}] \subseteq \mathfrak{p}$.
\end{ex*}

\newpage

\thispagestyle{empty}

\chapter[Restricted Root Systems and Parabolic Subgroups]{Restricted Root Systems and Parabolic Subgroups\\ {\Large\textnormal{\textit{by Max Riestenberg}}}}
\addtocontents{toc}{\quad\quad\quad \textit{Max Riestenberg}\par}

Assumptions: We assume $G$ is a connected, semisimple Lie group with finite center. $K$ denotes a maximal compact subgroup. The associated symmetric space $X=G/K$ is of noncompact type. 

Review: I am assuming we know about Lie groups $G$, Lie algebras $\mathfrak{g}$, symmetric spaces $X=G/K$, and the Cartan decomposition $\mathfrak{g}=\mathfrak{k} \oplus \mathfrak{p}$.

Goal: We need to cover the restricted root space decomposition, flag manifolds, and parabolic subgroups. In particular we need to give the description of parabolic subgroups in terms of restricted roots. 

\section{Constructing the associated symmetric space}

I added this subsection after my talk, since there was some discussion about how to associate a symmetric space to a semisimple Lie group. I will only summarize the construction.

Suppose $G$ is a real semisimple connected Lie group with finite center. Let $K$ be a maximal compact subgroup. Form the quotient $X=G/K$. The orbit map $G \to X$ given by $g \mapsto gK$ has differential at the identity $\mfg \to T_{[K]}G/K.$ The kernel of this linear map is $\mfk$. Define $\mfp$ to be the Killing form perpendicular of $\mfk$. Then the differential of the orbit map restricts to an isomorphism $\mfp \to T_{[K]}G/K.$ The Killing form restricts to a positive definite inner product on $\mfp$, and $K$ acts on $\mfp$ via the adjoint action by isometries for this inner product. The inner product may be extended to all of $X=G/K$ by left translation by the action of $G$.

The resulting Riemannian manifold turns out to be a symmetric space of non-compact type. We define the Cartan involution $\theta$ on $\mfg = \mfk \oplus \mfp$ by setting $\mfk$ to be the $+1$-eigenspace and $\mfp$ to be the $-1$-eigenspace. This involution integrates to a unique involution $\sigma \colon G \to G$. The geodesic symmetry at $[K]$ is then given by $gK \mapsto \sigma(g)K$. 

Let me emphasize that a few points in my summary are not obvious (to me). In particular, it is not obvious that the restriction of $B$ to $\mfk$ is negative definite and that the definition of $\theta$ yields a Cartan involution. (Recall that a \textit{Cartan involution} is a Lie algebra involution $\theta$ such that the modified Killing form $-B(X,\theta Y)$ is an inner product.) Let me not attempt to prove this here.

\begin{example}
    Our running example will be $\SL(n,\R)$. Its associated symmetric space $X_n$ can be modelled as the space of $n \times n$ real symmetric positive definite matrices of determinant $1$. The action $G \times X \to X$ may be given by $g\cdot x = g x \tran{g}$. Then $X$ has a basepoint $p = I_n$, the identity matrix, with stabilizer $K=\SO(n)$, with involution $\sigma(g) = \tran{g^{-1}}$ and Cartan involution $\theta(X)=-\tran{X}$. The Cartan decomposition is given by $\mfk = \mathfrak{so}(n)$ and $\mfp$ is the traceless symmetric matrices. 
\end{example}

\section{Maximal flats}

A \textit{flat} in $X$ is a totally geodesic submanifold isometric to Euclidean space. A flat is \textit{maximal} if it is maximal with respect to inclusion. 

\begin{example}
    The set of diagonal matrices with positive entries on the diagonal, multiplying to $1$ is a maximal flat in $X_n$. 
\end{example}

Recall that a point $p$ in $X$ determines the Cartan decomposition $\mfg = \mfp \oplus \mfk$ and a canonical identification $\mfp = T_pX$. Via this identification, the Riemannian exponential map corresponds to the Lie theoretic exponential map:
$$ \exp_p(X) = e^Xp .$$
Moreover, for a nonpositively curved symmetric space, the Riemannian exponential map $\exp_p \colon \mfp \to X$ is a diffeomorphism. 

\begin{proposition}
    $$ \{ \text{maximal flats through } p \} \xleftrightarrow{} \{ \text{maximal abelian subspaces of } \mfp \} $$
\end{proposition}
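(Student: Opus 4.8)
The plan is to establish the correspondence in both directions using the identification $\mathfrak{p} = T_pX$ together with the fact that $\exp_p \colon \mathfrak{p} \to X$ is a diffeomorphism. Given a maximal abelian subspace $\mathfrak{a} \subseteq \mathfrak{p}$, I would first show that $F_{\mathfrak{a}} := \exp_p(\mathfrak{a})$ is a flat through $p$. The key computation is that for $X, Y \in \mathfrak{a}$, the sectional curvature of the plane spanned by $X$ and $Y$ vanishes; in a symmetric space the curvature tensor at $p$ is given by $R(X,Y)Z = -[[X,Y],Z]$ for $X,Y,Z \in \mathfrak{p}$, so $[\mathfrak{a},\mathfrak{a}] = 0$ forces the curvature to vanish on all of $\mathfrak{a}$. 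One then checks that $\exp_p$ restricted to $\mathfrak{a}$ is a totally geodesic embedding onto a Euclidean subspace — geodesics through $p$ tangent to $\mathfrak{a}$ are of the form $t \mapsto e^{tX}p$ and stay in $F_{\mathfrak{a}}$, and the induced metric is flat by the curvature computation. So $F_{\mathfrak{a}}$ is a flat through $p$.

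Next I would show $F_{\mathfrak{a}}$ is \emph{maximal} as a flat, and conversely that every maximal flat through $p$ arises this way. Given any flat $F$ through $p$, its tangent space $V := T_pF \subseteq \mathfrak{p}$ is a subspace on which the curvature vanishes, i.e.\ $[[X,Y],Z] = 0$ for all $X,Y,Z \in V$; using the Cartan relation $[\mathfrak{p},\mathfrak{p}] \subseteq \mathfrak{k}$ and the nondegeneracy of $-B(\cdot, \theta \cdot)$ restricted appropriately, one deduces $[V,V] = 0$, so $V$ is an abelian subspace of $\mathfrak{p}$. Moreover $F = \exp_p(V)$ since both are complete totally geodesic submanifolds through $p$ with the same tangent space there. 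Hence $F \subseteq F'$ for flats corresponds exactly to $V \subseteq V'$ for the associated abelian subspaces, so maximal flats through $p$ correspond to maximal abelian subspaces of $\mathfrak{p}$, and the two assignments $\mathfrak{a} \mapsto \exp_p(\mathfrak{a})$ and $F \mapsto T_pF$ are mutually inverse bijections.

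The main obstacle I anticipate is the step showing that a subspace $V \subseteq \mathfrak{p}$ with vanishing curvature (i.e.\ $[[X,Y],Z] = 0$ on $V$) is actually abelian, i.e.\ $[V,V] = 0$. This is where one genuinely uses that $X$ is of noncompact type and that $-B(\cdot,\theta\cdot)$ is positive definite: for $X, Y \in V$ one has $[X,Y] \in \mathfrak{k}$, and one wants to pair $[X,Y]$ against itself. The trick is to compute $B([X,Y],[X,Y]) = -B(Y,[X,[X,Y]])$ by invariance of the Killing form, and $[X,[X,Y]] \in \mathfrak{p}$ lies in the span controlled by the curvature hypothesis — carefully unwinding this forces $[X,Y] = 0$. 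I would also need the standard fact that a complete connected totally geodesic submanifold is determined by its tangent space at one point, which follows from uniqueness of geodesics; this lets me upgrade the infinitesimal (Lie algebra) statement to the global (flat) statement cleanly.
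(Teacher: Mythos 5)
Your proposal is correct and follows essentially the same line as the paper: the forward direction uses $R(X,Y)Z=-[[X,Y],Z]$ to show an abelian subspace of $\mfp$ exponentiates to a flat, and the converse pairs $[X,Y]\in\mfk$ against itself via $\ad$-invariance of the Killing form and negative definiteness of $B|_{\mfk}$ to force $[X,Y]=0$. The paper reaches the same conclusion by writing out the sectional curvature formula and observing that its numerator reduces to $B([X,Y],[X,Y])$, but this is the identical computation reorganized (note that the hypothesis you actually need, and do invoke in your final paragraph, is definiteness of $B$ on $\mfk$, not merely nondegeneracy).
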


\begin{example}
    The set of traceless diagonal matrices form a maximal abelian subspace of $\mfp$ in $\mathfrak{sl}(n,\R)$.
\end{example}

\begin{proof}
    Let $\mfa$ be an abelian subspace of $\mfp$. Via the identification $\mfp = T_pX$, the Riemann curvature tensor at $p$ is given by $R \colon \mfp \times \mfp \times \mfp \to \mfp$,
    $$ R(X,Y)Z = -[[X,Y]Z] ,$$
    which clearly restricts to zero on $\mfa$. Hence $\exp_p(\mfa)$ is a flat.
    
    On the other hand, the sectional curvature of the plane spanned by $X,Y \in \mfp$ is given by 
    $$ \kappa(X,Y) = \frac{ \langle R(X,Y)Y,X \rangle}{\langle X,X \rangle \langle Y,Y \rangle - \langle X,Y \rangle^2 } $$
    which reduces to $B([X,Y],[X,Y])$ if $X,Y$ are orthonormal. If $X,Y$ correspond to vectors tangent to a flat, this must be zero. On the other hand, the restriction of $B$ to $\mfk$ is negative definite, so $[X,Y]=0$. It follows that the preimage of a flat under $\exp_p$ is an abelian subspace of $\mfp$.
\end{proof}

\section{Restricted root space decomposition}

Let $B \colon \mfg \times \mfg \to \R$ be the Killing form of $\mfg$. Recall that it is a symmetric bilinear form, nondegenerate if and only if $\mfg$ is semisimple (We assume that $G$ is semisimple in this section). The point $p \in X$ defines the Cartan involution $\theta \colon \mfg \to \mfg$ so that $\mfk$ is the $(+1)$-eigenspace of $\theta$ and $\mfp$ is the $(-1)$-eigenspace of $\theta$. We may define $B_p \colon \mfg \times \mfg \to \R$ by $B_p(X,Y)=-B(X,\theta (Y))$. 

\begin{lemma}\label{B_p}
    $B_p$ is an inner product. For $X \in \mfk$, $\ad X$ is skew-symmetric with respect to $B_p$. For $X \in \mfp$, $\ad (X)$ is symmetric with respect to $B_p$.
\end{lemma}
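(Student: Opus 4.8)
The plan is to establish the three assertions in sequence, each reducing to a computation with the Killing form $B$, its $\theta$-invariance, and the $B$-invariance of $\ad$.

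\textbf{Step 1: $B_p$ is an inner product.} By definition $B_p(X,Y) = -B(X,\theta Y)$. First I would check symmetry: since $\theta$ is a Lie algebra automorphism, $B(\theta X, \theta Y) = B(X,Y)$, so $B_p(X,Y) = -B(X,\theta Y) = -B(\theta X, \theta^2 Y) = -B(\theta X, Y) = -B(Y, \theta X) = B_p(Y,X)$. For positive-definiteness, decompose $X = X_{\mfk} + X_{\mfp}$ along $\mfg = \mfk \oplus \mfp$; then $\theta X = X_{\mfk} - X_{\mfp}$, and since $\mfk,\mfp$ are $B$-orthogonal (they are different eigenspaces of the $B$-self-adjoint operator $\theta$), one gets $B_p(X,X) = -B(X_{\mfk},X_{\mfk}) + B(X_{\mfp},X_{\mfp})$. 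This is positive because $B$ is negative definite on $\mfk$ and positive definite on $\mfp$ — exactly the two facts flagged as ``not obvious'' in the construction section, which by the standing assumption (and Theorem stating existence of a Cartan involution) we may take as given, since $\theta$ being a Cartan involution means precisely that $-B(X,\theta X)$ is positive definite.

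\textbf{Step 2: $\ad X$ is $B_p$-skew for $X \in \mfk$ and $B_p$-symmetric for $X \in \mfp$.} The key input is the invariance identity $B([X,Y],Z) = -B(Y,[X,Z])$ (equivalently $B(\ad(X)Y, Z) = -B(Y,\ad(X)Z)$), valid for all $X,Y,Z \in \mfg$, which holds for the Killing form of any Lie algebra. The second input is that $\theta$ is a Lie algebra homomorphism, so $\ad(\theta X) = \theta \circ \ad(X) \circ \theta^{-1} = \theta \circ \ad(X) \circ \theta$. Now compute, for arbitrary $Y,Z$:
\[
B_p(\ad(X)Y, Z) = -B(\ad(X)Y, \theta Z) = B(Y, \ad(X)\theta Z) = B(Y, [X,\theta Z]).
\]
On the other hand,
\[
B_p(Y, \ad(X)Z) = -B(Y, \theta[X,Z]) = -B(Y, [\theta X, \theta Z]).
\]
If $X \in \mfk$ then $\theta X = X$, so the second expression is $-B(Y,[X,\theta Z]) = -B_p(\ad(X)Y,Z)$, giving skew-symmetry. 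If $X \in \mfp$ then $\theta X = -X$, so the second expression is $+B(Y,[X,\theta Z]) = B_p(\ad(X)Y,Z)$, giving symmetry.

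\textbf{Anticipated obstacle.} There is no real obstacle in the argument itself; it is all bookkeeping with $B$ and $\theta$. The one genuinely non-formal ingredient is the positive-definiteness in Step 1, i.e.\ that $B|_{\mfk}$ is negative definite and $B|_{\mfp}$ is positive definite. As the author already notes this is not obvious from first principles, but it is precisely the defining property of a Cartan involution, whose existence on any real semisimple Lie algebra was recorded as a theorem earlier; so I would simply invoke that and not attempt an independent proof. The rest is the two invariance identities (Jacobi/Killing-invariance and $\theta$ a homomorphism) applied mechanically.
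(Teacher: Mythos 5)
Your proof is correct. The paper states Lemma~\ref{B_p} without proof (moving straight to an example), so there is no paper argument to compare against; your argument is the standard one, using the $\ad$-invariance of the Killing form, the fact that $\theta$ is an automorphism with $\theta^2=\id$, and the defining property of a Cartan involution for positive-definiteness, and all the computations check out.
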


\begin{example}
    For $\SL(n,\R)$ and $p=I_n$, $B_p(X,Y) = 2n\tr(X \tran{Y})$. Note that $\tr(X \tran{Y})$ is the entrywise dot product for matrices, also known as the Frobenius inner product. 
\end{example}

By the spectral theorem of linear algebra, each $\ad (X) \colon \mfg \to \mfg$ is real-diagonalizable over $\R$. 

\begin{center}
\adforn{11}
\end{center}

Here's a brief aside to prepare us for root theory. The takeaway is that roots are natural generalizations of eigenvalues and root spaces are natural generalizations of eigenvectors to the setting of a commuting family of linear transformations.

\begin{ex*}
    Let $f_1,\dots,f_n$ be a finite set of commuting, diagonalizable linear transformations of a vector space $V$. Show that they admit a common diagonalization, i.e.\ there exists a basis $\{e_i\}$ of $V$ such that each $f_i$ is diagonal in the basis $\{e_i\}$. 
\end{ex*}

Recall that an eigenvalue of a linear transformation is a number $\lambda$ so that $f v = \lambda v$. When you play with the previous exercise, you will discover that the eigenvalues of linear combinations of the $f_i$ is the linear combination of the eigenvalues of the $f_i$. In other words, rather than think of an eigenvalue as a number, it is better to think of it as a gadget that eats linear combinations of $f_i$ and spits out a number. And this operation should be linear, i.e.\ it should commute with taking linear combinations. In other words, an eigenvalue of a commuting family of linear transformations ${f_i}$ is a linear functional on the span of ${f_i}$. 
\begin{center}
\adforn{11}
\end{center}

We apply these observations to a maximal abelian subspace $\mfa$ of $\mfp$. $\mfa$ acts on $\mfg$ via the adjoint action, and this is a commuting family of diagonalizable linear transformations. Therefore we may consider its simultaneous eigenspace decomposition, which in this case is called the \textit{restricted root space decomposition}: 
$$ \mfg = \mfg_0 \oplus \bigoplus_{\alpha \in \Sigma} \mfg_\alpha .$$
  where $\mfg_\alpha = \{ X \in \mfg : \forall A \in \mfa, [A,X]=\alpha(A)X \}$ and $\Sigma = \Sigma(\mfg,\mfa)= \{ \alpha \in \mfa^\ast \setminus \{0\} : \mfg_\alpha \ne 0 \}$ is the set of \textit{restricted roots}.

\begin{example}
    For $\mfa$ as above in $\mathfrak{sl}(n,\R)$, $\mfg_0 = \mfa$ and each root $\alpha \colon \mfa \to \R$ is given by the difference of two diagonal entries. Each root space is one-dimensional, spanned by an off-diagonal elementary matrix.  
\end{example}

\begin{ex*}
    Give an example where $\mfa \ne \mfg_0$.
\end{ex*}

\section{Weyl group and Weyl chambers}

The restricted root space decomposition depends only on $p$ and $\mfa$. Therefore 
$$ N_K(\mfa) = \{ k \in K : \Ad(k)(\mfa)=\mfa \} $$
acts on $\mfa$ and also the set of roots (recall that $K$ denotes the stabilizer of $p$ in $G$). The kernel of the action on $\mfa$ is 
$$ Z_K(\mfa) = \{ k \in K : \forall A \in \mfa, \Ad(k)(A)=A \} $$
and the quotient $W = N_K(\mfa)/Z_K(\mfa)$ is called the \textit{(restricted) Weyl group}. 

Each root $\alpha \in \Sigma$ has a \textit{wall}
$$ w_\alpha = \ker \alpha = \{ A \in \mfa : \alpha(A) = 0 \} .$$
The components of $\mfa \setminus \cup_{\alpha \in \Sigma} w_\alpha$ are called \textit{(open) Euclidean Weyl chambers}. Elements of $\mfa$ in open Euclidean Weyl chambers are called \textit{regular}. We often choose an open Euclidean Weyl chamber and denote it $\mfa^+$ (sometimes called the \textit{positive Weyl chamber}). 

\begin{example}
    For $\SL(n,\R)$, the restricted Weyl group $W$ is isomorphic to the permutation group $S_d$ on $d$ symbols. It acts on $\mfa$ via simultaneously permuting the rows and columns.
\end{example}

Via the exponential map, we can talk about walls and Euclidean Weyl chambers in a maximal flat as well. It turns out that each wall of a maximal flat is an intersection of maximal flats.  

Choose a regular element $X \in \mfa^+$. There is an associated set of \textit{positive roots} $\Sigma^+ = \{\alpha \in \Sigma : \alpha(X) > 0 \}$ and a similarly defined set of {negative roots}. Every root is positive or negative. There is also a set of \textit{simple roots} $\Delta$, uniquely defined by the property that each positive root can be written as a linear combination of simple roots by nonnegative integers. Each wall bounding $\cham$ is the kernel of a unique simple root in $\Delta$. In particular, $\abs{\Delta} = \dim \mfa = \rank X$.

\section{The KAK decomposition}

\begin{theorem}\label{almost KAK}
Let $\cham$ be a closed Euclidean Weyl chamber in a maximal abelian subspace of $\mfp$. Then
    $$ \mfp = \bigcup_{k \in K} \Ad(k)(\cham) .$$
Moreover, if $\Ad(k)(\cham)=\cham$ then $\Ad(k)$ fixes $\cham$ pointwise.
\end{theorem}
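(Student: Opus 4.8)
The plan is to prove the first statement via the classical maximum principle argument comparing the flat $\exp_p(\cham)$ with an arbitrary point of $X$, and to deduce the second statement from the structure of the Weyl group action on $\mfa$.

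For the first statement, fix $Y \in \mfp$; I want to find $k \in K$ with $\Ad(k)(Y) \in \cham$. Consider the smooth function $k \mapsto B_p(\Ad(k)Y, H_0)$ on the compact group $K$, where $H_0 \in \cham$ is a fixed regular element; alternatively, and more symmetrically, consider the function $f \colon K \to \R$ given by $f(k) = \norm{\Ad(k)Y - H_0}^2$ for the inner product $B_p$, or the function $k \mapsto B_p(\Ad(k) Y, \Ad(k) Y - \text{proj})$. The cleanest route: pick a regular $H_0 \in \cham$ and minimize $\phi(k) = B_p(\Ad(k)Y, H_0)$ — wait, one should \emph{maximize} the appropriate quantity. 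I would take $\phi(k) = -B_p(\Ad(k)Y, H_0)$ is not bounded usefully; instead the standard trick is to let $Z = \Ad(k_0)Y$ where $k_0$ maximizes $k \mapsto B_p(\Ad(k)Y, H_0)$ over the compact set $K$, then differentiate: for any $W \in \mfk$, the curve $t \mapsto \Ad(\exp(tW))Z$ has derivative $[W,Z]$ at $t=0$, so maximality forces $B_p([W,Z], H_0) = 0$ for all $W \in \mfk$. Using that $\ad(H_0)$ is symmetric with respect to $B_p$ (Lemma~\ref{B_p}, since $H_0 \in \mfp$) and the invariance of the Killing form, rewrite $B_p([W,Z],H_0) = -B_p(W,[Z,H_0])$; since this vanishes for all $W \in \mfk$ and $[Z,H_0] \in \mfk$ (as $Z, H_0 \in \mfp$ and $[\mfp,\mfp]\subseteq\mfk$), we get $[Z,H_0]=0$. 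Thus $Z$ centralizes $H_0$, hence lies in the centralizer $\mfg_0$ of $\mfa$ intersected with $\mfp$ — but since $H_0$ is regular, its centralizer in $\mfp$ is exactly $\mfa$, so $Z \in \mfa$. Finally, a standard argument (choosing $k_0$ to maximize $B_p(\Ad(k)Y,H_0)$ and noting that no element of $W \setminus \{1\}$ can move $Z$ to increase the pairing with $H_0 \in \cham$) shows $Z$ lies in the \emph{closed} chamber $\cham$, using that $W$ acts simply transitively on the set of closed chambers and $B_p(wZ, H_0) \le B_p(Z,H_0)$ forces $Z$ to be dominant.

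For the second statement: suppose $k \in K$ satisfies $\Ad(k)(\cham) = \cham$. Then $k \in N_K(\mfa)$, so $\Ad(k)$ induces an element $\bar k$ of the Weyl group $W = N_K(\mfa)/Z_K(\mfa)$ which preserves the closed chamber $\cham$. Since $W$ acts simply transitively on the collection of closed Weyl chambers in $\mfa$ (equivalently, the only element of $W$ fixing a chamber setwise is the identity — this is a standard fact about finite reflection groups, following because a nontrivial $w\in W$ sends some positive root to a negative root), we conclude $\bar k = 1$, i.e.\ $\Ad(k) \in Z_K(\mfa)$, which by definition fixes $\mfa$, and in particular $\cham$, pointwise.

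The main obstacle is the compactness/maximization step and extracting $Z \in \cham$ (the closed chamber) rather than merely $Z \in \mfa$: one must be careful that the maximizer lands in the correct chamber, which requires knowing that $W$ acts transitively on chambers and that the pairing with a dominant regular element is maximized precisely on the dominant representative of a $W$-orbit. A secondary subtlety is justifying that the centralizer of a regular element of $\mfa$ inside $\mfp$ is exactly $\mfa$, which follows from the restricted root space decomposition: if $[H_0,X]=0$ and $X = X_0 + \sum_\alpha X_\alpha$, then $0 = \sum_\alpha \alpha(H_0) X_\alpha$ forces $X_\alpha = 0$ for all $\alpha$ (regularity: $\alpha(H_0)\neq 0$), so $X \in \mfg_0$, and intersecting with $\mfp$ gives $\mfa$ since $\mfg_0 \cap \mfp = \mfa$ by maximality of $\mfa$.
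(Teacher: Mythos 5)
Your proposal is correct. The paper states this theorem without proof (it is cited as input to Corollary~\ref{KAK} but no argument is given in the text), so there is no proof in the source to compare against; you have reconstructed the standard argument. Two remarks to tighten it. First, you hedge among several candidate objective functions before settling on $\phi(k)=B_p(\Ad(k)Y,H_0)$ with $H_0$ a fixed regular element of $\cham$ (i.e.\ $H_0$ in the open chamber $\mfa^+$); only that choice makes the critical-point computation work, so the hedging should be excised. Second, the step you flag as the main obstacle --- that the maximizer $Z=\Ad(k_0)Y$, once known to lie in $\mfa$, actually lies in the closed chamber $\cham$ --- is closed by the standard dominance estimate: if $Z^+$ is the dominant representative of the $W$-orbit of $Z$, then $Z^+-wZ^+$ is a non-negative combination of simple roots for every $w\in W$, so $B_p(Z^+,H_0)-B_p(wZ^+,H_0)\ge 0$ with equality only when $wZ^+=Z^+$; since $k_0$ was a global maximizer over $K$ and every $w\in W$ is represented in $N_K(\mfa)$, this forces $Z=Z^+\in\cham$. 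Everything else --- the identity $B_p([W,Z],H_0)=-B_p(W,[Z,H_0])$ with $[Z,H_0]\in\mfk$, the identification $\mfg_0\cap\mfp=\mfa$ from maximality of $\mfa$, and the reduction of the second statement to the simple transitivity of $W$ on chambers --- is accurate.
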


\begin{example}
    For $\SL(n,\R)$, the stabilizer of the point-chamber pair $(p,\cham)$, i.e.\
    $$ M = \{ g \in G : gp =p, \Ad(g)(\cham) = \cham \} $$
    is given by diagonal matrices with entries $\pm 1$, with an even number of $-1$'s. 
\end{example}

Theorem \ref{almost KAK} implies that $G$ acts transitively on point-chamber pairs, but in general this action is not simply transitive.

\begin{ex*}
    Give an example of a symmetric space $G/K$ where the stabilizer of a point-chamber pair in $G$ has positive dimension.
\end{ex*}

Unfortunately, the following decomposition is also sometimes referred to as the ``Cartan decomposition." Fortunately, it can be unambiguously referred to as the ``$KAK$ decomposition."

\begin{corollary}\label{KAK}
    If $G$ is a connected semisimple Lie group with finite center, $K$ is a maximal compact and $\overline{\mfa}^+$ is a closed Euclidean Weyl chamber, then $G = K\exp(\overline{\mfa}^+)K$. Moreover, for any two factorizations $g=kak', g=lbl'$ we have $a=b$. 
\end{corollary}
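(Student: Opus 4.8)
The plan is to deduce Corollary~\ref{KAK} from Theorem~\ref{almost KAK} together with the group-level polar decomposition that falls out of the diffeomorphism $\exp_p\colon\mfp\to X$. Throughout, fix the basepoint $p=eK\in X$, whose stabiliser in $G$ is $K$, and recall $\exp_p(Y)=e^{Y}p$ for $Y\in\mfp$.

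\textbf{Step 1: $G=\exp(\mfp)K$, with the $\exp(\mfp)$-factor unique.} For $g\in G$ the point $g\cdot p$ lies in $X$, and since $\exp_p\colon\mfp\to X$ is a diffeomorphism there is a unique $Y_g\in\mfp$ with $g\cdot p=e^{Y_g}\cdot p$; then $e^{-Y_g}g$ fixes $p$, so $e^{-Y_g}g=:k\in K$ and $g=e^{Y_g}k$. The element $Y_g$ is unique by bijectivity of $\exp_p$, and then $k$ is determined too. I also record that $\exp_G$ is injective on $\mfp$: if $e^{Y_1}=e^{Y_2}$ with $Y_i\in\mfp$, then $\exp_p(Y_1)=\exp_p(Y_2)$, so $Y_1=Y_2$.

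\textbf{Step 2: existence.} Apply Theorem~\ref{almost KAK} to write $Y_g=\Ad(k_0)(H)$ with $k_0\in K$ and $H\in\cham$. Using the identity $g\exp_G(X)g^{-1}=\exp_G(\Ad(g)X)$ proved in the earlier exercise, $e^{Y_g}=k_0\,e^{H}\,k_0^{-1}$, so $g=k_0\,e^{H}\,(k_0^{-1}k)$. This is the claimed factorisation with $a=\exp(H)\in\exp(\cham)$, hence $G=K\exp(\cham)K$.

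\textbf{Step 3: uniqueness of $a$.} Suppose $g=k\exp(Y)k'=l\exp(Z)l'$ with $Y,Z\in\cham$. Rewriting each factorisation as $g=\exp(\Ad(k)Y)\,(kk')=\exp(\Ad(l)Z)\,(ll')$, the uniqueness in Step~1 together with $\Ad(K)\mfp\subseteq\mfp$ gives $\Ad(k)Y=\Ad(l)Z$, i.e.\ $Z=\Ad(m)Y$ for $m=l^{-1}k\in K$. So it suffices to prove: if $Y,Z\in\cham$ lie in one $\Ad(K)$-orbit in $\mfp$, then $Y=Z$. Now $\mfa$ and $\Ad(m)\mfa$ are maximal abelian subspaces of $\mfp$ both containing $Z$, hence both are maximal abelian subspaces of the $\theta$-stable reductive subalgebra $\mfz_{\mfg}(Z)$; by conjugacy of maximal abelian subspaces of the $(-1)$-eigenspace, applied inside $Z_G(Z)^{\circ}$ with maximal compact $Z_K(Z)^{\circ}$, there is $n\in Z_K(Z)^{\circ}$ with $\Ad(n)\Ad(m)\mfa=\mfa$. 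Then $nm\in N_K(\mfa)$ represents some $w\in W$, and $wY=\Ad(nm)Y=\Ad(n)Z=Z$ since $n$ centralises $Z$. Finally $\cham$ is a strict fundamental domain for $W$ acting on $\mfa$ (a standard fact about finite reflection groups: each $W$-orbit meets a closed chamber in exactly one point), so from $Y,Z\in\cham$ and $wY=Z$ we conclude $Y=Z$, i.e.\ $a=b$.

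\textbf{Main obstacle.} The two bookkeeping arguments with $\exp_p$ are routine; the real content is the lemma in Step~3 that $\Ad(K)$-conjugate elements of $\mfa$ are $W$-conjugate, which rests on the conjugacy of maximal abelian subspaces of $\mfp$ inside the centraliser $Z_G(Z)^{\circ}$ --- essentially a refinement of the surjectivity statement of Theorem~\ref{almost KAK}, and the point I expect to need the most care (one either cites it as a known structural fact or reproves it by a compactness argument on $K$). The strict-fundamental-domain property of $\cham$, though elementary, is also genuinely used and should be stated explicitly rather than glossed over.
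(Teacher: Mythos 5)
Your proof is correct. For existence you take the same route as the paper's two-sentence sketch --- the polar decomposition $G=\exp(\mfp)K$ followed by an application of Theorem~\ref{almost KAK} --- differing only in that you extract the polar decomposition directly from the diffeomorphism $\exp_p\colon\mfp\to X$ already recorded in the text (together with $\stab_G(p)=K$), rather than via the language of transvections; the two are equivalent, but your version is arguably cleaner because every ingredient has already been stated. The paper does not indicate any proof of the uniqueness clause, and your Step~3 supplies the standard argument: the uniqueness of the $\exp(\mfp)$-factor in $G=\exp(\mfp)K$ reduces the question to showing that $\Ad(K)$-conjugate elements of $\cham$ coincide, which you establish by conjugating the two maximal abelian subspaces $\mfa$ and $\Ad(m)\mfa$ inside the $\theta$-stable reductive centraliser $\mfz_{\mfg}(Z)$ via some $n\in Z_K(Z)^{\circ}$, producing a Weyl group element $w$ with $wY=Z$, and then invoking that $\cham$ is a strict fundamental domain for $W$ acting on $\mfa$. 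Both inputs you flag --- the conjugacy of maximal abelian subspaces of $\mfp_Z$ by $Z_K(Z)^{\circ}$, and the strict-fundamental-domain property --- are indeed the nonroutine ones and deserve an explicit citation, but the logic is sound and the argument is more complete than the one the text offers.
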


\begin{example}
    Applied to $\SL(n,\R)$, this is just the \textit{singular value decomposition}.
\end{example}

\begin{proof}
    One first shows that any two points of $X$ are related by a transvection. Moreover, each transvection lies in a 1-parameter subgroup corresponding to an element of $\mfp$. It follows that $G = K \exp(\mfp)$. Now apply Theorem \ref{almost KAK}.
\end{proof}

The previous corollary implies that the \textit{Cartan projection} $\mu \colon G \to \overline{\mfa}^+$ is well-defined. (It depends on the basepoint $p$.)

\section{The visual boundary}

In this section we introduce the visual boundary $\visb$ of $X$. There is a natural action of $G$ on $\visb$. The orbits of this action are called \textit{flag manifolds} and the point stabilizers are proper parabolic subgroups. We describe this picture in more detail, then give a description of parabolic subgroups in terms of the restricted root space decomposition.

We say that two unit-speed geodesic rays $c,c' \colon [0,\infty) \to X$ are \textit{asymptotic} if $\sup_t d(c(t),c'(t))$ is finite. The set of equivalence classes of asymptotic rays is called the \textit{visual boundary} of $X$, denoted $\visb$. There is a natural topology on $X \sqcup \visb$ homeomorphic to a closed ball called the \textit{visual compactification} of $X$. $G$ acts on $\visb$ via $g[c] = [g\circ c]$.

\begin{ex*}
    Prove this is indeed an equivalence relation and that this action is well-defined.
\end{ex*}

Since the action of $G$ on $X$ is transitive, we can move the basepoint of any geodesic ray to a fixed point $p$, and by Theorem \ref{almost KAK} in the last section, we may use $K$ to move the geodesic ray into (the exponential of) a preferred Weyl chamber $\cham$. Moreover, there is an essentially unique way of doing this. 

\begin{definition}
    Let $\cham$ be a closed Euclidean Weyl chamber. Its ideal boundary $\sigma$ is called an \textit{ideal/spherical Weyl chamber.}
\end{definition}

\begin{theorem}\label{model Weyl chamber}
    Let $\sigma$ be an ideal Weyl chamber, let $K$ be a maximal compact, and let $\xi$ be an ideal point. Then $G \cdot \xi = K \cdot \xi$ and 
    $$ \abs{(G \cdot \xi) \cap \sigma} = \abs{(K \cdot \xi) \cap \sigma} = 1  .$$
\end{theorem}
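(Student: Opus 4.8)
The plan is to reduce the statement about ideal points on $\visb$ to the statement about unit vectors in $\mfp$, where we can invoke Theorem \ref{almost KAK} (the $KAK$-type decomposition of $\mfp$). First I would fix the basepoint $p \in X$ that determines the Cartan decomposition $\mfg = \mfk \oplus \mfp$, the identification $\mfp \cong T_pX$, and the closed Euclidean Weyl chamber $\cham \subseteq \mfa \subseteq \mfp$ whose ideal boundary is $\sigma$. Since $X$ is a nonpositively curved symmetric space, $\exp_p \colon \mfp \to X$ is a diffeomorphism and every geodesic ray from $p$ has the form $t \mapsto \exp_p(tv)$ for a unique unit vector $v \in \mfp$; moreover two such rays are asymptotic if and only if they are equal, because in nonpositive curvature distinct geodesic rays from a common point diverge. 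Hence the map $v \mapsto [t \mapsto \exp_p(tv)]$ is a bijection from the unit sphere of $\mfp$ onto $\visb$, and under this bijection $\sigma$ corresponds exactly to the unit sphere of $\cham$, i.e.\ to $\cham \cap S(\mfp)$ where $S(\mfp)$ is the unit sphere for the inner product $B_p$.

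Next I would establish $G \cdot \xi = K \cdot \xi$. Given any geodesic ray $c$ representing an arbitrary ideal point, transitivity of $G$ on $X$ lets me move its initial point to $p$; the resulting ray $c'$ is $G$-equivalent to $c$ (after adjusting the representative within its asymptote class, using that $G$ acts by isometries and that asymptotic rays based at the same point coincide here). So every $G$-orbit on $\visb$ meets the sphere of rays issuing from $p$, which by the previous paragraph is identified $K$-equivariantly with $S(\mfp)$ — here $K = \stab_G(p)$ acts via $\Ad$ on $\mfp$ by isometries of $B_p$ (Lemma \ref{B_p}). Therefore the $G$-orbit of $\xi$ through $p$-based rays is a single $K$-orbit, giving $G\cdot\xi = K\cdot\xi$.

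Finally, for the counting statement, translating through the bijection reduces $\abs{(K\cdot\xi)\cap\sigma} = 1$ to the assertion: for every unit vector $v \in \mfp$, the orbit $\Ad(K)v$ meets the unit sphere of $\cham$ in exactly one point. Existence is immediate from Theorem \ref{almost KAK}, which says $\mfp = \bigcup_{k\in K}\Ad(k)(\cham)$: some $\Ad(k)v$ lies in $\cham$, and it is still a unit vector since $\Ad(k)$ is a $B_p$-isometry. For uniqueness, suppose $\Ad(k_1)v$ and $\Ad(k_2)v$ both lie in $\cham$; then $w := \Ad(k_2 k_1^{-1})(\Ad(k_1)v) = \Ad(k_2)v$ shows two points of $\cham$ in the same $K$-orbit. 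The uniqueness clause of Corollary \ref{KAK} (equivalently, that the $\cham$-component in the $KAK$ decomposition is well-defined) forces these two points of $\cham$ to coincide, completing the proof. The main obstacle I anticipate is the first paragraph: one must be careful that the passage between geodesic rays up to asymptote and unit vectors in $\mfp$ is genuinely a $K$-equivariant bijection carrying $\sigma$ to $\cham \cap S(\mfp)$, which uses nonpositive curvature (so that $\exp_p$ is a diffeomorphism and rays from $p$ don't merge) rather than any deep structure theory; once that identification is in place, the remaining steps are direct applications of Theorems \ref{almost KAK} and \ref{KAK}.
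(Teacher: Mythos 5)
Your reduction of $\abs{(K\cdot\xi)\cap\sigma}=1$ to Theorem~\ref{almost KAK} and the uniqueness clause of Corollary~\ref{KAK} is correct, and so is the $K$-equivariant identification of $\visb$ with the unit sphere of $\mfp$ that carries $\sigma$ onto the unit sphere of $\cham$. That part of the proof is fine.

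The second paragraph, however, does not actually prove $G\cdot\xi=K\cdot\xi$, and without it the count $\abs{(G\cdot\xi)\cap\sigma}=1$ is also left unproved. You argue: every point of $G\cdot\xi$ is represented by a ray based at $p$; $p$-based rays are $K$-equivariantly identified with the unit sphere of $\mfp$; ``therefore $G\cdot\xi$ is a single $K$-orbit.'' The last step is a non-sequitur. Under your bijection, the set $G\cdot\xi$ (which is $K$-invariant since $K\subset G$) corresponds to a $K$-invariant subset of the sphere, but a $K$-invariant set is in general a union of several $K$-orbits, and nothing you have written rules out $G\cdot\xi$ meeting $\sigma$ in more than one point. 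The whole content of the $G$-part of the theorem is precisely that no $g\in G$ can carry one point of $\sigma$ to a \emph{different} point of $\sigma$, and that requires a genuine additional input beyond Theorem~\ref{almost KAK}: for instance an Iwasawa-type decomposition yielding $G=K\cdot\stab_G(\xi)$; or a connectedness argument (the compact set $K\cdot\xi$ is also open in $G\cdot\xi$ because $\mfg=\mfk+\mathfrak{q}$, where $\mathfrak{q}$ is the Lie algebra of $\stab_G(\xi)$); or, which is what the paper's remark immediately after the theorem is pointing at, the fact that connectedness of $G$ forces the $G$-action on the spherical building $\visb$ to be type-preserving, so that any $g\in G$ mapping a simplex of $\visb$ to itself must fix it pointwise. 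With one of these supplied, $G\cdot\xi=K\cdot\xi$ follows and the $G$-count reduces to the $K$-count you already established.
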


The previous Theorem is a consequence of Theorem \ref{almost KAK}. An ideal Weyl chamber is a spherical simplex, and its faces are also simplices. If $g \in G$ takes a simplex $\tau$ to itself, it must fix $\tau$ pointwise. (Warning: this uses our assumption that $G$ is connected!)

\section{Flag manifolds and parabolic subgroups}

\begin{definition}
    The stabilizer $P=G_\xi = \{ g \in G : g\xi = \xi \} $ of an ideal point $\xi \in \visb$ is called a \textit{parabolic subgroup} of $G$.
    The homogeneous space $G/P \cong G \cdot \xi$ is called a \textit{flag manifold}.
\end{definition}

\begin{example}
    We describe the visual boundary, flag manifolds, and parabolic subgroups for $\SL(n,\R)$. In this case, each ideal Weyl chamber $\sigma$ corresponds to a full flag in $\R^n$, i.e.\ a chain of subspaces $0=V_0 \subset V_1 \subset \dots \subset V_{n-1} \subset V_n=\R^n$ with $\dim V_i=i$. Each simplex $\tau$ corresponds to a partial flag. Each parabolic subgroup stabilizes a partial flag. The maximal parabolics correspond to Grassmannians, and the minimal parabolic subgroups (a.k.a.\ Borel subgroup) stabilize a full flag, hence are conjugate to the subgroup of upper triangular matrices. 
    
    Each ideal point $\xi \in \visb$ lies in the interior of some simplex $\tau$. It corresponds to an \textit{eigenvalue-flag} pair, i.e.\ a partial flag $(V_i)$ together with a vector $(\lambda_i) \in \R^n$, with the constraints that $\sum \lambda_i = 0 $ and $\sum \lambda_i^2 =1$ and $\lambda_j = \lambda_{j+1}$ when a subspace of dimension $j$ is missing from the partial flag. Given a point $p \in X$, we can construct a geodesic ray from $p$ and an eigenvalue-flag pair by using the inner product on $\R^n$ corresponding to $p$ to construct an orthonormal basis compatible with the partial flag, then using the eigenvalue data to construct a matrix $X \in \mfp$ with the appropriate eigenvalues. See Eberlein's book \cite{eberlein1996geometry} for more details.
\end{example}

We have seen that a pair of ideal points have the same stabilizers if and only if they span the same simplex of $\visb$. In other words, parabolic subgroups correspond to simplices of $\visb$. 

By Theorem \ref{model Weyl chamber}, we may assume (up to translating by an element of $K$) that our ideal point $\xi \in \visb$ is represented by the geodesic ray 
$$ c(t) = e^{tX}p $$
for $X \in \cham \subset \mfa \subset \mfp$. We can describe $G_\xi$ explicitly in terms of the restricted roots $\Sigma$. Let $\Delta$ be the simple roots corresponding to $
\cham$ and let $\Theta \subset \Delta$ be the simple roots that are positive on $X$. Let $\Sigma^+$ be the positive roots and $\Sigma^-$ be the negative roots. 

The roots which are zero on $X$ lie in $\Span(\Delta \setminus \Theta)$, so the set of roots positive on $X$ is $\Sigma^+_\Theta = \Sigma^+ \setminus \Span(\Delta \setminus \Theta)$. We define 
$$ \mfu_\Theta \coloneqq \sum_{\alpha \in \Sigma^+_\Theta} \mfg_\alpha \quad \text{ and } \quad \mfu_\Theta^{opp} \coloneqq \sum_{\alpha \in \Sigma^+_\Theta} \mfg_{-\alpha}. $$

\begin{ex*}
    $\mfu_\Theta$ and $\mfu_\Theta^{opp}$ are nilpotent subalgebras of $\mfg$. 
\end{ex*}

\begin{example}
    For $\SL(n,\R)$, the standard subalgebras $\mfu_\Theta$ are strictly upper triangular.
\end{example}

\begin{theorem}
    Let $P_\Theta$ denote the normalizer of $u_\Theta$ in $G$. Then $P_\Theta = G_\xi$. 
\end{theorem}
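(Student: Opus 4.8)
The plan is to reduce to the Lie-algebra level, compute $\mathrm{Lie}(G_\xi)$ explicitly, and then bootstrap to an equality of groups. Write $a_s=\exp(sX)$, so the geodesic ray representing $\xi$ is $c(s)=e^{sX}p=a_s\cdot p$; put $\mfz_\Theta:=\{Y\in\mfg:[X,Y]=0\}=\mfg_0\oplus\bigoplus_{\alpha(X)=0}\mfg_\alpha$, $U_\Theta:=\exp(\mfu_\Theta)$, and $\mfp_\Theta:=\mfz_\Theta\oplus\mfu_\Theta=\mfg_0\oplus\bigoplus_{\alpha(X)\geq 0}\mfg_\alpha$. The starting point is that $G_\xi$ is closed (a stabiliser for a continuous action on the Hausdorff space $\visb$), hence a Lie subgroup, and that — since $X$ is a Hadamard manifold, so $g$ fixes $\xi$ exactly when the rays $gc$ and $c$ stay a bounded distance apart, and since $a_s$ acts by isometries —
\[
d\big(gc(s),c(s)\big)=d\big((a_{-s}ga_s)p,\,p\big);
\]
because $g\mapsto gp$ is proper ($K$ is compact), this says $g\in G_\xi$ iff $\{a_{-s}ga_s:s\geq 0\}$ is relatively compact in $G$. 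From this I would first read off the easy inclusions: if $z$ centralises $X$ then $a_{-s}za_s=z$; if $u=\exp(Y)$ with $Y\in\mfu_\Theta$ then $a_{-s}ua_s=\exp(\Ad(a_{-s})Y)\to e$, since $\Ad(a_{-s})$ contracts each $\mfg_\alpha$ with $\alpha(X)>0$. Hence $Z_G(X)\subseteq G_\xi$ and $U_\Theta\subseteq G_\xi$, and in particular $\mfp_\Theta\subseteq\mathrm{Lie}(G_\xi)$.

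The reverse inclusion $\mathrm{Lie}(G_\xi)\subseteq\mfp_\Theta$ is the technical heart. As $a_s\in G_\xi$ normalises $G_\xi$, the operator $\Ad(a_s)=e^{s\,\ad X}$ preserves $V:=\mathrm{Lie}(G_\xi)$; since $\ad X$ is $\R$-diagonalisable with eigenspaces $\mfg^{(c)}:=\bigoplus_{\alpha(X)=c}\mfg_\alpha$ (with $\mfg_0\subseteq\mfg^{(0)}$), this forces $V=\bigoplus_c\big(V\cap\mfg^{(c)}\big)$. If $V$ contained a nonzero $Y\in\mfg^{(c)}$ with $c<0$, then $\exp(Y)\in G_\xi$ while $a_{-s}\exp(Y)a_s=\exp(e^{-sc}Y)$ with $e^{-sc}\to\infty$, so $\overline{\exp(\R Y)}$ would be a compact one-parameter subgroup; but $\ad Y$ shifts the $\ad X$-grading by $c\neq 0$, hence is nilpotent, and a nilpotent element generating a relatively compact one-parameter group must be zero — contradicting $Y\neq 0$. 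So $\mathrm{Lie}(G_\xi)=\mfp_\Theta$. Now $\mfu_\Theta$ is a nilpotent ideal of $\mfp_\Theta$ with reductive quotient $\mfp_\Theta/\mfu_\Theta\cong\mfz_\Theta$, so it is the nilradical of $\mfp_\Theta$, a characteristic ideal; since $G_\xi$ normalises $G_\xi^0$, the map $\Ad(G_\xi)$ preserves $\mathrm{Lie}(G_\xi^0)=\mfp_\Theta$ and therefore $\mfu_\Theta$, which is exactly $G_\xi\subseteq N_G(\mfu_\Theta)=P_\Theta$.

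It remains to prove $P_\Theta\subseteq G_\xi$, and this is where I expect the real obstacle. The easy inclusions only put $Z_G(X)$ and $U_\Theta$ inside $G_\xi$; to conclude that these already exhaust $P_\Theta$ one needs the \emph{Langlands decomposition} of the standard parabolic, $P_\Theta=Z_G(X)\,U_\Theta$ (with $Z_G(X)$ the Levi factor and $U_\Theta$ the unipotent radical), which I would quote as standard structure theory. Granting it, $P_\Theta=Z_G(X)U_\Theta\subseteq G_\xi$, and combined with the previous paragraph we get $P_\Theta=G_\xi$. So the crux is really the component structure of $P_\Theta$: the Lie-algebra computation and the nilradical argument only pin $G_\xi$ down up to its identity component inside $P_\Theta$, and the Langlands decomposition is what closes the gap; the secondary technical point is making airtight the claim that a nilpotent element cannot generate a relatively compact one-parameter subgroup, while everything else is routine bookkeeping with the $\Ad(a_s)$-action on the restricted root spaces.
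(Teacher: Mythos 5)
Your proof is correct, but it follows a route that is genuinely different from what the paper has in mind. The paper's proof (which it only sketches, deferring to Eberlein) is based on a single unifying criterion: both $P_\Theta$ and $G_\xi$ are identified with the set $\{g\in G: \lim_{s\to\infty}e^{-sX}ge^{sX}\text{ exists}\}$, and the two equalities are checked separately. You instead use a weaker \emph{boundedness} criterion $g\in G_\xi\iff\{a_{-s}ga_s:s\geq 0\}$ relatively compact, which is cheaper to establish and to exploit; from it you extract the Lie-algebra equality $\mathrm{Lie}(G_\xi)=\mfp_\Theta$ by an $\ad X$-grading argument combined with the observation that a nilpotent element cannot generate a relatively compact one-parameter subgroup (this is sound: $\Ad(\exp(tY))=e^{t\,\ad Y}$ has polynomially unbounded entries if $\ad Y\neq 0$ is nilpotent, and $\Ad$ is continuous). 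Your nilradical trick to pass from the Lie-algebra equality to $G_\xi\subseteq N_G(\mfu_\Theta)=P_\Theta$ at the group level — using that $G_\xi$ normalises $G_\xi^0$ and that $\mfu_\Theta$ is the characteristic nilradical of $\mfp_\Theta$ — is a neat way to avoid any discussion of components, which is a real advantage over naively trying to integrate. The price is the last step: to get $P_\Theta\subseteq G_\xi$ you must quote the Levi decomposition $P_\Theta=L_\Theta U_\Theta$ together with $L_\Theta=Z_G(X)$ as a black box, whereas the paper's ``limit exists'' criterion handles that containment by the same mechanism as the others, so it is arguably the more self-contained approach if one is willing to prove the limit characterisation once. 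One small notational warning: you use $X$ simultaneously for the symmetric space and for the element of $\mfa$ (inherited from the paper, but worth flagging), and you write $\mfz_\Theta$ for what is normally called $\mfl_\Theta$ (the full centraliser, not just its centre), which conflicts with the notation used elsewhere in these notes.
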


We skip the proof, but mention the main idea: each group is equal to the set of elements $g \in G$ such that the limit $\lim_{t \to \infty} e^{-tX}ge^{tX}$ exists. See Eberlein's book \cite{eberlein1996geometry} for more details.

\section{Levi subgroups}

The subgroup $P_\Theta$ is called the \textit{standard parabolic} associated to $\Theta$. (`Standard' means standard with respect to the choices of fixed basepoint, flat, and chamber.) Let $P_\Theta^{opp}$ denote the normalizer of $u_\Theta^{opp}$ in $G$. The intersection $L_\Theta \coloneqq P_\Theta \cap P_\Theta^{opp}$ is called the \textit{Levi subgroup}, and its Lie algebra is given by
$$ \mfl_{\Theta} = \mfg_0 \oplus \sum_{\alpha \in \Span(\Delta \setminus \Theta) \cap \Sigma^+} \left(\mfg_\alpha \oplus \mfg_{-\alpha} \right) .$$
The Lie algebra $\mfp_\Theta$ of $P_\Theta$ decomposes as 
$$ \mfp_\Theta  = l_\Theta \oplus u_\Theta .$$

\begin{example}
    For $\SL(d,\R)$, a Levi subgroup is a block diagonal subgroup. It acts on the parallel set of block diagonal symmetric matrices. 
\end{example}

Guichard-Wienhard allow the subset $\Theta$ to be empty, in which case $P_\emptyset = G$, so $G$ itself is considered a parabolic subgroup. At the other extreme, $P_\Delta$ is a minimal parabolic subgroup, a.k.a.\ a \textit{Borel subgroup.}

\begin{remark}
    Warning! The subspace $\mfp$ and subalgebra $\mfp_\Theta$ have essentially nothing to do with each other. Guichard-Wienhard avoid the notation $\mfp$ in the Cartan decomposition by replacing it with $\mfk^\perp$.
\end{remark}

\section{Parallel sets}

Recall that we considered an ideal point $\xi$ in the previous section. Let $c \colon \R \to X$ be the unit-speed geodesic with $c(0)=p$ in the asymptote class of $\xi$. Define $\Theta$ and $P_\Theta$ as before. The Levi subgroup $L_\Theta = P_\Theta \cap P_\Theta^{opp}$ stabilizes a \textit{parallel set}, which is the union of biinfinite geodesics parallel to $c$, equivalently, it is the union of maximal flats containing $c$. If $c$ is regular, it lies in a unique maximal flat, $\Theta = \Delta$, $P_\Theta = P_\Delta$ is a Borel subgroup and the parallel set is a maximal flat. In general, a parallel set is a totally geodesic subspace containing $p$. It is a nonpositively curved symmetric space, but not necessarily of non-compact type i.e.\ it has a Euclidean de Rham factor.

The presence of a Euclidean factor means that $\mfl_\Theta$ is no longer semisimple, but it is still reductive, i.e.\ the direct sum of its center with a semisimple subalgebra. Observe that $L_\Theta$ acts on $\mfu_\Theta$ via the adjoint representation. This representation is crucial for the definition of $\Theta$-positivity!

\begin{center}
\adforn{14}
\end{center}

\begin{ex*}
    Give an example of a symmetric space with an isometry $g$ which fixes $p$ and preserves $\cham$ but does not fix $\cham$ pointwise. (Hint: such an isometry cannot be in the identity component.)
\end{ex*}

\begin{ex*}
    The action of $L_\Theta$ on $G/P_\Theta^{opp}$ fixes the identity coset, so $L_\Theta$ also acts on the tangent space $T_{[P_\Theta^{opp}]}G/P_\Theta^{opp}$. Show that this representation is equivalent to the adjoint action of $L_\Theta$ on $\mfu_\Theta$.
\end{ex*}

\newpage

\thispagestyle{empty}

\chapter[Representations and Dynkin Diagrams]{Representations and Dynkin Diagrams\\ {\Large\textnormal{\textit{by Daniel Soskin}}}}
\addtocontents{toc}{\quad\quad\quad \textit{Daniel Soskin}\par}

\section{Representations}
Let $\mathfrak{g}$ be a complex Lie algebra and $V$ a vector space over the field of complex numbers, unless another field is introduced explicitly.
\begin{definition}
A \emph{representation of a Lie algebra} $\mathfrak{g}$ is a Lie algebras homomorphism 
$\phi \from \mathfrak{g} \to \mathfrak{gl}(V)$, where $\gl(V)$ is the general linear algebra, i.e.\ the algebra of endomorphisms of a vector space $V$ with product given by $[X,Y]=XY-YX$. 
\end{definition}

\begin{definition} A vector space $V$, endowed with an operation $\mathfrak{g} \times V\to V$, is called a \emph{$\mathfrak{g}$-module} if the following conditions are satisfied: for any $X,Y \in \mathfrak{g}$, $v,w \in V$, and $a,b \in F$ we have
\begin{enumerate}
    \item $(aX+bY).v=a(X.v)+b(Y.v)$,
    \item $X.(av+bw)= a(X.v)+b(X.w)$,
    \item $[Y,X].v=XY.v-YX.v$.
\end{enumerate}
\end{definition}

Given a representation $\phi\colon\mathfrak{g} \to \gl(V)$, we can view $V$ as a $\mathfrak{g}$-module via the action $X.v=\phi(X)(v)$.
Conversely, given a $\mathfrak{g}$-module $V$, this equation defines a representation $\phi\colon\mathfrak{g} \to \gl(V)$.

\begin{definition}
A \emph{homomorphism of $\mathfrak{g}$-modules} is a linear map $\phi\colon V \to W$ such that $\phi(X.v)=X.\phi(v)$.
\end{definition}

\begin{definition}
A $\mathfrak{g}$-module is called \emph{irreducible} if it has precisely two $\mathfrak{g}$-submodules (itself and $0$).
\end{definition}

\begin{definition} A $\mathfrak{g}$-module is called \emph{completely reducible} if it is a direct sum of irreducible $\mathfrak{g}$-submodules.
\end{definition}

\begin{lemma}[Schur's Lemma] Let $\phi\colon\mathfrak{g} \to \gl(V)$ be an irreducible representation of $\mathfrak{g}$.
Then the only endomorphisms of $V$ commuting with $\phi(\mathfrak{g})$ are the scalars. 
\end{lemma}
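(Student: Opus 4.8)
The plan is to exploit algebraic closedness of the base field $\C$: every endomorphism of a finite-dimensional complex vector space has an eigenvalue. So let $\psi \in \End(V)$ commute with $\phi(X)$ for all $X \in \mathfrak{g}$. First I would pick an eigenvalue $\lambda \in \C$ of $\psi$, which exists because $V$ is a nonzero finite-dimensional space over an algebraically closed field and the characteristic polynomial of $\psi$ has a root. Then I would consider the eigenspace $W = \ker(\psi - \lambda \cdot \mathrm{id}_V)$, which is nonzero by choice of $\lambda$.

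The key step is to check that $W$ is a $\mathfrak{g}$-submodule of $V$. For this, take $w \in W$ and $X \in \mathfrak{g}$; since $\psi$ commutes with $\phi(X)$, one computes $(\psi - \lambda\,\mathrm{id})(\phi(X) w) = \phi(X)(\psi - \lambda\,\mathrm{id})(w) = 0$, so $\phi(X) w \in W$. Hence $W$ is a nonzero submodule of the irreducible module $V$, which forces $W = V$. Therefore $\psi - \lambda \cdot \mathrm{id}_V$ vanishes identically on $V$, i.e.\ $\psi = \lambda \cdot \mathrm{id}_V$ is a scalar, as claimed.

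I do not expect a serious obstacle here; the only subtlety worth flagging is that the argument genuinely requires the ground field to be algebraically closed (to guarantee an eigenvalue) and the module to be finite-dimensional — both of which are in force by the standing conventions of this section (where $V$ is a complex vector space, taken finite-dimensional as in the definition of a representation). If one wanted to be careful about the degenerate case, one notes $V \neq 0$ because an irreducible module has two distinct submodules $0$ and $V$, so these cannot coincide.
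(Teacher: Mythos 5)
Your proof is correct and is the standard argument: use algebraic closedness of $\C$ to produce an eigenvalue $\lambda$ of $\psi$, check that the eigenspace $\ker(\psi-\lambda\,\mathrm{id})$ is a $\mathfrak{g}$-submodule because $\psi$ commutes with every $\phi(X)$, and conclude by irreducibility that it is all of $V$. Note that the paper itself states Schur's Lemma without giving a proof, so there is nothing to compare against; your argument fills that gap correctly, and your remarks about finite dimensionality, algebraic closedness, and $V\neq 0$ are exactly the hypotheses one should flag.
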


\begin{theorem}[Weyl] Let $\phi\colon\mathfrak{g} \to \gl(V)$ be a (finite-dimensional) representation of a semisimple Lie algebra. Then $\phi$ is completely reducible.
\end{theorem}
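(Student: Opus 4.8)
The plan is to prove the equivalent statement that every submodule $W$ of a finite-dimensional $\mathfrak{g}$-module $V$ admits a $\mathfrak{g}$-invariant complement; complete reducibility of $V$ then follows by induction on $\dim V$. The engine of the proof is the Casimir element, and the real work is reducing the general splitting problem to a shape where that element bites. First I would reduce to a one-codimensional submodule with trivial quotient: given $W\subseteq V$, endow $\Hom(V,W)$ with the $\mathfrak{g}$-module structure $(X.\psi)(v)=X.\psi(v)-\psi(X.v)$ and consider
\[
\mathcal{V}=\{\psi\in\Hom(V,W):\psi|_W\in\C\cdot\id_W\},\qquad \mathcal{W}=\{\psi\in\Hom(V,W):\psi|_W=0\}.
\]
Using that $W$ is a submodule one checks that $\mathcal{V}$ and $\mathcal{W}$ are submodules, that $\mathcal{W}$ has codimension one in $\mathcal{V}$, and that $\mathfrak{g}$ acts trivially on $\mathcal{V}/\mathcal{W}$. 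A $\mathfrak{g}$-invariant line in $\mathcal{V}$ complementary to $\mathcal{W}$ is spanned by some $\psi_0$ with $\psi_0|_W=\id_W$ and $X.\psi_0=0$ for all $X$; the latter says $\psi_0\colon V\to W$ is $\mathfrak{g}$-equivariant, so $\ker\psi_0$ is a submodule and $V=W\oplus\ker\psi_0$. Hence it suffices to split every exact sequence $0\to W\to V\to F\to 0$ of $\mathfrak{g}$-modules in which $\dim F=1$ and $\mathfrak{g}$ acts trivially on $F$.

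For such a sequence, $\mathfrak{g}=[\mathfrak{g},\mathfrak{g}]$ forces $\mathfrak{g}.V\subseteq W$. I would then induct on $\dim W$. If $W$ has a submodule $W'$ with $0\subsetneq W'\subsetneq W$, apply the inductive hypothesis first to $0\to W/W'\to V/W'\to F\to 0$, producing $W'\subseteq\widetilde V\subseteq V$ with $\widetilde V/W'\cong F$, $V=W+\widetilde V$, $W\cap\widetilde V=W'$, and then to $0\to W'\to\widetilde V\to\widetilde V/W'\to 0$, producing a line $U\subseteq\widetilde V$ with $\widetilde V=W'\oplus U$; a dimension count gives $V=W\oplus U$. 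This leaves the case $W$ irreducible, and moreover $\mathfrak{g}$ acting nontrivially on $W$ (otherwise $\phi([\mathfrak{g},\mathfrak{g}])$ kills $\mathfrak{g}.V$ which contains the image, so $\phi=0$, $V$ is a trivial module and any vector-space complement works).

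Now the Casimir step. Replacing $\mathfrak{g}$ by $\mathfrak{g}/\ker\phi$, which is still semisimple and now acts faithfully, Cartan's criterion from the previous chapter gives that the trace form $\beta(X,Y)=\tr(\phi(X)\phi(Y))$ is nondegenerate. Choosing $\beta$-dual bases $\{X_i\},\{Y^i\}$ of $\mathfrak{g}/\ker\phi$ and setting $c_\phi=\sum_i\phi(X_i)\phi(Y^i)\in\End(V)$, invariance of $\beta$ gives $[\phi(X),c_\phi]=0$ for all $X\in\mathfrak{g}$, while a short computation gives that the trace of $c_\phi$ on $V$ equals $\dim(\mathfrak{g}/\ker\phi)\ne 0$. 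By Schur's Lemma $c_\phi$ acts on the irreducible $W$ by a scalar $\lambda$, and it acts by $0$ on $F$ since $\mathfrak{g}$ does; hence $\lambda\dim W=\tr_V(c_\phi)\ne 0$, so $\lambda\ne 0$. Therefore $c_\phi$ maps $V$ onto $W$ with one-dimensional kernel, and $\ker c_\phi$ — a submodule, because $c_\phi$ commutes with the $\mathfrak{g}$-action — is the desired complement.

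The genuinely clever point, and the step I expect to be the main obstacle to get right, is the reduction in the first paragraph: packaging the splitting problem inside the auxiliary module $\Hom(V,W)$ so that only the codimension-one, trivial-quotient case remains. Everything afterward is bookkeeping plus the Casimir computation; the one place demanding care there is the well-definedness of $c_\phi$, which forces one to pass to the faithful quotient $\mathfrak{g}/\ker\phi$ before invoking Cartan's criterion for the nondegeneracy of $\beta$, and then to verify the identities $[\phi(X),c_\phi]=0$ and $\tr_V(c_\phi)=\dim(\mathfrak{g}/\ker\phi)$ with dual bases.
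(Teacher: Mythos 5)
The paper states Weyl's theorem without proof --- it is invoked as background in the Lie-theory review chapter --- so there is no in-paper argument to compare against. Your proposal is a correct and complete proof, and it is the classical one (essentially Humphreys, \textit{Introduction to Lie Algebras and Representation Theory}, \S6.3): reduce via the auxiliary module $\Hom(V,W)$ to the codimension-one, trivial-quotient case; induct on $\dim W$ to reach the case of irreducible $W$ with nontrivial action; then invoke the Casimir operator $c_\phi$, which commutes with $\phi(\mathfrak{g})$, has trace $\dim(\mathfrak{g}/\ker\phi)\ne 0$, acts by a nonzero scalar on $W$ and by $0$ on $V/W$, so that $\ker c_\phi$ is the desired invariant complement.

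Two small imprecisions, neither of which breaks the argument. First, the assertion that ``$\mathfrak{g}=[\mathfrak{g},\mathfrak{g}]$ forces $\mathfrak{g}.V\subseteq W$'' is a red herring: once $\mathfrak{g}$ acts trivially on $V/W$, the containment $\mathfrak{g}.V\subseteq W$ is immediate with no appeal to semisimplicity; perfect derivedness is used later, in ruling out the case that $\mathfrak{g}$ also acts trivially on $W$. Second, the Cartan criterion as stated in this chapter concerns the Killing form; what you actually need is its corollary that for a \emph{faithful} finite-dimensional representation of a semisimple Lie algebra the associated trace form $\beta_\phi(X,Y)=\tr(\phi(X)\phi(Y))$ is nondegenerate. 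This follows by the same mechanism (the radical of $\beta_\phi$ is an ideal whose image is solvable by Cartan's solvability criterion, hence zero), but it is a separate statement and deserves a line of justification rather than a bare citation. Once that is patched, the $\beta_\phi$-dual bases, the commutation $[\phi(X),c_\phi]=0$, and the trace computation $\tr_V(c_\phi)=\dim(\mathfrak{g}/\ker\phi)$ all go through exactly as you describe.
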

Let $\mathfrak{g}$ be a semisimple Lie algebra, $\mathfrak{h}$ a fixed Cartan subalgebra and $\Sigma$ a root system relative to $\mathfrak{h}$. If $V$ is a finite-dimensional $\mathfrak{g}$-module, $\mathfrak{h}$ acts diagonally on $V$, and $V$ is a direct sum of its weight spaces:  $V= \bigoplus_{\lambda \in \mathfrak{h}^{*}} V_{\lambda}$, where $V_{\lambda}=\{v \in V|h.v=\lambda(h)v$ $\forall h \in \mathfrak{h} \}$. Whenever $V_{\lambda}\neq 0$, it is called \emph{weight space} and $\lambda$ is called \emph{weight}.
\begin{example}
The Lie algebra $\mathfrak{g}$ itself is an $\mathfrak{g}$-module via the adjoint representation.
Then, weights are the roots $\alpha \in \Sigma$ with the weight spaces $\mathfrak{g}_{\alpha}$ from the root space decomposition along with the weight space $\mathfrak{h}$ of $0$.
\end{example}
\begin{ex*}
Verify that $\mathfrak{g}_{\alpha}$ maps $V_{\lambda}$ into $V_{\lambda+\alpha}$ , $\lambda \in \mathfrak{h}^{*}, \alpha \in \Sigma$.
\end{ex*}

\begin{example}
Let us consider $\mathfrak{g}=\sl(2,\C)$ with the standard basis 
$X=\begin{pmatrix} 0&1 \\ 0&0  \end{pmatrix}$,
$Y=\begin{pmatrix} 0&0 \\ 1&0  \end{pmatrix}$,
$H=\begin{pmatrix} 1&0 \\ 0&-1  \end{pmatrix}$. One can verify that $[H,X]=2X$, $[H,Y]=-2Y$, $[X,Y]=H$, and thus for $\sl(2,\C)$-module $V$ it follows that if $v \in V_{\lambda}$, then $X.v \in V_{\lambda+2}$ and $Y.v \in V_{\lambda-2}$. Assume now that $V$ is an irreducible finite-dimensional $\mathfrak{g}$-module.
There exists a nonzero $v_{0}\in V_{\lambda}$ such that $V_{\lambda+2}=0$, and $X.v_{0}=0$. Define $v_{i}=\frac{1}{i!}Y^{i}.v_{0}$, ($i>-1$). Then,

\begin{enumerate}
    \item $H.v_{i}=(\lambda-2i)v_{i}$;
    \item $Y.v_{i}=(i+1)v_{i+1}$;
    \item $X.v_{i}=(\lambda-i-1)v_{i-1}$.
\end{enumerate}

All nonzero $v_{i}$ are linearly independent.
Let $m$ be an integer such that $v_{m}\neq0$ and $v_{m'}=0$ for $m<m'$.
The subspace of $V$ with basis $v_{0}, v_{1}, ..., v_{m}$ is a $\mathfrak{g}$-submodule, so it must be equal to $V$, since the latter is irreducible.
Note, that from (3) it follows that $m=\lambda$, and $m=\dim(V)-1$, because each weight space is of dimension one. Thus, there exist only one irreducible $\sl(2,\C)$-module of each possible dimension.

It turns out, that for any complex semisimple Lie algebra $\mathfrak{g}$ is built from copies of $\sl(2,\C)$. For any root $\alpha \in \Sigma$, and any $X_{\alpha} \in \mathfrak{g}_{\alpha}$, there exist $Y_{-\alpha} \in \mathfrak{g}_{-\alpha}$ such that $X_{\alpha}$, $Y_{\alpha}$ and $[X_{\alpha},Y_{\alpha}]$ span a subalgebra of $\mathfrak{g}$ isomorphic to $\sl(2,\C)$.
\end{example}

\begin{definition}
Let $\mathfrak{h}_{0}$ be the real form of $\mathfrak{h}$.
An element $\lambda \in \mathfrak{h}_{0}^\ast$ is \emph{algebraically integral} if $$\frac{2(\lambda,\alpha)}{(\alpha,\alpha)}$$ are integers for all roots $\alpha$.
\end{definition}
\begin{ex*}
The weight of any finite-dimensional representation is algebraically integral. 
\end{ex*}
The \emph{fundamental weights} $w_{1},\ldots,w_{l}$ are defined in a way that they form basis of $\mathfrak{h}_{0}^\ast$ such that $\frac{2(w_{i},\alpha_{j})}{(\alpha_{j},\alpha_{j})}=\delta_{ij}$, where $\alpha_{i}$ are simple roots. An element $\lambda$ is algebraically integral if and only if it is an integral combination of the fundamental weights. Thus, the set of all algebraically integral weights in $\mathfrak{h}_{0}^\ast$ form the \emph{weight lattice} for $\mathfrak{g}$. 

Suppose that the Lie algebra $\mathfrak{g}$ is the Lie algebra of a Lie group $G$. Then, $\lambda \in \mathfrak{h}_{0}^\ast$ is \emph{analytically integral} if for each $t$ in $\mathfrak{h}$ such that $\exp(t)=e$ in $G$ we have $(\lambda,t) \in 2\pi i\Z$. If a representation of $\mathfrak{g}$ arises from representation of $G$, then the weights of the representation are analytically integral. For semisimple $G$ the set of analytically integral weights form a sub-lattice of the lattice of algebraically integral weights. If $G$ is simply connected then both lattices coincide. 

There is a partial order on the space of weights. We say that $\mu$ is \emph{higher} that $\lambda$ ($\mu \succeq \lambda$), if $\mu-\lambda$ is expressible as a linear combination of positive roots with non-negative coefficients. 

An integral element $\lambda$ is called \emph{dominant} if it is a non-negative integer combination of the fundamental weights. Note, that it is not the same as being higher than $0$.
\begin{definition}
A weight $\lambda$ of a representation $V$ of $\mathfrak{g}$ is called \emph{highest weight} if it is higher than every other weight of $V$. 
\begin{theorem}
\begin{enumerate}
    \item  Every irreducible finite-dimensional representation has a highest weight.
\item  The highest weight is dominant and algebraically integral element.
\item Two irreducible representations with the same highest weight are isomorphic.
\item Every dominant, algebraically integral element is the highest weight of an irreducible representation. 
\end{enumerate}
\end{theorem}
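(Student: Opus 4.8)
The plan is to deduce all four statements from two ingredients already at hand: the classification of finite-dimensional $\sl(2,\C)$-modules recalled above, and the description of $U(\mathfrak{g})$ coming from the triangular decomposition $\mathfrak{g}=\mathfrak{n}^-\oplus\mathfrak{h}\oplus\mathfrak{n}^+$ with $\mathfrak{n}^\pm=\bigoplus_{\alpha\in\Sigma^+}\mathfrak{g}_{\pm\alpha}$ (the Poincar\'e--Birkhoff--Witt theorem). For \textbf{(1)} I would fix a regular element of $\mathfrak{h}_0$, let $\lambda$ be a weight of $V$ maximal for the resulting height function (possible since $V$ has finitely many weights), and pick $0\neq v^+\in V_\lambda$. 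Maximality gives $\mathfrak{g}_\alpha\cdot V_\lambda\subseteq V_{\lambda+\alpha}=0$ for every $\alpha\in\Sigma^+$, so $v^+$ is a highest weight vector. By PBW the submodule it generates equals $U(\mathfrak{n}^-)v^+$, which is spanned by weight vectors of weight $\lambda$ minus a sum of positive roots; irreducibility forces this submodule to be all of $V$, so every weight of $V$ is $\preceq\lambda$, i.e.\ $\lambda$ is highest.

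For \textbf{(2)} I would restrict $V$ to the subalgebra $\mathfrak{s}_i\cong\sl(2,\C)$ spanned by $X_{\alpha_i},Y_{\alpha_i},H_i:=[X_{\alpha_i},Y_{\alpha_i}]$ attached to each simple root $\alpha_i$. The vector $v^+$ is a highest weight vector for $\mathfrak{s}_i$ with $H_i$-eigenvalue $\tfrac{2(\lambda,\alpha_i)}{(\alpha_i,\alpha_i)}$, which by the $\sl(2,\C)$-classification is a non-negative integer; this is dominance, and since every coroot is an integral combination of the simple coroots, this also gives algebraic integrality. For \textbf{(3)}, given irreducibles $V,V'$ with highest weight $\lambda$ and highest weight vectors $v^+,v'^+$, I would consider $M:=U(\mathfrak{g})\cdot(v^+,v'^+)\subseteq V\oplus V'$; arguing as in (1), its $\lambda$-weight space is one-dimensional, and each of the two projections $M\to V$, $M\to V'$ is surjective since its image is a nonzero submodule of an irreducible. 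The kernel of $M\to V'$ is a submodule of $V\oplus 0$, hence $0$ or everything; if everything then $M=V\oplus V'$ and $M_\lambda$ would be two-dimensional, a contradiction. So both projections are isomorphisms and $V\cong M\cong V'$.

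For \textbf{(4)} I would realize the representation as the simple quotient of a Verma module: set $L(\lambda):=M(\lambda)/N(\lambda)$ where $M(\lambda)=U(\mathfrak{g})\otimes_{U(\mathfrak{b})}\C_\lambda$ with $\mathfrak{b}=\mathfrak{h}\oplus\mathfrak{n}^+$, $\C_\lambda$ the one-dimensional $\mathfrak{b}$-module on which $\mathfrak{h}$ acts by $\lambda$ and $\mathfrak{n}^+$ by $0$, and $N(\lambda)$ the unique maximal proper submodule (unique because $M(\lambda)_\lambda$ is one-dimensional, so no proper submodule meets it). Then $L(\lambda)$ is automatically irreducible with highest weight $\lambda$; the real content is finite-dimensionality when $\lambda$ is dominant integral. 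For this I would show, using the $\sl(2,\C)$-relations in $\mathfrak{s}_i$, that $Y_{\alpha_i}^{m_i+1}v^+$ with $m_i=\tfrac{2(\lambda,\alpha_i)}{(\alpha_i,\alpha_i)}$ is again killed by $\mathfrak{n}^+$, hence lies in $N(\lambda)$ and dies in $L(\lambda)$; consequently every vector of $L(\lambda)$ is $\mathfrak{s}_i$-finite, so the weight set of $L(\lambda)$ is stable under the Weyl group $W$ and contained in $\lambda$ minus the non-negative span of the simple roots --- a set that is necessarily finite --- while each weight space is finite-dimensional by the PBW spanning bound. This finite-dimensionality step is where essentially all the work is concentrated; (1)--(3) are formal by comparison.
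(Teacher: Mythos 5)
The paper states this theorem without proof; it is quoted as classical background (cf.\ Humphreys, \emph{Introduction to Lie Algebras and Representation Theory}, \S\S 20--21), so there is no proof in the paper to compare against. Your proposal is the standard proof from exactly that reference and is correct: (1) from a height-maximal weight, PBW, and irreducibility; (2) by restricting to the $\mathfrak{sl}(2,\C)$-triples attached to the simple roots, which is exactly the $\mathfrak{sl}(2,\C)$-module analysis the chapter sets up right before the theorem; (3) via the cyclic submodule of $V\oplus V'$ generated by the pair of highest-weight vectors; (4) via the simple quotient $L(\lambda)$ of the Verma module $M(\lambda)$.

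You rightly flag finite-dimensionality of $L(\lambda)$ in (4) as carrying all the weight, and two sub-steps there would deserve a sentence each in a full write-up. First, passing from $Y_{\alpha_i}^{m_i+1}v^+=0$ in $L(\lambda)$ to local $\mathfrak{s}_i$-finiteness of \emph{all} of $L(\lambda)$ uses that $\mathfrak{g}$ is itself a sum of finite-dimensional $\ad(\mathfrak{s}_i)$-modules, so the sum of the finite-dimensional $\mathfrak{s}_i$-submodules of $L(\lambda)$ is $\mathfrak{g}$-stable and, containing $v^+$, is everything. Second, the phrase that the weights lie in ``$\lambda$ minus the non-negative span of the simple roots --- a set that is necessarily finite'' misplaces where the finiteness comes from: $\lambda-\Z_{\geq 0}\Delta$ is infinite. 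What is finite is a $W$-stable subset of that cone, and that needs its own short argument --- for instance, take the dominant representative $\mu$ of each $W$-orbit of weights, note that $\lambda-\mu\in\Z_{\geq 0}\Delta$ with both $\lambda,\mu$ dominant forces $\norm{\mu}\leq\norm{\lambda}$, and there are only finitely many lattice points in that ball. With these two glosses your argument is complete.
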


\end{definition}

\section{Dynkin diagrams and related classifications}

Let $\Sigma$ be a root system of dimension $l$, with ordered basis of simple roots $\{\alpha_{1},\ldots,\alpha_{l}\}$. For two roots $\alpha,\beta \in \Sigma$, let $\langle \beta,\alpha \rangle:=\frac{2(\beta,\alpha)}{(\alpha,\alpha)}$.
\begin{definition}
The root system $\Sigma$ is called \emph{irreducible} if it cannot be partitioned into the union of two orthogonal proper subsets.
\end{definition}
\begin{theorem} The root system of a simple Lie algebra is irreducible. 
\end{theorem}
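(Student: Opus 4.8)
The plan is to argue by contradiction. Suppose $\Sigma$ admits a partition $\Sigma=\Sigma_1\sqcup\Sigma_2$ into two nonempty proper subsets with $(\alpha,\beta)=0$ for all $\alpha\in\Sigma_1$, $\beta\in\Sigma_2$. I will use this to produce a nonzero proper ideal of $\mathfrak{g}$, contradicting simplicity. Recall that a simple Lie algebra is semisimple, so by Cartan's criterion its Killing form $B$ is nondegenerate, and $\mathfrak{g}$ has a root space decomposition $\mathfrak{g}=\mathfrak{h}\oplus\bigoplus_{\alpha\in\Sigma}\mathfrak{g}_\alpha$ relative to a fixed Cartan subalgebra $\mathfrak{h}$, with $[\mathfrak{g}_\alpha,\mathfrak{g}_\beta]\subseteq\mathfrak{g}_{\alpha+\beta}$ under the conventions $\mathfrak{g}_0=\mathfrak{h}$ and $\mathfrak{g}_\gamma=0$ for $\gamma\notin\Sigma\cup\{0\}$.

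For $i=1,2$ let $\mathfrak{h}_i\subseteq\mathfrak{h}$ be the span of the elements $t_\alpha$ ($\alpha\in\Sigma_i$), where $t_\alpha\in\mathfrak{h}$ is the vector dual to $\alpha$ via $B$, and set $\mathfrak{g}_i:=\mathfrak{h}_i\oplus\bigoplus_{\alpha\in\Sigma_i}\mathfrak{g}_\alpha$. The first step is to show $\mathfrak{g}=\mathfrak{g}_1\oplus\mathfrak{g}_2$ as vector spaces. Since the roots span $\mathfrak{h}^\ast$ (semisimplicity), the $t_\alpha$ span $\mathfrak{h}$, hence $\mathfrak{h}=\mathfrak{h}_1+\mathfrak{h}_2$; and since $B(t_\alpha,t_\beta)$ is proportional to $(\alpha,\beta)=0$ for $\alpha\in\Sigma_1$, $\beta\in\Sigma_2$, the subspaces $\mathfrak{h}_1$ and $\mathfrak{h}_2$ are $B$-orthogonal, so by nondegeneracy of $B$ on $\mathfrak{h}$ they intersect trivially. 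Adding the disjoint collections of root spaces gives the claimed direct sum decomposition.

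The second and main step is to check that $\mathfrak{g}_1$ (and symmetrically $\mathfrak{g}_2$) is an ideal. The crucial observation is: if $\alpha\in\Sigma_1$ and $\beta\in\Sigma_2$, then $\alpha+\beta\notin\Sigma\cup\{0\}$. Indeed $\alpha+\beta\neq 0$, else $(\alpha,\beta)=-(\alpha,\alpha)\neq 0$; and if $\alpha+\beta$ were a root it would lie in $\Sigma_1$ or $\Sigma_2$, yet $(\alpha+\beta,\alpha)=(\alpha,\alpha)\neq 0$ and $(\alpha+\beta,\beta)=(\beta,\beta)\neq 0$ forbid it from being orthogonal to all of $\Sigma_2$ or to all of $\Sigma_1$. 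Hence $[\mathfrak{g}_\alpha,\mathfrak{g}_\beta]=0$, and combined with $[t_\alpha,\mathfrak{g}_\beta]=\beta(t_\alpha)\mathfrak{g}_\beta=0$ (again proportional to $(\alpha,\beta)$), this yields $[\mathfrak{g}_1,\mathfrak{g}_2]=0$. One then verifies $\mathfrak{g}_1$ is a subalgebra: $[\mathfrak{h}_1,\mathfrak{g}_\alpha]\subseteq\mathfrak{g}_\alpha$ for $\alpha\in\Sigma_1$; for $\alpha,\alpha'\in\Sigma_1$ the bracket $[\mathfrak{g}_\alpha,\mathfrak{g}_{\alpha'}]\subseteq\mathfrak{g}_{\alpha+\alpha'}$ with $\alpha+\alpha'\in\Sigma_1$ whenever it is a root (it remains orthogonal to $\Sigma_2$), while $[\mathfrak{g}_\alpha,\mathfrak{g}_{-\alpha}]\subseteq\C\, t_\alpha\subseteq\mathfrak{h}_1$. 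Together with $\mathfrak{g}=\mathfrak{g}_1\oplus\mathfrak{g}_2$ and $[\mathfrak{g}_1,\mathfrak{g}_2]=0$, this makes $\mathfrak{g}_1$ an ideal; it is nonzero (it contains some $\mathfrak{g}_\alpha\neq 0$) and proper (it omits $\mathfrak{g}_\beta$ for $\beta\in\Sigma_2$), contradicting simplicity.

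I expect the only genuine obstacle to be the bookkeeping in the last step, namely confirming that $\mathfrak{g}_1$ is closed under \emph{all} brackets, in particular that $[\mathfrak{g}_\alpha,\mathfrak{g}_{-\alpha}]$ lands inside $\mathfrak{h}_1$ and not merely inside $\mathfrak{h}$, and that $\mathfrak{h}_1,\mathfrak{h}_2$ are genuinely complementary; everything else is immediate from the root space decomposition and the behaviour of the bracket on root spaces.
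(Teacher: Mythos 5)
Your proof is correct and complete; the paper states this theorem without giving a proof, so there is nothing in the text to compare against. Your argument is the standard one (cf.\ Humphreys): an orthogonal partition $\Sigma=\Sigma_1\sqcup\Sigma_2$ yields a nontrivial splitting $\mathfrak g=\mathfrak g_1\oplus\mathfrak g_2$ into commuting ideals, and the key step --- that $\alpha+\beta\notin\Sigma\cup\{0\}$ whenever $\alpha\in\Sigma_1$, $\beta\in\Sigma_2$, because $\alpha+\beta$ cannot be orthogonal to $\alpha$ and to $\beta$ simultaneously --- is verified correctly, as is the closure of each $\mathfrak g_i$ under brackets (including $[\mathfrak g_\alpha,\mathfrak g_{-\alpha}]\subseteq\mathbb C\,t_\alpha\subseteq\mathfrak h_1$, and that $\alpha+\alpha'\in\Sigma_1$ whenever it is a root, since it stays orthogonal to $\Sigma_2$). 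The only small points worth making explicit, though neither is a gap: you should note that $[\mathfrak h_1,\mathfrak h_1]=0$ because $\mathfrak h$ is abelian, so $\mathfrak g_1$ really is closed under all brackets among its pieces; and the properness of $\mathfrak g_1$ follows because the root space decomposition is direct, so $\mathfrak g_1$ omits each $\mathfrak g_\beta$ with $\beta\in\Sigma_2\neq\emptyset$.
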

The following statement reduces the problem of characterizing semisimple Lie algebras by their root systems to the problem of characterizing simple ones by their irreducible root systems.
\begin{theorem}
Let $\mathfrak{g}$ be a semisimple Lie algebra with $\mathfrak{h}$ and $\Sigma$ as defined before. If $\mathfrak{g}=\mathfrak{g}_{1}\oplus...\oplus \mathfrak{g}_{t}$ is the decomposition of $\mathfrak{g}$ into simple ideals, then $\mathfrak{h}_{i}=\mathfrak{h}\cap \mathfrak{g}_{i}$ is a Cartan subalgebra of $\mathfrak{g}_{i}$ with relative irreducible root subsystem $\Sigma_{i}$ of $\Sigma$ in such a way that $\Sigma=\Sigma_{1}\cup\ldots\cup\Sigma_{t}$ is the decomposition of $\Sigma$ into its irreducible components.
\end{theorem}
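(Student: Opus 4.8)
The plan is to verify the statement in three layers: first that $\mathfrak{h}$ respects the ideal decomposition, then that the root set does, and finally that the pieces are orthogonal and irreducible. Throughout I use that, for complex semisimple Lie algebras, a Cartan subalgebra is precisely a maximal toral (abelian, $\ad$-semisimple) subalgebra. \textbf{Step 1.} Let $\pi_i \from \mathfrak{g} \to \mathfrak{g}_i$ be the projection along $\mathfrak{g} = \mathfrak{g}_1 \oplus \dots \oplus \mathfrak{g}_t$; it is a Lie algebra homomorphism, and since $\ad_\mathfrak{g}(h)$ is block-diagonal with blocks $\ad_{\mathfrak{g}_i}(\pi_i(h))$, it carries $\ad$-semisimple elements of $\mathfrak{h}$ to $\ad$-semisimple elements of $\mathfrak{g}_i$. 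Hence each $\pi_i(\mathfrak{h})$ is a toral subalgebra of $\mathfrak{g}_i$, and $\mathfrak{h} \subseteq \bigoplus_i \pi_i(\mathfrak{h})$, the right-hand side being toral in $\mathfrak{g}$ (it is abelian because $[\mathfrak{g}_i,\mathfrak{g}_j]=0$ for $i \ne j$). Maximality of $\mathfrak{h}$ then forces $\mathfrak{h} = \bigoplus_i \pi_i(\mathfrak{h})$, hence $\pi_i(\mathfrak{h}) = \mathfrak{h}\cap\mathfrak{g}_i =: \mathfrak{h}_i$; and $\mathfrak{h}_i$ is maximal toral in $\mathfrak{g}_i$ because enlarging it would enlarge $\mathfrak{h}$.

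\textbf{Step 2.} Work with the root space decomposition $\mathfrak{g} = \mathfrak{h} \oplus \bigoplus_{\alpha\in\Sigma}\mathfrak{g}_\alpha$. Pick $0 \ne X \in \mathfrak{g}_\alpha$ and write $X = \sum_i X_i$ with $X_i \in \mathfrak{g}_i$. Comparing components in $[h,X]=\alpha(h)X$ and using $[h,X_i] = [\pi_i(h),X_i]$, one gets $[\pi_i(h),X_i]=\alpha(h)X_i$ for all $i$ and all $h\in\mathfrak{h}$; were two of the $X_i$ nonzero, this would force $\alpha$ to vanish on all of $\mathfrak{h}$, which is impossible. So $\mathfrak{g}_\alpha \subseteq \mathfrak{g}_{i(\alpha)}$ for a unique index $i(\alpha)$, the root $\alpha$ vanishes on $\mathfrak{h}_j$ for $j \ne i(\alpha)$, and $\alpha|_{\mathfrak{h}_{i(\alpha)}}$ is a root of $\mathfrak{g}_{i(\alpha)}$ relative to $\mathfrak{h}_{i(\alpha)}$. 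Conversely, every root of $\mathfrak{g}_i$ relative to $\mathfrak{h}_i$, extended by $0$ on the other $\mathfrak{h}_j$, is a root of $\mathfrak{g}$. Identifying $\mathfrak{h}_i^\ast$ with the functionals on $\mathfrak{h}$ vanishing on $\bigoplus_{j\ne i}\mathfrak{h}_j$, this pairing is a bijection and yields the disjoint union $\Sigma = \Sigma_1 \cup \dots \cup \Sigma_t$, where $\Sigma_i = \Sigma(\mathfrak{g}_i,\mathfrak{h}_i)$.

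\textbf{Step 3.} For $i \ne j$, $X\in\mathfrak{g}_i$, $Y\in\mathfrak{g}_j$ the composite $\ad X\,\ad Y$ annihilates every $\mathfrak{g}_k$, hence $B_\mathfrak{g}(X,Y)=0$; and for $X,Y \in \mathfrak{g}_i$ the same computation gives $B_\mathfrak{g}(X,Y)=B_{\mathfrak{g}_i}(X,Y)$. So under $\mathfrak{h} = \bigoplus_i\mathfrak{h}_i$ the Killing form $B_\mathfrak{g}$ is block-diagonal, each block being the Killing form of the simple algebra $\mathfrak{g}_i$ restricted to its Cartan subalgebra, hence non-degenerate. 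The induced identification $\mathfrak{h}^\ast\cong\mathfrak{h}$ therefore makes $\mathfrak{h}^\ast = \bigoplus_i\mathfrak{h}_i^\ast$ an orthogonal direct sum, and since $\Sigma_i\subseteq\mathfrak{h}_i^\ast$ the subsystems $\Sigma_i$ are pairwise orthogonal. Finally each $\mathfrak{g}_i$ is simple, so by the theorem quoted just above its root system $\Sigma_i$ is irreducible. Thus $\Sigma = \Sigma_1\cup\dots\cup\Sigma_t$ is a partition into pairwise orthogonal irreducible subsystems, i.e.\ the decomposition of $\Sigma$ into its irreducible components.

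\textbf{Main obstacle.} The only step that is not pure bookkeeping is Step 1: that $\mathfrak{h}$ splits compatibly with $\mathfrak{g}=\bigoplus_i\mathfrak{g}_i$ and that the summands $\mathfrak{h}_i$ are again Cartan. This rests on the characterization of Cartan subalgebras of complex semisimple Lie algebras as maximal toral subalgebras and on the observation that $\ad$-semisimplicity is detected componentwise; once those are in hand, Steps 2 and 3 follow by tracking how roots and the Killing form interact with the direct sum.
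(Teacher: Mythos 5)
Your proof is correct. The paper states this theorem without proof (it is part of a background chapter that cites standard references for Lie theory), so there is no argument in the source to compare against; your argument is the standard one that would be found, e.g., in Humphreys's book on Lie algebras and representation theory. Step~1 correctly reduces everything to the characterization of a Cartan subalgebra of a complex semisimple Lie algebra as a maximal toral subalgebra and to the componentwise nature of $\ad$-semisimplicity; Step~2 correctly isolates each root space inside a single simple ideal (the key point being that a root supported on two ideals would have to vanish identically); Step~3 correctly invokes the block-diagonal form of the Killing form to get orthogonality of the $\Sigma_i$ and the earlier theorem that a simple Lie algebra has an irreducible root system to finish. The only thing worth making explicit, if you want the decomposition $\Sigma = \Sigma_1 \cup \dots \cup \Sigma_t$ to visibly be the decomposition into irreducible components, is the uniqueness of such a decomposition for a root system, which is standard; your final sentence implicitly appeals to it.
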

Irreducible root systems can be classified by so called \emph{Dynkin diagrams}.

Since $\langle \beta,\alpha \rangle$ must be an integer (called \emph{Cartan integer}), there are only a few possible values: $0$, $\pm1$, $\pm2$, $\pm3$.
Moreover, at most two distinct root lengths may occur in an irreducible system $\Sigma$, which are referred to as \emph{long} and \emph{short} roots.
The matrix $(\langle \alpha_{i},\alpha_{j} \rangle)_{i,j=1,\ldots,l}$ is called the \emph{Cartan matrix} of $\Sigma$. Define the \emph{Coxeter graph} of $\Sigma$ to be a graph on $l$ vertices $\alpha_1,\ldots,\alpha_l$, where the $\alpha_i$ is joined to $\alpha_j$ ($i\neq j$) by $\langle \alpha_{i},\alpha_{j} \rangle\,\langle \alpha_{j},\alpha_{i} \rangle$ edges. Whenever a double or triple edge occurs in the Coxeter graph of $\Sigma$, we add an arrow pointing to the shorter of the two roots.
The resulting object is called a \emph{Dynkin diagram}.
Since for simple roots $(\alpha,\beta)\leq0$, all Cartan integers can be recovered from the Dynkin diagram.
Note that the Coxeter graph is connected if and only if its corresponding root system $\Sigma$ is irreducible. 
\begin{theorem}
If $\Sigma$ is an irreducible root system of rank $l$, its Dynkin diagram is one of the following ($l$ vertices in each case):
$A_{l}$ $(l \geq 1)$, $B_{l}$ $(l \geq 2)$,
$C_{l}$ $(l \geq 3)$,
$D_{l}$ $(l \geq 4)$, $E_{6}$, $E_{7}$, $E_{8}$, $F_{4}$, $G_{2}$, see Table \ref{fig:general}.
\end{theorem}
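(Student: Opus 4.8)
The strategy is to strip away the Lie algebra entirely and classify the underlying Euclidean combinatorics. First I would pass from $\Sigma$ to the set of normalised simple roots $\varepsilon_i := \alpha_i/\norm{\alpha_i}$: a set of $l$ linearly independent unit vectors in a Euclidean space satisfying $(\varepsilon_i,\varepsilon_j)\le 0$ and $4(\varepsilon_i,\varepsilon_j)^2 = \langle\alpha_i,\alpha_j\rangle\langle\alpha_j,\alpha_i\rangle \in\{0,1,2,3\}$ for $i\ne j$. Call such a configuration \emph{admissible}; the Dynkin diagram is determined by it, and — crucially — every subset of an admissible configuration is again admissible, so one may freely delete vertices and still be looking at a diagram of the sort we want to classify. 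The problem thus becomes the purely combinatorial classification of admissible configurations, and by the irreducibility hypothesis we may assume the associated graph is connected.

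The structural constraints all come from the same trick: compute $\norm{v}^2 > 0$ for a well-chosen partial sum $v$ of the $\varepsilon_i$. This yields, in turn: (i) the number of connected pairs $\{i,j\}$ is strictly less than $l$, so the graph has no cycles and is therefore a tree; (ii) at most three edges (counted with multiplicity) emanate from any vertex, so a triple bond forces the diagram to be $G_2$; (iii) a \emph{simple chain} — a string of vertices joined by single bonds with every interior vertex of degree two — may be collapsed to a single vertex, producing again an admissible configuration. Combining (ii) and (iii), a connected admissible diagram has at most one vertex of degree three, contains at most one multiple bond, and cannot have both a branch vertex and a multiple bond; otherwise, collapsing chains would exhibit a forbidden local picture. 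Only three shapes survive.

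The endgame treats them one at a time. A single chain of single bonds is $A_l$. If the diagram has exactly one double bond and no branch vertex, collapse the single-bond arms on either side of it and test $\norm{u}^2\norm{w}^2 > (u,w)^2$ for $u$ and $w$ the weighted sums $\sum_k k\,\varepsilon_k$ taken along the two arms; with the identity $\norm{\sum_{k=1}^m k\,\varepsilon_k}^2 = \tfrac{m(m+1)}{2}$ this forces the arms to be very short, leaving only $B_2 = C_2$ and $F_4$, together with the one-armed families $B_l$ and $C_l$. If the diagram has a unique degree-three vertex $\varepsilon$ and only single bonds, write its three legs with vertex-counts $p \ge q \ge r \ge 2$ and form the corresponding weighted sums $u,v,w$; since $\varepsilon \notin \Span(u,v,w)$, the squared cosines of the three angles between $\varepsilon$ and $u,v,w$ sum to strictly less than $1$, and the same norm identity turns this into the Diophantine inequality $\tfrac1p + \tfrac1q + \tfrac1r > 1$. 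Its solutions $(p,q,r) = (p,2,2)$, $(3,3,2)$, $(4,3,2)$, $(5,3,2)$ correspond exactly to $D_l$, $E_6$, $E_7$, $E_8$.

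I expect the main obstacle to be the bookkeeping in the second paragraph: making the chain-collapsing operation precise and verifying that it really does rule out \emph{every} configuration other than the three surviving shapes (for instance, a diagram with two branch vertices, or with a branch vertex and a double bond, collapses to one of a short explicit list of forbidden diagrams, each killed by a single norm computation). Once the shape is pinned down, the arithmetic — the norm identity and the Diophantine inequality — is routine. Finally, the theorem as stated only asserts that the diagram \emph{is} one of the listed ones; that each listed diagram genuinely occurs is a converse existence statement, which I would establish (or cite) separately by exhibiting explicit root systems.
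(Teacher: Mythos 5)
The paper states this classification theorem without any proof; it is presented as a citation of standard Lie theory, with a table of the resulting Dynkin diagrams. There is therefore no ``paper's own proof'' to measure you against. What you have written is the standard classification argument via \emph{admissible configurations}, essentially as in Humphreys (\emph{Introduction to Lie Algebras and Representation Theory}, \S 11.4) or Bourbaki (\emph{Groupes et alg\`ebres de Lie}, Ch.\ VI, \S 4), and it is correct. The reduction to unit vectors with non-positive pairwise inner products and with $4(\varepsilon_i,\varepsilon_j)^2 \in \{0,1,2,3\}$ is the right abstraction; the heredity of admissibility under deletion of vertices is the workhorse; the tree constraint follows from $\bigl\lVert \sum_i \varepsilon_i\bigr\rVert^2 > 0$; the bound of at most three bonds at a vertex follows from $\sum_j (\varepsilon,\eta_j)^2 < 1$; the chain-collapsing lemma is correctly stated (and it is important, as you note, that interior chain vertices have degree two, so that the collapse does not disturb the rest of the configuration). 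Your arithmetic also checks: $\bigl\lVert \sum_{k=1}^m k\,\varepsilon_k\bigr\rVert^2 = \sum_{k=1}^m k^2 - \sum_{k=1}^{m-1} k(k+1) = \tfrac{m(m+1)}{2}$, the Cauchy--Schwarz constraint at the double bond cuts the two arms down to $(B_\ell,C_\ell,F_4)$, and $\tfrac{1}{p}+\tfrac{1}{q}+\tfrac{1}{r} > 1$ with $p\ge q\ge r\ge 2$ yields exactly $(p,2,2)$, $(3,3,2)$, $(4,3,2)$, $(5,3,2)$. You are right that the converse existence statement (that each diagram is realized by an actual irreducible root system) is a separate claim; the paper records it as the next theorem, and both are classical.

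One small observation on your second paragraph: ``fewer edges than vertices'' does not by itself forbid cycles in a disconnected graph, so either invoke the irreducibility hypothesis (as you do, to assume connectedness) and then appeal to the characterization of trees, or apply the heredity of admissibility directly to a would-be cycle and derive a contradiction. Your sketch can be read either way; just make the choice explicit when writing it out.
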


\def\do{}
\begin{table}[ht]
    \centering
    \begin{tabular}{l l}
         $A_\ell \, (\ell \geq 1)$: & 
         \dynkin[labels={\alpha_1,\alpha_2,\alpha_{\ell-1},\alpha_\ell}, scale=1.5]A{} \\
         $B_\ell \, (\ell \geq 2)$: & {\dynkin[labels={\alpha_1,\alpha_2,\alpha_{\ell-2},\alpha_{\ell-1},\alpha_{\ell}}, scale=1.5]B{}}\\
         $C_\ell \, (\ell \geq 3)$: &{{\dynkin[labels={\alpha_1,\alpha_2,\alpha_{\ell-2},\alpha_{\ell-1},\alpha_{\ell}}, scale=1.5]C{}} }\\
        $D_\ell \, (\ell \geq 4)$: &{{\dynkin[labels={\alpha_1,\alpha_2,\alpha_{\ell-3},\alpha_{\ell-2},\alpha_{\ell-1},\alpha_{\ell}}, label directions={,,,right,,}, scale=1.5]D{}} }\\
        $E_6$: & {{\dynkin[labels={\alpha_1,\alpha_2,\alpha_3,\alpha_4,\alpha_5,\alpha_6}, scale=1.5] E6}}\\
        $E_7$: & {{\dynkin[labels={\alpha_1,\alpha_2,\alpha_3,\alpha_4,\alpha_5,\alpha_6,\alpha_7}, scale=1.5] E7}}\\
        $E_8$: &{{\dynkin[labels={\alpha_1,\alpha_2,\alpha_3,\alpha_4,\alpha_5,\alpha_6, \alpha_7, \alpha_8}, scale=1.5] E8}}\\
        $F_4$: &{{\dynkin[labels={\alpha_1,\alpha_2,\alpha_3,\alpha_4}, scale=1.5] F4}}\\
        $G_2$: & {{\dynkin[labels={\alpha_1,\alpha_2}, scale=1.5] G2}}
    \end{tabular}
    \caption{List of Dynkin diagrams}
    \label{fig:general}
\end{table}
\begin{theorem}For each Dynkin diagram of type A-G, there exists an irreducible root system having the given diagram.
\end{theorem}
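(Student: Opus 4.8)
The plan is to argue by explicit construction, one diagram at a time. For each diagram $\Delta$ on the list we exhibit a concrete finite set $\Sigma$ of vectors in a Euclidean space $(E,(\cdot,\cdot))$, verify directly that it satisfies the root system axioms, single out a base whose elements realize the vertices of $\Delta$, and compute the Cartan integers $\langle\alpha_i,\alpha_j\rangle$ of that base to see that they reproduce the edges and arrows of $\Delta$. Checking that a given $\Sigma$ is a root system reduces each time to three points: (i) $\Sigma$ spans $E$ and $\R\alpha\cap\Sigma=\{\pm\alpha\}$ for every $\alpha\in\Sigma$; (ii) the reflection $s_\alpha(x)=x-\langle x,\alpha\rangle\alpha$ permutes $\Sigma$ for every $\alpha\in\Sigma$; and (iii) $\langle\beta,\alpha\rangle\in\Z$ for all $\alpha,\beta\in\Sigma$. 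Irreducibility then follows from connectedness of the Coxeter graph of the chosen base.

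For the classical series I would use the standard models inside $\R^n$ with $e_1,\dots,e_n$ the standard orthonormal basis. For $A_\ell$ take $\Sigma=\{e_i-e_j:i\neq j\}$ in the hyperplane $\{x:\sum_k x_k=0\}\subseteq\R^{\ell+1}$ with simple roots $\alpha_i=e_i-e_{i+1}$; for $B_\ell$ take $\Sigma=\{\pm e_i\pm e_j:i<j\}\cup\{\pm e_i\}\subseteq\R^\ell$ with $\alpha_i=e_i-e_{i+1}$ for $i<\ell$ and $\alpha_\ell=e_\ell$; for $C_\ell$ replace the short roots $\pm e_i$ by the long roots $\pm2e_i$ and set $\alpha_\ell=2e_\ell$; for $D_\ell$ take $\Sigma=\{\pm e_i\pm e_j:i<j\}\subseteq\R^\ell$ with $\alpha_i=e_i-e_{i+1}$ for $i<\ell$ and $\alpha_\ell=e_{\ell-1}+e_\ell$. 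In every case (ii) is a short case analysis over the finitely many shapes a root can take, and since all Cartan integers lie in $\{0,\pm1,\pm2\}$, (iii) is automatic; computing $\langle\alpha_i,\alpha_j\rangle$ shows that the only nonzero off-diagonal products occur for consecutive indices (together with the fork at $\alpha_{\ell-2}$ in type $D$), producing exactly the required chain, double edge with arrow, or branch.

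For the exceptional diagrams I would use the standard small models: $G_2$ in the zero-sum hyperplane of $\R^3$ with short roots the $e_i-e_j$ $(i\neq j)$ and long roots the $\pm(2e_i-e_j-e_k)$; $F_4$ in $\R^4$ with roots $\pm e_i$, $\pm e_i\pm e_j$ $(i<j)$, and $\tfrac12(\pm e_1\pm e_2\pm e_3\pm e_4)$; and $E_8$ in $\R^8$ with the $112$ roots $\pm e_i\pm e_j$ $(i<j)$ together with the $128$ roots $\tfrac12\sum_{i=1}^{8}\varepsilon_i e_i$ where each $\varepsilon_i\in\{\pm1\}$ and $\prod_i\varepsilon_i=1$. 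For $E_8$ I would take the base $\alpha_1=\tfrac12(e_1-e_2-\cdots-e_7+e_8)$, $\alpha_2=e_1+e_2$, $\alpha_i=e_{i-1}-e_{i-2}$ for $3\le i\le 8$, and read off its Cartan matrix. The diagrams $E_7$ and $E_6$ are then realized as the root subsystems of $E_8$ spanned by an appropriate $7$-element, respectively $6$-element, subset of this base, using the general fact that the roots lying in the span of a subset $\Delta'$ of a base $\Delta$ form a root system with base $\Delta'$.

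The step I expect to be the main obstacle is verifying axiom (ii) for $E_8$: one must check that reflecting in a half-integer root carries integer roots to admissible roots and half-integer roots to half-integer roots with the correct sign parity $\prod_i\varepsilon_i=1$ --- this is precisely where the parity condition in the definition of $\Sigma$ earns its keep, and it is the one genuinely delicate computation (the same bookkeeping reappears, in smaller form, for $F_4$). Once reflection-closure and integrality are in hand for $E_8$, the remainder --- bases and Cartan matrices for the exceptional types, irreducibility, and the descent to $E_7$ and $E_6$ --- is routine. A more uniform but heavier alternative would be to read the Cartan matrix off the diagram and invoke Serre's presentation theorem to build a semisimple Lie algebra with that Cartan matrix, whose restricted root system then realizes the diagram; I would mention this route but prefer the explicit construction, which keeps the argument self-contained.
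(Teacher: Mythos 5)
The paper states this existence theorem without proof, so there is no argument in the text to compare against. Your plan is the canonical construction that one finds in Humphreys or Bourbaki: explicit lattice models for the four classical series, explicit finite models for $G_2$, $F_4$, and $E_8$, and realization of $E_7$ and $E_6$ as root subsystems of $E_8$ spanned by subsets of a base. The outline is correct; the models, bases, and Cartan-integer computations you describe are all standard and do what you claim (the $240 = 112 + 128$ roots of $E_8$, the $48$ of $F_4$, the $12$ of $G_2$, the fork of $D_\ell$ at $\alpha_{\ell-2}$, the $E_8$ base with $\alpha_1 = \tfrac12(e_1 - e_2 - \cdots - e_7 + e_8)$, etc.). You are also right to isolate the one nontrivial verification, namely closure of $\Sigma$ under $s_\alpha$ when $\alpha$ is a half-integer vector of the $E_8$ model: the parity constraint $\prod_i \varepsilon_i = 1$ is exactly what keeps $s_\alpha(\beta)$ in $\Sigma$ when $\beta$ is also a half-integer root, and the integer case follows because $\langle \beta, \alpha\rangle$ is an even integer for integer $\beta$ and half-integer $\alpha$ in this lattice. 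The alternative you mention---reading the Cartan matrix off the diagram and invoking Serre's presentation---is the other standard proof; it avoids coordinate bookkeeping but imports Serre's finiteness theorem, which is far from free. Preferring the explicit construction here is a reasonable choice, and your proposal is a sound plan for carrying it out.
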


Classification of real Lie algebras differs from the case of Lie algebras over algebraically closed fields. \begin{definition}
The \emph{complexification of a real Lie algebra} $\mathfrak{g}$ is obtained from $\mathfrak{g}$ by extending the field of scalars from real to complex. Elements of $\mathfrak{g}^{\C}$ can be considered as pairs $(u,v)$, $u,v \in \mathfrak{g}$ (or $u+iv$) with the following operations:

\begin{enumerate}
    \item $(u_{1},v_{1})+(u_{2},v_{2})=(u_{1}+u_{2},v_{1}+v_{2})$,

    \item $(\alpha+i\beta)(u,v)=(\alpha u-\beta v, \alpha v + \beta u)$ for any real $\alpha, \beta$,

\item $[(u_{1},v_{1}),(u_{2},v_{2})]=([u_{1},u_{2}]-[v_{1},v_{2}],[v_{1},u_{2}]+[u_{1},v_{2}])$.
\end{enumerate}
\end{definition}

If $\mathfrak{g}$ is a finite-dimensional real Lie algebra, its complexification is either simple or a product of a simple complex Lie algebra and its conjugate.
Thus, real simple Lie algebras can be classified by the classification of complex Lie algebras and some additional information, which can be done by \emph{Satake diagrams}, which generalize Dynkin diagrams.

\newpage

\thispagestyle{empty}

\chapter[The Indefinite Special Orthogonal Group]{The Indefinite Special Orthogonal Group\\ {\Large\textnormal{\textit{by Jacques Audibert}}}}
\addtocontents{toc}{\quad\quad\quad \textit{Jacques Audibert}\par}

\section{The Lie group and its Lie algebra}

Denote $I_{p,q}=
\begin{pmatrix}
I_p&0\\
0&-I_q
\end{pmatrix}
$.

\begin{theorem}
For every $B\in\GL(n,\mathbb{R})$ symmetric there exists $p,q\geq0$ and $P \in\GL(n,\mathbb{R})$ such that $\tran{P}BP=I_{p,q}$.
\end{theorem}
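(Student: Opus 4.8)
The statement is the existence half of Sylvester's law of inertia, and I would prove it by induction on $n$, diagonalising the symmetric bilinear form $(v,w)\mapsto \tran{v}Bw$ one vector at a time (Lagrange's method, i.e.\ Gram--Schmidt with respect to $B$). The base case $n=1$ is immediate: $B=(b)$ with $b\neq 0$, and $P=(|b|^{-1/2})$ gives $\tran{P}BP=(\pm 1)$, which is $I_{1,0}$ or $I_{0,1}$.

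For the inductive step I would first note that, since $B$ is symmetric and invertible, there is a vector $v_1$ with $c:=\tran{v_1}Bv_1\neq 0$: otherwise the polarisation identity $\tran{v}Bw=\tfrac12\bigl(\tran{(v+w)}B(v+w)-\tran{v}Bv-\tran{w}Bw\bigr)$ would force $B=0$, contradicting invertibility. Rescaling $v_1\mapsto |c|^{-1/2}v_1$, we may assume $\tran{v_1}Bv_1=\varepsilon\in\{+1,-1\}$. Next consider the $B$-orthogonal complement $W=\{w\in\R^n:\tran{v_1}Bw=0\}$. Since $w\mapsto \tran{v_1}Bw$ is a nonzero linear functional, $\dim W=n-1$, and since $\tran{v_1}Bv_1\neq 0$ we get $\R^n=\R v_1\oplus W$. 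The key point is that the restriction of $B$ to $W$ is again non-degenerate: if $w_0\in W$ were $B$-orthogonal to all of $W$, then (being also $B$-orthogonal to $v_1$) it would be $B$-orthogonal to all of $\R^n$, forcing $w_0=0$ by invertibility of $B$. Hence the induction hypothesis applies to $W$ with the symmetric non-degenerate form $B|_W$, yielding a basis $w_2,\dots,w_n$ of $W$ in which this form has Gram matrix $I_{p',q'}$.

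Finally I would assemble the change of basis. In the basis $(v_1,w_2,\dots,w_n)$ the Gram matrix of $(v,w)\mapsto\tran{v}Bw$ is block-diagonal, equal to $\diag(\varepsilon,I_{p',q'})$; permuting the basis so that the entry $+1$ (resp.\ $-1$) coming from $v_1$ is grouped with the other $+1$'s (resp.\ $-1$'s) turns this into $I_{p,q}$ with $(p,q)=(p'+1,q')$ or $(p',q'+1)$. Taking $P$ to be the matrix whose columns are this reordered basis, the $(i,j)$ entry of $\tran{P}BP$ is exactly $\tran{(\text{col }i)}B(\text{col }j)$, so $\tran{P}BP=I_{p,q}$, and $P\in\GL(n,\R)$ because its columns are a basis.

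The genuinely substantive step is the non-degeneracy of $B|_W$ — that is what makes the induction close — while the only slightly fiddly part is the bookkeeping of the sign-sorting permutation in the last paragraph. An alternative, if one prefers to quote it, is the real spectral theorem: write $\tran{O}BO=\diag(\lambda_1,\dots,\lambda_n)$ with $O$ orthogonal and all $\lambda_i\neq 0$, then conjugate by a permutation to sort the signs and by $\diag(|\lambda_i|^{-1/2})$ to normalise the entries to $\pm 1$; but the inductive argument above is more self-contained.
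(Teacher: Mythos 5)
Your proof is correct. Note, however, that the paper itself gives no proof of this statement: it is quoted as a classical fact (the existence half of Sylvester's law of inertia) without argument, so there is no in-paper proof to compare against. Your inductive Lagrange-style diagonalisation is the standard textbook argument and is complete: the polarisation step to locate a $B$-anisotropic vector $v_1$, the splitting $\R^n=\R v_1\oplus W$, the non-degeneracy of $B|_W$ (which you rightly flag as the substantive point), and the final sign-sorting all close correctly. The alternative route you mention via the real spectral theorem is also fine and is arguably closer to how this fact is usually invoked in a Lie-theory context, since the spectral theorem is already in the toolkit; but as a self-contained proof your inductive argument is preferable.
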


For every $B\in\GL(n,\mathbb{R})$ symmetric define $\SO(B)=\{M\in\SL(n,\mathbb{R}): \tran{M}BM=B\}$.

\begin{remark}
If $\tran{P}BP=I_{p,q}$ then $\SO(B)=P\SO(I_{p,q})P^{-1}$.
\end{remark}

\begin{example}
Let $0\leq p\leq q$.
Denote by
\begin{equation*}
    Q_{p,q}=\begin{pmatrix}
    0&0&W_p\\
    0&-I_{q-p}&0\\
    \tran{W_p}&0&0
    \end{pmatrix}\in\GL(p+q,\mathbb{R})
\end{equation*}
where
\begin{equation*}
    W_p=\begin{pmatrix}
    &&&(-1)^{p-1}\\
    &&\iddots&\\
    &-1&&\\
    1&&&
    \end{pmatrix}\in\GL(p,\mathbb{R})
\end{equation*}
Denote
\begin{equation}
    P=
    \begin{pmatrix}
    \frac{(-1)^{p-1}}{\sqrt{2}}&&&&&&&&\frac{(-1)^p}{\sqrt{2}}\\
    &\ddots&&&&&&\iddots&\\
    &&-\frac{1}{\sqrt{2}}&&&&\frac{1}{\sqrt{2}}&&\\
    &&&\frac{1}{\sqrt{2}}&&-\frac{1}{\sqrt{2}}&&&\\
    &&&&-I_{q-p}&&&&\\
    &&&\frac{1}{\sqrt{2}}&&\frac{1}{\sqrt{2}}&&&\\
    &&\frac{1}{\sqrt{2}}&&&&\frac{1}{\sqrt{2}}&&\\
    &\iddots&&&&&&\ddots&\\
    \frac{1}{\sqrt{2}}&&&&&&&&\frac{1}{\sqrt{2}}
    \end{pmatrix}
\end{equation}
Then $\tran{P}Q_{p,q}P=I_{p,q}$ thus $\SO(Q_{p,q})=P\SO(I_{p,q})P^{-1}$.
\end{example}

\begin{proposition}
\begin{itemize}
    \item $\SO(Q_{p,q})$ is a Lie group.
    \item If $p=0$, then $\SO(Q_{0,q})$ is connected and compact.
    \item If $p>0$, then $\SO(Q_{p,q})$ has two connected components and is not compact.
\end{itemize}
\end{proposition}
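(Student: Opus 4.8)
Since $\tran{P}Q_{p,q}P=I_{p,q}$, conjugation by $P$ is a diffeomorphism of $\GL(n,\R)$ taking $\SO(I_{p,q})=\SO(p,q)$ onto $\SO(Q_{p,q})$, and it preserves every topological property at issue; so I would just prove the three assertions for the standard group $\SO(p,q)=\{M\in\SL(n,\R):\tran{M}I_{p,q}M=I_{p,q}\}$. This is the common zero set inside $\GL(n,\R)$ of the polynomial equations $\tran{M}I_{p,q}M=I_{p,q}$ and $\det M=1$, hence a closed subgroup, so it is a Lie group by Cartan's closed subgroup theorem. When $p=0$ one has $I_{0,q}=-I_q$, so $\SO(0,q)=\{M\in\SL(q,\R):\tran{M}M=I_q\}=\SO(q)$, which is closed and bounded (unit-norm columns), hence compact, and connected — the connectedness I would recall by induction on $q$ from the transitive action of $\SO(q)$ on $S^{q-1}$ with stabiliser $\SO(q-1)$.

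Now assume $p>0$, so $q\ge p\ge1$ in the present setup. For non-compactness I would take a positive and a negative basis vector spanning a hyperbolic plane and let $\SO(1,1)$ act on it by $t\mapsto\left(\begin{smallmatrix}\cosh t&\sinh t\\\sinh t&\cosh t\end{smallmatrix}\right)$, extended by the identity on the orthogonal complement; this is an unbounded one-parameter subgroup of $\SO(p,q)$. For the component count I would show there are at least two and at most two. Writing $M=\left(\begin{smallmatrix}A&B\\C&D\end{smallmatrix}\right)$ with $A$ of size $p$, the top-left block of $\tran{M}I_{p,q}M=I_{p,q}$ reads $\tran{A}A-\tran{C}C=I_p$, so $\tran{A}A\succeq I_p$ and $\det A\ne0$ on all of $\SO(p,q)$; thus $M\mapsto\operatorname{sgn}\det A$ is continuous, and it is non-constant since it equals $1$ at $I_n$ and $-1$ at $\diag(-1,1,\dots,1,-1)\in\SO(p,q)$ (the last $-1$ lying in the $q$-block, which exists because $q\ge1$). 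Hence $\SO(p,q)$ has at least two components.

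For the upper bound I would use the polar (Cartan) decomposition. The group $\OO(p,q)$ is stable under $M\mapsto\tran{M^{-1}}$, hence also under transposition. Consequently, if $M=K\exp(S)$ with $K\in\OO(n)$ and $S$ symmetric is the polar decomposition of $M\in\OO(p,q)$, then $\exp(2S)=\tran{M}M$ is a product of elements of $\OO(p,q)$, hence lies in $\OO(p,q)$; comparing the spectral decompositions of $\exp(2S)$ and $I_{p,q}\exp(2S)I_{p,q}=\exp(-2S)$ — which forces $I_{p,q}$ to interchange the eigenprojections for $\lambda$ and $\lambda^{-1}$ — yields $I_{p,q}SI_{p,q}=-S$, i.e. $S\in\mathfrak{so}(p,q)$, so $\exp(S)\in\SO(p,q)$ and then $K=M\exp(-S)\in\SO(p,q)\cap\OO(n)$. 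An element of $\OO(p,q)\cap\OO(n)$ commutes with $I_{p,q}$ and is therefore block diagonal, so $\OO(p,q)\cap\OO(n)=\OO(p)\times\OO(q)$ and $\SO(p,q)\cap\OO(n)=\mathrm{S}(\OO(p)\times\OO(q))$. The map $M=K\exp(S)\mapsto K\exp((1-s)S)$, $s\in[0,1]$, is then a deformation retraction of $\SO(p,q)$ onto $\mathrm{S}(\OO(p)\times\OO(q))$, so the two spaces have the same components; and $\mathrm{S}(\OO(p)\times\OO(q))$ is the disjoint union of $\SO(p)\times\SO(q)$ and a translate of it, each connected because $\SO(p)$ and $\SO(q)$ are. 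Hence $\SO(p,q)$ has exactly two components. The one genuinely non-formal point is verifying that the orthogonal and positive-definite polar factors of an element of $\OO(p,q)$ stay in $\OO(p,q)$ (the spectral argument above); I expect that to be the main obstacle, with everything else being bookkeeping.
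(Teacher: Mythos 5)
The paper states this proposition without proof, so there is nothing to compare against directly; but your argument is correct. Your reduction via conjugation by $P$, the closed-subgroup theorem, the $\SO(q)$ identification for $p=0$, the hyperbolic one-parameter subgroup for non-compactness, the sign of $\det A$ for the lower bound on components, and the polar-decomposition retraction onto $\mathrm{S}(\OO(p)\times\OO(q))$ for the upper bound are all sound — including the spectral step showing the symmetric factor $\exp(S)$ stays in $\OO(p,q)$. This is also exactly the structure behind the paper's next (also unproved) theorem that $\SO_0(Q_{p,q})\cong\SO(p)\times\SO(q)\times\R^{pq}$: your retraction establishes a weaker statement (homotopy equivalence rather than homeomorphism, via $\mathrm{S}(\OO(p)\times\OO(q))$ rather than its identity component) that nevertheless suffices for the component count.
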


Denote $\SO_0(Q_{p,q})$ the connected component of $\SO(Q_{p,q})$ containing the identity.

\begin{theorem}
$\SO_0(Q_{p,q})$ is homeomorphic to $\SO(I_p)\times\SO(I_q)\times\mathbb{R}^{pq}$.
\end{theorem}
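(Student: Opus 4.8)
The plan is to use the Cartan decomposition / polar decomposition strategy adapted to the pseudo-orthogonal group. First I would work with $\SO_0(I_{p,q})$ directly (the statement for $\SO_0(Q_{p,q})$ then follows by conjugating through $P$, which is a homeomorphism). Write the Lie algebra $\mathfrak{so}(p,q)$ and observe that it carries the Cartan involution $\theta(X)=-\tran{X}$, with Cartan decomposition $\mathfrak{so}(p,q)=\mathfrak{k}\oplus\mathfrak{p}$ where $\mathfrak{k}=\mathfrak{so}(p)\oplus\mathfrak{so}(q)$ consists of the block-diagonal skew-symmetric matrices commuting with $I_{p,q}$, and $\mathfrak{p}$ consists of the symmetric matrices in $\mathfrak{so}(p,q)$, which are precisely the block off-diagonal matrices of the form $\begin{pmatrix}0 & B\\ \tran{B} & 0\end{pmatrix}$ with $B$ an arbitrary $p\times q$ real matrix. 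In particular $\dim\mathfrak{p}=pq$.

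The second step is to invoke the global Cartan decomposition for the semisimple Lie group with finite center $\SO_0(I_{p,q})$: the map $K\times\mathfrak{p}\to\SO_0(I_{p,q})$, $(k,X)\mapsto k\exp(X)$, is a diffeomorphism, where $K$ is the maximal compact subgroup $\SO_0(I_{p,q})\cap\OO(n)$. One identifies $K$ explicitly: an element of $\SO_0(I_{p,q})$ that is also orthogonal must commute with $I_{p,q}$, hence is block diagonal $\diag(A,D)$ with $A\in\OO(p)$, $D\in\OO(q)$, and $\det A\det D=1$; the further constraint of lying in the identity component forces $A\in\SO(p)$ and $D\in\SO(q)$ (this is where connectedness is used, and it matches the two-component count from the earlier Proposition). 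So $K\cong\SO(p)\times\SO(q)$. Combining, $\SO_0(I_{p,q})\cong \SO(p)\times\SO(q)\times\mathfrak{p}\cong \SO(p)\times\SO(q)\times\R^{pq}$ via $(A,D,B)\mapsto \diag(A,D)\exp\begin{pmatrix}0 & B\\ \tran{B} & 0\end{pmatrix}$. Finally transport along $P$: $\SO_0(Q_{p,q})=P\,\SO_0(I_{p,q})\,P^{-1}$, and conjugation by $P$ is a homeomorphism (indeed a diffeomorphism) of Lie groups, so the homeomorphism type is unchanged; noting $\SO(I_p)=\SO(p)$ and $\SO(I_q)=\SO(q)$ gives the claimed statement.

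The main obstacle — or at least the only genuinely non-formal point — is justifying the global Cartan decomposition $G=K\exp(\mathfrak{p})$ with the map being a diffeomorphism, and pinning down that $K$ is exactly $\SO(p)\times\SO(q)$ rather than a disconnected or larger group. The decomposition itself is standard for connected semisimple Lie groups with finite center (it is essentially Corollary~\ref{KAK} combined with $\Ad(K)\overline{\mathfrak{a}^+}=\mathfrak{p}$ from Theorem~\ref{almost KAK}), so I would cite it rather than reprove it; the identification of $K$ is a short direct computation with the defining equations $\tran{M}I_{p,q}M=I_{p,q}$ and $\tran{M}M=I_n$ together with the connectedness input. One should also check that $\exp\colon\mathfrak{p}\to\exp(\mathfrak{p})$ is a diffeomorphism onto a closed submanifold — again part of the standard Cartan decomposition package for nonpositively curved symmetric spaces. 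If one prefers an entirely self-contained argument avoiding the abstract theory, an alternative is to mimic the classical polar decomposition of $\GL(n,\R)$: given $M\in\SO_0(I_{p,q})$, show $\tran{M}M$ is symmetric positive definite and lies in $\exp(\mathfrak{p})$, set $S=(\tran{M}M)^{1/2}$, check $S\in\SO_0(I_{p,q})$, and verify $MS^{-1}\in K$; but this requires re-proving that $S\in\SO_0(I_{p,q})$, which is where the structure of $\mathfrak{p}$ re-enters, so it is not obviously shorter.
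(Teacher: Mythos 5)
The paper states this theorem without proof, so there is nothing to compare against; your argument via the global Cartan decomposition $\SO_0(I_{p,q})=K\exp(\mathfrak{p})$ with $K\cong\SO(p)\times\SO(q)$ and $\mathfrak{p}\cong\R^{pq}$ (then conjugating by $P$) is the standard one and is correct. Two small points worth pinning down if you write this out in full. First, the step ``lying in the identity component forces $A\in\SO(p)$, $D\in\SO(q)$'' deserves a sentence: $\SO(I_{p,q})\cap\OO(p+q)=S(\OO(p)\times\OO(q))$, and since inclusion of the maximal compact is a homotopy equivalence, $\SO(I_{p,q})$ and $S(\OO(p)\times\OO(q))$ have the same number of components (two, for $p,q\geq1$), so intersecting with the identity component of $\SO(I_{p,q})$ picks out exactly the identity component $\SO(p)\times\SO(q)$ of $S(\OO(p)\times\OO(q))$. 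Second, invoking the Cartan decomposition for ``connected semisimple Lie groups with finite center'' excludes the degenerate cases $p+q\le 2$ (where $\mathfrak{so}(p,q)$ is abelian, not semisimple); those cases ($\SO_0(1,1)\cong\R$, $\SO(2)\cong S^1$, and trivial groups) should be checked by hand, which is immediate and consistent with the formula.
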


\begin{ex*}
Determine the fundamental group of $\SO_0(Q_{p,q})$.
\end{ex*}

\begin{proposition}
The Lie algebra of $\SO(Q_{p,q})$ is 
\[\mathfrak{so}(Q_{p,q})=\{M\in\Mat(p+q,\mathbb{R}): \tran{M}Q_{p,q}+Q_{p,q}M=0,\ \Tr(M)=0\}=\]
\begin{equation*}\left\{
    \begin{array}{ll}
        \begin{pmatrix}
            M_{11}&M_{12}&M_{13}\\
            M_{21}&M_{22}&\tran{M}_{12}W_p\\
            M_{31}&\tran{W}_p \tran{M}_{21}&-\tran{W}_p \tran{M}_{11} W_p
        \end{pmatrix}
        :
        \tran{M}_{22}=-M_{22},\ M_{13}=-W_p \tran{M}_{13} W_p,\ M_{31}=-W_p \tran{M}_{31} W_p
    \end{array}\right\}
\end{equation*}
with 
\begin{equation*}
M_{ij}=-\tran{W}_p \tran{M}_{ij} W_p \Longleftrightarrow M_{ij}=
\begin{pmatrix}
    a_{11}&a_{12}&\cdots&a_{1p-1}&0\\
    a_{21}&a_{22}&\cdots&0&a_{1p-1}\\
    \vdots&\vdots&\ddots&\vdots&\vdots\\
    a_{p-11}&0&\cdots&(-1)^{p-2}a_{22}&(-1)^{p-1}a_{12}\\
    0&a_{p-11}&\cdots&(-1)^{p-1}a_{21}&(-1)^pa_11
\end{pmatrix}.
\end{equation*}
\end{proposition}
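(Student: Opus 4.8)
The plan is to compute the Lie algebra $\mathfrak{so}(Q_{p,q})$ directly from its defining equations and then unwind the block structure imposed by the particular matrix $Q_{p,q}$. First I would recall that $\SO(Q_{p,q}) = \{M \in \SL(p+q,\R) : \tran{M}Q_{p,q}M = Q_{p,q}\}$, and differentiate the two defining conditions at the identity: writing $M = I + tX + o(t)$, the condition $\tran M Q_{p,q} M = Q_{p,q}$ linearizes to $\tran X Q_{p,q} + Q_{p,q} X = 0$, while $\det M = 1$ linearizes to $\Tr(X) = 0$. This already gives the first displayed description of $\mathfrak{so}(Q_{p,q})$; one should note that the trace condition is in fact automatic here (as for any $\mathfrak{so}(B)$), since $\tran X Q_{p,q} = -Q_{p,q} X$ forces $Q_{p,q}X$ to be skew with respect to the form, hence traceless, but it is harmless to keep it.

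Next I would substitute the explicit block form of $Q_{p,q} = \begin{pmatrix} 0 & 0 & W_p \\ 0 & -I_{q-p} & 0 \\ \tran{W_p} & 0 & 0 \end{pmatrix}$ into the equation $\tran X Q_{p,q} + Q_{p,q} X = 0$, writing $X = (M_{ij})_{1 \le i,j \le 3}$ in the conformal block decomposition with block sizes $p, q-p, p$. Carrying out the two block products $Q_{p,q}X$ and $\tran X Q_{p,q} = \tran{(X Q_{p,q})}$ — here using $\tran{W_p} = W_p^{-1}$ up to sign, more precisely the easily-checked identities $\tran{W_p} W_p = I_p$ and $W_p = \pm\tran{W_p}$ — and equating the nine block entries to zero yields a system of linear relations among the $M_{ij}$. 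I expect the off-diagonal relations to express $M_{21}, M_{31}, M_{13}, M_{23}$ in terms of $M_{12}, M_{11}$ and of $M_{21}$ itself (e.g. the $(2,3)$-block gives $M_{23} = \tran{M_{12}} W_p$), the $(2,2)$-block to give $\tran{M_{22}} = -M_{22}$, and the $(3,3)$-block together with the $(1,3)$- and $(3,1)$-blocks to give the ``persymmetry-type'' constraints $M_{13} = -W_p \tran{M_{13}} W_p$, $M_{31} = -W_p \tran{M_{31}} W_p$, and $M_{33} = -\tran{W_p}\tran{M_{11}}W_p$.

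The last step is to translate the condition $M_{ij} = -\tran{W_p}\tran{M_{ij}}W_p$ into the concrete anti-persymmetric matrix displayed at the end of the statement: conjugating the transpose by $W_p$ reverses the order of both rows and columns and inserts alternating signs, so the equation says precisely that the $(k,\ell)$ entry is determined by the $(p+1-\ell, p+1-k)$ entry up to the sign $(-1)^{k+\ell}$ (or a similar sign depending on conventions), which forces the anti-diagonal-type pattern with zeros below the anti-diagonal in the way drawn. I would verify this by a direct indexing check on a general entry, and separately sanity-check the dimension count against the known fact (stated earlier) that $\dim \SO_0(Q_{p,q}) = \dim\SO(p) + \dim\SO(q) + pq$. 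The main obstacle is purely bookkeeping: keeping the sign conventions for $W_p$ and the transpose-conjugation consistent across all nine blocks, since a single sign slip propagates into the wrong persymmetry condition; everything else is a routine linearization and block multiplication.
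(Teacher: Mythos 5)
Your proposal is correct and is the natural computation; the paper actually states this proposition without any proof, so there is nothing to compare against beyond noting that differentiating the defining relations at the identity and then block-multiplying against the explicit $Q_{p,q}$ is exactly what one must do. Two small warnings. First, your aside that the trace condition is automatic is right, but the reason you offer is slightly off: skew-symmetry of $Q_{p,q}X$ gives $\Tr(Q_{p,q}X)=0$, not $\Tr(X)=0$. The clean argument is that $\tran X Q_{p,q}+Q_{p,q}X=0$ rewrites as $X=-Q_{p,q}^{-1}\tran X Q_{p,q}$, so $X$ is conjugate to $-\tran X$ and hence $\Tr(X)=-\Tr(X)$. (You hedged by saying it is harmless to keep the trace condition, so this does not affect your proof.) Second, your worry about sign bookkeeping with $W_p$ versus $\tran{W_p}$ is well placed: since $\tran{W_p}=(-1)^{p-1}W_p$, the conditions $M_{13}=-W_p\tran{M_{13}}W_p$ and $M_{13}=-\tran{W_p}\tran{M_{13}}W_p$ differ by an overall sign when $p$ is even, and the printed statement in fact mixes the two (it writes $W_p$ in the constraints on $M_{13},M_{31}$ but $\tran{W_p}$ in the final display that produces the explicit anti-persymmetric pattern). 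When you carry out the indexing check you propose, sanity-check it against the explicit $\mathfrak{so}(Q_{2,3})$ matrix given shortly afterward in the text; the block equations you derive (that is, with $W_p$, not $\tran{W_p}$, in the $M_{13}$ and $M_{31}$ constraints) are the ones that reproduce it.
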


The Lie bracket is $[X,Y]=XY-YX$.

We thus see that $\dim\mathfrak{so}(Q_{p,q})=\frac{(p+q)(p+q-1)}{2}$.

\begin{proposition}
Its Killing form is $B(X,Y)=(p+q-2)\Tr(XY)$ $(p+q\geq3)$.
\end{proposition}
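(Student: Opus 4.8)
The plan is to pin down the constant abstractly, using the principle that on a simple Lie algebra every invariant symmetric bilinear form is a scalar multiple of the Killing form, and then to evaluate that scalar on one convenient element. Write $n = p+q$, $\mathfrak g = \mathfrak{so}(Q_{p,q})$, and $T(X,Y) := \Tr(XY)$ for the trace form of the standard $n$-dimensional representation; note that $T$ is invariant under conjugation, since $\Tr\big((gXg^{-1})(gYg^{-1})\big) = \Tr(XY)$.

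First I would pass to the complexification. The Killing form (resp. $T$) of a real Lie algebra is the restriction of the same form on its complexification, so it is enough to prove $B_{\mathfrak g_\C} = (n-2)\,T$ on $\mathfrak g_\C$. Over $\C$ any nondegenerate symmetric bilinear form is equivalent to the standard one, so $\mathfrak g_\C \cong \mathfrak{so}(n,\C) = \{\, M \in \mathfrak{sl}(n,\C) : M + \tran{M} = 0 \,\}$ through a conjugation — which preserves both $B$ and $T$ — and the latter carries the convenient basis $\{\, X_{ij} := E_{ij} - E_{ji} : 1 \le i < j \le n \,\}$. For $n \ne 4$, $\mathfrak{so}(n,\C)$ is simple, so the remark following Cartan's criterion applies to the nondegenerate form $T$ and gives $B_{\mathfrak{so}(n,\C)} = c\,T$ for some scalar $c$. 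The case $n = 4$ needs a small extra argument, since $\mathfrak{so}(4,\C) \cong \mathfrak{sl}(2,\C) \oplus \mathfrak{sl}(2,\C)$ is only semisimple; but an invariant symmetric form there has no cross terms between the two ideals, and conjugation by an orthogonal matrix of determinant $-1$ interchanges the ideals while fixing both $B$ and $T$, so again $T$ is a scalar multiple of the Killing form and $B = c\,T$.

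To determine $c$ I would take $X = X_{12} = E_{12} - E_{21}$. Since $X^2 = -E_{11} - E_{22}$, we get $T(X,X) = -2$. For the Killing side, using $E_{ab}E_{cd} = \delta_{bc}E_{ad}$ one verifies $[X_{12},X_{12}] = 0$, $[X_{12},X_{ij}] = 0$ whenever $\{i,j\} \cap \{1,2\} = \emptyset$, and $[X_{12},X_{1j}] = -X_{2j}$, $[X_{12},X_{2j}] = X_{1j}$ for $3 \le j \le n$. Hence $\ad X$ stabilises each of the $n-2$ planes $\Span(X_{1j},X_{2j})$, acting there by $\bigl(\begin{smallmatrix} 0 & 1 \\ -1 & 0\end{smallmatrix}\bigr)$, whose square has trace $-2$, and annihilates the remaining basis vectors; therefore $B_{\mathfrak g}(X,X) = \Tr\big((\ad X)^2\big) = -2(n-2)$. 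Comparing the two values forces $c = n-2$, i.e.\ $B_{\mathfrak g}(X,Y) = (p+q-2)\Tr(XY)$.

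The one place demanding care is the sign bookkeeping in the bracket relations for $\ad X_{12}$ — mechanical with the rule $E_{ab}E_{cd} = \delta_{bc}E_{ad}$ — together with the low-rank coincidence at $n = 4$, which is side-stepped via the outer automorphism swapping the two $\mathfrak{sl}_2$-factors; once $B_{\mathfrak g}$ and $T$ are known to be proportional, the numbers $B_{\mathfrak g}(X,X)$ and $T(X,X)$ above do not rely on simplicity and settle the constant uniformly.
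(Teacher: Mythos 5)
The paper presents this proposition with no proof at all, so there is nothing in the source to compare your argument against; I can only judge it on its own merits, and it is correct and complete. You combine two standard ideas: on a complex simple Lie algebra any invariant symmetric bilinear form is proportional to the Killing form, and the scalar of proportionality can be extracted by evaluating both forms on a single well-chosen element. The reduction to $\mathfrak{so}(n,\C)$ is legitimate because both the Killing form and the trace form $T(X,Y)=\Tr(XY)$ are unchanged by complexification and by conjugation (a Lie algebra isomorphism $\phi$ satisfies $\ad(\phi X)=\phi\circ\ad X\circ\phi^{-1}$, and $\Tr$ is conjugation-invariant). Your bracket bookkeeping checks out: with $X_{ij}=E_{ij}-E_{ji}$ one indeed finds $[X_{12},X_{1j}]=-X_{2j}$, $[X_{12},X_{2j}]=X_{1j}$ for $3\le j\le n$, and $\ad X_{12}$ kills $X_{12}$ and all $X_{ij}$ with $\{i,j\}\cap\{1,2\}=\emptyset$; so $(\ad X_{12})^2$ acts as $-\mathrm{id}$ on each of the $n-2$ planes $\Span(X_{1j},X_{2j})$ and as $0$ elsewhere, giving $B(X_{12},X_{12})=-2(n-2)$ against $T(X_{12},X_{12})=-2$, whence $c=n-2$.

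Two small stylistic points worth recording. First, the version of the proportionality remark stated earlier in these notes (``any non\-degenerate symmetric invariant bilinear form on a semisimple Lie algebra is a scalar multiple of the Killing form'') is actually false for semisimple-but-not-simple algebras. You are more careful than the paper here: you correctly isolate $n=4$, where $\mathfrak{so}(4,\C)\cong\mathfrak{sl}(2,\C)\oplus\mathfrak{sl}(2,\C)$, and close the gap by conjugating by an element of $O(4,\C)\setminus SO(4,\C)$, which swaps the two simple ideals while preserving both $B$ and $T$ and hence forces the two scalars to coincide. Second, the form of the proportionality remark you invoke assumes $T$ nondegenerate, which you use without comment; this is easily seen since the $X_{ij}$ form a $T$-orthogonal basis with $T(X_{ij},X_{ij})=-2$, but a one-line remark would make the argument self-contained (alternatively, invoke the Schur's-lemma version which needs no nondegeneracy hypothesis on $T$). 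Where the paper does prove Killing-form formulas in this volume (for $\mathfrak{gl}(n,\R)$, $\mathfrak{sl}(3,\R)$, $\mathfrak{sp}(2n,\R)$ in the exercise chapter), it does so by brute-force computation in a basis; your approach trades that patience for a one-point evaluation once proportionality is known, which is the cleaner route and scales better to other groups.
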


Since, for $p+q\geq3$, $B(X,Y)$ is non-degenerate, $\mathfrak{so}(Q_{p,q})$ is semisimple.

\begin{remark}
Let $p+q\geq3$.
Then $\mathfrak{so}(Q_{p,q})$ is simple unless
\begin{itemize}
    \item $p=q=2$ because $\mathfrak{so}(Q_{2,2})=\mathfrak{sl}(2,\mathbb{R})\oplus\mathfrak{sl}(2,\mathbb{R})$,
    \item $p=0$ and $q=4$ because $\mathfrak{so}(Q_{0,4})=\mathfrak{su}(I_2)\oplus\mathfrak{su}(I_2)$.
\end{itemize}
\end{remark}

\begin{ex*}
Prove that $\SO_0(I_{2,2})\cong \SL(2,\mathbb{R})\times\SL(2,\mathbb{R})/\{\pm(I_2,I_2)\}$ and $\SO_0(I_4)\cong \SU(I_2)\times\SU(I_2)/\{\pm(I_2,I_2)\}$.
\end{ex*}

\begin{example}
$\SO(Q_{2,3})$ has Lie algebra
\begin{equation*}
    \mathfrak{so}(Q_{2,3})=\left\{
    \begin{array}{ll}
    \begin{pmatrix}
    a_{12}&a_{12}&a_{13}&a_{14}&0\\
    a_{21}&a_{22}&a_{23}&0&a_{14}\\
    a_{31}&a_{32}&0&a_{23}&-a_{13}\\
    a_{41}&0&a_{32}&-a_{22}&a_{12}\\
    0&a_{41}&-a_{31}&a_{21}&-a_{11}
    \end{pmatrix}
    : \ a_{ij}\in\mathbb{R}
    \end{array}
    \right\}.
\end{equation*}
\end{example}

\section{The restricted root system}

Let $0\leq p\leq q$ with $p+q\geq 3$.

\begin{proposition}
The map $\theta\colon \mathfrak{so}(Q_{p,q})\rightarrow\mathfrak{so}(Q_{p,q})$, $M\mapsto -\tran{M}$ is a Cartan involution.
\end{proposition}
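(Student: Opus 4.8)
The plan is to verify, directly from the definition, the three properties making $\theta(M) = -\tran M$ a Cartan involution of $\mathfrak{so}(Q_{p,q})$: that $\theta$ restricts to a Lie algebra automorphism of $\mathfrak{so}(Q_{p,q})$, that $\theta$ is an involution, and that the associated symmetric bilinear form $B_\theta(X,Y) := -B(X,\theta(Y))$ is positive definite.

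First I would record that $X \mapsto -\tran X$ is already a Lie algebra automorphism of $\mathfrak{gl}(p+q,\mathbb{R})$: it is linear, squares to the identity, and $-\tran{[X,Y]} = \tran X\,\tran Y - \tran Y\,\tran X = [-\tran X, -\tran Y]$. The only step with actual content is showing $\theta$ preserves $\mathfrak{so}(Q_{p,q})$. The key observation is that $Q := Q_{p,q}$ is symmetric and an involution as a matrix, i.e. $Q^2 = I_{p+q}$: since $W_p$ is a signed permutation matrix it is orthogonal, so $W_p\tran{W_p} = \tran{W_p}W_p = I_p$, and the block product gives $Q^2 = \diag(W_p\tran{W_p}, I_{q-p}, \tran{W_p}W_p) = I_{p+q}$ (equivalently, $P$ is orthogonal, so $Q = PI_{p,q}P^{-1}$ is \emph{conjugate}, not merely congruent, to $I_{p,q}$). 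Then for $M \in \mathfrak{so}(Q_{p,q})$ we have $\tran M = -QMQ^{-1} = -QMQ$, hence $\theta(M) = -\tran M = QMQ$, which has trace $-\Tr M = 0$ and satisfies $\tran{(QMQ)}\,Q + Q\,(QMQ) = Q\tran M + MQ = -MQ + MQ = 0$, so $\theta(M) \in \mathfrak{so}(Q_{p,q})$. Since $\theta^2 = \id$, $\theta$ restricts to an automorphism of $\mathfrak{so}(Q_{p,q})$ of order dividing $2$; it is genuinely of order $2$ once $p \geq 1$, since the Lie algebra then contains a nonzero symmetric matrix (its $(-1)$-eigenspace under $\theta$ is nontrivial), which is sent to its negative.

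For positive definiteness I would simply invoke the Killing form formula $B(X,Y) = (p+q-2)\Tr(XY)$ from the preceding proposition. It gives
\[
B_\theta(X,Y) = -B(X,-\tran Y) = (p+q-2)\,\Tr\!\left(X\,\tran Y\right),
\]
and $\Tr(X\tran Y) = \sum_{i,j} X_{ij}Y_{ij}$ is the Frobenius inner product on $\Mat(p+q,\mathbb{R})$, which is symmetric and positive definite; since $p+q \geq 3$ we have $p+q-2 > 0$, so $B_\theta$ is symmetric with $B_\theta(X,X) = (p+q-2)\sum_{i,j}X_{ij}^2 > 0$ for $X \neq 0$. Restricting this to the subalgebra $\mathfrak{so}(Q_{p,q})$ it stays positive definite, which is exactly the Cartan condition. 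Hence $\theta$ is a Cartan involution.

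The main (and essentially only) obstacle is the well-definedness step: it rests entirely on $Q_{p,q}^2 = I$, without which transposition would not preserve $\mathfrak{so}(Q_{p,q})$, so the real point is recognizing $Q_{p,q}$ as an \emph{orthogonal} conjugate of $I_{p,q}$ rather than just a congruent form. Everything after that is formal given the Killing form. One caveat worth recording: for $p = 0$ the group is compact, $\mathfrak{so}(Q_{0,q}) = \mathfrak{so}(q)$ consists of antisymmetric matrices, $\theta$ is the identity, and $B_\theta = -B$ is still positive definite because the Killing form of a compact semisimple Lie algebra is negative definite — consistent with the statement under the convention that the identity counts as a Cartan involution of a compact semisimple Lie algebra.
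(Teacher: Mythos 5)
Your proof is correct and follows the same route as the paper: first argue that $\theta$ is a Lie algebra involution of $\mathfrak{so}(Q_{p,q})$ because transposition preserves the algebra, then verify positive definiteness of $-B(X,\theta(X)) = (p+q-2)\Tr(X\,\tran X)$ using the stated Killing form formula. The one step the paper merely asserts (``$\mathfrak{so}(Q_{p,q})$ is closed under transpose'') is exactly the step you justify via $Q_{p,q}^2 = I_{p+q}$, which is the only point with actual content.
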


\begin{proof}
Since $\mathfrak{so}(Q_{p,q})$ is close under transpose, $\theta$ is a Lie involution of $\mathfrak{so}(Q_{p,q})$. For any $X\in\mathfrak{so}(Q_{p,q})$ non-zero $-B(X,\theta(X))=(p+q-2)\Tr(X\tran{X})>0$
\end{proof}

Thus $\mathfrak{so}(Q_{p,q})=\mathfrak{k}\oplus\mathfrak{p}$ where $\mathfrak{k}=\{M\in\mathfrak{so}(Q_{p,q})| -\tran{M}=M\}$ and $\mathfrak{p}=\{M\in\mathfrak{so}(Q_{p,q})| \tran{M}=M\}$. This is the Cartan decomposition.

Denote
\begin{equation*}
    \mathfrak{a}=\left\{\begin{array}{ll}
    D_{\lambda}=\begin{pmatrix}
    \lambda_1&&&&&&\\
    &\ddots&&&&&\\
    &&\lambda_p&&&&\\
    &&&0&&&\\
    &&&&-\lambda_p&&\\
    &&&&&\ddots&\\
    &&&&&&-\lambda_1
    \end{pmatrix}
    : \lambda_i\in\mathbb{R}
    \end{array}\right\}
\end{equation*}
This is a maximal abelian subspace of $\mathfrak{p}$ as the following computations show.

Denote
\begin{equation*}
    (E_{kl})_{ij}=\left\{\begin{array}{ll}
    1 & \mbox{if $i=k$ and $j=l$},\\
    0 & \mbox{otherwise}.
\end{array}\right.
\end{equation*}

Then we compute the following brackets:

    \begin{equation}
    \begin{bmatrix}D_{\lambda},\begin{pmatrix}
    E_{ij}&&\\
    &0&\\
    &&(-1)^{i+j+1}E_{p-j+1p-i+1}
    \end{pmatrix}\end{bmatrix}=(\lambda_i-\lambda_j)
    \begin{pmatrix}
    E_{ij}&&\\
    &0&\\
    &&(-1)^{i+j+1}E_{p-j+1p-i+1}
    \end{pmatrix}
    \end{equation}
    \begin{equation}
    \begin{bmatrix}D_{\lambda},\begin{pmatrix}
    0&&\\
    &E_{ij}&\\
    &&0
    \end{pmatrix}\end{bmatrix}=0
    \end{equation}
    \begin{equation}
    \begin{bmatrix}D_{\lambda},\begin{pmatrix}
    0&E_{ij}&\\
    &0&(-1)^{p-i}E_{jp-i+1}\\
    &&0
    \end{pmatrix}\end{bmatrix}=\lambda_i
    \begin{pmatrix}
    0&E_{ij}&\\
    &0&(-1)^{p-i}E_{jp-i+1}\\
    &&0
    \end{pmatrix}
    \end{equation}
    \begin{equation}
    \begin{bmatrix}D_{\lambda},\begin{pmatrix}
    0&&\\
    E_{ij}&0&\\
    &(-1)^{p-j}E_{p-j+1i}&0
    \end{pmatrix}\end{bmatrix}=-\lambda_j
    \begin{pmatrix}
    0&&\\
    E_{ij}&0&\\
    &(-1)^{p-j}E_{p-j+1i}&0
    \end{pmatrix}
    \end{equation}
    \begin{equation}
    \begin{bmatrix}D_{\lambda},\begin{pmatrix}
    &&E_{ij}+(-1)^{p+i+j}E_{p-j+1p-i+1}\\
    &0&\\
    0&&
    \end{pmatrix}\end{bmatrix}=(\lambda_i+\lambda_{p-j+1})
    \begin{pmatrix}
    &&E_{ij}+(-1)^{p+i+j}E_{p-j+1p-i+1}\\
    &0&\\
    0&&
    \end{pmatrix}
    \end{equation}
    \begin{equation}
    \begin{bmatrix}D_{\lambda},\begin{pmatrix}
    &&0\\
    &0&\\
    E_{ij}+(-1)^{p+i+j}E_{p-j+1p-i+1}&&
    \end{pmatrix}\end{bmatrix}=(-\lambda_{p-i+1}-\lambda_j)
    \begin{pmatrix}
    &&0\\
    &0&\\
    E_{ij}+(-1)^{p+i+j}E_{p-j+1p-i+1}&&
    \end{pmatrix}
    \end{equation}

\begin{remark}
The rank of $\mathfrak{so}(Q_{p,q})$ is the dimension of $\mathfrak{a}$ which is $p$.
\end{remark}

We see that 
\begin{equation*}
    Z_{\mathfrak{k}}(\mathfrak{a})=\{X\in\mathfrak{k}\ :\ [A,X]=0\ \forall A\in\mathfrak{a}\}=\left\{
    \begin{array}{ll}
        \begin{pmatrix}
            0&&\\
            &M_{22}&\\
            &&0
        \end{pmatrix}:\ M_{22}^\top=-M_{22}
    \end{array}\right\}.
\end{equation*}

\begin{definition}
A real semisimple Lie algebra is said to be \emph{split} if $Z_{\mathfrak{k}}(\mathfrak{a})=\{0\}$.
\end{definition}

Thus $\mathfrak{so}(Q_{p,q})$ is split if and only if $p=q$ or $p=q-1$.

Denote $\epsilon_i:\mathfrak{a}\rightarrow\mathbb{R},D_{\lambda}\mapsto\lambda_i$ for every $i$.

\begin{proposition}
The set of restricted roots of $\mathfrak{so}(Q_{p,q})$ relative to $\mathfrak{a}$ is
\begin{itemize}
    \item if $p\neq q$, then $\Sigma=\{\epsilon_i\ (1\leq i\leq p),\ -\epsilon_i\ (1\leq i\leq p),\ \epsilon_i+\epsilon_j\ (1\leq i<j\leq p),\ -\epsilon_i-\epsilon_j\ (1\leq i<j\leq p),\ \epsilon_i-\epsilon_j\ (1\leq i\neq j\leq p)\}$,
    \item if $p=q$, then $\Sigma=\{\epsilon_i+\epsilon_j\ (1\leq i<j\leq p),\ -\epsilon_i-\epsilon_j\ (1\leq i<j\leq p),\ \epsilon_i-\epsilon_j\ (1\leq i\neq j\leq p)\}$,
\end{itemize}
and the root space decomposition is
\begin{equation*}
    \mathfrak{so}(Q_{p,q})=\mathfrak{a}\oplus Z_{\mathfrak{k}}(\mathfrak{a})\oplus\bigoplus_{1\leq i\leq p}\mathfrak{g}_{\epsilon_i}\oplus\bigoplus_{1\leq i\leq p}\mathfrak{g}_{-\epsilon_i}\oplus\bigoplus_{1\leq i\neq j\leq p}\mathfrak{g}_{\epsilon_i-\epsilon_j}\oplus\bigoplus_{1\leq i<j\leq p}\mathfrak{g}_{\epsilon_i+\epsilon_j}\oplus\bigoplus_{1\leq i<j\leq p}\mathfrak{g}_{-\epsilon_i-\epsilon_j}
\end{equation*}
where $\mathfrak{g}_{\lambda}=\{X\in\mathfrak{so}(Q_{p,q}) : [A,X]=\lambda(A)X\ \forall A\in\mathfrak{a}\}$
with $\dim(\mathfrak{g}_{\epsilon_i})=q-p$, $\dim(\mathfrak{g}_{-\epsilon_i})=q-p$, $\dim(\mathfrak{g}_{\epsilon_i-\epsilon_j})=1$, $\dim(\mathfrak{g}_{\epsilon_i+\epsilon_j})=1$ and $\dim(\mathfrak{g}_{-\epsilon_i-\epsilon_j})=1$.
\end{proposition}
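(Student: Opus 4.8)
The plan is to apply the general restricted root space decomposition to the explicit description of $\mathfrak{so}(Q_{p,q})$ and its maximal abelian subspace $\mathfrak{a}\subset\mathfrak{p}$ obtained above, and then to read off the roots, the root spaces, and their dimensions directly from the six bracket computations already displayed.

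First I would invoke the abstract setup. Since $\mathfrak{a}\subset\mathfrak{p}$, each $\ad(A)$ with $A\in\mathfrak{a}$ is symmetric with respect to the inner product $B_p(X,Y)=-B(X,\theta(Y))$ (Lemma~\ref{B_p}), and these operators commute because $\mathfrak{a}$ is abelian, so they are simultaneously diagonalizable. This yields the decomposition $\mathfrak{so}(Q_{p,q})=\mathfrak{g}_0\oplus\bigoplus_{\alpha\in\Sigma}\mathfrak{g}_\alpha$ with $\mathfrak{g}_0=Z_{\mathfrak{g}}(\mathfrak{a})$. As $\mathfrak{g}_0$ is $\theta$-stable we may split $\mathfrak{g}_0=(\mathfrak{g}_0\cap\mathfrak{p})\oplus(\mathfrak{g}_0\cap\mathfrak{k})$; maximality of $\mathfrak{a}$ forces $\mathfrak{g}_0\cap\mathfrak{p}=\mathfrak{a}$, while $\mathfrak{g}_0\cap\mathfrak{k}=Z_{\mathfrak{k}}(\mathfrak{a})$ is the subspace of block matrices with only a skew-symmetric middle $(q-p)\times(q-p)$ block computed above, of dimension $\tbinom{q-p}{2}$. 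This accounts for the summand $\mathfrak{a}\oplus Z_{\mathfrak{k}}(\mathfrak{a})$ in the asserted decomposition.

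Next I would exhibit an explicit basis of a $\theta$- and $\ad(\mathfrak{a})$-stable complement of $\mathfrak{g}_0$, organized by the $3\times 3$ block structure (blocks of sizes $p$, $q-p$, $p$) and respecting the symmetry constraints $M_{13}=-W_pM_{13}^\top W_p$ and $M_{31}=-W_pM_{31}^\top W_p$ from the matrix description of $\mathfrak{so}(Q_{p,q})$. The six displayed brackets identify, block by block, eigenvectors of $\ad(D_\lambda)$ with eigenvalue written via $\epsilon_i(D_\lambda)=\lambda_i$: the coupled top-left and bottom-right off-diagonal entries give the roots $\epsilon_i-\epsilon_j$, $i\neq j$, with $\dim\mathfrak{g}_{\epsilon_i-\epsilon_j}=1$; the top-middle and middle-right entries give $\epsilon_i$ and the middle-left and bottom-middle entries give $-\epsilon_j$, each such root space being indexed by the $q-p$ middle coordinates, hence of dimension $q-p$ and \emph{absent} when $p=q$; and the top-right, respectively bottom-left, entries give $\epsilon_i+\epsilon_{p-j+1}$, respectively its negative, each of dimension $1$. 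One also checks that the putative eigenvector for $2\epsilon_i$ vanishes identically because of the constraint on the top-right block, so no such root occurs; relabelling $\{i,p-j+1\}$ as an unordered pair $\{i,j\}$ with $i<j$ turns these into the lists in the statement, in both the case $p\neq q$ and the case $p=q$.

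The concluding, and genuinely verifying, step is a dimension count: $\dim\mathfrak{a}=p$, $\dim Z_{\mathfrak{k}}(\mathfrak{a})=\tfrac{(q-p)(q-p-1)}{2}$, the spaces $\mathfrak{g}_{\pm\epsilon_i}$ contribute $2p(q-p)$, the $\mathfrak{g}_{\epsilon_i-\epsilon_j}$ contribute $p(p-1)$, and the $\mathfrak{g}_{\pm(\epsilon_i+\epsilon_j)}$ contribute $p(p-1)$; these add up to $\tfrac{(p+q)(p+q-1)}{2}=\dim\mathfrak{so}(Q_{p,q})$, so the eigenvectors constructed span all of $\mathfrak{g}$ and the exhibited list of roots is exhaustive. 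I expect the main obstacle to be exactly this bookkeeping inside the symmetry-constrained block description — producing an honest basis of each root space, handling the degenerate cases (the vanishing $2\epsilon_i$-candidate, the collapse of the middle block and of $Z_{\mathfrak{k}}(\mathfrak{a})$ when $p=q$ or $p=q-1$), and confirming that nothing is double counted — rather than the eigenvalue computations themselves, which are immediate from the displayed brackets.
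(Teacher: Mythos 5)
Your proposal is correct and follows essentially the same route as the paper, which simply points to the six displayed bracket computations; you make explicit the steps the paper leaves implicit, namely that $\ad(A)$ for $A\in\mathfrak{a}$ are commuting $B_p$-symmetric operators, that $\mathfrak{g}_0\cap\mathfrak{p}=\mathfrak{a}$ by maximality, and you add a dimension count $p+\tfrac{(q-p)(q-p-1)}{2}+2p(q-p)+2p(p-1)=\tfrac{(p+q)(p+q-1)}{2}$ to certify that the listed root vectors span $\mathfrak{g}$. The extra care about degenerate cases (the vanishing $2\epsilon_i$ candidate, the collapse of $Z_{\mathfrak{k}}(\mathfrak{a})$ and of $\mathfrak{g}_{\pm\epsilon_i}$ when $q-p\le 1$, avoiding double counting of $\epsilon_i+\epsilon_{p-j+1}$ across $(i,j)\leftrightarrow(p-j+1,p-i+1)$) is accurate and is exactly the bookkeeping the paper omits.
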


\begin{proof}
This is a consequence of the previous bracket computations.
\end{proof}

\begin{proposition}
If $p\neq q$, then $\Delta=\{\epsilon_1-\epsilon_2,\ \epsilon_2-\epsilon_3, \ldots,\ \epsilon_{p-1}-\epsilon_p,\ \epsilon_p\}$ is a set of simple roots. If $p=q$ then $\Delta=\{\epsilon_1-\epsilon_2, \epsilon_2-\epsilon_3, \ldots,\ \epsilon_{p-1}-\epsilon_p,\ \epsilon_{p-1}+\epsilon_p\}$ is a set of simple roots.
\end{proposition}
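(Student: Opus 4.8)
The plan is to check directly the two properties characterising a set $\Delta \subset \Sigma$ of simple roots: that $\Delta$ is a basis of the span $\mathfrak{a}^\ast$ of $\Sigma$, and that every root is a $\Z$-linear combination of the elements of $\Delta$ with coefficients either all $\geq 0$ or all $\leq 0$. Since $\dim \mathfrak{a}^\ast = p$ by the earlier remark and $\Delta$ has $p$ elements in each case, the first property reduces to checking linear independence; and since $\Sigma = -\Sigma$, the second reduces to expanding every \emph{positive} root as a nonnegative integral combination of $\Delta$, once a positive system has been fixed.

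\emph{Linear independence.} Record the elements of $\Delta$ by their coordinates in the basis $(\epsilon_1, \dots, \epsilon_p)$ of $\mathfrak{a}^\ast$, forming a $p \times p$ matrix $T$. If $p \neq q$, then $T$ — with rows $\epsilon_1 - \epsilon_2, \dots, \epsilon_{p-1} - \epsilon_p, \epsilon_p$ — is upper bidiagonal with $1$'s on the diagonal, so $\det T = 1$. If $p = q$, the only change is that the last row of $T$ is $(0, \dots, 0, 1, 1)$ (coming from $\epsilon_{p-1} + \epsilon_p$); subtracting the penultimate row from it makes $T$ upper triangular with diagonal $(1, \dots, 1, 2)$, so $\det T = 2$. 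Either way $\Delta$ is a basis of $\mathfrak{a}^\ast$.

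\emph{Sign condition.} Use the positive system cut out by the regular element $D_\lambda \in \mathfrak{a}$ with $\lambda_1 > \dots > \lambda_p > 0$ — equivalently the lexicographic order $\epsilon_1 > \dots > \epsilon_p > 0$. Its positive roots are: when $p \neq q$, the $\epsilon_i$ $(1 \leq i \leq p)$ together with the $\epsilon_i - \epsilon_j$ and the $\epsilon_i + \epsilon_j$ $(1 \leq i < j \leq p)$; when $p = q$, the same list without the $\epsilon_i$. Put $\alpha_k := \epsilon_k - \epsilon_{k+1}$ for $1 \leq k \leq p-1$, and $\alpha_p := \epsilon_p$ (resp. $\alpha_p := \epsilon_{p-1} + \epsilon_p$ if $p = q$), so $\Delta = \{\alpha_1, \dots, \alpha_p\}$ and each $\alpha_k$ is a positive root. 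Then $\epsilon_i - \epsilon_j = \alpha_i + \dots + \alpha_{j-1}$ for $i < j$. If $p \neq q$, one also has $\epsilon_i = \alpha_i + \dots + \alpha_p$ and $\epsilon_i + \epsilon_j = (\epsilon_i - \epsilon_j) + 2\epsilon_j$; substituting the two previous identities turns these into nonnegative integral combinations of $\Delta$. If $p = q$, one instead writes $\epsilon_i + \epsilon_p = \alpha_i + \dots + \alpha_{p-2} + \alpha_p$ and, for $i < j < p$, $\epsilon_i + \epsilon_j = (\epsilon_i + \epsilon_p) + (\epsilon_j - \epsilon_p)$, again a nonnegative combination after substitution. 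In every case the coefficients are nonnegative integers, so $\Delta$ is a set of simple roots; as a by-product $\Sigma$ has type $B_p$ when $p \neq q$ and type $D_p$ when $p = q$.

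The determinant and telescoping computations are routine; the only delicate point is the case $p = q$, where the short roots $\pm\epsilon_i$ are absent, so $\epsilon_p$ is no longer a root and cannot be a simple root — one must take $\epsilon_{p-1} + \epsilon_p$ — and the expansion of $\epsilon_i + \epsilon_j$ with both indices $< p$ must be routed through $\epsilon_i + \epsilon_p$, since $2\epsilon_p$ is not a nonnegative combination of the $\alpha_k$ in type $D_p$. The degenerate small cases should be dispatched separately: $p = 0$ (no roots, $\Delta = \emptyset$); $p = 1$ with $q \geq 2$ (type $A_1$, $\Delta = \{\epsilon_1\}$); and $p = q = 2$ (type $D_2 = A_1 \times A_1$, $\Delta = \{\epsilon_1 - \epsilon_2, \epsilon_1 + \epsilon_2\}$), consistent with $\mathfrak{so}(Q_{2,2}) \cong \mathfrak{sl}(2,\R) \oplus \mathfrak{sl}(2,\R)$.
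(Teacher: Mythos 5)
Your proof is correct, and it follows essentially the same strategy as the paper: fix the lexicographic positive system and express every positive root as a telescoping nonnegative integral combination of the proposed simple roots. The telescoping identities you write for $\epsilon_i - \epsilon_j$, $\epsilon_i$, and $\epsilon_i + \epsilon_j$ in the case $p \neq q$ are exactly the ones in the paper's proof.

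Where you go beyond the paper: the paper proves only the $p \neq q$ case and leaves $p = q$ as an exercise for the reader, whereas you carry out the $D_p$ case in full, correctly noticing that $2\epsilon_p$ is not a nonnegative combination of $\Delta$ in type $D_p$ and routing the expansion of $\epsilon_i + \epsilon_j$ through $\epsilon_i + \epsilon_p = \alpha_i + \cdots + \alpha_{p-2} + \alpha_p$ instead. You also make the linear-independence check explicit via the determinant computation ($\det T = 1$ in type $B_p$, $\det T = 2$ in type $D_p$), which the paper omits, and you note the degenerate small cases ($p \in \{0,1\}$, $p = q = 2$). None of these additions change the underlying method; they just fill in steps the paper skips or delegates. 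A small remark: when you fix the positive system via $\lambda_1 > \cdots > \lambda_p > 0$, it would be cleanest to observe at the same time that each $\alpha_k$ is indecomposable in that cone (or simply invoke the standard fact that a set of positive roots expanding every positive root with nonnegative integer coefficients and linearly independent is automatically a simple system), so that the two bullet points you verify are genuinely sufficient; as written, this is implicit in your opening sentence but worth making explicit since ``basis plus sign condition'' is the characterisation being used.
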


\begin{proof}
We only prove the case $p\neq q$. We have to express each root as a combination of the simple ones involving only non-negative or non-positive coefficients. We see that $\epsilon_i=\epsilon_i-\epsilon_{i+1}+\epsilon_{i+1}-\epsilon_{i+2}+\ldots+\epsilon_{p-1}-\epsilon_p+\epsilon_p$. Also for $i<j$ $\epsilon_i+\epsilon_j=\epsilon_i-\epsilon_{i+1}+\epsilon_{i+1}-\epsilon_{i+2}+\ldots+\epsilon_{j-1}-\epsilon_j+2\epsilon_j-2\epsilon_{j+1}+2\epsilon_{j+1}-2\epsilon_{j+2}+\ldots+2\epsilon_{p-1}-2\epsilon_p+2\epsilon_p$ and $\epsilon_i-\epsilon_j=\epsilon_i-\epsilon_{i+1}+\epsilon_{i+1}-\epsilon_{i+2}+\ldots+\epsilon_{j-1}-\epsilon_j$. We deduce the other equations by multiplying those by $-1$.
\end{proof}

\begin{ex*}
Check the case $p=q$.
\end{ex*}

The Dynkin diagram of $\mathfrak{so}(Q_{p,q})$ is $B_p$ if $p\neq q$ and $D_p$ if $p=q$.

\begin{example}
Case of $\mathfrak{so}(Q_{2,3})$:
\begin{equation*}
    \mathfrak{a}=\left\{
    \begin{array}{ll}
        \begin{pmatrix}
            \lambda_1&&&&\\
            &\lambda_2&&&\\
            &&0&&\\
            &&&-\lambda_2&\\
            &&&&-\lambda_1
        \end{pmatrix}:\ \lambda_i\in\mathbb{R}
    \end{array}
    \right\}
\end{equation*}
so $\mathfrak{so}(Q_{2,3})$ has rank 2. We have $Z_{\mathfrak{k}}(\mathfrak{a})=\{0\}$ so $\mathfrak{so}(Q_{2,3})$ is split.
Then $\Sigma=\{\epsilon_1,\ \epsilon_2,\ -\epsilon_1,\ -\epsilon_2,\ \epsilon_1+\epsilon_2,\ -\epsilon_1-\epsilon_2,\ \epsilon_1-\epsilon_2,\ \epsilon_2-\epsilon_1\}$ and all root spaces have dimension 1. As a set of simple roots we can pick $\Delta=\{\epsilon_1-\epsilon_2,\ \epsilon_2\}$ and the Dynkin diagram is thus $B_2$.
\end{example}

\newpage

\thispagestyle{empty}

\chapter[Exercises - Part 1]{Examples and Exercises - Part 1\\ {\Large\textnormal{\textit{by Alex Moriani}}}}
\addtocontents{toc}{\quad\quad\quad \textit{Alex Moriani}\par}

\section{Closed linear groups}
In this session we will talk about linear Lie groups, that is, closed Lie subgroups of $\GL(n,\R)$. But first, let us just recall how to see the Lie algebra of a linear Lie group as a group of matrices.\\
The group $\GL(n,\R)$ is open in $\Mat(n,\R)$ (which is a real vector space), and so it is a manifold of dimension $n^2$.

\begin{exercise}
Prove that the general linear group $\GL(n,\R)$ is a Lie group. Prove that its Lie algebra is isomorphic to $\Mat(n,\R)$ endowed with the usual bracket on matrices $[A,B]=AB-BA$.
\end{exercise}

\textit{Solution.}
$\GL(n,\R)$ is an open subset of $\Mat(n,\R)$ (a real vector space), so a smooth manifold. Multiplication and inverse are polynomial in the coefficients so are smooth and $\GL(n,\R)$ is a Lie group.
The Lie algebra of a Lie group is defined as being the set of left invariant vector fields, which is in bijection with the tangent space at the identity. For $\GL(n,\R)$, the Lie algebra may so be identified with the vector space $\Mat(n,\R)$.
It will be the same for every closed subgroup of $\GL(n,\R)$, its Lie algebra will be a subspace of $\Mat(n,\R)$.

\begin{definition}
The linear Lie algebra of a closed Lie subgroup $G$ of $\GL(n,\R)$ is its tangent space at $I_n$ seen as a subspace of $\Mat(n,\R)$ :\\
\[\mathfrak{g}=\{\gamma'(t)\ |\ \gamma \from \R\to G\ \mathrm{smooth},\ \gamma(0)=I_n\},\]
endowed with the usual bracket on matrices $[X,Y]=XY-YX$.
\end{definition}

\begin{exercise}
Show that it is a Lie algebra (vector space, skew-symmetry and Jacobi identity).
\end{exercise}
\textit{Solution.}
If $X,Y\in \Mat(n,\R)$ corresponding to $x,y \from \R\to \GL(n,\R)$, $xy$ is smooth and its derivative at $t+0$ gives $(xy)'(0)=x'(0)+y'(0)=X+Y$, so $X+Y\in\mathfrak{g}$. Looking at $kx$, for $k\in\R$, we see that $kX\in\mathfrak{g}$. So $\mathfrak{g}$ is a vector space.
Computations show that $[\cdot,\cdot]$ is bilinear skew-symmetric and verify the Jacobi identity. So $\mathfrak{g}$ is a Lie algebra.

\begin{exercise}
Show for the case of $\GL(n,\R)$, its Lie algebra $\mathfrak{gl}(n,\R)$ and its linear Lie algebra $\mathfrak{g}$ are isomorphic.
\end{exercise}

\textit{Hint.} They are isomorphic as vector spaces. Recall that the bracket of the Lie algebra $\mathfrak{gl}(n,\R)$ is given by the bracket of vector fields ($\mathfrak{gl}(n,\R)=\{\mbox{left invariant vector fields}\}$).
\begin{align*}
\Phi : \mathfrak{g} & \to \mathfrak{gl}(n,\R)\\
X & \mapsto X^L,
\end{align*}
where $X^L$ is the left invariant vector field with $(X^L)_{I_n}=X$.
We want to show that this map is a Lie algebra isomorphism.
It is well-defined, one-one, onto.
We need to check that it respects the Lie bracket, i.e.\ we have to verify that for every $X,Y\in\mathfrak{g}$, $[X^L,Y^L]=[X,Y]^L=(XY-YX)^L$, and to do that we check that they act in the same way as derivations on functions.
It is differential calculus.
If this is true, the two vector fields are the same, and so is their value at the identity, i.e. $[X^L,Y^L]_{I_n}=[X,Y]=XY-YX$.\\
Remark: recall that the exponential map for Lie groups gives the exponential of matrices in a linear group.
To find a particular curve with derivative $X$ one can choose $t\mapsto \exp(tX)$.
Here is another way of thinking of the linear Lie algebra of a closed linear subgroup $G$:
\[ \mathfrak{g}=\{X\in \Mat(n,\R)\ |\ \exp(tX)\in G\ \forall t\in\R\}. \]

\section{Exercises}

\begin{exercise} Show that the classical groups \begin{itemize}
    \item $\SL(n,\R)=\{A\in \GL(n,\R): \det(A)=1\}$,
    \item $\Sp(2n,\R)=\{A\in \GL(n,\R): \tran{A} J_{n,n} A=J_{n,n}\}\cap \SL(2n,\R)$,
    \item $\SO(p,q)=\{A\in \GL(n,\R): \tran{A} I_{p,q}A=I_{p,q}\}\cap \SL(p+q,\R)$,
\end{itemize}
are closed linear groups, and compute their linear Lie algebras.
\end{exercise}

\textit{Hint.}  Use the inverse function theorem.

\textit{Solution.} \begin{itemize}
    \item $\SL(n,\R)=\det^{-1}(1)$, $(d_{I_n}\det) (X)=\tr(X)$, so
    $\mathfrak{sl}(n,\R)=((d_{I_n}\det))^{-1}(0)=\{X\in\mathfrak{gl}(n,\R): \tr(X)=0\}$,
    \item $\Sp(2n,\R)=f^{-1}(J_{n,n})$, where
\[
    f \from \SL(2n,\R)\to \Mat(2n,\R), \quad A\mapsto \tran{A}J_{n,n}A.
\]
Then $(d_{I_{2n}}f)(X)= \tran{X} J_{n,n}+J_{n,n}X$, so
    $\mathfrak{sp}(2n,\R)=\{X\in \mathfrak{gl}(2n,\R): \tran{X} J_{n,n}+J_{n,n}X=0\}$,
    \item $\SO(p,q)= g^{-1}(I_{p,q})$, where
\[
    g \from \SL(p+q,\R)\to \Mat(p+q,\R), \quad A\mapsto \tran{A}I_{p,q}A.
\]
Thus
    $\mathfrak{so}(p,q)=\{X\in\mathfrak{gl}(p+q,\R): \tran{X} I_{p,q} + I_{p,q}X=0\}$.
\end{itemize}

\begin{exercise}
Compute the Killing form on $\mathfrak{gl}(n,\R)$.
Compute the Killing form on $\mathfrak{sl}(3,\R)$ and $\mathfrak{sp}(4,\R)$.
\end{exercise}

\textit{Solution.} Taking the basis $(E_{ij})$ (matrices with zeros, and only one $1$ on the $i$-th row and $j$-th column), and a lot of patience, one can compute the killing form on $\mathfrak{gl}(n,\R)$ to obtain that $B(X,Y)=2n \Tr(XY)-2\Tr(X)\Tr(Y)$.
Since $\mathfrak{sl}(n,\R)$ is an ideal of $\mathfrak{gl}(n,\R)$, its Killing form is the restriction of that of $\mathfrak{gl}(n,\R)$, i.e.\ $B(X,Y)=2n\Tr(XY)$.
For $\mathfrak{sl}(3,\R)$, decompose it into skew and symmetric part, so it becomes $\mathfrak{sl}(3,\R)=\mathfrak{so}(3,\R)\oplus \mathfrak{p}$.
Take the basis \[ \begin{pmatrix}0 & 1 & 0 \\ -1 & 0 & 0 \\ 0 & 0 & 0 \end{pmatrix},\begin{pmatrix}0 & 0 & 1 \\ 0 & 0 & 0 \\ -1 & 0 & 0 \end{pmatrix},\begin{pmatrix} 0 & 0 & 0 \\ 0 & 0 & 1\\ 0 & -1 & 0\end{pmatrix},\begin{pmatrix}0 & 1 & 0 \\ 1 & 0 & 0 \\ 0 & 0 & 0 \end{pmatrix},\begin{pmatrix}0 & 0 & 1 \\ 0 & 0 & 0 \\ 1 & 0 & 0 \end{pmatrix},\begin{pmatrix} 0 & 0 & 0 \\ 0 & 0 & 1\\ 0 & 1 & 0\end{pmatrix},\]

\[\begin{pmatrix} 1 & 0 & 0 \\ 0 & 0 & 0 \\ 0 & 0 & -1 \end{pmatrix},\begin{pmatrix} 0& 0 & 0\\0 & 1 & 0 \\ 0 & 0 & -1\end{pmatrix}, \]
and find the matrix of the Killing form being equal to
\[\begin{pmatrix} -12 & 0 & 0 & 0 & 0 & 0 & 0 & 0\\
0 & -12 & 0 & 0 & 0 & 0 & 0 & 0 \\
0 & 0 & -12 & 0 & 0 & 0 & 0 & 0 \\
0 & 0 & 0 & 12 & 0 & 0 & 0 & 0 \\
0 & 0 & 0 & 0 & 12 & 0 & 0 & 0 \\
0 & 0 & 0 & 0 & 0 & 12 & 0 & 0 \\
0 & 0 & 0 & 0 & 0 & 0 & 12 & 6 \\
0 & 0 & 0 & 0 & 0 & 0 & 6 & 12 
\end{pmatrix}, \]
which is non-degenerate of signature $(5,3)$.

For $\mathfrak{sp}(2n,\R)$, a computation as that of $\mathfrak{gl}(n,\R)$ gives us $B(X,Y)=(2n+2)\Tr(XY)$, which is non-degenerate of signature $(n^2,n^2+n)$ in $\mathfrak{sp}(2n,\R)$.

\begin{exercise}
Show that the Lie algebras above are semisimple Lie algebras, except for $\mathfrak{gl}(n,\R)$, which is only reductive (i.e. direct sum of an abelian Lie algebra and a semisimple one).
\end{exercise}

\begin{remark}
We see that the Killing form is non-degenerate for $\mathfrak{sl}(n,\R), \mathfrak{sp}(2n,\R)$ and $\mathfrak{so}(p,q)$.
\end{remark} 

Recall the Cartan criterion: a Lie algebra is semisimple if and only if its Killing form is non-degenerate.
Write $\mathfrak{gl}(n,\R)$ as $\mathfrak{sl}(n,\R) \oplus \R I_n$.

\begin{exercise}
Find a Cartan decomposition for $\mathfrak{sl}(n,\R)$, $ \mathfrak{sp}(2n,\R)$, $\mathfrak{so}(p,q)$.
Find the associated symmetric space of the Lie group $\SL(n,\R)$ and its maximal flats.
\end{exercise}

\textit{Solution.}
We can take the skew and symmetric part in each case (that are eigenspaces of the involution $X\mapsto -\tran{X}$).
This gives the following Cartan decompositions:
\begin{itemize}
    \item $\mathfrak{sl}(n,\R)=\mathfrak{so}(n,\R)\oplus\mathfrak{p}$ with $\mathfrak{p}$ being the symmetric matrices with trace $0$,
    \item $\mathfrak{sp}(2n,\R)=\mathfrak{k}\oplus \mathfrak{q}$,
    \item $\mathfrak{so}(p,q)=\mathfrak{so}(p,\R)\oplus\mathfrak{so}(q,\R)\oplus\mathfrak{r}$.
\end{itemize}

Let us detail the cases $\mathfrak{sl}(n,\R)$ and $\mathfrak{sp}(2n,\R)$. In $\mathfrak{sl}(n,\R)$, the skew part gives all the skew-symmetric matrices, i.e.\ the Lie algebra of $\SO(n,\R)$, so a maximal compact of $\SL(n,\R)$ is $\SO(n,\R)$.
The symmetric part of $\mathfrak{sl}(n,\R)$ are the symmetric matrices with trace zero.
In $\mathfrak{p}$, define $\mathfrak{a}$ as the set of diagonal matrices of $\mathfrak{p}$, i.e.\ diagonal matrices of trace zero. The space $\mathfrak{a}$ is abelian, and is maximal for this property.
Let us say that if you take a symmetric matrix with zero trace, then its exponential will give a positive definite matrix with determinant one (symmetric implies diagonalizable, and then apply exponential to give another symmetric matrix with eigenvalues $\exp(l_i)>0$, and recall that $\exp\circ\Tr=\det\circ\exp$).
So the exponential of $\mathfrak{p}$ will give us $ \mathrm{Sym}_+^1(n,\R) $ the set of positive definite symmetric matrices of determinant one.
The Cartan decomposition can be written as \[ \SL(n,\R)\cong \SO(n,\R)\times\mathrm{Sym}_+^1(n,\R), \]
which is nothing else than the polar decomposition.
The symmetric space is then $X=\SL(n,\R)/\SO(n,\R)\cong\mathrm{Sym}_+^1(n,\R)$.
A maximal flat of $X$ is then given by integrating the subspace $\mathfrak{a}\subset\mathfrak{p}$, and this gives diagonal matrices with positive entries and determinant one.

In $\mathfrak{sp}(2n,\R)$, we can decompose the matrices as \[X=\begin{pmatrix} M_1 & M_2 \\ M_3 & -\tran{M_1} \end{pmatrix},\] with $M_1\in\mathfrak{sl}(n,\R)$ and $M_2$,$M_3$ symmetric.
Then we see that the Lie subalgebra $\mathfrak{k}$ is made of matrices $\begin{pmatrix} M_1 & -M_2 \\ M_2 & M_1 \end{pmatrix}$, with $M_1$ skew- and $M_2$ symmetric. At the Lie group level, the maximal compact $K$ associated to the Lie sub-algebra $\mathfrak{k}$ can be thought of in a convenient way. We recall the definition \[ \UU(n)=\{A\in \GL(n,\C): A^*A=I_n\},\] where $A^*$ stands for $\overline{\tran{A}}$ (conjugate transpose).
Recall also that complex matrices can be seen as real matrices of twice the size by writing \[ A=\Re(A)+i\Im(A)\mapsto \begin{pmatrix} \Re(A) & -\Im(A) \\ \Im(A) & \Re(A) \end{pmatrix}. \]
Looking at $\UU(n)$ as a group of real matrices, and doing the computation $\tran{A} J_{n,n}A$ for $A\in \UU(n)$, we find $J_{n,n}$, i.e\ $\UU(n)\subset \Sp(2n,\R)$. Moreover, computing the Lie algebra of $\UU(n)$ seen in $\GL(2n,\R)$ we find exactly $\mathfrak{k}$. Then just remark that $\UU(n)$ is connected so we can say that a maximal compact of $\Sp(2n,\R)$ is $\UU(n)$. The symmetric space of $\Sp(2n,\R)$ will be studied in the following worksheet.

\begin{exercise}
Compute the restricted root spaces, the Weyl chambers (and Weyl group), the $KAK$ decomposition and the parabolic subgroups of $\SL(3,\R)$.
\end{exercise}

For $\SL(3,\R)$, recall that \[\mathfrak{a}=\left\{\begin{pmatrix}\lambda & & \\ & \mu & \\ & & \nu \end{pmatrix} : \lambda,\mu,\nu\in\R,\ \lambda+\mu+\nu=0\right\}.\]
The space $\mathfrak{a}^*$ is generated by $f_i$, for $1\leq i\leq 3$, where $f_i\left(\begin{pmatrix}\lambda_1 & & \\ & \lambda_2 & \\ & & \lambda_3 \end{pmatrix}\right)=\lambda_i$.
Let us take $A=\begin{pmatrix}\lambda_1 & & \\ & \lambda_2 & \\ & & \lambda_3 \end{pmatrix}\in\mathfrak{a}$, and $X=(x_{ij})\in\mathfrak{sl}(3,\R)=\mathfrak{g}$.
We have 
\[ [A,X]=\begin{pmatrix} 0 & (\lambda_1-\lambda_2)x_{12} & (\lambda_1-\lambda_3)x_{13} \\ (\lambda_2-\lambda_1)x_{21} & 0 & (\lambda_2-\lambda_3)x_{23} \\ (\lambda_3-\lambda_1)x_{31} &  (\lambda_3-\lambda_2)x_{32} & 0 \end{pmatrix} ,\]
so the roots are $\Sigma=\{\lambda_i-\lambda_j : i\neq j\}$ and the root spaces associated are $\mathfrak{g}_{(ij)}=\{X\in\mathfrak{g}\ |\ x_{kl}=0\ \forall (kl)\neq(ij)\}$ of dimension one. Here $\mathfrak{g}_0=\mathfrak{a}$.
An example of a Lie algebra with $\mathfrak{a}\neq \mathfrak{g}_0$ is $\mathfrak{sl}(n,\C)$, where all works like the real case (with Hermitian instead of symmetric) but $\mathfrak{g}_0=\mathfrak{a}\oplus i\mathfrak{a}$.

Let us compute the Weyl chambers. Let us take the root $\alpha=\lambda_1-\lambda_2$ and compute $\ker(\alpha)=\left\{\begin{pmatrix} d & & \\ & d & \\ & & -2d\end{pmatrix}: d\in \R\right\}$. We can represent the space $\mathfrak{a}$ as a 2-plane, namely the hyperplane $x+y+z=0$ in $\R^3$, and draw the walls we obtain.

\newpage

\thispagestyle{empty}
\chapter[Exercises - Part 2]{Examples and Exercises - Part 2\\ {\Large\textnormal{\textit{by Colin Davalo}}}}
\addtocontents{toc}{\quad\quad\quad \textit{Colin Davalo}\par}

\section{Hermitian symmetric spaces, in general}

Several equivalent definitions of Hermitian symmetric spaces exist. Here is a first definition.

\begin{definition}
A symmetric space $(X,g)$ is called of \emph{Hermitian type} if it admits a complex structure $J\colon TX\to TX$ that is invariant by the identity component of $G$ and such that $g(J\cdot,J\cdot)=g(\cdot,\cdot)$.
\end{definition}

In particular $g$ becomes a Hermitian metric, and $\omega(\cdot,\cdot)=g(\cdot,J\cdot)$ defines a non-degenerate $2$-form on $X$. 

\begin{exercise}
Any differential form on a symmetric space $X$ that is invariant under the identity component of the isometry group is closed. In particular the previous $\omega$ is a symplectic form.
\end{exercise}

\textit{Hint.}
Consider $x\in X$ and the symmetry $\sigma_x$ at $x$. Then if $\omega$ is an invariant $k$-form, then $\sigma_x^* \omega$ is also an invariant $k$-form, equal to $(-1)^k\omega$.

A characterisation of symmetric spaces of Hermitian type is the following: let $X$ whose identity component of the isometry group is $G$. Let $\mathfrak{g}=\mathfrak{k}+\mathfrak{p}$ be the Cartan decomposition associated with $X$. Then $X$ is of Hermitian type if and only if $\mathfrak{k}$ has a non-zero center.

\medskip

There exist Hermitian symmetric spaces of compact and non-compact type, but we are more interested in the Hermitian symmetric spaces of non-compact type.

\section{An interesting example}

We study one example of an Hermitian symmetric space. Through this example, we try to understand the main features of Hermitian symmetric spaces.

\medskip

Let $n\geq 1$ be an integer, and let $\Sp(2n,\R)$ be the subgroup of $\GL(2n,\R)$ of elements preserving the symplectic form $\omega$, which is the bilinear form defined by $\omega(X,Y)=\tran{X} J_{n,n} Y$ for $X,Y\in \R^{2n}$ with :
$$J_{n,n}=\begin{pmatrix} 0 & I_n \\ -I_n & 0\end{pmatrix}.$$

The goal of this exercise session is to study the properties of the symmetric space associated with $\Sp(2n,\R)$. 

\begin{exercise}
Find a Cartan involution, and a maximal abelian subalgebra $\mathfrak{a}\subset \mathfrak{p}$ for $\Sp(2n,\R)$. What is the associated restricted root system? And the Weyl group? 
\end{exercise}

\textit{Hint.} 
The Killing form is proportional to $A,B\mapsto \Tr(AB)$. The Weyl groups acts by isometry on $\mathfrak{a}$

A model for this symmetric space associated with $\Sp(2n,\R)$ is the \emph{Siegel upper half-space} $\mathcal{X}_n$, defined as the space of complex $n\times n$ matrices which can be written $S+iT$ with $S,T$ real symmetric $n\times n$ matrices, and $T$ positive definite.

\begin{remark}
When $n=1$, the Siegel space $\mathcal{X}_1$ becomes the upper half-plane:
$$\{z\in \C : \Im(z)>0\}.$$
It is a model for the symmetric space $\mathbb{H}^2$ associated with $\Sp(2,\R)=\SL(2,\R)$. 
\end{remark}

The symplectic groups acts on the Siegel space in a way that is similar to the action of $\SL(2,\R)$ on the upper half-plane. Consider for
$$g=\begin{pmatrix} A & B \\ C & D\end{pmatrix}\in \Sp(2n,\R),$$
for some $n\times n$ blocks $A,B,C,D$. Let $Z=S+iT\in \mathcal{X}_n$ for some symmetric real matrices $S,T$. We define $g\cdot Z=(AZ+B)(CZ+D)^{-1}$.

\begin{exercise} 
Assuming that this is well defined, check that it is an action. Find an equivariant diffeomorphism between $\mathcal{X}_n$ and the symmetric space $\Sp(2n,\R)/\UU(n)$.
\end{exercise}

\textit{Hint.} 
Show that the stabilizer of $iI_n\in \mathcal{X}_n$ is isomorphic to $\UU(n)$.

The action of any element  $\Sp(2n,\R)$ on $\mathcal{X}_n$ is holomorphic. Hence the symmetric space associated to $\Sp(2n,\R)$ inherits a complex structure.

\begin{exercise}
Check that the Riemannian metric actually defines an Hermitian metric for this complex structure.
\end{exercise}

A Hermitian symmetric space is of \emph{tube type} if it is biholomorphic to a domain of the form $V+i\Omega$ with $V$ a real vector space and $\Omega$ a convex cone.

\medskip

It is handy to realize the symmetric space as a bounded domain of a complex vector space. For every symmetric space of Hermitian type, a general construction called the \emph{Harish-Chandra embedding}.
For the group $\Sp(2n,\R)$, this domain is $\mathcal{D}_n$, the space of complex symmetric $n\times n$ matrices $Z$ such that $I_n- {^tZ}\overline{Z}$ is positive definite.

\begin{exercise}
Find a biholomorphism from $\mathcal{X}_n$ to $\mathcal{D}_n$, inspired from a biholomorphism from the upper half-plane to the unit disk, that can be extended continuously to the boundary of $\mathcal{X}_n$.
\end{exercise}

Recall the maximal principle: let $f$ be a continuous function from the closure $\overline{\Omega}$ of a bounded domain $\Omega$ of a complex vector space into $\C$ that is holomorphic on $\Omega$. Then:
$$\max_{x\in \overline{\Omega}}|f(x)|=\max_{x\in \partial \Omega}|f(x)|.$$

However this inequality can be improved sometimes. We say that a closed subset $F\subset \partial \Omega$ is a \emph{boundary} if for all function $f$ continuous on $\overline{\Omega}$ and holomorphic on $\Omega$ :
$$\max_{x\in \overline{\Omega}}|f(x)|=\max_{x\in F}|f(x)|.$$

The intersection of all boundaries of $\Omega$ is called the \emph{Shilov boundary} of $\Omega$. 

\begin{exercise}
Show that the Shilov boundary of $D\times D\subset \C^2$ where $D\subset \C$ is the unit disk is the set of pairs $(z_1,z_2)\in \C^2$ such that $|z_1|=|z_2|=1$.

 Show that $\Sp(2n,\R)$ acts on $\partial\mathcal{D}_n$, and that any $Z\in \partial\mathcal{D}_n$ such that $Z \overline{Z}=I_n$ is in the same orbit as $I_n$.

Show that the Shilov boundary of $\mathcal{D}_n$ is the set of symmetric complex matrices $Z$ such that $Z\overline{Z}=I_n$. (harder)
\end{exercise}

The Shilov boundary of $\mathcal{D}_n$ is invariant by the action of $\Sp(2n, \R)$. Let $Q_n$ be the stabilizer of a Lagrangian in $\R^{2n}$, i.e. of a $n$-dimensional subspace on which $\omega$ is degenerate. The space $\mathcal{L}_n$ of Lagrangians can be seen as the flag manifold $\Sp(2n,\R)/Q_n$.

\begin{exercise}
Check that $Q_n$ is a parabolic subgroup. To what set of root is it associated ? Show that the Shilov boundary of $\mathcal{D}_n$ is diffeomorphic in a $\Sp(2n,\R)$-equivariant way to $\mathcal{L}_n$.
\end{exercise}

\newpage


\part[Known Notions of Positivity]{Known Notions of Positivity
\textnormal{
\begin{minipage}[c]{15cm}
\begin{center}
    \vspace{2cm}
    {\Large Romeo Troubat}\\
    \vspace{-4mm}
    {\large \textit{Université de Strasbourg}}\\
    \vspace{.5cm}
    {\Large Xian Dai}\\
    \vspace{-4mm}
    {\large \textit{Ruprecht-Karls-Universit\"at Heidelberg}}\\
    \vspace{.5cm}
    {\Large Jingyi Xue}\\
    \vspace{-4mm}
    {\large \textit{National University of Singapore}}\\
    \vspace{.5cm}
     {\Large Raphael Appenzeller}\\
    \vspace{-4mm}
    {\large \textit{ETH Zürich}}\\
    \vspace{.5cm}
     {\Large Francesco Fournier-Facio}\\
    \vspace{-4mm}
    {\large \textit{ETH Zürich}}\\
    \vspace{.5cm}
     {\Large Samuel Bronstein}\\
    \vspace{-4mm}
    {\large \textit{ENS Paris}}\\
    \vspace{.5cm}
     {\Large Ilia Smilga}\\
    \vspace{-4mm}
    {\large \textit{Institut des Hautes Études Scientifiques}}
\end{center}
\end{minipage}
}}\label{chap3}

\thispagestyle{empty}

\chapter[Totally Positive Matrices]{Totally Positive Matrices\\ {\Large\textnormal{\textit{by Romeo Troubat}}}}
\addtocontents{toc}{\quad\quad\quad \textit{Romeo Troubat}\par}

In the section, our aim will be to introduce a notion of positivity in $\gl(n, \R)$. First, we will discuss the link between triples of points in $S^1$ and matrices in $\SL(2, \R)$ which will give us a notion of positivity for $\SL(2, \R)$. We will then generalize this notion for matrices in $\gl(n, \R)$ using \emph{totally positive matrices}.

\section{Positivity for the special linear group}

The tangent bundle of $S^1$ is equivalent to the trivial bundle $S^1 \times \R$, thus the choice of a half line in $\R$ gives a causal structure to the space $S^1$. If we choose $\R^{>0} \subset \R$, we can for instance say that clockwise rotations in $S^1$ are positive whereas counter clockwise rotations are negative.

\begin{definition}
A triple of points $(x,y,z)$ in $S^1$ is said to be \emph{positive} if the points are pairwise distinct and if one has to meet the point $y$ when going from $x$ to $z$ following the positive rotation.
\end{definition}

The group $\SL(2, \R)$ acts on $S^1 \simeq \mathbb{RP}^1$. Up to an action by $\SL(2, \R)$, we can assume that $x = \mathbb{R} e_2$ and $z = \mathbb{R} e_1$ where $e_1 = (0,1)$ ans $e_2 = (1, 0)$. Let us consider the subgroup

\[ U := \left\{ g \in \SL(2, \R), g = \begin{pmatrix}
1 & t\\
0 & 1
\end{pmatrix} \right\}. \]

One could check that $U$ fixes $z$ and acts transitively on $S^1 \setminus \{z\}$. It can be identified with $\R$ using the application

\[ t \in \R \longmapsto \begin{pmatrix} 1 & t \\ 0 & 1 \end{pmatrix} \in U, \]

and the choice of a half-line $\R^{>0} \subset \R$ gives us a subsemigroup $U^{>0} \subset U$ defined by

\[ U^{>0} = \left\{ \begin{pmatrix} 1 & t \\ 0 & 1 \end{pmatrix}, t > 0 \right\}. \]

Any point on $S^1$ different from $z$ can be written in a unique way as $y = u_y x$. If $y = t_y e_1 + e_2$, then $u_y$ is equal to $\begin{pmatrix} 1 & t_y \\ 0 & 1 \end{pmatrix}$. It is then easy to verify that the triple $(x,y,z)$ is positive if and only if $t_y > 0$, i.e if and only if $u_y \in U^{>0}$.

\indent In the same way, let us define the group

\[ O := \left\{ g \in \SL(2, \R), g = \begin{pmatrix}
1 & 0\\
t & 1
\end{pmatrix} \right\} \]

and its subsemigroup

\[ O^{>0} = \left\{ \begin{pmatrix} 1 & 0 \\ t & 1 \end{pmatrix}, t > 0 \right\}. \]

Finally, we call $A = \left\{ \begin{pmatrix} \lambda & 0 \\ 0 & \lambda^{-1} \end{pmatrix}, \lambda \neq 0 \right\}$ the set of diagonal matrices in $\SL(2, \R)$ and $A^{\circ} = \left\{ \begin{pmatrix} \lambda & 0 \\ 0 & \lambda^{-1} \end{pmatrix}, \lambda > 0 \right\}$ the connected component of the identity in $A$.

\begin{definition}
We call $\SL(2, \R)^{>0} = O^{>0} A^{\circ} U^{>0}$ the set of \emph{totally positive matrices} in $\SL(2, \R)$.
\end{definition}

\begin{proposition}
The set of totally positive matrices is the set of matrices in $\SL(2, \R)$ whose coefficients are all positive. In particular, it is a subsemigroup of $\SL(2, \R)$.
\end{proposition}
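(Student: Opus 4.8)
The plan is to compute the product $O^{>0}A^{\circ}U^{>0}$ explicitly and observe that the result consists precisely of the matrices in $\SL(2,\R)$ with all entries positive. First I would write a general element of $\SL(2,\R)^{>0}$ as
\[
\begin{pmatrix} 1 & 0 \\ s & 1 \end{pmatrix}
\begin{pmatrix} \lambda & 0 \\ 0 & \lambda^{-1} \end{pmatrix}
\begin{pmatrix} 1 & t \\ 0 & 1 \end{pmatrix}
= \begin{pmatrix} \lambda & \lambda t \\ \lambda s & \lambda st + \lambda^{-1} \end{pmatrix},
\]
with $s>0$, $t>0$, $\lambda>0$. Since all four entries $\lambda$, $\lambda t$, $\lambda s$, $\lambda st+\lambda^{-1}$ are manifestly positive, every totally positive matrix has positive entries.

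For the converse, I would start from an arbitrary $M = \begin{pmatrix} a & b \\ c & d \end{pmatrix} \in \SL(2,\R)$ with $a,b,c,d>0$ and solve for $s,t,\lambda$: reading off the matrix above forces $\lambda = a$, then $t = b/a$ and $s = c/a$, which are positive because $a,b,c>0$. It remains to check the $(2,2)$-entry is automatically correct: we need $\lambda st + \lambda^{-1} = d$, i.e. $a \cdot \tfrac{c}{a}\cdot\tfrac{b}{a} + \tfrac1a = \tfrac{bc}{a} + \tfrac1a = \tfrac{bc+1}{a}$, and since $\det M = ad-bc = 1$ we have $bc+1 = ad$, so this equals $\tfrac{ad}{a} = d$ as desired. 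Hence $M \in O^{>0}A^{\circ}U^{>0} = \SL(2,\R)^{>0}$, giving the reverse inclusion.

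Finally, to see that $\SL(2,\R)^{>0}$ is a subsemigroup, I would simply note that the product of two matrices with strictly positive entries again has strictly positive entries (each entry of the product is a sum of products of positive numbers), and the determinant is multiplicative so stays equal to $1$; thus the set of positive-entry matrices in $\SL(2,\R)$ is closed under multiplication, and we have just identified it with $\SL(2,\R)^{>0}$.

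There is no real obstacle here — the statement is essentially a direct computation. The only mildly delicate point is recognizing that the $(2,2)$-entry condition is not an extra constraint but is forced by the determinant-one condition; spelling out that $bc+1=ad$ is exactly what makes the factorization work is the heart of the argument. One should also implicitly use that the factorization $M = OAU$ of a matrix with nonzero $(1,1)$-entry is unique, but since we are exhibiting the factors explicitly this uniqueness is not strictly needed for the proof as written.
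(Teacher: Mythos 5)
Your proof is correct and follows the same route as the paper: compute the product $O^{>0}A^{\circ}U^{>0}$, observe its entries are positive, and for the converse read off $\lambda = a$, $t = b/a$, $s = c/a$ and note that the determinant condition $ad-bc=1$ forces the $(2,2)$-entry to come out right. You merely spell out the reverse inclusion more explicitly than the paper, which summarizes it as "choosing the parameters correctly."
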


\begin{proof}

It is easy to compute that

\[ \begin{pmatrix} 1 & 0 \\ s & 1 \end{pmatrix} \begin{pmatrix} \lambda & 0 \\ 0 & \lambda^{-1} \end{pmatrix} \begin{pmatrix} 1 & t \\ 0 & 1 \end{pmatrix} = \begin{pmatrix} \lambda & \lambda t \\ \lambda s & \lambda s t + \lambda^{-1} \end{pmatrix}. \]

By choosing the parameters $\lambda,s,t$ correctly, we can put any positive number we want in the top left, top right and bottom left spots in the matrix. The bottom right coefficient is then determined by the fact that the determinant has to be $1$. Thus, $\SL(2, \R)^{>0} = O^{>0} A^{\circ} U^{>0}$.

\end{proof}

\section{Totally positive matrices}

We will now try to generalize the previous notion of positivity to matrices in $\gl(n,\R)$.

\begin{definition}
A matrix in $\gl(n, \R)$ is said to be \emph{totally positive} if all of its minors are positive. We call $\gl(n, \R)^{>0}$ the set of totally positive matrices in $\gl(n, \R)$. We call $U$ (resp. $O$) the set of upper triangular (resp. lower triangular) matrices with $1$ on all diagonal entries and $A$ the set of diagonal matrices in $\gl(n, \R)$.
\end{definition}

\begin{proposition}
\label{prop:cauchybinet}

The set of totally positive matrices forms a subsemigroup of $\gl(n, \R)$.
\end{proposition}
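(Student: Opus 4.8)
The plan is to deduce the statement from the Cauchy--Binet formula. Recall that for $A, B \in \gl(n,\R)$ and index sets $I, J \subseteq \{1, \dots, n\}$ with $|I| = |J| = k$, writing $A_{I,K}$ for the $k \times k$ submatrix of $A$ with rows indexed by $I$ and columns indexed by $K$, one has
\[
\det\bigl((AB)_{I,J}\bigr) = \sum_{\substack{K \subseteq \{1,\dots,n\} \\ |K| = k}} \det(A_{I,K}) \, \det(B_{K,J}).
\]

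First I would fix two totally positive matrices $A, B \in \gl(n,\R)^{>0}$ and an arbitrary $k \times k$ minor of the product $AB$, namely the one determined by row set $I$ and column set $J$ with $|I| = |J| = k$. The Cauchy--Binet formula above expresses this minor as a sum, over the $\binom{n}{k}$ subsets $K$ of $\{1,\dots,n\}$ of size $k$, of the products $\det(A_{I,K})\det(B_{K,J})$. Each factor $\det(A_{I,K})$ is a minor of $A$ and $\det(B_{K,J})$ a minor of $B$; by total positivity all of these are strictly positive, so every summand is strictly positive. Since there is at least one index set $K$ of size $k$, the sum is a nonempty sum of positive terms, hence strictly positive. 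Therefore every minor of $AB$ is positive, i.e.\ $AB \in \gl(n,\R)^{>0}$, and $\gl(n,\R)^{>0}$ is closed under multiplication.

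To see that $\gl(n,\R)^{>0}$ is genuinely a nonempty subsemigroup rather than vacuously so, one can exhibit an explicit element — for instance a Vandermonde matrix with nodes $0 < x_1 < \dots < x_n$, whose $k \times k$ minors are generalised Vandermonde determinants and hence positive, or a product $O^{>0} A^{\circ} U^{>0}$ as in the rank-one case treated above. There is no genuine obstacle in this proof: the whole content is the Cauchy--Binet identity, after which positivity of the minors of a product follows at once from positivity of the minors of the factors. The only point deserving a word of care is the nonemptiness of the summation in Cauchy--Binet, which is exactly what prevents a minor of the product from vanishing.
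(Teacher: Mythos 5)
Your proof is correct and takes essentially the same approach as the paper: both reduce the claim to the Cauchy--Binet formula and observe that every summand $\det(A_{I,K})\det(B_{K,J})$ is positive. The only difference is cosmetic — the paper derives Cauchy--Binet inside the proof via an exterior-power computation, while you quote it as a known identity.
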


\begin{proof}
Let $A$ and $B$ be two totally positive matrices and let $I$ and $J$ be two subsets of $\{1, \ldots, n\}$ of cardinal $k$. We write $A|I$ the matrix we obtain by extracting the lines in $I$ from $A$ and $A|_{I, J}$ the one we obtain by extracting the lines from $I$ and the columns from $J$. We have

\[ (AB)|_{I,J} (e_1 \wedge ... \wedge e_k) = A|_I B|_J (e_1 \wedge ... \wedge e_k).\]

The matrix $B|_J$ is an application from $\R^k$ to $\R^n$. By projecting $B|_J(e_1 \wedge \ldots \wedge e_k)$ onto $e_{i_1} \wedge \ldots \wedge e_{i_k}$, we get the determinant of $p \circ B\vert _J$, where $p$ is the projection on $\langle e_{i_1}, \ldots, e_{i_k}\rangle$. Thus, we have

\[ B|_J (e_1 \wedge \ldots \wedge e_k) = \sum_{|K| = k} \det(B|_{K, J}) e_{i_1} \wedge \ldots \wedge e_{i_k}, \]

which gives us

\[ \begin{split} (AB)|_{I,J} (e_1 \wedge \ldots \wedge e_k) &= \sum_{|K| = k} \det(B|_{K, J}) A_{I} (e_{i_1} \wedge \ldots \wedge e_{i_k}) \\
&= \left(\sum_{|K| = k} \det(A|_{I,K})\det(B|_{K, J}) \right) e_1 \wedge \ldots \wedge e_k,  
\end{split} \]

thus, $\det((AB)|_{I,J}) = \sum_{|K| = k} \det(A|_{I,K})\det(B|_{K, J})$. This is known as the \emph{Cauchy--Binet formula}. Since all of $A$ and $B$'s minors are positive, the product $AB$ is a totally positive matrix.
\end{proof}

\begin{definition}
We call $U^{>0}$ (resp. $O^{>0}$) the set of matrices in $U$ (resp. $O$) which have all positive minors, except for those which have to be equal to $0$ by virtue of being in $U$ (resp. $O$). Finally, we call $A^{\circ}$ the connected component of the identity in $A$.
\end{definition}

We give an explicit parametrisation of the sets $U^{>0}$ and $O^{>0}$. The group $U$ is generated by the elementary matrices

\[ u_i(t) = I_n + t E_{i, i+1},\quad i = 1, \ldots, n-1.\]

We call $U^{\geqslant 0}$ the group generated by the elements $u_i(t)$ for $i = 1, \ldots, n-1$ and $t > 0$. For the case $n=2$, we have shown that $U^{\geqslant 0} = U^{>0}$, but that is no longer the case for $n \geqslant 3$ since the matrices $u_i(t)$, $t >0$, are not contained in $U^{>0}$ as too many of their minors are equal to zero.

\indent To get the appropriate parametrisation, let us study the group $S_n$ of permutations on the set $\{1, \ldots, n\}$.
Each of its elements can be decomposed as a product of adjacent transpositions $(i, i+1)$ and the length of a permutation can be defined as the smallest number of adjacent transposition in one of its decomposition. For each group $S_n$, there exists a permutation $\omega_0$ of maximum length, 

\[ \omega_0 = (n, n-1)(n-1, n-2)\ldots(2, 1)(n, n-1)(n-1, n-2)\ldots(3 ,2)\ldots(n, n-1)(n-1, n-2)(n, n-1). \]

For $n=4$, we have $\omega_0 = (4, 3)(3, 2)(2, 1) (4, 3) (3, 2) (4, 3)$. Let $\omega_0 = \sigma_{i_1} \ldots \sigma_{i_k}$ be a decomposition of $\omega_0 \in S_n$ and let us define the map

\[ F_{\sigma_{i_1} \ldots \sigma_{i_k}} \from (t_1, \ldots, t_k) \in \R^k \longmapsto u_{i_1}(t_1) \ldots u_{i_k}(t_k). \]

\begin{proposition}
The map $F_{\sigma_{i_1} \ldots \sigma_{i_k}} |_{(\R^{+})^k}$ is a bijection unto $U^{>0}$. It provides a parametrisation of $U^{>0}$. There is a symmetric result for $O^{>0}$.
\end{proposition}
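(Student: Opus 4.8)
The plan is to verify the three properties in turn --- that $F_{\sigma_{i_1}\cdots\sigma_{i_k}}$ sends the positive orthant $(\R^{>0})^k$ into $U^{>0}$, that this restriction is injective, and that it is onto $U^{>0}$ --- the statement for $O^{>0}$ being obtained afterwards by transposition. A useful first reduction is that the proposition does not depend on the reduced word chosen for $\omega_0$. By Matsumoto's theorem any two reduced words for $\omega_0$ differ by a sequence of braid moves $s_is_j=s_js_i$ $(|i-j|\ge 2)$ and $s_is_{i+1}s_i=s_{i+1}s_is_{i+1}$. The first corresponds, at the level of the generators $u_\bullet$, to the commutation $u_i(t)u_j(s)=u_j(s)u_i(t)$ valid when $|i-j|\ge 2$, hence to permuting two of the coordinates; the second corresponds to the identity
\[
u_i(a)\,u_{i+1}(b)\,u_i(c)\;=\;u_{i+1}\!\left(\tfrac{bc}{a+c}\right)u_i(a+c)\,u_{i+1}\!\left(\tfrac{ab}{a+c}\right),
\]
whose output parameters are subtraction-free rational functions of $(a,b,c)$ and which is a bijection of $(\R^{>0})^3$ onto itself. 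Hence changing the reduced word replaces $F$ by its composition with a bijection of $(\R^{>0})^k$, and it suffices to treat the single explicit word for $\omega_0$ displayed above.

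For that word I would argue by induction on $n$, the base case $n=2$ being the content of Section~1. The structural observation is that the chosen word factors as $(s_{n-1}s_{n-2}\cdots s_1)$ followed by a word in $s_2,\dots,s_{n-1}$ only, and that this tail is a reduced word for the longest element of the parabolic subgroup $\langle s_2,\dots,s_{n-1}\rangle\cong S_{n-1}$ acting on the last $n-1$ indices. Correspondingly $F(t)=M(t_1,\dots,t_{n-1})\cdot v$, where $M(t_1,\dots,t_{n-1})=u_{n-1}(t_1)u_{n-2}(t_2)\cdots u_1(t_{n-1})$ is the bidiagonal unipotent matrix with superdiagonal $(t_{n-1},\dots,t_2,t_1)$, and $v$ is the product over the remaining letters, which by induction ranges over the totally positive unipotents supported on the lower-right $(n-1)\times(n-1)$ block.

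To see that $F(t)\in U^{>0}$ when all $t_j>0$, note that each $u_i(t)$ with $t\ge 0$ is totally nonnegative, so by the Cauchy--Binet formula (Proposition~\ref{prop:cauchybinet}) any product of such matrices is totally nonnegative; the point is to upgrade this to strict positivity of the minors not forced to vanish by the triangular shape, for which I would use the planar-network reading of minors (Lindström--Gessel--Viennot): the product is encoded by the wiring diagram of the word, whose crossings carry the weights $t_j$, and $\det\bigl(F(t)|_{I,J}\bigr)$ equals the sum over families of vertex-disjoint directed paths from $J$ to $I$ of the products of traversed weights; reducedness of the word for $\omega_0$ guarantees that for every admissible pair $(I,J)$ at least one such family exists, so the minor is a nonzero sum of monomials with positive coefficients. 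Injectivity and surjectivity I would then dispatch together via the inductive splitting: given $u\in U^{>0}$, the parameters $t_1,\dots,t_{n-1}$ defining $M$ are recovered from $u$ by explicit subtraction-free ratios of its minors, positive precisely because $u\in U^{>0}$; then $M(t_1,\dots,t_{n-1})^{-1}u$ is again totally positive and supported on the lower-right $(n-1)\times(n-1)$ block, so the inductive hypothesis supplies unique positive parameters for $v$. Thus every $u\in U^{>0}$ lies in the image with a unique preimage, so $F|_{(\R^{>0})^k}$ is a bijection onto $U^{>0}$; since $F$ and its minor-ratio inverse are smooth it is in fact a diffeomorphism, i.e.\ a genuine parametrisation. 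Applying the transpose $g\mapsto\tran{g}$, which carries $U^{>0}$ onto $O^{>0}$ and $u_i(t)$ onto its lower-triangular analogue, yields the symmetric statement.

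The main obstacle is the strict-positivity step: one must pin down exactly which minors of an element of $U$ are identically zero (those incompatible with the triangular shape) and prove that all the others are strictly positive on the image --- equivalently, the combinatorial fact that the wiring diagram of a reduced word for $\omega_0$ links every admissible source set to every admissible sink set by a vertex-disjoint path family. Carefully tracking the correspondence between the chamber minors of that diagram and the coordinates $t_j$ is also exactly what powers the recovery formulas used above for injectivity and surjectivity, so this combinatorial core, rather than any of the surrounding algebra, is where the real work lies.
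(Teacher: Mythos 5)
The paper itself offers no proof of this proposition: it is stated, illustrated by the $n=3$ example, and the underlying factorization theory is deferred to Whitney's paper (the same chapter explicitly ``admits'' Whitney's elimination lemma when deriving $\gl(n,\R)^{>0}=O^{>0}A^{\circ}U^{>0}$). So your sketch is being judged on its own terms rather than against a proof the paper supplies.

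The framework you set up is correct. Matsumoto's theorem reduces to a single reduced word, and both transition moves are stated accurately: the commutation $u_i(t)u_j(s)=u_j(s)u_i(t)$ for $|i-j|\ge 2$, and the braid identity $u_i(a)u_{i+1}(b)u_i(c)=u_{i+1}\bigl(bc/(a+c)\bigr)u_i(a+c)\,u_{i+1}\bigl(ab/(a+c)\bigr)$, which is indeed a self-inverse bijection of $(\R^{>0})^3$ (from $(p,q,r)$ recover $b=p+r$, $a=qr/(p+r)$, $c=qp/(p+r)$). Reading the chosen word as $\sigma_{n-1}\cdots\sigma_1$ followed by a reduced word for the longest element of the parabolic on $\{2,\ldots,n\}$ is also right, and the induced factorization $F(t)=M(t_1,\ldots,t_{n-1})\cdot v$ with $M$ bidiagonal on the superdiagonal $(t_{n-1},\ldots,t_1)$ is accurately described.

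The two places where the argument is asserted rather than proved are exactly the ones you flag, and they are where the theorem actually lives. First, the claim that for a reduced word of $\omega_0$ every minor of $F(t)$ not identically zero on $U$ evaluates to a nonzero subtraction-free polynomial in the $t_j$: Cauchy--Binet only gives total \emph{nonnegativity}, and the upgrade to strict positivity genuinely rests on the Lindstr\"om--Gessel--Viennot path-system combinatorics of the wiring diagram, for which reducedness of the word is essential. Second, in the surjectivity and injectivity step you assert that $M(t_1,\ldots,t_{n-1})^{-1}u$ is again in $U^{>0}$ on the lower-right block; this is far from automatic ($M^{-1}$ is not itself totally nonnegative) and is precisely Whitney's lemma, or equivalently the Fomin--Zelevinsky chamber-minor calculus you gesture at. You identify both as ``the real work,'' and you are right: until those are supplied the sketch is an outline, not a proof --- although an outline entirely consonant with the standard literature, and parallel in spirit to the equally incomplete Gaussian-elimination argument the paper gives for $\gl(n,\R)^{>0}=O^{>0}A^{\circ}U^{>0}$, just phrased in the more explicitly combinatorial planar-network language.
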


For $n=3$, this gives us the parametrisation

\[ u_1(a)u_2(b)u_1(c) = \begin{pmatrix} 1 & a & 0 \\ 0 & 1 & 0 \\ 0 & 0 & 1 \end{pmatrix} \begin{pmatrix} 1 & 0 & 0 \\ 0 & 1 & b \\ 0 & 0 & 1 \end{pmatrix} \begin{pmatrix} 1 & c & 0 \\ 0 & 1 & 0 \\ 0 & 0 & 1 \end{pmatrix} = \begin{pmatrix} 1 & a+c & ab \\ 0 & 1 & b \\ 0 & 0 & 1 \end{pmatrix} \]

In the case $n=2$, we defined the set of totally positive matrices using the decomposition $\SL(2, \R)^{>0} = O^{>0} A^{\circ} U^{>0}$. Let us show a similar result for $n \geqslant 3$.

\begin{proposition}
For $n \geqslant 2$, the set of totally positive matrices can be decomposed as $\gl(n, \R)^{>0} = O^{>0} A^{\circ} U^{>0}$.
\end{proposition}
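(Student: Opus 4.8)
The plan is to prove the two inclusions separately, using the Cauchy--Binet formula of Proposition~\ref{prop:cauchybinet} for one direction and the Gauss ($LDU$) decomposition for the other. Throughout write $[k]=\{1,\dots,k\}$.

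\textbf{The inclusion $O^{>0}A^{\circ}U^{>0}\subseteq\gl(n,\R)^{>0}$.} The first step is the elementary observation that for a lower unitriangular matrix $O$ a minor $\det(O|_{I,K})$ with $I=\{i_1<\dots<i_k\}$, $K=\{k_1<\dots<k_k\}$ is forced to vanish by the triangular shape precisely when $i_\ell<k_\ell$ for some $\ell$; hence, by the very definition of $O^{>0}$ (and such matrices exist by the parametrisation proposition), $O\in O^{>0}$ means exactly $\det(O|_{I,K})>0$ whenever $i_\ell\ge k_\ell$ for all $\ell$, and $=0$ otherwise. Dually, for $U\in U^{>0}$ one has $\det(U|_{K,J})>0$ iff $k_\ell\le j_\ell$ for all $\ell$. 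Now fix $O\in O^{>0}$, $D=\diag(d_1,\dots,d_n)\in A^{\circ}$, $U\in U^{>0}$ and index sets $I,J$ with $|I|=|J|=k$. Applying Cauchy--Binet twice and using that $D$ is diagonal,
\[
\det\big((ODU)|_{I,J}\big)=\sum_{|K|=k}\det(O|_{I,K})\Big(\prod_{m\in K}d_m\Big)\det(U|_{K,J}),
\]
and each summand is $\ge 0$. Taking $K^{*}=\{\min(i_1,j_1)<\dots<\min(i_k,j_k)\}$, which is a genuine strictly increasing subset of $\{1,\dots,n\}$ because $I$ and $J$ are strictly increasing, one has $\min(i_\ell,j_\ell)\le i_\ell$ and $\le j_\ell$ for every $\ell$, hence $\det(O|_{I,K^{*}})>0$, $\det(U|_{K^{*},J})>0$, and $\prod_{m\in K^{*}}d_m>0$; so the sum is strictly positive and $ODU\in\gl(n,\R)^{>0}$.

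\textbf{The reverse inclusion.} Let $g\in\gl(n,\R)^{>0}$. Its leading principal minors $\Delta_k=\det(g|_{[k],[k]})$ are all positive, so $g$ has a unique Gauss decomposition $g=ODU$ with $O$ unit lower triangular, $D$ diagonal, $U$ unit upper triangular; moreover $d_k=\Delta_k/\Delta_{k-1}>0$ (with $\Delta_0=1$), so $D\in A^{\circ}$. It remains to check $O\in O^{>0}$ and $U\in U^{>0}$. For $|I|=k$ one has $g|_{I,[k]}=(O|_{I,[k]})\,(D|_{[k]})\,(U|_{[k]})$, and taking determinants (using that $U|_{[k]}$ is unitriangular) gives $\det(O|_{I,[k]})=\det(g|_{I,[k]})/\Delta_k>0$; so every column-initial minor of $O$ is positive, and symmetrically every row-initial minor of $U$ is positive.

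\textbf{The main obstacle} is upgrading ``all column-initial minors positive'' to ``$O\in O^{>0}$'' (and dually for $U$): one must show that positivity of the minors anchored to the first columns forces positivity of every minor not forbidden by the triangular shape. I would settle this by invoking the Fekete-type criterion for total positivity (positivity of initial minors implies total positivity, as in the standard references on totally positive matrices) applied to the triangular factors $O$ and $U$; alternatively, one can recover the general minors of $O$ from the column-initial ones by repeated use of Sylvester's determinant identity, exploiting the triangular shape. Once this step is in place, combining the two inclusions gives $\gl(n,\R)^{>0}=O^{>0}A^{\circ}U^{>0}$, and since $\gl(n,\R)^{>0}$ is a semigroup by Proposition~\ref{prop:cauchybinet} this also recovers, for $n=2$, the description from the previous section.
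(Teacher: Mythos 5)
Your proof is correct in outline and takes a genuinely different route from the paper's. The paper only argues the inclusion $\gl(n,\R)^{>0}\subseteq O^{>0}A^{\circ}U^{>0}$, and does so by iterated Gaussian elimination: subtract successive row multiples to clear the first column, factor the row operations as a product of positive transvections, cite Whitney's theorem that the reduced matrix remains totally positive, and recurse; the product of positive elementary transvections is then recognised as lying in $O^{>0}$ via the earlier parametrisation $F_{\sigma_{i_1}\cdots\sigma_{i_k}}|_{(\R^{>0})^k}$. You instead start from the abstract $LDU$ decomposition (which exists and is unique because the leading principal minors are positive), read off $D\in A^{\circ}$ immediately, and reduce the problem to the fact that a unitriangular matrix whose column-initial minors are all positive lies in $O^{>0}$ — which you rightly flag as a Fekete/Fomin--Zelevinsky-type ``initial minors'' criterion and do not prove. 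That step is the analogue of the paper's appeal to Whitney: both proofs ultimately lean on a nontrivial external fact about total positivity. Where your proposal clearly adds value is the converse inclusion $O^{>0}A^{\circ}U^{>0}\subseteq\gl(n,\R)^{>0}$, which the paper omits and which is not a mere consequence of the semigroup property (the factors $O^{>0}$, $A^{\circ}$, $U^{>0}$ are only totally nonnegative, not totally positive). Your Cauchy--Binet argument handles this correctly: the characterisation of the forced-zero minors of a lower unitriangular matrix as those with $i_\ell<k_\ell$ for some $\ell$ is accurate, the set $K^{*}=\{\min(i_\ell,j_\ell)\}_\ell$ is indeed strictly increasing and makes the corresponding summand positive, and all other summands are $\ge 0$. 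So overall: a correct alternative proof; more complete than the paper on the easy direction; it trades Whitney's elimination lemma for the Fekete-type initial-minors criterion on the hard direction, leaving a gap of comparable weight which you honestly acknowledge.
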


\begin{proof}

Let $A = (a_{i,j})$ be a totally positive matrix. In particular, all of its coefficients are positive. We are going to reduce our matrix using operations on lines and columns. First, let us subtract $\frac{a_{i, 1}}{a_{i-1, 1}} L_{i-1}$ to $L_i$ for $i = 2, ..., n$. This gives us the matrix

\[ A = \left(I_n + \frac{a_{n, 1}}{a_{n-1, 1}} E_{n, n-1}\right)\ldots\left(I_n + \frac{a_{2, 1}}{a_{1, 1}} E_{2, 1}\right) \begin{pmatrix} 1 & a_{1,j} \\ 0 & a_{2,j} - \frac{a_{2,1}}{a_{1,1}} \\
\vdots & \vdots \\
0 & a_{n,j} - \frac{a_{n,1}}{a_{n-1,1}}\end{pmatrix} \]

Let us write $B = \begin{pmatrix}  a_{1,j} \\ a_{2,j} - \frac{a_{2,1}}{a_{1,1}} \\
\vdots \\
a_{n,j} - \frac{a_{n,1}}{a_{n-1,1}}\end{pmatrix}$. We admit that the matrix $A$ is totally positive if and only if $B$ is totally positive \cite{Whitney}. By repeating the same process, we can reach the decomposition

\[ A = \left(I_n + \frac{a_{n, 1}}{a_{n-1, 1}} E_{n, n-1}\right)\ldots\left(I_n + \frac{a_{n-1, n-1}}{a_{n-2, n-1}}E_{n-1, n-2}\right)\left(I_n + \frac{a_{n, n}}{a_{n-1, n}}E_{n, n-1}\right) C, \]

where $C$ is an upper triangular matrix. Using the parametrisation we have shown, we can see that the product of transvections is equal to a matrix in $O^{>0}$. We now only have the repeat the same operations on the columns to obtain the decomposition $\gl(n, \R)^{>0} = O^{>0} A^{\circ} U^{>0}$.
\end{proof}

\newpage

\thispagestyle{empty}

\chapter[Lusztig's Total Positivity]{Lusztig's Total Positivity\\ {\Large\textnormal{\textit{by Xian Dai}}}}
\label{section:LusztigTotalPositivity}
\addtocontents{toc}{\quad\quad\quad \textit{Xian Dai}\par}

Lusztig's total positivity generalizes total positivity for $n\times n$ matrices in $\mathrm{GL}(n,\mathbb{R})$ to split real Lie groups. Our goal in this section is to explain this general notion of total positivity. 

\section{Some motivation for total positivity}

We recall a $n\times n$ matrix is said to be \emph{(totally) positive} if all of its minors are positive (i.e. in $\mathbb{R}^{>0}$). The set of all totally
positive  $n\times n$ matrices form a subset $\mathrm{GL}(n,\mathbb{R})^{>0}\subset \mathrm{GL}(n,\mathbb{R})$.

Let $V\cong \mathbb{R}^n$. Another characterization of total positivity in $\mathrm{GL}(n,\mathbb{R})$ due to I.J. Schoenberg \cite{Schoenberg} is as follows.
\begin{proposition} \label{equivPositivity}
A matrix $g\in\mathrm{GL}(V) $ is positive if and only if the coefficients of $k$-th exterior power $\Lambda^k g\colon \Lambda^k V \to \Lambda^k V$ are positive for $k=1,2, \ldots, n$.
\end{proposition}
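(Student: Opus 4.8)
The plan is to reduce the proposition to the standard linear-algebra dictionary between exterior powers and minors: once one knows that the matrix entries of $\Lambda^k g$ in the natural basis of $\Lambda^k V$ are exactly the $k\times k$ minors of $g$, the equivalence becomes a bookkeeping statement. Indeed, letting $k$ run from $1$ to $n$ and letting the basis vectors range over all increasing $k$-tuples produces every minor of $g$ precisely once (with $k=1$ giving the entries of $g$ and $k=n$ giving $\det g$), so ``all coefficients of all $\Lambda^k g$ are positive'' is literally the same condition as ``all minors of $g$ are positive'', which is the definition of total positivity.

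First I would fix notation. For an increasing multi-index $I=(i_1<\dots<i_k)\subseteq\{1,\dots,n\}$ set $e_I=e_{i_1}\wedge\dots\wedge e_{i_k}$, so that $\{e_I:\lvert I\rvert=k\}$ is a basis of $\Lambda^k V$; for two such multi-indices $I,J$ let $g_{I,J}$ be the submatrix of $g$ with rows in $I$ and columns in $J$, and $\Delta_{I,J}(g)=\det g_{I,J}$ the corresponding minor. Then I would compute the entries of $\Lambda^k g$ directly from $\Lambda^k g(e_J)=(ge_{j_1})\wedge\dots\wedge(ge_{j_k})$: expanding each $ge_{j_\ell}=\sum_i g_{i,j_\ell}e_i$ and using multilinearity and antisymmetry of the wedge, the coefficient of $e_I$ is
\[
\sum_{\sigma\in S_k}\operatorname{sgn}(\sigma)\,g_{i_{\sigma(1)},j_1}\cdots g_{i_{\sigma(k)},j_k}=\Delta_{I,J}(g),
\]
which is just the Leibniz formula for the determinant. (Equivalently, this identification unpacks the functoriality $\Lambda^k(gh)=(\Lambda^k g)(\Lambda^k h)$, i.e. the Cauchy--Binet formula of Proposition~\ref{prop:cauchybinet}.)

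With this in hand the two directions are immediate. If $g$ is totally positive, then $\Delta_{I,J}(g)>0$ for all $I,J$ of every size, so every matrix entry of every $\Lambda^k g$ is positive. Conversely, if the coefficients of $\Lambda^k g$ are positive for $k=1,\dots,n$, then reading off the entry indexed by $(I,J)$ yields $\Delta_{I,J}(g)>0$ for all equinumerous $I,J$, i.e.\ all minors of $g$ are positive. The only points demanding care are the harmless bookkeeping ones: fixing an ordering convention for the basis $\{e_I\}$ so that no spurious signs appear, and checking that the extreme cases $k=1$ and $k=n$ are genuinely covered so that ``all minors'' is exhausted. I do not expect any real obstacle here; the entire content of the proposition is the entry-versus-minor identification above.
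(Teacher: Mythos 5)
Your proof is correct and follows exactly the same route as the paper: identify the basis $\{e_I\}$ of $\Lambda^k V$, observe that the matrix entries of $\Lambda^k g$ in this basis are precisely the $k\times k$ minors of $g$, and conclude. You simply spell out the Leibniz-formula computation that the paper leaves implicit; there is no difference in substance.
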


\begin{proof}
Suppose a totally ordered basis of $V$ is $e_1, e_2, \ldots, e_n$, then for any $k\in[1,n]$, the $k$-th exterior power $\Lambda^k V$ has a basis $(e_{r_1}\wedge e_{r_2}\cdots \wedge e_{r_k})$ indexed by the sequences $r_1< r_2< \cdots < r_k$ in $[1,n]$ and $\Lambda^k g\colon \Lambda^k V \to \Lambda^k V$ are positive for $k=1,2, \ldots, n$ is given by

$$\Lambda^k g (e_{r_1}\wedge e_{r_2}\cdots \wedge e_{r_k})=g e_{r_1}\wedge g e_{r_2}\cdots \wedge A e_{r_k}.  $$

The conclusion follows once we notice that coefficients of above matrices are exactly minors of $g$.

\end{proof}

One source of the idea of total positivity  appear in the work of Gantmacher and Krein in 1935. The following Theorem can be viewed as one of the initial motivation of total positivity.

\begin{theorem}[Gantmacher-Krein]
If $g\in\mathrm{GL}(n,\mathbb{R}) $ is totally positive, then $g$ has distinct real positive eigenvalues.
\end{theorem}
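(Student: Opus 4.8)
The statement to prove is the Gantmacher–Krein theorem: a totally positive $g\in\GL(n,\R)$ has $n$ distinct positive real eigenvalues. The natural strategy is to exploit Proposition~\ref{equivPositivity}, which says that each exterior power $\Lambda^k g$ again has all entries strictly positive, and then to apply the Perron–Frobenius theorem to each $\Lambda^k g$ simultaneously. The key point is that the eigenvalues of $\Lambda^k g$ are exactly the products $\lambda_{i_1}\cdots\lambda_{i_k}$ over size-$k$ subsets $\{i_1<\dots<i_k\}$ of the eigenvalues $\lambda_1,\dots,\lambda_n$ of $g$ (counted with multiplicity), so controlling the spectral radius of each $\Lambda^k g$ pins down the moduli of all the $\lambda_i$.

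\textbf{Step 1: Perron–Frobenius input.} Since $g$ has strictly positive entries, Perron–Frobenius gives that the spectral radius $\rho(g)$ is itself a simple eigenvalue, with a strictly positive eigenvector, and every other eigenvalue has strictly smaller modulus. Order the eigenvalues so that $|\lambda_1|\ge |\lambda_2|\ge\cdots\ge|\lambda_n|$; then $\lambda_1=\rho(g)>0$ and $|\lambda_1|>|\lambda_2|$.

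\textbf{Step 2: Induction via exterior powers.} Apply the same reasoning to $\Lambda^k g$, which is positive by Proposition~\ref{equivPositivity}. Its largest-modulus eigenvalue is simple and positive; but that eigenvalue is $\max$ over the products of $k$ eigenvalues of $g$. I would argue by induction on $k$ that this forces $\lambda_1\cdots\lambda_k$ to be real, positive, and strictly larger in modulus than every other $k$-fold product. Comparing the extremal product for $\Lambda^k g$ with that for $\Lambda^{k-1}g$ (namely $\lambda_1\cdots\lambda_{k-1}$, already known to be positive and dominant by the inductive hypothesis) shows that $\lambda_k$ must be real, positive, and satisfy $\lambda_{k-1}>\lambda_k$ — otherwise either $\lambda_1\cdots\lambda_{k-1}\lambda_{k+1}$ or a complex conjugate pair would tie or beat $\lambda_1\cdots\lambda_k$ in modulus, contradicting simplicity of the Perron root of $\Lambda^k g$. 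Running this up to $k=n$ yields $\lambda_1>\lambda_2>\cdots>\lambda_n>0$ (positivity of the last one also follows since $\det g=\lambda_1\cdots\lambda_n>0$, or directly from the $k=n$ case).

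\textbf{Main obstacle.} The delicate part is the bookkeeping in Step~2: one must be careful that a putative complex eigenvalue $\lambda_k=re^{i\theta}$ would come with its conjugate $\bar\lambda_k$, and that the relevant $k$-fold products involving one of them have the same modulus, so neither can be the \emph{strict} Perron maximum of $\Lambda^k g$ — unless they coincide, i.e.\ $\theta=0$. Formulating this cleanly (and handling repeated moduli among the $\lambda_i$ before one knows they are distinct) is where the real work lies; everything else is a direct appeal to Perron–Frobenius plus the multiplicativity of eigenvalues under $\Lambda^k$. I would present the argument by strong induction on $k$, carrying the statement ``$\lambda_1,\dots,\lambda_k$ are real, positive, pairwise distinct, and $\lambda_1\cdots\lambda_k$ strictly dominates all other $k$-fold products'' as the inductive hypothesis.
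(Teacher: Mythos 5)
Your approach is exactly the one used in the paper: pass to the exterior powers $\Lambda^k g$, invoke Proposition~\ref{equivPositivity} to see they have positive entries, apply Perron--Frobenius to each, and use multiplicativity of eigenvalues under $\Lambda^k$ to run an induction yielding $\lambda_1\cdots\lambda_k>0$ for every $k$ and hence $\lambda_1>\lambda_2>\cdots>\lambda_n>0$. The paper's proof is essentially a compressed version of your Step~2, and your extra care about the conjugate-pair bookkeeping is a correct elaboration of the step it leaves implicit.
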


\begin{proof}
Suppose $g\in \mathrm{GL}(n,\mathbb{R})^{>0}$. We denote $c_1,\ldots, c_n$ to be eigenvalues of $g$ arranged so that $|c_1|\geq|c_2|\geq\cdots \geq|c_n|$. Then the eigenvalues of $\Lambda^k g$ are $c_{r_1}c_{r_2} \cdots c_{r_k}$ for $r_1< r_2 \cdots < r_k$ and $r_i\in [1,n]$.
For instance, the first two eigenvalues of $\Lambda^k g$ in decreasing order of absolute values are $c_1c_2\cdots c_k$ and $c_1c_2\cdots c_{k-1}c_{k+1}$. Since $\Lambda^k g$ has positive entries by Proposition \ref{equivPositivity}. By Perron-Frobenius Theorem, we must have 
$$c_1c_2\cdots c_{k}>0$$

Taking $k=1$ yields $c_1>0$. By induction on $k$, we obtain $c_1> c_2 >\ldots >c_n>0$.
\end{proof}

\section{Lusztig's total positivity for matrices}

We start from a presentation of Lusztig's total positivity in a simple case which is $\mathrm{GL}(n,\mathbb{R})$. Totally positive matrices in $\mathrm{GL}(n,\mathbb{R})$ satisfy a decomposition theorem and form in fact a semigroup.

Let $U$ be the group of upper triangular matrices with ones on the diagonal, $O$ be the group of lower triangular matrices with ones on the diagonal, and $A$ the group of diagonal matrices. The group $U$ is generated by elementary matrices 
$$u_i(t)= I_n+ t E_{i,i+1},\quad i=1, \ldots , n-1,$$
where $I_n$ denotes the identity matrix and $E_{i,i+1}$ the matrix with the single entry $1$ in the $i$-th row and $(i+1)$-th column.

Let furthermore $U^{>0}\subset U$ and $O^{>0}\subset O$ be the subsets of totally positive unipotent matrices, i.e. those matrices of $U$, where all minors are positive, except those which have to be zero by the condition of being an element of $U$, similarly for $O$.  We can parametrize the set $U^{>0}$ using the symmetric group $S_n$ on $n$ letters. We denote by $\sigma_i, i=1,\ldots, n-1,$ the tranposition $(i,i+1)$ and by $\omega_0$ the longest element of the symmetric group, which send $(1,2,\ldots, n)$ to $(n,\ldots, 2,1)$. Let $k=\frac{n(n-1)}{2}$. For every way to write $\omega_0= \sigma_{i_1}\sigma_{i_2}\cdots \sigma_{i_k}$, we can define a map
\begin{align*}
F_{\sigma_{i_1}\sigma_{i_2}\cdots\sigma_{i_k}}\colon\mathbb{R}^k&\to U\\
(t_1,\ldots,t_k) &\mapsto u_{i_1}(t_1)u_{i_2}(t_2)\cdots u_{i_k}(t_k).
\end{align*}

An element is in $U^{>0}$ if and only if it is of the form $u_{i_1}(t_1)u_{i_2}(t_2)\cdots u_{i_k}(t_k)$ with $t_i\in \mathbb{R}_+$. Therefore the map $F_{\sigma_{i_1}\sigma_{i_2}\cdots\sigma_{i_k}}|_{(\mathbb{R}^{>0})^k}$ is a bijection onto $U^{>0}$ and provide a parametrization of $U^{>0}$ by $\mathbb{R}_+^k$. The same works for $O^{>0}$.  

The following decomposition theorem is due to A.~Whitney \cite{Whitney}: 
\[\mathrm{GL}(n,\mathbb{R})^{>0}=O^{>0} A^0 U^{>0},\]
where $A^0$ is the connected component of the identity in $A$, i.e.\ the diagonal matrices all of whose entries are positive.

\section{Lusztig's total positivity for split real Lie groups}

In this subsection,  we want to explain the generalized notion of total positivity of arbitrary split real reductive Lie group introduced by Lusztig \cite{Lusztig94}. Moreover, the explicit parametrization of $U^{>0}$ (resp. $O^{>0}$) can also be generalized to arbitrary split real reductive Lie group.

Let $G$ be a split semisimple algebraic group over $\mathbb{R}$. Let $\mathfrak{g}$ be the Lie algebra of $G$.
To describe total positivity in $G$, we start from some  discussion of some important ingredients involved in defining positivity in this general setting.

\subsection{Root space decomposition and Chevalley generators}

Fix a Cartan subalgebra $\mathfrak{h}$ of $\mathfrak{g}$. We denote by $\Sigma$ the set of all roots and $\Sigma^{+}$ a choice of positive roots (resp. $\Sigma^{-}$ negative roots) and $\Delta \subset \Sigma^{+}$ the set of simple roots.  The Lie algebra $\mathfrak{g}$ admits the root space decomposition,
$$\mathfrak{g} = \mathfrak{h}  \oplus  \bigoplus_{\alpha\in \Sigma} \mathfrak{g}_{\alpha}.$$

Denote $h_{\alpha}$ as the coroot of $\alpha \in \Sigma$, we say a family $(X_{\alpha})_{\alpha \in \Sigma}$ is a \emph{Chevalley basis} for $(\mathfrak{g}, \mathfrak{h})$ (see \cite[Chapter VIII, Section 2.4, Definition 3]{nicolas1975lie} or  \cite[Chapter VII, Section 25]{HumphreysRepresentation}) if 
\begin{enumerate}
\item
$X_{\alpha}  \in \mathfrak{g}_{\alpha} \text{ for all } \alpha \in \Sigma$.

\item $[X_{\alpha}, X_{-\alpha}]= -h_{\alpha} \text{ for all } \alpha \in \Sigma $.
\item the linear map from $\mathfrak{g}$ to $\mathfrak{g}$ which is equal to $-1$ on the Cartan subalgebra $\mathfrak{a}$ and takes $X_{\alpha}$ to $X_{-\alpha}$ for all $\alpha\in \Sigma$ is an automorphism of $\mathfrak{g}$.
\end{enumerate}

Take $e_{\alpha}=X_{\alpha}$ and $f_{\alpha}=X_{-\alpha} $ for $\alpha \in \Delta$ so that $(e_{\alpha},f_{\alpha}, h_{\alpha})_{\alpha \in \Delta}$ form a \emph{Chevalley generators} of $\mathfrak{g}$. We define 
$$x_{\alpha}(t)= \textnormal{exp}(te_{\alpha}) \textnormal{ , } y_{\alpha}(t)= \textnormal{exp}(tf_{\alpha})  $$

In analogy to the last subsection, the elements $e_{\alpha}$ (resp. $f_{\alpha}$ ) play the roles of $E_{i,i+1}$ (resp. $E_{i+1,i}$). Also $x_{\alpha}(t)$ (resp. $y_{\alpha}(t)$) are similar to $u_{i}(t)$(resp. $v_{i}(t)$) in $\mathrm{GL}(n,\mathbb{R})$ case.

\subsection{Longest element in Weyl group}

We denote by $W$ the Weyl group of $G$ generated by $s_\alpha$ for $\alpha\in \Delta$. $W$ together with $(s_\alpha)_{\alpha\in \Delta}$ is a Coxeter group.

Let $l\colon W\to \mathbb{N}$ be the standard length function. For $\omega\in W$, we let $\Delta_\omega$ be the set of all sequences $(\alpha_{1}, \alpha_{2}, \ldots, \alpha_{p})$ in $\Delta$ so that $p=l(\omega)$ and 
$$s_{\alpha_{1}}s_{\alpha_{2}} \cdots s_{\alpha_{p}}=\omega.$$ 

We let $\omega_0$ be the element such that $p_0=l(\omega_0)$ is maximum. To make a link with the theory in the last subsection, one notices that the symmetric group $S_n$ is the Weyl group for the case $\mathrm{GL}(n,\mathbb{R})$.

\subsection{Parametrization of $U^{\pm}$ and total positivity}

With a fixed $\omega_0$ so that $p_0=l(\omega_0)$ reaches maximum, we can define maps 

\begin{align*}
\Phi^{\pm}_{\omega_0}\colon \mathbb{R}^p_{\geq 0} &\to U^{\pm}\\
\Phi^+_{\omega_0}(a_1,a_2,\ldots, a_p):=& x_{\alpha_1}(a_1)x_{\alpha_2}(a_2)\ldots x_{\alpha_p}(a_p),\\
\Phi^{-}_{\omega_0}(a_1,a_2,\ldots, a_p):=& y_{\alpha_1}(a_1)y_{\alpha_2}(a_2)\ldots y_{\alpha_p}(a_p).
\end{align*}

\noindent Here $U^{\pm}$ are the unipotent radicals of the Borel subgroups (minimal parabolic subgroups) $B^{\pm}$ corresponding to $\Sigma^{\pm}$ respectively. It turns out that the definition of $\Phi^{\pm}_{\omega_0}$ do not depend on a particular choice of $(\alpha_{1}, \alpha_{2}, \ldots, \alpha_{p})$.

We then define

\[U_{{\omega_0},\geq 0}^{\pm}:=\Phi^{\pm}_{\omega_0}(\mathbb{R}^p_{\geq 0}) \subset U^{\pm},\] and
\[U_{{\omega_0},> 0}^{\pm}:=\Phi^{\pm}_{\omega_0}(\mathbb{R}^p_{>0}) \subset U^{\pm}.
\]
 
\noindent The sets $U_{\geq 0}^{\pm}=U_{{\omega_0},\geq 0}^{\pm}$ and $U_{> 0}^{\pm}=U_{{\omega_0},> 0}^{\pm}$ are semigroups of $G$. 

\begin{proposition}
The maps $\Phi^{\pm}_{\omega_0}$ are homeomorphisms between $(\mathbb{R}^{>0})^p$ and $U_{> 0}^{\pm}$. Therefore  $U_{> 0}^{\pm}$ are cells.
\end{proposition}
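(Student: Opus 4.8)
The plan is to verify the three defining properties of a homeomorphism for the map $\Phi^{+}_{\omega_0}\colon (\mathbb{R}^{>0})^p\to U_{>0}^{+}$ --- continuity, bijectivity, and continuity of the inverse --- the case of $\Phi^{-}_{\omega_0}$ then following by transporting everything through the Chevalley automorphism of condition~(3) in the definition of a Chevalley basis, which swaps $e_\alpha\leftrightarrow f_\alpha$, hence $x_\alpha(t)\leftrightarrow y_\alpha(t)$ and $U^{+}\leftrightarrow U^{-}$. I would first record the dimension count $p=l(\omega_0)=|\Sigma^{+}|=\dim U^{+}$, which is what makes the statement plausible in the first place: domain and target are manifolds of the same dimension.

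Continuity and surjectivity are the easy halves. After fixing a faithful finite-dimensional representation of $G$, each generator $e_{\alpha}$ acts as a nilpotent endomorphism, so $x_\alpha(t)=\exp(te_\alpha)$ is polynomial in $t$; hence $\Phi^{+}_{\omega_0}(a_1,\dots,a_p)=x_{\alpha_1}(a_1)\cdots x_{\alpha_p}(a_p)$ is a polynomial map $\mathbb{R}^p\to U^{+}$, in particular smooth and continuous. Surjectivity onto $U_{>0}^{+}$ is nothing but the definition $U_{>0}^{+}=\Phi^{+}_{\omega_0}\big((\mathbb{R}^{>0})^p\big)$.

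The heart of the matter is injectivity, and I would arrange to get continuity of the inverse from the same argument. The approach I favour is the \emph{Chamber Ansatz}: for the chosen reduced expression of $\omega_0$ there are explicit formulas expressing each parameter $a_i$ as a subtraction-free rational function of $g=\Phi^{+}_{\omega_0}(a_1,\dots,a_p)$ --- a ratio of products of generalized minors $\Delta_\gamma(g)$ attached to the chamber weights of the reduced word. These generalized minors are regular functions on $U^{+}$ that are strictly positive on $U_{>0}^{+}$ (the general analogue of Proposition~\ref{equivPositivity}), so their vanishing loci are avoided and each $a_i$ is recovered as a continuous function of $g\in U_{>0}^{+}$; this shows at once that $\Phi^{+}_{\omega_0}$ is injective on $(\mathbb{R}^{>0})^p$ and that the inverse $U_{>0}^{+}\to(\mathbb{R}^{>0})^p$ is continuous, so $\Phi^{+}_{\omega_0}$ is a homeomorphism onto $U_{>0}^{+}$. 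A variant proof of injectivity: check that $d\Phi^{+}_{\omega_0}$ is invertible at every point --- the $j$-th partial derivative has leading component in the root space $\mathfrak{g}_{\beta_j}$ with $\beta_j=s_{\alpha_1}\cdots s_{\alpha_{j-1}}(\alpha_j)$, and the $\beta_j$ run exactly once over $\Sigma^{+}$, so the $p$ partials are linearly independent --- whence $\Phi^{+}_{\omega_0}$ is a local diffeomorphism, and global injectivity together with continuity of the inverse then follow from Brouwer's invariance of domain, using that $(\mathbb{R}^{>0})^p$ and $U^{+}$ are both $p$-dimensional manifolds. I expect this injectivity step to be the main obstacle: it genuinely rests on the structural results of Lusztig (and, in explicit form, Berenstein--Fomin--Zelevinsky), whereas everything around it is formal.

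Finally, the assertion that $U_{>0}^{\pm}$ is a \emph{cell} drops out immediately: $t\mapsto\log t$ is a homeomorphism $\mathbb{R}^{>0}\to\mathbb{R}$, so $(\mathbb{R}^{>0})^p\cong\mathbb{R}^p$, and composing with the homeomorphism $\Phi^{\pm}_{\omega_0}$ exhibits $U_{>0}^{\pm}$ as a topological $p$-cell, $p=|\Sigma^{+}|$.
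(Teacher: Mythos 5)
The paper states this proposition without giving a proof; it is quoted from Lusztig's theory of total positivity, so there is no argument of the paper's own to compare against. Your main line of attack---reducing $\Phi^-$ to $\Phi^+$ by the Chevalley automorphism, observing that the map is polynomial hence continuous, taking surjectivity from the definition, and then establishing injectivity together with continuity of the inverse via the Chamber Ansatz of Berenstein--Fomin--Zelevinsky (each parameter $a_i$ recovered as a subtraction-free ratio of generalized minors that are strictly positive on $U^+_{>0}$)---is the standard modern proof and is correct, modulo the fact that you are importing the Chamber Ansatz as a black box, which is entirely fair since that result is genuinely deep and is where the real content lives.

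The ``variant proof'' of injectivity, however, has a gap and should not be presented as an alternative route. Showing that $d\Phi^+_{\omega_0}$ has full rank at every point of $(\mathbb{R}^{>0})^p$ only establishes that $\Phi^+_{\omega_0}$ is a local diffeomorphism, and invariance of domain then tells you the map is open with open image; neither of these upgrades local injectivity to global injectivity (think of $\mathbb{R}\to S^1$, $t\mapsto e^{it}$). Global injectivity of a local diffeomorphism between $p$-manifolds requires a genuinely additional input---a proper covering-map argument, a monotonicity/degree-one argument, or some algebraic identity such as the Chamber Ansatz itself. Moreover the claim that the $j$-th partial derivative has ``leading component in $\mathfrak{g}_{\beta_j}$'' with $\beta_j=s_{\alpha_1}\cdots s_{\alpha_{j-1}}(\alpha_j)$ is not immediate: right-translating $\partial_j\Phi^+$ to the Lie algebra gives $\mathrm{Ad}\big(x_{\alpha_1}(a_1)\cdots x_{\alpha_{j-1}}(a_{j-1})\big)(e_{\alpha_j})$, whose height-$1$ component is $e_{\alpha_j}$ and whose projection to $\mathfrak{g}_{\beta_j}$ needs a further Bruhat-cell argument to isolate; it is true, but not by inspection. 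So keep the Chamber Ansatz as the proof and drop the invariance-of-domain aside, or be explicit that it establishes only that $\Phi^+_{\omega_0}$ is an open map with nondegenerate differential.
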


Now consider 
$$G_{\geq 0}= U_{\geq 0}^{+} T_{>0}U_{\geq 0}^{-}=U_{\geq 0}^{-} T_{>0}U_{\geq 0}^{+},$$
$$G_{> 0}= U_{> 0}^{+} T_{>0}U_{> 0}^{-}=U_{> 0}^{-} T_{>0}U_{> 0}^{+}, $$
 
\noindent where $T$ is the subgroup of $G$ with Lie algebra $\mathfrak{a}$ (sometimes called the maximal torus of $G$). We let $T^0$ be the connected component of $T$ containing the identity. $G_{\geq 0}$ and $G_{> 0}$ are semigroups in $G$. More precisely, $G_{\geq 0}$ is a semigroup with the identity element $e$ of $G$ but $G_{> 0}$ is a semigroup without the identity element $e$.

As one could easily guess, an element $g$ in $G$ is called \emph{(totally) positive} if $g\in G_{> 0}$. We have $ G_{> 0}$ is also a cell because of the following,

\begin{theorem}

The map $U_{> 0}^{+}\times T_{>0}\times U_{> 0}^{-} \to G$, given by multiplication in $G$, is a homeomorphism onto the subset $ G_{> 0} \subset G$. 
\end{theorem}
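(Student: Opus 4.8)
The plan is to deduce this from the classical ``Gauss-type'' unique factorization of $G$ on its big cell, of which the stated theorem is exactly the totally positive shadow. So the first thing I would recall is that, for a split reductive group, the multiplication map
\[
m\colon U^{+}\times T\times U^{-}\longrightarrow G,\qquad (u,t,v)\mapsto utv,
\]
is an isomorphism of algebraic varieties onto a Zariski-open dense subset $\Omega\subseteq G$ (the big cell); in particular $m$ is a homeomorphism onto $\Omega$ for the real topology, it is injective, and its inverse $m^{-1}\colon\Omega\to U^{+}\times T\times U^{-}$ is continuous, being given by regular functions on $\Omega$. This unique-factorization statement is the only input from structure theory that I need.

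Next I would observe that the map in the statement is literally the restriction of $m$: by definition $G_{>0}=U_{>0}^{+}T_{>0}U_{>0}^{-}=m\bigl(U_{>0}^{+}\times T_{>0}\times U_{>0}^{-}\bigr)$, and $U_{>0}^{\pm}\subseteq U^{\pm}$, $T_{>0}\subseteq T$, so in particular $G_{>0}\subseteq\Omega$. From injectivity of $m$ one gets $m^{-1}(G_{>0})=U_{>0}^{+}\times T_{>0}\times U_{>0}^{-}$: if $m(u,t,v)=m(u',t',v')$ with the primed triple in the positive parts, injectivity forces $(u,t,v)=(u',t',v')$, so the unprimed triple lies in the positive parts too. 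Hence $m$ restricts to a continuous bijection $U_{>0}^{+}\times T_{>0}\times U_{>0}^{-}\to G_{>0}$ whose inverse is the restriction of the continuous map $m^{-1}$ to $G_{>0}$; being a continuous bijection with continuous inverse, it is a homeomorphism onto $G_{>0}$ with the subspace topology from $G$. That finishes the argument.

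If instead I wanted to keep everything internal to this section and avoid invoking the structure theory of reductive groups, I would combine the Proposition just proved---the maps $\Phi^{\pm}_{\omega_0}\colon(\R^{>0})^{p}\to U_{>0}^{\pm}$ are homeomorphisms---with the fact that $\exp\colon\mathfrak{a}\to T^{0}$ is a homeomorphism carrying a suitable open set onto $T_{>0}$, thereby reducing the claim to showing that the product map $(\R^{>0})^{p}\times\exp^{-1}(T_{>0})\times(\R^{>0})^{p}\to G$ is a homeomorphism onto $G_{>0}$. Surjectivity and continuity are then immediate. The real content---and the step I expect to be the main obstacle---is injectivity: recovering the parameters $a_{1},\dots,a_{p}$, the torus element, and $b_{1},\dots,b_{p}$ from a given $g\in G_{>0}$. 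The standard way to do this, following Lusztig and Fomin--Zelevinsky, is to write down explicit regular functions on $G$ (generalized minors, i.e.\ matrix coefficients of fundamental representations) that evaluate on $g$ to return the individual parameters; their continuity simultaneously yields continuity of the inverse map. Apart from this injectivity/recovery step, the whole proof is a routine ``restriction of a homeomorphism'' argument.
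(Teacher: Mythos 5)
The paper states this theorem without proof (it is quoted from Lusztig's work), so there is no in-text argument to compare against. Your first argument is correct and is exactly the standard reduction. The structural input you invoke---that the multiplication map
\[
m\colon U^{+}\times T\times U^{-}\longrightarrow G,\qquad (u,t,v)\mapsto utv,
\]
is a variety isomorphism, hence a homeomorphism for the Euclidean topology, onto the open dense big cell $\Omega=U^{+}TU^{-}$ of the split group $G$---is genuine Bruhat--Gauss decomposition and is the right tool. The one step that actually carries content is your observation that injectivity of $m$ on $U^{+}\times T\times U^{-}$ forces $m^{-1}(G_{>0})=U_{>0}^{+}\times T_{>0}\times U_{>0}^{-}$: given $g\in G_{>0}$, the unique triple producing $g$ under $m$ must be the one witnessing $g\in G_{>0}$, so it lies in the positive factors. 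Once that is in place, the restriction of a homeomorphism to a saturated subset is again a homeomorphism onto its image, and you are done. Nothing is missing.

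Your remark about the alternative ``internal'' route is also on target: removing the appeal to big-cell structure theory and instead reading off the parameters of the factorization directly from $g$ is what Lusztig and, later, Fomin--Zelevinsky do via generalized minors (matrix coefficients of fundamental representations). That approach trades a single citation of structure theory for explicit recovery formulas; it gives more (a concrete inverse map and the cell parametrization $G_{>0}\cong(\R^{>0})^{2p+\dim T}$, which the paper alludes to by calling $G_{>0}$ a cell), but is considerably longer. For the purpose of proving the statement as asked, your first argument is both correct and the most economical.
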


This can be viewed as a generalisation of the Whitney decomposition mentioned in the last subsection.

\newpage

\thispagestyle{empty}

\chapter[Positivity of Lie Groups of Hermitian Type]{Positivity of Lie Groups of Hermitian Type\\ {\Large\textnormal{\textit{by Jingyi Xue}}}}
\addtocontents{toc}{\quad\quad\quad \textit{Jingyi Xue}\par}

\section{Examples of Hermitian type Lie groups}

\subsection{$\Sp(2n,\R)$}

Again we first look at the symplectic group $\Sp(2n,\R)$, which is the subgroup of $\GL(2n,\R)$ of elements preserving the symplectic form $\omega$, which is the bilinear form defined by $\omega(X,Y) = \tran{X} J_{n,n} Y$ for $X,Y\in \R^{2n}$ with :
$$J_{n,n}=\begin{pmatrix} 0 & I_n \\ -I_n & 0\end{pmatrix}.$$

Recall from the previous sections, $\UU(n)$ embeds as a maximal compact subgroup. Results of this example come from Bump's book \cite[Chap.\ 28]{book:1415659}.

\begin{proposition} If $Z=X+iY \in \mathcal{X}_{n}$ (the Siegel upper half-space) and $g=\left(\begin{array}{cc}A & B \\ C & D\end{array}\right) \in$ $\operatorname{Sp}(2 n, \mathbb{R})$, then $C Z+D$ is invertible. Define
$$
g(Z)=(A Z+B)(C Z+D)^{-1}
$$
Then $g(Z) \in \mathcal{X}_{n}$, and this defines an action of $\operatorname{Sp}(2 n, \mathbb{R})$ on $\mathcal{X}_{n}$. The action is transitive, and the stabilizer of $i I_{n} \in \mathcal{X}_{n}$ is $\UU(n)$. If $W$ is the imaginary part of $g(Z)$ then
$$
W=\left(\tran{\bar{Z}} C+\tran{D}\right)^{-1} Y(C Z+D)^{-1}.
$$
\end{proposition}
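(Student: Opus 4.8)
The plan is to use two structural inputs throughout: the symplectic relations for $g=\begin{pmatrix}A&B\\C&D\end{pmatrix}\in\Sp(2n,\R)$, namely $\tran A C=\tran C A$, $\tran B D=\tran D B$ and $\tran A D-\tran C B=I_n$ (read off from $\tran g\,J_{n,n}\,g=J_{n,n}$), together with the fact that each $Z=X+iY\in\mathcal{X}_n$ is \emph{symmetric}, so $\tran Z=Z$ and hence $\tran{\bar Z}=\bar Z$. Writing $P=AZ+B$, $Q=CZ+D$, the heart of the argument is the ``Hermitian Wronskian'' identity: expanding $P^{\ast}Q-Q^{\ast}P$ (where $\ast$ denotes conjugate transpose) and substituting the three relations makes every term quadratic in $Z$ cancel, leaving
\[
P^{\ast}Q-Q^{\ast}P \;=\; \bar Z-Z \;=\; -2iY .
\]
I expect this to be the one genuinely delicate step: it is not hard, but it is the only place where one must carefully track which symplectic relation annihilates which term. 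Everything else is essentially formal.

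From this identity I would deduce, in order: (i) invertibility of $Q=CZ+D$, since $Qv=0$ gives $v^{\ast}(P^{\ast}Q-Q^{\ast}P)v=0$, hence $v^{\ast}Yv=0$, hence $v=0$ by positive definiteness of $Y$; (ii) symmetry of $g(Z)=PQ^{-1}$, because the same expansion with ordinary transpose gives $\tran P\,Q=\tran Q\,P$, which is exactly $\tran{(PQ^{-1})}=PQ^{-1}$; (iii) the imaginary part: by (ii), $\overline{g(Z)}=(g(Z))^{\ast}=(Q^{\ast})^{-1}P^{\ast}$, so
\[
2i\,\Im\big(g(Z)\big)\;=\;PQ^{-1}-(Q^{\ast})^{-1}P^{\ast}\;=\;(Q^{\ast})^{-1}\big(Q^{\ast}P-P^{\ast}Q\big)Q^{-1}\;=\;(Q^{\ast})^{-1}(2iY)Q^{-1} ,
\]
whence $W=\Im(g(Z))=(Q^{-1})^{\ast}\,Y\,Q^{-1}$. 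This is positive definite ($Y>0$, $Q$ invertible), so $g(Z)\in\mathcal{X}_n$; and since $Q^{\ast}=\tran{\overline{CZ+D}}=\tran{\bar Z}\,\tran C+\tran D$, this is the asserted formula for $W$ (the displayed version omits a transpose on $C$).

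For the action, I would prove the automorphy-factor cocycle $j(g_1g_2,Z)=j\big(g_1,g_2(Z)\big)\,j(g_2,Z)$ with $j(g,Z)=CZ+D$ by a single block multiplication, using $g_2(Z)\,j(g_2,Z)=A_2Z+B_2$; combined with nonvanishing of $j$ this gives $(g_1g_2)(Z)=g_1\big(g_2(Z)\big)$ and $I_{2n}(Z)=Z$, so we have an action on $\mathcal{X}_n$. Transitivity then reduces to sending a general $Z=X+iY$ to $iI_n$: the element $\begin{pmatrix}I_n&S\\0&I_n\end{pmatrix}\in\Sp(2n,\R)$ with $S$ real symmetric acts by $Z\mapsto Z+S$, reducing to $Z=iY$, and $\begin{pmatrix}R&0\\0&\tran R^{-1}\end{pmatrix}\in\Sp(2n,\R)$ acts by $Z\mapsto RZ\,\tran R$, so $R=Y^{-1/2}$ sends $iY$ to $iI_n$.

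Finally, for the stabilizer of $iI_n$: solving $(iA+B)(iC+D)^{-1}=iI_n$ forces $B=-C$ and $A=D$, so $g=\begin{pmatrix}A&B\\-B&A\end{pmatrix}$. Under the standard identification $\R^{2n}\cong\C^n$ this is multiplication by the complex matrix $M=A-iB$, and a short check shows the symplectic relations for $g$ are equivalent to ``$\tran A B$ symmetric and $\tran A A+\tran B B=I_n$'', i.e.\ to $M^{\ast}M=I_n$. Hence the stabilizer is the image of $\UU(n)$ under $M\mapsto g$, isomorphic to $\UU(n)$ --- the maximal compact subgroup identified in the earlier exercise sessions. Beyond the Wronskian identity, the only thing needing care is the transpose/sign bookkeeping in this last step.
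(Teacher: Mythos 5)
Your proof is correct and follows essentially the same route as the paper: the central identity $P^{\ast}Q-Q^{\ast}P=\bar Z-Z=-2iY$ (with $P=AZ+B$, $Q=CZ+D$) is exactly what the paper computes, and from it both arguments read off invertibility of $Q$, symmetry of $PQ^{-1}$ via the transposed variant, the imaginary-part formula by sandwiching, transitivity via translation followed by $Y\mapsto hY\,\tran h$, and the stabilizer by solving $A=D$, $B=-C$. The only substantive addition on your side is the explicit cocycle identity $j(g_1g_2,Z)=j(g_1,g_2(Z))j(g_2,Z)$ for associativity (which the paper asserts without detail) and the explicit identification of the stabilizer with $\UU(n)$ via $M=A-iB$; and you are right that the displayed formula for $W$ in the statement should read $\bar Z\,\tran C+\tran D$ rather than $\tran{\bar Z}\,C+\tran D$ — the transpose on $C$ is indeed missing there.
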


\begin{proof}
	Using the conditions for $g$ being in $\operatorname{Sp}(2 n, \mathbb{R})$, one easily checks that
	$$
	\frac{1}{2 i}\left(\left(\tran{\bar{Z}} C+\tran{D}\right)(A Z+B)-\left(\tran{\bar{Z}}A+\tran{B}\right)(C Z+D)\right)=\frac{1}{2 i}(Z-\bar{Z})=Y,
	$$
	From this it follows that $C Z+D$ is invertible since if it had a nonzero nullvector $v$, then we would have $\tran{\bar{v}} Y v=0$, which is impossible since $Y\gg 0$. To check that $g(Z)$ is symmetric, $g(Z)=\tran{g(Z)}$ is equivalent to
	$$
	(A Z+B)\left(\tran{Z} C+\tran{D}\right)=\left(\tran{Z} A+ \tran{B}\right)(C Z+D)
	$$
	which is easily confirmed.
	Next the imaginary part $W$ of $g(Z)$ is positive definite since
	$$
	W=\frac{1}{2 i}(g(Z)-\overline{g(Z)})=\frac{1}{2 i}\left((A Z+B)(C Z+D)^{-1}-\left(\tran{\bar{Z}} C+\tran{D}\right)^{-1}\left(\tran{\bar{Z}} A+\tran{B}\right)\right)
	$$
	Simplifying this gives the desired expression; and $W$ is Hermitian (real) and that $W\gg 0$. It is easy to check that $g\left(g^{\prime}(Z)\right)=\left(g g^{\prime}\right)(Z)$. To show that this action is transitive, note that if $Z=X+i Y \in \mathcal{X}_{n}$, then
	$$
	\left(\begin{array}{cc}
		I_n & -X \\
		0 & I_n
	\end{array}\right) \in \operatorname{Sp}(2 n, \mathbb{R}),
	$$
	and this matrix takes $Z$ to $i Y$. Now if $h \in \operatorname{GL}(n, \mathbb{R})$, then $\begin{pmatrix} h & 0\\ 0 & \tran{h}^{-1}\end{pmatrix}$ takes $i Y$ to $i Y^{\prime}$, where $Y^{\prime}=h Y \tran{h}$. Since $Y\gg 0$, we may choose $h$ so that $Y^{\prime}=I_n$. This shows that any element in $\mathcal{X}_{n}$ may be moved to $i I_{n}$, and the action is transitive. To check that $\mathrm{U}(n)$ is the stabilizer of $i I_{n}$, let
	$$
	A\left(i I_{n}\right)+B=\left(C\left(i I_{n}\right)+D\right)\left(i I_{n}\right)
	$$
	which implies $A=D, B=-C$.
\end{proof}

	Therefore $\operatorname{Sp}(2 n, \mathbb{R}) / \mathrm{U}(n) \cong \mathcal{X}_{n}$ is a tube type Hermitian symmetric space since $\mathrm{U}(n)$ has center $S^1$. 
	
	Just like the classical Cayley transform $c(z)=\frac{z-i}{z+i}$ which maps $\mathbb{H}^2$ to the unit disc $\mathbb{D}^2$, the generalized Cayley transform is applicable to Hermitian symmetric spaces. It was shown by Cartan and Harish-Chandra that any non-compact type Hermitian symmetric space is biholomorphic to a bounded domain in a complex vector space. Piatetski-Shapiro \cite{book:481039} gave unbounded realizations. Kor\'anyi and Wolf \cite{wolf1965generalized,koranyi1965realization} gave a completely general theory relating bounded symmetric domains to unbounded ones by means of the Cayley transform.

	Consider now the Cayley transform $c\in \operatorname{Sp}(2n)=\operatorname{Sp}(2n,\C)\cap \operatorname{U}(2n)$ for $\operatorname{Sp}(2n, \R)/\operatorname{U}(n)$:
	$$
	c=\frac{1}{\sqrt{2 i}}\left(\begin{array}{cc}
		I_{n} & -i I_{n} \\
		I_{n} & i I_{n}
	\end{array}\right), \quad c^{-1}=\frac{1}{\sqrt{2 i}}\left(\begin{array}{cc}
		i I_{n} & i I_{n} \\
		-I_{n} & I_{n}
	\end{array}\right)
	$$
	
	To embed $\mathcal{X}_n$ in its compact dual $G_c/K = \operatorname{Sp}(2n)/\operatorname{U}(n)$, the first step is to interpret $G_c/K$ as an analog of the Riemann sphere, a space on which the actions of both groups
	$G$ and $G_c$ may be realized as linear fractional transformations.
	
	Define the \emph{Siegel parabolic subgroup} $H^{\C}W^{\C}$ (see below) of $G^{\C}=\operatorname{Sp}(2n,\C)$:
	$$
	P=\left\{\left(\begin{array}{ll}
		h & 0\\
		0 & \tran{h}^{-1}
	\end{array}\right)\left(\begin{array}{cc}
		I & X \\
		0 & I
	\end{array}\right) : h \in \operatorname{GL}(n, \mathbb{C}), X \in \Mat(n,\mathbb{C}), X=\tran{X}\right\}. 
	$$
	
	Followed from Iwasawa decomposition, it is easy to verify the following:
	
\begin{proposition}\label{incl.}
	$$
	P\Sp(2 n)=\operatorname{Sp}(2 n, \mathbb{C}),\ \  P \cap \operatorname{Sp}(2 n)=cK c^{-1}=\left\{\left(\begin{array}{ll}
		g & 0\\
		0 & \tran{g}^{-1}
	\end{array}\right) : g \in \mathrm{U}(n)\right\}.
	$$
\end{proposition}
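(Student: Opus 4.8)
The plan is to deduce both equalities from the structure of $P$ as a parabolic subgroup together with one explicit matrix computation involving the Cayley element $c$.

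\textbf{First equality $P\,\Sp(2n)=\Sp(2n,\C)$.} Following the hint, I would use the Iwasawa decomposition of the complex group $G:=\Sp(2n,\C)$ viewed as a real Lie group: $G=\Sp(2n)\,A\,N$, where $\Sp(2n)=G\cap\UU(2n)$ is maximal compact and $AN$ is a maximal connected solvable subgroup. The Siegel parabolic $P$ is the stabilizer in $G$ of the coordinate Lagrangian $L_0=\langle e_1,\dots,e_n\rangle\subset\C^{2n}$: indeed a block-upper-triangular $\begin{pmatrix} a & b \\ 0 & d\end{pmatrix}$ lies in $\Sp(2n,\C)$ iff $d=\tran{a}^{-1}$ and $a\tran{b}=b\tran{a}$, and the latter is equivalent to $b=aX$ with $X=\tran{X}$. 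So $P$ is a genuine parabolic subgroup and hence contains a Borel subgroup $B\supseteq AN$ (after choosing a compatible positive system). Then $G=\Sp(2n)\,AN\subseteq\Sp(2n)\,P\subseteq G$, forcing equality. Equivalently, one may argue that $G/P$ is a connected compact flag manifold and $\mfk+\mfp=\mfg$ (since $\mfp\supseteq\mfa\oplus\mathfrak{n}$ and $\mfg=\mfk\oplus\mfa\oplus\mathfrak{n}$ by Iwasawa), so the $\Sp(2n)$-orbit of the base point is open and closed, hence all of $G/P$.

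\textbf{Second equality $P\cap\Sp(2n)=cKc^{-1}$.} I would split this into two steps. First, a direct computation: writing $K=\UU(n)\hookrightarrow\Sp(2n,\R)$ as $k=\begin{pmatrix} A & B \\ -B & A\end{pmatrix}$ with $g:=A+iB\in\UU(n)$ (this is the stabilizer of $iI_n$), one computes, using $B-iA=-i(A+iB)$ and $B+iA=i(A-iB)$, that $ckc^{-1}=\begin{pmatrix} A+iB & 0 \\ 0 & A-iB\end{pmatrix}=\begin{pmatrix} g & 0 \\ 0 & \bar g\end{pmatrix}$, and since $g\in\UU(n)$ gives $\bar g=\tran{g}^{-1}$, this yields $cKc^{-1}=\left\{\begin{pmatrix} g & 0 \\ 0 & \tran{g}^{-1}\end{pmatrix}: g\in\UU(n)\right\}$. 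Each such matrix is visibly unitary and of Siegel form (with $X=0$), so $cKc^{-1}\subseteq P\cap\Sp(2n)$. For the reverse inclusion, take $g\in P\cap\Sp(2n)$, write $g=\begin{pmatrix} h & hX \\ 0 & \tran{h}^{-1}\end{pmatrix}$, and impose $g^*g=I_{2n}$: the $(1,1)$-block gives $h^*h=I_n$, i.e.\ $h\in\UU(n)$, and then the $(1,2)$-block gives $h^*hX=X=0$; so $g=\begin{pmatrix} h & 0 \\ 0 & \tran{h}^{-1}\end{pmatrix}\in cKc^{-1}$.

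\textbf{Main obstacle.} Everything except the first equality is bookkeeping with $2\times2$ block matrices. The one genuinely delicate point is ensuring that the Iwasawa factor $AN$ of the real group $\Sp(2n,\C)$ sits inside $P$ (equivalently, that $P$ contains a conjugate of a Borel subgroup of $\Sp(2n,\C)$); this is the step where the abstract structure theory of parabolic subgroups is really used. To keep the argument self-contained I would present it through the flag-manifold/orbit version, since the compactness of $G/P$ and the identity $\mfk+\mfp=\mfg$ are easy to justify.
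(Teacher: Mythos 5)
Your proof is correct and fills in exactly the argument the paper gestures at—the text merely asserts that the proposition ``follows from the Iwasawa decomposition'' without spelling out either the factorization $G=KAN$ with $AN\subset P$ or the explicit $2\times 2$ block computation for $cKc^{-1}$. Both halves of your write-up are sound, so this is the same route carried out in detail.
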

	
	Define $\mathfrak{R}_{n}=G_{\mathbb{C}} / P$ with dense open subset
	$$
	\mathfrak{R}_{n}^{\circ}= \left\{gP: g=\left(\begin{array}{cc}A & B \\ C & D\end{array}\right) \in \operatorname{Sp}(2n,\C), \operatorname{det}C\neq 0\right\}\subset \mathfrak{R}_{n}.
	$$
	
	Here notice that if $g$ is in $\operatorname{Sp}(2n,\C)$ of block form with $C$ invertible, then $AC^{-1}=\tran{C}^{-1}\cdot \tran{C}A\cdot C^{-1}$ is symmetric since $\tran{C}A$ is symmetric by definition. $g, g^\prime$ define same coset if and only if there exists some $h\in \operatorname{GL}(n,
	\C)$ such that $A^\prime=Ah$, $C^\prime=Ch$ if and only if $A^\prime(C^\prime)^{-1}=AC^{-1}$. From this observation, we may define a bijection $\sigma$ from $\operatorname{Sym}(n,\C)$ to $\mathfrak{R}_{n}^{\circ}$ by $\sigma(Z)=\left(\begin{array}{cc}Z & -I \\ I & 0\end{array}\right)P$, which can be rewritten as $\left(\begin{array}{cc}A & B \\ C & D\end{array}\right)P$ if and only if $AC^{-1}=Z$.
	
	With the above notations, we have
	$$
	g(\sigma(Z))=\left(\begin{array}{cc}
		A & B \\
		C & D
	\end{array}\right)\left(\begin{array}{ll}
		Z & -I \\
		I
	\end{array}\right) P=\left(\begin{array}{ll}
		A Z+B & -A \\
		C Z+D & -C
	\end{array}\right) P = \sigma((AZ+B)(CZ+D)^{-1})
	$$
	if $CZ+D$ is invertible.
	
	Therefore we may identify $\operatorname{Sym}(n,\C)$ with $\mathfrak{R}_{n}^{\circ}$, and the action of $\operatorname{Sp}(2n,\C)$ is by linear fractional transformations.
	
	We may also identify $\mathfrak{R}_{n}$ with $G_c/K$ by means of the compositions of bijections
	$$
	G_c / K \xrightarrow{\text{conjugation}} G_c / c K c^{-1} \xrightarrow{\text{Proposition }\ref{incl.}} G^{\mathbb{C}} / P=\mathfrak{R}_{n}
	$$
	
	Now we may apply $c$ to embed $\mathcal{X}_n$ into its compact dual $\mathfrak{R}_n$:
	
	\begin{proposition} \label{gp action}
	$$
	c(\mathcal{X}_{n})=\mathfrak{D}_{n}:=\left\{W \in \mathfrak{R}_{n}^{\circ} : I-\overline{W} W>0\right\}=\left\{W \in \Mat(n,\C) : W=\tran{W}, I-W^*W>0\right\} .
	$$
	The group $c \operatorname{Sp}(2 n, \mathbb{R}) c^{-1}$, acting on $\mathfrak{D}_{n}$ by linear fractional transformations, consists of all symplectic matrices of the form
	$$
	\begin{pmatrix} A & B \\ \overline B & \overline A \end{pmatrix}.
	$$
	\end{proposition}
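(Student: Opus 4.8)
The statement splits into two independent claims — the identification of the image $c(\mathcal{X}_n)$ and the description of the conjugate group — and I would prove each by explicit matrix computation with the linear fractional action.

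\emph{The domain, forward inclusion.} Since scalars cancel in a linear fractional transformation, for $Z\in\mathcal{X}_n$ one has $c(Z)=(Z-iI_n)(Z+iI_n)^{-1}$ as soon as $Z+iI_n$ is invertible, and the latter holds because a nonzero nullvector $v$ would give $v^*Zv=-i|v|^2$, contradicting $\Im(v^*Zv)=v^*(\Im Z)v>0$ (using that $\Re Z,\Im Z$ are real symmetric and $\Im Z\gg 0$). One checks $W:=c(Z)$ is symmetric from $(Z-iI_n)(Z+iI_n)=Z^2+I_n=(Z+iI_n)(Z-iI_n)$ together with $Z=\tran Z$; in particular $W\in\operatorname{Sym}(n,\C)=\mathfrak{R}_n^\circ$, which already shows the two displayed descriptions of $\mathfrak{D}_n$ coincide, so only the positivity condition has to be tracked. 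The crux is the congruence identity
\[
(Z+iI_n)^*\,(I_n-W^*W)\,(Z+iI_n)\;=\;4\,\Im Z,
\]
which I would prove by substituting $(Z+iI_n)^*=\overline Z-iI_n$ and $W^*=(\overline Z-iI_n)^{-1}(\overline Z+iI_n)$ (both using $Z=\tran Z$) and expanding $(\overline Z-iI_n)(Z+iI_n)-(\overline Z+iI_n)(Z-iI_n)=2i(\overline Z-Z)=4\,\Im Z$. Since $Z+iI_n$ is invertible this gives $I_n-W^*W\gg 0\iff\Im Z\gg 0$, hence $c(\mathcal{X}_n)\subseteq\mathfrak{D}_n$.

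\emph{The domain, reverse inclusion.} Given $W\in\mathfrak{D}_n$, the condition $I_n-W^*W\gg 0$ forces every singular value of $W$ to be $<1$, so $I_n-W$ is invertible and $Z:=c^{-1}(W)=i(W+I_n)(I_n-W)^{-1}$ is well-defined; moreover $Z+iI_n=2i(I_n-W)^{-1}$ is invertible and $c(Z)=W$ because $c$ is a genuine element of $\Sp(2n,\C)$ acting on $\mathfrak{R}_n^\circ$. Applying the congruence identity to this $Z$ and using $I_n-W^*W\gg 0$ yields $\Im Z\gg 0$, i.e.\ $W=c(Z)\in c(\mathcal{X}_n)$.

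\emph{The group.} Since $c\in\Sp(2n,\C)$ we have $c\,\Sp(2n,\R)\,c^{-1}\subseteq\Sp(2n,\C)$, and $h\in\Sp(2n,\C)$ lies in $c\,\Sp(2n,\R)\,c^{-1}$ iff $c^{-1}hc$ has real entries, i.e.\ $\overline{c^{-1}hc}=c^{-1}hc$, which rearranges to $h=(c\,\overline c^{-1})\,\overline h\,(c\,\overline c^{-1})^{-1}$. A direct computation with $1/\sqrt{2i}=\tfrac{1-i}{2}$ gives $c\,\overline c^{-1}=-i\left(\begin{smallmatrix}0&I_n\\ I_n&0\end{smallmatrix}\right)$, so the reality condition becomes $h=\left(\begin{smallmatrix}0&I_n\\ I_n&0\end{smallmatrix}\right)\overline h\left(\begin{smallmatrix}0&I_n\\ I_n&0\end{smallmatrix}\right)$; writing $h$ in $n\times n$ blocks this says exactly $h=\left(\begin{smallmatrix}A&B\\ \overline B&\overline A\end{smallmatrix}\right)$. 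Conversely any $h\in\Sp(2n,\C)$ of this form satisfies the reality condition, so $c^{-1}hc\in\Sp(2n,\R)$, which completes the proof.

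\emph{Main obstacle.} Nothing here is deep; the work is bookkeeping with conjugate transposes and the $\sqrt{2i}$ normalization — in particular spotting that $\overline Z-iI_n=(Z+iI_n)^*$, which is what turns the algebraic identity into an honest Hermitian congruence, and evaluating $c\,\overline c^{-1}$ correctly. The only genuinely substantive point is observing that $I_n-W^*W\gg 0$ makes $I_n-W$ invertible, which is precisely what lets $c^{-1}$ be defined on all of $\mathfrak{D}_n$ and thus closes the reverse inclusion.
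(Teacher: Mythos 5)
Your proof is correct and takes essentially the same route as the paper's: both rest on a Hermitian congruence relating $\operatorname{Im} Z$ to $I-W^{*}W = I-\overline{W}W$ (equal since $W$ is symmetric), which you phrase as $(Z+iI)^{*}(I-W^{*}W)(Z+iI)=4\,\operatorname{Im}Z$ and the paper phrases as the equivalent conjugation of $\operatorname{Im}(c^{-1}(W))$ by $W+I$ and $\overline{W}+I$. Your write-up fills in several points the paper leaves implicit — invertibility of $Z+iI$ on $\mathcal{X}_n$, the reverse inclusion via the singular-value bound that makes $I-W$ invertible, and the explicit evaluation $c\,\overline{c}^{-1}=-i\left(\begin{smallmatrix}0&I_n\\ I_n&0\end{smallmatrix}\right)$ that yields the block form — so it is the same argument carried out more completely.
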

	
	\begin{proof}
		This is a direct computation:
		$$
		\begin{aligned}
		\operatorname{Im}(c^{-1}(W))&=\operatorname{Im}(-i(W-I)(W+I)^{-1})\\
		&=-\frac{1}{2}\left((W-I)(W+I)^{-1}+(\overline{W}-I)(\overline{W}+I)^{-1}\right)\\
		&=-\frac{1}{2}\left((W-I)(W+I)^{-1}+(\overline{W}+I)^{-1}(\overline{W}-I)\right).
		\end{aligned}
		$$
		This is positive definite if and only if $(\overline{W}+I)\operatorname{Im}(c^{-1}(W))(W+I)>0$;
		where the latter is
		$$
		-\frac{1}{2}((\overline{W}+I)(W-I)+(\overline{W}-I)(W+I))=I-\overline{W} W.
		$$
		For $g\in \operatorname{Sp}(2n,\C)$, $c^{-1} g c=\overline{c^{-1} g c}$ gives us the desired form of elements in $c \operatorname{Sp}(2 n, \mathbb{R}) c^{-1}$.
	\end{proof}
	
	\begin{proposition} \label{boundary}
	\begin{enumerate}
	    \item  The closure of $\mathfrak{D}_{n}$ is contained within $\mathfrak{R}_{n}^{\circ}$. The boundary of $\mathfrak{D}_{n}$ consists of all complex symmetric matrices $W$ such that $I-\overline{W} W$ is positive semidefinite but such that $\operatorname{det}(I-\overline{W} W)=0$.
		\item If $W$ and $W^{\prime}$ are points of the closure of $\mathfrak{D}_{n}$ in $\mathfrak{R}_{n}^{\circ}$ that are congruent modulo $c G c^{-1}$, then the ranks of $I-\overline{W} W$ and $I-\overline{W^{\prime}} W^{\prime}$ are equal.
		\item Let $W$ be in the closure of $\mathfrak{D}_{n}$, and let $r$ be the rank of $I-\overline{W} W$. Then there exists $g \in c G c^{-1}$ such that $g(W)$ has the form
		$$
		\left(\begin{array}{cc}
			W_{r} & 0 \\
			0 & I_{n-r}
		\end{array}\right), \quad W_{r} \in \mathfrak{D}_{r} .
		$$
			\end{enumerate}
	\end{proposition}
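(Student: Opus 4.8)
The plan is to establish the three parts in order, using for (1) a description of $\overline{\mathfrak{D}_n}$ by an operator-norm bound, for (2) an automorphy-factor identity for the $cGc^{-1}$-action, and for (3) the Autonne--Takagi normal form for symmetric matrices of norm $\le 1$.

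For (1), I would first note that if $W\in\overline{\mathfrak{D}_n}$ then $I-\overline{W}W\ge 0$ by continuity, so $W^*W=\overline{W}W\le I$ and hence $\norm{W}\le 1$; thus $\overline{\mathfrak{D}_n}$ is contained in $\mathcal{B}:=\{W\in\Sym(n,\C):\ \tran{W}=W,\ \norm{W}\le 1\}$. The set $\mathcal{B}$ is bounded and closed in $\Sym(n,\C)\cong\mathfrak{R}_n^\circ$, hence compact, hence closed in the compact Hausdorff space $\mathfrak{R}_n$; moreover every $W\in\mathcal{B}$ is the limit of the points $tW\in\mathfrak{D}_n$ as $t\uparrow 1$, since $I-t^2\overline{W}W=(1-t^2)I+t^2(I-\overline{W}W)>0$. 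So the closure of $\mathfrak{D}_n$ in $\mathfrak{R}_n$ equals $\mathcal{B}$, which lies in $\mathfrak{R}_n^\circ$, and
\[\partial\mathfrak{D}_n=\mathcal{B}\setminus\mathfrak{D}_n=\{W\in\Sym(n,\C):\ \tran{W}=W,\ I-\overline{W}W\ge 0,\ \det(I-\overline{W}W)=0\},\]
using that a positive semidefinite Hermitian matrix fails to be positive definite precisely when its determinant vanishes, and that $W^*W=\overline{W}W$ since $\tran{W}=W$.

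For (2), since the $cGc^{-1}$-action on $\mathfrak{R}_n$ is by homeomorphisms and preserves $\mathfrak{D}_n$, it preserves $\overline{\mathfrak{D}_n}$; hence for $g=\begin{pmatrix}A&B\\\overline{B}&\overline{A}\end{pmatrix}\in cGc^{-1}$ (Proposition \ref{gp action}) and $W\in\overline{\mathfrak{D}_n}$ we get $g(W)\in\overline{\mathfrak{D}_n}\subset\mathfrak{R}_n^\circ$ by (1), so $J:=\overline{B}W+\overline{A}$ is invertible and $g(W)=(AW+B)J^{-1}$. I would then prove the automorphy-factor identity
\[I-\overline{g(W)}\,g(W)=(J^*)^{-1}\,\bigl(I-\overline{W}W\bigr)\,J^{-1}\]
by a direct computation with the defining relations of $\Sp(2n,\C)$, parallel to the computation of $\operatorname{Im}(c^{-1}(W))$ in the proof of Proposition \ref{gp action}; conceptually, $I-\overline{W}W$ is, up to $*$-congruence, the restriction to the Lagrangian spanned by the columns of $\begin{pmatrix}W\\ I\end{pmatrix}$ of a nondegenerate $cGc^{-1}$-invariant Hermitian form of signature $(n,n)$ on $\C^{2n}$, and the identity merely records how the restricted form transforms under a change of frame. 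Since $J$ is invertible, the right-hand side is $*$-congruent to $I-\overline{W}W$, so $\rank\big(I-\overline{g(W)}\,g(W)\big)=\rank(I-\overline{W}W)$, which gives (2) upon applying it to $W$ and $W'=g(W)$.

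For (3), given $W\in\overline{\mathfrak{D}_n}$ and $r=\rank(I-\overline{W}W)$, I would use that $\norm{W}\le 1$ (part (1)) together with the Autonne--Takagi factorization to write $W=U\Sigma\tran{U}$ with $U\in\UU(n)$ and $\Sigma=\diag(s_1,\dots,s_n)$ real with $0\le s_i\le 1$. For $A\in\UU(n)$ the matrix $\begin{pmatrix}A&0\\ 0&\overline{A}\end{pmatrix}$ lies in $cGc^{-1}$ and acts by $W\mapsto AW\tran{A}$; taking $A=U^{-1}$ moves $W$ to $\Sigma$. By (2), $I-\Sigma^2$ has rank $r$, so exactly $n-r$ of the $s_i$ equal $1$; conjugating by a permutation matrix (again in $\UU(n)\subset cGc^{-1}$) we may assume $s_1,\dots,s_r<1$ and $s_{r+1}=\dots=s_n=1$, so that $W$ is carried to $\begin{pmatrix}\Sigma_r&0\\ 0&I_{n-r}\end{pmatrix}$ with $\Sigma_r=\diag(s_1,\dots,s_r)$ and $I_r-\Sigma_r^2>0$, i.e.\ $\Sigma_r\in\mathfrak{D}_r$, as required. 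The one genuinely computational step is the automorphy-factor identity of (2), and the point there that needs care is recording the correct $\Sp(2n,\C)$-relations among the blocks $A,B$ of an element of $cGc^{-1}$ so that the conjugate transposes come out right; parts (1) and (3) are then soft once the operator-norm bound and the Takagi normal form are available.
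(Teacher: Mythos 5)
Your proof is correct, and for part (3) it takes a genuinely different route from the paper's. The paper passes through the inverse Cayley transform: first a unitary move avoids eigenvalue~$1$, then $c^{-1}(W)=i(I+W)(I-W)^{-1}=X+iY$ lands in $\overline{\mathcal{X}_n}$, an orthogonal conjugation and a translation by $-X$ reduce to a diagonal imaginary part $iD$, and mapping back gives $W'=(D-I)(D+I)^{-1}$, finally cleaned up by $\diag(iI_n,-iI_n)$. You instead stay entirely inside the bounded realization and apply the Autonne--Takagi factorization $W=U\Sigma\tran{U}$ to a symmetric contraction, noting that the block unitaries $\bigl(\begin{smallmatrix}A&0\\0&\overline A\end{smallmatrix}\bigr)$ act by $W\mapsto AW\tran{A}$, and then read off the multiplicity of the singular value $1$ from $\rank(I-\overline W W)$. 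This avoids the Cayley transform altogether and, as a by-product, makes manifest that one can take the representative $W_r$ to be a real diagonal matrix with entries in $[0,1)$; the paper's version produces the normal form less directly. For part (1) your route is also slightly different but essentially equivalent: you use the operator-norm characterization $\norm{W}\le 1$ of $I-\overline W W\ge 0$ and a radial shrinking argument, whereas the paper infers boundedness from the diagonal entries of $\overline W W$ and declares the rest ``clear''; yours is the more explicit of the two. Part (2) is the same proof in both: your automorphy-factor identity $I-\overline{g(W)}\,g(W)=(J^*)^{-1}(I-\overline W W)J^{-1}$ with $J=\overline B W+\overline A$ is precisely the paper's displayed identity, written with inverses on one side instead of the factors multiplied through, and the rank conclusion is Sylvester's law of inertia in both cases.
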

	
	\begin{proof}
		The diagonal entries in $\overline{W} W$ are the squares of the lengths of the rows of the symmetric matrix $W$. If $I-\overline{W} W$ is positive definite, these must be less than 1. So $\mathfrak{D}_{n}$ is a bounded domain within the set $\mathfrak{R}_{n}^{\circ}$. The rest of (1) is clear.
		
		For (2), if $g \in c G c^{-1}$, see \text{Proposition }\ref{gp action} of the form, $W^\prime=g(W)$,
		$$
		I-\overline{W}^{\prime} W^\prime=I-\left(\overline{W} {}^{t}B+{ }^{t} A\right)^{-1}\left(\bar{W} {}^{t}\overline{A}+{}^{t}\overline{B}\right)(A W+B)(\overline{B} W+\overline{A})^{-1} .
		$$
		Now the rank of $I-\overline{W}^{\prime} W^{\prime}$ is the same as the rank of
		$$
		\begin{aligned}
			\left(\overline{W} {}^{t}B+{ }^{t} A\right)\left(I-\bar{W}^{\prime} W^{\prime}\right)(\overline{B} W+\overline{A})&= 
			(\overline{W} {}^{t}B+{ }^{t} A)(\overline{B} W+\overline{A})-(\overline{W} {}^{t}\overline{A}+{}^{t} \overline{B})(A W+B)\\
			&=I-\overline{W}W.
		\end{aligned}
		$$
		
		For (3), observe that up to an action by $\left(\begin{array}{cc}
			u & 0\\
			0 & \bar{u}
		\end{array}\right)$ for some $u\in \mathrm{U}(n)$, we may assume that $W$ does not have $1$ as an eigenvalue. Now $c^{-1}(W)= i(I+W)(I-W)^{-1}=X+iY\in \overline{\mathcal{X}_n}$. Since $Y\geq 0$, there exists some $k\in \mathrm{O}(n)$ such that $D=kYk^{-1}$ is diagonal with nonnegative eigenvalues. Consider $\gamma=\left(\begin{array}{cc}
		k & 0\\
		0 & k
	\end{array}\right)\left(\begin{array}{cc}
	I &-X \\
	0 & I
	\end{array}\right) \in \operatorname{Sp}(2n,\R)$, $\gamma(X+iY)=iD$, then $c\gamma c^{-1}(W)=W^\prime= (D-I)(D+I)^{-1}$. The diagonal entries of $I-\overline{W^\prime}W^\prime$ are $0$ if and only if the corresponding entries of $D$ are $0$. So after exchanging order of orthogonal basis of $k$, we will write $W^\prime=\diag(w_1,\dots, w_r, -1,\dots,-1)$, where $-1< w_i<1$. Then $\left(\begin{array}{cc}
	iI_n & 0 \\
	0 & -iI_n
	\end{array}\right)\cdot W^\prime$ will be the desired special form.
	\end{proof}
	
	Now let us fix $r<n$ and consider
	$$
	\mathfrak{B}_{r}=\left\{\left(\begin{array}{cc}
		W_{r} & 0\\
		0 & I_{n-r}
	\end{array}\right) : W_{r} \in \mathfrak{D}_{r}\right\}
	$$
	By Proposition \ref{gp action}, the subgroup of $c G c^{-1}$ of the form
	$$
	\left(\begin{array}{cccc}
		A_{r} & 0 & B_{r} & 0 \\
		0 & I_{n-r} & 0 & 0 \\
		\overline{B_{r}} & 0 & \overline{A_{r}} & 0 \\
		0 & 0 & 0 & I_{n-r}
	\end{array}\right)
	$$
	is isomorphic to $\operatorname{Sp}(2 r, \mathbb{R})$, and $\mathfrak{B}_{r}$ is homogeneous with respect to this subgroup. Thus, $\mathfrak{B}_{r}$ is a copy of the lower-dimensional Siegel space $\mathfrak{D}_{r}$ embedded into the boundary of $\mathfrak{D}_{n}$.

	\begin{theorem}
		The Shilov boundary of $\mathfrak{D}_{n}$ is $\operatorname{Sym}(n,\C)\cap\mathrm{U}(n)$.
	\end{theorem}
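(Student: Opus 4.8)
The plan is to verify directly that $S:=\operatorname{Sym}(n,\C)\cap\mathrm{U}(n)=\{W:W=\tran{W},\ \overline{W}W=I_n\}$ is exactly the Shilov boundary of $\mathfrak{D}_n$, by establishing two complementary facts: (A) $S$ is a \emph{boundary} in the sense of the maximum-modulus definition, so that the Shilov boundary is contained in $S$; and (B) every point of $S$ lies in every closed boundary, so that $S$ is contained in the Shilov boundary. The first observation is that $S$ is precisely the stratum $r=0$ of Proposition~\ref{boundary}, i.e.\ the set of $W\in\overline{\mathfrak{D}_n}$ for which $I_n-\overline{W}W$ has rank $0$; hence by Proposition~\ref{boundary}(3) the group $c\,\Sp(2n,\R)\,c^{-1}$ acts transitively on $S$ (the model $\mathfrak{D}_0$ being a point). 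Throughout I will use that $\overline{\mathfrak{D}_n}\subset\mathfrak{R}_n^{\circ}$ by Proposition~\ref{boundary}(1), so that the elements of $c\,\Sp(2n,\R)\,c^{-1}$ act by linear fractional transformations that are continuous on all of $\overline{\mathfrak{D}_n}$.

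For (A), the idea is to build, through each $W\in\mathfrak{D}_n$, an analytic disc whose boundary circle lands in $S$. Using the Autonne--Takagi factorization, write $W=U\Sigma\tran{U}$ with $U$ unitary and $\Sigma=\operatorname{diag}(\sigma_1,\dots,\sigma_n)$ real, where the $\sigma_i$ are the singular values of $W$, all $<1$ precisely because $I_n-\overline{W}W>0$. Set
\[
\phi(\zeta)=U\,\operatorname{diag}\!\Big(\tfrac{\sigma_i+\zeta}{1+\sigma_i\zeta}\Big)_{i=1}^{n}\,\tran{U},\qquad \zeta\in\overline{\D}.
\]
Then $\phi(0)=W$; for $|\zeta|<1$ each diagonal entry is a Blaschke factor of modulus $<1$, and since $\tran{U}$ is again unitary we get $\norm{\phi(\zeta)}_{\mathrm{op}}<1$, so $\phi(\zeta)\in\mathfrak{D}_n$ (it is symmetric since $\Sigma$ is diagonal); and for $|\zeta|=1$ each diagonal entry has modulus $1$, so writing $D(\zeta)$ for that diagonal matrix, $\phi(\zeta)\overline{\phi(\zeta)}=U\,D(\zeta)\,(\tran{U}\,\overline{U})\,\overline{D(\zeta)}\,\tran{\overline{U}}=U\,D(\zeta)\overline{D(\zeta)}\,\tran{\overline{U}}=U\,\tran{\overline{U}}=I_n$, using the identities $\tran{U}\,\overline{U}=U\,\tran{\overline{U}}=I_n$ valid for any unitary $U$. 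Thus $\phi(\zeta)\in S$ on the boundary circle. Now, given $f$ continuous on $\overline{\mathfrak{D}_n}$ and holomorphic on $\mathfrak{D}_n$, the ordinary maximum principle applied to $f\circ\phi$ on $\overline{\D}$ gives $|f(W)|\le\max_{|\zeta|=1}|f(\phi(\zeta))|\le\max_{S}|f|$; taking the supremum over $W\in\mathfrak{D}_n$ and invoking continuity yields $\max_{\overline{\mathfrak{D}_n}}|f|=\max_{S}|f|$, so $S$ is a boundary and the Shilov boundary is contained in $S$.

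For (B), by the transitivity of $c\,\Sp(2n,\R)\,c^{-1}$ on $S$ and the continuity of its action on $\overline{\mathfrak{D}_n}$, it is enough to produce a peak function at the single point $I_n$: an $f$ continuous on $\overline{\mathfrak{D}_n}$, holomorphic on $\mathfrak{D}_n$, with $|f|\le|f(I_n)|$ and equality only at $I_n$; every closed boundary then contains $I_n$, hence all of $S$. I would take $f(W)=\det\!\big(\tfrac{1}{2}(I_n+W)\big)$. On $\overline{\mathfrak{D}_n}$ one has $\norm{W}_{\mathrm{op}}\le 1$, so each eigenvalue $\lambda$ of $W$ satisfies $|\lambda|\le 1$ and $\big|\tfrac{1+\lambda}{2}\big|\le\tfrac{1+|\lambda|}{2}\le 1$ with equality only when $\lambda=1$; hence $|f(W)|\le 1$, with equality forcing every eigenvalue of $W$ to equal $1$. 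In that case $\operatorname{tr}(W)=n$, and since the trace is bounded in absolute value by the sum of the singular values, $n=\operatorname{tr}(W)\le\sum_i\sigma_i\le n$, so all $\sigma_i=1$ and $W$ is unitary; being unitary it is normal, hence diagonalizable, and a diagonalizable matrix all of whose eigenvalues equal $1$ is $I_n$. This shows $f$ is a peak function at $I_n$, finishing (B), and together with (A) it follows that the Shilov boundary of $\mathfrak{D}_n$ is $S=\operatorname{Sym}(n,\C)\cap\mathrm{U}(n)$.

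The step I expect to be the main obstacle is (A): making the analytic-disc argument completely rigorous. Because complex symmetric matrices need not be diagonalizable, one genuinely needs the Takagi factorization to realize $W$ as $U\Sigma\tran{U}$ with \emph{real} $\Sigma$, and then one must check carefully the bookkeeping identities $\tran{U}\,\overline{U}=I_n$ and $U\,\tran{\overline{U}}=I_n$ that make $\phi(\zeta)$ land in $\mathfrak{D}_n$ for $|\zeta|<1$ and in $S$ for $|\zeta|=1$. A secondary point is confirming that the $c\,\Sp(2n,\R)\,c^{-1}$-action, defined a priori only by linear fractional formulas on $\mathfrak{R}_n^{\circ}$, really is continuous on $\overline{\mathfrak{D}_n}$ — which is exactly what Proposition~\ref{boundary}(1) supplies, and which is what allows peak functions to be transported around the orbit $S$.
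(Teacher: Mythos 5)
Your proof is correct, and it takes a genuinely different route from the paper's. The paper proceeds by locating an arbitrary maximizer $x\in\overline{\mathfrak{D}_n}$ of $|f|$, using the rank stratification of Proposition~\ref{boundary}(3) to move $gx$ into a standard sub-Siegel domain $\mathfrak{B}_r$, and — if $r>0$ — applying the maximum principle inside that positive-dimensional stratum to propagate the maximum down to the closed orbit $\operatorname{Sym}(n,\C)\cap\UU(n)$; minimality then follows by observing that the Shilov boundary is closed, non-empty, and invariant under the transitive $c\,G\,c^{-1}$-action on that orbit. You replace both halves by more direct constructions: the containment $\text{Shilov}\subseteq S$ is established via an explicit one-parameter Blaschke-type analytic disc $\phi(\zeta)=U\,\mathrm{diag}\big(\tfrac{\sigma_i+\zeta}{1+\sigma_i\zeta}\big)\tran{U}$ through every interior point, obtained from the Autonne--Takagi factorization $W=U\Sigma\tran{U}$, so that one never analyzes where the maximizer lands; and the reverse containment is established via an explicit peak function $W\mapsto\det\big(\tfrac12(I_n+W)\big)$ at $I_n$, transported around $S$ by the group action, rather than by the abstract invariance-of-Shilov argument. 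The identities $\tran{U}\,\overline{U}=U\,\tran{\overline{U}}=I_n$ and the estimate $|\tr W|\le\sum_i\sigma_i(W)$ that you flag as the delicate points do hold and make both halves airtight. Your argument needs \emph{less} of Proposition~\ref{boundary} than the paper's (only parts (1) and (3), and only the $r=0$ case of part (3) for transitivity, not the classification of all rank strata), and in return produces concrete witnesses: a disc through each interior point and a peak function at each Shilov point. The paper's approach buys a cleaner global picture of the boundary stratification but leans more on Proposition~\ref{boundary}. Both are valid; yours is the more elementary and self-contained of the two.
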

	
	\begin{proof}
		Let $f\in \mathcal{O}(\mathfrak{D}_{n})\cap\mathcal{C}(\overline{\mathfrak{D}_{n}})$, we show that $|f|$ must take its maximum on $\operatorname{Sym}(n,\C)\cap\mathrm{U}(n)$, which is the unique closed $cGc^{-1}$-orbit in $\overline{\mathfrak{D}_{n}}$ (union of all orbits of $\mathfrak{B}_0$). This is sufficient because $cGc^{-1}$ acts transitively on $\operatorname{Sym}(n,\C)\cap\mathrm{U}(n)$ (Proposition \ref{boundary}), so the set $\operatorname{Sym}(n,\C)\cap\mathrm{U}(n)$ cannot be replaced by any strictly smaller subspace with the same maximizing property.
		
		Suppose $x \in \overline{\mathfrak{D}_{n}}$ maximizes $|f|$. By Proposition \ref{boundary}, $\exists g\in cGc^{-1}$, s.t. $g\cdot x\in \mathfrak{B}_r$ for some $r<n$. If $r>0$, then noting that $f|_{g^{-1}\cdot \mathfrak{B}_r}\in \mathcal{O}(g^{-1}\cdot \mathfrak{B}_r)$, the maximum modulus principle implies that $f$ is constant on $g^{-1}\cdot \mathfrak{B}_r$ and hence $|f|$ takes the same maximum value on $\partial(g^{-1}\cdot \mathfrak{B}_r)$, which intersects $\operatorname{Sym}(n,\C)\cap\mathrm{U}(n)$.
	\end{proof}
	
	Notice that the action of $\UU(n)$ on Lagrangian Grassmannians $\mathcal{L}$ ($\UU(n)$ embedded in $\operatorname{Sp}(2n,\R)$) is transitive; the stabilizer of a point is readily verified to be $\mathrm{O}(n)$ and we get $\mathcal{L} \simeq \mathrm{U}(n) / \mathrm{O}(n)$. Now consider the action of $\UU(n)$ on $\operatorname{Sym}(n,\C)\cap\mathrm{U}(n)$ by conjugation, the action is still transitive and the stabilizer of $I_n$ is $\mathrm{O}(n)$, so $\operatorname{Sym}(n,\C)\cap\mathrm{U}(n) \cong \mathrm{U}(n) / \mathrm{O}(n)$.
	
	\emph{Therefore we can identify $\mathcal{L}$ with $\operatorname{Sym}(n,\C)\cap\mathrm{U}(n)$.}

\subsection{$\mathrm{SU}(n, n)$} 

This is quite similar to $\Sp(2n,\R)$. Explicitly,
$$
\mathrm{SU}(n, n)=\left\{g \in \mathrm{SL}(2 n, \mathbb{C}) :  g^* I_{n,n} g= I_{n,n} \right\}, \quad \text { where } \quad I_{n,n}=\left(\begin{array}{cc}
	I_{n} &  0\\
	0 & -I_{n}
\end{array}\right).
$$

Consider domain $\mathcal{T}:= \left\{Z \in \Mat(n,\mathbb{C}): Z^* Z<I_{n}\right\}$, then one can check that the linear fractional transformation $g(Z)=(A Z+B)(C Z+D)^{-1}$ for $g=\left(\begin{array}{cc}A & B \\ C & D\end{array}\right) \in \operatorname{SU}(n,n)$ and $Z=X+iY\in \mathcal{T}$ is well-defined, which determines a transitive action.

The stabilizer of $0$ is
$$
\begin{aligned}
	\operatorname{Stab}_{\operatorname{SU}(n,n)}(0)&=\left\{\left(\begin{array}{cc}
		A & 0 \\
		0 & D
	\end{array}\right) \in \mathrm{SU}(n, n)\right\}\\
	&=\left\{\left(\begin{array}{cc}
		A & 0 \\
		0 & D
	\end{array}\right): \begin{array}{l}
		A^* A=D^* D=I_{n} \\
		\operatorname{det}(A) \operatorname{det}(D)=1
	\end{array}\right\}=\mathrm{S}(\mathrm{U}(n) \times \mathrm{U}(n))
\end{aligned}
$$
which is maximal compact.

The center $Z\left(\mathrm{S}(\mathrm{U}(n) \times \mathrm{U}(n))\right)=\left\{\left(\begin{array}{cc}\lambda I_{n} & 0 \\ 0 & \mu I_{n}\end{array}\right): \begin{array}{c}|\lambda|=|\mu|=1 \\ (\lambda \mu)^{n}=1\end{array}\right\}$ is non-trivial.

If $Z\in \mathcal{T}$, then $I-Z$ is invertible, the inverse Cayley transform 
$$
\begin{aligned}
	c^{-1}: \mathcal{T}\  &\longrightarrow\  \operatorname{Herm}(n,\mathbb{C})+i \operatorname{Herm}^{+}(n,\mathbb{C})\\
	Z\  &\longmapsto\  i\frac{I+Z}{I-Z}
\end{aligned}
$$
is biholomorphic (verify: $\frac{1}{2i}\left[c(Z) -c(Z)^{*}\right]=\left(I-Z^{*}\right)^{-1}\left[I-Z^{*} Z\right]\left(I-Z\right)^{-1}\in \operatorname{Herm}^{+}(n,\mathbb{C})$), where $\operatorname{Herm}(n,\mathbb{C})$ is the (real) vector space of Hermitian matrices and $\operatorname{Herm}^{+}(n,\mathbb{C})$ is the (real) cone of positive definite ones. 

Therefore $\operatorname{SU}(n,n)/\mathrm{S}(\mathrm{U}(n) \times \mathrm{U}(n))\cong \mathcal{T}\cong \operatorname{Herm}(n,\mathbb{C})+i \operatorname{Herm}^{+}(n,\mathbb{C})$ is a Hermitian symmetric space of tube type.

The Shilov boundary of $\mathcal{T}$ is 
$$
\widecheck{S}_{n, n}=\left\{Z \in \operatorname{M}_n(\mathbb{C}) : I_n-Z^* Z=0\right\}
$$
which corresponds to the space of maximal isotropic subspaces
$$
\mathrm{Iso}_{n, n}=\left\{L \in \operatorname{Gr}_{n}\left(\mathbb{C}^{2n}\right): |\omega|_{L} = 0\right\} \subset \overline{\mathcal{X}_{n, n}}
$$
under the Borel embedding $\mathcal{X}_{n, n} =\left\{L \in \operatorname{Gr}_{n}\left(\mathbb{C}^{2n}\right): |\omega|_{L}>0\right\}\subset \operatorname{Gr}_{n}\left(\mathbb{C}^{2n}\right)$

\begin{remark}
	A general description of Borel embedding, Harish-Chandra embedding and the Shilov boundaries in each realizations can be found in \cite{https://doi.org/10.48550/arxiv.math/0501258}.
\end{remark}

\subsection{$\mathrm{SO}(n,2)$ ($n > 2$)}

Consider the domain (also known as the \emph{Lie ball}):
$$
	\mathcal{D}_{I V_{n}}:=\left\{Z \in \mathbb{C}^{n}: Z^*Z<\frac{1}{2}\left(1+\left|\tran{Z}Z\right|^{2}\right)<1\right\}
	=\left\{z \in \mathbb{C}^{n} : \sum\left|z_{i}\right|^{2}<\frac{1}{2}\left(1+\left|\sum z_{i}^{2}\right|^{2}\right)<1\right\}
$$
then one can check that linear fractional transformation
$$
\begin{aligned}
	\mathrm{SO}(n, 2) \times \mathcal{D}_{I V_{n}} & \longrightarrow \mathcal{D}_{I V_{n}} \\
	\left(\left(\begin{array}{cc}
		A & B \\
		C & D
	\end{array}\right), Z\right) &\mapsto \frac{A Z+B\left(\begin{array}{c}
			\frac{1}{2}({}^tZZ+1) \\
			\frac{i}{2}({}^tZZ-1)
		\end{array}\right)}{(1, i) \cdot\left(C Z+D\left(\begin{array}{c}
			\frac{1}{2}({}^tZZ+1) \\
			\frac{i}{2}({}^tZZ-1)
		\end{array}\right)\right)}
\end{aligned}
$$
is a transitive action, which can be extended continuously to $\overline{\mathcal{D}_{I V_{n}}}$. The stabilizer of $0$ is easily verified to be $\mathrm{SO}(n)\times \mathrm{SO}(2)$, which has non-trivial center.

To describe a tube domain expression of $\mathcal{D}_{I V_{n}}$, first we introduce the \emph{future tube} in $\C^{n+1}$ ($n\geq 0$):
$$
\tau^{+}(n)=\mathbb{R}^{n+1}+i V^{+}=\left\{z \in \mathbb{C}^{n+1} : y^{2}=y_{0}^{2}-y_{1}^{2}-\ldots-y_{n}^{2}>0, y_{0}>0\right\}.
$$
where the sharp convex subset
$$
V^{+} =V^{+}(n)=\left\{y \in \mathbb{R}^{1,n} : y^{2}>0, y_{0}>0\right\} .
$$
is usually called the \emph{future cone}.

The reason of its name is that for $n=3$, $\R^{1,3}$ with the Lorentz inner product becomes the Minkowski spacetime. The boundary $\partial \tau^{+}(3)$ consists of the smooth part
$$
S=\left\{\zeta=\xi+i \eta \in \mathbb{C}^{4} : \eta^{2}=0, \eta_{0}>0\right\}
$$
and the distinguished boundary
$$
M=\left\{\zeta=\xi+i \eta \in \mathbb{C}^{4} : \eta=0\right\}\cong\mathbb{R}^{4},
$$
Through any point $\zeta \in S$ there passes a generator $l_{\zeta}$ of the cone
$$
\Gamma^{+}=\partial V^{+}=\left\{\eta \in \mathbb{R}^{1,3} : \eta^{2}=0, \eta_{0} \geq 0\right\}
$$
called a \emph{real light ray}. The complexification $\lambda_{\zeta}$ of the ray $l_{\zeta}$ which coincides with a complex halfplane $\lambda_{\zeta}=\{\xi+\alpha \eta : \alpha \in \mathbb{C}, \operatorname{Im} \alpha>0\}$ is called a \emph{complex light ray}.

We will give a biholomorphism between the $n$-dimensional Lie ball $\mathcal{D}_{I V_{n}}$ and the future tube $\tau^{+}(n-1)$. This biholomorphism is a composition of two mappings.

The first mapping is a realization of $\tau^{+}(n-1)$ as a domain on a complex quadric in $\mathbb{C P}^{n+1}$. Let us introduce the new variables
$$
z_{1}=\frac{s_{1}}{s_{0}}, \ldots, z_{n-1}=\frac{s_{n-1}}{s_{0}}, \ z_{0}=\frac{s_{n}}{s_{0}} .
$$
In these variables the domain $\tau(n-1)=\left\{z \in (\mathbb{R}^{1,n-1})^2 : y^{2}>0\right\}$ will transform to the domain
$$
\begin{aligned}
T^{\prime}=&\left\{s \in \mathbb{C}^{n+2} : -\left|s_{0}\right|^{2}-\cdots-\left|s_{n-1}\right|^{2}+\left|s_{n}\right|^{2}+2 \operatorname{Re}\left(\bar{s_{0}} s_{n+1}\right)>0\right.,\\
&\left.-s_{0}^{2}-\cdots-s_{n-1}^{2}+s_{n}^{2}+2 s_{0} s_{n+1}=0\right\}
\end{aligned}
$$
Changing the variables $s_{0}$, $s_{1}, \ldots, s_{n+1}$ to the variables $t_{0}=s_{0}-s_{n+1}, t_{1}=s_{1}, \ldots, t_{n+1}=s_{n+1}$ we can write $T^\prime$ in the form
$$
\begin{aligned}
	T=&\left\{t \in \mathbb{C}^{n+2}: -\left|t_{0}\right|^{2}-\cdots-\left|t_{n-1}\right|^{2}+\left|t_{n}\right|^{2}+\left|t_{n+1}\right|^{2}>0\right.,\\
	&\left.-t_{0}^{2}-\cdots-t_{n-1}^{2}+t_{n}^{2}+t_{n+1}^{2}=0\right\}
\end{aligned}
$$
which is a section of the domain $\widetilde{T}=\left\{t \in \mathbb{C}^{n+2}:\left|t_{0}\right|^{2}+\cdots+\left|t_{n-1}\right|^{2}<\left|t_{n}\right|^{2}+\left|t_{n+1}\right|^{2}\right\}$ by the complex quadric $\left\{t_{0}^{2}+\cdots+t_{n-1}^{2}=t_{n}^{2}+t_{n+1}^{2}\right\} .$ The domains $T$ and $\widetilde{T}$ are given by homogeneous relations so it is more natural to consider them as domains in $\mathbb{C P}^{n+1}$. Note that the Levi form of the domain $\widetilde{T}$ being restricted to the complex tangent space of $\partial \widetilde{T}$ at a point $t$ with $t_{n+1} \neq 0$ has one negative and $n$ positive eigenvalues. The domain $T$ has two components distinguished by the sign of $\operatorname{Im} \frac{t_{n}}{t_{n+1}}$. $\tau^{+}(n-1)$is biholomorphic to the domain $T_{+}$ on the quadric in $\mathbb{C P}^{n+1}$ given in homogeneous coordinates as follows
$$
\begin{gathered}
	T_{+}=\left\{\left[t_{0}: t_{1}: \cdots: t_{n+1}\right]:\left|t_{0}\right|^{2}+\cdots+\left|t_{n-1}\right|^{2}<\left|t_{n}\right|^{2}+\left|t_{n+1}\right|^{2}\right. ,\\
	\left.t_{0}^{2}+\cdots+t_{n-1}^{2}=t_{n}^{2}+t_{n+1}^{2}, \operatorname{Im} \frac{t_{n}}{t_{n+1}}>0\right\}
\end{gathered}
$$

The second mapping given by
$$
w_{0}=\frac{t_{0}}{t_{n}+i t_{n+1}}, \ldots,\  w_{n-1}=\frac{t_{n-1}}{t_{n}+i t_{n+1}} .
$$
transforms the domain $T_{+}$ biholomorphically onto the domain
$\mathcal{D}_{I V_{n}}$.

The composed mapping of $\tau^{+}(n-1)$ onto $\mathcal{D}_{I V_{n}}$ is given by
$$
w_{0}=i \frac{1+z^{2}}{(z+\mathbf{i})^{2}}, w_{1}=i \frac{2 z_{1}}{(z+\mathbf{i})^{2}}, \ldots, w_{n-1}=i \frac{2 z_{n-1}}{(z+\mathbf{i})^{2}}
$$
where $\mathbf{i}=(i, 0, \ldots, 0)$. We conclude that $\operatorname{SO}(n,2)/(\operatorname{SO}(n)\times\operatorname{SO}(2))\cong \mathcal{D}_{I V_{n}}\cong \tau^{+}(n-1)$ is a Hermitian symmetric space of tube type.

The distinguished boundary of $\tau^{+}(n-1)$ transforms into the set
$$
S_{L}=\left\{\left|w_{0}\right|^{2}+\cdots+\left|w_{n-1}\right|^{2}=1,\left|w_{0}^{2}+\cdots+w_{n-1}^{2}\right|=1\right\}.
$$
Set $w=u+i v,(z, \omega)=z_{0} \omega_{0}+z_{1} \omega_{1}+\cdots+z_{n-1} \omega_{n-1}$. Then the intersection of $S_{L}$ with the complex sphere $\Sigma_{1}=\{w\in\C^n\mid (w, w)=1\}$ is given by the equations $|u|^{2}-|v|^{2}=1,(u, v)=0,|u|^{2}+|v|^{2}=1$. It follows that $v=0$; hence $\Sigma_{1}$ intersects $S_{L}$ in the $(n-1)$-dimensional real sphere $\left\{u \in \mathbb{R}^{n}\mid |u|^{2}=1\right\}$. So $S_{L}$ can be written as
$$
\widecheck{S}:=S_L=\left\{e^{i \theta} u\mid u \in S^{n-1} \subset \R^{n}\right\} \simeq S^1\times S^{n-1}/\Z_2,
$$
which is also the (distinguished) Shilov boundary of $\mathcal{D}_{I V_{n}}$, known as the \emph{Lie sphere}.

\begin{remark}
For the convenience of readers, we record at here the classification of all irreducible Hermitian symmetric spaces, according as to whether or not they are of tube type: 
$$
\begin{array}{c|c}
	\text { tube type } & \text { nontube type } \\
	\hline \hline \mathrm{SU}(n, n) & \mathrm{SU}(p, q), p>q \\
	\hline \mathrm{Sp}(2 n, \mathbb{R}) & \\
	\hline \mathrm{SO}^{*}(2 n)  \text {, n even } & \mathrm{SO}^{*}(2 n) \text {, n odd } \\
	\hline \mathrm{SO}(n,2) & \\
	\hline \mathrm{E}_{7}(-25) & \mathrm{E}_{6}(-14) \\
	\hline
\end{array}
$$
where $\mathrm{E}_{7}(-25)$ and $\mathrm{E}_{6}(-14)$ correspond to the exceptional Hermitian symmetric spaces of complex dimension 27 and 16 , respectively.
\end{remark}

\section{Maslov index and a semigroup}
For $\operatorname{Sp}(2 n, \mathbb{R})$, consider
$$
\begin{aligned}
	&V=\left\{g \in \operatorname{Sp}(2 n, \mathbb{R}) : g=\left(\begin{array}{cc}
		I_{n} & 0 \\
		M & I_{n}
	\end{array}\right), M \in \operatorname{Sym}(n, \mathbb{R})\right\} \\
	&W=\left\{g \in \operatorname{Sp}(2 n, \mathbb{R}) : g=\left(\begin{array}{cc}
		I_{n} & N \\
		0 & I_{n}
	\end{array}\right), N \in \operatorname{Sym}(n, \mathbb{R})\right\}
\end{aligned}
$$
and
$$
H=\left\{g \in \operatorname{Sp}(2 n, \mathbb{R}) : g=\left(\begin{array}{cc}
	A & 0 \\
	0 & {}^t A^{-1}
\end{array}\right)\right\} \cong \mathrm{GL}(n, \mathbb{R})
$$
where the matrices are written with respect to a symplectic basis. Define
$$
\operatorname{Sp}(2 n, \mathbb{R})^{> 0}:=V^{> 0} H^{\circ} W^{> 0}
$$
where
$$
V^{> 0}=\left\{\left(\begin{array}{cc}
	I_{n} & 0 \\
	M & I_{n}
\end{array}\right) \in V : M \in \operatorname{Pos}(n, \mathbb{R})\right\},
W^{> 0}=\left\{\left(\begin{array}{cc}
	I_{n} & N \\
	0 & I_{n}
\end{array}\right) \in W : N \in \operatorname{Pos}(n, \mathbb{R})\right\}
$$
and $H^{\circ}$ is the identity component of $H$. To show $\operatorname{Sp}(2 n, \mathbb{R})^{> 0}$ is a subsemigroup, we only need to verify the product of an element in $W^{> 0}$ with an element in $V^{> 0}$ is in $V^{> 0} H^{\circ} W^{> 0}$. Notice if $N, M \in \operatorname{Pos}(n, \mathbb{R})$, then $I+N M$ is invertible since if it had a nonzero nullvector $v$, then $M v=-N^{-1} v$, then $\tran{v} M v=-\tran{v} N^{-1} v$, a contradiction. An explicit calculation (in view of block Gaussian elimination) is
$$
\begin{aligned}
	&\left(\begin{array}{ll}
		I & N \\
		0 & I
	\end{array}\right)\left(\begin{array}{cc}
		I & 0 \\
		M & I
	\end{array}\right)=\left(\begin{array}{cc}
		I+N M & N \\
		M & I
	\end{array}\right) \\
	&=\left(\begin{array}{ccc}
		I & 0 \\
		M(I+N M)^{-1} & I
	\end{array}\right)\left(\begin{array}{cc}
		I+N M & 0 \\
		0 & I-M(I+N M)^{-1} N
	\end{array}\right)\left(\begin{array}{cc}
		I & (I+N M)^{-1} N \\
		0 & I
	\end{array}\right) .
\end{aligned}
$$

\begin{remark}
	The definition for $\operatorname{Sp}(2n,\R)^{> 0}$ is obtained from $\Theta$-positive structure by taking $\Theta=\{\alpha_n\}$, where the restricted roots $\alpha_{i}=e_i-e_{i+1}$, $\alpha_n=2e_n$. 
\end{remark}

We will see that a triple $(L_1, L_2,L_3)\in \mathcal{L}^3$ is positive if and only if $\exists g\in \operatorname{Sp}(2n,\R)$ such that $g(L_1,L_2,L_3)=(L_E,uL_E,L_F)$ for some $u\in W^{> 0}$ if and only if $(L_1, L_2,L_3)$ has maximal Maslov index $n$, where $L_E=\operatorname{span}\{e_1,\cdots,e_n\}$, $L_F=\operatorname{span}\{f_1,\cdots,f_n\}$ are two standard Lagrangians, $\{f_1,\cdots,f_n,e_1,\cdots,e_n\}$ is a symplectic basis with respect to $\omega$.

Now we generalize this construction to any Hermitian Lie group $G$ which is of tube type. 

First we briefly introduce the notion of Jordan algebras and their relationship with symmetric cones. For details, a good reference is \cite{book:834711}.

\begin{definition}
	A \emph{Jordan algebra} is a vector space $V$ with a bilinear (not necessarily associative) product $V \times V \rightarrow V,(x, y) \mapsto x y$ with
	$$
	\begin{aligned}
		&y x=x y \\
		&x\left(x^{2} y\right)=x^{2}(x y).
	\end{aligned}
	$$
	We always assume that $V$ has an identity element $e$.
	
	An \emph{idempotent} in $V$ is an element $c \in V$ satisfying $c^{2}=c$. Two idempotents $c$ and $d$ are said to be \emph{orthogonal} if $cd=0$. Since then
	$$
	\langle c, d\rangle=\langle c^{2}, d\rangle=\langle c, cd\rangle=\langle c, 0\rangle=0,
	$$
	orthogonal idempotents are orthogonal with respect to the inner product.
	
	\begin{remarks}
	\begin{enumerate}
	    \item  $\forall x\in V$, we usually denote by $L(x):V\to V$ the multiplication by $x$.
		\item Let $c$ be an idempotent in a Jordan algebra. It is not hard to verify that the only possible eigenvalues of $L(c)$ are $0$, $\frac{1}{2}$ and $1$. (verify the identity $2 L(c)^{3}-3 L(c)^{2}+L(c)=0$)
			\end{enumerate}
	\end{remarks}
	
	A Jordan algebra over $\R$ is said to be \emph{Euclidean} if there exists a positive definite symmetric bilinear form $\langle\cdot, \cdot\rangle: V \times V \rightarrow \mathbb{R}$ which is associative; i.e. $\langle x u, y\rangle=\langle u, x y\rangle$ for all $x, y, u \in V$.
	
	An idempotent is called \emph{primitive} if it is non-zero and cannot be written as sum of two (necessarily orthogonal) non-zero idempotents. A complete system of orthogonal primitive idempotents or a \emph{Jordan frame}, is a set of primitive idempotents $c_{1}, \ldots, c_{m}$ satisfying
	$$
		c_{i} c_{j}=0,\quad \forall i \neq j, \quad
		\sum_{i=1}^{m} c_{i}=e .
	$$
\end{definition}

Let $V$ be a finite-dimensional Euclidean Jordan algebra
over $\R$, we define the rank of $V$ as
$$
\rk(V):=\max \{\operatorname{deg}(x)\mid x \in V\},
$$
where $\operatorname{deg}(x)$ is the degree of the unique minimal polynomial of $x$. An element $x \in V$ is called \emph{regular} if $\operatorname{deg}(x)=\rk(V)$.

\begin{proposition}[Characteristic polynomial]\label{char poly}
	The set of regular elements is open and dense in $V$. There exist polynomials $a_{1}, \ldots, a_{r}$ on $V$ such that the minimal polynomial of every regular element $x \in V$ in the variable $\lambda$ is given by
	$$
	f(\lambda, x)=\lambda^{r}-a_{1}(x) \lambda^{r-1}+\cdots+(-1)^{r} a_{r}(x) .
	$$
	The polynomials $a_{i}$ are unique and homogeneous of degree $i$. 
\end{proposition}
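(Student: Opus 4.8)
\textbf{Set-up.} Throughout I would work with the associative, commutative, unital subalgebra $\R[x]\subseteq V$ generated by $x$; this makes sense because Jordan algebras are power-associative (a standard fact), so the powers $x^{k}$ are unambiguous and each coordinate of the map $x\mapsto x^{k}$ is a homogeneous polynomial of degree $k$ in the coordinates of $x$. Recall that $\deg(x)=\dim\R[x]$ is the least $k$ with $e,x,\dots,x^{k}$ linearly dependent, equivalently the degree of the minimal polynomial $p_x(t)$ of $x$, and that $r=\rk(V)=\max_{x\in V}\deg(x)$; in particular regular elements exist. The plan is: (1) prove the regular locus is open and dense; (2) define the $a_i$ there and record their homogeneity; (3) extend them to polynomials on all of $V$, which is the substantial point.

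\textbf{(1) Openness and density.} Fix a basis of $V$ and let $M(x)$ be the $(\dim V)\times r$ matrix whose columns are the coordinate vectors of $e,x,\dots,x^{r-1}$; its entries are polynomials in $x$. Since $\deg(x)\le r$, the first $\deg(x)$ columns form a basis of $\R[x]$ and the remaining columns lie in their span, so $\rank M(x)=\deg(x)$. Hence $x$ is regular iff some $r\times r$ minor $m_\sigma(x)$ of $M(x)$ is non-zero. Each $m_\sigma$ is a polynomial, and some $m_{\sigma_0}$ is not identically zero since it is non-zero at a regular element; therefore the regular locus $\bigcup_\sigma\{m_\sigma\neq 0\}$ is open and contains the dense open set $\{m_{\sigma_0}\neq 0\}$, hence is dense.

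\textbf{(2) Definition and homogeneity.} For regular $x$ the list $e,x,\dots,x^{r-1}$ is a basis of $\R[x]$ and $x^{r}$ lies in its span, so there is a unique relation $x^{r}=a_1(x)x^{r-1}-a_2(x)x^{r-2}+\dots+(-1)^{r-1}a_r(x)e$; this is precisely $p_x(\lambda)=f(\lambda,x)$, and it forces uniqueness of the functions $a_i$ on the regular locus. Cramer's rule (say with respect to the Gram matrix of $e,x,\dots,x^{r-1}$ for the Euclidean inner product, whose determinant vanishes exactly off the regular locus) writes $a_i=R_i/Q$ with $R_i,Q\in\R[V]$. From $(tx)^{k}=t^{k}x^{k}$ we get $a_i(tx)=t^{i}a_i(x)$ for $t\neq 0$, so $a_i$ is homogeneous of degree $i$ on the regular locus.

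\textbf{(3) Polynomiality — the main obstacle.} The crux is to show $R_i/Q$ has no poles, i.e.\ extends to an element of $\R[V]$. I would pass through the Newton power sums $p_k(x)=\sum_j\lambda_j(x)^{k}$, where $\lambda_1(x),\dots,\lambda_r(x)$ are the roots of $p_x$ with multiplicity. Since $\R[x]$ is invariant under multiplication by $x^{k}$ and that restricted operator is multiplication by $t^{k}$ on $\R[t]/(p_x)$, it is diagonalisable with eigenvalues $\lambda_j(x)^{k}$, whence $p_k(x)=\tr\big(L(x^{k})|_{\R[x]}\big)$. As the multiplication operators are self-adjoint for the associative inner product, $\R[x]^{\perp}$ is also $L(x^{k})$-invariant, so $\tr_V L(x^{k})=p_k(x)+\tr_{\R[x]^{\perp}}L(x^{k})$; analysing the right-hand term via the Peirce decomposition of $V$ relative to a Jordan frame shows it too is a symmetric expression in the $\lambda_j(x)$, producing a system that is triangular in $p_1(x),\dots,p_r(x)$ with non-vanishing diagonal and whose left-hand sides $\tr_V L(x^{k})$ are manifestly polynomial in $x$. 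Solving it shows each $p_k$ is a polynomial in $x$, homogeneous of degree $k$; Newton's identities then express $a_i=e_i(\lambda_1,\dots,\lambda_r)$ polynomially in $p_1,\dots,p_i$, hence in $x$, and homogeneous of degree $i$. By density this polynomial agrees with $R_i/Q$ on the regular locus, giving the desired extension, and uniqueness is inherited from step (2). The one genuinely non-formal ingredient is the behaviour of $L(x^{k})$ on the Peirce pieces, for which I would rely on the development in \cite{book:834711}.
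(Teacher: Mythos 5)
The paper states this proposition with no proof; in context it is being quoted from Faraut--Kor\'anyi \cite{book:834711}, which is the source for the whole surrounding discussion, so there is no in-text argument to compare yours against. Taking your sketch on its own terms, it is correct. Parts (1) and (2) are standard: regularity as a rank condition on the matrix with columns $e,x,\dots,x^{r-1}$ (open, and dense because some $r\times r$ minor is not identically zero), Cramer's rule with the Gram determinant of $e,x,\dots,x^{r-1}$ as denominator, and homogeneity from $(tx)^k=t^kx^k$.

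Part (3) is where the work is, and your route --- recovering $p_k(x)$ as $\tr\big(L(x^k)|_{\R[x]}\big)$, using self-adjointness of the multiplication operators to split off $\R[x]^\perp$ as an invariant complement, computing the complementary trace via the Peirce decomposition, and then feeding the result into Newton's identities --- does go through. It is in fact cleaner than you advertise: in a simple Euclidean Jordan algebra of rank $r$ and Peirce multiplicity $d$ one gets exactly $\tr_V L(x^k) = \big(1+\tfrac{d(r-1)}{2}\big)\,p_k(x)$, so there is no genuine triangular system to invert --- each $p_k$ is a single manifestly nonzero constant times the polynomial $\tr_V L(x^k)$, and the semisimple case is handled factor by factor. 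The one substantive structural fact you are borrowing is that all off-diagonal Peirce spaces of a simple Euclidean Jordan algebra have the same dimension $d$; you correctly flag this as the ingredient for which you lean on \cite{book:834711}, and since it is established there independently of the generic minimal polynomial there is no circularity. No gaps.
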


Another useful result is
\begin{theorem}[Spectral decomposition of type II]
	Suppose $V$ has rank $r$. Then for every $x$ in $V$ there exists a Jordan frame $c_{1}, \ldots, c_{r}$ and real numbers $\lambda_{1}, \ldots, \lambda_{r}$ such that
	$$
	x=\sum_{j=1}^{r} \lambda_{j} c_{j} .
	$$
	The numbers $\lambda_{j}$ (with their multiplicities) are uniquely determined by x. Moreover,
	$$
	a_{k}(x)=\sum_{1 \leq i_{1} \leq \cdots \leq i_{k} \leq r} \lambda_{i_{1}} \ldots \lambda_{i_{k}},
	$$
	where $a_{k}(1 \leq k \leq r)$ is the polynomial defined in the above proposition.
\end{theorem}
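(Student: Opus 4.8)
The plan is to establish the decomposition in two stages — first localising at the element $x$ to get an orthogonal idempotent decomposition, then refining it to a genuine Jordan frame — and afterwards to deduce uniqueness and the formula for the $a_k$ from the regular case together with a density argument. The Euclidean hypothesis enters only in the first stage.

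First I would work inside the unital subalgebra $\mathbb{R}[x]\subseteq V$ generated by $x$; by power-associativity of Jordan algebras it is commutative and associative, so $\mathbb{R}[x]\cong\mathbb{R}[\lambda]/(p)$ with $p$ the minimal polynomial of $x$, and $\deg p=\deg(x)$. The point is that the positive-definite associative trace form forbids $p$ from being degenerate: a repeated root of $p$ would produce a nonzero $n\in\mathbb{R}[x]$ with $n^2=0$, whence $\langle n,n\rangle=\langle n^2,e\rangle=0$, a contradiction; and a non-real root would produce a nonzero idempotent $c\in\mathbb{R}[x]$ together with an element $u$ satisfying $u^2=-c$, whence $\langle u,u\rangle=\langle u^2,e\rangle=-\langle c,e\rangle=-\|c\|^2<0$, again a contradiction. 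Hence $\mathbb{R}[x]\cong\mathbb{R}^k$ with $k=\deg(x)$, and transporting the standard idempotents back gives pairwise orthogonal idempotents $\epsilon_1,\dots,\epsilon_k$ with $\sum_i\epsilon_i=e$ and $x=\sum_i\mu_i\epsilon_i$, where the $\mu_i$ are the distinct real roots of $p$.

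Next I would refine this to a Jordan frame. Any $\epsilon_i$ that is not primitive in $V$ splits into two nonzero orthogonal idempotents, so an induction on $\dim V$ writes $\epsilon_i=\sum_l c_{i,l}$ with the $c_{i,l}$ primitive and pairwise orthogonal; using that an idempotent dominated by $\epsilon_i$ is orthogonal to every $\epsilon_j$ with $j\neq i$ (Peirce decomposition, cf.\ \cite{book:834711}) one checks that the whole family $\{c_{i,l}\}$ is a complete, hence maximal, system of orthogonal primitive idempotents. Invoking the structural fact that every such maximal system has cardinality $\rk(V)=r$ (again \cite{book:834711}), a relabelling yields $c_1,\dots,c_r$ and $x=\sum_i\mu_i\epsilon_i=\sum_{j=1}^r\lambda_j c_j$, with $\lambda_j=\mu_i$ whenever $c_j\leq\epsilon_i$. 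For uniqueness and the identity for $a_k$, I would first treat a regular $x$: there $\deg(x)=r$, so the $\lambda_1,\dots,\lambda_r$ are pairwise distinct and are exactly the roots of the generic minimal polynomial $f(\lambda,x)=\lambda^r-a_1(x)\lambda^{r-1}+\cdots+(-1)^r a_r(x)$ of Proposition~\ref{char poly}; by Vieta's formulas $a_k(x)$ is then the $k$-th elementary symmetric function of $\lambda_1,\dots,\lambda_r$. For arbitrary $x=\sum_j\lambda_j c_j$ I would perturb to $x_\varepsilon=\sum_j(\lambda_j+\varepsilon_j)c_j$, which is regular for generic small $\varepsilon$, apply the identity to $x_\varepsilon$, and let $\varepsilon\to 0$, using that the $a_k$ are polynomial (hence continuous) on $V$ and that elementary symmetric functions are continuous. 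This gives $f(\lambda,x)=\prod_{j=1}^r(\lambda-\lambda_j)$, so the multiset $\{\lambda_1,\dots,\lambda_r\}$ is determined by $x$.

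The main obstacle is the structural input used in the refinement step: the Peirce bookkeeping that orthogonality survives refinement, and especially the invariance of the cardinality of a maximal orthogonal system of primitive idempotents. Rather than reprove these I would cite \cite{book:834711}; the genuinely self-contained core of the argument is the localisation step, where positive-definiteness and associativity of the Jordan trace form force $\mathbb{R}[x]$ to be split étale over $\mathbb{R}$.
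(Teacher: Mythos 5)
The paper does not include its own proof of this theorem; it is quoted from Faraut--Kor\'anyi \cite{book:834711} (Theorem III.1.2 there), so there is nothing internal to compare against. Your argument is correct and is essentially the standard proof from that source: first localise in the power-associative subalgebra $\mathbb{R}[x]$ and use the positive-definite associative trace form to rule out nilpotents and non-real roots (forcing $\mathbb{R}[x]\cong\mathbb{R}^{\deg(x)}$), then refine the resulting complete orthogonal system of idempotents into a Jordan frame via Peirce theory and the invariance of the cardinality of such systems, and finally get the coefficient formula from regular elements by Vieta together with a density/continuity argument. The two places where you outsource to \cite{book:834711} (Peirce refinement compatibility and that every Jordan frame has exactly $r$ elements) are genuine structural inputs, and citing them is the right call given the scope.

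One small discrepancy worth flagging, which is in the paper's statement and not in your proof: the displayed formula has non-strict inequalities $1\le i_1\le\cdots\le i_k\le r$, which would be the complete homogeneous symmetric polynomial $h_k(\lambda_1,\dots,\lambda_r)$; the correct statement (and what Vieta applied to $f(\lambda,x)=\prod_j(\lambda-\lambda_j)$ actually yields, as you derive) is the elementary symmetric polynomial with strict inequalities $i_1<\cdots<i_k$. Your proof proves the correct formula; the statement has a typo.
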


The reason why we choose Jordan algebras is the cones of squares give a full characterization of symmetric cones that we now describe.

Let $V$ be a Euclidean Jordan $\mathbb{R}$-algebra and let $\mathcal{K}$ be the set of all squares, i.e,
$$
\mathcal{K}:=\left\{x^{2}: x \in V\right\} .
$$
If $x^{2} \in \mathcal{K}$ and $\alpha \geq 0$ then $\alpha x^{2}=(\sqrt{\alpha} x)^{2} \in \mathcal{K}$. It follows that $\mathcal{K}$ is a cone. We call $\mathcal{K}$ the \emph{cone of squares} of $V$. Furthermore, it can be shown that $\mathcal{K}$ is a closed symmetric cone, cf. \cite{book:834711}. (i.e. homogeneous and self-dual; $\operatorname{Aut}(\mathcal{K})$ transitively on the interior of $\mathcal{K}$, $\mathcal{K}=\mathcal{K}^*$)

\begin{remark}
	Since $\mathcal{K}$ is closed, here $\mathcal{K}^*$ is the dual closed cone $\mathcal{K}^{*}:=\{y \in V\mid \langle x, y\rangle \geq 0, \forall x \in \mathcal{K}\}$. If $\mathcal{C}$ is an open convex cone in $V$, we say that $\mathcal{C}$ is homogeneous if  $\operatorname{Aut}(\mathcal{C})$ transitively on $\mathcal{C}$; is self-dual if $\mathcal{C}=\mathcal{C}^*$, where $\mathcal{C}^*$ is the dual open cone $\mathcal{C}^{*}:=\{y \in V\mid \langle x, y\rangle > 0, \forall x \in \overline{\mathcal{C}}\setminus\{0\}\}$.
\end{remark}

\begin{proposition}
	Let $\mathcal{I}$ be the set of invertible elements in $V$, then $\mathcal{K}^{\circ}=\left\{x^{2}\mid x \in \mathcal{I}\right\}$, where $\mathcal{K}^{\circ}$ is the interior of $\mathcal{K}$.
\end{proposition}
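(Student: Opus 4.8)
The plan is to identify both sides of the equality with the set of elements of $V$ whose spectral eigenvalues are all strictly positive, and then to show that this set is exactly $\mathcal{K}^{\circ}$.

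First I would set up the eigenvalue description. By the Spectral decomposition of type II, every $x\in V$ can be written $x=\sum_{j=1}^{r}\lambda_{j}c_{j}$ for a Jordan frame $c_{1},\dots,c_{r}$ and real numbers $\lambda_{j}$ that are uniquely determined by $x$ (with multiplicities); call these the \emph{eigenvalues} of $x$. Since $c_{i}c_{j}=0$ for $i\neq j$ and $c_{i}^{2}=c_{i}$, one computes $x^{2}=\sum_{j}\lambda_{j}^{2}c_{j}$, so by uniqueness the eigenvalues of $x^{2}$ are the $\lambda_{j}^{2}$. Moreover $x$ is invertible if and only if every $\lambda_{j}\neq 0$: if some $\lambda_{k}=0$ then $c_{k}x=\lambda_{k}c_{k}=0$ with $c_{k}\neq 0$, so $x$ is a zero divisor; if all $\lambda_{j}\neq 0$ then $y:=\sum_{j}\lambda_{j}^{-1}c_{j}$ satisfies $xy=e$ and $x^{2}y=x$. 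Writing $\Omega_{+}:=\{x\in V:\text{all eigenvalues of }x\text{ are }>0\}$, it follows that $x\in\mathcal{I}$ forces $x^{2}\in\Omega_{+}$ (its eigenvalues are $\lambda_{j}^{2}>0$), and conversely if $z=\sum_{j}\mu_{j}c_{j}$ with all $\mu_{j}>0$ then $z=y^{2}$ for the invertible element $y:=\sum_{j}\sqrt{\mu_{j}}\,c_{j}$. Hence $\{x^{2}:x\in\mathcal{I}\}=\Omega_{+}$. The same reasoning with ``$\geq 0$'' in place of ``$>0$'' shows $\mathcal{K}=\{x\in V:\text{all eigenvalues of }x\text{ are }\geq 0\}$.

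Next I would prove $\Omega_{+}$ is open and equals $\mathcal{K}^{\circ}$. Using the joint Peirce decomposition $V=\bigoplus_{i\leq j}V_{ij}$ relative to the Jordan frame of $x=\sum_{j}\lambda_{j}c_{j}$, the operator $L(x)$ acts on $V_{ii}$ by $\lambda_{i}$ and on $V_{ij}$ ($i<j$) by $\tfrac{1}{2}(\lambda_{i}+\lambda_{j})$; since $c_{i}\in V_{ii}$, each diagonal block $V_{ii}$ is nonzero, so $L(x)$ is positive definite precisely when all $\lambda_{i}>0$, i.e. precisely when $x\in\Omega_{+}$. As $x\mapsto L(x)$ is linear, hence continuous, into $\End(V)$, and positive-definiteness is an open condition, $\Omega_{+}=\{x:L(x)\succ 0\}$ is open; since $\Omega_{+}\subseteq\mathcal{K}$ this gives $\Omega_{+}\subseteq\mathcal{K}^{\circ}$. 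For the reverse inclusion, suppose $x=\sum_{j}\lambda_{j}c_{j}\in\mathcal{K}$ has some $\lambda_{k}=0$. For $t>0$ the element $x-tc_{k}=\sum_{j\neq k}\lambda_{j}c_{j}+(-t)c_{k}$ is a linear combination of the same Jordan frame, so its eigenvalues are the $\lambda_{j}$ ($j\neq k$) together with $-t<0$; thus $x-tc_{k}\notin\mathcal{K}$, and letting $t\to 0^{+}$ exhibits $x$ as a limit of points outside $\mathcal{K}$, so $x\notin\mathcal{K}^{\circ}$. By contraposition $\mathcal{K}^{\circ}\subseteq\Omega_{+}$, and we conclude $\mathcal{K}^{\circ}=\Omega_{+}=\{x^{2}:x\in\mathcal{I}\}$.

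The step I expect to be the main obstacle is the openness argument: it rests on the joint Peirce decomposition associated with a Jordan frame, so that $L(x)$ is simultaneously diagonalized with the stated eigenvalues — a standard fact about Euclidean Jordan algebras, but one not made explicit in the preceding material. The delicate point to check is that every diagonal Peirce block $V_{ii}$ is nonzero (it contains $c_{i}$), which is exactly what guarantees that positive definiteness of $L(x)$ forces each $\lambda_{i}>0$ rather than merely each average $\tfrac12(\lambda_i+\lambda_j)>0$.
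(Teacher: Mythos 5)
Your proof is correct and takes a genuinely different route from the paper's. The paper leans on the (already-asserted) self-duality $\mathcal{K}=\mathcal{K}^*$: it identifies $\mathcal{K}^*=\{y:L(y)\succeq 0\}$ via $\langle y,x^2\rangle=\langle L(y)x,x\rangle$, implicitly takes interiors to get $\mathcal{K}^\circ=\{y:L(y)\succ 0\}$, and then shows $L(y)\succ 0$ iff $y=w^2$ with $w$ invertible. That last equivalence is proved by looking at $\langle L(y)c_i,c_i\rangle=\lambda_i\|c_i\|^2$ (necessity) and by summing $\sum_i L(c_i)=L(e)=\mathrm{id}$ (sufficiency). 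Your argument bypasses self-duality entirely: you characterise both $\mathcal{K}$ and $\{x^2:x\in\mathcal{I}\}$ purely by the sign pattern of spectral eigenvalues, establish openness of $\Omega_+$ via the joint Peirce decomposition (rather than the implicit ``interior of $L\succeq0$ is $L\succ0$'' step of the paper), and handle $\mathcal{K}^\circ\subseteq\Omega_+$ with an explicit boundary perturbation $x-tc_k$, which is cleaner and more constructive than the paper's route. The trade-off is exactly what you flag: you invoke the Peirce decomposition, a standard fact but one not set up anywhere in the text, whereas the paper keeps the machinery limited to $L(c_i)$ and the associativity $\langle xu,v\rangle=\langle u,xv\rangle$ already on the table. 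Both proofs share the crucial observation that $\langle L(x)c_i,c_i\rangle=\lambda_i\|c_i\|^2$ detects the sign of each eigenvalue — your ``$c_i\in V_{ii}$ so each diagonal Peirce block is nonzero'' is exactly this point dressed in Peirce language. One small caveat, not an error: when you write ``the same reasoning with $\geq 0$ in place of $>0$ shows $\mathcal{K}=\{$all eigenvalues $\geq 0\}$,'' the invertibility clause drops out of that reasoning; it is clear what is meant, but worth phrasing as a one-line separate verification.
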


\begin{proof}
	$y\in \mathcal{K}^*$ if and only if $\langle y, x^2\rangle=\langle L(y)x,x\rangle \geq 0$ for all $x\in V$, if and only if $L(y)$ is positive semidefinite. Since $\mathcal{K}=\mathcal{K}^*$, we need to verify that $L(y)$ is positive definite if and only if $y=w^2$ for some invertible $w$. 
	
	Necessity: Suppose $L(y)>0$, and $y=\sum_{i=1}^{r} \lambda_{i} c_{i}$ is the spectral decomposition. Since
	$$
	\left\langle L(y) c_{i}, c_{i}\right\rangle=\lambda_{i}\left\langle c_{i}, c_{i}\right\rangle=\lambda_{i}\left\|c_{i}\right\|^{2}>0,
	$$
	we know $\lambda_{i}>0$. Therefore $y$ is invertible and $y=w^{2}$ with $w=\sum_{i=1}^{r} \sqrt{\lambda_{i}} c_{i}$.
	
	Sufficiency: Suppose $y=w^2$ for invertible $w= \sum_{i=1}^{r} \lambda_{i} c_{i}$, notice that we have  $\left\langle z L\left(w^{2}\right), z\right\rangle=\left\langle\sum_{i=1}^{r} \lambda_{i}^{2} L\left(c_{i}\right) z, z\right\rangle$. Since $w$ is invertible, the eigenvalues of $w^2$ are greater than zero, i.e. $\lambda_{i}>0$ for all $i$. Hence we only need to verify for each $z\in V\setminus\{0\}$, there is some $i$ such that $\langle L(c_i)z,z\rangle>0$. Suppose not, $\exists z_0\in V\setminus\{0\}$ s.t. $\langle L(c_i)z_0,z_0\rangle=0$ for all $i$. (remark that $L(c_i)\geq 0$) Therefore
	$$
	0=\sum_{i=1}^{r}\left\langle L\left(c_{i}\right) z_0, z_0\right\rangle=\sum_{i=1}^{r}\left\langle c_{i} z_0, z_0\right\rangle=\langle e z_0, z_0\rangle=\langle z_0, z_0\rangle,
	$$
	which is a contradiction since $z_0\neq 0$.
\end{proof}

From this relation one can also prove that $\mathcal{K}^\circ$ is an open symmetric cone. We have mentioned that $\mathcal{K}$ is a closed symmetric cone; in fact, the converse also holds. We state the following result which is a Jordan algebraic characterization of symmetric cones.

\begin{theorem}[Theorem III.3.1 in \cite{book:834711}]
	A cone is symmetric if and only if it is the cone of squares of some Euclidean Jordan algebra.
\end{theorem}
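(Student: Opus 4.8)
This is the Koecher--Vinberg theorem, and the plan is to prove the two implications separately.

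\emph{Cone of squares $\Rightarrow$ symmetric.} Let $V$ be a Euclidean Jordan algebra with associative positive definite form $\langle\cdot,\cdot\rangle$ and identity $e$, and set $\Omega=\mathcal K^{\circ}$. By the Proposition just established, $\Omega=\{x^{2}:x\in\mathcal I\}$; combining this with the spectral decomposition of type II and the fact that the top coefficient $a_{r}$ is the Jordan determinant, one sees that $\Omega$ is the connected component of $e$ in $\{x:a_{r}(x)\neq 0\}$, hence a proper open convex cone. Self-duality $\Omega=\Omega^{*}$ is the open-cone form of $\mathcal K=\mathcal K^{*}$, noted above. For homogeneity, the quadratic representation $P(x)=2L(x)^{2}-L(x^{2})$ with $x\in\mathcal I$ is an automorphism of $V$ preserving $\Omega$ with $P(x)e=x^{2}$, and since every point of $\Omega$ is such a square, $\operatorname{Aut}(\Omega)$ acts transitively. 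Thus $\Omega$ is symmetric.

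\emph{Symmetric $\Rightarrow$ cone of squares.} Let $\Omega\subset V$ be a symmetric cone, $\langle\cdot,\cdot\rangle$ the inner product making it self-dual. Put $G=G(\Omega)^{\circ}$, the identity component of $\{g\in\GL(V):g\Omega=\Omega\}$, which acts transitively on $\Omega$. Self-duality makes $\theta(g)=(g^{*})^{-1}$ an involutive automorphism of $G$ with fixed-point group $K=G^{\theta}=G\cap\mathrm{O}(V)$, a maximal compact subgroup. Take $e\in\Omega$ to be the unique $K$-fixed point; then $\operatorname{Stab}_{G}(e)=K$ and the orbit map gives $G/K\cong\Omega$. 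Let $\mathfrak g=\mathfrak k\oplus\mathfrak p$ be the Cartan decomposition from $\theta$, so $\mathfrak p$ is the space of self-adjoint elements of $\mathfrak g$. Since $\ker(X\mapsto Xe)=\mathfrak k$, evaluation at $e$ restricts to a linear isomorphism $\mathfrak p\xrightarrow{\ \sim\ }T_{e}\Omega=V$; write $L(u)\in\mathfrak p$ for the preimage of $u$, and define the candidate product $u\cdot v:=L(u)v$.

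Now one checks the axioms. The homotheties $x\mapsto\lambda x$ lie in $G$, so $\mathrm{id}\in\mathfrak p$, whence $L(e)=\mathrm{id}$ by uniqueness and $e$ is a two-sided identity. Commutativity follows from $[L(u),L(v)]\in[\mathfrak p,\mathfrak p]\subseteq\mathfrak k$, which annihilates $e$, so $L(u)v=L(u)L(v)e=L(v)L(u)e=L(v)u$. The form is associative since $L(u)=L(u)^{*}$ gives $\langle u\cdot v,w\rangle=\langle v,u\cdot w\rangle$, and $\langle\cdot,\cdot\rangle$ is positive definite, so $V$ is Euclidean once it is a Jordan algebra. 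The remaining, genuinely hard point is the Jordan identity $L(u)L(u^{2})=L(u^{2})L(u)$. I would prove it by reconstructing the quadratic representation from the cone: using that the Riemannian exponential $\mathfrak p\to\Omega$, $X\mapsto\exp(X)e$, is a diffeomorphism, set $P(x):=\exp(2X)\in G$ where $X\in\mathfrak p$ is the unique element with $\exp(X)e=x$ (equivalently, extract $P$ from the geodesic symmetry/inversion $x\mapsto x^{*}$ of $\Omega=G/K$, where $x^{*}=-\nabla\log\varphi_{\Omega}(x)$ and $\varphi_{\Omega}(x)=\int_{\Omega}e^{-\langle x,\xi\rangle}\,d\xi$). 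One gets $P(e)=\mathrm{id}$, $\tfrac12 DP(e)=L$, and the functional equation $P(gx)=g\,P(x)\,g^{*}$ for $g\in G$; the crux is to show $P$ is a polynomial map of degree $2$, which is exactly where the analytic input of Koecher--Vinberg enters (strict convexity of $\log\varphi_{\Omega}$ and $\varphi_{\Omega}(gx)=\det(g)^{-1}\varphi_{\Omega}(x)$). Granting that, $P(x)=2L(x)^{2}-L(x^{2})$, and the Jordan identity follows from the fundamental formula $P(P(x)y)=P(x)P(y)P(x)$. Finally $\Omega$ equals the interior of $\{x\cdot x:x\in\Omega\}$ since $\exp(\tfrac12 X)e$ squares to $x$ and squares are dense. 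The main obstacle is precisely the polynomiality of $P$: everything else is formal Lie theory, but that step carries the real content and is cleanest via the characteristic function, as in Faraut--Koranyi.
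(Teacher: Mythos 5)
The paper does not prove this theorem: it is stated purely as a citation to Faraut--Kor\'anyi, \emph{Analysis on Symmetric Cones}, Theorem~III.3.1, and the surrounding text uses it as a black box. So there is no ``paper proof'' to compare against; your sketch has to stand on its own.

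As a sketch of the Koecher--Vinberg theorem it is sound and well organised, and it follows the same Lie-theoretic route as Faraut--Kor\'anyi's Chapter~III. The forward direction is correct: self-duality is the open-cone form of $\mathcal K=\mathcal K^*$, and homogeneity follows because the quadratic representation $P(x)=2L(x)^2-L(x^2)$ for invertible $x$ preserves $\Omega$ and satisfies $P(x)e=x^2$, while every point of $\Omega$ is an invertible square. In the converse direction, your setup ($G(\Omega)^\circ$, the Cartan involution $\theta(g)=(g^*)^{-1}$, the $K$-fixed base point $e$, the identification $\mathfrak p\xrightarrow{\ \sim\ }V$, $L(u)v:=u\cdot v$) and the verification of commutativity via $[\mathfrak p,\mathfrak p]\subset\mathfrak k$ and $\mathfrak k\,e=0$, and of the associativity of the form via $L(u)=L(u)^*$, are all exactly right. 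You also correctly isolate the Jordan identity $[L(u),L(u^2)]=0$ as the genuinely nontrivial step and correctly point to the characteristic function $\varphi_\Omega$ and the polynomiality of $P$ as the place where the analytic input lives; Faraut--Kor\'anyi in fact use the Hessian of $\log\varphi_\Omega$ and the transformation law $\varphi_\Omega(gx)=|\det g|^{-1}\varphi_\Omega(x)$ to set up the geodesic symmetry and get at $P$, so this is faithful to their approach. Two small caveats you should keep in mind if you ever expand this: (i) the existence and uniqueness of the $K$-fixed point $e$ is itself a nontrivial theorem (it is Theorem~I.1.8 / III.2.1 in Faraut--Kor\'anyi and uses $\varphi_\Omega$ already at that stage); and (ii) the final step ``the Jordan identity follows from the fundamental formula $P(P(x)y)=P(x)P(y)P(x)$'' is a genuine implication once $P(x)=2L(x)^2-L(x^2)$ is known, but it requires a linearisation argument at $x=e$ that deserves to be spelled out rather than asserted. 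Neither is an error in what you wrote, but both are real content hiding behind a single clause.
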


\begin{proposition}
	Let $V$ be an Euclidean Jordan $\mathbb{R}$-algebra with rank $r$ and $\mathcal{K}$ its cone of squares. If $x \in V$ is such that $x=\sum_{i=1}^{r} \lambda_{i} c_{i}$, where $\left(c_i\right)_{i=1,\dots,r}$ is a Jordan frame, then $\lambda_{i} \geq 0$, resp. $>0$. for $i=1, \ldots r$ if and only if $x \in \mathcal{K}$, resp. $\mathcal{K}^\circ$.
\end{proposition}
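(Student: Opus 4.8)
The plan is to reduce everything to two ingredients already established in the excerpt: the multiplication rule for elements written in a Jordan frame, namely $\bigl(\sum_i a_i c_i\bigr)\bigl(\sum_i b_i c_i\bigr)=\sum_i a_i b_i c_i$, which is immediate from $c_ic_j=0$ for $i\neq j$ and $c_i^{2}=c_i$; and the two descriptions of the cone of squares coming from the preceding propositions, $\mathcal{K}=\mathcal{K}^{*}=\{\,y\in V: L(y)\text{ positive semidefinite}\,\}$ and $\mathcal{K}^{\circ}=\{\,w^{2}: w\in\mathcal{I}\,\}=\{\,y\in V: L(y)\text{ positive definite}\,\}$. Throughout I would use that distinct members of a Jordan frame are orthogonal for $\langle\cdot,\cdot\rangle$ and that $\langle c_i,c_i\rangle=\|c_i\|^{2}>0$, since each $c_i$ is a nonzero idempotent.

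For the implication from positivity of the $\lambda_i$ to membership in the cone, I would exhibit an explicit square root. If $\lambda_i\geq 0$ for all $i$, put $w:=\sum_{i=1}^{r}\sqrt{\lambda_i}\,c_i$; by the frame multiplication rule $w^{2}=\sum_i\lambda_i c_i=x$, so $x\in\mathcal{K}$. If in addition every $\lambda_i>0$, then $w$ is invertible, with inverse $\sum_i\lambda_i^{-1/2}c_i$ --- the relation $w\cdot\sum_i\lambda_i^{-1/2}c_i=\sum_i c_i=e$ being checked inside the commutative associative subalgebra $\R c_1\oplus\cdots\oplus\R c_r$, where multiplication is coordinatewise --- so $x=w^{2}\in\mathcal{K}^{\circ}$ by the preceding proposition.

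Conversely, suppose $x\in\mathcal{K}$, say $x=y^{2}$. Then $L(x)$ is positive semidefinite, and $L(x)c_i=xc_i=\sum_j\lambda_j(c_jc_i)=\lambda_i c_i$, so $0\leq\langle L(x)c_i,c_i\rangle=\lambda_i\|c_i\|^{2}$, hence $\lambda_i\geq 0$; equivalently one may test self-duality against $c_i=c_i^{2}\in\mathcal{K}$ to get $0\leq\langle x,c_i\rangle=\lambda_i\|c_i\|^{2}$ directly. If $x\in\mathcal{K}^{\circ}$, then $L(x)$ is positive definite and, since $c_i\neq 0$, the inequality becomes strict, giving $\lambda_i>0$. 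This closes both equivalences.

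I do not expect a genuine obstacle here; the two places that need a moment of care are keeping the invertibility computation for $w$ inside the associative subalgebra spanned by the frame, so that it is literally coordinatewise rather than a manipulation in the non-associative Jordan algebra, and being scrupulous about pairing the right characterization with the right cone --- $L(x)\geq 0$ and self-duality for the closed cone $\mathcal{K}$, and $L(x)>0$ together with invertible square roots for the open cone $\mathcal{K}^{\circ}$ --- so that the ``$\geq$'' and ``$>$'' versions of the statement are each proved by the matching tool. Uniqueness of the spectral decomposition is not used, only that the given decomposition is one such.
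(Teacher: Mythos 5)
Your proof is correct. The forward direction (positive $\lambda_i$ implies membership in the cone) is essentially the same as the paper's: exhibit the square root $w=\sum\sqrt{\lambda_i}\,c_i$, and, for the interior, note that $w$ is invertible since $\mathcal{K}^\circ$ consists precisely of squares of invertible elements. Your converse, however, takes a genuinely different route. The paper writes $x=y^2$, expands $y$ in its own spectral decomposition $y=\sum\alpha_i c_i'$ (generally with respect to a \emph{different} Jordan frame), obtains $x=\sum\alpha_i^2 c_i'$, and then tacitly invokes the uniqueness clause of the spectral decomposition theorem to identify $\{\lambda_i\}$ with $\{\alpha_i^2\}$ as multisets. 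You instead test $x$ directly against the given frame elements $c_i$, either via self-duality of $\mathcal{K}$ (pairing $\langle x,c_i\rangle=\lambda_i\|c_i\|^2\geq 0$ against $c_i=c_i^2\in\mathcal{K}$) or via the criterion that $L(x)$ is positive semidefinite (resp.\ positive definite) for $x\in\mathcal{K}$ (resp.\ $\mathcal{K}^\circ$), reading $\lambda_i$ off as the eigenvalue of $L(x)$ on the eigenvector $c_i$. Your version avoids any appeal to uniqueness of the spectral data and leans only on material already established in the two immediately preceding propositions, which is arguably cleaner; the paper's version is shorter on the page but asks the reader to supply the uniqueness step. Both are valid.
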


\begin{proof}
	If $\lambda_{i} \geq 0$ then we can write $x=y^2$, with $y=\sum_{i=1}^{r} \sqrt{\lambda_{i}} c_{i}$, which means that $x \in \mathcal{K}$. In case that $\lambda_{i}>0$ for $i=1, \ldots, r$, we know that $x$ is invertible, and it follows that $x \in \mathcal{K}^\circ$. Conversely, if $x \in \mathcal{K}$ we have $x=y^2$, with $y \in V$. If we denote $\alpha_{i}$ the eigenvalues of $y$, we can write $x=\sum_{i=1}^{r} \alpha_{i}^2 c_{i}$ where $\alpha_{i}^2$'s are the eigenvalues of $x$, which are greater or equal than zero. If $x \in \mathcal{K}^\circ$ then $y$ is invertible, therefore $\alpha_{i}^2$'s are greater than zero.
\end{proof}

We denote $\mathcal{K}^\circ$ by $\Omega$, tube domain $T_\Omega:=V \oplus i \Omega=\left\{\sum \mu_{i} c_{i} \mid \operatorname{Im} \mu_{i}>0\right\}$. Via the Cayley transform $c(z)=(z-i e)(z+i e)^{-1}$, $T_\Omega$ is biholomorphic to bounded symmetric domain in $V^\C$:
$$
\mathcal{D}:=\left\{v=\sum \lambda_{i} c_{i} \in V^{\mathbb{C}} : \left(c_{i}\right) \text { Jordan frame, }\left|\lambda_{i}\right|<1\right\}.
$$
The Shilov boundary of $T_\Omega$ is $V$. (Proposition IX.5.5 in \cite{book:834711}) For the Shilov boundary $\check{S}$ of $\mathcal{D}$, we have:

\begin{proposition}[Proposition.X.2.3, Theorem.X.4.6 in \cite{book:834711}] \label{Shilov bd}
	Let $\mathcal{D} \subset V^{\mathbb{C}}$ be a bounded symmetric domain in the complexification of an Euclidean Jordan algebra $V$. The following are equivalent:
	\begin{enumerate}
	\item $z \in \check{S}$,
	\item $z=\sum \lambda_{i} c_{i}$, where $c_{1}, \ldots, c_{r}$ is a Jordan frame and $\left|\lambda_{i}\right|=1$,
	\item $z \in \overline{c(V)}$,
	\item $\bar{z}=z^{-1}$.
		\end{enumerate}
\end{proposition}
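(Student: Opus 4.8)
The plan is to prove the four-fold equivalence by first dispatching the two purely Jordan-algebraic equivalences $(2)\Leftrightarrow(4)$ and $(2)\Leftrightarrow(3)$, and only then identifying the resulting set with the Shilov boundary $(1)$ by a maximum-modulus argument modelled on the $\operatorname{Sp}(2n,\mathbb{R})$ computation carried out above. Throughout I work with $z$ in the closure of $\mathcal D$: by the very definition of $\mathcal D$ every $z\in\mathcal D$ is of the form $\sum_j\lambda_jc_j$ with $(c_j)$ a Jordan frame in $V$ and $|\lambda_j|<1$, and this persists on $\overline{\mathcal D}$ with $|\lambda_j|\le 1$ because the space of Jordan frames and $\overline{\mathbb D}$ are compact. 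Write $\check S_0$ for the set in $(2)$.

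For $(2)\Leftrightarrow(4)$, the argument is bookkeeping with such decompositions: since the $c_j$ lie in the \emph{real} form $V$, complex conjugation gives $\overline z=\sum_j\overline{\lambda_j}\,c_j$, while for invertible $z$ (all $\lambda_j\ne 0$) one has $z^{-1}=\sum_j\lambda_j^{-1}c_j$ because the $c_j$ are orthogonal idempotents. By uniqueness of the eigenvalues with multiplicities (the spectral decomposition, cf.\ Proposition \ref{char poly}), $\overline z=z^{-1}$ is equivalent to $\overline{\lambda_j}=\lambda_j^{-1}$ for every $j$, i.e.\ to $|\lambda_j|=1$ for every $j$. For $(2)\Leftrightarrow(3)$, I would compute the Cayley transform $c(z)=(z-ie)(z+ie)^{-1}$ on a real element $x=\sum_j\mu_jc_j\in V$ with $\mu_j\in\mathbb R$: the element $x+ie=\sum_j(\mu_j+i)c_j$ is invertible in $V^{\mathbb C}$, so $c(x)=\sum_j\frac{\mu_j-i}{\mu_j+i}\,c_j$, and $\mu\mapsto\frac{\mu-i}{\mu+i}$ is a homeomorphism of $\mathbb R$ onto $S^1\setminus\{1\}$. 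Hence $c(V)=\{\sum_j\lambda_jc_j:(c_j)\text{ a Jordan frame},\ \lambda_j\in S^1\setminus\{1\}\}$, and taking closure in $V^{\mathbb C}$ (again using compactness of Jordan frames and of $S^1$) yields $\overline{c(V)}=\check S_0$. So $(2)$, $(3)$, $(4)$ are all equivalent and describe the same explicit set.

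The real content is $(2)\Leftrightarrow(1)$. To show $\check S\subseteq\check S_0$ I would run the same stratification/maximum-modulus scheme as in the $\operatorname{Sp}(2n,\mathbb R)$ case: the automorphism group $G=\operatorname{Aut}(\mathcal D)^{\circ}$ acts on $\overline{\mathcal D}$ with finitely many orbits, and the Jordan-algebra analogue of Proposition \ref{boundary}(3) — obtained from the Peirce decomposition relative to the idempotent $\sum_{|\lambda_j|=1}c_j$, see \cite{book:834711} — lets one move any boundary point $z$ with exactly $k>0$ eigenvalues of modulus $<1$ to a normal form $g\cdot z=z_k+\sum_{|\lambda_j|=1}c_j$ with $z_k$ inside a copy of a lower-rank bounded symmetric domain $\mathcal D_k$. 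If $f\in\mathcal O(\mathcal D)\cap C(\overline{\mathcal D})$ attains $\max|f|$ at such a $z$, then $f\circ g^{-1}$ is holomorphic on the face $g^{-1}\cdot\mathcal D_k$, hence constant there by the maximum principle, so $|f|$ attains the same maximum on the lower-rank boundary of that face; iterating drives the maximum into $\check S_0$, proving that $\check S_0$ is a boundary of $\mathcal D$ and therefore $\check S\subseteq\check S_0$. For the reverse inclusion I would use that $\check S$ is non-empty, closed, and $G$-invariant, together with the transitivity of the linear part of $G$ on $\check S_0$ — in the model case $V=\operatorname{Sym}(n,\mathbb C)$ this is just $\operatorname U(n)$ acting by $u\cdot z=uz\,{}^tu$ on $\{Z=\,{}^tZ:Z\overline Z=I_n\}$ — which forces $\check S\supseteq\check S_0$. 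Combining the three steps gives the equivalence of $(1)$–$(4)$.

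The hard part will be the stratification input behind $(2)\Rightarrow(1)$: producing, from the Peirce/Jordan-frame data of an arbitrary boundary point, an automorphism of $\mathcal D$ that exhibits a genuine lower-rank subdomain (the general analogue of Proposition \ref{boundary}(3)), and verifying transitivity of the relevant group on $\check S_0$; everything else is routine. A tempting shortcut is to instead transport the Shilov boundary of the tube domain $T_\Omega$ — which equals $V$, as recalled above — through the Cayley transform and read off $\check S=\overline{c(V)}$ directly; but then the difficulty merely migrates to checking that $c$ extends continuously to $V$ and respects Shilov boundaries despite $T_\Omega$ being unbounded, so it is essentially the same obstacle in different clothing.
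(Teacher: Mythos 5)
The paper does not prove this proposition at all: it is quoted directly from Faraut and Kor\'anyi \cite{book:834711} (Proposition X.2.3 and Theorem X.4.6) with no in-text argument, so there is no paper proof to compare against. That said, your sketch is internally consistent and in the spirit of the explicit $\Sp(2n,\R)$ computation carried out in the same section (Proposition~\ref{boundary} and the theorem following it), so it is the right plan for filling this gap.

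Two remarks on the details. In $(2)\Leftrightarrow(4)$ you appeal to uniqueness of the spectral decomposition, but this is both more than you need and delicate to invoke in $V^{\C}$ (Jordan frames are not unique when eigenvalues coincide, and carrying the real uniqueness statement to the complexification requires care). You can avoid it entirely: since $z\in\overline{\mathcal D}$, some decomposition $z=\sum\lambda_jc_j$ exists; if $z$ is invertible then $\bar z - z^{-1}=\sum(\bar\lambda_j-\lambda_j^{-1})c_j$, and the $c_j$, being a Jordan frame, are linearly independent, so $\bar z=z^{-1}$ forces $|\lambda_j|=1$ coordinatewise with no uniqueness input; the converse is the same one-line computation. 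Your Cayley-transform calculation for $(2)\Leftrightarrow(3)$ is correct, and the compactness argument that turns $c(V)=\{\sum\lambda_jc_j:\lambda_j\in S^1\setminus\{1\}\}$ into $\overline{c(V)}=\{\sum\lambda_jc_j:|\lambda_j|=1\}$ is the right way to close the gap there.

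You are honest that the real content is $(1)\Leftrightarrow(2)$, and you have correctly identified the two missing ingredients: (i) the Peirce-decomposition normal form putting an arbitrary boundary point into a lower-rank face via an automorphism of $\mathcal D$ (the abstract analogue of Proposition~\ref{boundary}(3)), and (ii) transitivity of the linear isotropy group on the set described by $(2)$. Neither is established in these notes; both are proved in Chapter~X of \cite{book:834711}. Your observation that the tube-domain shortcut (transporting $\partial_{\mathrm{Shilov}}T_\Omega=V$ through the Cayley transform) only relocates the difficulty rather than removing it is also accurate. So: as a proof \emph{sketch} this is sound and mirrors the model case; as a complete proof it still requires supplying those two structural inputs from Faraut--Kor\'anyi.
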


\begin{example}
	For $\mathcal{X}_n=\operatorname{Sym}(n,\R)+i\operatorname{Pos}(n,\R)$, define $X \circ Y:=\frac{X Y+Y X}{2}$ for $X,Y\in \operatorname{Sym}(n,\R)$, where $XY$ denotes the usual matrix product. The inner product is defined as $\langle X, Y\rangle=\Tr(X \circ Y)=\Tr(X Y) \text { for all } X, Y \in \operatorname{Sym}(n,\R)$. One can check that $\left(\operatorname{Sym}(n,\R),\circ\right)$ is a Euclidean Jordan algebra. Any $X \in \operatorname{Sym}(n,\R)$ has spectral decomposition:
	$$
	X=\sum_{i=1}^{n} \lambda_{i} q_{i} \tran{q_{i}}
	$$
	where $\lambda_{i}\in\R$, $i=1, \ldots, n$, are the eigenvalues of $X$ and $q_{i}, i=1, \ldots, n$ are unitary eigenvectors of $X$. Each $q_{i} \tran{q_{i}}$ is an idempotent; $\left\{q_{1} \tran{q_{1}}, \ldots, q_{n} \tran{q_{n}}\right\}$ is a Jordan frame.
	
	$\operatorname{Aut}(\operatorname{Pos}(n,\R))=\operatorname{GL}(n,\R)/\{\pm I_n\}$ acts on $\operatorname{Sp}(2n,\R)$ transitively by $g \cdot x:=g x \tran{g}$. For any $Y\in \operatorname{Pos}(n,\R)^*$, $\langle Y,\xi\tran{\xi}\rangle>0$ for all nonzero $\xi\in\R^n$. Namely, $\operatorname{tr}(Y\xi\tran{\xi})=\sum_{i,j} y_{i j} \xi_{i} \xi_{j}>0$, which implies $Y\in \operatorname{Pos}(n,\R)$. On the other hand, take any $X\in\operatorname{Pos}(n,\R)$, $Y\in \overline{\operatorname{Pos}(n,\R)}\setminus\{0\}$, we write $Y=\sum_{i=1}^{n} \mu_{i} q_{i} {}^t q_{i}$ for $\mu_i\geq 0,\forall i$ but not all $0$. Then $\Tr(X Y)=\sum_{i}\mu_{i} {}^tq_iXq_i>0$, hence $X\in \operatorname{Pos}(n,\R)^*$. This shows $\operatorname{Pos}(n,\R)=\operatorname{Pos}(n,\R)^*$, therefore $\operatorname{Pos}(n,\R)$ is an open symmetric cone.
\end{example}

For the Maslov cocycle, we follow Anna Wienhard's definition in \cite{https://doi.org/10.48550/arxiv.math/0501258}, which is a slight variation of Clerc's definition \cite{clerc2004indice}.

Let $T:X=G/K\to T_\Omega=V+i\Omega$ be a biholomorphism, where $\Omega\subset V$ is a symmetric convex cone in the real vector space $V$; $\mathcal{D}$ the bounded domain realization and $\check{S}$ its Shilov boundary; $r_X$ the rank of $X$. Then as mentioned in previous sections, $\check{S}$ is a homogeneous $G$-space of the form $G/Q$, where $Q$ is a specific parabolic subgroup (which is maximal if $X$ is irreducible). Two points $x,y\in\check{S}$ are \emph{transversal} if $(x,y)$ lies in the open $G$-orbit in $\check{S}^2$.

Let
$$
\check{S}^{[3]}:=\left\{\left(x_{1}, x_{2}, x_{3}\right) \in \check{S}^{3} : x_{i} \text { is transverse to } x_{j}, x_{k} \text { for some } i\right\}
$$
be the space of triples, where one point is transverse to the other two. For any triple $\left(x_{1}, x_{2}, x_{3}\right) \in \check{S}^{[3]}$, we may assume that $T\left(x_{3}\right)=\infty$ and $y_{1}=T\left(x_{1}\right), y_{2}=T\left(x_{2}\right) \in V$ by using the transitivity of the $G$-action on $\check{S}$, then the \emph{Maslov cocycle} is defined to be
$$
\tau\left(x_{1}, x_{2}, x_{3}\right):=k_{+}\left(y_{2}-y_{1}\right)-k_{-}\left(y_{2}-y_{1}\right),
$$
where $k_{\pm}$ are the numbers of positive respectively negative eigenvalues in the spectral decomposition of $\left(y_{2}-y_{1}\right)$ with respect to a Jordan frame $\left(c_{j}\right)_{j=1, \ldots, r_X}$.

\begin{remark}
	 Let $\left(x_{1}, x_{2}, x_{3}\right) \in \check{S}^{[3]}$ with $\tau\left(x_{1}, x_{2}, x_{3}\right)=r_X$, then $\left(x_{1}, x_{2}, x_{3}\right)$ are pairwise transverse since if we assume $x_3$ is transverse to $x_1$, $x_2$ w.l.o.g., then $\tau=r_X$ implies eigenvalues of $y_2-y_1$ are all positive; especially $y_2-y_1$ is invertible.
\end{remark}

Given any triple $x_{1}, x_{2}, x_{3} \in \check{S}$, using $G$-action we may assume that $x_{1}, x_{2}, x_{3}$ are transverse to $\infty$, hence $y_{1}, y_{2}, y_{3}$ in $V$. The cocycle identity allows us to define $\tau$ on $\check{S}^{3}$ as
$$
\begin{aligned}
	\tau\left(x_{1}, x_{2}, x_{3}\right) &=\tau\left(x_{1}, x_{2}, \infty\right)+\tau\left(x_{2}, x_{3}, \infty\right)-\tau\left(x_{1}, x_{3}, \infty\right) \\
	&=\left(k_{+}\left(y_{2}-y_{1}\right)-k_{-}\left(y_{2}-y_{1}\right)\right) \\
	&+\left(k_{+}\left(y_{3}-y_{2}\right)-k_{-}\left(y_{3}-y_{2}\right)\right) \\
	&-\left(k_{+}\left(y_{3}-y_{1}\right)-k_{-}\left(y_{3}-y_{1}\right)\right) .
\end{aligned}
$$

This clearly defines a $G$-invariant real function on $\check{S}^{3}$. We call a triple $\left(x_{1}, x_{2}, x_{3}\right)\in \check{S}^3$ \emph{maximal} if $\tau\left(x_{1}, x_{2}, x_{3}\right)$ attains its maximal possible value $\operatorname{rk}(V)=r_X$. 

If we restrict ourselves to pairwise transverse triples (usually denoted by $\check{S}^{(3)}$), we will recover Clerc and {\O}rsted's definition of Maslov index given in \cite{clerc2001maslov}.

Under the Cayley transform $c$, $c(\infty)=e$, $c(0)=-e$, we also let
$$
	\varepsilon_{k}:=\sum_{i=1}^{k} c_{i}-\sum_{i=k+1}^{r_X} c_{i} \in \check{S}
$$
which is a point on $\check{S}$ following from Proposition \ref{Shilov bd}, where $\left(c_{j}\right)_{j=1, \ldots, r_X}$ is a fixed Jordan frame. We have
$$
\begin{aligned}
		c(\varepsilon_k)&=\left(\sum_{j=1}^k(1-i)c_j-\sum_{j=k+1}^{r_X}(1+i)c_j\right)\cdot\left(\sum_{j=1}^k(1+i)c_j-\sum_{j=k+1}^{r_X}(1-i)c_j\right)^{-1}\\
		&=\left(\sum_{j=1}^k(1-i)c_j-\sum_{j=k+1}^{r_X}(1+i)c_j\right)\cdot\frac{1}{2}\left(\sum_{j=1}^k(1-i)c_j-\sum_{j=k+1}^{r_X}(1+i)c_j\right)=(-i)\varepsilon_{k},
\end{aligned}
$$
hence $\tau\left(-e, (-i)\varepsilon_{k}, e\right)=k_+\left(\varepsilon_k\right)-k_-\left(\varepsilon_k\right)=2k-r_X$. 

\begin{theorem}[Theorem 4.3 in \cite{clerc2001maslov}, Theorem 3.5 in \cite{clerc2004indice}]
	There are exactly $r_X+1$ orbits of pairwise transverse triples in $\check{S}^3$ under the action of $G$. Each $\left(-e, (-i)\varepsilon_j, e\right)$, $0\leq j\leq r_X$, represents one orbit. The function $\tau$ takes values in $\{-r_X,-r_X+2, \ldots, r_X-2, r_X\}$ and it classifies all these $G$-orbits.
\end{theorem}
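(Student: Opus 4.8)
The plan is to pass to the tube realization and the Cayley transform, reduce the classification of $G$-orbits on pairwise transverse triples in $\check{S}^{(3)}$ to the classification of orbits of the stabilizer of a transverse pair acting on the invertible elements of the Jordan algebra $V$, and then read off both the count $r_X+1$ and the list of $\tau$-values from the Jordan-algebraic analogue of Sylvester's law of inertia (see \cite{book:834711}).

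First I would normalize a triple. By definition a pair of points of $\check{S}$ is transverse precisely when it lies in the unique open $G$-orbit in $\check{S}^2$, so $G$ acts transitively on transverse pairs; given a pairwise transverse triple $(x_1,x_2,x_3)$, I move $(x_1,x_3)$ to the fixed transverse pair $(c(0),c(\infty))=(-e,e)$. The residual freedom is $H:=\operatorname{Stab}_G(-e,e)$, which is the Levi factor of the parabolic $Q=\operatorname{Stab}_G(\infty)$; it fixes $0\in V$ and acts on the big cell $c(V)\cong V$ through the structure group $\operatorname{Str}(V)$. Transversality of $x_2$ to $c(\infty)$ means $x_2=c(y)$ for a unique $y\in V$, and transversality to $c(0)$ then means $y$ is invertible in $V$. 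Since the triples are ordered, a group element carrying one normalized triple to another must fix $-e$ and $e$ individually, hence lies in $H$; therefore $G$-orbits of pairwise transverse triples correspond bijectively to $\operatorname{Str}(V)$-orbits on the set $V^{\times}$ of invertible elements, the invariant of the triple being the element $y_2-y_1\in V^{\times}$ used to define the Maslov cocycle (after translating $y_1$ to $0$).

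Next I would classify the $\operatorname{Str}(V)$-orbits on $V^{\times}$. By the spectral decomposition of type II, every $y\in V^{\times}$ is $y=\sum_{i=1}^{r_X}\lambda_i c_i$ for a Jordan frame $(c_i)$ and nonzero reals $\lambda_i$. The automorphism group $\operatorname{Aut}(V)=\operatorname{Str}(V)\cap \mathrm{O}(V)$ acts transitively on Jordan frames, and for $u=\sum u_i c_i$ with $u_i>0$ the quadratic representation $P(u)\in\operatorname{Str}(V)$ satisfies $P(u)c_i=u_i^2 c_i$, so one can simultaneously rescale all $|\lambda_i|$ to $1$; hence every $y\in V^{\times}$ is $\operatorname{Str}(V)$-equivalent to exactly one $\varepsilon_k=\sum_{i=1}^{k}c_i-\sum_{i=k+1}^{r_X}c_i$, the only invariant being the number $k\in\{0,1,\dots,r_X\}$ of positive eigenvalues. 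This is the Jordan-algebraic Sylvester law of inertia, reducing to the classical one for $V=\operatorname{Sym}(r_X,\mathbb{R})$, and it yields exactly $r_X+1$ orbits of pairwise transverse triples, with representatives $(0,\varepsilon_k,\infty)$, i.e. $(-e,(-i)\varepsilon_k,e)$.

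Finally, since $\tau$ is $G$-invariant it is constant on each orbit, and on the representatives the computation already recorded above gives $\tau(-e,(-i)\varepsilon_k,e)=k_+(\varepsilon_k)-k_-(\varepsilon_k)=2k-r_X$, so as $k$ runs through $0,\dots,r_X$ the cocycle takes exactly the $r_X+1$ distinct values $-r_X,-r_X+2,\dots,r_X-2,r_X$; distinctness on distinct orbits means $\tau$ separates, hence classifies, the orbits. The step I expect to be the main obstacle is the structural one underpinning the reduction: verifying that $H=\operatorname{Stab}_G(-e,e)$ acts on $V$ exactly as $\operatorname{Str}(V)$, and establishing the inertia statement (transitivity of $\operatorname{Aut}(V)$ on Jordan frames together with the $P(u)$-rescaling). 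The two transitivity inputs, the transversality/invertibility dictionary, and the final arithmetic with $\tau$ are then routine, and the value of $\tau$ on $\varepsilon_k$ has already been supplied.
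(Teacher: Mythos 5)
The paper does not supply a proof of this theorem --- it quotes it from Clerc--{\O}rsted and Clerc and cites those references. So there is no in-text argument to compare against; your proposal has to be judged on its own. Its overall shape is right, and it correctly assembles ingredients that do appear in these notes (the proposition in the Maslov-index chapter identifying $\stab_G(\infty)$ and $\stab_G(0,\infty)$; the spectral decomposition of type~II; and the calculation $\tau(-e,(-i)\varepsilon_k,e)=2k-r_X$ recorded just above the statement).

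The one substantive error is your identification of the pair-stabilizer with the full structure group. As the Maslov-index chapter's proposition states, $\stab_G(0,\infty)=G(\Omega)=\{g\in\GL(V):g(\Omega)=\Omega\}$, and $G(\Omega)$ is a \emph{proper} subgroup of $\operatorname{Str}(V)$: the map $-\id_V$ lies in $\operatorname{Str}(V)$ (since $P(-x)=P(x)$) but sends $\Omega$ to $-\Omega$, hence is not an automorphism of $T_\Omega$ and does not come from $G$. This is not a harmless relabeling. Under $\operatorname{Str}(V)$, the element $-\id_V$ carries $\varepsilon_k$ to $-\varepsilon_k$, which lies in the same orbit as $\varepsilon_{r_X-k}$; so $\operatorname{Str}(V)$-orbits on $V^\times$ are indexed by unordered pairs $\{k,r_X-k\}$ and number only $\lceil(r_X+1)/2\rceil$. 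Your sentence ``every $y\in V^\times$ is $\operatorname{Str}(V)$-equivalent to exactly one $\varepsilon_k$, $k\in\{0,\dots,r_X\}$'' is therefore false and contradicts your own count of $r_X+1$. Fortunately the normalization you actually carry out --- acting by $\operatorname{Aut}(V)$ to move $y$ onto a fixed Jordan frame, then rescaling eigenvalues by $P(u)$ with $u$ invertible --- uses only elements of $G(\Omega)$, and the Jordan-algebraic inertia theorem you cite is exactly the statement that the signature is a complete $G(\Omega)$-invariant on $V^\times$. Replace $\operatorname{Str}(V)$ by $G(\Omega)$ throughout, observe that $g\in G(\Omega)$ cannot change $k_+$ because it preserves $\overline\Omega$, and the argument goes through as you intended.
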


Therefore we can also define for any pairwise transverse triple $\left(x_1,x_2,x_3\right)$, $\tau\left(x_{1}, x_{2}, x_{3}\right)=2k-r_X$, where $k$ is the unique integer, $0\leq k\leq r_X$, such that $\left(x_1,x_2,x_3\right)$ is conjugate under $G$ to the triplet $\left(-e, (-i)\varepsilon_{k}, e\right)$. See \cite{clerc2001maslov}.

\begin{remarks}
	For tube type bounded symmetric domain $\mathcal{D}$, $\pi\tau$ actually coincides with the restriction of Bergmann cocycle $\beta$ on $\check{S}^{(3)}$. For non-tube type $\mathcal{D}$, however, $\beta\left(\check{S}^{(3)}\right)=\left[-\pi r_{X}, \pi r_{X}\right]$. See \cite{https://doi.org/10.48550/arxiv.math/0501258} Chapter 5 for details.
\end{remarks}

Fix $x,z\in \check{S}=G/Q$ such that $\operatorname{Stab}_G(z)=Q$, $\operatorname{Stab}_G(x)=Q^{opp}$. Given any $y\in G/Q$ transverse to $z$, there exists unique $u\in \operatorname{UniRad}(Q)$ such that $y=ux$, where $\operatorname{UniRad}(Q)$ is the unipotent radical of the parabolic $Q$.

Denote by 
$$
\begin{aligned}
	&U^{> 0}:=\left\{u\in \operatorname{UniRad}(Q)\mid \tau\left(x,ux,z\right)=r_X\right\},\\
	&U^{opp,> 0}:=\left\{v\in \operatorname{UniRad}(Q^{opp})\mid \tau\left(z,vz,x\right)=r_X\right\},\\
	&L^\circ:=\text{ identity component of } Q\cap Q^{opp}.
\end{aligned}
$$
then we can define the positive subsemigroup $G^{> 0}\subset G$ to be the subsemigroup generated by $U^{> 0}$, $U^{opp,> 0}$ and $L^\circ$.

\newpage

\thispagestyle{empty}

\chapter[Positivity of Triples of Flags]{Positivity of Triples of Flags\\ {\Large\textnormal{\textit{by Raphael Appenzeller, Francesco Fournier-Facio}}}}
\addtocontents{toc}{\quad\quad\quad \textit{Raphael Appenzeller, Francesco Fournier-Facio}\par}

\section{Flags}

In this section we introduce the objects that we will be working with. We start with the flag variety, then consider the relevant (semi)groups, and finally put both together by letting the (semi)groups act on the flag variety.

\subsection{The flag variety}

\begin{definition}
A \emph{flag} is an $(n-1)$-tuple $(F_1, \ldots, F_{n-1})$, where $F_i$ is a vector subspace of $\mathbb{R}^n$ of dimension $i$, and $F_i \subset F_{i+1}$ for all $1 \leq i \leq (n-2)$.

The set of flags is denoted by $\Flag(\R^n)$ and is called the \emph{(full) flag variety}.
\end{definition}

The two most basic examples of flags also turn out to be the most important ones:

\begin{example}
Let $\{ e_i \}_{i = 1}^n$ be the canonical basis of $\mathbb{R}^n$.

The \emph{standard ascending flag} is $F = (F_1, \ldots, F_{n-1})$, where $F_i = \langle e_1, \ldots, e_i \rangle$.

The \emph{standard descending flag} is $E = (E_1, \ldots, E_{n-1})$, where $E_i = \langle e_n, \ldots, e_{n-i+1} \rangle$.
\end{example}

In some sense, these two flags are \emph{opposite} each other. One way to formalize this is the following:

\begin{definition}
Two flags $F^1, F^2$ are \emph{transverse}, denoted $F^1 \pitchfork F^2$, if $F^1_i \cap F^2_{n-i} = \{0\}$ for every $1 \leq i < n$. Equivalently, $F^1$ and $F^2$ are transverse if $F^1_i \oplus F^2_{n-i} = \mathbb{R}^n$ for every $1 \leq i < n$.

Given a flag $F$, the set of flags transverse to $F$ is denoted by $\Omega_F$.
\end{definition}

\begin{example}
The standard ascending and descending flags are transverse to each other.
\end{example}

The full flag variety $\Flag(\R^n)$ comes equipped with a natural topology, induced by the inclusion in a product of Grassmannians $\Flag(\R^n) \subset \operatorname{Gr}(n, 1) \times \cdots \times \operatorname{Gr}(n, n-1)$.

\begin{proposition}
Let $F$ be the standard ascending flag. Then $\Omega_F$ is open and dense in $F$.
\end{proposition}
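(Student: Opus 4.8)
The statement evidently means that $\Omega_F$ is open and dense in $\Flag(\R^n)$, and the plan is to treat openness by a soft semicontinuity argument and density by reducing transversality to the non-vanishing of finitely many minors.

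\textbf{Openness.} First I would note that for each $i \in \{1,\dots,n-1\}$ the map $\pi_i \colon \Flag(\R^n) \to \operatorname{Gr}(n,i)$ sending $(G_1,\dots,G_{n-1})$ to $G_i$ is continuous, since the topology on $\Flag(\R^n)$ is induced from the product of Grassmannians. Because $\dim F_i + \dim F_{n-i} = n$, a flag $G$ is transverse to $F$ if and only if $\dim(G_i \cap F_{n-i}) \le 0$ for every $i$; and since $V \mapsto \dim(V \cap F_{n-i})$ is upper semicontinuous on $\operatorname{Gr}(n,i)$, the set $\mathcal O_i := \{V : V \cap F_{n-i} = \{0\}\}$ is open. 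Then $\Omega_F = \bigcap_{i=1}^{n-1} \pi_i^{-1}(\mathcal O_i)$ is a finite intersection of open sets, hence open.

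\textbf{Density: reduction to minors.} Next I would record the linear-algebra description of $\Omega_F$. To a flag $G$ associate an adapted basis, i.e.\ a matrix $M = (m_1 \mid \dots \mid m_n) \in \GL(n,\R)$ with $G_i = \langle m_1,\dots,m_i\rangle$; then $q(M) := (G_1,\dots,G_{n-1})$ defines a continuous surjection $q \colon \GL(n,\R) \to \Flag(\R^n)$. The condition $G_i \oplus F_{n-i} = \R^n$ is exactly $\det(m_1 \mid \dots \mid m_i \mid e_1 \mid \dots \mid e_{n-i}) \ne 0$, and expanding this determinant along its last $n-i$ standard-basis columns identifies it, up to sign, with the minor $\Delta_i(M)$ of $M$ on rows $n-i+1,\dots,n$ and columns $1,\dots,i$. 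Hence $q(M) \in \Omega_F$ if and only if $\Delta_1(M),\dots,\Delta_{n-1}(M)$ are all nonzero (with $\Delta_n = \det M$ automatically nonzero). Each $\Delta_i$ is a polynomial in the entries of $M$ that is not identically zero --- evaluating at the antidiagonal permutation matrix $W$ with $W_{j,\,n+1-j}=1$ gives $\pm 1$ --- so its zero set is closed and nowhere dense in $\Mat(n,\R)$, hence in $\GL(n,\R)$.

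\textbf{Density: conclusion.} Therefore $\mathcal G := \{M \in \GL(n,\R) : \Delta_i(M) \ne 0,\ i=1,\dots,n-1\}$ is the complement in $\GL(n,\R)$ of a finite union of nowhere-dense sets, so it is (open and) dense in $\GL(n,\R)$, and $q^{-1}(\Omega_F) = \mathcal G$. To finish I would take an arbitrary nonempty open $\mathcal U \subseteq \Flag(\R^n)$, choose $G \in \mathcal U$ and $M_0 \in q^{-1}(G)$; since $q$ is continuous, $q^{-1}(\mathcal U)$ is a nonempty open subset of $\GL(n,\R)$, so it meets the dense set $\mathcal G$, and the image under $q$ of such an intersection point lies in $\mathcal U \cap \Omega_F$. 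Thus $\Omega_F$ meets every nonempty open set, i.e.\ it is dense. The only step with genuine content is the identification of transversality to $F$ with the non-vanishing of the minors $\Delta_i$ --- getting the index ranges and the Laplace expansion right, and checking each $\Delta_i$ is a nontrivial polynomial --- while the topological parts are routine; alternatively one could quote the Bruhat decomposition and identify $\Omega_F$ with the big cell $B^+ w_0 B^+/B^+$, known classically to be open and dense, but the argument above is elementary and self-contained.
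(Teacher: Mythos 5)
Your proof is correct, but it takes a genuinely different route from the paper's. For density, the paper argues directly at the level of flags: given an arbitrary flag $F^1$, modify each $F^1_i$ by a small amount in a direction transverse to $F_{n-i}$, with an induction ensuring that the perturbed subspaces still nest into a flag $F^2$ close to $F^1$; the paper explicitly labels this a ``proof idea'' and leaves the compatible-perturbation induction unspecified. You instead lift to $\GL(n,\R)$ via the adapted-basis map $q$, identify $q^{-1}(\Omega_F)$ with the non-vanishing locus of the lower-left corner minors $\Delta_1,\dots,\Delta_{n-1}$ (verifying each is a nonzero polynomial by evaluating at the antidiagonal permutation matrix), note that the complement of finitely many nowhere-dense zero loci is dense in $\GL(n,\R)$, and then push this down to $\Flag(\R^n)$ using only that $q$ is continuous and surjective. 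This effectively rediscovers the big Bruhat cell, as you observe in closing; the payoff is that your density argument is complete and elementary where the paper's sketch leaves real work. For openness, the paper gives a direct neighbourhood argument (a flag near $F^1$ must still satisfy $F^2_i + F_{n-i} = \R^n$), while you invoke upper semicontinuity of $V\mapsto\dim(V\cap F_{n-i})$ on the Grassmannian and take a finite intersection of preimages of open sets; these are essentially the same observation packaged differently. The one computation with content in your proof --- the Laplace expansion identifying transversality with the minor $\Delta_i(M)$ on rows $n-i+1,\dots,n$ and columns $1,\dots,i$ --- checks out.
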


\begin{remark}
We will soon show that $\Flag(\R^n)$ is a homogeneous space. Therefore the statement of the proposition holds for every flag, not only the standard ascending one.
\end{remark}

\begin{proof}[Proof idea]
Let $F^1 \in \Omega_F$. If another flag $F^2$ is close enough to $F^1$, then $F^1_i$ is close to $F^2_i$, and so every element of $\mathbb{R}^n = F^1_i + F_{n-i}$ is close to an element of $F^2_i + F_{n-i}$, which is not possible if $\dim(F^2_i + F_{n-i}) < n$. Thus $F^2 \in \Omega_F$, which shows that $\Omega_F$ is open.

Now let $F^1 \in \Flag(\R^n)$ be arbitrary, then for every $i$ we can modify $F^1_i$ by a small amount in a direction transverse to $F_{n-i}$ in order to obtain a subspace $F^2_i$ such that $F^2_i + F_{n-i} = \mathbb{R}^n$. It is possible to do this in a compatible way on every $i$ by an induction argument, to put these spaces together into a flag $F^2$, which is then an element of $\Omega_F$ close to $F^1$. This shows that $\Omega_F$ is dense in $\Flag(\R^n)$.
\end{proof}

A further relevant notion is the following, which may be thought of as an analogue of transversality for triples of flags:

\begin{definition}
A triple of flags $(F^1, F^2, F^3)$ is \emph{generic} if for all $a, b, c \geq 0$ such that $a + b + c = n$, it holds $F^1_a + F^2_b + F^3_c = \mathbb{R}^n$.
\end{definition}

By choosing $a, b$ or $c$ to be equal to $0$, we see that if $(F^1, F^2, F^3)$ is generic, then the three flags are pairwise transverse. The converse is not true however (contrary to what is claimed in \cite{Bonahon_2014}), as the following example shows:

\begin{example}
Let $F$ be the standard ascending, and $E$ the standard descending flag. Let $T$ be defined by $T_1 := \langle t_1 \rangle$ and $T_2 := \langle t_1, t_2 \rangle$, where
\[t_1:=
\begin{pmatrix}
y \\
z \\
1
\end{pmatrix}
\quad \text{ and } \quad t_2:=
\begin{pmatrix}
x \\
1 \\
0
\end{pmatrix}
\]
(it will be soon clear why we chose this notation). If $y \neq 0$ and $xz - y \neq 0$, then $E, F$ and $T$ are pairwise transverse. However, if $z = 0$, then the triple is not generic, since $E_1 + F_1 + T_1 = \langle e_1, e_3 \rangle$. As a concrete example, one can take $x = 1, y = 1$ and $z = 0$.
\end{example}

\subsection{The (semi)groups}

We will be working in the group $G := \GL(n,\mathbb{R})$ and the main players will be its subgroups $B$ of upper-triangular matrices, and $U$ of unipotent upper-triangular matrices. We further denote by $\tran{B}$ and $\tran{U}$ the subgroups of lower-triangular and unipotent lower-triangular matrices, and $A := B \cap \tran{B}$ the group of diagonal matrices. \\

Recall from the previous talk that a matrix $g \in G$ is called \emph{totally positive} if each of its minors is positive. We denote by $G_{> 0}$ the subset of totally positive matrices. Especially relevant for this talk is the subset $U_{> 0}$: this is defined as the set of matrices in $U$ such that every minor (that is not forced to be zero) is positive.

\begin{example}
\label{FLAGS:example:flagsinr3}

Suppose that $n = 3$. Then each $u \in U$ is of the form
\[
\begin{pmatrix}
1 & x & y \\
0 & 1 & z \\
0 & 0 & 1
\end{pmatrix}
\]
where $a, b, c \in \mathbb{R}$. Most minors are already determined: for instance the $(1, 3)$ minor is $0$, and the $(1, 1)$ minor is $1$. The condition that $u \in U_{> 0}$ then amounts to:
$$x>0, \quad y>0, \quad z>0, \quad xz - y > 0.$$
\end{example}

We remark the following fact:

\begin{lemma}
$G_{> 0}$ and $U_{> 0}$ are subsemigroups.
\end{lemma}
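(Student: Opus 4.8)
The plan is to reduce both claims to the Cauchy--Binet formula, which was already established in the excerpt as Proposition~\ref{prop:cauchybinet} (whose proof yielded precisely the identity $\det((AB)|_{I,J}) = \sum_{|K|=k}\det(A|_{I,K})\det(B|_{K,J})$). For $G_{>0}$: let $A, B \in G_{>0}$ and fix any $k$ and any index sets $I, J \subseteq \{1,\dots,n\}$ with $|I| = |J| = k$. By Cauchy--Binet, $\det((AB)|_{I,J})$ is a sum of products $\det(A|_{I,K})\det(B|_{K,J})$ over all $K$ with $|K| = k$. Each factor is a minor of $A$ or of $B$, hence strictly positive by hypothesis, and the sum is nonempty (there is at least one such $K$), so $\det((AB)|_{I,J}) > 0$. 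Since this holds for all minors, $AB \in G_{>0}$; thus $G_{>0}$ is closed under multiplication, i.e.\ it is a subsemigroup. (It is not a group, since it does not contain the identity.)

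For $U_{>0}$ the argument is the same, but one must track which minors are \emph{forced} to be zero by upper-triangularity and check that the Cauchy--Binet sum respects this. First I would record the combinatorial criterion: for $u \in U$, the minor $\det(u|_{I,J})$ with $I = \{i_1 < \dots < i_k\}$, $J = \{j_1 < \dots < j_k\}$ is forced to vanish exactly when it fails the ``dominance'' condition $i_\ell \le j_\ell$ for all $\ell$ (because an entry $u_{i,j}$ with $i > j$ is zero, and a standard expansion shows the minor vanishes unless each row index can be matched to a weakly larger column index); and when the dominance condition holds, $\det(u|_{I,J})$ is, for a generic/totally positive unipotent matrix, positive. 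So $U_{>0}$ consists of those $u \in U$ for which $\det(u|_{I,J}) > 0$ for every dominance-admissible pair $(I,J)$.

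Now take $u, v \in U_{>0}$ and a dominance-admissible pair $(I,J)$ with $|I| = |J| = k$. Apply Cauchy--Binet: $\det((uv)|_{I,J}) = \sum_{|K|=k}\det(u|_{I,K})\det(v|_{K,J})$. The key point is that for each $K$ in this sum, either both $(I,K)$ and $(K,J)$ are dominance-admissible---in which case $\det(u|_{I,K})\det(v|_{K,J}) > 0$---or at least one of them is not, in which case that factor is zero and the term drops out. One checks that at least one fully-admissible $K$ occurs (e.g.\ $K = I$ works since $(I,I)$ is admissible and, because $(I,J)$ is admissible, $(I,J) = (K,J)$ is too), so the sum is a nonempty sum of positive terms and hence positive. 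Since $uv \in U$ automatically, this shows $uv \in U_{>0}$, so $U_{>0}$ is a subsemigroup.

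The routine part is the Cauchy--Binet bookkeeping; the one genuinely delicate step is the combinatorial claim that the dominance/shape condition is exactly what forces a minor of an element of $U$ to be zero, and that this condition is ``transitive'' along the Cauchy--Binet sum (admissibility of $(I,K)$ and $(K,J)$ together with nonvanishing). I would either prove the dominance characterization by a direct Laplace-expansion/permutation argument, or simply cite Whitney~\cite{Whitney} and the parametrization of $U^{>0}$ recalled earlier in the chapter, since that parametrization already encodes which minors are positive. Either route makes the semigroup property fall out immediately from positivity of the surviving Cauchy--Binet terms.
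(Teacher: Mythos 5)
Your proof is correct and uses exactly the tool the paper does: the Cauchy--Binet formula, appealing back to Proposition~\ref{prop:cauchybinet}. The paper disposes of both $G_{>0}$ and $U_{>0}$ in a single sentence; your careful handling of the $U_{>0}$ case (the dominance characterization $i_\ell \le j_\ell$ of which minors are forced to vanish, the observation that a non-admissible $(I,K)$ or $(K,J)$ kills the corresponding Cauchy--Binet term, and the choice $K = I$ to guarantee a surviving positive term) fills in what the paper leaves implicit.
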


\begin{proof}
This follows easily from the Cauchy--Binet Formula (see Proposition \ref{prop:cauchybinet} and its proof).
\end{proof}

\subsection{The action}

There is a natural action of $G$ on $\Flag(\R^n)$, given by $g \cdot(F_1,\ldots, F_{n-1}) = (gF_1, \ldots, gF_{n-1})$. The following result gives the important properties of this action that will be used in the definition of positivity.

\begin{proposition}
\label{FLAGS:prop:action}

The action of $G$ on $\Flag(\R^n)$ has the following properties:
\begin{enumerate}
    \item \emph{(Naturality)} The action is continuous and preserves the notions of transversality and genericity.
    \item \emph{(Transitivity)} The action of $G$ on $\Flag(\R^n)$ is transitive, and the action of $G$ on pairs of transverse flags is transitive.
    \item \emph{(Stabilizers)} Let $F$ be the standard ascending flag. Then the stabilizer of $F$ is the Borel subgroup $B$, and the induced action of $U$ on $\Omega_F$ is simply transitive.
\end{enumerate}
\end{proposition}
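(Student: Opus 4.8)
The plan is to establish the three properties in order, using the standard linear-algebra machinery for flags. For \emph{Naturality}, the action map $G \times \Flag(\R^n) \to \Flag(\R^n)$ is the restriction of the continuous action of $G$ on each Grassmannian $\operatorname{Gr}(n,i)$, so continuity is automatic from the inclusion $\Flag(\R^n) \subset \prod_i \operatorname{Gr}(n,i)$. Transversality is preserved because $g(F^1_i) \oplus g(F^2_{n-i}) = g(F^1_i \oplus F^2_{n-i}) = g(\R^n) = \R^n$ for $g \in \GL(n,\R)$; the same computation with three summands $g(F^1_a) + g(F^2_b) + g(F^3_c) = g(F^1_a + F^2_b + F^3_c)$ shows genericity is preserved.

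For \emph{Transitivity}, I would first show $G$ acts transitively on $\Flag(\R^n)$: given any flag $(F_1, \ldots, F_{n-1})$, pick a basis $v_1, \ldots, v_n$ of $\R^n$ adapted to it, i.e.\ with $F_i = \langle v_1, \ldots, v_i\rangle$; the matrix $g$ sending $e_j \mapsto v_j$ lies in $\GL(n,\R)$ and carries the standard ascending flag to $(F_1, \ldots, F_{n-1})$. For transitivity on pairs of transverse flags, suppose $(F^1, F^2)$ is a transverse pair. By transversality, $F^1_i \oplus F^2_{n-i} = \R^n$ for all $i$; one can then construct a basis $w_1, \ldots, w_n$ such that $\langle w_1, \ldots, w_i \rangle = F^1_i$ and $\langle w_n, w_{n-1}, \ldots, w_{n-i+1}\rangle = F^2_i$ simultaneously — this is the standard ``simultaneous refinement'' argument, building the $w_j$ one at a time while respecting both flags. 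The matrix $g \colon e_j \mapsto w_j$ then sends the pair $(F, E)$ of standard ascending and descending flags to $(F^1, F^2)$; since $(F,E)$ is itself transverse, and any two transverse pairs are thus in the $G$-orbit of $(F,E)$, transitivity on transverse pairs follows.

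For \emph{Stabilizers}, the stabilizer of the standard ascending flag $F$ consists of the $g$ with $g\langle e_1, \ldots, e_i\rangle = \langle e_1, \ldots, e_i\rangle$ for all $i$; writing this out column by column shows $g$ is upper-triangular, so $\operatorname{Stab}_G(F) = B$. It remains to show $U$ acts simply transitively on $\Omega_F$, the set of flags transverse to $F$. For transitivity: by the previous part, $G$ acts transitively on transverse pairs, so given $F' \in \Omega_F$ there is $g \in G$ with $g \cdot (E, F) = (F', F)$ where $E$ is the standard descending flag; but $g$ fixes $F$, so $g \in B$, and then I would use the fact that $B = U A$ (with $A$ stabilizing $E$) to replace $g$ by an element of $U$ sending $E$ to $F'$. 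Since $\Omega_F$ is exactly the $U$-orbit of $E$, this gives transitivity. For freeness: if $u \in U$ fixes some $F' \in \Omega_F$, then $u$ fixes both $F$ and $F'$, a transverse pair; but the pointwise stabilizer of a transverse pair is $\operatorname{Stab}_G(F) \cap \operatorname{Stab}_G(F') = B \cap gBg^{-1}$, which for a transverse pair equals the diagonal torus $A$ (up to conjugation), and $U \cap A = \{e\}$, so $u = e$.

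The main obstacle is the simultaneous basis construction underlying transitivity on transverse pairs: one must be careful that the inductive choice of $w_j$ can always be made so that \emph{both} partial flags are respected at every stage, which uses the transversality hypothesis $F^1_i \oplus F^2_{n-i} = \R^n$ in an essential way. Everything else reduces to bookkeeping with upper-triangular matrices and the Bruhat-type decomposition $B = UA$, which I would invoke without reproving.
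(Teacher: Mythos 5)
Your proof is correct, but it routes the dependencies differently from the paper. The paper proves transitivity of $U$ on $\Omega_F$ directly by induction on $n$ (normalize $T_1 = \langle v \rangle$ so $v$ has last coordinate $1$, put $v$ in the last column of $u$, and recurse on the remaining columns), and then observes that transitivity on pairs of transverse flags is a consequence of that. You instead prove transitivity on transverse pairs first, via the simultaneous refinement: from $F^1_i \oplus F^2_{n-i} = \R^n$ and $F^1_{i-1} \oplus F^2_{n-i+1} = \R^n$ one gets $\dim(F^1_i \cap F^2_{n-i+1}) = 1$, so choosing $w_i$ to span that intersection gives a basis adapted to both flags simultaneously. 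You then deduce transitivity of $U$ on $\Omega_F$ from this plus the factorization $B = UA$ with $A$ stabilizing $E$. Both arguments are valid and of comparable length; the paper's induction builds the unipotent element $u$ one column at a time and avoids invoking $B = UA$, while your route isolates the linear-algebra fact about one-dimensional intersections, which is arguably more transparent, at the cost of one extra decomposition step. The freeness argument is the same in both ($U \cap A = \{e\}$), and your naturality argument explicitly spells out what the paper dismisses as clear.
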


\begin{proof}
The first item is clear from the definitions. For transitivity, let $T$ be a flag, and choose elements $t_1, \ldots, t_{n-1}$ such that $T_i = \langle t_1, \ldots, t_i \rangle$. Then the element $g \in G$ whose $i$-th column is $t_i$ sends the standard ascending flag $F$ to $T$. Now the statement on transitivity on pairs of transverse flags will follow from the third item. 

It is easy to see that $B$ is the stabilizer of $F$. Next, we prove that $U$ acts transitively on $\Omega_F$. We prove this by induction on $n$. For $n = 1$ is clear. Now let $n > 1$ and assume that the statement is true up to $(n-1)$. Let $T \in \Omega_F$: we need to show that there exists $u \in U$ such that $uE = T$, where $E$ is the standard descending flag.

Let $v \in \mathbb{R}^n$ be a vector such that $T_1 = \langle v \rangle$. Since $T \pitchfork F$, it holds $T_1 + F_{n-1} = \mathbb{R}^n$, so the last coordinate of $v$ is non-zero. Therefore, up to rescaling, we may assume that the last coordinate of $v$ is $1$. This allows to consider a matrix $u \in U$ whose last column is $v$. In other words $u E_1 = u \langle e_n \rangle = \langle v \rangle = T_1$. Since no condition has been imposed on the first $(n-1)$ columns of $u$, by induction we may choose them so that $u E_i = T_i$ for every other $i$. Thus $uE = T$.

It remains to show that the action is simply transitive. Since we already know it is transitive, it suffices to show that the stabilizer in $U$ of the standard descending flag $E$ is trivial. Now the stabilizer in $G$ of $E$ is just $\tran{B}$, so the intersection of the two stabilizers is $B \cap \tran{B}$, that is, the subgroup $A$ of diagonal matrices. Since $U$ intersects $A$ trivially, we conclude.
\end{proof}

\section{Positivity of triples of flags via total positivity of groups}

From the properties of the action in Proposition \ref{FLAGS:prop:action} we deduce that we can identify $U$ with the set of flags transverse to $F$ via the orbit of $E$:
\begin{align*}
 U & \cong \Omega_F \\
 u & \mapsto uE
\end{align*}
This allows us to give the following definition:

\begin{definition}
Let $T$ be a flag. The triple $(E,T,F)$ is called \emph{positive (relative to $E,F$)} if there exists $u \in U_{>0}$ such that $T=uE$.
\end{definition}

\begin{remark}
We follow the convention of Guichard-Wienhard \cite{GuichardWienhard18}. Some authors \cite{Bonahon_2014, Bonahon_2017, FockGoncharov06} permute the order and say $(E,F,T)$ is positive.
\end{remark}

\begin{example}
We consider flags in $\mathbb{R}^3$. Let
$$
u=\begin{pmatrix}
1 & 2 & 1 \\
0 & 1 & 1 \\
0 & 0 & 1
\end{pmatrix} , \quad \text{then} \quad
 uE=\left\{ \left\langle \begin{pmatrix}
1 \\ 1 \\ 1
\end{pmatrix} \right\rangle , \left\langle \begin{pmatrix}
1 \\ 1 \\ 1
\end{pmatrix}
,\begin{pmatrix}
2 \\ 1 \\0
\end{pmatrix}\right\rangle\right\}.
$$

Using the criterion from Example \ref{FLAGS:example:flagsinr3} it is easy to see that $u$ is totally positive, and thus $(E,uE,F)$ is a positive triple of flags. 
\end{example}

We recommend to solve the following exercise now.

\begin{exercise} \label{FLAGS:ex:1}
Consider the following flag in $\mathbb{R}^3$
$$
T = \left(\left\langle \begin{pmatrix}
1 \\ -1 \\ 1
\end{pmatrix}\right\rangle , \left\langle \begin{pmatrix}
1 \\ -1 \\ 1
\end{pmatrix},  \begin{pmatrix}
-2 \\ 1 \\ 0
\end{pmatrix}\right\rangle\right) .
$$
\begin{itemize}
    \item [(a)] Is there a $u \in U$ such that $(E,T,F) = (E,uE,F)$? 
    \item [(b)] Is there a $u \in U_{>0}$ such that $(E,T,F) = (E,uE,F)$? 
\end{itemize}
The solution to (a) is
$$
u = \begin{pmatrix}
1 & -2 & 1 \\
0 & 1 & -1 \\
0 & 0 & 1
\end{pmatrix}
$$
which has some negative entries, thus $u \notin U_{>0}$. Remember that $U$ acts on $\Omega_F$ simply transitively and thus the element $u \in U$ with $uE = T$ is unique, answering (b) negatively. \\

If $(E, T, F)$ is positive, then $F \pitchfork T$ (because the action preserves transversality: Proposition \ref{FLAGS:prop:action}) and the element $u \in U_{> 0}$ such that $uF = T$ is unique. Moreover, $E, F$ and $T$ are pairwise transverse. Something stronger is true:
\end{exercise}

\begin{lemma}
\label{FLAGS:lem:GW:generic}

Let $(E,  T, F)$ be a positive triple (relative to $E, F$). Then $(E, T, F)$ is generic.
\end{lemma}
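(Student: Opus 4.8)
\textbf{Plan for the proof of Lemma \ref{FLAGS:lem:GW:generic}.} The statement to prove is that if $(E,T,F)$ is positive relative to $E,F$ — meaning $T = uE$ for some $u \in U_{>0}$ — then the triple $(E,T,F)$ is generic, i.e.\ $E_a + T_b + F_c = \R^n$ whenever $a+b+c = n$ with $a,b,c \geq 0$. The approach I would take is a direct linear-algebra computation: write everything in the standard basis and reduce genericity of the triple to the non-vanishing of certain minors of $u$, which hold by the definition of $U_{>0}$.

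First I would set up coordinates. Recall $E_a = \langle e_n, e_{n-1}, \dots, e_{n-a+1}\rangle$ is spanned by the last $a$ basis vectors, $F_c = \langle e_1, \dots, e_c\rangle$ by the first $c$, and $T_b = u E_b = \langle u e_n, \dots, u e_{n-b+1}\rangle$ by the last $b$ columns of $u$. The condition $E_a + T_b + F_c = \R^n$ with $a+b+c=n$ is equivalent to the linear independence of the $a + b + c = n$ vectors consisting of $\{e_{n-a+1},\dots,e_n\}$, the last $b$ columns of $u$, and $\{e_1,\dots,e_c\}$; equivalently, the $n\times n$ matrix $M$ whose columns are these vectors is invertible, i.e.\ $\det M \neq 0$. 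I would then argue that, after expanding $\det M$ along the rows corresponding to indices in $\{1,\dots,c\}$ and $\{n-a+1,\dots,n\}$ (where the $e_i$-columns have their single nonzero entries), $\det M$ equals, up to sign, a single minor of $u$: namely the minor of $u$ with row set $\{c+1, c+2, \dots, c+b\}$ (the ``middle'' rows) and column set $\{n-b+1,\dots,n\}$ (the last $b$ columns). Since $u \in U$ is unipotent upper triangular, this minor is one of the ones \emph{not} forced to be zero — one should check the shape condition, which amounts to $c+1 \leq n-b+1$, i.e.\ $c + b \leq n$, which holds since $a \geq 0$ — and hence by definition of $U_{>0}$ it is strictly positive, in particular nonzero. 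Therefore $\det M \neq 0$ and genericity follows.

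The one genuine subtlety — and the step I expect to be the main obstacle — is verifying carefully that the cofactor expansion really collapses $\det M$ to exactly that one minor of $u$, with the correct row and column sets, and that this minor is indeed of the non-forced-zero type for \emph{every} admissible $(a,b,c)$. The bookkeeping with the indices (which rows are ``used up'' by the $e_i$ from $F_c$, which by the $e_j$ from $E_a$, and which $b$ rows remain for the genuine entries of $u$) needs to be done with care, and one must handle the degenerate cases $b = 0$ (where genericity reduces to transversality of $E$ and $F$, which is immediate) and $a = 0$ or $c = 0$ (where it reduces to $T \pitchfork F$ or $T \pitchfork E$, already known since positivity preserves transversality by Proposition \ref{FLAGS:prop:action}). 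Once the index bookkeeping is pinned down, the positivity of the relevant minor of $u \in U_{>0}$ closes the argument immediately. An alternative, slightly slicker route would be to invoke the parametrisation of $U^{>0}$ from Chapter~\ref{chap3} together with the known fact that totally positive (rather: ``$\geq 0$'') unipotent matrices have all admissible minors positive, and then cite Schoenberg-type exterior-power criteria; but I think the explicit cofactor computation is the most transparent and self-contained.
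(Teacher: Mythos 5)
Your proof is correct and follows essentially the same route as the paper: reduce genericity to the non-vanishing of a single $b\times b$ minor of $u$ with rows $\{c+1,\dots,c+b\}$ and columns $\{n-b+1,\dots,n\}$, check it is admissible for the unipotent shape since $a\geq 0$, and invoke total positivity of $u$. The paper phrases this via Notation~\ref{FLAGS:not:uabc} as $\det u(a,b,c)>0$ (after a harmless cyclic relabeling of $(a,b,c)$), which is exactly the minor you identify.
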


For the proof, we introduce the following notation, which will also be used in the next section:

\begin{notation}
\label{FLAGS:not:uabc}

Let $g \in G$, and let $a, b, c \geq 0$ be such that $a + b + c = n$. Split $g$ as a block matrix as follows.
$$
g = \left(
\begin{array}{c|c|c}
(a \times a) & \, & \,  \\ \hline
\, & \, & (c \times c)  \\ \hline
\, & (b \times b) & \,
\end{array}\right)
$$
We denote by $g(a, b, c)$ the highlighted $(c \times c)$ block.
\end{notation}

While this may seem a little arbitrary, it will be clear through the course of the proof how these blocks - and their minors - appear naturally when considering triples of the form $(E, uE, F)$.

\begin{proof}
Let $u \in U_{> 0}$ be the unique element such that $T = uE$. We need to show that for every $a, b, c \geq 0$ such that $a + b + c = n$, it holds $F_a + E_b + T_c = \mathbb{R}^n$. Now we have $F_a = \langle e_1, \ldots, e_a \rangle$, $E_b := \langle e_n, \ldots, e_{n-b+1} \rangle$, and $T_c$ is the span of the last $c$ columns of the matrix $u$. Since $u$ is totally positive, $u(a, b, c)$ has positive determinant. In particular, the last $c$ columns of $u$ span $\langle e_{a+1}, \ldots, e_{n-b} \rangle$ modulo $F_a + E_b$. We conclude that $F_a + E_b + T_c = \mathbb{R}^n$.
\end{proof}

\begin{remark}
Therefore the set $\{ T \in \Flag(\R^n) : (E, T, F) \text{ is positive}\}$ is a subset of $\Omega_E \cap \Omega_F$. By Proposition \ref{FLAGS:prop:action}, it can be identified with $U_{>0}$, and in particular it is connected. In fact, Lusztig even proved that it is a connected component of $\Omega_E \cap \Omega_F$ \cite{Lusztig94}. This is not very relevant for our purposes so we would not get into detail.
\end{remark}

We can extend the definition from triples of this form to all triples by exploiting the high transitivity of the action.

\begin{definition}
A triple of flags $(F^1,F^2, F^3)$ is \emph{(GW)-positive}, if there exists $g \in G$ such that $(gF^1, gF^2, gF^3) = (E,uE,F)$ with $u \in U_{>0}$. This is the definition used in \cite{GuichardWienhard18}.
\end{definition}

One should be careful to notice that while $(E,T,F)$ is positive relative to $E,F$ implies that it is (GW)-positive, the converse does not hold, as the following exercise shows.

\begin{exercise}
Show that $(E,T,F)$ from Exercise \ref{FLAGS:ex:1} is (GW)-positive.
\end{exercise}
One can check $(E,T,F)$ is (GW)-positive using   
$
g = \begin{pmatrix}
-1 & & \\ & 1 & \\ & & -1
\end{pmatrix}.
$
In fact we have the following characterisation.

\begin{lemma}
\label{FLAGS:lem:GW:char:diagonal}

Let $u \in U$. Then $(E, uE, F)$ is (GW)-positive if and only if there exists a diagonal matrix $d$ such that $dud^{-1} \in U_{>0}$.
\end{lemma}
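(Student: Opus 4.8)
\textbf{Proof plan for Lemma \ref{FLAGS:lem:GW:char:diagonal}.}

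The plan is to unwind the definition of (GW)-positivity and reduce the existence of the conjugating element $g\in G$ to an element of $U$, at the cost of a diagonal correction. First I would observe the ``if'' direction, which is essentially immediate: if $dud^{-1}\in U_{>0}$ for some diagonal $d$, then applying $g=d$ to the triple $(E,uE,F)$ gives $(dE,duE,dF)$, and since $d$ stabilizes both $E$ and $F$ (being diagonal, hence in $B\cap\tran B$), this equals $(E,(dud^{-1})E,F)$ with $dud^{-1}\in U_{>0}$; so $(E,uE,F)$ is (GW)-positive by definition.

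For the ``only if'' direction, suppose there is $g\in G$ with $(gE,guE,gF)=(E,vE,F)$ for some $v\in U_{>0}$. The key step is that $g$ then preserves the standard ascending and descending flags: $gF=F$ forces $g\in B$ by Proposition \ref{FLAGS:prop:action} (Stabilizers), and $gE=E$ forces $g\in\tran B$ by the analogous statement for the descending flag (its stabilizer is $\tran B$, as used in the proof of Proposition \ref{FLAGS:prop:action}). Hence $g\in B\cap\tran B=A$, i.e.\ $g=d$ is a diagonal matrix. Now from $guE=vE$ and $gE=E$ we get $d u d^{-1}(dE)=vE$, so $dud^{-1}$ and $v$ are both elements of $U$ sending $E$ to $vE$; since $U$ acts simply transitively on $\Omega_F$ (Proposition \ref{FLAGS:prop:action}), we conclude $dud^{-1}=v\in U_{>0}$, as desired. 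This completes the plan: the whole argument is a bookkeeping exercise with the transitivity properties already established.

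The one point that requires a little care — and which I expect to be the main (minor) obstacle — is justifying that $g$ must fix $F$ and $E$ \emph{on the nose} rather than merely map the triple to \emph{some} triple of the prescribed form. Here one uses that $(E,uE,F)$ being (GW)-positive via $g$ means literally $(gE,guE,gF)=(E,vE,F)$ as an ordered triple, so $gF=F$ and $gE=E$ are part of the hypothesis; no further work is needed. I would also remark (perhaps in passing) that $dud^{-1}$ is automatically unipotent upper-triangular since conjugation by a diagonal matrix preserves $U$, so the only content of the condition $dud^{-1}\in U_{>0}$ is the positivity of the non-forced minors — which is exactly what the equality $dud^{-1}=v$ delivers.
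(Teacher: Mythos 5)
Your proof is correct and follows essentially the same route as the paper: the ``if'' direction uses that a diagonal matrix stabilizes both $E$ and $F$, and the ``only if'' direction deduces that the conjugating element $g$ must lie in $B\cap\tran B=A$ and then invokes simple transitivity of $U$ on $\Omega_F$ to identify $dud^{-1}$ with $v\in U_{>0}$. The paper's proof is word-for-word the same argument, just slightly more compressed.
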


\begin{proof}
If the triple $(E, uE, F)$ is (GW)-positive, then there exists $d \in G$ such that $(dE, duE, dF) = (E, vE, F)$, where $v \in U_{>0}$. This implies that $d$ stabilizes both $E$ and $F$, and therefore it must be a diagonal matrix. Moreover, since $d$ stabilizes both $E$ and $F$, it is also true that $dud^{-1} E = vE$. Now it is easy to see that $dud^{-1} \in U$, and so by simple transitivity (Proposition \ref{FLAGS:prop:action}), it follows that $dud^{-1} = v \in U_{>0}$.

Conversely, if $dud^{-1} \in U_{>0}$, then $d \cdot (E, uE, F) = (dE, duE, dF) = (E, dud^{-1}E, F)$, which is positive relative to $E, F$.
\end{proof}

\begin{example} (Triples on the circle)
In $\mathbb{R}^2$, flags consist only of one subspace, and thus the flag variety can be identified with the projective line, which is homeomorphic to a circle. Note that every triple consisting of three distinct points is (GW)-positive, while only some triples are positive relative to $E,F$, namely those which are positively oriented in the usual sense.
\end{example}

\section{Positivity via triple ratios}

The notion of positivity introduced in the previous section has one disadvantage, namely that the standard ascending and descending flags seem to play a central role, even though the definition applies to all flags. In this section we introduce a set of \emph{coordinates} that parametrizes the $G$-orbits on generic triples of flags, and allows to give a definition of positivity that is more intrinsic. In this section, we follow \cite{Bonahon_2014}. \\

Let $(E, F, T)$ be a generic triple of flags. For $0 \leq i \leq n$, fix a generator $e^{(i)} \in \Lambda^i(E_i) \cong \mathbb{R}$, which we also see as an element of $\Lambda^i(\mathbb{R}^n)$. Similarly, fix generators $f^{(i)}$ and $t^{(i)}$ of $\Lambda^i(F_i)$ and $\Lambda^i(T_i)$. Now if $a + b + c = n$, then $e^{(a)} \wedge f^{(b)} \wedge t^{(c)} \in \Lambda^n(\mathbb{R}^n) \cong \mathbb{R}$. Note that since the triple is generic, this element is never $0$.

\begin{definition}
Fix an identification of $\Lambda^n(\mathbb{R}^n)$ with $\mathbb{R}$. Let $a, b, c \geq 1$ be such that $a + b + c = n$. Then we define the $(a, b, c)$-triple ratio of the triple $(E, F, T)$ as follows:
$$T_{abc}(E, F, T) := 
\frac{e^{(a+1)} \wedge f^{(b)} \wedge t^{(c-1)}}{e^{(a-1)} \wedge f^{(b)} \wedge t^{(c+1)}} \cdot
\frac{e^{(a)} \wedge f^{(b-1)} \wedge t^{(c+1)}}{e^{(a)} \wedge f^{(b+1)} \wedge t^{(c-1)}} \cdot
\frac{e^{(a-1)} \wedge f^{(b+1)} \wedge t^{(c)}}{e^{(a+1)} \wedge f^{(b-1)} \wedge t^{(c)}}.
$$
\end{definition}

Note that each $e^{(i)}$ appears exactly once on the numerator and exactly once on the denominator (if it does appear). Therefore the definition of $T_{abc}(E, F, T)$ is independent of the choice of the generators $e^{(i)}$. The same holds of course for the choices of $f^{(i)}$ and $t^{(i)}$. It is a little less clear that is independent of the choice of the identification $\Lambda^n(\mathbb{R}^n) \cong \mathbb{R}^n$:

\begin{lemma}
\label{FLAGS:lem:BD:orbits}

For all $\gamma \in G$, it holds $T_{abc}(E, F, T) = T_{abc}(\gamma E, \gamma F, \gamma T)$. In particular, $T_{abc}(E, F, T)$ is independent of the choice of the identification $\Lambda^n(\mathbb{R}^n) \cong \mathbb{R}^n$.
\end{lemma}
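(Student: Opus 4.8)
The plan is to compute $T_{abc}(\gamma E, \gamma F, \gamma T)$ using the generators obtained by pushing forward the chosen generators for $(E,F,T)$, and then to observe that the six exterior products occurring in the definition all rescale by one and the same factor $\det\gamma$, which therefore cancels out of the product of three fractions.

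First I would recall that $\gamma \in G$ induces on each exterior power the map $\Lambda^i\gamma \colon \Lambda^i(\R^n) \to \Lambda^i(\R^n)$, and that $\Lambda^i\gamma$ carries $\Lambda^i(E_i)$ isomorphically onto $\Lambda^i(\gamma E_i) = \Lambda^i\big((\gamma E)_i\big)$: indeed, if $v_1,\dots,v_i$ is a basis of $E_i$ then $\gamma v_1,\dots,\gamma v_i$ is a basis of $(\gamma E)_i$ and $\gamma v_1 \wedge \cdots \wedge \gamma v_i = \Lambda^i\gamma(v_1 \wedge \cdots \wedge v_i)$; likewise for $F$ and $T$. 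By Naturality (Proposition \ref{FLAGS:prop:action}) the triple $(\gamma E, \gamma F, \gamma T)$ is again generic, so the triple ratio $T_{abc}(\gamma E,\gamma F,\gamma T)$ is defined, and we are free to evaluate it using the generators $\Lambda^i\gamma(e^{(i)})$, $\Lambda^i\gamma(f^{(i)})$, $\Lambda^i\gamma(t^{(i)})$ of $\Lambda^i\big((\gamma E)_i\big)$, $\Lambda^i\big((\gamma F)_i\big)$, $\Lambda^i\big((\gamma T)_i\big)$ respectively.

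Next I would use that under the fixed identification $\Lambda^n(\R^n) \cong \R$ the endomorphism $\Lambda^n\gamma$ is multiplication by $\det\gamma \ne 0$. Since $\Lambda^n\gamma(x \wedge y \wedge z) = \Lambda^p\gamma(x) \wedge \Lambda^q\gamma(y) \wedge \Lambda^r\gamma(z)$ whenever $x \in \Lambda^p(\R^n)$, $y \in \Lambda^q(\R^n)$, $z \in \Lambda^r(\R^n)$ with $p+q+r=n$, every term of the form $e^{(p)} \wedge f^{(q)} \wedge t^{(r)}$ appearing in $T_{abc}$ gets multiplied by $\det\gamma$ when we replace $(E,F,T)$ by $(\gamma E,\gamma F,\gamma T)$ and the generators by their $\Lambda^i\gamma$-images. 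The expression defining $T_{abc}$ is a product of three fractions, hence contains exactly three such terms in its total numerator and three in its total denominator, so the factor $(\det\gamma)^3$ in the numerator cancels the $(\det\gamma)^3$ in the denominator, giving $T_{abc}(\gamma E, \gamma F, \gamma T) = T_{abc}(E,F,T)$. The independence of the choice of identification $\Lambda^n(\R^n) \cong \R$ follows by the identical counting: two such identifications differ by a nonzero scalar $\lambda$, each of the six terms is multiplied by $\lambda$, and the $\lambda^3$ in numerator and denominator cancel.

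There is no real obstacle here; the computation is pure bookkeeping of scaling factors. The one point that deserves care is the first step: the generators chosen for the transformed triple must be taken to be precisely the $\Lambda^i\gamma$-images of the original generators, so that the only discrepancy between the two computations of the triple ratio is the scalar $\det\gamma$ (resp. $\lambda$) attached to each exterior product — and one must also invoke that $T_{abc}$ is already known to be independent of the individual choices of $e^{(i)}, f^{(i)}, t^{(i)}$, which was checked immediately after the definition, so the argument is insensitive to which generator of $\Lambda^i\big((\gamma E)_i\big)$ one picks.
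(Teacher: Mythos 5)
Your proof is correct and follows essentially the same route as the paper: push the chosen generators forward by $\gamma$, observe each wedge product acquires a factor of $\det\gamma$, and cancel the $(\det\gamma)^3$ appearing in both numerator and denominator (and likewise $\lambda^3$ for a change of identification $\Lambda^n(\R^n)\cong\R$). The extra care you take in spelling out that $\Lambda^i\gamma$ carries $\Lambda^i(E_i)$ onto $\Lambda^i((\gamma E)_i)$ and that the result is insensitive to which generator one picks is implicit in the paper's one-line appeal to the remark preceding the definition.
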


\begin{proof}
We choose $\gamma \cdot e^{(i)}$ as a generator of $\Lambda^i(\gamma E_i)$, which we are allowed to do by the previous remark. Then for every $a, b, c \geq 0$ such that $a + b + c = n$ it holds
\[\gamma \cdot e^{(a)} \wedge \gamma \cdot f^{(b)} \wedge \gamma \cdot t^{(c)} = \det(\gamma) \cdot e^{(a)} \wedge f^{(b)} \wedge t^{(c)}.\]
Since there are three such expressions in the numerator and three in the denominator, the $\det(\gamma)$ cancel out, and we obtain $T_{abc}(\gamma E, \gamma F, \gamma T) = T_{abc}(E, F, T)$.
\end{proof}

In fact, much more is true:

\begin{theorem}[Fock-Goncharov \cite{FockGoncharov06}]
Two triples $(E, F, T)$ and $(E', F', T')$ are in the same $G$-orbit if and only if their triple ratios coincide. Moreover, every possible set of non-zero triple ratios is attained by some triple.
\end{theorem}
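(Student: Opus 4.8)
The plan is to use the transitivity of the $G$-action (Proposition~\ref{FLAGS:prop:action}) to reduce everything to a normal form. Given a generic triple $(E,F,T)$, first move $E$ and $F$ to the standard descending and ascending flags; since genericity forces $F\pitchfork T$, the third flag becomes $T=uE$ for a \emph{unique} $u\in U$, and genericity of the triple is then precisely the non-vanishing of all the minors $\det u(a,b,c)$ of Notation~\ref{FLAGS:not:uabc} (this is the computation in the proof of Lemma~\ref{FLAGS:lem:GW:generic}, with \emph{positive} relaxed to \emph{non-zero}); call such a $u$ nondegenerate. The residual freedom is conjugation by $A$, the common stabilizer of $E$ and $F$, so $G$-orbits of generic triples correspond bijectively to $A$-conjugacy classes of nondegenerate $u\in U$. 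The \emph{only if} direction is now immediate: the triple ratios are $G$-invariant by Lemma~\ref{FLAGS:lem:BD:orbits}, hence constant on $G$-orbits. What remains is the converse and the surjectivity statement.

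\textbf{Counting.} There are exactly $\binom{n-1}{2}$ triples $(a,b,c)$ with $a,b,c\ge 1$ and $a+b+c=n$, hence $\binom{n-1}{2}$ triple ratios. On the other hand $\dim\Flag(\R^n)=\binom{n}{2}$, the center of $G$ acts trivially on flags, and the induced $\PGL_n(\R)$-action on generic triples has trivial stabilizers (the centralizer in $A$ of a nondegenerate $u$ is reduced to scalars), so the orbit space of generic triples has dimension $3\binom{n}{2}-(n^2-1)=\binom{n-1}{2}$. So the triple ratios are the right number of coordinates, and the real content is that they assemble into a global coordinate system with image exactly $(\R^{\times})^{\binom{n-1}{2}}$.

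\textbf{The reconstruction (the crux).} Here I would follow Fock--Goncharov and reconstruct the triple from its ratios by explicit formulas. The key observation is that each product $e^{(p)}\wedge f^{(q)}\wedge t^{(r)}$ occurring in a triple ratio equals, up to sign, one of the minors $\det u(\cdot,\cdot,\cdot)$ of the normalized matrix $u$, so each $T_{abc}$ is a degree-zero Laurent monomial in those minors. Laying the minors $\det u(a,b,c)$ out at the lattice points of the $n$-th dilate of a triangle with vertices $E,F,T$ — its $\binom{n-1}{2}$ interior lattice points indexing the triple ratios — the relations defining the $T_{abc}$ become a recursion which, together with the residual $A$-normalization, recovers every minor $\det u(a,b,c)$, hence $u$ modulo $A$, from the $T_{abc}$ alone; this is injectivity modulo $G$. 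Running the formulas backwards from any prescription of non-zero ratios produces a nondegenerate $u$, hence a generic triple realizing them; this is surjectivity. One point to watch: a single matrix-normalization chart (e.g.\ rescaling certain minors of $u$ to $1$ by the $A$-action) need not cover the whole configuration space — already for $n=3$ the orbit with $u_{12}=0$ lies outside the obvious chart yet is generic and realizes the ratio value $-1$ — so one must argue on the minors directly, or patch charts. An essentially equivalent alternative is induction on $n$: pass to a canonically associated generic triple in dimension $n-1$, whose $\binom{n-2}{2}$ ratios form a sub-collection of the original ones, the remaining $n-2$ ratios reconstructing the extension, and check consistency and surjectivity of the recursion. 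The case $n=3$ — one triple ratio, which one computes to be $(zx-y)/y$ and checks runs over all of $\R^{\times}$ — is a good sanity check.

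\textbf{Main obstacle.} The \emph{only if} half and the dimension count are routine; the genuine obstacle is the reconstruction, i.e.\ showing that the $\binom{n-1}{2}$ triple ratios \emph{separate} $G$-orbits and that \emph{every} non-zero tuple is realized. The difficulty is essentially combinatorial bookkeeping: organizing the recursion on the triangulated triangle (equivalently, recognizing the underlying cluster structure), checking that each step divides only by a non-vanishing minor, tracking the interplay between the $\binom{n}{2}$ relevant minors and the $(n-1)$-dimensional residual normalization, and patching the normalization charts to conclude that the image is the full torus $(\R^{\times})^{\binom{n-1}{2}}$ rather than a proper subset.
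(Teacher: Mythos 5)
The paper cites this theorem to Fock--Goncharov but gives no proof, so there is no internal argument against which to compare your proposal. Assessed on its own terms, your outline has the right skeleton: the easy direction is Lemma~\ref{FLAGS:lem:BD:orbits}; the reduction via Proposition~\ref{FLAGS:prop:action} to the claim that $u \mapsto \bigl(T_{abc}(E,F,uE)\bigr)$ induces a bijection from $A$-orbits of nondegenerate $u \in U$ onto $(\R^{\times})^{\binom{n-1}{2}}$ is legitimate; and the dimension count is right, including your implicit use of the fact that genericity forces the last-column entries $u(a,n-1-a,1)=u_{a+1,n}$ to be nonzero, so that the $A$-centralizer of a nondegenerate $u$ really is the scalars.

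The gap is that you then describe what a proof of the bijectivity claim should do rather than actually doing it. Both halves of the remaining work live in the reconstruction: you would still need to (i) produce the recursion expressing each of the $\binom{n}{2}$ nontrivial minors $\det u(a,b,c)$ in terms of the $\binom{n-1}{2}$ triple ratios and an $(n-1)$-parameter normalization gauge, (ii) verify that every step of this recursion divides only by minors already shown to be nonzero, so that the procedure is well-defined for an arbitrary prescribed tuple in $(\R^{\times})^{\binom{n-1}{2}}$, and (iii) check that the $u$ so constructed in fact has \emph{all} generic minors nonzero --- surjectivity does not follow merely from the recursion running to completion without dividing by zero. You are right that a single normalization chart cannot cover the whole configuration space, and right that this is exactly where the cluster-type bookkeeping in Fock--Goncharov earns its keep; but naming the obstacle is not the same as discharging it. As written, this is an accurate roadmap for the argument, not a self-contained proof of the theorem.
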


The definition of positivity is then natural in terms of these coordinates:

\begin{definition}
The triple $(E, F, T)$ is called \emph{(BD)-positive} if all triple ratios are positive.
\end{definition}

In order to facilitate the computations, we prove the following:

\begin{lemma}
\label{FLAGS:lem:GW:implies:BD}

Let $a, b, c \geq 0$ be such that $a + b + c = n$. Let $F$ be the standard ascending, and $E$ the standard descending flag, and let $T := uE$ for some $u \in U$. Then, under a suitable choice of generators,
$$f^{(a)} \wedge e^{(b)} \wedge t^{(c)} = (-1)^{\lfloor b/2 \rfloor + \lfloor c/2 \rfloor + bc} \cdot \det u(a, b, c),$$
where $u(a, b, c)$ is as in Notation \ref{FLAGS:not:uabc}. Thus, for every $a, b, c \geq 1$ such that $a + b + c = n$, it holds:
$$T_{abc}(F, E, T) = 
\frac{\det u(a+1, b, c-1)}{\det u(a-1, b, c+1)} \cdot
\frac{\det u(a, b-1, c+1)}{\det u(a, b+1, c-1)} \cdot
\frac{\det u(a-1, b+1, c)}{\det u(a+1, b-1, c)}.
$$
\end{lemma}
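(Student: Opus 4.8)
The plan is to compute the elementary triple products $f^{(a)}\wedge e^{(b)}\wedge t^{(c)}$ once and for all in the standard basis and then feed this identity into the definition of the triple ratio, where the signs will cancel. I would begin by pinning down the ``suitable choice of generators'': take $f^{(i)}:=e_1\wedge\cdots\wedge e_i$, take $e^{(i)}:=e_n\wedge e_{n-1}\wedge\cdots\wedge e_{n-i+1}$ (the order suggested by $E_i=\langle e_n,\ldots,e_{n-i+1}\rangle$), and correspondingly $t^{(i)}:=u\cdot e^{(i)}=ue_n\wedge ue_{n-1}\wedge\cdots\wedge ue_{n-i+1}$, which generates $\Lambda^i(T_i)$ since $T_i=uE_i$; finally identify $\Lambda^n(\mathbb R^n)\cong\mathbb R$ via $e_1\wedge\cdots\wedge e_n\mapsto 1$. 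Since $a+b+c=n$ forces $n-b+1=a+c+1$, the flag $E_b$ spans $\langle e_{a+c+1},\ldots,e_n\rangle$ while $F_a$ spans $\langle e_1,\ldots,e_a\rangle$; together these omit exactly the coordinate subspace $\langle e_{a+1},\ldots,e_{a+c}\rangle$. Hence in $f^{(a)}\wedge e^{(b)}\wedge t^{(c)}$ only the components of the columns $ue_{a+b+1},\ldots,ue_n$ of $u$ lying in $\langle e_{a+1},\ldots,e_{a+c}\rangle$ contribute, and the usual description of minors via exterior powers identifies this contribution with $\det u(a,b,c)$ — the submatrix on rows $\{a+1,\ldots,a+c\}$ and columns $\{a+b+1,\ldots,n\}$, which is exactly the block in Notation \ref{FLAGS:not:uabc} — times $e_{a+1}\wedge\cdots\wedge e_{a+c}$ in increasing order.

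Next I would chase the three sources of sign. Reordering $e^{(b)}$ from $(n,n-1,\ldots,n-b+1)$ to increasing order introduces $(-1)^{b(b-1)/2}=(-1)^{\lfloor b/2\rfloor}$; reordering $t^{(c)}$ likewise introduces $(-1)^{\lfloor c/2\rfloor}$; and bringing $(e_1\wedge\cdots\wedge e_a)\wedge(e_{a+c+1}\wedge\cdots\wedge e_n)\wedge(e_{a+1}\wedge\cdots\wedge e_{a+c})$ into $e_1\wedge\cdots\wedge e_n$ requires sliding the block of $c$ vectors past the block of $b$ vectors, which costs $(-1)^{bc}$. Multiplying these gives $f^{(a)}\wedge e^{(b)}\wedge t^{(c)}=(-1)^{\lfloor b/2\rfloor+\lfloor c/2\rfloor+bc}\det u(a,b,c)$, the first assertion; the boundary cases in which one of $a,b,c$ vanishes are handled by the same argument with the convention that the empty wedge and the determinant of the empty matrix both equal $1$.

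For the triple-ratio formula I would substitute this identity into the definition of $T_{abc}$. Since $T_{abc}(F,E,T)$ is obtained from $T_{abc}(E,F,T)$ by swapping the roles $e^{(\cdot)}\leftrightarrow f^{(\cdot)}$, it is a product of three ratios of terms $f^{(a')}\wedge e^{(b')}\wedge t^{(c')}$ whose index triples $(a',b',c')$ are, in the numerators, $(a+1,b,c-1)$, $(a,b-1,c+1)$, $(a-1,b+1,c)$, and in the denominators $(a-1,b,c+1)$, $(a,b+1,c-1)$, $(a+1,b-1,c)$. Replacing each term by $(-1)^{\lfloor b'/2\rfloor+\lfloor c'/2\rfloor+b'c'}\det u(a',b',c')$, the $\det u$ factors are precisely those in the claimed formula, so it remains to check that the total sign is $+1$. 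Here the bookkeeping is clean: the multiset of the values $\lfloor b'/2\rfloor,\lfloor c'/2\rfloor$ arising over the three numerator triples agrees with the one over the three denominator triples (each of $\lfloor b/2\rfloor,\lfloor(b\pm1)/2\rfloor,\lfloor c/2\rfloor,\lfloor(c\pm1)/2\rfloor$ appears once on each side), and the sums of the products $b'c'$ also agree, both equalling $3bc-1$. Hence the numerator and denominator sign-exponents coincide, the signs cancel exactly, and the stated formula follows. The only real work is this sign accounting together with the three permutation signs in the second paragraph; no conceptual difficulty remains once the generators are fixed, so I expect the write-up to consist mainly of making those explicit.
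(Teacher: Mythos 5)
Your proof is correct and follows essentially the same route as the paper: same choice of generators $f^{(i)}$, $e^{(i)}$, $t^{(i)}$, same identification of the relevant $c\times c$ block, and the same cancellation of signs in the triple ratio. The only differences are cosmetic: you track the sign $(-1)^{\lfloor b/2\rfloor+\lfloor c/2\rfloor+bc}$ by reordering the three blocks and then sliding the $c$-block past the $b$-block, whereas the paper flips the last $c$ columns and then does successive Laplace expansions along the antidiagonal $J_b$ block; and for the triple-ratio cancellation you note explicitly that both the numerator and denominator products of $b'c'$ sum to $3bc-1$, whereas the paper argues that the parity of $bc$ is unchanged under $\pm 2$ shifts. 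Both computations are equivalent, and yours is arguably cleaner.
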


\begin{proof}
We choose the following generators:
$$f^{(i)} := e_1 \wedge \cdots \wedge e_i; \quad e^{(i)} := e_n \wedge \cdots \wedge e_{n-i+1}; \quad t^{(i)} = u_n \wedge \cdots \wedge u_{n-i+1};$$
where $u_j$ denotes the $j$-th column of $u$. Then we use the formula for wedges in terms of determinants to obtain:
$$f^{(a)} \wedge e^{(b)} \wedge t^{(c)} = \det(e_1 \cdots e_a \mid e_n \cdots e_{n-b+1} \mid u_n \cdots u_{n - c + 1}).$$
Flipping the last $c$ columns changes this determinant by $(-1)^{\lfloor c/2 \rfloor}$, and gives
$$
\det \begin{pmatrix}
I_a & 0 & * \\
0 & 0 & u(a, b, c) \\
0 & J_b & *
\end{pmatrix}
=
\det \begin{pmatrix}
0 & u(a, b, c) \\
J_b & *
\end{pmatrix}
$$
where $J_b$ is the $(b \times b)$ matrix with $1$ on the antidiagonal and $0$ elsewhere. Next, we compute
$$
\det \begin{pmatrix}
0 & u(a, b, c) \\
J_b & *
\end{pmatrix} = 
(-1)^{b+c+1} \det \begin{pmatrix}
0 & u(a, b, c) \\
J_{b-1} & *
\end{pmatrix} =
(-1)^{b+c+1} \cdot (-1)^{b+c} \cdot \det \begin{pmatrix}
0 & u(a, b, c) \\
J_{b-2} & *
\end{pmatrix}
$$
$$
= \cdots = \left( \prod\limits_{j = c+2}^{b+c+1}(-1)^j \right) \det u(a,b,c) = (-1)^{\lfloor b/2 \rfloor} \cdot (-1)^{bc} \cdot \det u(a, b, c).
$$
The last equality follows from the fact that there are $\lfloor b/2 \rfloor$ odd numbers in $\{ c+2, \ldots, b+c+1 \}$, unless $b$ and $c$ are both odd, in which case there are $\lceil b/2 \rceil = - \lfloor b/2 \rfloor$. We conclude that
$$f^{(a)} \wedge e^{(b)} \wedge t^{(c)} = (-1)^{\lfloor b/2 \rfloor} \cdot (-1)^{\lfloor c/2 \rfloor} \cdot (-1)^{bc} \cdot \det u(a, b, c).$$

Using this formula to compute $T_{abc}(F, E, T)$, the signs $(-1)^{\lfloor b/2 \rfloor}$ and $(-1)^{\lfloor c/2 \rfloor}$ each appear exactly once in the numerator and once in the denominator, so they all cancel out. As for the terms of the form $(-1)^{bc}$, these also cancel out, since the parity of $bc$ does not change when each of $b$ and $c$ is changed by $\pm 2$.
\end{proof}

\begin{example}
\label{FLAGS:ex:T111}

Let $n = 3$, and let consider $(F, E, uE)$. There is only one triple ratio to consider, namely $T_{111}$. By the previous formula,
$$
u = 
\begin{pmatrix}
1 & x & y \\
0 & 1 & z \\
0 & 0 & 1
\end{pmatrix}
\Rightarrow T_{111}(F, E, uE) = \frac{1}{xz - y} \cdot \frac{1}{1} \cdot \frac{y}{1} = \frac{y}{xz - y}.
$$
Therefore $(F, E, uE)$ is (BD)-positive if and only if $y$ and $xz - y$ have the same sign, and are non-zero (the last condition ensures that $(F, E, uE)$ is generic).
\end{example}

\section{Equivalence of the two definitions}

The two definitions may look different at first sight, but this is not the case:

\begin{theorem}[Fock-Goncharov \cite{FockGoncharov06}]
\label{FLAGS:thm:GW:equiv:BD}

A triple $(E, T, F)$ is (GW)-positive if and only if it is (BD)-positive.
\end{theorem}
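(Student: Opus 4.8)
The plan is to exploit the $G$-invariance of both notions in order to reduce to a concrete statement about a single unipotent matrix, prove one implication by a direct minor computation, and handle the converse via the Fock--Goncharov classification of orbits.

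\emph{Reduction to normal form.} Both (GW)- and (BD)-positivity are $G$-invariant (for (GW) this is immediate from the definition; for (BD) it is Lemma~\ref{FLAGS:lem:BD:orbits}) and both force the triple to be generic (for (GW) by Lemma~\ref{FLAGS:lem:GW:generic}; for (BD) because the triple ratios are only defined, hence a fortiori only positive, on the generic locus). So it suffices to treat a generic triple already normalized to the form $(E,uE,F)$, where $E,F$ are the standard descending and ascending flags and $u\in U$ is the unique unipotent upper-triangular matrix with $uE=T$; this normalization uses transitivity of $G$ on transverse pairs and simple transitivity of $U$ on $\Omega_F$ (Proposition~\ref{FLAGS:prop:action}). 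By Lemma~\ref{FLAGS:lem:GW:char:diagonal}, $(E,uE,F)$ is (GW)-positive iff $dud^{-1}\in U_{>0}$ for some diagonal $d$; and since ``all triple ratios are positive'' is permutation-invariant in the three flags, $(E,uE,F)$ is (BD)-positive iff $T_{abc}(F,E,uE)>0$ for all $(a,b,c)$ with $a,b,c\ge 1$, $a+b+c=n$, which by Lemma~\ref{FLAGS:lem:GW:implies:BD} equals the product of six block-minor ratios of $u$ displayed there. Thus the theorem reduces to: such a $u$ admits a diagonal conjugate in $U_{>0}$ iff all these triple ratios are positive.

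\emph{Easy direction.} If $w:=dud^{-1}\in U_{>0}$, then since $d$ fixes $E$ and $F$, Lemma~\ref{FLAGS:lem:BD:orbits} gives $T_{abc}(F,E,uE)=T_{abc}(F,E,wE)$, and Lemma~\ref{FLAGS:lem:GW:implies:BD} writes the latter as a ratio of six block minors of $w$. Each such block minor is a minor of the upper-triangular matrix $w$ on contiguous rows and columns whose index matching never crosses the main diagonal, hence is not one of the minors forced to vanish by membership in $U$, and is therefore strictly positive because $w\in U_{>0}$. So every $T_{abc}(F,E,uE)$ is positive and the triple is (BD)-positive.

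\emph{Hard direction and main obstacle.} Conversely, assume all $T_{abc}(F,E,uE)$ are positive, and consider $\Theta\colon U_{>0}\to(\R_{>0})^{I}$, $v\mapsto\bigl(T_{abc}(F,E,vE)\bigr)_{(a,b,c)\in I}$, where $\lvert I\rvert=\binom{n-1}{2}$ and the easy direction shows $\Theta$ really lands in $(\R_{>0})^I$. Granting that $\Theta$ is \emph{surjective}, choose $v\in U_{>0}$ with $\Theta(v)=\bigl(T_{abc}(F,E,uE)\bigr)$; then $(F,E,uE)$ and $(F,E,vE)$ are generic triples with equal triple ratios, so by the Fock--Goncharov classification theorem they lie in one $G$-orbit. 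Any $g$ carrying one to the other fixes $E$ and $F$, hence is diagonal, and conjugates $u$ to $v$ (simple transitivity of $U$ on $\Omega_F$, exactly as in the proof of Lemma~\ref{FLAGS:lem:GW:char:diagonal}); so $d:=g$ gives $dud^{-1}=v\in U_{>0}$ and $(E,uE,F)$ is (GW)-positive. The substantive point is therefore the surjectivity of $\Theta$: using the cell parametrization $(\R_{>0})^{N}\xrightarrow{\ \sim\ }U_{>0}$ with $N=\binom{n}{2}$ attached to a reduced word for $\omega_0$, one expresses each $T_{abc}$ as a subtraction-free (indeed Laurent-monomial) function of the $N$ coordinates; diagonal conjugation acts on $U_{>0}$ through $A/Z(G)$ with $(n-1)$-dimensional orbits and leaves every $T_{abc}$ unchanged, so $\Theta$ factors through the $\binom{n-1}{2}$-dimensional quotient, and surjectivity is then obtained either by an explicit section after normalizing the $n-1$ gauge coordinates (for $n=3$: set $x=z=1$, $y=r/(1+r)$ to realize $T_{111}=r$), or by a topological argument (the induced map on the quotient is an injective immersion between connected equidimensional manifolds, hence an open embedding, and a degeneration analysis showing a triple ratio blows up or vanishes at the boundary closes the image). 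Alternatively one may simply invoke the positivity-preserving form of the Fock--Goncharov construction, which realizes prescribed triple ratios by subtraction-free formulas and so outputs a positive triple from positive input. I expect this surjectivity (equivalently: positivity of the $\binom{n-1}{2}$ triple-ratio minor combinations forces total positivity up to diagonal gauge) to be the only real difficulty; the rest is bookkeeping with the lemmas already established.
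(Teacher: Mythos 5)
Your easy direction (GW $\Rightarrow$ BD) is correct and is exactly the argument given in the paper via Lemma~\ref{FLAGS:lem:GW:implies:BD}. For the hard direction, however, you take a genuinely different route: instead of explicitly constructing a diagonal gauge matrix $d$ with $dud^{-1}\in U_{>0}$ (which is what the paper does for $n=3$, by a sign case analysis on $y$ and $xz-y$), you propose to invoke the Fock--Goncharov orbit classification and reduce the whole problem to surjectivity of the induced map $\Theta\colon U_{>0}/A \to (\R_{>0})^{\binom{n-1}{2}}$. That reduction itself is correct and cleanly carried out: the dimensions match, injectivity follows from Fock--Goncharov's theorem together with the stabilizer computation you cite, and the final diagonal-conjugation step via simple transitivity of $U$ on $\Omega_F$ is sound.

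The gap is precisely where you flag it, but it is more serious than ``bookkeeping.'' None of your three fallback proposals for the surjectivity of $\Theta$ actually closes it. The Laurent-monomiality of each $T_{abc}$ in a Lusztig cell parametrization is a genuine theorem of Fock--Goncharov about the relation between their positive atlas and Lusztig's; it is not a computation one can wave at, and in any case stating it does not by itself give a section unless one also handles the edge coordinates that the gauge quotient eliminates. The topological argument has a concrete obstruction: the projection $U_{>0}\to U_{>0}/A$ is not proper, so a coordinate of $u$ going to $0$ or $\infty$ need not force any $T_{abc}$ to blow up or vanish (already for $n=3$, sending $b\to 0$ degenerates $u$ while $T_{111}=a/c$ stays bounded), so ``closedness of the image'' does not follow from the sketched degeneration analysis without a more careful choice of slice. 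And your third option is essentially a restatement of the conclusion. To be fair, the paper itself only proves the $n=3$ case and explicitly defers the general case; your architecture would be a legitimate alternative exposition if you supplied the surjectivity argument (for instance by gauge-fixing a concrete $(n-1)$-dimensional slice of $U_{>0}$ and exhibiting an explicit section, generalizing your $n=3$ fix $x=z=1$, $y=r/(1+r)$), but as written the central step is asserted, not proved.
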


We start by remarking the following, which accounts for the fact that the two definitions use different conventions in terms of orderings of the triples (as is apparent from Lemma \ref{FLAGS:lem:GW:implies:BD}):

\begin{lemma}
The notion of (BD)-positivity is independent of the ordering of the triple.
\end{lemma}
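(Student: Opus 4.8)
The plan is to make the symmetric group $S_3$ act on the collection of all triple ratios of a generic triple, and to observe that a permutation of the flags changes each individual triple ratio only by relabelling its indices and possibly inverting it — operations that do not affect the sign. First I would note that genericity itself is symmetric in the three flags: the condition ``$F^1_a+F^2_b+F^3_c=\R^n$ for all $a,b,c\geq 0$ with $a+b+c=n$'' is invariant under permuting the flags together with the indices, and since it is imposed over the entire range of $(a,b,c)$, it is in fact invariant under permuting only the flags. Hence if $(F^1,F^2,F^3)$ is generic, so is every reordering, and all the relevant triple ratios remain defined.

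Since $S_3$ is generated by the cyclic rotation $\rho\colon(F^1,F^2,F^3)\mapsto(F^2,F^3,F^1)$ and the transposition $\sigma\colon(F^1,F^2,F^3)\mapsto(F^1,F^3,F^2)$, it suffices to analyse these two cases. Fixing generators $f^{j,(i)}$ of $\Lambda^i(F^j_i)$ as in the definition, I would expand $T_{abc}(F^2,F^3,F^1)$ and $T_{abc}(F^1,F^3,F^2)$ directly from the formula, reorder each occurring wedge $f^{j,(p)}\wedge f^{k,(q)}\wedge f^{l,(r)}$ back to the standard cyclic order $f^{1,(\cdot)}\wedge f^{2,(\cdot)}\wedge f^{3,(\cdot)}$, and track the Koszul signs. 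The crucial observation is that within each of the three fractions making up a triple ratio, the reordering sign acquired by the numerator and the one acquired by the denominator agree: the relevant exponents differ by an even integer, because the degrees of the displaced generators change by $\pm 2$ between numerator and denominator. So all signs cancel, and comparing the resulting six fractions term by term with the fractions of an appropriately relabelled triple ratio of $(F^1,F^2,F^3)$ yields
\[
T_{abc}(F^2,F^3,F^1)=T_{cab}(F^1,F^2,F^3),\qquad T_{abc}(F^1,F^3,F^2)=T_{acb}(F^1,F^2,F^3)^{-1}.
\]

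Finally I would assemble these into the general statement. Writing an arbitrary $\pi\in S_3$ as a word in $\rho$ and $\sigma$ and iterating the two identities gives
\[
T_{abc}\bigl(\pi\cdot(F^1,F^2,F^3)\bigr)=T_{\pi^{*}(a,b,c)}(F^1,F^2,F^3)^{\varepsilon(\pi)},
\]
where $\varepsilon(\pi)\in\{\pm1\}$ is $+1$ for even and $-1$ for odd $\pi$ (it is $+1$ under $\rho$ and $-1$ under $\sigma$, and it is multiplicative), and $\pi^{*}$ is the corresponding permutation of the coordinates of $(a,b,c)$, hence a bijection of the index set $\{(a,b,c):a+b+c=n,\ a,b,c\geq1\}$. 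Therefore the family $\{T_{abc}(\pi\cdot(F^1,F^2,F^3))\}$ coincides, as a set, with $\{T_{abc}(F^1,F^2,F^3)^{\varepsilon(\pi)}\}$; since a nonzero real number is positive exactly when its inverse is, all triple ratios of $\pi\cdot(F^1,F^2,F^3)$ are positive if and only if all triple ratios of $(F^1,F^2,F^3)$ are. The only delicate point I anticipate is the middle step — verifying the wedge-reordering sign cancellation and matching the permuted and relabelled triple ratios fraction by fraction; once the cancellation pattern is pinned down, the identification is purely mechanical.
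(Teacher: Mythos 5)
Your proposal is correct and follows essentially the same route as the paper: verify the two basic identities under a cyclic rotation and a transposition (the paper cites these as the chain $T_{abc}(E,F,T)=T_{bca}(F,T,E)=T_{bac}(F,E,T)^{-1}$ from Bonahon--Dreyer, where you derive equivalent ones by tracking Koszul signs directly), then observe that any reordering permutes the indexed family of triple ratios up to inversion, which leaves positivity unchanged. The only cosmetic difference is your choice of generating transposition (swapping the last two flags rather than the first two) and the fact that you sketch the wedge sign cancellation that the paper delegates to a citation; the underlying argument is identical.
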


\begin{proof}
By \cite[Lemma 5]{Bonahon_2014}, it holds:
$$T_{abc}(E, F, T) = T_{bca}(F, T, E) = T_{bac}(F, E, T)^{-1};$$
this follows from elementary computations on the wedges appearing in the definition. The first equality shows that, upon applying a cyclic permutation, the set of triple ratios itself is permuted, and in particular the positivity of each triple ratio is not affected. The same holds under a transposition flipping the first two flags: the second equality shows that then the set of triple ratios is permuted and inverted, but again the positivity of each triple ratio is not affected.
\end{proof}

Therefore it suffices to show that $(E, T, F)$ is (GW)-positive if and only if $(F, E, T)$ is (BD)-positive. 

We start with the easier implication:

\begin{proof}[Proof of (GW) $\Rightarrow$ (BD)]
Suppose that $(E, T, F)$ is (GW)-positive. Note that both notions are not affected by the action of $G$: this holds by definition for (GW)-positivity, and follows from Lemma \ref{FLAGS:lem:BD:orbits} for (BD)-positivity. Therefore we may assume that $E$ is the standard descending flag, that $F$ is the standard ascending flag, and that $T = uE$, where $u \in U_{>0}$. We now need to show that the triple $(F, E, uE)$ is (BD)-positive.

First, it is generic, by Lemma \ref{FLAGS:lem:GW:generic}. Moreover, each triple ratio $T_{abc}$ is expressed in terms of minors of $u$, by Lemma \ref{FLAGS:lem:GW:implies:BD}. But $u$ is totally positive, so each of these minors is positive. It follows that each triple ratio is positive and we conclude.
\end{proof}

The other implication is more involved, and goes beyond the scope of this exposition, so we only present the proof in case $n = 3$. Recall that in this case there is only one triple ratio involved (Example \ref{FLAGS:ex:T111}), so it is surprising that from this one can recover a totally positive matrix, something which imposes four positivity conditions (Example \ref{FLAGS:example:flagsinr3}). This apparent incompatibility of the two definitions is resolved by remembering that the definition of (GW)-positivity actually allows for more matrices than just totally positive ones: this was made explicit in Lemma \ref{FLAGS:lem:GW:char:diagonal}, and it will be crucial in the proof.

\begin{proof}[Proof of (BD) $\Rightarrow$ (GW) for $n = 3$]
Suppose that $(F, E, T)$ is (BD)-positive. Again, we note that both notions are not affected by the action of $G$, which is transitive on pair of transverse flags by Proposition \ref{FLAGS:prop:action}, so we may assume that $E$ is the standard descending flag and $F$ is the standard ascending flag. Since this triple is generic, in particular $T$ is transverse to $F$, and so by Proposition \ref{FLAGS:prop:action} there exists a unique $u \in U$ such that $T = uE$. Our goal is to show that there exists a diagonal matrix $d$ such that $dud^{-1} \in U_{> 0}$, which will conclude the proof by Lemma \ref{FLAGS:lem:GW:char:diagonal}. \\

Now the condition that $(F, E, uE)$ is (BD)-positive amounts to the positivity of the triple ratio $T_{111}(F, E, uE)$. By Example \ref{FLAGS:ex:T111}, we have:
$$
T_{111}(F, E, uE) = \frac{y}{xz - y}; \quad \text{where} \quad u = 
\begin{pmatrix}
1 & x & y \\
0 & 1 & z \\
0 & 0 & 1
\end{pmatrix}.
$$
In other words, $y$ and $(xz - y)$ are non-zero and have the same sign. We split into two cases.

Suppose that $y > 0$. Then $xz > y > 0$, so $x$ and $z$ are non-zero and have the same sign. If they are both positive, then $u$ is totally positive by Example \ref{FLAGS:example:flagsinr3} and we are done. Otherwise, if $x$ and $z$ are both negative, we let $d := \operatorname{diag}(1, -1, 1)$, and then
$$dud^{-1} = 
\begin{pmatrix}
1 & -x & y \\
0 & 1 & -z \\
0 & 0 & 1
\end{pmatrix}.
$$
This matrix is totally positive by Example \ref{FLAGS:example:flagsinr3}: indeed, all relevant entries are positive, and the only relevant other minor is $(-x)(-z) - y = xz - y > 0$.

Suppose instead that $y < 0$. Then $xz < y < 0$, so $x$ and $z$ are non-zero and have opposite signs. Let $d := (\operatorname{sgn}(x), 1, \operatorname{sgn}(z))$. Then
$$dud^{-1} = 
\begin{pmatrix}
1 & \operatorname{sgn}(x) x & \operatorname{sgn}(x) \operatorname{sgn}(z) y \\
0 & 1 & \operatorname{sgn}(z) z \\
0 & 0 & 1
\end{pmatrix} = 
\begin{pmatrix}
1 & |x| & - y \\
0 & 1 & |z| \\
0 & 0 & 1
\end{pmatrix}.
$$
This matrix is totally positive by Example \ref{FLAGS:example:flagsinr3}: indeed, all relevant entries are positive, and the only relevant other minor is $|xz| - (-y) = -(xz - y) > 0$, since we know that $xz < 0$.
\end{proof}

\section{Generalizations}

It is possible to generalize the notion of positivity of triples to quadruples and more generally to $n$-tuples. This may be done using \emph{quadruple ratios} as in \cite{Bonahon_2014} or by checking positivity of some relevant triples, as in \cite{GuichardWienhard18}. Another way to generalize positivity of triples it to consider generalized flag varieties, which we will do in this section. Yet another generalization is given by taking non-full flags. \\

So far we considered $G=\operatorname{GL}(n,\mathbb{R})$ and the Borel subgroup of upper triangular matrices $B < G$. We defined a Borel subgroup to be a connected subgroup with $\operatorname{Lie}(B) = \fraka \oplus_{\alpha \in \Sigma^+} \frakg_\alpha$. Note that 
\[
N_G (B) = \left\{ g \in G \colon gBg^{-1} \subseteq B \right\} = B. 
\]
We call all subgroups of the form $gBg^{-1}$ for $g\in G$ \emph{Borel subgroups}. Then $G$ acts by conjugation on the set of Borel subgroups $\{gBg^{-1} \}$. The group $G$ also acts transitively on the flags by multiplication, and we get a $G$-equivariant map 
\begin{align*}
    \{\text{Borel subgroups } gBg^{-1}\} &\to \Flag(\R^n) \\
    gBg^{-1} &\mapsto gF\\
    \operatorname{Stab}_G(T) & \mapsfrom T
\end{align*}
Thus we may identify
\[
\Flag(\R^n) \cong G/\operatorname{Stab}_G(F) = G/N_G(B) = G/B.
\]

The notion of flags in $\mathbb{R}^n$ is tightly intertwined with $\GL(n,\mathbb{R})$, and it is not obvious how to generalize flags. However Borel subgroups are available more generally, so we will use them to define our ``generalized flags''. Let now $G$ be a simple split real linear algebraic Lie group, for example $\SL(n,\mathbb{R})$ or $\Sp(4,\R)$. General theory \cite{borel} tells us, that there exists a Borel subgroup $B\subseteq G$, (defined as maximal among the connected solvable subgroups), which satisfies
\[
N_G (B) = \left\{ g \in G : gBg^{-1} \subseteq B \right\} = B. 
\]
We note that $G$ acts on the set of Borel subgroups by $g.B = gBg^{-1}$. It is a theorem \cite[IV.11.1]{borel} that any two Borel subgroups are conjugate, therefore the action of $G$ on the set of Borel subgroups is transitive.  
\begin{definition}
The \emph{generalized flag variety} is the set of Borel subgroups $G/B$.
\end{definition}

For positivity of triples of elements in the generalized flag variety we need an opposite Borel group $\tran{B}$ and a notion of positivity $U_{>0}$ for unipotent subgroups $U\subseteq B$. Previous talks have constructed these objects, see Chapter \ref{section:LusztigTotalPositivity}.

\begin{definition}
A triple $(F_1, F_2, F_3) \in (G/B)^3$ is \emph{positive} if there is a $g\in G$ and $u \in U_{>0}$ such that $g(F_1,F_2,F_3) = ( \tran{B} , u\tran{B}, B)$.
\end{definition}

We note that for $G=\GL(n,\mathbb{R})$, this is the definition of (GW)-positivity. 

\section{The flag variety for the symplectic group}

We work out the case $\Sp(4,\R)$ and give an interpretation to the generalized flag variety.
Recall that $\Sp(4,\R) = \{ A \in \GL(4,\R): AJ_{2,2}\tran{A} = J_{2,2} \}$ for $J_{2,2}=\begin{pmatrix}
 & I_2 \\ -I_2 &\end{pmatrix}$. A standard Borel subgroup $B$ and unipotent subgroup $U$ are given by
 \begin{align*}
     B &= \left\{ \begin{pmatrix}
     X & Y \\ 0 & \tran{X}^{-1}
     \end{pmatrix}  \in \GL(4,\R) : X = \begin{pmatrix}
     \star & \star \\ 0 & \star
     \end{pmatrix} , X\tran{Y} = \tran{Y} X
     \right\} \\
     U &= \left\{ \begin{pmatrix}
     X & Y \\ 0 & \tran{X}^{-1}
     \end{pmatrix}  \in \GL(4,\R) : X = \begin{pmatrix}
     1 & \star \\ 0 & 1
     \end{pmatrix}
     \right\} \\
      U_{>0} &= \left\{ \begin{pmatrix}
     1 & b+d & -bcd & bc \\
      0 & 1 & -(ad + ab + cd) & a+c \\
      && 1 & 0\\
      && -(b+d) & 1
     \end{pmatrix}  \in \GL(4,\R)
     \right\} \\
 \end{align*}

 We can interpret $\mathbb{R}^4 = \langle x_1, x_2, y_1, y_2\rangle$ as a symplectic vector space with the symplectic 2-form $\omega $ defined by
 \begin{align*}
     \omega(x_i, y_j) &= \delta_{ij} = - \omega (y_j, x_i), \\
     \omega(x_i, x_j) &= 0 = \omega(y_i, y_j).
 \end{align*}
 \begin{definition}
 A sub-vector space $V \subseteq \mathbb{R}^4$ is \emph{isotropic} if $\omega(V,V) = 0$.
 \end{definition}
 \begin{example}
 The sub-vector space $\langle x_1 \rangle$ is isotropic. $\langle x_1, x_2\rangle$ is maximal isotropic.
 \end{example}
 \begin{definition}
 A sub-vector space that is maximal isotropic is called a \emph{Lagrangian}.
 \end{definition}
 It can be shown that the generalized flag variety $\Sp(4,\R)/B$ can be identified with the isotropic complete flags in $\mathbb{R}^4$ with the standard symplectic form. In particular we have
 \begin{align*}
     \left\{ \text{Borel subgroups of $\Sp(4,\R)$}\right\} & \cong \left\{ \text{isotropic complete flags in $(\mathbb{R}^4,\omega)$}\right\} \\
     B &\mapsto \left\{\langle x_1\rangle, \langle x_1, x_2\rangle\right\} =:F\\
     \tran{B} &\mapsto \left\{\langle y_1\rangle, \langle y_1, y_2\rangle\right\} =:E .
 \end{align*}
 Note that $BE = E$ and $\tran{B} F=F$. This justifies the following definition.
 \begin{definition}
 A \emph{symplectic flag} is a complete isotropic flag.
 \end{definition}

\newpage

\thispagestyle{empty}

\chapter[The Maslov Index]{The Maslov Index\\ {\Large\textnormal{\textit{by Samuel Bronstein, Ilia Smilga}}}}
\addtocontents{toc}{\quad\quad\quad \textit{Samuel Bronstein, Ilia Smilga}\par}

All the definitions and theorems of this session come from Lion and Vergne's book: \cite{lio13}.

\section{Maslov index in the symplectic group}

Consider $\Sp(2n,\R)$ the subgroup of $\GL(2n,\R)$ of elements preserving the symplectic form $\omega$
defined by $w(X,Y)=\tran{X}J_{n,n}Y$ for $X,Y\in\R^{2n}$ with:
\begin{equation*}
    J_{n,n}=\left(
    \begin{array}{cc}
    0&I_n\\-I_n&0
    \end{array}\right).
\end{equation*}
A basis of $\R^{2n}$ in which $w$ translates as $J_{n,n}$ is called a \emph{symplectic basis}.
In this regard, an endomorphism of $\R^{2n}$ belongs to $\Sp(2n,\R)$ if and only if its sends a symplectic basis onto a symplectic basis.
We are interested here in the maximal isotropic subspaces, i.e.\ the Lagrangians:
\begin{definition}[Lagrangian subspace]
$L\subset\R^{2n}$ is \emph{Lagrangian} if it is $n$-dimensional and $w|_{L\times L}=0$.
The set of Lagrangian subspaces will be denoted $\mathcal{L}$.
\end{definition}
We refer to the previous sections for the fact that $\mathcal{L}$ is actually the Shilov boundary of the Siegel disk, which is the symmetric space of $\Sp(2n,\R)$. As such, there is a notion of positivity of triples on $\mathcal{L}$, of which we here bring another description.
This notion of positivity being invariant by conjugation, it is natural to study the orbits of the
$\Sp(2n,\R)$ action on triples of Lagrangians.
\begin{proposition}
\begin{itemize}
    \item
    The set $\mathcal{L}$ is homogeneous: $\mathcal{L}=\Sp(2n,\R)/T$,
    where:
    \begin{equation}
    T=\left\{\left(
    \begin{array}{cc}
    M&SM\\0&^tM^{-1}
    \end{array}\right): S\in \Sym(n,\R),M\in\GL(n,\R)\right\}
    \end{equation}
    \item
    The $\Sp(2n,\R)$ action is transitive on the set of pairs of transverse Lagrangians,
    denoted $\mathcal{L}^{2,T}$. We have the identification:
    \begin{equation}
    \mathcal{L}^{2,T}=\Sp(2n,\R)/\GL(n,\R)
    \end{equation}
\end{itemize}
\end{proposition}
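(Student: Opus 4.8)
The statement has two parts: (i) $\mathcal{L} \cong \Sp(2n,\R)/T$ with the indicated subgroup $T$, and (ii) $\Sp(2n,\R)$ acts transitively on pairs of transverse Lagrangians $\mathcal{L}^{2,T}$, giving $\mathcal{L}^{2,T} \cong \Sp(2n,\R)/\GL(n,\R)$. In both cases the strategy is the same orbit--stabilizer argument: exhibit a transitive action and compute the stabilizer of a convenient base point. The main work is the linear algebra of Lagrangian subspaces; the group-theoretic wrapper is routine once transitivity is in hand.

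\textbf{Part (i).} First I would fix the standard symplectic basis $(e_1,\dots,e_n,f_1,\dots,f_n)$ and take as base point the Lagrangian $L_0 = \langle e_1,\dots,e_n\rangle$, which is isotropic of dimension $n$ since $J_{n,n}$ pairs the $e_i$ trivially among themselves. Transitivity of the $\Sp(2n,\R)$-action on $\mathcal{L}$ follows from the fact that any Lagrangian $L$ can be completed to a symplectic basis whose first $n$ vectors span $L$: pick any basis $v_1,\dots,v_n$ of $L$, then use nondegeneracy of $\omega$ on $\R^{2n}$ to find $w_1,\dots,w_n$ with $\omega(v_i,w_j)=\delta_{ij}$, and correct the $w_j$ by vectors of $L$ (using that $L$ is Lagrangian, hence its own symplectic orthogonal) so that $\omega(w_i,w_j)=0$; the change-of-basis matrix from the standard symplectic basis to $(v_i,w_j)$ lies in $\Sp(2n,\R)$ and sends $L_0$ to $L$. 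Then I would compute $\stab(L_0)$: writing $g = \left(\begin{smallmatrix} A & B \\ C & D\end{smallmatrix}\right)$ in $n\times n$ blocks, the condition $g L_0 = L_0$ forces $C = 0$; the symplectic relations $\tran{g}J_{n,n}g = J_{n,n}$ with $C=0$ then yield $\tran{A}D = I_n$ (so $D = \tran{A}^{-1}$) and $\tran{A}B$ symmetric, i.e. $B = A\,S'$ with $S' = \tran{A}B\,(\tran{A}A)^{-1}\cdots$ — more cleanly, $B = SA$ for $S := BA^{-1}$ and the symmetry condition $\tran{(SA)}A$ symmetric becomes, after using $A$ invertible, exactly $S = \tran{S}$. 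This is precisely the displayed group $T$ (with $M = A$), so $\mathcal{L} \cong \Sp(2n,\R)/T$.

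\textbf{Part (ii).} For transversality, take the base pair $(L_0, L_\infty)$ with $L_0 = \langle e_i\rangle$ and $L_\infty = \langle f_i\rangle$; these are transverse since $L_0 \oplus L_\infty = \R^{2n}$. Given any transverse pair $(L, L')$, first move $L$ to $L_0$ by an element of $\Sp(2n,\R)$ (Part (i)); the image of $L'$ is still transverse to $L_0$, and I claim any Lagrangian transverse to $L_0$ is $g\cdot L_\infty$ for some $g \in \stab(L_0) = T$. Indeed such an $L'$ is the graph of a symmetric map $L_\infty \to L_0$ (transversality gives it as a graph over $L_\infty$; the Lagrangian condition makes the map symmetric), and an appropriate element of $T$ — using the $S$ parameter — shears $L_\infty$ onto this graph. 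Hence the action on $\mathcal{L}^{2,T}$ is transitive. Finally I compute the stabilizer of $(L_0, L_\infty)$: it is the intersection of $\stab(L_0) = T$ with $\stab(L_\infty)$; imposing $B = 0$ on elements of $T$ (which is what $gL_\infty = L_\infty$ demands given $C = 0$) leaves exactly $\left\{\left(\begin{smallmatrix} M & 0 \\ 0 & \tran{M}^{-1}\end{smallmatrix}\right) : M \in \GL(n,\R)\right\} \cong \GL(n,\R)$, giving $\mathcal{L}^{2,T} \cong \Sp(2n,\R)/\GL(n,\R)$.

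\textbf{Main obstacle.} The genuinely delicate point is the ``correct the $w_j$ by elements of $L$'' step establishing transitivity on $\mathcal{L}$ — making sure one can simultaneously arrange $\omega(v_i,w_j)=\delta_{ij}$ and $\omega(w_i,w_j)=0$. The clean way is: having found any symplectic complement basis $w_j^0$ with $\omega(v_i,w_j^0)=\delta_{ij}$, set $w_j = w_j^0 - \tfrac12\sum_k \omega(w_j^0,w_k^0)\,v_k$ and check directly that the antisymmetry of $\omega$ together with $\omega(v_i,\cdot)=\delta_{ij}$ on the correction terms kills all the $\omega(w_i,w_j)$; this is a short but not entirely automatic computation, and it is the linchpin that makes everything else fall into place. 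The block-matrix verifications in both stabilizer computations are routine and I would not belabor them.
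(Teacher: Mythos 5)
Your proof is correct, and the overall plan (orbit--stabilizer plus explicit symplectic-basis completion) is the same as the paper's, but your route through the two parts runs in the opposite order and is slightly more work. The paper proves transitivity on transverse \emph{pairs} first: given a transverse pair $(L,L')$, the restriction of $\omega$ to $L\times L'$ is a perfect pairing (because $L'\cap L^\perp = L'\cap L = \{0\}$), so for any basis $(v_i)$ of $L$ there is a unique $\omega$-dual basis $(w_j)$ of $L'$, and $\omega(w_i,w_j)=0$ holds for free since $L'$ is Lagrangian. Concatenating gives a symplectic basis immediately, with no correction needed; transitivity on single Lagrangians then follows by forgetting the second component. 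Your argument instead does part (i) first, which requires the $w_j \mapsto w_j^0 - \tfrac12\sum_k\omega(w_j^0,w_k^0)v_k$ correction (your computation of it is right), and then derives transitivity on pairs by first normalizing $L$ to $L_0$ and then using the $S$-parameter of $T$ to shear $L_\infty$ onto any transverse Lagrangian. Both are valid; the paper's ordering just buys you the symplectic basis with less linear algebra. Your stabilizer computations for $T$ and $\GL(n,\R)$ match what the paper leaves to the reader.
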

\begin{proof}
Of course the action by conjugation on the considered sets is an action by isometries,
the main trouble is to convince oneself of the transitivity of those action:
For a pair of transverse Lagrangian, concatenating two basis of the Lagrangians yields a symplectic basis of $\R^{2n}$.
As we always can send a symplectic basis onto another one via an element of $\Sp(2n,\R)$,
it follows that the action is transitive.
We let the reader compute the stabilizers. The stabilizer of the standard pair of transverse Lagrangian is the subset of $T$ defined by $S=0$.
\end{proof}
More generally, one could have shown that the orbits of the conjugation action on pairs
of Lagrangians are characterized by the degree of the intersection of the Lagrangians.
Keep in mind that, given $(L_1,L_2)$ a transverse pair of Lagrangians, $w$
gives an isomorphism $L_2\approx L_1^\ast$.
That clarification done, let us give a first definition of the Maslov index, due to Kashiwara:
\begin{definition}[Maslov Index]
For $(L_1,L_2,L_3)$ a triple of Lagrangians, the \emph{Maslov index} is denoted
$\tau(L_1,L_2,L_3)$ and defined as the signature of the quadratic form $Q$
on $L_1\times L_2\times L_3$:
\begin{equation}
Q(x_1,x_2,x_3)=w(x_1,x_2)+w(x_2,x_3)+w(x_3,x_1)
\end{equation}
\end{definition}
There are several nice properties that we can easily deduce from this definition:
The Maslov index is invariant by conjugation by an element of $\Sp(2n,\R)$, it is an integer
between $-3n$ and $3n$, it is alternating.
Conveniently, we also have the following property, called the \emph{chain rule}.
\begin{proposition}[Chain rule]
For $(L_1,L_2,L_3,L_4)$ Lagrangians, we have:
\begin{equation}
\tau(L_1,L_2,L_3)=\tau(L_1,L_2,L_4)+\tau(L_2,L_3,L_4)+\tau(L_1,L_3,L_4)
\end{equation}
\end{proposition}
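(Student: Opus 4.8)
The plan is to deduce the chain rule entirely from the definition of the Maslov index as the signature of the Kashiwara quadratic form, together with the basic additivity of signatures of quadratic forms under appropriate direct-sum or quotient decompositions. First I would set up notation: write $Q_{ijk}$ for the quadratic form $Q(x_i,x_j,x_k) = \omega(x_i,x_j) + \omega(x_j,x_k) + \omega(x_k,x_i)$ on $L_i \times L_j \times L_k$, so that $\tau(L_i,L_j,L_k) = \mathrm{sign}(Q_{ijk})$. The goal is then to compare the signature of $Q_{123}$ on $L_1 \times L_2 \times L_3$ with the sum of the signatures of the three forms $Q_{124}$, $Q_{234}$, $Q_{134}$. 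The natural arena for the comparison is the large space $L_1 \times L_2 \times L_3 \times L_4$, on which one can write a single form whose signature can be computed in two ways.

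The key step I anticipate is to introduce the form $R$ on $V := L_1 \times L_2 \times L_3 \times L_4$ defined by $R(x_1,x_2,x_3,x_4) = \omega(x_1,x_2) + \omega(x_2,x_3) + \omega(x_3,x_1)$ — i.e.\ $Q_{123}$ pulled back along the projection $V \to L_1\times L_2 \times L_3$ — and observe that, since the $L_4$-variable does not appear, $\mathrm{sign}(R) = \mathrm{sign}(Q_{123})$ (the form is degenerate with radical containing $L_4$, but signature is unchanged by such a pullback). Then I would perform a linear change of variables on $V$ of the form $x_i \mapsto x_i - x_4$ for $i=1,2,3$ (legitimate because each $L_i$ and $L_4$ can be compared once we pass to a common ambient space; one handles the possible lack of transversality by working with the forms abstractly on the product, where the substitution is just an invertible linear map of $V$). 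Expanding $R(x_1 - x_4, x_2 - x_4, x_3 - x_4, x_4)$ using bilinearity of $\omega$ and the skew-symmetry $\omega(x_4,x_4)=0$, the cross-terms reorganize precisely into $Q_{124} + Q_{234} + Q_{134}$ evaluated on suitable coordinates (after a further harmless reindexing), plus possibly a degenerate piece that contributes $0$ to the signature. Taking signatures of both sides of this identity of quadratic forms then yields the chain rule. I would alternatively present this as the identity at the level of the defining matrices, following Lion–Vergne \cite{lio13}: exhibit an explicit congruence of symmetric matrices realizing $R \cong Q_{124} \oplus Q_{234} \oplus Q_{134}$ up to a form with trivial signature.

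The main obstacle I expect is bookkeeping rather than conceptual: getting the substitution and the expansion to land \emph{exactly} on the three forms $Q_{124}, Q_{234}, Q_{134}$ with the correct cyclic orderings (so that signs come out right and no spurious $\pm$ appears), and carefully justifying that the leftover terms after the change of variables genuinely have zero signature — this requires identifying their radical and checking the induced form on the quotient vanishes. A secondary subtlety is that the Maslov index is genuinely defined for \emph{all} triples, not only transverse ones, so the argument must be phrased so as never to require transversality of the $L_i$; working throughout with the abstract quadratic forms on the product spaces $L_i \times L_j \times L_k$ (where nothing is assumed about intersections) rather than with any concrete model such as $L_2 \cong L_1^\ast$ avoids this issue. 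Once these points are handled, the chain rule follows, and I would remark that it immediately implies the $4$-term cocycle-type identities used elsewhere, e.g.\ in the discussion of the Toledo number and of maximality of triples.
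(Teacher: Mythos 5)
You should first note that the paper itself does not prove this proposition: it is stated without a proof environment and the whole chapter begins by deferring to Lion--Vergne \cite{lio13}. So there is no in-paper argument to compare against; what follows is an assessment of your proposed proof on its own terms.

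Your starting identity is correct and is indeed the algebraic germ of the statement. Writing $\tilde Q(a,b,c) = \omega(a,b)+\omega(b,c)+\omega(c,a)$ as a form on all of $W^3$ (not just on $L_1\times L_2\times L_3$), one has, for any $a,b,c,d\in W$,
\begin{equation*}
\tilde Q(a-d,\,b-d,\,c-d)\;=\;\tilde Q(a,b,d)+\tilde Q(b,c,d)+\tilde Q(c,a,d),
\end{equation*}
which one checks by expanding $\omega$ and using $\omega(d,d)=0$. Note already that the third summand comes out as $\tilde Q(c,a,d)$, i.e.\ $\tau(L_3,L_1,L_4)=-\tau(L_1,L_3,L_4)$, not $\tau(L_1,L_3,L_4)$; if carried to completion your argument therefore produces the usual cocycle identity $\tau(L_1,L_2,L_3)=\tau(L_1,L_2,L_4)+\tau(L_2,L_3,L_4)-\tau(L_1,L_3,L_4)$ rather than the formula printed with three plus signs --- this is almost certainly a typo or a differing sign convention in the statement you were given, and your worry about ``getting the cyclic orderings right'' is well founded.

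The genuine gap is in the passage from the algebraic identity to a statement about signatures, and it is really two gaps. First, the substitution $x_i\mapsto x_i-x_4$ is \emph{not} an invertible linear map of $V=L_1\times L_2\times L_3\times L_4$: the vector $x_i-x_4$ lies in the ambient $W$, not in $L_i$ (unless $L_4\subseteq L_i$). So while $(a,b,c,d)\mapsto(a-d,b-d,c-d,d)$ is an automorphism of $W^4$, its restriction sends $V$ onto a different subspace $\psi(V)\neq V$ of $W^4$. Consequently $\operatorname{sign}\bigl(\tilde Q|_{\psi(V)}\bigr)$, which is what the change of variables actually computes, has no a priori relation to $\operatorname{sign}\bigl(\tilde Q|_{L_1\times L_2\times L_3}\bigr)=\tau(L_1,L_2,L_3)$; the claim ``the substitution is just an invertible linear map of $V$'' is exactly the false step. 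Second, even if you do have the pointwise identity $\Sigma = Q_{124}+Q_{234}+Q_{314}$ of quadratic forms on the \emph{same} space $V$ (which you do: it is the restriction of the $W^4$ identity to $V$), the signature of a sum of quadratic forms on a common space is not the sum of signatures. That needs a genuine orthogonal direct-sum decomposition of $V$ into three pieces carrying the three forms, possibly after discarding a totally degenerate summand --- and no such decomposition is exhibited. Indeed, in your decomposition the three summands all share the $L_4$ factor (and pairwise share one of $L_1,L_2,L_3$), so they cannot be orthogonally disjoint in any naive sense. Your parenthetical about a ``degenerate piece that contributes $0$'' does not address this, because the obstruction is not a leftover degenerate term; it is the absence of any orthogonality at all among the three terms.

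To actually close the argument one needs the additional input that Lion--Vergne supply: either (i) isolate an explicit subspace $U\subseteq V$ on which a suitable 4-variable form (such as $\omega(x_1,x_2)+\omega(x_2,x_3)+\omega(x_3,x_4)+\omega(x_4,x_1)$) is nondegenerate, apply the Schur-complement/orthogonal-decomposition theorem $\operatorname{sign}(Q)=\operatorname{sign}(Q|_U)+\operatorname{sign}(Q|_{U^\perp})$ twice with two different choices of $U$ and compare, or (ii) reduce by perturbation to the pairwise transverse case and use the second description of $\tau$ (the signature of $x_2\mapsto\omega(p_{13}x_2,p_{31}x_2)$ on $L_2$) given just before this proposition in the chapter. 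Either route is a real additional argument, not bookkeeping.
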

This last property leads to think of the Maslov index as some kind of cocycle, but it is not
the topic we are interested in here.
For transverse triples, we have another possible definition, which is easier to handle:
\begin{proposition}
For a pairwise transverse triple of Lagrangians $(L_1,L_2,L_3)$, its Maslov index is the
signature of the quadratic form $S$ on $L_2$ defined by:
\begin{equation}
S(x_2)=\omega(p_{13}x_2,p_{31}x_2)
\end{equation}
where $p_{ij}$ denotes the projection on $L_i$ with kernel $L_j$, well defined because of
transversality.
\end{proposition}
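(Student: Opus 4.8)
The plan is to reconcile the two definitions of the Maslov index for a pairwise transverse triple $(L_1, L_2, L_3)$: the Kashiwara definition as the signature of the ternary form $Q$ on $L_1 \times L_2 \times L_3$, and the formula as the signature of the binary form $S$ on $L_2$. First I would set up coordinates using transversality: since $L_1$ and $L_3$ are transverse, $\R^{2n} = L_1 \oplus L_3$ and $\omega$ identifies $L_3$ with $L_1^\ast$. Fix a basis of $L_1$ and the dual basis of $L_3$, so that $\omega$ becomes the standard symplectic form $J_{n,n}$. Because $L_2$ is transverse to both $L_1$ and $L_3$, it is the graph of an invertible linear map; in these coordinates $L_2 = \{(x, Bx) : x \in \R^n\}$ for some matrix $B$, and transversality of $L_2$ with $L_1$ and $L_3$ forces $B$ to be invertible, while the Lagrangian condition forces $B$ to be symmetric. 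The projections then read off explicitly: $p_{13}(x, Bx) = (x, 0)$ and $p_{31}(x, Bx) = (0, Bx)$, so $S(x) = \omega((x,0),(0,Bx)) = \tran{x} B x$, and the signature of $S$ is just the signature of the symmetric matrix $B$.

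Next I would compute $Q$ in the same coordinates. An element of $L_1 \times L_2 \times L_3$ is a triple $(u, (v, Bv), Bw)$ with $u, v, w \in \R^n$ (using $\omega$ to identify $L_3 \cong L_1^\ast$ and writing its elements as $Bw$ for convenience). Expanding $Q = \omega(x_1,x_2) + \omega(x_2,x_3) + \omega(x_3,x_1)$ using $\omega((a,b),(c,d)) = \tran{a}d - \tran{b}c$ gives three bilinear terms in $u, v, w$, producing a quadratic form on $\R^n \times \R^n \times \R^n$ whose matrix is block-structured with blocks built from $B$ and $0$. The key step is then a change of variables (a congruence transformation, hence signature-preserving) that block-diagonalizes this $3n \times 3n$ form into a copy of $\tran{x}Bx$ on one $\R^n$ factor plus a hyperbolic (split) form on the remaining $2n$ coordinates. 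A hyperbolic form of rank $2n$ has signature $0$, so the signature of $Q$ equals the signature of $B$, which equals the signature of $S$. This matches the two definitions.

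The main obstacle I anticipate is bookkeeping: choosing the identifications $L_3 \cong L_1^\ast$ consistently so that the formula for $\omega$, the formula for the $p_{ij}$, and the expansion of $Q$ all use the same sign conventions — a sign error anywhere would flip the index. I would mitigate this by committing early to the explicit symplectic basis $(e_1, \dots, e_n, f_1, \dots, f_n)$ with $L_1 = \langle e_i \rangle$, $L_3 = \langle f_i \rangle$, $\omega(e_i, f_j) = \delta_{ij}$, and doing every computation in these coordinates. A secondary point needing care is justifying that the congruence diagonalizing the $3n$-dimensional form is genuinely volume-free of $B$'s spectral data except through its signature; this follows because once $B$ is invertible one can absorb it by rescaling, and the residual cross terms assemble into a standard hyperbolic pairing.

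Alternatively — and this may be cleaner to write up — I would derive the transverse formula directly from the chain rule and the alternating property already stated in the excerpt, by introducing a fourth auxiliary Lagrangian transverse to all three and reducing to a computation in a fixed symplectic splitting; but I expect the direct coordinate computation above to be the most self-contained route given what has been established.
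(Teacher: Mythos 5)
Your proposal is correct and takes essentially the same approach as the paper: both decompose $Q$ by a signature-preserving change of variables into $S$ plus the nondegenerate $\omega$-pairing of $L_1$ against $L_3$, which is hyperbolic of rank $2n$ and so contributes zero to the signature. The paper does this coordinate-free via $y_1 = x_1 - p_{13}x_2$, $y_2 = x_2$, $y_3 = x_3 - p_{31}x_2$, yielding $Q = S(y_2) - \omega(y_1, y_3)$; you arrive at the same decomposition by computing in an explicit symplectic basis where $L_2$ is the graph of an invertible symmetric matrix.
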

\begin{proof}
A small computation allows us to show:
\begin{flalign}
    Q(x_1,x_2,x_3)&=B(p_{13}x_2,p_{31}x_2)-B(x_1-p_{13}x_2,x_3-p_{31}x_2)\\
    &=S(y_2)-B(y_1,y_3)
\end{flalign}
with $(y_1,y_2,y_3)$ another set of coordinates. As such, it is clear that the signature
of $Q$ equals the signature of $S$.
\end{proof}
\begin{remark}
\begin{itemize}
    \item 
This proposition is actually true as soon as $L_1\cap L_3=\emptyset$, we do not need that
$L_2$ is transverse to the others. However, $L_2$ being transverse to $L_1$ and $L_3$ is
equivalent to asking that $S$ is non-degenerate.
\item
This gives a more precise bound on the Maslov index: For transverse triples, it is less or
equal than $n$.
\item
We can check that $B(p_{13}x,p_{31}y)$ is symmetric on $L_2$,
 so this is is the bilinear form associated to $S$.
\end{itemize}
\end{remark}
It remains to check that the Maslov index actually distinguishes the orbits of the $\Sp(2n,\R)$ action on the set of transverse triples.
\begin{theorem}
For $(L_1,L_2,L_3)$ transverse triple, there is a symplectic basis
$(p_1,\ldots,p_n,q_1,\ldots, q_n)$  and $0\leq k \leq n$ such that:
\begin{flalign}
L_1&=\langle p_1,\ldots,p_n \rangle\\
L_2&=\langle q_1,\ldots,q_n \rangle\\
L_3&=\langle p_1+\epsilon_1 q_1,\ldots,p_n+\epsilon_n q_n\rangle
\end{flalign}
where $\epsilon_i=1$ if $i\leq k$ and $\epsilon_i=-1$ otherwise.
Then the Maslov index is:
\begin{equation}
    \tau(L_1,L_2,L_3)=n-2k
\end{equation}
\end{theorem}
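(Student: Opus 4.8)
The plan is to reduce an arbitrary transverse triple to the normal form asserted in the statement, and then to compute the Maslov index directly from that normal form. For the first part I would argue in two steps. First, using the transitivity of the $\Sp(2n,\R)$-action on transverse pairs (established in the proposition above), I can normalise $(L_1,L_2)$ to a standard transverse pair, say $L_1 = \langle p_1,\ldots,p_n\rangle$ and $L_2 = \langle q_1,\ldots,q_n\rangle$ for some symplectic basis $(p_i,q_i)$; the remaining freedom is exactly the stabiliser of this pair, which by the computation above is a copy of $\GL(n,\R)$ acting by $p_i \mapsto \sum_j M_{ji} p_j$, $q_i \mapsto \sum_j (M^{-1})_{ij} q_j$. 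Second, since $L_3$ is transverse to both $L_1$ and $L_2$, the symplectic form identifies $L_2$ with $L_1^\ast$, and $L_3$ is the graph of an isomorphism $L_1 \to L_2$, i.e.\ of a bilinear form $b$ on $L_1$; the Lagrangian condition forces $b$ to be symmetric and non-degenerate. The residual $\GL(n,\R)$ acts on $b$ by congruence $b \mapsto \tran{M} b M$, so by Sylvester's law of inertia we may diagonalise $b$ to $\diag(\epsilon_1,\ldots,\epsilon_n)$ with each $\epsilon_i = \pm 1$, and after permuting the basis vectors we may assume $\epsilon_i = 1$ for $i \le k$ and $\epsilon_i = -1$ for $i > k$. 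This yields $L_3 = \langle p_1 + \epsilon_1 q_1, \ldots, p_n + \epsilon_n q_n\rangle$ as required; here $k$ is the number of positive eigenvalues of $b$, which is a congruence invariant and hence genuinely an invariant of the triple.

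\textbf{Computing the index.} With the normal form in hand I would use the transverse-triple formula $\tau(L_1,L_2,L_3) = \operatorname{sign} S$, where $S(x_2) = \omega(p_{13}x_2, p_{31}x_2)$ and $p_{ij}$ is the projection onto $L_i$ along $L_j$. Take $x_2 = q_i \in L_2$. To project onto $L_1$ along $L_3$, I write $q_i$ in terms of the basis $\{p_j\} \cup \{p_j + \epsilon_j q_j\}$ of $\R^{2n}$: one checks $q_i = \epsilon_i^{-1}(p_i + \epsilon_i q_i) - \epsilon_i^{-1} p_i = \epsilon_i(p_i + \epsilon_i q_i) - \epsilon_i p_i$ (using $\epsilon_i^{-1} = \epsilon_i$). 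Hence $p_{31} q_i = \epsilon_i(p_i + \epsilon_i q_i)$ (the $L_3$-component) and $p_{13} q_i = -\epsilon_i p_i$ (the $L_1$-component). Then
\[
S(q_i) = \omega(p_{13}q_i, p_{31}q_i) = \omega(-\epsilon_i p_i,\ \epsilon_i p_i + q_i) = -\epsilon_i^2 \omega(p_i, p_i) - \epsilon_i \omega(p_i, q_i) = -\epsilon_i,
\]
since $\omega(p_i,p_i) = 0$ and $\omega(p_i,q_i) = 1$ in a symplectic basis. By bilinearity and the orthogonality relations $\omega(p_i,p_j) = 0$, $\omega(p_i,q_j) = \delta_{ij}$, the cross terms $S(q_i,q_j)$ for $i \ne j$ vanish, so $S$ is diagonal in the basis $\{q_i\}$ with entries $-\epsilon_i$. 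Therefore $S$ has $n-k$ positive eigenvalues (those with $\epsilon_i = -1$) and $k$ negative eigenvalues, giving $\operatorname{sign} S = (n-k) - k = n - 2k$.

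\textbf{Main obstacle.} The computation of $\tau$ is routine once the normal form is available; the genuine content is the reduction to normal form, and within that the only real subtlety is bookkeeping the precise action of the stabiliser $\GL(n,\R)$ on the graphing form $b$ and checking it is congruence (rather than, say, a twisted action), so that Sylvester's law applies cleanly. I would also want to be slightly careful that the "number of $+1$'s" $k$ is well-defined independently of all choices — this follows because the signature $(k, n-k)$ of $b$ is a symplectic invariant of the transverse triple, as it equals (up to a sign or shift) the Maslov index we are computing. A minor point to verify is that non-degeneracy of $b$ is equivalent to $L_3$ being transverse to $L_2$ as well as to $L_1$, which was already noted in the remark following the second definition. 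Everything else is linear algebra in a symplectic basis.
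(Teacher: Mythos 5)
Your proof is correct, and it takes a genuinely different route from the paper's. The paper applies Sylvester's law directly to the form $S$ on $L_2$ --- the very form whose signature \emph{is} $\tau$ by the transverse-triple characterisation --- choosing $(q_i)$ so that $S(q_i,q_j) = -\epsilon_j\delta_{ij}$, taking $(p_i)$ as the dual basis under the pairing $\omega\colon L_1 \times L_2 \to \R$, and then reading off the normal form for $L_3$ from the identity $p_{13}q_i = -\epsilon_i p_i$. You instead normalise the \emph{pair} $(L_1,L_2)$ first via transitivity, encode $L_3$ as the graph of a symmetric nondegenerate form $b$ on $L_1$, diagonalise $b$ by Sylvester using the residual $\GL(n,\R)$-stabiliser, and only then compute $S$ explicitly in the normal form. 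Your route is slightly longer but cleanly separates the normal-form construction from the index computation; the paper's is more economical because $\tau$ falls out of the diagonalisation immediately, with the normal form for $L_3$ as a by-product. Both hinge on Sylvester's law of inertia as the invariant making $k$ well-defined --- in your version the signature of $b$, in the paper's the signature of $S$ itself --- and of course they agree up to the explicit identity $\operatorname{sign} S = -\operatorname{sign} b$ that your computation $S(q_i) = -\epsilon_i$ establishes. One small bookkeeping slip at the end: realising $L_3$ as a graph over $L_1$ (so that $b$ is a form on $L_1$) is equivalent to $L_3 \pitchfork L_2$, while invertibility of the graphed map $L_1 \to L_2$, equivalently nondegeneracy of $b$, is equivalent to $L_3 \pitchfork L_1$; you have the two associated transversalities swapped in your closing remark, though since a transverse triple has both, the argument is unaffected.
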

\begin{proof}
the quadratic form $S$ being non-degenerate, one can choose $q_1,\ldots,q_n$
basis of $L_2$ such that $S(q_i,q_j)=-\epsilon_j\delta_{ij}$.
But $L_1$and $L_2$ being transverse, the symplectic form $\omega$ yields an isomorphism
$L_1=L_2^\ast$. So take the dual basis of $(q_i)$ to be a basis of $L_1$, denoted $(p_i)$.

Finally, fix $z_i=p_{13}q_i$. Check that for all $j$, $B(z_i,q_j)=-\epsilon_j\delta_{ij}$
Thus (by uniqueness of the dual basis) $z_i=-\epsilon_i p_i$. This means that $(q_i+\epsilon_i p_i)$ is a basis of $L_3$, and so is $(p_i+\epsilon_i q_i)$.
\end{proof}
It is clear from this theorem that the maximality of the Maslov index corresponds to 
the maximality previously defined on $\Sp(2n,\R)$. This is the main motivation
for generalizing the Maslov index to Hermitian Lie groups of tube type.
\begin{remark}[n=1]
For $n=1$, the set of Lagrangians correspond to the circle seen as the boundary of the Poincar\'e Disk.
In this setting, the Maslov index distinguishes the two orbits, on which it assigns $1$ or $-1$.
\end{remark}

\section{Maslov index in Hermitian Lie groups of tube type}

To generalize this notion, one needs the notion of Euclidean Jordan algebra.
\begin{definition}
A \emph{Euclidean Jordan algebra} is the data of:
\begin{itemize}
    \item a real vector space $V$,
    \item a scalar product on $V$ denoted $\langle\cdot,\cdot\rangle$,
    \item a bilinear product $\circ: V\times V\rightarrow V$,
\end{itemize}
such that the following is verified:
\begin{itemize}
    \item there is $e\in V$ such that for any $x\in V$, $x\circ e=e\circ x=x$,
    \item $x\circ y=y\circ x$,
    \item $x\circ(x^2\circ y)=x^2\circ(x\circ y)$,
    \item $\langle x\circ u,v\rangle=\langle u,x\circ v\rangle$.
\end{itemize}
If $V$ is a Euclidean Jordan algebra, we also define $\Omega$ the \emph{open cone of squares} as the interior of $\{v^2: v\in V\}$.
\end{definition}
A model-example for such a structure is $V=\Sym(n,\mathbb R)$ with the natural operations and the open cone of squares coincides with the set
of positive definite matrices.

This is enough to define what is a Hermitian group of tube type:
\begin{definition}[Hermitian group of tube type]
A Hermitian Lie Group $G$ is said to be \emph{of tube type} if there is a Euclidean Jordan algebra $V$ with open cone of squares $\Omega$ such that the homogeneous space $G/K$ identifies with \emph{the tube domain}
$T_{\Omega}:=\{X+iY:X\in V,Y\in\Omega\}$.
One can rephrase that by saying that $G$ acts transitively on $T_{\Omega}$
with stabilizers conjugates of $K$.
\end{definition}
\begin{example}
For $G=\Sp(2n,\mathbb R)$, one can consider $V=\Sym(n,\mathbb R)$
with the action on $T_{\Omega}$ being:
\begin{equation}
    \left(\begin{array}{cc}
    A&B\\C&D
    \end{array}\right)
\cdot Z=(AZ+B)(CZ+D)^{-1}
\end{equation}
\end{example}
Conveniently, we have a full classification of Hermitian Lie groups of tube type into four infinite families and one exceptional Lie group:
\begin{center}
    \begin{tabular}{|c|c|}
    \hline
    G&V\\
    \hline
    $\Sp(2n,\mathbb R)$&$\Sym(n,\mathbb R)$\\
    $\SU(n,n)$&$\mathrm{Herm}(n,\C)$\\
    $\SO^\ast(4n)$&$\mathrm{Herm}(n,\mathbb H)$\\
    $E_7(-25)$&$\mathrm{Herm}(3,\mathbb O)$\\
    $\SO(2,n)$&$\mathbb R\times\mathbb R^{n-1}$ with $(a,v)\circ(b,w)=(ab+\langle v,w\rangle,aw+bv)$\\
    \hline
    \end{tabular}
\end{center}
The infinite boundary on which we want to consider the $G$-action comes from Hermitian Lie group theory, we give a vague definition here:
\begin{definition}[Shilov Boundary]
For $G$ Hermitian Lie group of tube type, the \emph{Shilov boundary} $S$
is a certain flag variety of $G$.
For $G=\Sp(2n,\mathbb R)$, $S$ corresponds to the set of Lagrangian subspaces.
\end{definition}
Actually, the action on $S$ is quite well understood:
\begin{proposition}
There is a point $\infty\in S$ such that:
\begin{itemize}
    \item $\{s\in S|s\pitchfork\infty\}\approx V\subset \overline{T}_\Omega$
    \item $\stab_G(\infty)=\{g \from T_\Omega\rightarrow T_\Omega :  g(Z)=AZ+B \text{ with } A\in \GL(V)\text{ and } B\in V\}$.
    In particular the $G$-action on the set of transverse pairs is transitive.
    \item
    $\{s\in S|s\pitchfork\infty,s\pitchfork 0\}\approx V^\ast=\{v\in V : \det v\neq 0\}$
    \item
    The stabilizer of a pair $(0,\infty)$ is $G(\Omega)=\{g\in \GL(V): g(\Omega)=\Omega\}$.
\end{itemize}
\end{proposition}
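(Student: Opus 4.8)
The plan is to push everything to the bounded realization $\mathcal{D}\subset V^{\C}$ and its Shilov boundary $\check S$, where the four assertions become transparent Jordan-algebraic statements, keeping the symplectic model $G=\Sp(2n,\R)$, $V=\Sym(n,\R)$, $\Omega=\{\text{positive definite matrices}\}$, $S=\mathcal L$ in mind throughout. First I would fix the Cayley transform $c(Z)=(Z-ie)(Z+ie)^{-1}$, a biholomorphism $T_\Omega\to\mathcal{D}$ that conjugates the $G$-action on $T_\Omega$ to the action of $cGc^{-1}$ on $\mathcal{D}$ and extends continuously to a homeomorphism of Shilov boundaries identifying $S$ with $\check S=\{z\in V^{\C}:\bar z=z^{-1}\}$ (Proposition~\ref{Shilov bd}). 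Define $\infty\in S$ to be the boundary point corresponding to $e\in\check S$; it is the point at infinity of the tube, and in the symplectic model it is the Lagrangian $\langle x_1,\dots,x_n\rangle$ spanned by the first coordinate directions. The first bullet then says that the affine chart of $S$ around $\infty$ is $V$: by definition $\{s\in S:s\pitchfork\infty\}$ is the slice at $\infty$ of the open $G$-orbit in $S\times S$, and I would identify it with the image of $V\hookrightarrow\overline{T_\Omega}$, $X\mapsto X+i\cdot 0$. In the symplectic model this is the classical fact that the Lagrangians transverse to $\langle x_1,\dots,x_n\rangle$ are exactly the graphs $\{(Zv,v):v\in\R^n\}$ of symmetric matrices $Z$; in general it is the Harish-Chandra/Bruhat description of the big cell in the flag variety $S=G/Q$ with $Q=\stab_G(\infty)^\circ$ the relevant maximal parabolic, combined with the fact (Faraut--Kor\'anyi \cite{book:834711}) that $\check S$ is the $G$-orbit of $e$ whose open cell is $c(V)$.

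Next I would treat the stabilizer. An element $g\in\stab_G(\infty)$ preserves the transverse locus of $\infty$, so it restricts to a biholomorphic self-map of $T_\Omega$ fixing its point at infinity; conjugating by $c$ it becomes a biholomorphism of $\mathcal{D}$ fixing $e\in\check S$. By the rigidity of automorphisms of bounded symmetric domains (any automorphism extends to the compact dual and is a ``linear fractional'' transformation there), and unwinding through $c$, $g$ acts on $T_\Omega$ by an affine map $Z\mapsto AZ+B$. Demanding that this affine map preserve $T_\Omega=V\oplus i\Omega$ forces $B\in V$ and $A\in\GL(V)$ with $A\Omega=\Omega$, i.e.\ $A\in G(\Omega)$ (consistent with the fourth bullet), and conversely every such affine map lies in $G$. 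The ``in particular'' then follows at once: $\stab_G(\infty)$ contains all translations $Z\mapsto Z+B$, $B\in V$, which act transitively on $\{s\pitchfork\infty\}\cong V$, so together with transitivity of $G$ on the homogeneous space $S=G/Q$ we obtain transitivity of $G$ on the set of transverse pairs.

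For the last two bullets, note that the point $0\in S$ corresponds to $0\in V$ in the chart above, and that the intrinsic transversality of $s_v,s_{v'}$ with $v,v'\in V$ is equivalent to invertibility of $v-v'$ in the Jordan algebra (for $\Sp(2n,\R)$: two symmetric-matrix graphs are transverse iff the difference of the matrices is invertible; in general this is the Jordan-algebraic form of the finite Bruhat stratification of $S\times S$ by the rank of the difference). Hence $\{s\in S:s\pitchfork\infty,\ s\pitchfork 0\}$ is the set of $v\in V$ with $v$ invertible, i.e.\ $V^\ast=\{v\in V:\det v\neq 0\}$ with $\det$ the Jordan determinant $a_{r_X}$ of Proposition~\ref{char poly}; and $\stab_G(0)\cap\stab_G(\infty)$ consists of those affine maps $Z\mapsto AZ+B$ that additionally fix $0$, i.e.\ with $B=g(0)=0$, leaving exactly the linear maps $Z\mapsto AZ$ with $A\in G(\Omega)$.

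The genuinely Lie-theoretic input, and the main obstacle, is twofold: establishing that the transverse locus of $\infty$ is precisely the affine piece $V$ --- equivalently, matching the ``open $G$-orbit'' definition of transversality with the intrinsic ``invertible difference'' condition, which rests on the finite Bruhat-type stratification of $S\times S$ analogous to the classification of pairs of Lagrangians by their intersection dimension --- and the rigidity statement that a domain automorphism fixing the point at infinity is affine. Both are elementary for the classical groups and standard in general (Faraut--Kor\'anyi \cite{book:834711}, together with the Kor\'anyi--Wolf picture of the Cayley transform); a fully self-contained treatment would run through the Harish-Chandra embedding.
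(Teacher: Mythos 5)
The paper does not prove this proposition --- it is stated without proof as background from the Korányi--Wolf theory of tube-type Hermitian symmetric spaces, with Faraut--Körányi \cite{book:834711} as the implicit reference --- so there is no paper argument against which to compare. Your outline is a correct reconstruction of the standard proof: pass to the bounded realization via the Cayley transform, take $\infty$ to correspond to $e\in\check{S}$, identify the affine piece $\{s\in S : s\pitchfork\infty\}$ with the big cell $V$ of the flag variety $S=G/Q$ via the Harish--Chandra embedding, and read off the stabilizers from the action by affine maps. You are also right to observe that the ``$A\in\GL(V)$'' in the second bullet should really be read as $A\in G(\Omega)$: the requirement that $Z\mapsto AZ+B$ map $T_\Omega=V+i\Omega$ to itself forces $A(\Omega)=\Omega$, and this is exactly what makes the second and fourth bullets consistent with each other. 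The one step you single out as the genuine Lie-theoretic input --- matching the orbit-theoretic definition of transversality (open $G$-orbit in $S\times S$) with the Jordan-algebraic criterion of invertibility of the difference --- is precisely where the Peirce-decomposition analysis of $G$-orbits on $S\times S$ in \cite{book:834711} is needed, and flagging it as the crux rather than glossing over it is the right call.
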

As a corollary, the $G$-orbits of transverse triples correspond to $G(\Omega)$-orbits in $V^\ast$.
The main tool that we use is the spectral decomposition for Euclidean Jordan algebras:
\begin{proposition}[Spectral decomposition]
Let $V$ be a Euclidean Jordan algebra. Then for any $v\in V$,
there is $c_1,\ldots,c_r$ a \emph{Jordan frame}, i.e.\ such that:
\begin{itemize}
    \item $\forall i$ we have $c_i^2=c_i$ and $c_i\neq c_j+c_k$,
    \item $\forall i\neq j$ we have $c_ic_j=0$,
    \item $\sum c_i=e$,
\end{itemize}
and there are unique eigenvalues $\lambda_1,\ldots,\lambda_r$ such that $v=\sum \lambda_i c_i$.
\end{proposition}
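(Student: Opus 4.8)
The plan is to follow the classical argument (as in Faraut--Kor\'anyi, \emph{Analysis on Symmetric Cones}): first build a ``coarse'' decomposition $v=\sum_j\mu_j f_j$ with \emph{distinct} eigenvalues $\mu_j$ using only the associative subalgebra generated by $v$, then refine each $f_j$ into primitive idempotents to obtain a Jordan frame, and finally deduce uniqueness. \textbf{Step 1.} The Jordan axioms imply $V$ is power-associative, so the span $\mathbb R[v]$ of $e,v,v^2,\dots$ is an associative commutative unital subalgebra, hence $\mathbb R[v]\cong\mathbb R[T]/(m_v)$ for the minimal polynomial $m_v$ of $v$. Using the Euclidean structure I would show $m_v$ splits into \emph{distinct} real linear factors: if $z\in\mathbb R[v]\setminus\{0\}$ had $z^2=0$ then $\langle z,z\rangle=\langle z\circ z,e\rangle=0$, a contradiction (so $\mathbb R[v]$ is reduced, a product of copies of $\mathbb R$ and $\mathbb C$); and if one of these factors were isomorphic to $\mathbb C$, with unit $f$ and $w$ in that factor satisfying $w^2=-f$, then $\langle w,w\rangle=\langle w\circ f,w\rangle=\langle f,w^2\rangle=-\langle f,f\rangle<0$, again impossible. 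Thus $\mathbb R[v]\cong\mathbb R^k$; pulling back the coordinate idempotents --- concretely the Lagrange interpolation polynomials $p_j(v)$ at the distinct roots $\mu_1,\dots,\mu_k$ of $m_v$ --- yields nonzero, pairwise orthogonal idempotents $f_1,\dots,f_k$ with $\sum_j f_j=e$ and $v=\sum_j\mu_j f_j$, and this decomposition is unique since necessarily $f_j=p_j(v)$ and $\{\mu_j\}$ is the root set of $m_v$.

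\textbf{Step 2.} Next I would show every nonzero idempotent $c$ is a finite sum of pairwise orthogonal primitive idempotents: if $c$ is not primitive, write $c=c'+c''$ with $c',c''$ nonzero orthogonal idempotents and iterate; the process stops because pairwise orthogonal nonzero idempotents are orthogonal with respect to $\langle\cdot,\cdot\rangle$ (as recorded in the Remarks above), hence linearly independent, so at most $\dim V$ of them occur. Applying this to each $f_j$ of Step 1 and relabelling gives $v=\sum_i\lambda_i c_i$ with the $c_i$ pairwise orthogonal primitive idempotents summing to $e$ --- a Jordan frame --- where $\lambda_i=\mu_j$ whenever $c_i$ occurs in the refinement of $f_j$.

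\textbf{Step 3.} For uniqueness, given any Jordan-frame expression $v=\sum_i\lambda_i c_i$, grouping indices by the common value of $\lambda_i$ produces orthogonal idempotents $g_\ell:=\sum_{\lambda_i=\nu_\ell}c_i$ (the $\nu_\ell$ being the distinct values) with $\sum_\ell g_\ell=e$ and $v=\sum_\ell\nu_\ell g_\ell$; by the uniqueness in Step 1 this forces $\{\nu_\ell\}=\{\mu_j\}$ and $g_\ell=f_j$ after matching indices. So the distinct eigenvalues are intrinsic, and the whole multiset $\{\lambda_i\}$ is determined once we know the multiplicity $\#\{i:\lambda_i=\mu_j\}$ is independent of the chosen frame --- equivalently, that any two decompositions of the fixed idempotent $f_j$ into orthogonal primitive idempotents have the same length (note this also forces every Jordan frame of $V$ to have the same cardinality, which we may then call $r=\rk(V)$ by testing on a regular element). \textbf{This last invariance is the only non-formal point and the main obstacle.} I would settle it with the Peirce decomposition of a single idempotent: $L(c)$ satisfies $2L(c)^3-3L(c)^2+L(c)=0$, so $V=V(c,1)\oplus V(c,\tfrac12)\oplus V(c,0)$ along the eigenvalues $1,\tfrac12,0$, and $c$ is primitive exactly when $V(c,1)=\mathbb R c$. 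Restricting to $W:=V(f_j,1)$ --- a Euclidean Jordan algebra with unit $f_j$ in which the $c_i$ refining $f_j$ form a Jordan frame --- the number of such $c_i$ equals $\rk(W)$, the maximal degree of a minimal polynomial of an element of $W$, which is manifestly an invariant of $W$ and hence of $f_j$. Assembling Steps 1--3 yields the existence of the frame together with the uniqueness of the eigenvalues with multiplicities.
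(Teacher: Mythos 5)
The paper does not actually prove this proposition; it is stated and cited to Lion--Vergne (here, \cite{lio13}) and, in the earlier chapter on Hermitian Lie groups, to Faraut--Kor\'anyi \cite{book:834711}. Your outline reproduces the standard Faraut--Kor\'anyi route, and \textbf{Steps 1 and 2 are correct}: passing to the power-associative unital subalgebra $\mathbb R[v]$, using the associativity $\langle xu,y\rangle=\langle u,xy\rangle$ to rule out nilpotents and a complex factor, pulling back the Lagrange-interpolation idempotents, and terminating the primitive refinement because Jordan-orthogonal nonzero idempotents are $\langle\cdot,\cdot\rangle$-orthogonal and hence linearly independent.

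\textbf{Step 3 identifies the crux but does not close it.} You reduce uniqueness of multiplicities to the claim that every Jordan frame of $W:=V(f_j,1)$ has cardinality $\rk(W)$, and then assert this as if it followed from the rank being ``manifestly an invariant.'' The rank is indeed an invariant, but the equality of frame cardinality with rank is precisely what needs proof: from your setup one gets only the inequality $\#\{c_i\} \le \rk(W)$, since $\sum_i i\,c_i$ has minimal polynomial of degree equal to the frame size. The reverse inequality --- that an arbitrary element of $W$ has $\deg m_w \le \#\{c_i\}$ for an \emph{arbitrary} Jordan frame --- is not established. Applying Steps 1--2 to a regular element does produce one frame of size $\rk(W)$ (its Step-1 idempotents must already be primitive, else a refinement would exceed the rank), but that is a \emph{different} frame, and you have no mechanism relating it to a given $\{c_i\}$. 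The Peirce decomposition of a single idempotent, which is what you invoke ($V=V(c,1)\oplus V(c,\tfrac12)\oplus V(c,0)$), does not do this job; what one needs is the simultaneous Peirce decomposition relative to a complete Jordan frame, built on the fact that $[L(c_i),L(c_j)]=0$ for orthogonal idempotents (a consequence of the linearized Jordan identity), followed by a genuine argument bounding the degree of an arbitrary element's minimal polynomial by the number of frame members. This is the substantive middle portion of Faraut--Kor\'anyi's Chapter~III--IV that your proposal skips, so the invariance of multiplicities --- and with it the well-definedness of $r=\rk(V)$ as the common frame cardinality --- remains open.
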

\begin{remark}
As $\det v=\prod \lambda_i$, $v$ is invertible if and only if $\lambda_i\neq 0$ for all $i\neq 0$.
Then we can define the signature of $v$ in the following way:
$\textrm{sgn}(v)=|\{i: \lambda_i>0\}|-|\{i: \lambda_i<0\}|$.
\end{remark}
It is a well known fact that the signature is invariant under the $G(\Omega)$-action on $V^\ast$, and then the Maslov index is the corresponding invariant for the action on transverse triples.

\newpage


\part[Theta-Positivity]{Theta-Positivity
\textnormal{
\begin{minipage}[c]{15cm}
\begin{center}
    \vspace{2cm}
    {\Large Clarence Kineider}\\
    \vspace{-4mm}
    {\large \textit{ENS Rennes}}\\
    \vspace{.5cm}
    {\Large Pierre-Louis Blayac}\\
    \vspace{-4mm}
    {\large \textit{Institut des Hautes Études Scientifiques}}\\
    \vspace{.5cm}
    {\Large Balthazar Fléchelles}\\
    \vspace{-4mm}
    {\large \textit{Institut des Hautes Études Scientifiques}}\\
    \vspace{.5cm}
    {\Large Dani Kaufman}\\
    \vspace{-4mm}
    {\large \textit{Ruprecht-Karls-Universit\"at Heidelberg}}\\
    \vspace{.5cm}
     {\Large Fernando Camacho Cadena}\\
    \vspace{-4mm}
    {\large \textit{Ruprecht-Karls-Universit\"at Heidelberg}}\\
    \vspace{.5cm}
     {\Large Merik Niemeyer}\\
    \vspace{-4mm}
    {\large \textit{Ruprecht-Karls-Universit\"at Heidelberg}}
\end{center}
\end{minipage}
}}\label{chap4}

\thispagestyle{empty}

\chapter[Introduction to Theta-Positivity]{Introduction to Theta-Positivity\\ {\Large\textnormal{\textit{by Clarence Kineider}}}}
\addtocontents{toc}{\quad\quad\quad \textit{Clarence Kineider}\par}
	
	Total positivity in split real Lie groups and maximality in Hermitian Lie groups of tube type have a lot in common.
	The notion of $\Theta$-positivity will unify those two cases under the same definition. Moreover, we have a complete description of all semisimple Lie groups admitting a $\Theta$-positive structure which includes two new family of groups, namely $\SO(p,q)$, $p\neq q$ and exceptional groups with restricted root system of type $F_4$.
	
	In this chapter we will do every computation in $\SL(n,\R)$.
	
	Let $G$ a real semisimple Lie group with finite center and $\mfg$ its Lie algebra. Let $K$ be a maximal compact subgroup of $G$, $\mfk$ its Lie algebra, and $\mfk^\perp$ the orthogonal of $\mfk$ with respect to the Killing form.  Let $\mfa$ be a maximal abelian Cartan subspace of $\mfk^\perp$, and denote by $\Sigma = \Sigma(\mfg,\mfa)$ the system of restricted roots (see Part \ref{chap2} for all necessary definitions). 
	 Choose $\Delta\subset \Sigma$ a set of simple roots and let $\Sigma^+$ be the set of positive roots with respect to $\Delta$.
	 
	 \begin{example}
	 	For $G=\SL(n,\R)$, $K=\SO(n)$ and $\mfk$ is the set of skew-symmetric matrices.
	 	Then $\mfk^\perp$ is the set of traceless symmetric matrices, and $\mfa$ is the set of traceless diagonal matrices. The restricted root system is $\Sigma = \left\lbrace \alpha_{ij} := \epsilon_i-\epsilon_j : i,j \in \left\lbrace 1,\dots,n\right\rbrace, i\neq j \right\rbrace$, where $\epsilon_i :(a_{pq})_{p,q \in \left\lbrace 1,\dots,n\right\rbrace} \mapsto a_{ii}$. The standard choice of simple roots is $\Delta = (\alpha_{i, i+1})_{i\in\left\lbrace 1,\dots,n-1\right\rbrace }$, and the set of positive roots associated is $\Sigma^+ = (\alpha_{ij})_{i<j}$. The weight space $\mfg_{\alpha_{ij}}$ is the $1$-dimensional subspace of $\mathfrak{sl}(n,\R)$ spanned by $E_{ij}$.
	 \end{example}
 
 	Now let $\Theta\subset\Delta$ be any subset of simple roots ($\Theta$ can be empty). Let $\Sigma_\Theta^+=\Sigma^+\setminus\Span(\Delta\setminus\Theta)$, and let
 	\[ \mfu_\Theta = \sum_{\alpha\in\Sigma_\Theta^+} \mfg_\alpha,~~~ \mfu_\Theta^{\mathrm{opp}} = \sum_{\alpha\in\Sigma_\Theta^+}\mfg_{-\alpha} \]
 	and \[ \mfl_\Theta = \mfg_0 \oplus \sum_{\alpha\in\Sigma^+\cap\Span(\Delta\setminus\Theta)}\mfg_\alpha \oplus\mfg_{-\alpha}.\]
 	As Lie algebra, $\mfg$ splits into $\mfg = \mfu_\Theta^{\mathrm{opp}}\oplus \mfl_\Theta \oplus \mfu_\Theta$. Moreover $\mfl_\Theta$ acts on $\mfu_\Theta$ by the Lie bracket, and we would like to understand this action and its properties.
 	The group $G$ acts on $\mfg$ by the adjoint representation, denote by $P_\Theta$ the normalizer in $G$ of $\mfu_\Theta$ and $P_\Theta^{\mathrm{opp}}$ the normalizer of $\mfu_\Theta^{\mathrm{opp}}$.
 	Those are the standard parabolic subgroups associated with $\Theta$, and the corresponding Levi subgroup is $L_\Theta = P_\Theta\cap P_\Theta^{\mathrm{opp}}$. Let us complete the list with $U_\Theta = \exp(\mfu_\Theta)$ and $\mfp_\Theta = Lie(P_\Theta)$. 
 	\begin{proposition}
 		$\mfl_\Theta$ is the Lie algebra of $L_\Theta$ and $\mfp_\Theta = \mfl_\Theta\oplus\mfu_\Theta$.
 	\end{proposition}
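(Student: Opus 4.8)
The plan is to reduce the statement to a computation of \emph{infinitesimal} normalizers and then read everything off the restricted root space decomposition (in $\SL(n,\R)$ this will just be the block‑triangular picture). First I would record the general principle: since $P_\Theta=\{g\in G:\Ad(g)\mfu_\Theta=\mfu_\Theta\}$ is the preimage of a closed condition under the continuous map $\Ad\colon G\to\GL(\mfg)$, it is a closed, hence embedded, subgroup, and its Lie algebra is the infinitesimal normalizer
\[
\mathfrak{n}_{\mathfrak{g}}(\mfu_\Theta):=\{X\in\mfg:[X,\mfu_\Theta]\subseteq\mfu_\Theta\}.
\]
The inclusion $\mathrm{Lie}(P_\Theta)\subseteq\mathfrak{n}_{\mathfrak{g}}(\mfu_\Theta)$ comes from differentiating, and the reverse holds because $\ad(X)\mfu_\Theta\subseteq\mfu_\Theta$ forces $\Ad(\exp tX)\mfu_\Theta=e^{t\,\ad X}\mfu_\Theta\subseteq\mfu_\Theta$ for all $t$. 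The same applies to $P_\Theta^{\mathrm{opp}}$, so $\mathrm{Lie}(P_\Theta^{\mathrm{opp}})=\mathfrak{n}_{\mathfrak{g}}(\mfu_\Theta^{\mathrm{opp}})$, and since $L_\Theta=P_\Theta\cap P_\Theta^{\mathrm{opp}}$ is an intersection of closed subgroups, $\mathrm{Lie}(L_\Theta)=\mathfrak{n}_{\mathfrak{g}}(\mfu_\Theta)\cap\mathfrak{n}_{\mathfrak{g}}(\mfu_\Theta^{\mathrm{opp}})$.

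Next I would compute $\mathfrak{n}_{\mathfrak{g}}(\mfu_\Theta)$. Because $\mfa$ normalizes $\mfu_\Theta$ (a sum of restricted root spaces), $\mathfrak{n}_{\mathfrak{g}}(\mfu_\Theta)$ is $\ad(\mfa)$‑invariant, hence splits as $\mfg_0\oplus\bigoplus_{\alpha\in\Sigma}\bigl(\mathfrak{n}_{\mathfrak{g}}(\mfu_\Theta)\cap\mfg_\alpha\bigr)$, and each summand $\mathfrak{n}_{\mathfrak{g}}(\mfu_\Theta)\cap\mfg_\alpha$ is either $0$ or all of $\mfg_\alpha$. Using $[\mfg_\alpha,\mfg_\beta]\subseteq\mfg_{\alpha+\beta}$ together with the fact that a root is positive (resp. negative) exactly when all its simple‑root coefficients are $\geq0$ (resp. $\leq 0$), one checks three cases: (i) for $\alpha\in\Sigma_\Theta^+$, $\mfg_\alpha\subseteq\mfu_\Theta$, which is a subalgebra, so $\mfg_\alpha$ normalizes $\mfu_\Theta$; (ii) for $\alpha\in\Sigma^+\cap\Span(\Delta\setminus\Theta)$, both $\mfg_\alpha$ and $\mfg_{-\alpha}$ normalize $\mfu_\Theta$, since bracketing with $\mfg_\beta$, $\beta\in\Sigma_\Theta^+$, lands (when nonzero) in $\mfg_{\alpha+\beta}$ or $\mfg_{-\alpha+\beta}$, whose simple coefficients on $\Theta$ coincide with those of $\beta$ — in particular nonzero and $\geq0$, which forces the root to be positive, hence in $\Sigma_\Theta^+$; (iii) for $\alpha\in\Sigma_\Theta^+$, $\mfg_{-\alpha}$ does \emph{not} normalize $\mfu_\Theta$, because $\mfg_\alpha\subseteq\mfu_\Theta$ but $[\mfg_{-\alpha},\mfg_\alpha]$ is a nonzero subspace of $\mfg_0$, which meets $\mfu_\Theta$ trivially. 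In $\SL(n,\R)$, once $\Theta$ is unwound into a decomposition of $\{1,\dots,n\}$ into consecutive blocks, $\mfu_\Theta$ is the nilradical of strictly block‑upper‑triangular matrices, $\mfl_\Theta$ the block‑diagonal traceless matrices, and the three cases become the elementary computations $[E_{ij},E_{k\ell}]=\delta_{jk}E_{i\ell}-\delta_{\ell i}E_{kj}$ with $E_{ij}$ block‑upper or block‑diagonal, and $[E_{ij},E_{ji}]=E_{ii}-E_{jj}\neq0$ for (iii). Summing up, $\mathfrak{n}_{\mathfrak{g}}(\mfu_\Theta)=\mfg_0\oplus\bigoplus_{\alpha\in\Sigma^+\cap\Span(\Delta\setminus\Theta)}(\mfg_\alpha\oplus\mfg_{-\alpha})\oplus\bigoplus_{\alpha\in\Sigma_\Theta^+}\mfg_\alpha=\mfl_\Theta\oplus\mfu_\Theta$, which yields $\mfp_\Theta=\mfl_\Theta\oplus\mfu_\Theta$.

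Finally, the symmetric computation (with roles of positive and negative roots swapped) gives $\mathfrak{n}_{\mathfrak{g}}(\mfu_\Theta^{\mathrm{opp}})=\mfl_\Theta\oplus\mfu_\Theta^{\mathrm{opp}}$, so, invoking the direct sum decomposition $\mfg=\mfu_\Theta^{\mathrm{opp}}\oplus\mfl_\Theta\oplus\mfu_\Theta$ recalled above, $\mathrm{Lie}(L_\Theta)=(\mfl_\Theta\oplus\mfu_\Theta)\cap(\mfl_\Theta\oplus\mfu_\Theta^{\mathrm{opp}})=\mfl_\Theta$. The only step that is not purely formal is case (iii) in the root‑space bookkeeping — ruling out that $\mfg_{-\alpha}$ normalizes $\mfu_\Theta$ — which relies on $[\mfg_\alpha,\mfg_{-\alpha}]\neq0$; in the general semisimple setting this is the nondegeneracy of the Killing‑form pairing $\mfg_\alpha\times\mfg_{-\alpha}\to\mfg_0$, whereas in $\SL(n,\R)$ it is the visible identity $[E_{ij},E_{ji}]=E_{ii}-E_{jj}$, which is precisely why carrying out the argument concretely in $\SL(n,\R)$ makes the statement transparent.
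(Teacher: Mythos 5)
The paper records this proposition without any proof, so there is no in-text argument to compare against; I assess your proof on its own. The route you take — identifying $\mfp_\Theta=\mathrm{Lie}(P_\Theta)$ with the infinitesimal normalizer $\mathfrak{n}_{\mfg}(\mfu_\Theta)=\{X\in\mfg:[X,\mfu_\Theta]\subseteq\mfu_\Theta\}$, computing it root space by root space, doing the same for $P_\Theta^{\mathrm{opp}}$, and then intersecting using $\mfg=\mfu_\Theta^{\mathrm{opp}}\oplus\mfl_\Theta\oplus\mfu_\Theta$ — is the standard argument and is essentially correct, including the preliminary step $\mathrm{Lie}(L_\Theta)=\mathrm{Lie}(P_\Theta)\cap\mathrm{Lie}(P_\Theta^{\mathrm{opp}})$ for the intersection of closed subgroups.

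One step should be tightened. The dichotomy ``each $\mathfrak{n}_{\mfg}(\mfu_\Theta)\cap\mfg_\alpha$ is either $0$ or all of $\mfg_\alpha$'' is not automatic in the restricted-root setting, where $\dim\mfg_\alpha$ may exceed $1$, and you lean on it implicitly to pass from ``case (iii) shows $\mfg_{-\alpha}\not\subseteq\mathfrak{n}$'' (which is all that $[\mfg_{-\alpha},\mfg_\alpha]\neq 0$ gives) to ``$\mfg_{-\alpha}\cap\mathfrak{n}=\{0\}$.'' The cleaner fix is to drop the dichotomy and prove directly the per-element statement that case (iii) needs: for every nonzero $X\in\mfg_{-\alpha}$ with $\alpha\in\Sigma_\Theta^+$ there is $Y\in\mfg_\alpha\subseteq\mfu_\Theta$ with $[X,Y]\neq 0$, and since $[X,Y]\in\mfg_0$ while $\mfg_0\cap\mfu_\Theta=\{0\}$, $X$ cannot normalize $\mfu_\Theta$. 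This per-element fact is indeed the nondegeneracy you invoke, but note the nondegenerate object is the Killing pairing $B\colon\mfg_\alpha\times\mfg_{-\alpha}\to\R$, not the bracket into $\mfg_0$: choose $Y$ with $B(X,Y)\neq 0$, then $B([Y,X],H)=\alpha(H)\,B(X,Y)$ for $H\in\mfa$ forces $[X,Y]\neq 0$. With this adjustment the proof is complete, and your cases (i)–(ii) and the $\SL(n,\R)$ block picture are fine as written.
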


 		\newcommand{\UP}[2]{\makebox[0pt]{\smash{\raisebox{1.5em}{$\phantom{#2}#1$}}}#2}
 		\newcommand{\LF}[1]{\makebox[0pt]{$#1$\hspace{4.5em}}}
 		\renewcommand{\arraystretch}{1.3}
 	\begin{example}
 		For $G = \SL(n,\R)$ let us choose $\Theta = \left\lbrace \alpha_{i, i+1}, \alpha_{j, j+1}\right\rbrace $ with $1\leq i<j<n$. Then the Lie algebra $\mathfrak{sl}(n,\R)$ admits the following block decomposition :\\
		\[
 			\left(
 			\begin{array}{c@{}ccc|ccc|ccc}
 				           &  *  &  \dots  &  \UP{i}{*}  &  \UP{~~i+1}{*}  &  \dots  &  \UP{j}{*}  &  \UP{~~j+1}{*}  &  \dots  &  *  \\
 					       & \vdots & \ddots & \vdots & \vdots & \ddots & \vdots & \vdots & \ddots & \vdots \\ 
 				\LF{i} & *    &  \dots &   *               & *                     &\dots     &*                  &*                      &\dots     &* \\ \hline
 				 \LF{i+1}          &  *  &  \dots  &  *  & *  &  \dots  &  *  & *  &  \dots  &  *  \\
 				           & \vdots & \ddots & \vdots & \vdots & \ddots & \vdots & \vdots & \ddots & \vdots \\
 				\LF{j} & *    &  \dots &   *               & *                     &\dots     &*                  &*                      &\dots     &* \\ \hline
 				 \LF{j+1}  &  *  &  \dots  &  *  & * &  \dots  &  *  & *  &  \dots  &  *  \\
 				           & \vdots & \ddots & \vdots & \vdots & \ddots & \vdots & \vdots & \ddots & \vdots \\
 			               & *    &  \dots &   *               & *                     &\dots     &*                  &*                      &\dots     &* \\
 			\end{array}\right), ~~\tr = 0.
 		\]
 		\renewcommand{\arraystretch}{1}
 		
 		In the following examples we refer to this block decomposition when speaking about matrices defined block-wise.
 		The Levi subalgebra $\mfl_\Theta$ is the set of traceless block-diagonal matrices, the algebra $\mfu_\Theta$ is the set of block strictly upper triangular matrices
 		and $\mfp_\Theta=\mfl_\Theta \oplus\mfu_\Theta$ is the set of traceless block-upper triangular matrices :
 		\[
 		\renewcommand{\arraystretch}{1}
 		\mfl_\Theta = \left\lbrace \left(
 		\begin{array}{c|c|c}
 			*& & \\ \hline
 			 & *& \\ \hline
 			  & & *
 		\end{array}\right), \tr = 0\right\rbrace , ~~
 	\mfu_\Theta = \left\lbrace \left(
 	\begin{array}{c|c|c}
 	& *&* \\ \hline
 	& & *\\ \hline
 	& & 
	 \end{array}\right), \tr = 0\right\rbrace ,~~
	\mfp_\Theta = \left\lbrace \left(
	\begin{array}{c|c|c}
	*& *&* \\ \hline
	& *& *\\ \hline
	& & *
	\end{array}\right), \tr = 0\right\rbrace .
 		\renewcommand{\arraystretch}{1}
 		\]
 		The parabolic subgroup $P_\Theta$ is the stabilizer of the standard (partial) flag of $\R^n$ with subspaces of dimensions $(0,i,j,n)$.
 	\end{example}
 
 Since $L_\Theta$ is a reductive group, we need to know its center, the Cartan subalgebra of its Lie algebra etc.\ in order to understand the action of $L_\Theta$ on $\mfu_\Theta$.
 	First, $\mfl_\Theta$ splits into its center $Z(\mfl_\Theta)$ and its semisimple part $\mfl'_\Theta = \left[ \mfl_\Theta,\mfl_\Theta\right]$: \[ \mfl_\Theta = Z(\mfl_\Theta) \oplus \mfl'_\Theta. \]
 	Its center $Z(\mfl_\Theta)$ once again splits in two parts: $\mfz_\Theta = Z(\mfl_\Theta)\cap \mfk^\perp = Z(\mfl_\Theta)\cap \mfa$ and $Z(\mfl_\Theta)\cap \mfk$. 

  \begin{fact}\label{fact:lTheta}
   The Cartan subalgebra of $\mfl_\Theta$ is $\mfa_\Theta = \mfa \cap \mfl'_\Theta$, its restricted root system is the restriction of $\Sigma\cap\Span(\Delta\setminus\Theta)$ to $\mfa_\Theta$, a possible choice of simple roots are the restriction of $\Delta\setminus\Theta$ to $a_\Theta$, and the corresponding Dynkin diagram is the largest subgraph of the Dynkin diagram of $G$ with vertices $\Delta\setminus\Theta$.
  \end{fact}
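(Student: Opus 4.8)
The statement is standard structure theory for the Levi factor of a parabolic subgroup, so the plan is to do careful bookkeeping with the restricted root space decomposition rather than anything deep; I interpret ``Cartan subalgebra of $\mfl_\Theta$'' as the maximal abelian subspace of $\mfk^\perp\cap\mfl'_\Theta$ playing the role of $\mfa$ for the semisimple part $\mfl'_\Theta$. Write $\Sigma_\Theta := \Sigma\cap\Span(\Delta\setminus\Theta)$; grouping the summands in the definitions of $\mfu_\Theta$, $\mfu_\Theta^{\mathrm{opp}}$ and $\mfl_\Theta$ gives $\mfl_\Theta = \mfg_0 \oplus \bigoplus_{\alpha\in\Sigma_\Theta}\mfg_\alpha$ with $\mfg_0 = \mfa\oplus Z_\mfk(\mfa)$. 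The four steps are: (1) pin down $Z(\mfl_\Theta)\cap\mfa$ and split $\mfa$; (2) produce the Cartan decomposition of $\mfl'_\Theta$ and prove $\mfa_\Theta$ is maximal abelian in its noncompact part; (3) read off the restricted roots and a simple system; (4) compare Cartan integers to identify the Dynkin diagram.

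\emph{Step 1.} First I would show $Z(\mfl_\Theta)\cap\mfa = \mfz_\Theta := \bigcap_{\alpha\in\Delta\setminus\Theta}\ker\alpha$: an element $H\in\mfa$ centralizes $\mfl_\Theta$ iff $\alpha(H)=0$ for all $\alpha\in\Sigma_\Theta$, and $\Sigma_\Theta$ spans the same subspace of $\mfa^\ast$ as $\Delta\setminus\Theta$. Setting $\mfa_\Theta := \Span_{\R}\{h_\alpha : \alpha\in\Sigma_\Theta\}$, nondegeneracy of the Killing form on $\mfa$ and linear independence of $\Delta\setminus\Theta$ give $\mfa = \mfz_\Theta\oplus\mfa_\Theta$ (orthogonal sum). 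I would then check $\mfa_\Theta = \mfa\cap\mfl'_\Theta$: the $\mathfrak{sl}_2$-triples $(\mfg_\alpha,\mfg_{-\alpha},\R h_\alpha)$ for $\alpha\in\Sigma_\Theta$ lie in $\mfl'_\Theta=[\mfl_\Theta,\mfl_\Theta]$, so $\mfa_\Theta\subseteq\mfl'_\Theta$, while $\mfz_\Theta$, being central in $\mfl_\Theta$, meets $[\mfl_\Theta,\mfl_\Theta]$ trivially.

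\emph{Step 2 (the main obstacle).} Since $\mfl_\Theta$ is reductive in $\mfg$ (cf.\ the ``Parallel sets'' section), $\mfl'_\Theta$ is semisimple by Cartan's criterion applied to the restriction of $B_\mfg$; and $\mfl_\Theta$ is $\theta$-stable, so $\theta$ restricts to a Cartan involution of $\mfl'_\Theta$, giving $\mfl'_\Theta=(\mfl'_\Theta\cap\mfk)\oplus(\mfl'_\Theta\cap\mfk^\perp)$. Now $\mfa_\Theta\subset\mfl'_\Theta\cap\mfk^\perp$ is abelian; to see it is \emph{maximal} abelian there, let $X\in\mfl'_\Theta\cap\mfk^\perp$ commute with $\mfa_\Theta$. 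Writing $r\colon\mfa^\ast\to\mfa_\Theta^\ast$ for the map dual to $\mfa_\Theta\hookrightarrow\mfa$, one has $\mfa^\ast=\Span(\Delta\setminus\Theta)\oplus\ker r$ by Step 1, so $r$ restricts to a linear isomorphism $\Span(\Delta\setminus\Theta)\xrightarrow{\ \sim\ }\mfa_\Theta^\ast$; in particular $r(\alpha)\neq 0$ for $\alpha\in\Sigma_\Theta$, so $\ad(\mfa_\Theta)$ has no nonzero centralizer in any $\mfg_\alpha$, $\alpha\in\Sigma_\Theta$. Hence $X\in\mfg_0=\mfa\oplus Z_\mfk(\mfa)$, and $X\in\mfk^\perp$ forces $X\in\mfa$, so $X\in\mfa\cap\mfl'_\Theta=\mfa_\Theta$. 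Thus $\mfa_\Theta$ is the desired Cartan subalgebra.

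\emph{Steps 3 and 4.} Decomposing $\mfl'_\Theta$ under $\ad(\mfa_\Theta)$ and using that $r$ is a bijection $\Span(\Delta\setminus\Theta)\to\mfa_\Theta^\ast$ identifies $\mfg_\alpha$ as the $r(\alpha)$-root space, so $\Sigma(\mfl'_\Theta,\mfa_\Theta)=r(\Sigma_\Theta)$, the restriction of $\Sigma\cap\Span(\Delta\setminus\Theta)$ to $\mfa_\Theta$. Choosing $X_0\in\mfa_\Theta$ with $\alpha(X_0)>0$ for all $\alpha\in\Delta\setminus\Theta$ (possible as these restrictions form a basis of $\mfa_\Theta^\ast$), and using that every element of $\Sigma_\Theta$ is a nonnegative or nonpositive integer combination of $\Delta\setminus\Theta$, the positive system it determines is $r(\Sigma^+\cap\Span(\Delta\setminus\Theta))$ with simple system $r(\Delta\setminus\Theta)$. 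Finally, transporting a $W$-invariant inner product on $\mfa^\ast$ along $r$ (legitimate because $\Span(\Delta\setminus\Theta)$ is invariant under $W_\Theta:=\langle s_\alpha:\alpha\in\Delta\setminus\Theta\rangle$, the Weyl group of $\mfl'_\Theta$) gives $\langle r(\alpha),r(\beta)\rangle=\langle\alpha,\beta\rangle$ for $\alpha,\beta\in\Delta\setminus\Theta$, so all Cartan integers coincide with those of $\mfg$; by the recipe building a Dynkin diagram from Cartan integers, the diagram of $(\Sigma(\mfl'_\Theta,\mfa_\Theta),r(\Delta\setminus\Theta))$ is exactly the full subgraph of the Dynkin diagram of $(\Sigma,\Delta)$ on $\Delta\setminus\Theta$. (For $\SL(n,\R)$ this is transparent: deleting the nodes in $\Theta$ from $A_{n-1}$ yields a disjoint union of type-$A$ diagrams, matching $\mfl'_\Theta\cong\prod_i\mathfrak{sl}(n_i,\R)$.) The only point requiring genuine care is Step 2 — semisimplicity of $\mfl'_\Theta$ and, above all, maximality (not just abelianness) of $\mfa_\Theta$ — after which Steps 3 and 4 are formal consequences of the isomorphism $r$.
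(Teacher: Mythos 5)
The paper states Fact~\ref{fact:lTheta} without proof, so there is no argument in the text to compare against; I am evaluating your proposal on its own merits. The overall structure and conclusions are correct: Step 1's orthogonal splitting $\mfa = \mfz_\Theta \oplus \mfa_\Theta$ via the Killing form, Step 2's use of $\theta$-stability of $\mfl_\Theta$ and the observation that a centralizer of $\mfa_\Theta$ in $\mfl'_\Theta \cap \mfk^\perp$ must sit in $\mfg_0 \cap \mfk^\perp = \mfa$, and Steps 3's identification of the restricted root system as $r(\Sigma \cap \Span(\Delta\setminus\Theta))$ are all sound. You rightly flag the maximality of $\mfa_\Theta$ as the nontrivial point; the argument you give for it is correct, and hinges on the observation — worth making explicit — that since $r$ is injective on $\Span(\Delta\setminus\Theta)$, one can choose $H \in \mfa_\Theta$ with $\alpha(H)\neq 0$ for all $\alpha\in\Sigma_\Theta$, and bracketing with such $H$ kills all the nonzero root-space components of a centralizing element.

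One place that could be tightened is Step 4. Transporting a $W$-invariant inner product along $r$ does give a bilinear form on $\mfa_\Theta^*$ with the stated values, but to conclude that it computes the \emph{correct} Cartan integers of $\mfl'_\Theta$ you still need this transported form to be the right invariant form for $(\Sigma(\mfl'_\Theta,\mfa_\Theta), r(\Delta\setminus\Theta))$, which is essentially the content you are trying to establish and hence looks a little circular. A cleaner route, already available from your Step 1: for each $\alpha\in\Sigma_\Theta$, the $\mathfrak{sl}_2$-triple $(X_\alpha, Y_\alpha, h_\alpha)$ lies in $\mfl'_\Theta$ with $h_\alpha \in \mfa_\Theta$, so $h_\alpha$ is literally the coroot of $r(\alpha)$ in $\mfl'_\Theta$. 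Then for $\alpha,\beta\in\Delta\setminus\Theta$ the Cartan integer of $\mfl'_\Theta$ is $r(\beta)(h_\alpha) = \beta(h_\alpha)$, which is the Cartan integer of $\mfg$, and the identification of Dynkin diagrams is immediate without touching inner products at all. With this substitution the proof is complete and self-contained.
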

   Note that since $\mfa\subset\mfl_\Theta\cap \mfk^\perp$, we have \[ \mfa = \mfa_\Theta\oplus \mfz_\Theta. \] 
 	In particular, $\mfz_\Theta$ acts on $\mfu_\Theta$ by restriction of the action of $\mfa$ on $\mfg$, so $\mfu_\Theta$ decomposes into weight spaces for this action:
 	\begin{equation}\label{eq:uTheta} \mfu_\Theta = \bigoplus_{\beta\in\mfz^*_\Theta} \mfu_\beta,
    \end{equation}
 	where 
    \begin{equation}\label{eq:ubeta}
    \mfu_\beta = \left\lbrace x\in\mfu_\Theta : \forall z\in \mfz_\Theta,\, \ad(z).x = \beta(z)x \right\rbrace
    \end{equation}
    is $L_\Theta^0$-invariant.
    
    Moreover if a root $\alpha\in\mfa^*$ vanishes on $\mfz_\Theta$ (i.e. lies in $\mfa_\Theta^*$) then $\alpha\in\Span(\Delta\setminus\Theta)$.
 	
 	The inclusion $\mfz_\Theta\subset\mfa$ induces a projection $i^*:\mfa^*\to\mfz_\Theta^*$ given by the restriction of a linear form on $\mfa$ to $\mfz_\Theta$. Thus we can write $\mfu_\beta$ as a sum of weight spaces for the action of $\mfa$:
 	\[ \mfu_\beta = \sum_{\alpha\in\Sigma_\Theta^+, \alpha|_{\mfz_\Theta} = \beta} \mfg_\alpha. \]

   	Suppose that $\beta$ is the restriction of a root $\alpha\in\Theta$.
    Then $\alpha$ is the only element of $\Span(\Theta)$ which restricts to $\beta$, allowing us the following slight abuse of notation:
    \begin{equation}\label{eq:ualpha}
     \mfu_\alpha:=\mfu_\beta=\sum_{\gamma\in \Span_{\N}(\Delta\setminus\Theta)}\mfg_{\alpha+\gamma}.
    \end{equation}
 	
 	\begin{example}
 		Using the same choice of $\Theta$ as in the previous example, the objects introduced above are the following: the center $Z(\mfl_\Theta)$ of $\mfl_\Theta$ is the set of matrices of the form 
 		\[  \left(
 		\begin{array}{c|c|c}
 			\lambda I_i& & \\ \hline
 			& \mu I_{j-i}& \\ \hline
 			& & \nu I_{n-j}
 		\end{array}\right)\]
 		with $i\lambda+(j-i)\mu + (n-j)\nu = 0$. The semisimple part of $\mfl_\Theta$ is then the set of block-diagonal matrices for which every block is traceless. In this situation, $Z(\mfl_\Theta)$ is entirely contained in $\mfa$, so $\mfz_\Theta = Z(\mfl_\Theta)$. The weight spaces $\mfu_\beta$ are the block-upper triangular matrices in $\mfu_\Theta$ for which only one block is non-zero, and the $\mfu_\beta$ with $\beta\in\Theta$ are the one where the non-zero block is just above the diagonal. The third $\mfu_\beta$ (the one with the top right block) is associated to the restriction of the root $\alpha_{i,j+1} = \alpha_{i, i+1} + \dots +\alpha_{j, j+1}$ to $\mfz_\Theta$, and all the roots $\alpha_{i+1, i+2},\dots,\alpha_{j-1, j}$ vanish on $\mfz_\Theta$.
 	\end{example}

 	In \cite{GuichardWienhard18} the irreducibility of $\mfu_\beta$ is stated as a fact, but the proof is not completely straightforward and is due to Kostant.
 	
 	\begin{theorem}[{\cite[Theorem 0.1]{Kostant08}}]\label{thm:ubeta irred}
 		For all $\beta\in\mfz_\Theta^*$, $\mfu_\beta$ is an irreducible representation of $L_\Theta^0$, and 
 		\[ \left[ \mfu_\beta,\mfu_{\beta'} \right] = \mfu_{\beta+\beta'}.  \]
 	\end{theorem}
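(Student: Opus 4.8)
The statement is Kostant's theorem on the $\mathbb{Z}$-grading induced by $\mathfrak{z}_\Theta$ on $\mathfrak{u}_\Theta$, so the plan is to reduce to the setting of Kostant's paper \cite{Kostant08} and to verify that our $\mathfrak{l}_\Theta^0$, $\mathfrak{z}_\Theta$, $\mathfrak{u}_\beta$ fit that framework. The first step is to observe that $\mathfrak{l}_\Theta = \mathfrak{z}_\Theta \oplus (Z(\mathfrak{l}_\Theta)\cap\mathfrak{k}) \oplus \mathfrak{l}'_\Theta$ is reductive with $\mathfrak{a}_\Theta$ a maximal split torus of the semisimple part and $\mathfrak{z}_\Theta$ a split central torus, and that the whole decomposition \eqref{eq:uTheta} of $\mathfrak{u}_\Theta$ is the weight decomposition for $\operatorname{ad}(\mathfrak{z}_\Theta)$; since $\mathfrak{z}_\Theta$ is central in $\mathfrak{l}_\Theta$, each $\mathfrak{u}_\beta$ is automatically $\mathfrak{l}_\Theta$-invariant, hence $L_\Theta^0$-invariant, as already recorded in \eqref{eq:ubeta}. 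The bracket relation $[\mathfrak{u}_\beta,\mathfrak{u}_{\beta'}]\subseteq\mathfrak{u}_{\beta+\beta'}$ is immediate from the Jacobi identity and the fact that $\mathfrak{z}_\Theta$ acts by scalars $\beta(z)$, $\beta'(z)$ on the two factors; so the content of the theorem is the reverse inclusion $[\mathfrak{u}_\beta,\mathfrak{u}_{\beta'}]=\mathfrak{u}_{\beta+\beta'}$ together with irreducibility of each $\mathfrak{u}_\beta$ as an $L_\Theta^0$-representation.

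\textbf{Reduction to a graded simple Lie algebra.} The key structural input is that, decomposing $\mathfrak{g}$ into its simple ideals, the grading by $\mathfrak{z}_\Theta^*$ restricts to each simple factor, and one checks that on each factor $\mathfrak{z}_\Theta$ (or a quotient) is at most one-dimensional on the relevant piece; more precisely, for $\beta$ in the support one should pass to the subalgebra generated by $\mathfrak{u}_\beta$, $\mathfrak{u}_{-\beta}$ and $[\mathfrak{u}_\beta,\mathfrak{u}_{-\beta}]$, which carries a $\mathbb{Z}$-grading $\bigoplus_{k\in\mathbb{Z}}\mathfrak{g}(k)$ with $\mathfrak{g}(k)\subseteq\mathfrak{u}_{k\beta}$ (for $k>0$). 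I would then invoke Kostant's result \cite[Theorem 0.1]{Kostant08}: for a $\mathbb{Z}$-graded semisimple Lie algebra $\bigoplus\mathfrak{g}(k)$ arising as above from a parabolic, the bracket $\mathfrak{g}(j)\times\mathfrak{g}(k)\to\mathfrak{g}(j+k)$ is surjective and each $\mathfrak{g}(k)$ is an irreducible module over $\mathfrak{g}(0)$ (equivalently over the identity component of the corresponding Levi). The work is in matching notation: our $\mathfrak{u}_\beta$ with $\beta=\alpha|_{\mathfrak{z}_\Theta}$ for $\alpha\in\Theta$ corresponds to $\mathfrak{g}(1)$ in the grading where one assigns degree $1$ to the simple root $\alpha$ and degree $0$ to all of $\Delta\setminus\Theta$; more general $\beta$ correspond to multi-gradings, one for each element of $\Theta$ in the support of $\beta$, and one runs the argument coordinate by coordinate.

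\textbf{Irreducibility as an $L_\Theta^0$-module.} For the irreducibility statement I would argue as follows. Fix $\beta$ and let $\alpha$ be the unique highest-degree direction; $\mathfrak{u}_\beta$ contains a unique (up to scalar) $\mathfrak{a}$-weight space for the highest weight $\lambda_\beta$, namely $\mathfrak{g}_{\lambda_\beta}$ where $\lambda_\beta$ is the highest root of $\Sigma_\Theta^+$ restricting to $\beta$ (this uses that $\Sigma^+\cap\operatorname{Span}(\Delta\setminus\Theta)$ together with $\alpha$ generates the relevant root strings). The lowest-weight vector generates all of $\mathfrak{u}_\beta$ under the action of the positive part of $\mathfrak{l}_\Theta$ because, by Fact \ref{fact:lTheta}, the root system of $\mathfrak{l}'_\Theta$ is exactly $\Sigma\cap\operatorname{Span}(\Delta\setminus\Theta)$, and successive root vectors $e_\gamma$ for $\gamma\in\Delta\setminus\Theta$ move between the $\mathfrak{a}$-weight spaces appearing in $\mathfrak{u}_\beta$ transitively — this is a standard $\mathfrak{sl}_2$-string connectedness argument within the $\mathfrak{l}'_\Theta$-module $\mathfrak{u}_\beta$. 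Combined with $Z(\mathfrak{l}_\Theta)$ acting by scalars, any nonzero $\mathfrak{l}_\Theta^0$-submodule is forced to contain the highest weight line and hence everything. \textbf{The main obstacle} I anticipate is precisely this last connectedness/transitivity check: one needs that $\mathfrak{u}_\beta$, as a module over the possibly-reductive-but-not-simple $\mathfrak{l}'_\Theta$, is actually irreducible and not merely a direct sum of isotypic pieces — this is exactly where Kostant's theorem does nontrivial work (it fails for general reductive subalgebras and general submodules of a parabolic nilradical), and where simply citing \cite{Kostant08} is cleanest rather than reproving it. I would therefore structure the proof as: (1) elementary verification of $[\mathfrak{u}_\beta,\mathfrak{u}_{\beta'}]\subseteq\mathfrak{u}_{\beta+\beta'}$; (2) identification of the multi-grading with the setup of \cite{Kostant08}; (3) direct citation of Kostant for surjectivity of brackets and $\mathfrak{g}(0)$-irreducibility; (4) translation of "$\mathfrak{g}(0)$-irreducible" to "$L_\Theta^0$-irreducible" using $L_\Theta^0 = \exp(\mathfrak{l}_\Theta)$-connectedness.
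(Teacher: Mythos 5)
The paper does not give an independent proof of this statement; it records it as a direct citation to \cite[Theorem~0.1]{Kostant08}, which is precisely what you conclude is the right move. Your setup of the easy inclusion $[\mathfrak{u}_\beta,\mathfrak{u}_{\beta'}]\subseteq\mathfrak{u}_{\beta+\beta'}$ and of the $\mathbb{Z}^{|\Theta|}$-grading by $\mathfrak{z}_\Theta$ matching Kostant's hypotheses is correct, and you correctly identify the irreducibility statement together with the reverse inclusion as exactly the nontrivial content that should be imported from Kostant rather than reproved.
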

 	
 	\begin{definition}\label{def:sharp1}
 		Let $V$ be a finite dimensional real vector space. A \textit{cone} in $V$ is a subset $C$ stable by positive scalar multiplication, i.e.\ $\forall x\in C$, $\forall \lambda >0$ we have $\lambda x\in C$.
 		The cone $C$ is said \textit{sharp} if it contains no affine line.
 	\end{definition}
 
 	\begin{example}
 		The sharpness condition ensure the cone is not too "wide", for instance the half plane $\H\subset\R^2$ is a cone, but not sharp since it contains many horizontal affine lines.
 		
 		We are interested in sharp \textit{convex} cones as they will give a good analog of $\R^{>0}$ in $\R$, which is our first example of sharp convex cone. A second example of sharp convex cone is the set of positive definite symmetric matrices in the space of symmetric matrices.
 	\end{example}
 
 	Since $L_\Theta$ is not necessarily connected, denote $L_\Theta^0$ the connected component of the identity in $L_\Theta$. We now have all the tool to give the definition of $\Theta$-positivity.
 	
 	\begin{definition}
 		We say that $(G,\Theta)$ admits a \emph{$\Theta$-positive structure} if for all $\beta\in\Theta$, the action of $L_\Theta^0$ on $\mfu_\beta$ preserves a sharp convex cone.
 	\end{definition}
 	
 	We already know two notions of positivity, namely total positivity and maximality. Our goal is now to explain why those are $\Theta$-positive structure, for a suitable choice of $\Theta$.
 	
 	\begin{example}
 		Let us begin with total positivity in $G = \SL(n,\R)$ to simplify the notations.
 		For $\Theta=\Delta$ (with the usual choice of $\Delta$ for $\SL(n,\R)$), the Levi subgroup $L_\Theta$ is the subgroup of diagonal matrices of determinant 1, and all of the weight spaces associated to elements of $\Theta$ are one-dimensional: 
 		\[ \forall \beta = \alpha_{i, i+1}\in\Theta,\, \mfu_\beta = \mfg_\beta = \left\langle E_{i,i+1}\right\rangle . \]
 		In Lusztig's total positivity, see Chapter \ref{section:LusztigTotalPositivity}, the semigroup of positive elements of a split real Lie group is generated by elements of the form $u_i(t) = I_n + tE_{i,i+1} = \exp(tE_{i,i+1})$, with $t>0$. Here the sharp convex cone in $\mfu_\beta = \mfg_{\alpha_{i,i+1}}$ is the cone of positive scalar multiples of $E_{i,i+1}$.
 	\end{example}
 	
 	\begin{example}
 		Now for maximality, let us treat the case $G = \Sp(2n,\R)$.
 		For the choice $\Theta = \left\lbrace \alpha_n\right\rbrace $ where $\alpha_n$ is the root at the end of the Dynkin diagram with two arrows pointing out, there is only one space $\mfu_\beta$, namely $\mfu_{\alpha_n}$. In this case the space $\mfu_{\alpha_n}$ can be identified with the space of symmetric matrices of size $n$, and the sharp convex cone is the set of positive definite matrices.
 		More details on the computations can be found in the exercise section.
 	\end{example}
 	
 	\newpage

\thispagestyle{empty}

\chapter[The Classification Theorem]{The Classification Theorem\\ {\Large\textnormal{\textit{by Pierre-Louis Blayac, Balthazar Fléchelles}}}}
\addtocontents{toc}{\quad\quad\quad \textit{Pierre-Louis Blayac, Balthazar Fléchelles}\par}

We will try to prove the following classification theorem for $\Theta$-positive structures.
\begin{theorem}[{\cite[thm 4.3]{GuichardWienhard18}}]\label{thm:classificationThetaPositiveStructures}
A \emph{simple} Lie group $G$ admits a $\Theta$-positive structure if and only if $(G,\Theta)$ falls in one of the following cases:
\begin{enumerate}
    \item G is a split real Lie form, and $\Theta=\Delta$,
    \item $G$ is Hermitian of tube type and $\Theta = \{\alpha_r\}$,
    \item $G$ is locally isomorphic to $\SO(p,q)$ with $p< q$, and $\Theta = \{\alpha_1,\ldots,\alpha_{p-1}\}$,
    \item $G$ is a real form of $F_4$, $E_6$, $E_7$ or $E_8$ whose restricted root system is of type $F_4$, and $\Theta = \{\alpha_1,\alpha_2\}$.
\end{enumerate}
\end{theorem}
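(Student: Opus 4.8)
The plan is to reduce the classification to understanding, case by case, when the action of $L_\Theta^0$ on each weight space $\mfu_\beta$ (for $\beta\in\Theta$) preserves a sharp convex cone. By Fact~\ref{fact:lTheta}, the semisimple part $\mfl'_\Theta$ has Dynkin diagram the subdiagram of $G$'s Dynkin diagram supported on $\Delta\setminus\Theta$, and by Theorem~\ref{thm:ubeta irred} each $\mfu_\beta$ is an irreducible $L_\Theta^0$-representation. So the central question becomes: for which irreducible representations $V$ of a reductive group $H=L_\Theta^0$ does $H$ preserve a sharp convex cone in $V$? I would first isolate the key representation-theoretic criterion: a sharp $H$-invariant convex cone in $V$ exists if and only if (taking its closure and the convex hull of the $H$-orbit of a highest weight vector) the representation is ``of cone type'', which forces the highest weight $\beta$ restricted to $\mfa_\Theta$ to be essentially the highest weight of a representation with a very constrained weight diagram — in practice $\mfu_\beta$ must be either one-dimensional, or a (spin-type / standard) representation of an orthogonal group, or a representation whose zero weight space behaves correctly. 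This is where the arithmetic of root systems enters.

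\textbf{Carrying out the cases.} With the criterion in hand I would run through the possibilities for $\Theta\subsetneq\Delta$ proper and nonempty (the case $\Theta=\Delta$ being exactly Lusztig total positivity, handled by Chapter~\ref{section:LusztigTotalPositivity}, and giving (1) for $G$ split). For each simple Dynkin diagram of $G$ and each candidate $\Theta$, compute $\mfl'_\Theta$, decompose $\mfu_\Theta$ into the $\mfu_\beta$ via \eqref{eq:uTheta}--\eqref{eq:ualpha}, identify the $L_\Theta^0$-module structure of each $\mfu_\beta$ with $\beta\in\Theta$ using the bracket relation $[\mfu_\beta,\mfu_{\beta'}]=\mfu_{\beta+\beta'}$ of Theorem~\ref{thm:ubeta irred}, and test the cone criterion. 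The positive cases should organize themselves exactly as: $\Theta=\{\alpha_r\}$ with $G$ Hermitian of tube type (here $\mfu_{\alpha_r}$ is the Jordan algebra $V$ and the cone is $\Omega$, the symmetric cone, as in the $\Sp(2n,\R)$ example); $\Theta=\{\alpha_1,\ldots,\alpha_{p-1}\}$ for $G=\SO(p,q)$, $p<q$, where the ``last'' weight space $\mfu_{\alpha_{p-1}}$ carries the standard representation of the $\SO(q-p+1,\ldots)$-type factor of $\mfl'_\Theta$ and the invariant cone is the cone over the quadric (a Lorentz-type cone), while the other $\mfu_{\alpha_i}$ are lines; and $\Theta=\{\alpha_1,\alpha_2\}$ for the real forms of $F_4,E_6,E_7,E_8$ with restricted root system $F_4$, where the relevant $\mfu_\beta$'s are again a line and a standard orthogonal representation. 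For every other $(G,\Theta)$ I would exhibit an element of $L_\Theta^0$ acting on some $\mfu_\beta$ with no invariant sharp cone — typically because the representation is self-dual with a zero weight of the wrong multiplicity, or because the Weyl group of $\mfl'_\Theta$ acts on the weights with enough symmetry that any invariant convex set containing a nonzero vector must contain a line.

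\textbf{The main obstacle.} The hard part will be the ``only if'' direction: showing that outside the four listed families \emph{no} sharp invariant convex cone exists in any $\mfu_\beta$, $\beta\in\Theta$. This is not a single clean argument but a finite (if lengthy) verification across all simple root systems and all proper nonempty $\Theta$; the subtlety is that one must rule out cones not just for the obvious large weight spaces but for small ones too, and the criterion ``invariant convex cone $\Rightarrow$ not sharp'' has to be made quantitative in terms of the weights of $\mfu_\beta$ as an $\mfl'_\Theta$-module. I expect the cleanest packaging is: prove that a sharp invariant cone forces $\mfu_\beta$ to admit an $L_\Theta^0$-invariant (possibly degenerate) symmetric bilinear form of Lorentzian-or-definite type, hence $L_\Theta^0$ maps to an orthogonal group of such a form, and then check on the Dynkin-diagram level which $(\mfl'_\Theta\hookrightarrow \mathrm{GL}(\mfu_\beta))$ factor through such orthogonal groups — which is exactly the combinatorics yielding the $A$, tube-Hermitian, $B$-ending ($\SO(p,q)$), and $F_4$ cases. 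Once that structural reduction is set up, the remaining work is bookkeeping with the tables of restricted root systems (Chapter~\ref{chap2} and the $\SO(p,q)$ chapter), which I would present compactly rather than exhaustively.
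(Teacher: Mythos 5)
The structural reduction you propose for the ``only if'' direction is false, and this is a genuine gap. You claim that a sharp $L_\Theta^0$-invariant convex cone in $\mfu_\beta$ forces an $L_\Theta^0$-invariant symmetric bilinear form of Lorentzian-or-definite type, so that $L_\Theta^0$ factors through an orthogonal group on $\mfu_\beta$. But this already fails for case (2) of the theorem you are trying to prove: take $G=\Sp(2n,\R)$ with $n\geq 2$ and $\Theta=\{\alpha_n\}$, so that $L_\Theta^0\cong\GL(n,\R)^\circ$ acts on $\mfu_\beta\cong\Sym(n,\R)$ by $A\cdot M=AM\tran{A}$, preserving the sharp cone of positive definite matrices. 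This representation is not self-dual (its dual is $A\cdot M=\tran{A}^{-1}MA^{-1}$, distinguished already by the central action), so it admits no invariant bilinear form at all, degenerate or otherwise, and $L_\Theta^0$ certainly does not land in any orthogonal group of $\mfu_\beta$. Your criterion would therefore wrongly exclude all Hermitian tube-type groups of real rank $\geq 2$. The ``Lorentz-or-definite'' picture only captures cases (1), (3), (4) and gives an incomplete characterization.

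The correct criterion, and the one the paper actually uses, is Benoist's theorem: an irreducible subsemigroup of $\GL(V)$ preserves a sharp convex cone if and only if it is \emph{positively proximal}. This is then translated via highest-weight theory into two Dynkin-diagrammatic conditions (Fact~\ref{fact:PosCritDynkinDiagram}): (i) $\dim\mfg_\beta=1$ for each $\beta\in\Theta$ (equivalent to the highest weight space being a line), and (ii) the highest weight of $\mfu_\beta$ as an $L_\Theta^0$-module lies in twice the weight lattice (equivalent to every arrow from $\Theta$ to $\Delta\setminus\Theta$ being a double arrow pointing away from $\Theta$, or no arrow at all). Positive proximality is strictly weaker than carrying an invariant form — it concerns only the top eigenspace and the sign of the top eigenvalue — which is precisely why it accommodates the non-self-dual Jordan-algebra representations of the tube-type case. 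Without this piece (or some substitute for it), your case-by-case verification has no workable exclusion criterion for the negative cases, and the ``bookkeeping'' you defer to will not close.
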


Recall that a $\Theta$-positive structure on $G$ is linked to the existence of sharp convex cones invariant under the Levi subgroup $L_\Theta^\circ$ in $\mfu_\beta$ for $\beta\in\Theta$.
Conveniently, \cite[Prop.\,4.7]{Ben2000:automorphismsCvxCones} gives a nice criterion for an irreducible representation of a semisimple Lie group to preserve a sharp convex cone, which translates into the following fact:

\begin{fact}\label{fact:PosCritDynkinDiagram}
A semi-simple Lie group $G$ admits a $\Theta$-positive structure is positive if and only if
\begin{enumerate}[label=(\roman*)]
    \item \label{cond:1dim} $\mfg_\beta$ is $1$-dimensional for every $\beta\in\Theta$,
    \item \label{cond:double arrow} The node in the Dynkin diagram of $G$ corresponding to the restricted root $\beta \in\Theta$ is either disconnected from $\Delta\setminus\Theta$, or linked to $\Delta\setminus\Theta$ via a double arrow pointing toward $\Delta\setminus\Theta$.
\end{enumerate}
\end{fact}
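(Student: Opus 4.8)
The statement is essentially Benoist's criterion \cite[Proposition~4.7]{Ben2000:automorphismsCvxCones} transported into the present situation, so the plan has three steps: reduce to Benoist, make Benoist explicit via the weight decomposition of $\mfu_\beta$, and translate the outcome into the Dynkin‑diagram dichotomy \ref{cond:1dim}--\ref{cond:double arrow}. The reduction is immediate from Kostant's Theorem~\ref{thm:ubeta irred}: for each $\beta\in\Theta$ the space $\mfu_\beta$ is an \emph{irreducible} representation of the connected reductive real algebraic group $L_\Theta^0$, and $(G,\Theta)$ admits a $\Theta$-positive structure precisely when each such $\mfu_\beta$ carries an $L_\Theta^0$-invariant sharp convex cone with non-empty interior. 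This is exactly the class of representations addressed by \cite[Proposition~4.7]{Ben2000:automorphismsCvxCones}, which characterises the irreducible representations of a connected reductive group preserving a sharp convex cone in terms of proximality and of the position of the highest weight.

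To make this explicit I would use the structure recalled in the excerpt. By Fact~\ref{fact:lTheta}, $L_\Theta^0$ has Cartan subspace $\mfa_\Theta=\mfa\cap\mfl'_\Theta$, restricted root system $\Sigma\cap\Span(\Delta\setminus\Theta)$, and Dynkin diagram the full subdiagram of $G$ on the vertices $\Delta\setminus\Theta$; and by \eqref{eq:ualpha}, as a weight module for $\mfa$ (equivalently: for $\mfa_\Theta$, with central character $\beta|_{\mfz_\Theta}$),
\[
\mfu_\beta \;=\; \bigoplus_{\gamma\in\Span_{\N}(\Delta\setminus\Theta)}\mfg_{\beta+\gamma},
\]
so that $\beta$ is the lowest weight of $\mfu_\beta$, with weight space $\mfg_\beta$. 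Applying the Cartan involution together with the longest element of the Weyl group of $L'_\Theta$ identifies the highest weight space of $\mfu_\beta$ with $\mfg_\beta$ up to an element of $L_\Theta^0$; hence $\mfu_\beta$ is proximal over $\R$ if and only if $\dim\mfg_\beta=1$, which is \ref{cond:1dim}. With proximality in hand, Benoist's criterion reduces to a condition on the $\sl_2$-content of $\mfu_\beta$: for every simple root $\alpha\in\Delta\setminus\Theta$, the $\alpha$-root string through $\beta$ (which has length $1-\langle\beta,\alpha^{\vee}\rangle$ since $\beta-\alpha$ is never a root) must generate only the trivial or the $3$-dimensional adjoint module of the $\sl_2$-triple attached to $\alpha$; equivalently $\langle\beta,\alpha^{\vee}\rangle\in\{0,-2\}$ for all $\alpha\in\Delta\setminus\Theta$, which says exactly that $\beta$ is either orthogonal to $\Delta\setminus\Theta$ or joined to it by a single double edge along which $\beta$ is the long root — that is, \ref{cond:double arrow}.

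I would then run the two implications separately. For ``only if'': if \ref{cond:1dim} fails, $\mfu_\beta$ is not proximal and Benoist's criterion forbids a sharp invariant cone; if \ref{cond:double arrow} fails for some $\alpha$ (single edge, double edge pointing toward $\beta$, or a triple edge), the submodule of $\mfu_\beta$ spanned by $\mfg_\beta,\mfg_{\beta+\alpha},\dots$ is, as a module over $\sl_2^{(\alpha)}$, either the standard $2$-dimensional module or the $4$-dimensional module $\mathrm{Sym}^3(\R^2)$, and one checks directly that $\SL(2,\R)$ preserves no sharp convex cone in $\R^2$, resp. in $\R^4$; $L_\Theta^0$-invariance plus irreducibility of $\mfu_\beta$ then propagate this obstruction to the whole space. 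For ``if'': when \ref{cond:1dim} and \ref{cond:double arrow} hold, $\mfu_\beta$ is proximal with all $\sl_2^{(\alpha)}$-strings of length $1$ or $3$, which places $\mfu_\beta$ among the representations produced by the positive half of Benoist's criterion; in the four families of Theorem~\ref{thm:classificationThetaPositiveStructures} the cone can moreover be exhibited by hand (a half-line, a cone of positive definite symmetric, Hermitian or quaternionic-Hermitian matrices, or a forward light cone).

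The main obstacle is this last step: verifying rigorously, in the ``if'' direction, that \ref{cond:1dim}$+$\ref{cond:double arrow} genuinely place $\mfu_\beta$ in the range of Benoist's positive criterion. This requires controlling the \emph{entire} $\sl_2^{(\alpha)}$-decomposition of $\mfu_\beta$, not merely the string through $\beta$, and here one really needs the full strength of Kostant's Theorem~\ref{thm:ubeta irred} — both the irreducibility of $\mfu_\beta$ and the bracket relation $[\mfu_\beta,\mfu_{\beta'}]=\mfu_{\beta+\beta'}$ — together with a uniform, case-free description of how $\Theta$ is attached to $\Delta\setminus\Theta$ in the Dynkin diagram of $G$.
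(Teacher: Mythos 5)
Your outline follows essentially the same route as the paper: Kostant's irreducibility of $\mfu_\beta$ reduces the question to Benoist's criterion for an irreducible representation of the reductive group $L_\Theta^0$ to preserve a sharp convex cone, and the criterion is then read off from the lowest weight $\beta|_{\mfa_\Theta}$, whose weight space is $\mfg_\beta$, yielding \ref{cond:1dim}, and from the parity of the Cartan integers $2\langle\beta,\alpha\rangle/\langle\alpha,\alpha\rangle$ for $\alpha\in\Delta\setminus\Theta$, yielding \ref{cond:double arrow}. Your reformulation of the parity condition through the length of the $\alpha$-string through $\beta$ is equivalent to the paper's condition that $\beta|_{\mfa_\Theta}/2$ lie in the weight lattice of $L_\Theta^0$.

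The genuine soft spot is the ``main obstacle'' you name at the end, and the two remedies you propose for it. First, no control of the \emph{entire} $\sl_2^{(\alpha)}$-decomposition of $\mfu_\beta$ is needed: by the proximality criterion (the analogue of Proposition~\ref{prop:critprox}, applied to the lowest weight), positive proximality of $L_\Theta^0$ on $\mfu_\beta$ is detected solely on the lowest-weight datum --- one-dimensionality of $\mfg_\beta$ together with $\langle\beta|_{\mfa_\Theta},\alpha^\vee\rangle\in 2\Z$ for every simple root $\alpha$ of $L_\Theta^0$ --- and Benoist's theorem then directly produces the invariant sharp convex cone; the string through the lowest weight is all that enters. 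The only technical point you gloss over here is the identity
\[
\frac{\langle\omega,\alpha\rangle_{\mfg}}{\langle\alpha,\alpha\rangle_{\mfg}}
=\frac{\langle\omega|_{\mfa_\Theta},\alpha|_{\mfa_\Theta}\rangle_{\mfl_\Theta}}{\langle\alpha|_{\mfa_\Theta},\alpha|_{\mfa_\Theta}\rangle_{\mfl_\Theta}},
\]
which is what legitimizes reading the Cartan integers of the $L_\Theta^0$-module off the Dynkin diagram of $G$; it should be stated and checked. Second, your fallback of exhibiting cones ``by hand'' in the four families of Theorem~\ref{thm:classificationThetaPositiveStructures} would be circular in the paper's logic, since that classification is itself deduced from the present Fact; it can serve as a sanity check but not as the proof of the ``if'' direction.
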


Once this is established, it is not hard to find the list of Theorem \ref{thm:classificationThetaPositiveStructures}.
For example, one can use the Tables 1 and 4 in \cite{OGVLieGroups} to write an explicit list of the Lie groups admitting a $\Theta$-positive structure, and then check in another reference  the list of split real Lie forms (all rows of Table 4 of \cite{OGVLieGroups} whose column ``$\dim\mfg_{\lambda_j}$'' only indicates $1$; one can also consult \cite[\S C.3-4]{knapp}) and Hermitian groups of tube type \cite[Rem.\,2.1]{BIW07}.
Another reference is \cite[Chapter X, Table VI]{Helgason}.
Beware that there are mistakes in the tables of \cite{OGVLieGroups}, but we believe that the information we need here is correctly displayed there.
The procedure is the following.
For each row of Table 4 of \cite{OGVLieGroups}:
\begin{itemize}
    \item look at the column ``$\Sigma$'' giving the restricted root system;
    \item go to the corresponding row of Table 1;
    \item since any Dynkin diagram has at most one double arrow, there are at most two subsets of vertices satisfying Condition~\ref{cond:double arrow} of Fact~\ref{fact:PosCritDynkinDiagram} (the full set of vertices and, if there is a double arrow, another subset);
    \item for each of these two subsets $i_1,\dots,i_k$, go back to Table 4 column ``$\dim \mfg_{\lambda_j}$'' to check Condition~\ref{cond:1dim} of Fact~\ref{fact:PosCritDynkinDiagram}, \emph{i.e.\ }that $\mfg_{\lambda_{i_1}},\dots,\mfg_{\lambda_{i_k}}$ all have dimension $1$.
\end{itemize}

For instance, asking $\dim(\mfg_{\lambda_j})=1$ for any $j$ is equivalent to asking the group to be real split.

\begin{remark}
    If $G_1$ and $G_2$ are two semi-simple Lie groups with respective simple roots $\Delta_1$ and $\Delta_2$, which are respectively $\Theta_1$ and $\Theta_2$-positive (and we may allow $\Theta_1$ or $\Theta_2$ to be empty, considering that any group is $\emptyset$-positive), then $G_1\times G_2$ is $\Theta_1\sqcup \Theta_2$-positive.
    This is clear for instance in view of Fact~\ref{fact:PosCritDynkinDiagram}.
\end{remark}

In order to prove Fact~\ref{fact:PosCritDynkinDiagram}, we will need a theorem by Yves Benoist concerning representations of subsemigroups of $\GL(V)$, where $V$ is a fixed real vector space of dimension $d$.

\section{A theorem by Yves Benoist}
Before quoting Benoist's theorem, we need to introduce some terminology:
\begin{definition}
We say a semi-group $G\subset \GL(V)$ is \emph{irreducible} if any invariant non-trivial subspace is full, i.e.\
\begin{equation*}
    \forall W\subset V,\quad G\cdot W= W \implies \text{$W=\{0\}$ or $W=V$}.
\end{equation*}
\end{definition}
\begin{definition}
\begin{itemize}
    \item $\gamma\in \GL(V)$ is \emph{proximal} if it has only one eigenvalue of largest modulus, and if this eigenvalue is simple (and hence real). If moreover it is positive, we say that $\gamma$ is \emph{positively proximal}.
    \item If $\gamma\in \GL(V)$ is proximal, we will denote by $x_\gamma^+\in\P(V)$ the projectivization of the eigenline corresponding to the eigenvalue of largest modulus. $x_\gamma^+$ is called the \emph{attracting fixed point} of $\gamma$ in $\P(V)$.
    We denote by $y_\gamma^-$ the $\gamma$-invariant complementary hyperplane.
    Note that $\gamma^nx\to x_\gamma^+$ when $n\to\infty$ for any $x\not\in y_\gamma^-$.
    \item A (semi-)group $\Gamma < \GL(V)$ is \emph{proximal} if it has a proximal element. If moreover every proximal element of $\Gamma$ is positively proximal, we say that $\Gamma$ is \emph{positively proximal}.
\end{itemize}
\end{definition}

These definitions are of interest for the notion of limit set:
\begin{proposition}\label{prop:limSetDef}
If $\Gamma<\GL(V)$ is an irreducible and proximal subsemigroup, then
\begin{equation*}
    \Lambda_\Gamma := \overline{\{x_\gamma^+ :\text{$\gamma\in\Gamma$ proximal}\}}
\end{equation*}
is the smallest non-empty $\Gamma$-invariant closed subset of $\P(V)$.
It is called the \emph{limit set} of $\Gamma$.
\end{proposition}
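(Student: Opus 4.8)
The statement asserts that for an irreducible and proximal subsemigroup $\Gamma<\GL(V)$, the closure $\Lambda_\Gamma$ of the set of attracting fixed points of proximal elements is the smallest non-empty $\Gamma$-invariant closed subset of $\P(V)$. The plan is to establish two things: first, that $\Lambda_\Gamma$ is itself a non-empty, closed, $\Gamma$-invariant subset of $\P(V)$; second, that any non-empty closed $\Gamma$-invariant set $F\subset\P(V)$ must contain $\Lambda_\Gamma$.

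\textbf{Step 1: $\Lambda_\Gamma$ is non-empty, closed, and $\Gamma$-invariant.} Non-emptiness is immediate from proximality (there is at least one proximal element, hence at least one point $x_\gamma^+$), and closedness is built into the definition. For invariance, I would first check that the set of \emph{proximal} elements of $\Gamma$ is stable under the following operation: if $\gamma\in\Gamma$ is proximal and $g\in\Gamma$ is arbitrary, then $g\gamma^n g^{-1}$ need not lie in $\Gamma$, so instead one argues directly with $g\gamma^n$. The key elementary fact is that if $\gamma$ is proximal with attracting point $x_\gamma^+$ and repelling hyperplane $y_\gamma^-$, and if $g\in\GL(V)$ satisfies $g x_\gamma^+\notin g y_\gamma^-$ — which is automatic — then for $n$ large the element $g\gamma^n$ is proximal with attracting point converging to $g x_\gamma^+$ as $n\to\infty$. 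Since $g\gamma^n\in\Gamma$, this shows $x_{g\gamma^n}^+\in\Lambda_\Gamma$, and passing to the limit gives $g x_\gamma^+\in\Lambda_\Gamma$. Taking closures, $g\Lambda_\Gamma\subseteq\Lambda_\Gamma$ for every $g\in\Gamma$, i.e.\ $\Lambda_\Gamma$ is $\Gamma$-invariant.

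\textbf{Step 2: minimality.} Let $F\subset\P(V)$ be non-empty, closed and $\Gamma$-invariant. I must show $x_\gamma^+\in F$ for every proximal $\gamma\in\Gamma$; then $\Lambda_\Gamma\subseteq F$ follows by closedness. Pick any $x\in F$. If $x\notin y_\gamma^-$, then $\gamma^n x\to x_\gamma^+$, and since $\gamma^n x\in F$ (invariance) and $F$ is closed, $x_\gamma^+\in F$. The remaining issue is the degenerate case $F\subseteq y_\gamma^-$, or more generally that every point of $F$ we try to iterate happens to lie in the repelling hyperplane. This is where irreducibility enters: the span of $F$ is a $\Gamma$-invariant subspace (because $\Gamma F\subseteq F$ implies $\Gamma\cdot\operatorname{Span}(F)\subseteq\operatorname{Span}(F)$), hence by irreducibility $\operatorname{Span}(F)=V$ (it is non-trivial as $F\neq\emptyset$). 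Therefore $F$ cannot be contained in the hyperplane $y_\gamma^-$, so there exists $x_0\in F\setminus y_\gamma^-$, and the argument above applies to give $x_\gamma^+\in F$.

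\textbf{Main obstacle.} The genuinely technical point is the claim in Step 1 that for a proximal $\gamma$ and arbitrary $g$, the product $g\gamma^n$ is proximal for large $n$ with attracting point near $g x_\gamma^+$. This requires a quantitative estimate: writing $V=\mathbb{R}x_\gamma^+\oplus y_\gamma^-$ and using that $\gamma^n$ contracts $\P(V)\setminus\{y_\gamma^-\}$ towards $x_\gamma^+$ at an exponential rate governed by the spectral gap, one shows $g\gamma^n$ has a dominant eigenvalue with a one-dimensional eigenspace close to $g x_\gamma^+$; a clean way is to exhibit a small closed ball $B$ around $g x_\gamma^+$ (in an affine chart) with $g\gamma^n(B)\subset \operatorname{int}(B)$ for $n$ large, and invoke a contraction/fixed-point argument together with the openness of the proximal locus. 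I would either carry out this estimate or — preferably — simply cite the relevant lemma from Benoist's work (or Abels–Margulis–Soifer), since these facts about proximal dynamics are standard; the excerpt's framing suggests the intended proof quotes Benoist's theorem rather than reproving the proximal-dynamics lemmas from scratch.
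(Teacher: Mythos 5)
Your Step~2 argument for minimality is correct and is essentially the paper's: both escape the repelling hyperplane $y_\gamma^-$ via irreducibility (the paper finds $g\in\Gamma$ with $g(x)\notin y_\gamma^-$; you note $\operatorname{Span}(F)$ is a nonzero $\Gamma$-invariant subspace, hence all of $V$, so $F\not\subseteq y_\gamma^-$ — same mechanism).

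The genuine gap is in Step~1, in your ``key elementary fact.'' You assert that for $\gamma$ proximal and any $g\in\GL(V)$, the automatic condition $gx_\gamma^+\notin gy_\gamma^-$ suffices for $g\gamma^n$ to be proximal for large $n$ with attractor tending to $gx_\gamma^+$. That is the wrong condition. The map $g\gamma^n$ first applies $\gamma^n$, which contracts everything off $y_\gamma^-$ toward $x_\gamma^+$, and then applies $g$; for a small neighbourhood of $gx_\gamma^+$ to be mapped into itself, you need $\gamma^n(gx_\gamma^+)$ to stay near $x_\gamma^+$, i.e.\ the relevant condition is $gx_\gamma^+\notin y_\gamma^-$ (the repelling hyperplane of $\gamma$ itself, not its image under $g$), and this is \emph{not} automatic. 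Concretely, in $V=\R^2$ take $\gamma=\diag(2,1/2)$ and $g$ the rotation by $\pi/2$: your condition holds, yet $g\gamma^n$ has characteristic polynomial $\lambda^2+1$ for every $n$ and is never proximal. This is precisely where the paper's proof inserts an auxiliary element found by irreducibility: choose $h\in\Gamma$ with $h(gx_\gamma^+)\notin y_\gamma^-$, and then $g\gamma^n h\in\Gamma$ is proximal for $n$ large with $x_{g\gamma^n h}^+\to gx_\gamma^+$. So irreducibility is needed in the invariance step of Step~1 as well, not just in the minimality step; without it your argument that $\Lambda_\Gamma$ is $\Gamma$-invariant does not go through.
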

\begin{proof}
The set $\Lambda_\Gamma$ is clearly non-empty.

Let us check that it is $\Gamma$-invariant. 
It is clear that $\Lambda_\Gamma\subset\Gamma\Lambda_\Gamma$.
Consider $\gamma,\eta\in\Gamma$ with $\eta$ proximal.
If $\Gamma$ was a group we could just observe that $\gamma\eta\gamma^{-1}\in\Gamma$ is also proximal and $\gamma x_\eta^+ = x_{\gamma\eta\gamma^{-1}}^+\in\Lambda_\Gamma$.
When $\Gamma$ is not a group the argument is more delicate.
By irreducibility we can find $g\in \Gamma$ such that $g(\gamma(x_\eta^+))\not\in y_\eta^-$.
Then one may check that for $n$ large enough $\gamma\eta^ng\in \Gamma$ is proximal and that $x_{\gamma\eta^ng}^+\to\gamma x_\eta^+$.

Consider now a non-empty closed set  $F\subset\P(V)$ with $\Gamma F\subset F$, and let us check that $\Lambda_\Gamma\subset F$.
For any $x\in F$ and $\gamma\in\Gamma$ proximal, by irreducibility there exists $g\in\Gamma$ such that $g(x)\not\in y_\gamma^-$.
Then $x_{\gamma}^+=\lim \gamma^ngx\in F$.
\end{proof}

\begin{definition}
A \emph{convex cone} $C$ of $V$ is a non-trivial convex subset of $V$ invariant under multiplication by positive scalars. 
It is \emph{sharp} or \emph{properly convex} if its closure does not contain a full line.
This is consistent with Definition~\ref{def:sharp1}.
\end{definition}

\begin{theorem}[{\cite[prop. 3.11]{Ben2000:automorphismsCvxCones}}]\label{thm:SharpCvxConeIffPosProx}
An irreducible semi-group $\Gamma \subset \GL(V)$ preserves a sharp convex cone of $V$ if and only if it is positively proximal.
\end{theorem}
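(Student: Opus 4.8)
The plan is to prove both directions of Theorem~\ref{thm:SharpCvxConeIffPosProx}. The easy direction is the ``only if'' implication: suppose $\Gamma \subset \GL(V)$ is irreducible and preserves a sharp convex cone $C$. First I would replace $C$ by the interior of its closure, which is a sharp open convex cone still invariant under $\Gamma$; sharpness guarantees that $\overline{C}$ contains no line, so $\overline{C}$ is a proper convex cone and its projectivization $\P(C)$ is a properly convex subset of $\P(V)$ with non-empty interior. Now let $\gamma \in \Gamma$ be proximal. Since $\overline{C}$ is closed and $\gamma$-invariant and non-trivial, for a generic $x$ in the interior of $C$ one has $x \notin y_\gamma^-$, hence $\gamma^n x \to x_\gamma^+$, so the attracting fixed point $x_\gamma^+$ lies in $\P(\overline{C})$. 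But a properly convex cone cannot contain a point together with its antipode, and proximality with a negative leading eigenvalue would force the attracting direction for $\gamma^2$ to be the same line while $\gamma$ itself swaps the two rays of that line inside the cone --- contradiction. More carefully: if the leading eigenvalue of $\gamma$ were negative, then $\gamma$ would act on the eigenline $\ell = \mathbb{R}x_\gamma^+$ by $v \mapsto \lambda v$ with $\lambda < 0$; since both rays of $\ell$ attract under $\gamma^2$, and $\overline{C}$ meets $\ell$ (as it contains $x_\gamma^+$), invariance of $\overline{C}$ under $\gamma$ would force $\overline{C}$ to contain both rays of $\ell$, i.e.\ the full line $\ell$, contradicting sharpness. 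Hence every proximal element of $\Gamma$ is positively proximal, so $\Gamma$ is positively proximal. I should also note that irreducibility of $\Gamma$ as a semigroup together with the Benoist--Tits machinery guarantees $\Gamma$ \emph{has} a proximal element, so this direction is not vacuous.

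The ``if'' direction is the substantive part and is precisely Benoist's Proposition~3.11 in \cite{Ben2000:automorphismsCvxCones}; since the excerpt allows me to cite it, the honest plan is to \emph{reduce} to it rather than reprove it, but I would at least sketch the construction of the invariant cone. The idea: assume $\Gamma$ is irreducible and positively proximal. Consider the limit set $\Lambda_\Gamma \subset \P(V)$ from Proposition~\ref{prop:limSetDef}; it is the smallest non-empty closed $\Gamma$-invariant subset. Positive proximality means each $x_\gamma^+$ carries a canonical choice of ray in $V$ (the eigendirection with \emph{positive} eigenvalue), so one gets a $\Gamma$-invariant ``lift'' $\widetilde{\Lambda}_\Gamma$ of $\Lambda_\Gamma$ to the sphere $\mathbb{S}(V)$ that does not contain any antipodal pair --- this non-antipodality is exactly where positivity of the leading eigenvalues is used. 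Then one takes $C$ to be the convex cone generated by $\widetilde{\Lambda}_\Gamma$, i.e.\ the set of positive combinations of rays through points of $\widetilde{\Lambda}_\Gamma$. This $C$ is manifestly $\Gamma$-invariant and non-trivial; the content of Benoist's argument is that $C$ is \emph{sharp}, i.e.\ its closure contains no line. This uses that if $\overline{C}$ contained a line, then the minimal face structure together with irreducibility of $\Gamma$ would force the span of a proper invariant subface to be a proper non-zero $\Gamma$-invariant subspace, contradicting irreducibility --- the hard technical point being the careful analysis of faces of $\overline{C}$ and their stabilizers, which is why I would cite \cite{Ben2000:automorphismsCvxCones} for the full details.

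The main obstacle, and the reason this theorem is quoted rather than proved in the workshop notes, is precisely that sharpness verification in the ``if'' direction: controlling the extreme rays and the facial structure of the cone generated by the lifted limit set requires the non-trivial input that $\Lambda_\Gamma$ is not contained in any projective hyperplane (which follows from irreducibility) together with a ping-pong/contraction argument showing that positively proximal elements ``pull'' the cone into a sharp one. Everything else --- the existence of a proximal element, the description of $\Lambda_\Gamma$, the elementary ``only if'' direction --- is comparatively routine. So my proof proposal is: give the full elementary argument for ``only if'' as above, sketch the construction of the invariant cone for ``if'', and invoke \cite[Prop.\,3.11]{Ben2000:automorphismsCvxCones} for the sharpness claim, since reproving it would take us well beyond the scope of these notes.
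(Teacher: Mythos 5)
You have the easy half of the ``only if'' direction right --- showing that \emph{if} $\gamma\in\Gamma$ is proximal and $\Gamma$ preserves a sharp cone, then $\gamma$ is positively proximal (the line-in-the-cone contradiction you run is essentially the paper's first observation). And for the ``if'' direction, deferring to \cite{Ben2000:automorphismsCvxCones} with the lifted-limit-set sketch matches what the paper itself does, since the notes decline to reprove that implication.

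The gap is in your dismissal of the \emph{existence} of a proximal element. You write that ``irreducibility of $\Gamma$ as a semigroup together with the Benoist--Tits machinery guarantees $\Gamma$ \emph{has} a proximal element, so this direction is not vacuous,'' and treat this as routine. It is not: irreducibility alone is far from sufficient (the subgroup $\SO(n)\subset\GL(n,\R)$ acts irreducibly yet has no proximal element whatsoever), and there is no off-the-shelf ``Benoist--Tits machinery'' that delivers proximality from irreducibility. What actually forces proximality in this theorem is precisely the invariant sharp convex cone, and this is where the paper spends the bulk of its effort. The paper's argument proceeds by considering $\overline{\Gamma}\subset\End(V)$ and the minimal rank $M$ of a nonzero element of $\overline{\Gamma}$: if $M=d$ one shows via the Haar measure of the compact group $\overline{\Gamma}\cap\SL(V)$ that averaging a point of the cone $C$ produces a $\Gamma$-invariant half-line, whence $d=1$ by irreducibility; if $M<d$ one produces $\eta\in\overline{\Gamma}$ of rank $M$ with $\ker(\eta)\oplus\im(\eta)=V$, restricts $\eta\overline{\Gamma}\eta$ to $\GL(\im\gamma)$ where it still preserves a sharp cone $\overline{C}\cap\im(\gamma)\setminus\{0\}$, and reduces to the first case to conclude $\rank\eta=1$, i.e.\ $\eta$ is proximal; openness of proximality then pushes this from $\overline{\Gamma}$ to $\Gamma$. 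Both branches use the cone in an essential way, not just irreducibility. So while your proposal correctly handles ``proximal $\Rightarrow$ positively proximal'' and correctly punts on the converse, it misallocates the difficulty: the substantive work in the direction you claim to do ``in full'' is exactly the step you wave away.
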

\begin{proof}[Proof (only an idea for the converse)]
Suppose $\Gamma$ preserves a sharp convex cone $C\subset V$. 

\medskip

First observe that for any proximal element $\gamma\in\Gamma$, there exists a point $p\in C$ which does not meet the repelling hyperplane of $\gamma$ ($C$ has non-empty interior by irreducibility of $\Gamma$), so that $\frac{\gamma^n p}{\norm{\gamma^n p}}$ converges to a point $x$ in $x_\gamma^+\cap \overline{C}$. 
Since $C$ is sharp, this is a $\gamma$-invariant half-line contained in $x_\gamma^+$, so $\gamma$ is positively proximal.
Therefore we only need to prove that $\Gamma$ is proximal.

Without loss of generality, we may assume that $\Gamma$ contains all the homotheties $\{\lambda\id\}_{\lambda>0}$. 
We will denote by $\overline{\Gamma}$ the closure of $\Gamma$ in $\End(V)$.
Consider $M := \inf_{\gamma\in\overline{\Gamma}-\{0\}} \rank \gamma >0$.

\medskip

Suppose $M = d$, i.e.\ that $\overline\Gamma\setminus\{0\}\subset\GL(V)$.
The projection in $\P(\End(V))$ of $\overline\Gamma\setminus\{0\}$ is closed and hence compact, and equals that of $\overline\Gamma\cap\SL(V)$, which is then itself compact since $\pi\colon \End(V)\setminus\{0\}\to\P(\End(V))$ is injective on $\SL(V)$.
Thus $\overline\Gamma\cap\SL(V)$ is a compact group (any relatively compact sequence of the form $(g^n)_{n\geq 0}\subset\GL(V)$ has $g^{-1}$ as an accumulation point); let $\mu$ be its Haar measure.
Then $\int_g g(x)d\mu(g)$ is non-zero and $\overline\Gamma\cap\SL(V)$-invariant for any $x\in C\setminus\{0\}$, and hence spans a $\Gamma$-invariant half-line. 
Since $\Gamma$ is irreducible, we must have $d=1$, and $\Gamma$ is proximal.

\medskip

Suppose $M<d$ and consider $\gamma \in \overline{\Gamma}\setminus\{0\}$ with rank $M$.
By irreducibility of $\Gamma$ there exists $g\in G$ such that $g\im(\gamma)\not\subseteq\ker(\gamma)$.
Let $\eta := g\gamma$, so that  $\ker(\eta)\cap\im(\eta) = \varnothing$ (otherwise $\rk(\eta^2)<\rk(\eta) = \rk(\gamma) = M$), and hence $\im(\eta)\oplus\ker(\eta) = V$.

Consider now the semigroup $\eta\overline\Gamma\eta\subset \overline\Gamma\subset\End(V)$. 
It stabilizes $\im(\gamma)$, and acts trivially on $\ker(\gamma)$, so it naturally identifies with a semi group $\Gamma'\subset\GL(\im\gamma)$ ($\gamma$ restricts to a bijection on $\im(\gamma)$), which preserves the sharp convex cone $C' = \overline{C}\cap\im(\gamma)\setminus \{0\}$. 
That $\overline{\eta\Gamma\eta}\subset \overline{\Gamma}$ implies that  $\dim(\im(\eta))\geq M':=\inf_{g\in\Gamma'\setminus\{0\}}\rk (g) \geq M = \rk(\eta)$.
Thus, the first case above yields $\rank \eta=M'=1$, so that $\eta$ and hence $\overline{\Gamma}$ is proximal.
Since being proximal is an open condition, we conclude that $\Gamma$ is proximal.

To prove the converse , one uses the irreducibility and positive proximality of $\Gamma$ to  lift in a $\Gamma$-invariant way the limit set $\Lambda_\Gamma\subset\P(V)$ to a set of half-lines in $V$ whose convex hull is a sharp convex cone.
However this requires a lot of work (several intermediate results) so we refer to Benoist's article for a complete proof.
\end{proof}

\section{Proof of the classification theorem}

We know proceed to show fact \ref{fact:PosCritDynkinDiagram} using theorem \ref{thm:SharpCvxConeIffPosProx}. For this purpose, we fix a connected reductive Lie group $G$ (it corresponds to the Levi subgroup $L_\Theta^\circ$ in theorem \ref{thm:classificationThetaPositiveStructures}), and an irreducible representation $\rho \from G \to \GL(V)$.

Recall the following facts:
\begin{itemize}
    \item the weights $\lambda\in\mfa^\ast$ of $\rho$ lie in the \emph{weight lattice}
    \begin{equation*}
        P := \{\lambda\in\mfa^\ast: \frac{2\IP{\lambda}{\alpha}}{\IP{\alpha}{\alpha}}\in\Z,\forall\alpha\in\Delta\},
    \end{equation*}
    \item two weights differ by an element of $\Span_\Z\Delta$,
    \item the set of weights of $\rho$ is (partially) ordered as follow:
    \begin{align*}
        \mu \geq \lambda &\iff \mu-\lambda\in\Span_\N\Delta\\
        &\ \, \Longrightarrow \, \mu(X_0)\geq \lambda(X_0)
    \end{align*}
    where $X_0\in\mfa$ is a (fixed) vector with $\alpha(X_0)>0$ for any $\alpha\in\Delta$.
\end{itemize}

It is a classical result of the theory of representations of reductive groups that, by irreducibility, there exists a unique highest weight $\omega\in P$.

\begin{proposition}\label{prop:critprox}
$\rho(G)$ is proximal if and only if the highest weight space $V_\omega$ of $\rho$ is of dimension $1$. Moreover, in this case, $\rho(G)$ is positively proximal if and only if $\omega\in 2P$, i.e.\
\begin{equation*}
    \forall \alpha\in\Delta,\quad \frac{\IP{\omega}{\alpha}}{\IP{\alpha}{\alpha}}\in\Z.
\end{equation*}
\end{proposition}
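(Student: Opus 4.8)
The plan is to prove both equivalences by analysing how a proximal (respectively positively proximal) element of $\rho(G)$ is detected on the maximal split torus $A = \exp(\mfa)$, using the highest weight $\omega$ and the $KAK$ decomposition.

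\textbf{Step 1: Reduction to elements of $A$.} First I would recall that an element $g \in \GL(V)$ is proximal iff some power or some conjugate is, and more importantly that the singular values and the moduli of eigenvalues of $\rho(g)$ are controlled by the Cartan projection $\mu(g) \in \overline{\mfa^+}$ and the Jordan projection respectively. Since $\rho(G)$ is irreducible and $G$ is reductive, to test proximality of the semigroup $\rho(G)$ it suffices to look at the Jordan projections of its elements, which are dense in $\overline{\mfa^+}$ among regular elements; equivalently, pick a regular $X_0 \in \mfa^+$ and consider $a_t = \exp(tX_0)$ for $t \to +\infty$. The eigenvalues of $\rho(a_t)$ are exactly $e^{t\lambda(X_0)}$ as $\lambda$ ranges over the weights of $\rho$ (with multiplicity $\dim V_\lambda$). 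Because $\omega$ is the unique highest weight and $\omega(X_0) > \lambda(X_0)$ for every other weight $\lambda$ (by the displayed order property, since $\omega - \lambda \in \Span_\N \Delta \setminus \{0\}$ and $\alpha(X_0)>0$ for $\alpha \in \Delta$), the eigenvalue of largest modulus of $\rho(a_t)$ is $e^{t\omega(X_0)}$, with multiplicity exactly $\dim V_\omega$.

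\textbf{Step 2: The proximality equivalence.} From Step 1, if $\dim V_\omega = 1$ then $\rho(a_t)$ is proximal for all large $t$, so $\rho(G)$ contains a proximal element and is proximal. Conversely, if $\dim V_\omega \geq 2$, I would argue that \emph{no} element of $\rho(G)$ can be proximal: for any $g \in G$, write its Jordan decomposition; the hyperbolic part is conjugate into $\exp(\overline{\mfa^+})$, and the eigenvalue-of-largest-modulus of $\rho$ applied to that hyperbolic part has multiplicity $\geq \dim V_\omega \geq 2$ whenever the hyperbolic part is nonzero (and $\rho(g)$ has all eigenvalues of equal modulus, hence is not proximal, when the hyperbolic part is zero). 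The elliptic/unipotent parts do not break the multiplicity at the top modulus. This gives the first equivalence. One subtlety to handle carefully: when the hyperbolic part is \emph{singular}, i.e. lies on a wall of $\overline{\mfa^+}$, several distinct weights $\lambda$ may satisfy $\lambda(X) = \omega(X)$; but then the top eigenspace of $\rho(g)$ has dimension $\geq \dim V_\omega$ still, so proximality fails a fortiori. I expect this case-chasing over walls to be the main technical obstacle, though it is not deep.

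\textbf{Step 3: The positivity equivalence.} Assume now $\dim V_\omega = 1$, so $\rho(G)$ is proximal. A proximal element's top eigenvalue is positive iff $e^{t\omega(X)}$ compares correctly — but the real issue is the \emph{sign} of the top eigenvalue, which is governed not by $A^\circ = \exp(\mfa)$ (where everything is positive) but by the component group, i.e. by the finite group $M = Z_K(\mfa)$ and its action. The precise statement is: every proximal $\gamma \in \rho(G)$ is positively proximal iff $\omega$ is trivial on the relevant $2$-torsion, which translates to $\omega \in 2P$, equivalently $\langle \omega,\alpha\rangle / \langle \alpha,\alpha\rangle \in \Z$ for all $\alpha \in \Delta$. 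I would prove this by recalling that the weight $\omega$ exponentiates to a character of a maximal torus of $G$; the possible signs of the top eigenvalue of a proximal element arise from evaluating this character on order-two elements $\exp(i\pi H_\alpha^\vee)$-type elements coming from $\SL(2,\R)$ (or $\SO(3)$) subgroups associated to simple roots, and the value is $(-1)^{2\langle\omega,\alpha\rangle/\langle\alpha,\alpha\rangle}$. Hence all proximal elements are positively proximal exactly when all these exponents are even, i.e. $\omega \in 2P$. The delicate point here is to check that these order-two elements generate enough of the component group to detect all sign possibilities, and that conversely when $\omega \in 2P$ the top eigenvalue of a genuinely proximal element cannot be negative — this uses that a proximal element has an isolated top eigenvalue, so small perturbations (staying in $\rho(G)$) keep its sign, and one can deform into $A^\circ$ where positivity is manifest. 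This last deformation argument, made rigorous, is where I'd expect to spend the most care.
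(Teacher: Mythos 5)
Your proposal follows essentially the same route as the paper: both directions of the proximality claim are handled via the eigenvalue structure of $\rho(\exp(tX_0))$ and the Jordan decomposition $g=kan$ (showing the elliptic and unipotent factors do not affect moduli, so proximality is detected on the hyperbolic part), and the positivity claim is handled by evaluating $\rho$ on order-two elements of the form $e^{i\pi H_{\alpha}}$ associated to simple roots, which is exactly the paper's computation with $k_i=e^{i\pi H_{\alpha_i}}$ in $\SL(n,\R)$. The one place where you go slightly past the paper is the converse of the positivity equivalence, where your "deform into $A^\circ$" step is not quite a proof as stated; the paper simply does not prove that direction (and only treats $\SL(n,\R)$ with a remark that the general case is similar via $\mathfrak{sl}(2,\R)$-triples).
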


\begin{proof}
Suppose $V_\omega$ is of dimension $1$. 
Since $\omega$ is the highest weight, we have
\begin{equation}\label{eq:prox}
    \omega(X_0) > \lambda(X_0)
\end{equation}
for all $\lambda\neq\omega$ weight of $\rho$. 
But the eigenvalues of $\rho(e^{X_0})$ are by definition the numbers of the form $e^{\lambda(X_0)}$ where $\lambda$ is a weight of $\rho$, with corresponding eigenspace the weight space $V_\lambda$.
Thus, \eqref{eq:prox} and the hypothesis $\dim V_\omega = 1$ means that $\rho(e^{X_0})$ is proximal.

\medskip

Suppose that $\rho(G)$ is proximal, and pick $g\in G$ such that $\rho(g)$ is proximal. 
Consider the Jordan decomposition $g=kan$:
the elements $k,a,n$ pairwise commute, $k$ is conjugate to $k'\in K$, while $a$ is conjugate to $\exp(X)\in\exp(\overline{\mfa^+})$, and $n$ is conjugate to $\exp(N)\in \exp \mfu$ (where $\mfu = \sum_{\alpha\in\sigma_+} \mfg_\alpha$).

The element $\rho(k') \in \rho(K)\subset O(V,q)$ (for some scalar product $q$) only has eigenvalues of modulus $1$, and so does $\rho(k)$.
Consider a basis $e_1,\dots,e_d\in V$ with $e_i\in V_{\lambda_i}$ for some weight $\lambda_i$, and with an ordering such that $\lambda_i> \lambda_j \, \Rightarrow \, i< j$. 
Observe that $d\rho(\mfg_\alpha)\cdot V_\lambda\subset V_{\lambda+\alpha}$ for all $\alpha\in\sigma_+$ and $\lambda\in P$, so that $\rho(\exp(N))=\exp(d\rho(N))$ is upper triangular with ones on the diagonal for the basis $e_1,\dots,e_d$.
The moduli of the eigenvalues of $\rho(g)$ are hence given by those of $\rho(a)$, which is therefore proximal.

The eigenvalues of $\rho(a)$ are the numbers of the form $e^{\lambda(X)}$ where $\lambda$ is a weight of $\rho$. 
Since $\omega$ is the highest weight, $e^{\omega(X)}$ is the highest eigenvalue of $\rho(a)$.
Since $\rho(a)$ is proximal, we then have $\dim V_\omega = 1$.

\medskip

We only prove that if $G = \SL(n,\R)$ and $\rho(G)$ is positively proximal then $\omega\in 2P$ (for different $G$, the idea is similar using $\sl(2,\R)$-triplets).

Recall that the simple roots of $G$ are $\Delta = \{\alpha_1,\ldots,\alpha_{n-1}\}$ where $\alpha_i(\diag(\lambda_1,\ldots,\lambda_n)) = \lambda_{i+1}-\lambda_i$.

We want to show that for all $i\in\{1,\ldots,n-1\}$,
\begin{equation*}
    \frac{\IP{\omega}{\alpha_i}}{\IP{\alpha_i}{\alpha_i}}\in\Z.
\end{equation*}
Write (where the two $-1$'s appear at positions $i$ and $i+1$)
\begin{equation*}
    k_i = \diag(1,\ldots,1,-1,-1,1,\ldots,1) = e^{i\pi(E_{i\,i}-E_{i+1\,i+1})} = e^{i\pi H_{\alpha_i}}.
\end{equation*}
Since $\rho(k_i)$ and $\rho(e^{X_0})$ are both diagonal matrices, $\rho(k_i)$ has only $\pm 1$ as diagonal values, and $\rho(e^{X_0})$ is proximal, $\rho(k_ie^{X_0})$ is also proximal, so it is positively proximal. In particular, the eigenvalue of $\rho(k_i)$ on $V_\omega$ is $1$.

Pick $v\in V_\omega\setminus\{0\}$.
One has (after extending $\diff\rho:\mfg\to\sl_d\R$ to the complexification $\diff\rho^\C\colon\mfg^\C\to\sl(d,\C)$), 
\begin{align*}
    v = \rho(k_i)v = \rho(e^{i\pi H_{\alpha_i}})v = e^{i\pi \omega(H_{\alpha_i})}v,
\end{align*}
so that $\omega(H_{\alpha_i})$ is an even integer.

On the other hand, $H_{\alpha_i} := \frac{2t_{\alpha_i}}{\IP{\alpha_i}{\alpha_i}}$ by definition, where $t_{\alpha_i}\in\mfg$ is such that $\beta(t_{\alpha_i}) = \IP{\beta}{\alpha_i}$ for all $\beta\in\mfg^\ast$.
Hence, for every  $i\in\{1,\ldots,n-1\}$ we have
\begin{align*}
    \frac{\IP{\omega}{\alpha_i}}{\IP{\alpha_i}{\alpha_i}} &= \frac{\omega(t_{\alpha_i})}{\IP{\alpha_i}{\alpha_i}}= \frac12\omega(H_{\alpha_i}) \in \Z.\qedhere
\end{align*}
\end{proof}

\begin{proof}[Proof of Fact~\ref{fact:PosCritDynkinDiagram}]
 Note that given an irreducible representation $\rho$ of a reductive group with simple root system $\Delta$, there exists a unique lowest weight of $\rho$ relatively to the choice of $\Delta$, and it is the highest weight relatively to the opposite simple root system $-\Delta$.
 Therefore we may replace the term ``highest'' by ``lowest'' in Proposition~\ref{prop:critprox}.

 Let us fix $\beta\in\Theta$.
 Denote by $\rho \from L_\Theta^0\rightarrow\GL(\mfu_\beta)$ the irreducible representation given by \eqref{eq:ubeta}, \eqref{eq:ualpha} and Theorem~\ref{thm:ubeta irred}.
 
 By Theorem~\ref{thm:SharpCvxConeIffPosProx}, it is enough to show that $\rho(L_\Theta^0)$ is positively proximal if and only if Conditions~\ref{cond:1dim} and \ref{cond:double arrow} hold.

 By Proposition~\ref{prop:critprox} and the observation at the beginning of this proof, it is enough to check that the lowest weight space of $\rho$ has dimension $1$ if and only if Condition~\ref{cond:1dim} holds, and that the lowest weight divided by $2$ lies in the weight lattice of $L_\Theta^0$ if and only if Condition~\ref{cond:double arrow} holds.

 By Fact~\ref{fact:lTheta},
 the restricted roots of $L_\Theta^0$ have the form $\alpha_{|\mfa_\Theta}$ with $\alpha\in\Sigma\cap\Span(\Delta\setminus\Theta)$, and we can choose $\{\alpha_{|\mfa_\Theta}:a\in\Delta\setminus\Theta\}$ as simple roots.

 By \eqref{eq:ubeta},
 the weights have the form $\beta_{|a_\Theta}+\alpha_{|a_\Theta}$ where $\alpha\in\Sigma\cap\Span_\N(\Delta\setminus\Theta)$.

 Observe that $\beta_{|a_\Theta}$ is by definition the lowest weight, with weight space $\mfg_\beta$.
 Thus the lowest weight space of $\rho$ has dimension $1$ if and only if Condition~\ref{cond:1dim} holds.

 Denote by $\langle\cdot,\cdot\rangle_{\mfg}$ and $\langle\cdot,\cdot\rangle_{\mfl_\Theta}$ the Killing forms of respectively $\mfg$ and $\mfl_\Theta$.
 One may check that for all $\omega\in \mfa^*$ and $\alpha\in \Span(\Delta\setminus\Theta)$ we have
 \begin{equation}
  \frac{\langle\omega,\alpha\rangle_{\mfg}}{\langle\alpha,\alpha\rangle_{\mfg}} = \frac{\langle\omega_{|\mfa_\Theta},\alpha_{|\mfa_\Theta}\rangle_{\mfl_\Theta}}{\langle\alpha_{|\mfa_\Theta},\alpha_{|\mfa_\Theta}\rangle_{\mfl_\Theta}}
 \end{equation}
 Therefore $\beta_{|\mfa_\Theta}/2$ lies in the weight lattice of $L_\Theta^0$ if and only if for any $\alpha\in\Delta\setminus\Theta$ we have
 \begin{equation}\label{eq:PosCritDynkinDiagram}
     2\frac{\langle\beta,\alpha\rangle_{\mfg}}{\langle\alpha,\alpha\rangle_{\mfg}}\in2\Z.
 \end{equation}
 The quantity in \eqref{eq:PosCritDynkinDiagram} is worth
 \begin{itemize}
     \item $0$ if there is no arrow between $\beta$ and $\alpha$ in the Dynkin diagram of $G$,
     \item $1$ if there is a simple arrow (whatever the direction) or if there is a double or triple arrow pointing toward $\beta$,
     \item $2$ if there is a double arrow pointing toward $\alpha$,
     \item $3$ if there is a triple arrow pointing toward $\alpha$.
 \end{itemize}
 Thus $\eqref{eq:PosCritDynkinDiagram}$ holds for all $\beta\in\Theta$ and $\alpha\in\Delta\setminus\Theta$ if and only the only arrows in the Dynkin diagram of $G$ between $\Theta$ and $\Delta\setminus\Theta$ are double and point toward $\Delta\setminus\Theta$, which concludes the proof.
 \end{proof}
 
 \newpage

\thispagestyle{empty}

\chapter[Analysis of Groups With a Theta-Positive Structure]{Analysis of Groups With a Theta-Positive Structure\\ {\Large\textnormal{\textit{by Dani Kaufman}}}}
\addtocontents{toc}{\quad\quad\quad \textit{Dani Kaufman}\par}

\section{Examples of Theta-positive structures}

Other than the case where $G$ is a split real form and $\Theta = \Delta$ there are 3 new families of examples of $\Theta$-positive structures. We will call these three cases $A$, $B$ and $G$ respectively, for reasons which will be clear later. We will explore the properties of each of these examples in more detail. 

The first property to establish in each case is the structure of the positive cones in $\mathfrak{u}_\beta$. In each case, one finds that there is exactly one space, $\mathfrak{u}_\beta$, which has dimension strictly greater than 1 corresponding to the case that $\beta$ is the root linked to $\Delta\setminus \Theta$ by a double arrow. We will call this root space $\mathfrak{u}_\beta$ and all other one-dimensional root spaces $\mathfrak{u}_{\alpha_i}$. 

What is the structure of $\mathfrak{u}_\beta$ in each case?

\begin{itemize}
    \item For type $A$, $G$ is a Hermitian Lie group of tube type, meaning that its Hermitian symmetric space is of the form $V+i\Omega$. The vector space $\mathfrak{u}_\beta$ is isomorphic to $V$ and the positive cone is given by $\Omega$
    
    \item For type $B$, $G$ is locally isomorphic to $\SO(p,q)$. The space $\mathfrak{u}_\beta$ is isomorphic to $\mathbb{R}^{1,q-p+1}$ as a normed vector space and the positive cone is given by a choice of one side of the double cone consisting of vectors in $\mathfrak{u}_\beta$ with positive norm. 
    
    \item For type $G$, the space $\mathfrak{u}_\beta$ is given by the exceptional Jordan algebra of $3 \times 3$ matrices over $\mathbb{R},\mathbb{C},\mathbb{H}$ or $\mathbb{O}$ and the positive cones are given by the positive definite matrices in each algebra. 
\end{itemize}

\section{The Theta-Weyl group}

We define for each $\Theta$-positive structure a subgroup of the Weyl group of $G$ called the $\Theta$-Weyl group, denoted $W(\Theta)$. 

Let $W(\theta)$ be the subgroup generated by $w_{\alpha_i}$ and $w_\beta = w_{\alpha_{|\Theta|}}$ where $w_{\alpha_i}$ are the Weyl group generators associated to the roots $\alpha_i \in W(G)$ and $w_\beta$ is the element of $W(G)$ given by the longest element of the Weyl group of $\Delta\setminus\Theta \cup \{\beta\}$ 

The $\Theta$-Weyl group is isomorphic in each case to the Weyl group of a root system of type $A_1, B_n, G_2$ for each of the three families of non trivial $\Theta$-positive structures, justifying the use of these names for these families. We denote the Dynkin diagram for these new root systems $\Delta_\Theta$.

The Dynkin diagrams of $\Delta$ and $\Delta_\Theta$ for each family are shown in Figure \ref{fig:ThetaDynks}.

\begin{figure}
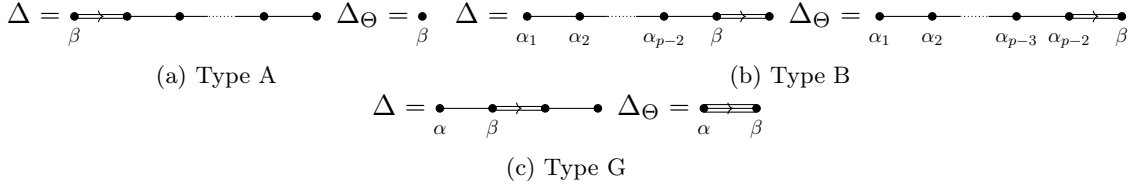

    \centering
    \def\do{}
    \begin{subfloat}[][Type A]{
        $\Delta=$\dynkin [labels={,,,,\beta},backwards]C{} 
        $\Delta_\Theta=$\dynkin [labels={\beta}] A{1}}
    \end{subfloat}
    \begin{subfloat}[][Type B]{    $\Delta=${\dynkin[labels={\alpha_1,\alpha_2,\alpha_{p-2},\beta,}]B{}} 
    $\Delta_\Theta=${\dynkin[labels={\alpha_1,\alpha_2,\alpha_{p-3},\alpha_{p-2},\beta}]B{}}}
    \end{subfloat}
    \begin{subfloat}[][Type G]{    $\Delta=$\dynkin[labels={\alpha,\beta}]F{4}
    $\Delta_\Theta=$\dynkin[labels={\alpha,\beta}]G{2}}
    \end{subfloat}

    \caption{$\Delta$ and $\Delta_\Theta$ for each family of groups with Theta-positive structures}
    \label{fig:ThetaDynks}
\end{figure}

\begin{remark}
The analysis of positivity in Hermitian Lie groups of tube type in Section 4.3 suggests that we should treat these groups as groups of $2\times2$ matrices with entries in some noncommutative ring. This agrees with the fact that $\Delta_\Theta$ is an $A_1$ root system, which is the same root system as $\SL(2,\mathbb{R})$. Thus we should think of groups with a $\Theta$-positive structure as some kind of group of type $\Delta_\Theta$ with noncommutative entries. 
\end{remark}

\section{A Theta-positive semigroup}

In analogy with Lusztig's total positivity, we can define a $\Theta$-positive semigroup, $\Utheta$. To do this, we first define
\begin{equation*}
x_{\alpha_i}\colon \mathfrak{u}_{\alpha_i} \xrightarrow{} U_{\alpha_i}, v \xrightarrow{} \exp(v)     
\end{equation*}

Recall that the $\Theta$-positive structure on $G$ gives a positive cone $c^\circ_{\alpha_i} \in \mathfrak{u}_{\alpha_i}$.
Let $\omega_\circ \in W(\Theta)$ be the longest element in the $\Theta$-Weyl group. Writing $\omega_\circ$ as a product of generators $\omega_{\alpha_{i_k}}$ we define a map
\begin{align*}
\Phi^{+}_{\omega_0}\colon \prod(c^\circ_{\alpha_{i_k}})_{\geq 0} &\to U_\Theta^{>0}\\
\Phi^+_{\omega_0}(a_1,a_2,\cdots, a_p):=& x_{\alpha_{i_1}}(a_1)x_{\alpha_{i_2}}(a_2)\cdots x_{\alpha_{i_p}}(a_p).
\end{align*}

\begin{theorem}[Theorem 4.5 in \cite{GuichardWienhard18}]
The image $U_\Theta^{>0}$ is independent of the choice of reduced expression of $\omega_\circ$.
\end{theorem}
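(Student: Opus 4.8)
The statement is the $\Theta$-positive analogue of Lusztig's theorem that the parametrization of the totally positive unipotent subsemigroup does not depend on the chosen reduced word for the longest element. The plan is to mimic the classical argument: reduce the independence statement to a finite list of ``elementary moves'' relating two reduced expressions, and then verify the identity on $U_\Theta^{>0}$ for each such move. Recall (Matsumoto--Tits) that any two reduced expressions of $\omega_\circ \in W(\Theta)$ differ by a finite sequence of \emph{braid moves}, each braid move taking place inside a rank-two standard parabolic subgroup of $W(\Theta)$. Since $W(\Theta)$ is of type $A_1$, $B_n$ or $G_2$ (by the $\Theta$-Weyl group discussion), the rank-two standard parabolics are of type $A_1\times A_1$, $A_2$, $B_2=C_2$, or $G_2$. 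Thus it suffices to prove, for each generator pair $(\alpha_i,\alpha_j)$ (or $(\alpha_i,\beta)$) occurring in $\Delta_\Theta$, the corresponding \emph{positive braid relation}: an equality of the two products $x_{\alpha_i}(\cdot)\,x_{\alpha_j}(\cdot)\,x_{\alpha_i}(\cdot)\cdots = x_{\alpha_j}(\cdot)\,x_{\alpha_i}(\cdot)\,x_{\alpha_j}(\cdot)\cdots$ as maps from the product of the relevant sharp cones into $U_\Theta^{>0}$, with the arguments on each side related by an explicit (cone-preserving) change of variables.

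First I would set up the reduction carefully: show that $\Phi^+_{\omega_\circ}$ restricted to the open product of cones $\prod (c^\circ_{\alpha_{i_k}})$ is well-defined and that its image is a subsemigroup; then observe that, because consecutive factors $x_{\alpha_i}(a)x_{\alpha_i}(a') = x_{\alpha_i}(a+a')$ (the $x_{\alpha_i}$ being one-parameter in the $1$-dimensional spaces $\mathfrak u_{\alpha_i}$, and $a,a'$ in the sharp cone which is closed under addition), the only nontrivial identifications come from braid moves between \emph{distinct} generators. For the generators $x_{\alpha_i}, x_{\alpha_j}$ with $\alpha_i,\alpha_j$ both of ``short/simple'' type inside $\Delta_\Theta$, the relevant relation is exactly the classical Lusztig relation in a split rank-two group, which one may cite. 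The genuinely new content is the relation involving the distinguished root $\beta$ whose space $\mathfrak u_\beta$ is higher-dimensional (isomorphic to the Jordan algebra $V$, or to $\mathbb R^{1,q-p+1}$, or to the exceptional Jordan algebra). Here I would use the case analysis from the previous chapter: in type $A$ this is the $\Sp$-type identity among the block-unipotent matrices, computed via block Gaussian elimination as in the Maslov/Jordan-algebra section; in type $B$ it is a computation inside $\SO(p,q)$; in type $G$ one uses the Freudenthal--Tits construction of the exceptional Jordan algebra. In each case one checks that the change of variables relating the two sides is a rational map with positive coefficients restricting to a bijection between the products of cones, so that the image $U_\Theta^{>0}$ is unchanged.

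Concretely, the key steps in order are: (1) reduce to rank-two braid moves via Matsumoto--Tits in $W(\Theta)$; (2) dispose of the commuting case $A_1\times A_1$ (the two factors literally commute, since $[\mathfrak u_\alpha,\mathfrak u_{\alpha'}]=0$ when no root $\alpha+\alpha'$ occurs); (3) handle the $A_2$-type braid move among two one-dimensional generators by the classical Lusztig identity $x_i(a)x_j(b)x_i(c) = x_j(bc/(a+c))\,x_i(a+c)\,x_j(ab/(a+c))$ (with the obvious positivity of the substitution); (4) handle the $B_2$- and $G_2$-type braid moves, which are the ones involving $\beta$, by the explicit matrix computations appropriate to types $A$, $B$, $G$ respectively, verifying positivity and bijectivity of the substitution on cones; (5) conclude that $\Phi^+_{\omega_\circ}$ has the same image for any reduced word. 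I expect step (4), specifically the $G_2$-type braid relation with $\mathfrak u_\beta$ the exceptional Jordan algebra, to be the main obstacle: the explicit change of variables is lengthy, its positivity (preservation of the cone of positive-definite elements) is the crux, and one must be careful that the octonionic entries are handled correctly. One can reduce the bookkeeping by invoking Kostant's theorem (Theorem~\ref{thm:ubeta irred}), which gives $[\mathfrak u_\beta,\mathfrak u_{\beta'}]=\mathfrak u_{\beta+\beta'}$ and the irreducibility that pins down the representation, so the substitution is forced up to normalization and only positivity remains to be checked.
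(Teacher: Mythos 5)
Your proposal follows the paper's proof exactly: reduce to braid moves via Matsumoto--Tits in $W(\Theta)$, handle the $A_2$-type move between one-dimensional weight spaces by the classical Lusztig identity, and verify the $B_2$- and $G_2$-type moves involving $\beta$ by explicit rank-two computations in which one checks that the substitution preserves the cones. The paper's sketch is terser (omitting the trivial commuting case and not spelling out the positivity/bijectivity of the change of variables), but the structure of the argument is the same.
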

\begin{proof}
We only give the basic ideas of a proof. Since two choices of reduced expression differ by braid relations, we only need to check that the image is preserved after each type of braid relation. Each braid relation is either of the form 
\begin{enumerate}
    \item $\omega_{\alpha_i}\omega_{\alpha_{i+1}}\omega_{\alpha_i} = \omega_{\alpha_{i+1}}\omega_{\alpha_i}\omega_{\alpha_{i+1}} $
    \item $\omega_{\alpha}\omega_{\beta}\omega_{\alpha}\omega_{\beta} = \omega_{\beta}\omega_{\alpha}\omega_{\beta}\omega_{\alpha}$
    \item $\omega_{\alpha}\omega_{\beta}\omega_{\alpha}\omega_{\beta}\omega_{\alpha}\omega_{\beta}=\omega_{\beta}\omega_{\alpha}\omega_{\beta}\omega_{\alpha}\omega_{\beta}\omega_{\alpha}$
\end{enumerate}
where for the last two relations $\alpha$ is the root in $\Theta$ adjacent to $\beta$. The first relation is between one-dimensional root spaces and is thus the same as the total positivity case. The second two relations can be established by a brute force computation in the $B_2$ and $G_2$ cases.
\end{proof}

With the notion of the semigroup $\Utheta$ we may define the theta positive semigroup $G_\Theta^{>0}$ as the semigroup generated by $U_\Theta^{opp>0},L_\Theta^\circ,\Utheta$ in a totally analogous way to the case of total positivity. 

\section{Theta-positive triples of flags}

Continuing the analogies with total positivity, we can define the notion of a positive triple of elements of $G/P_\Theta$. Let $E_\Theta, F_\Theta$ be two opposite standard flags in $G/P_\Theta$ i.e.\ we have that the stabilizer subgroups of $E$ and $F$ are $P_\Theta^{opp}$ and $P_\Theta$ respectively. 

\begin{definition}
Let $S_\Theta$ be a flag transverse to both $E_\Theta$ and $F_\Theta$. We call a triple $(E_\Theta, S_\Theta, F_\Theta)$ a \emph{$\Theta$-positive} triple of flags if $S_\Theta = uE_\Theta$ for $u \in \Utheta$
\end{definition}

\begin{theorem}[Theorem 4.7 \cite{GuichardWienhard18}]
The set $\{S_\Theta \in G/P_\Theta \mid (E_\Theta,S_\Theta,F_\Theta) \text{ is }\Theta\text{-positive}\}$  is a connected component of the collection of flags transverse to both $E_\Theta$ and $F_\Theta$.
\end{theorem}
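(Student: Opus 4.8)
The plan is to mimic, in the $\Theta$-positive setting, the proof strategy that works for Lusztig total positivity: first establish that the set in question is open in $\Omega_{E_\Theta}\cap\Omega_{F_\Theta}$ (the flags transverse to both $E_\Theta$ and $F_\Theta$), then establish that it is closed in that space, and finally invoke connectedness of $U_\Theta^{>0}$ to conclude that it is a single connected component. The identification $U_\Theta\cong\Omega_{F_\Theta}$, $u\mapsto uE_\Theta$ (coming from the fact that $U_\Theta$ acts simply transitively on the flags transverse to $F_\Theta$, just as in Proposition~\ref{FLAGS:prop:action}) turns the problem into a statement about subsets of $U_\Theta$: I must show that $U_\Theta^{>0}$ is a connected component of $\{u\in U_\Theta:uE_\Theta\pitchfork E_\Theta\}$.

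First I would set up the parametrization: by Theorem~4.5 of \cite{GuichardWienhard18} (quoted in the previous chapter), the map $\Phi^+_{\omega_\circ}$ restricted to the product of the open positive cones $(c^\circ_{\alpha_{i_k}})_{>0}$ is a well-defined map whose image $U_\Theta^{>0}$ does not depend on the reduced expression. The key structural inputs are: (i) each open cone $c^\circ_{\alpha_i}$ is connected (it is an open convex cone in $\mathfrak{u}_{\alpha_i}$, and when $\mathfrak{u}_\beta$ is higher-dimensional the relevant cone $\Omega$, the Lie ball cone, or the cone of positive-definite Jordan matrices is convex hence connected), so the product is connected and therefore $U_\Theta^{>0}$, being a continuous image of a connected set, is connected; (ii) $\Phi^+_{\omega_\circ}$ is in fact a homeomorphism onto its image — this is the analogue of the proposition that $U_{>0}^{\pm}$ are cells, and gives that $U_\Theta^{>0}$ is an embedded submanifold of the expected dimension $\sum_k \dim c^\circ_{\alpha_{i_k}} = \dim U_\Theta$, hence open in $U_\Theta$, a fortiori open in $\Omega_{E_\Theta}\cap\Omega_{F_\Theta}$.

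Next I would prove closedness of $\{u\in U_\Theta^{>0}\}$ inside $\Omega_{E_\Theta}\cap\Omega_{F_\Theta}$. Suppose $u_n\in U_\Theta^{>0}$ with $u_n E_\Theta\to T$ and $T\pitchfork E_\Theta$, $T\pitchfork F_\Theta$; write $u_n=\Phi^+_{\omega_\circ}(a^{(n)}_1,\dots,a^{(n)}_p)$ with $a^{(n)}_k\in c^\circ_{\alpha_{i_k}}$. One shows the parameters stay in a compact subset of $\prod \overline{c^\circ_{\alpha_{i_k}}}$: they cannot escape to infinity (that would force $u_n E_\Theta$ to degenerate to a flag non-transverse to $F_\Theta$, contradicting $T\pitchfork F_\Theta$ — here one uses the Bruhat-type / minor positivity estimates from the cone structure, exactly as in Lemma~\ref{FLAGS:lem:GW:generic}), and the limiting parameters cannot hit the boundary of the cones (that would force $T$ to fail transversality to $E_\Theta$). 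Hence $a^{(n)}_k\to a_k\in c^\circ_{\alpha_{i_k}}$ along a subsequence, so $T=\Phi^+_{\omega_\circ}(a_1,\dots,a_p)E_\Theta\in\{S_\Theta:(E_\Theta,S_\Theta,F_\Theta)\text{ positive}\}$. Openness and closedness together with connectedness then give the result: $U_\Theta^{>0}$ is open and closed in the space of doubly-transverse flags and connected, hence one of its connected components.

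The main obstacle I anticipate is the closedness step, specifically ruling out escape of the cone parameters to the boundary or to infinity. In the Lusztig case this is handled by explicit positivity of minors and their behaviour under the $F_{\omega_0}$-parametrization; in the $\Theta$-positive setting the "minors" are replaced by the $L_\Theta^\circ$-equivariant polynomial/rational functions (Jordan-algebraic determinants, the quadratic form on $\mathbb{R}^{1,q-p+1}$, etc.) detecting transversality, and one needs the analogue of the statement that a doubly-transverse limit forces bounded, boundary-avoiding parameters. I would either extract this from the cone-preservation property (a sharp convex cone has the property that its boundary is "detected" by the invariant functions vanishing) together with the proximality/positive-proximality dictionary of Chapter~\ref{chap4}'s classification chapter, or, failing a uniform argument, treat the three families $A$, $B$, $G$ separately using the explicit models of $\mathfrak{u}_\beta$ (tube-type Jordan algebra, Lorentzian space, exceptional Jordan algebra) where the degeneration analysis is a finite computation. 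The remaining steps — simple transitivity of $U_\Theta$ on $\Omega_{F_\Theta}$, connectedness of the cones, and that $\Phi^+_{\omega_\circ}$ is a homeomorphism onto its image — are either quoted from \cite{GuichardWienhard18} or routine.
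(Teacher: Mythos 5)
The paper states this theorem without proof, citing Guichard--Wienhard directly; there is no argument in the text to compare against. On its own merits, your open/closed/connected strategy is the natural one and its first two legs are solid: connectedness of $U_\Theta^{>0}$ follows from convexity (hence connectedness) of the cones $c^\circ_{\alpha_{i_k}}$, and openness in $U_\Theta \cong \Omega_{F_\Theta}$ (hence in $\Omega_{E_\Theta}\cap\Omega_{F_\Theta}$, which is open in $\Omega_{F_\Theta}$) follows from the cell parametrization plus invariance of domain, granted the dimension count $\sum_k \dim c^\circ_{\alpha_{i_k}} = \dim\mathfrak{u}_\Theta$, which is part of what \cite[Thm.\ 4.5]{GuichardWienhard18} delivers.

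The genuine gap is the one you flag yourself: closedness inside $\Omega_{E_\Theta}\cap\Omega_{F_\Theta}$. Your sketch asserts that a converging sequence $u_n = \Phi^+_{\omega_\circ}(a^{(n)}_1,\dots,a^{(n)}_p)$ with $u_n E_\Theta \to T$ and $T$ doubly transverse must have parameters accumulating in $\prod c^\circ_{\alpha_{i_k}}$, ruling out escape to infinity (breaking $T\pitchfork F_\Theta$) and collision with the cone boundary (breaking $T\pitchfork E_\Theta$); but you propose two possible mechanisms for this (invariant transversality-detecting functions, or case-by-case analysis over the $A$, $B$, $G$ families) without executing either. The analogy to the Lusztig case is not automatic here: in the split setting one has explicit matrix minors that are monotone in the $F_{\omega_0}$-parameters, whereas in the $\Theta$-setting the relevant quantities are Jordan-algebraic determinants or Lorentzian norms, and one needs to verify that their behaviour under $\Phi^+_{\omega_\circ}$ actually separates the open cone from its boundary. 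Until that verification is supplied the argument is incomplete; since the present notes contain no proof, the place to look for how this step is handled is \cite{GuichardWienhard18} itself.
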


\newpage

\thispagestyle{empty}

\chapter[Positivity in Higher Teichm\"uller Theory]{Positivity in Higher Teichm\"uller Theory\\ {\Large\textnormal{\textit{by Fernando Camacho Cadena}}}}
\addtocontents{toc}{\quad\quad\quad \textit{Fernando Camacho Cadena}\par}

Throughout this section $G$ will denote a real simple Lie group, and $\Sigma$ will be a compact closed surface of genus at least $2$ with fundamental group $\pi_1(\Sigma)$.

We recall that a \emph{higher Teichm\"uller space} is a subset of $\chi(S,G)$ which is a union of connected components consisting entirely of discrete and faithful representations. The goal of this section is to see how higher Teichm\"uller spaces arise from representations into different types of Lie groups.

The following were the known instances of higher Teichm\"uller spaces:
\begin{itemize}
    \item When $G$ is a split real simple group, as for example $\PSL(n,\R)$, the space of Hitchin representations is a higher Teichm\"uller space \cite{FockGoncharov06,Labourie:06,Guichard08}.
    \item When $G$ is a Hermitian Lie group of tube type, for example $\Sp(2n,\R)$, the space of maximal representations is a higher Teichm\"uller space \cite{BIW10}.
\end{itemize}

The main goal of this section is the following theorem, which states that there are new higher Teichm\"uller spaces.

\begin{theorem}[{\cite[Theorem A]{GLW21}}]\label{thm:there exist hTs for G with theta pos structure}
Let $G$ be a simple Lie group admitting a $\Theta$-positive structure. Then there exists a connected component of the representation variety consisting entirely of discrete and faithful representations.
\end{theorem}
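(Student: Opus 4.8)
The plan is to reduce the statement to two facts, exactly along the lines indicated in Chapter~\ref{lisa}: first, that every $\Theta$-positive representation $\rho\from\pi_1(\Sigma)\to G$ is discrete and faithful (because it is $\Theta$-Anosov); and second, that the subset of $\Theta$-positive representations is a non-empty union of connected components of $\chi(\Sigma,G)$. Granting both, any connected component meeting the $\Theta$-positive locus is entirely $\Theta$-positive, hence entirely discrete and faithful, which is precisely the claim. So the real work is to (a) set up the class of $\Theta$-positive representations, (b) show it is non-empty, (c) show it is open, (d) show it is closed, and (e) show positivity implies discreteness and faithfulness.

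For the definitions I would follow the previous chapter: $\rho$ is \emph{$\Theta$-positive} if it admits a continuous $\rho$-equivariant limit map $\xi\from\partial\pi_1(\Sigma)\to G/P_\Theta$ sending positively oriented triples of $\partial\pi_1(\Sigma)\cong\partial\H^2\cong S^1$ to $\Theta$-positive triples of flags, i.e.\ triples of the form $(E_\Theta,uE_\Theta,F_\Theta)$ with $u\in\Utheta$. Non-emptiness would be established by exhibiting the \emph{$\Theta$-Fuchsian} locus: take $j\from\pi_1(\Sigma)\to\PSL(2,\R)$ the holonomy of a hyperbolic structure and $\iota\from\PSL(2,\R)\to G$ the embedding coming from a magical $\sl_2$-triple (in the sense of \cite{BradlowCollierGarciaPradaGothen}), and check that the boundary map of $j$, post-composed with the equivariant map $\P(\R^2)\to G/P_\Theta$ induced by $\iota$, is a positive limit map. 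This uses the explicit description of the cones in each $\mfu_\beta$ from the previous chapter, together with the fact that $\iota$ carries the positive semigroup of $\PSL(2,\R)$ into $\Utheta$.

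Next I would prove that $\Theta$-positive representations are $\Theta$-Anosov, hence discrete and faithful. The Anosov property should follow from a contraction argument for the geodesic flow on $\widetilde{\Sigma}\times_{\pi_1(\Sigma)}\partial\pi_1(\Sigma)$ in the spirit of Labourie: positivity forces the flag configurations seen along a geodesic to move monotonically through a controlled region of $\Utheta$, producing uniform exponential contraction of the relevant associated bundle. Once $\rho$ is $\Theta$-Anosov, discreteness and faithfulness follow by the transversality/dynamics argument already used for Hitchin representations in Chapter~3 (a nontrivial $\gamma$ has $\rho(\gamma)$ with transverse attracting and repelling flags, forcing injectivity; a sequence $\rho(\gamma_k)\to e$ would make $\xi$ constant on three distinct points, a contradiction). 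Openness of the $\Theta$-positive locus then follows from openness of the Anosov condition, continuity of the limit map in families, and the fact that $\Theta$-positivity of a triple of flags is an open condition (so it persists along the limit map under small deformations).

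The hard part will be closedness of the $\Theta$-positive locus in $\chi(\Sigma,G)$. Given $\rho_n\to\rho$ with each $\rho_n$ $\Theta$-positive, one first uses uniform Anosov-type estimates---a collar/uniform-transversality lemma tailored to positive configurations---to show the maps $\xi_n$ are equicontinuous, so after passing to a subsequence $\xi_n\to\xi$, a continuous $\rho$-equivariant map sending positive triples into the \emph{closure} of the $\Theta$-positive triples. The crux is to exclude degeneration, i.e.\ to show $\xi$ lands in the interior: here the pointwise definition is insufficient, and one must invoke the positivity calculus for configurations of four or more flags (the generalised triple and cross ratios valued in the cones of the $\mfu_\beta$, together with their multiplicativity and monotonicity relations), so that a collapse at one boundary point would propagate to a global degeneration incompatible with equivariance under a surface group. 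This yields that $\xi$ is a genuine positive limit map and $\rho$ is $\Theta$-positive. Combining non-emptiness, openness, and closedness shows the $\Theta$-positive locus is a non-empty union of connected components, each consisting of discrete and faithful representations, which completes the proof; and I expect this closedness step to be the principal obstacle.
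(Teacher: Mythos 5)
Your steps (a), (c), and (e) are correct and track the paper: $\Theta$-positive representations are defined via a continuous $\rho$-equivariant positive boundary map $\xi\from\partial\pi_1(\Sigma)\to G/P_\Theta$, positivity implies Anosov (Theorem~B of \cite{GLW21}, the diamond/contraction argument sketched in the last chapter), Anosov reps are open and discrete-and-faithful by \cite{Labourie:06,GW12:AnosovDiscontinuity}, and the $\Theta$-positive locus is therefore open. Your step (b) is also in the right spirit: the magical $\sl_2$-triples of \cite{BradlowCollierGarciaPradaGothen} do indeed give the $\Theta$-Fuchsian representations.

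The gap is in step (d). You identify closedness of the $\Theta$-positive locus as ``the principal obstacle,'' and propose to attack it directly via equicontinuity and a degeneration-exclusion argument. But closedness in this generality was \emph{open} at the time of \cite{GLW21}: in these notes it appears as the Conjecture (due to \cite{Wie18}) at the end of the maximal representations chapter, and the remark after Corollary~\ref{cor: theta positive representations are open} records that Beyrer--Pozzetti \cite{BeyrerPozzetti21} proved it only for groups locally isomorphic to $\SO(p,q)$. The actual proof of Theorem~A in \cite{GLW21} does \emph{not} go through general closedness. Instead, as stated both here and in Chapter~\ref{lisa} (Theorem~E of \cite{GLW21}), the authors use non-abelian Hodge theory and the classification of magical $\sl_2$-triples from \cite{BradlowCollierGarciaPradaGothen} to exhibit specific connected components (the so-called Cayley components containing the $\Theta$-Fuchsian locus) and to show directly that every representation in those components is $\Theta$-positive; openness is then used only in conjunction with this Higgs-bundle input, not as half of an ``open and closed'' argument. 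Your reduction ``open $+$ closed $+$ non-empty $\Rightarrow$ union of components'' is logically valid, but it front-loads the one step that the paper deliberately routes around, and your sketch of how to prove it (a compactness/positivity-calculus degeneration argument) is far more delicate than you allow and was a separate, subsequent development rather than a lemma inside \cite{GLW21}.
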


From the classification of Lie groups admitting a $\Theta$-positive structure, the above theorem gives the existence of new higher Teichm\"uller spaces. Namely subsets of the representation varieties of groups isomorphic to $\SO(p,q)$ and Lie groups of exceptional type (which had not been known about before!).

To show the above theorem, Guichard, Labourie and Wienhard generalize the following common property shared by Hitchin and maximal representations.

\begin{theorem}[{\cite{FockGoncharov06,Labourie:06,Guichard08,BIW10}}]
Let $G$ be either a simple Lie group which is either real split, or Hermitian of tube type. Let $P_\Theta$ be the parabolic subgroup of $G$ corresponding to the $\Theta$-positive structure (when $G$ is split real, $\Theta$ is the set of simple roots, and when $G$ is Hermitian of tube type, $\Theta=\{\alpha_r\}$, see \cite{GuichardWienhard18}). Then a representation $\rho\colon \pi_1(\Sigma)\to G$ is Hitchin, or respectively maximal, if an only if there exists a $\rho$-equivariant continuous map
\[
    \xi\colon\del \pi_1(\Sigma)\to G/P_\Theta,
\]
sending positive triples in $\del \pi_1(\Sigma)$ to positive triples in $G/P_\Theta$.
\end{theorem}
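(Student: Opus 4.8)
The statement asserts that a representation $\rho\colon\pi_1(\Sigma)\to G$ into a real split or tube-type Hermitian group is Hitchin (resp.\ maximal) if and only if it admits a $\rho$-equivariant continuous limit map $\xi\colon\partial\pi_1(\Sigma)\to G/P_\Theta$ sending positive triples to positive triples. Since this is stated as a known theorem aggregating the work of Fock--Goncharov, Labourie, Guichard, and Burger--Iozzi--Wienhard, the plan is to explain how to assemble the two halves rather than to reprove everything from scratch.

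First I would treat the two families separately, since ``positive'' means different things ($\Theta=\Delta$ with Lusztig positivity of triples of full flags versus $\Theta=\{\alpha_r\}$ with maximality of the Maslov index on the Shilov boundary $G/P_{\{\alpha_r\}}$). For the \emph{split real} case, the forward direction ($\Rightarrow$) is exactly Labourie's theorem quoted in Chapter~\ref{chap1}: a Hitchin representation admits a continuous $\rho$-equivariant Frenet curve $\xi\colon\partial\pi_1(\Sigma)\to\Flag(\R^d)$, and one checks that the Frenet/hyperconvexity property forces positive triples in $\partial\pi_1(\Sigma)\cong S^1$ to be sent to Lusztig-positive triples of flags, using the characterization of positivity of triples via total positivity of the unipotent (Chapter by Appenzeller--Fournier-Facio) together with the Frenet limit relations $\bigoplus_i\xi^{n_i}(x_i)\to\xi^m(x)$. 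The converse ($\Leftarrow$) is the Fock--Goncharov/Guichard direction: the existence of a continuous equivariant positive curve implies $\rho$ lies in the Hitchin component (Guichard's theorem in the excerpt handles $\xi^1$ hyperconvex; Fock--Goncharov's positivity theorem handles the full-flag statement), and one also needs the fact, recalled above, that such a $\rho$ is automatically discrete and faithful.

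For the \emph{Hermitian tube-type} case, both directions are the Burger--Iozzi--Wienhard theorem \cite[Theorem 8]{BIW10} that is literally reproduced as Theorem~\ref{theorem - max reps are positive} earlier in the notes: $\rho$ is maximal iff there is a continuous $\rho$-equivariant $\varphi\colon\partial\H^2\to\widecheck\Sigma$ sending positively oriented triples to maximal triples of the Shilov boundary. One must then reconcile notation by identifying $\widecheck\Sigma$ with $G/P_{\{\alpha_r\}}$, identifying maximality of the Maslov index (Chapter by Bronstein--Smilga, using the Jordan-algebra spectral decomposition) with $\Theta$-positivity of triples in $G/P_\Theta$ (the $U_\Theta^{>0}$-orbit description of positive triples, via the cone of squares), and transporting the action of $\pi_1(\Sigma)$ on $\partial\pi_1(\Sigma)$ to $\partial\H^2$ through a choice of hyperbolization, exactly as in the Remark following Theorem~\ref{theorem - max reps are positive}.

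The main obstacle, and the only genuinely substantive point, is the two dictionary lemmas: (i) that Lusztig positivity of a triple of full flags is equivalent to the Frenet/hyperconvexity condition appearing in Labourie's curve, and (ii) that maximality of the Maslov index of a triple in the Shilov boundary coincides with the $\Theta$-positivity notion defined via $U_\Theta^{>0}$. Both are proven in the references but require the explicit parametrizations of $U^{>0}$ by reduced words (Chapter~\ref{section:LusztigTotalPositivity}) and the classification of $G$-orbits of transverse triples by the Maslov index, together with the Cayley-transform/Jordan-frame computations. I would state these as two lemmas, cite \cite{FockGoncharov06,Labourie:06,Guichard08} and \cite{BIW10,clerc2001maslov} for their proofs, and then the theorem follows by combining them with the already-quoted equivalences and the discreteness/faithfulness statements (Propositions~\ref{prop - max rep are injective and discrete image} and the Hitchin corollary). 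I do not expect any new calculation beyond bookkeeping once these lemmas are in hand.
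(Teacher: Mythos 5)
You are attempting to prove a theorem that the paper itself does not prove: the statement is cited as an established result from the four references and is used only as motivation for \cite[Definition 5.1]{GLW21} (the definition of $\Theta$-positive representations). There is therefore no ``paper's own proof'' to compare against, and the correct reading of this passage is that it is a pointer to the literature rather than a claim argued in the text.

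That said, your assembly of the cited results is essentially sound and tracks what the references actually establish. Two small remarks on efficiency and precision. For the split case you route through Labourie's Frenet theorem plus a dictionary lemma relating hyperconvexity to Lusztig positivity; but the notes already record the end-state directly as the ``Labourie, Guichard, Fock--Goncharov'' theorem in the Hitchin chapter (a representation into $G$ split is Hitchin iff there is a $\rho$-equivariant $\xi\colon\partial\pi_1(\Sigma)\to G/B$ sending positive triples to positive triples), so citing that statement collapses your forward and backward directions and your dictionary lemma (i) into a single quote, avoiding the (nontrivial and not-quite-formal) claim that the Frenet property ``forces'' Lusztig positivity of triples. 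For the Hermitian tube-type case your plan is correct, and you are right that the substantive work is the identification of $\widecheck{\Sigma}$ with $G/P_{\{\alpha_r\}}$ and of maximal Maslov triples with $\Theta$-positive triples in the sense of the $U_\Theta^{>0}$-orbit; the relevant material is indeed the Shilov-boundary and Jordan-algebra computations in Part~\ref{chap3}. So: no gap, a slightly longer path than necessary in the split case, and an accurate identification of where the real content lies.
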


As we saw in previous sections, there is also a notion of positive of tuples in (generalized) flag varieties $G/P_\Theta$. The above result then motivates the following definition.

\begin{definition}[{\cite[Definition 5.1]{GLW21}}]
Let $G$ be a Lie group admitting a $\Theta$-positive structure. A representation $\rho\colon\pi_1(\Sigma)\to G$ is said to be \textbf{$\Theta$-positive} if there exists a $\rho$-equivariant continuous map
\[
    \xi\colon \del \pi_1(\Sigma)\to G/P_\Theta
\]
that sends positive triples in $\del \pi_1(\Sigma)$ to positive triples in $G/P_\Theta$.
\end{definition}

In fact, the higher Teichm\"uller spaces from theorem \ref{thm:there exist hTs for G with theta pos structure} consist entirely of positive representations. In this section we do not present the full argument, but focus on the following

\begin{theorem}[{\cite[Theorem B]{GLW21}}]\label{thm: positive reps are Anosov}
Let $G$ be a simple Lie group admitting a $\Theta$-positive structure, and $\rho\colon\pi_1(\Sigma)\to G$ a $\Theta$-positive representation. Then $\rho$ is also a $\Theta$-Anosov representation.
\end{theorem}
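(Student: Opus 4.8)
\textbf{Proof strategy for Theorem \ref{thm: positive reps are Anosov}.}
The plan is to follow the by-now standard route for establishing Anosov-ness from a \emph{positive} equivariant boundary map, exploiting the contraction properties that positivity forces. The key point to extract from the $\Theta$-positive structure is the following dynamical statement: the $\Theta$-positive semigroup $U_\Theta^{>0}$ acts on the connected component of positive flags inside $\Omega_{E_\Theta}\cap\Omega_{F_\Theta}$ in a strongly contracting way, in the precise sense that iterated multiplication by a fixed element of $U_\Theta^{>0}$ shrinks this component uniformly towards a point. This is the analogue of the ``ping-pong'' mechanism in Lusztig positivity, and its availability here rests on Theorem~\ref{thm:ubeta irred} together with the cone structure of the $\mfu_\beta$'s.

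First I would set up the boundary map. By hypothesis there is a continuous $\rho$-equivariant $\xi\colon\partial\pi_1(\Sigma)\to G/P_\Theta$ sending positive triples to positive triples; the first observation is that $\xi$ is then \emph{transverse}, i.e.\ $\xi(x)$ and $\xi(y)$ are opposite (lie in general position with respect to $P_\Theta$, $P_\Theta^{\mathrm{opp}}$) whenever $x\neq y$, since genericity of triples implies pairwise transversality. In particular $\xi$ is injective and, using that $\partial\pi_1(\Sigma)$ is a circle with a cyclic order preserved by $\pi_1(\Sigma)$ acting as a convergence group, the image of any positively oriented triple lands in the positive cell. Next, for a non-trivial $\gamma\in\pi_1(\Sigma)$ with attracting/repelling fixed points $\gamma^\pm$, one analyses the action of $\rho(\gamma)$ near the pair $(\xi(\gamma^+),\xi(\gamma^-))$: pushing forward a small positively oriented interval around $\gamma^+$ and using the positivity-to-positivity property together with the contraction statement above, one concludes that $\rho(\gamma)$ is \emph{$\Theta$-proximal} (it has a well-defined attracting fixed flag $\xi(\gamma^+)$ in $G/P_\Theta$, a repelling one $\xi(\gamma^-)$ in $G/P_\Theta^{\mathrm{opp}}$, and a uniform gap in the relevant singular values). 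This gives the pointwise, element-by-element version of what we want.

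The second half is to upgrade this pointwise contraction to the uniform exponential estimate defining an Anosov representation. Here I would build the flat bundle $E=\widetilde{\pi_1(\Sigma)}\times_{\rho}(G/P_\Theta\times G/P_\Theta^{\mathrm{opp}})$ over the unit tangent bundle $T^1\Sigma$ (or rather over the geodesic flow space of $\pi_1(\Sigma)$), and use $\xi$ together with its ``opposite'' map to produce a continuous, flow-equivariant section $\sigma$ landing in the open subset of transverse pairs; equivalently one gets a splitting of the associated $\mathrm{Lie}(U_\Theta)$-bundle. Anosov-ness is then the statement that the geodesic flow contracts this sub-bundle exponentially. To get the \emph{uniform} rate one runs a compactness argument: the contraction constants obtained element-by-element in the previous paragraph depend continuously on the relevant data, $T^1\Sigma$ is compact, and positivity is an open condition, so a standard sub-additive/cocycle argument (of the type in Bochi--Potrie--Sambarino, or Kapovich--Leeb--Porti's local-to-global principle) promotes ``uniform contraction after a bounded time'' to genuine exponential decay. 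Concretely one shows there exist $C,\mu>0$ with the $\Theta$-singular-value gap of $\rho(\gamma)$ growing like $\mu\,\ell_S(\gamma)-C$ along all $\gamma$, which is precisely $\Theta$-Anosov.

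\textbf{Main obstacle.} The genuinely new difficulty — and the place where the $\Theta$-positive structure must be used in an essential way rather than cited — is the contraction lemma for $U_\Theta^{>0}$ acting on the positive component of transverse flags. In the split (Lusztig) case this is classical, and in the Hermitian tube-type case it follows from the theory of the Maslov index and bounded cohomology; for the unified $\Theta$-positive setting one must prove it from the cone structure on the $\mfu_\beta$ and the bracket relations $[\mfu_\beta,\mfu_{\beta'}]=\mfu_{\beta+\beta'}$ of Theorem~\ref{thm:ubeta irred}, showing that conjugation/multiplication dynamics by positive elements are ``cone-contracting''. Establishing this cleanly, and verifying that it behaves well in one-parameter families so that the compactness argument applies, is where the bulk of the work lies; the rest of the proof is then assembling known Anosov-from-boundary-map technology.
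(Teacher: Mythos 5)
Your overall skeleton — transversality from genericity, then contraction coming from the positive structure, then a compactness argument to make the contraction uniform — is the right shape, and it is the same strategic shape as the paper's proof. But the actual implementation you describe is a different route from the one the paper follows, and it is worth making that explicit. The paper does not argue through $\Theta$-proximality of individual group elements and a local-to-global or sub-additive cocycle upgrade. Instead it works directly with the metric characterization of $\Theta$-Anosov-ness from \cite{GW12:AnosovDiscontinuity}: it equips each diamond $V_y(x,z)$ with a complete Riemannian metric by pulling back the Euclidean metric on the parametrizing cone $C$ under the diffeomorphism $F\colon C\to U^{>0}$ of \cite[Theorem 4.5]{GuichardWienhard18}; it then uses the harmonic conjugate $w(x,y,z)$ (defined by a cross-ratio condition on $\partial\mathbb H^2$) to produce, for every unit tangent vector $(x,y,z)$, a diamond $\mathcal Y_{(x,y,z)}=V_{\xi(z)}(\xi(y),\xi(w(x,y,z)))$ having $\xi(z)$ in its interior; Proposition~\ref{prop: containment in diamonds means metrics expand} (nested diamonds shrinking to a point force the assigned metrics to degenerate) then gives the required contraction of norms along backward geodesic flow, and compactness of $T^1\Sigma$ supplies the uniform constants. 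What your version would buy is independence from the diamond--metric machinery at the price of importing heavier black-box theory (Bochi--Potrie--Sambarino, or Kapovich--Leeb--Porti). What the paper's version buys is that the contraction is produced geometrically and directly from the cone parametrization, with no need to pass through per-element singular-value estimates and a separate uniformization step. One small inaccuracy to fix in your sketch: what the argument really needs is that an \emph{arbitrary nested sequence} of diamonds $V_{y_m}(x_m,z_m)$ with $\bigcap_m V_{y_m}(x_m,z_m)=\{\bullet\}$ forces the metrics to degenerate; your phrasing in terms of ``iterated multiplication by a fixed element of $U_\Theta^{>0}$'' is narrower than what is actually used along a general geodesic.
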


We will give a definition of a $\Theta$-Anosov representation in section \ref{section:anosov representations definition for GLW21}. We nevertheless state here two essential properties of Anosov representations, due to Labourie, and Guichard-Wienhard.

\begin{theorem}[{\cite{Labourie:06,GW12:AnosovDiscontinuity}}]\label{thm: Anosov representations are open and discrete and faithful}
The space of $\Theta$-Anosov representations is open in $\Hom(\pi_1(\Sigma),G)$, and every $\Theta$-Anosov representation is discrete and faithful.
\end{theorem}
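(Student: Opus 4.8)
The plan is to treat the two assertions separately once the dynamical definition of $\Theta$-Anosov representations is in place (Section~\ref{section:anosov representations definition for GLW21}): recall that $\rho$ is $\Theta$-Anosov when it admits a $\rho$-equivariant continuous boundary map $\xi\colon\del\pi_1(\Sigma)\to G/P_\Theta$, together with a transverse map to $G/P_\Theta^{\mathrm{opp}}$, such that the induced flow on the associated bundle over the unit tangent bundle $T^1\Sigma$ uniformly contracts a continuous cone field along the geodesic flow. The equivalent and more hands-on characterization I would use throughout is the \emph{uniform gap in the Cartan projection}: there exist $c,C>0$ such that $\alpha(\mu(\rho(\gamma)))\geq c\,|\gamma|_S-C$ for every $\gamma\in\pi_1(\Sigma)$ and every $\alpha\in\Theta$, where $\mu\colon G\to\cham$ is the Cartan projection from Chapter~\ref{chap2} and $|\cdot|_S$ is a word metric on $\pi_1(\Sigma)$; the equivalence of the two formulations is itself part of the foundational work being cited.

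\textbf{Openness.} Here I would argue that the cone-contraction condition is manifestly an open condition on the pair (representation, cone field): if $\rho_0$ is $\Theta$-Anosov with an invariant contracting cone field $\mathcal{C}_0$, then for $\rho$ close to $\rho_0$ in $\Hom(\pi_1(\Sigma),G)$ the associated flow on the bundle varies continuously, the cone field $\mathcal{C}_0$ is still \emph{almost} invariant, and the strict exponential contraction estimate survives on a slightly enlarged cone field after finitely many steps. One then upgrades this to a genuine invariant contracting cone field for $\rho$ by the usual fixed-point/graph-transform argument, using the compactness of $T^1\Sigma$ and structural stability of the geodesic flow of $\Sigma$, which is Anosov. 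Continuity of the limit maps $\xi_\rho$ in $\rho$ and the openness of transversality then follow. This is the content of \cite{Labourie:06} and \cite{GW12:AnosovDiscontinuity}, and I would cite those for the estimates rather than reproduce them.

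\textbf{Discreteness and faithfulness.} The linear lower bound $\alpha(\mu(\rho(\gamma)))\geq c\,|\gamma|_S-C$ for $\alpha\in\Theta$ forces $\mu(\rho(\gamma))\to\infty$ in $\cham$ as $|\gamma|_S\to\infty$. Since $\mu$ is proper (Corollary~\ref{KAK}), the orbit map $\gamma\mapsto\rho(\gamma)\cdot p$ into the symmetric space $X=G/K$ is a proper — in fact quasi-isometric — embedding; in particular only finitely many $\gamma$ can have $\rho(\gamma)$ in any fixed compact neighbourhood of $e$. Hence $\rho$ has discrete image and finite kernel. As $\pi_1(\Sigma)$ is torsion-free (by the one-relator presentation discussed in Chapter~\ref{chap2}), a finite normal subgroup must be trivial, so $\ker\rho=\{1\}$ and $\rho$ is faithful.

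\textbf{Main obstacle.} The genuinely technical point is the openness statement: making precise that a uniform exponential gap in singular values is stable under perturbation requires either the cone-field perturbation analysis built on structural stability of the geodesic flow (Labourie's route) or a direct argument that the set of representations satisfying the uniform gap estimate is open (the Guichard–Wienhard route). Both are substantial; the discreteness and faithfulness part, by contrast, is a short formal consequence of the Cartan-projection estimate together with properness of $\mu$ and torsion-freeness of the surface group.
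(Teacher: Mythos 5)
The paper does not prove this theorem: it is stated as a black box with the attribution \cite{Labourie:06, GW12:AnosovDiscontinuity}, and then used as an ingredient in the sketch of Theorem B of \cite{GLW21}. So there is no in-paper argument to compare your sketch against. With that proviso, your outline is mathematically sound and follows the now-standard route, with two small points worth flagging.

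First, an attribution issue rather than a gap: the ``uniform gap in the Cartan projection'' characterization $\alpha(\mu(\rho(\gamma)))\geq c\,|\gamma|_S-C$ for $\alpha\in\Theta$, which you take as your working definition, is \emph{not} part of the cited references. It was established later by Kapovich--Leeb--Porti, Gu\'eritaud--Guichard--Kassel--Wienhard, and Bochi--Potrie--Sambarino. Labourie and Guichard--Wienhard work directly with the dynamical bundle/cone-contraction definition (the one the paper records in Section~\ref{section:anosov representations definition for GLW21}), and they extract discreteness and faithfulness from the existence of dynamics-preserving transverse limit maps — an argument closer in spirit to the one given for Hitchin representations in Chapter~\ref{chap1} than to the properness-of-orbit-map argument you present. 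Both routes are correct; yours is cleaner, but relies on an equivalence that postdates the cited work and should be attributed accordingly.

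Second, a very minor precision in the discreteness step: the estimate $\alpha(\mu(\rho(\gamma)))\geq c\,|\gamma|_S-C$ directly rules out a sequence $\gamma_n$ with $|\gamma_n|_S\to\infty$ and $\rho(\gamma_n)\to e$; one should add that a sequence with $\rho(\gamma_n)\to e$ and $|\gamma_n|_S$ bounded can only accumulate at $e$ if it is eventually $e$, since there are finitely many group elements of bounded word length. You essentially say this (``only finitely many $\gamma$ can have $\rho(\gamma)$ in any fixed compact neighbourhood of $e$''), but spelling out that this finiteness is what converts the divergence estimate into discreteness would make the step airtight. Your passage from finite kernel to trivial kernel via torsion-freeness of $\pi_1(\Sigma)$ is correct: a finite subgroup of a torsion-free group is trivial, normality is not needed.
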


One can then deduce the following result.

\begin{corollary}[{\cite{GLW21}}]\label{cor: theta positive representations are open}
The set of $\Theta$-positive representations is an open subset of $\Hom(\pi_1(\Sigma),G)$.
\end{corollary}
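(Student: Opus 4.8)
The plan is to combine Theorem~\ref{thm: positive reps are Anosov} (positive $\Rightarrow$ $\Theta$-Anosov) with the openness half of Theorem~\ref{thm: Anosov representations are open and discrete and faithful}, and then run a perturbation argument on the boundary maps. First I would fix a $\Theta$-positive representation $\rho_0$, with $\rho_0$-equivariant continuous boundary map $\xi_0\colon\partial\pi_1(\Sigma)\to G/P_\Theta$ sending positive triples to positive triples. By Theorem~\ref{thm: positive reps are Anosov}, $\rho_0$ is $\Theta$-Anosov, so by Theorem~\ref{thm: Anosov representations are open and discrete and faithful} there is an open neighbourhood $\mathcal U\subseteq\Hom(\pi_1(\Sigma),G)$ of $\rho_0$ in which every representation is $\Theta$-Anosov. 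Each $\rho\in\mathcal U$ then carries a unique continuous $\rho$-equivariant \emph{transverse} boundary map $\xi_\rho\colon\partial\pi_1(\Sigma)\to G/P_\Theta$ (so $\xi_\rho(x)$ and $\xi_\rho(y)$ are transverse for $x\ne y$), and I would invoke the standard companion fact to Theorem~\ref{thm: Anosov representations are open and discrete and faithful} that $\rho\mapsto\xi_\rho$ is continuous for the topology of uniform convergence on the compact space $\partial\pi_1(\Sigma)$ (implicit in \cite{Labourie:06,GW12:AnosovDiscontinuity}); this is essentially the only input beyond the two quoted theorems.

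It then remains to check that, after shrinking $\mathcal U$, each $\xi_\rho$ still sends positive triples to positive triples. For this I would use that the set $\mathcal P\subset(G/P_\Theta)^3$ of $\Theta$-positive triples is $G$-invariant and \emph{open} inside the set of pairwise transverse triples — which follows from Theorem~4.7 of \cite{GuichardWienhard18}, identifying, for fixed transverse $E_\Theta,F_\Theta$, the positive $S_\Theta$ as a connected component of the transverse ones. Consequently the condition ``$(\xi_\rho(x),\xi_\rho(y),\xi_\rho(z))\in\mathcal P$'' is invariant under the diagonal action of $\pi_1(\Sigma)$ on positive triples of $\partial\pi_1(\Sigma)$, by equivariance of $\xi_\rho$. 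Since $\pi_1(\Sigma)$ acts cocompactly on the set of pairwise distinct — hence on the set of positive — triples of $\partial\pi_1(\Sigma)\cong S^1$, it suffices to verify the condition over a fixed compact set $K$ of positive triples.

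Finally, the map $(\rho,x,y,z)\mapsto(\xi_\rho(x),\xi_\rho(y),\xi_\rho(z))$ is continuous on $\mathcal U\times K$ and lands in the pairwise transverse triples by transversality of $\xi_\rho$; at $\rho=\rho_0$ it takes values in the open set $\mathcal P$ for every $(x,y,z)\in K$. By compactness of $K$ there is a neighbourhood $\mathcal U'\subseteq\mathcal U$ of $\rho_0$ on which it takes values in $\mathcal P$ for all $(x,y,z)\in K$, hence, by the cocompactness reduction, for all positive triples; so every $\rho\in\mathcal U'$ admits a positive equivariant boundary map and is $\Theta$-positive, proving openness at $\rho_0$. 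The main obstacle I anticipate is not any single step but assembling the right ingredients: the continuous dependence $\rho\mapsto\xi_\rho$ and the openness of $\mathcal P$ among transverse triples are ``known'' but do not follow formally from the two theorems quoted here — they rest on the finer analysis of Anosov representations and of $\Theta$-positive structures — and the cocompactness of the $\pi_1(\Sigma)$-action on triples in its boundary is what lets one pass from one triple to all of them.
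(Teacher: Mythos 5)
Your proof is correct, and it supplies exactly what the paper leaves implicit. The text merely writes ``one can then deduce'' after Theorem~\ref{thm: positive reps are Anosov} and Theorem~\ref{thm: Anosov representations are open and discrete and faithful}, and, as you rightly point out, those two statements by themselves do not formally yield the corollary: a subset of an open set need not be open. The additional ingredients you isolate are the right ones and are what \cite{GLW21} in fact relies on. The continuous dependence $\rho\mapsto\xi_\rho$ of the Anosov boundary map on the representation is part of the analysis behind the openness of Anosov representations in \cite{Labourie:06,GW12:AnosovDiscontinuity}; openness of the set of $\Theta$-positive triples inside the pairwise-transverse triples does follow from the connected-component statement of \cite[Theorem~4.7]{GuichardWienhard18} together with $G$-equivariance (pull back a perturbed transverse pair to the standard one by an element of $G$ close to the identity); and cocompactness of the $\pi_1(\Sigma)$-action on positive triples of $\partial\pi_1(\Sigma)\cong S^1$ (identified with $T^1\mathbb{H}^2$, whose quotient is the compact manifold $T^1\Sigma$, as recalled in the text) gives the fundamental-domain reduction that lets the tube lemma close the argument. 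None of the steps as you have phrased them is problematic; the only caveat worth repeating is the one you already flagged, that the continuous dependence of $\xi_\rho$ and the openness among transverse triples are genuine inputs beyond the two theorems quoted in this chapter.
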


\begin{remark}
In the case when $G$ is locally isomorphic to $\SO(p,q)$, Beyrer and Pozzetti showed in \cite{BeyrerPozzetti21} that the subset of $\Theta$-positive representations is closed. Together with the fact from \cite{GLW21} that the set of $\Theta$-positive representations is open and that $\Theta$-Anosov representations are discrete and faithful, this implies that the subset of $\Theta$-positive representations is itself a higher Teichm\"uller space.
\end{remark}

The method in \cite{GLW21} to prove theorem \ref{thm:there exist hTs for G with theta pos structure} is to find a union of connected components in the representation variety consisting entirely of $\Theta$-positive representations. Then by corollary \ref{cor: theta positive representations are open}, all these representations are discrete and faithful. Finding a union of connected components involves methods from Higgs bundle theory as well as previous work by \cite{BradlowCollierGarciaPradaGothen}. Here we will only present the ideas for the proof of theorem \ref{thm: positive reps are Anosov}.

We begin with some preliminaries on diamonds, which are structures on the flag varieties associated to the $\Theta$-positive structure. We then move on to some definitions and facts about Anosov representations and finish with the idea of the proof of theorem \ref{thm: positive reps are Anosov}.

\section{Diamonds}
For the rest of this section, we will assume that $G$ is simple and admits a $\Theta$-positive structure, denote by $U^{>0}$ the $\Theta$-positive subsemigroup of $G$, and by $F_\Theta$ the flag variety $G/P_\Theta$. We recall here some instances of this.

\begin{examples}
\begin{itemize}
    \item When $G=\SL(2,\R)$,
    \[
        U^{>0}=\left\{\begin{pmatrix}1&t\\ &1\end{pmatrix}:t>0\right\}.
    \]
    \item When $G=\SL(3,\R)$,
    \[
        U^{>0}=\left\{\begin{pmatrix}1 & a+c & ab\\ & 1 & b\\ & & 1\end{pmatrix}:a,b,c>0\right\}.
    \]
    \item When $G=\Sp(2n,\R)$,
    \[
    U^{>0}=\left\{\begin{pmatrix}I_n&N\\ &I_n\end{pmatrix}:N \textnormal{ is positive definite and symmetric}\right\}
    \]
\end{itemize}   
\end{examples}

To define positivity in the flag variety, we fix two standard flags. Namely, we let $F^o\in F_\Theta$ be the flag stabilized by $P_\Theta$, and $E^o$ be the flag stabilized by $P_\Theta^{opp}$. Recall now that if $T\in F_\Theta$ is a third flag, the triple $(F^o,T,E^o)$ is \emph{positive} if there exists an element $u_T\in U^{>0}$ such that $T=u_TE$. We say that a general triple of flags $(F,T,E)$ is positive if there exists $g\in G$ such that 
\[
    gF^o=F,\quad gE^o=E, \quad \textnormal{and}\quad T=guE^o,
\]
for some $u\in U^{>0}$. In particular this implies that for two flags $F$ and $E$ which are transverse, letting $g\in G$ such that $gF^o=F$ and $gE^o=E$, the subset
\[
    \{(F,gug^{-1}E,E):u\in U^{>0}\}
\]
consists entirely of positive triples. Here is an example of what such a set looks like.

\begin{example}
In the case of $G=\SL(3,\R)$, $\Theta$ is the set of all positive roots, and we can choose
\[
    P_\Theta = \left\{\begin{pmatrix}* & * & *\\ 0 & * & *\\ 0 & 0 & *\end{pmatrix}\right\},
\]
making $F_\Theta$ the space $\Flag(\R^3)$ of full flags in $\R^3$. The standard flag $F^o$ corresponds to the flag $\{\Span(e_1),\Span(e_1,e_2)\}$, and the opposite flag is $\{\Span(e_3),\Span(e_3,e_2)\}$. where $e_1,e_2,e_3$ are the standard basis vectors for $\R^3$. We can visualize a full flag in $\mathbb{RP}^2$ as a point together with a (projective) line through that point. To picture the set $\{(F^o,uE^o,E^o):u\in U^{>0}\}$, we look at a coordinate patch in $\mathbb{RP}^2$:
\begin{align*}
    \{[x:y:z]\in\mathbb{RP}^2 : x\neq 0\}&\rightarrow \R^2\\
    [x:y:z]&\mapsto \left[1:\frac{y}{x}:\frac{z}{x}\right].
\end{align*}
Using the explicit form of $U^{>0}$ for $\SL(3,\R)$, one can see that the orbit of $\Span(e_1)$ is
\[
    \left\{\left[1:\frac{1}{a}:\frac{1}{ab}\right] : a,b>0\right\},
\]
and the orbit of $\Span(e_2)$ is
\[
\left\{\left[1:\frac{1}{a+c}:\frac{1}{0}\right] : a,c>0\right\},
\]
In coordinates, the orbit of $\Span(e_1)$ is the (open) upper right quadrant. One can do a similar computation for the orbit of the plane $\Span(e_1,e_2)$ by identifying planes in $\R^3$ to orthogonal lines (equivalently by identifying $(\mathbb{RP}^2)^*$ with $\mathbb{RP}^2$), and then see that the orbit (in coordinates) will also be a quadrant. Now imagine $\infty$ in the coordinates patch in for $\mathbb{RP}^2$ as a single point at infinity, thereby "closing up" the upper right quadrant into a "diamond" with the standard flags as extremities.
\end{example}
From this vague picture, we make the following definition.

\begin{definition}
Given two transverse flags $F,E\in F_\Theta$, a \textbf{diamond} is a subset of $F_\Theta$ of the form
\[
    \left\{guE^o:u\in U^{>0}\right\}
\]
for some $g\in G$. The \textbf{extremities} of the diamond are the flags $gF^o$ and $gE^o$.
\end{definition}
As noted above, for any $T$ in a diamond with extremities $F$ and $E$, we have that $(F,T,E)$ is a positive triple.

\begin{remark}
The terminology of diamond was actually coined by Labourie and Toulisse in \cite{LabourieToulisse20} for the case when $G=\SO(2,n)$ with $n\geq 3$. There are charts on the flag variety associated to $G$ which map to Minkowski space. In appropriate charts, a diamond is the intersection of the future and past light cone of the extremities.
\end{remark}

\begin{example}
When $G=\SL(2,\R)$, we have the following identification
\[
    \SL(2,\R)/P\cong S^1,
\]
where $P$ is the group of upper triangular matrices. Here, diamonds are line segments on the circle as in figure \ref{fig:prop positivity for diamonds}. We note here that the flag variety of $\SL(2,\R)$ and the one for $\PSL(2,\R)$ can be further identified with the visual boundary of $\mathbb{H}^2$, its symmetric space.
\end{example}
There is another characterization of diamonds in terms of transverse flags. To state the result, let $\Omega_F\subset F_\Theta$ be the subset of the flag variety consisting of flags transverse to $F$.

\begin{proposition}[{\cite[Proposition 2.5]{GLW21}}]\label{prop: diamonds are intersections of simultaneously transverse flags}
A diamond with extremities $E,F\in F_\Theta$ (recall here that $E$ and $F$ must be transverse) is a connected component of $\Omega_F\cap \Omega_E$.
\end{proposition}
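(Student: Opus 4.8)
The plan is to fix the extremities, reduce the statement to a claim about the $\Theta$‑positive unipotent semigroup, and then show that the diamond is a non‑empty subset of $\Omega_F\cap\Omega_E$ which is simultaneously open, closed and connected. First I would use the $G$‑action: a diamond with extremities $E,F$ (necessarily transverse) has the form $g\cdot\mathcal{D}^o$, where $\mathcal{D}^o=\{u E^o:u\in\Utheta\}$ is the standard diamond with extremities $F^o,E^o$ and $g\in G$ satisfies $gF^o=F$, $gE^o=E$; since $g$ is a homeomorphism of $F_\Theta$ carrying $\Omega_{F^o}\cap\Omega_{E^o}$ onto $\Omega_F\cap\Omega_E$, it suffices to treat $\mathcal{D}^o$. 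Next I would record the $P_\Theta$‑version of Proposition~\ref{FLAGS:prop:action}(3): since $P_\Theta=\stab_G(F^o)$ acts transitively on $\Omega_{F^o}$ with $\stab_{P_\Theta}(E^o)=P_\Theta\cap P_\Theta^{\mathrm{opp}}=L_\Theta$, and $P_\Theta=U_\Theta\rtimes L_\Theta$, the orbit map $u\mapsto uE^o$ is a diffeomorphism from $U_\Theta$ onto $\Omega_{F^o}$, which is an open subset of the compact manifold $F_\Theta$. Under this identification $\mathcal{D}^o$ corresponds to $\Utheta$ and $\Omega_{F^o}\cap\Omega_{E^o}$ to the open set $\mathcal{V}:=\{u\in U_\Theta:uE^o\pitchfork E^o\}$; because $\Theta$‑positive triples are generic (the analogue of Lemma~\ref{FLAGS:lem:GW:generic}, cf. \cite{GuichardWienhard18}), $\Utheta\subseteq\mathcal{V}$, and $\Utheta$ is non‑empty. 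So the proposition reduces to: $\Utheta$ is a connected component of $\mathcal{V}$.

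Connectedness is immediate, since the parametrisation $\Phi^+_{\omega_0}$ of \cite[Theorem~4.5]{GuichardWienhard18} is a homeomorphism from a product of open convex cones onto $\Utheta$. Openness of $\Utheta$ in $\mathcal{V}$ — equivalently in $U_\Theta$ — follows by invariance of domain: $\Phi^+_{\omega_0}$ is a continuous injection defined on an open subset of a real vector space of dimension $\dim\mfu_\Theta=\dim U_\Theta=\dim F_\Theta$ (the domain of $\Phi^+_{\omega_0}$ is a product of cones, one for each letter of a reduced word for the longest element $\omega_0\in W(\Theta)$, the $k$‑th being an open cone in the weight space attached to the $k$‑th positive root of $\Delta_\Theta$, and these weight spaces exhaust $\mfu_\Theta$ by \eqref{eq:uTheta}), so its image is open in the manifold $U_\Theta$. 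The main work, and the step I expect to be the main obstacle, is closedness of $\Utheta$ in $\mathcal{V}$. By continuity of the relevant coordinate functions the closure of $\Utheta$ in $U_\Theta$ is contained in the non‑negative semigroup $U_\Theta^{\geq 0}$ (in fact $\overline{\Utheta}=U_\Theta^{\geq 0}$), so it is enough to show that $U_\Theta^{\geq 0}\setminus\Utheta$ is disjoint from $\mathcal{V}$: an element $u\in U_\Theta^{\geq 0}$ with $uE^o\pitchfork E^o$ must already lie in $\Utheta$. This is where the fine structure of the $\Theta$‑non‑negative semigroup enters: one invokes the Lusztig‑type stratification of $U_\Theta^{\geq 0}$ into cells indexed by $W(\Theta)$ — the $\Theta$‑analogue of Lusztig's decomposition, built from the noncommutative/Jordan‑algebra presentation of $U_\Theta$ in \cite[\S4]{GuichardWienhard18} — whose unique open cell is $\Utheta$, and then shows that $u\mapsto uE^o$ carries every lower cell into a proper Schubert cell of $F_\Theta$ relative to $E^o$, so that such flags are never transverse to $E^o$. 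The delicate point will be producing, stratum by stratum, the invariant cutting out transversality to $E^o$ and checking that it vanishes on each lower cell; in the split and $\SO(p,q)$ cases this can be made completely explicit — for instance in $\SL(3,\mathbb{R})$ one has $U^{\geq 0}\setminus U^{>0}=\{y=0\}\cup\{xz-y=0\}$, which is exactly the locus where $uE^o$ fails to be transverse to $E^o$ — but in general these invariants are representation‑theoretic rather than literal minors.

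Finally, assembling the three properties, $\Utheta$ is a non‑empty, open, closed and connected subset of $\mathcal{V}$, hence a connected component of $\mathcal{V}$; transporting back through the diffeomorphism $U_\Theta\cong\Omega_{F^o}$ and the homeomorphism $g$ shows that the diamond $\mathcal{D}$ is a connected component of $\Omega_F\cap\Omega_E$.
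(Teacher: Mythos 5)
The paper gives no proof of this proposition; it is stated as a citation of \cite[Prop.~2.5]{GLW21}, and in fact it is a reformulation of Theorem~4.7 of \cite{GuichardWienhard18}, which is already quoted a chapter earlier in these notes. So there is no argument in the text for yours to be measured against, and the question is simply whether your proposal is sound. Your overall frame is correct: reduce by the $G$-action to the standard diamond, identify $\Omega_{F^o}$ with $U_\Theta$ via the simply-transitive orbit map $u\mapsto uE^o$, and argue that $\Utheta$ is a nonempty, open, closed and connected subset of $\mathcal{V}=\{u\in U_\Theta: uE^o\pitchfork E^o\}$. Nonemptiness is clear, connectedness follows from the cone parametrisation $\Phi^{+}_{\omega_0}$, and openness is handled correctly by invariance of domain (a continuous injection from an open set of dimension $\dim\mathfrak{u}_\Theta=\dim U_\Theta$ into $U_\Theta$).

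Closedness is, as you say yourself, the substance of the proof, and that is exactly where the proposal has a genuine gap. You reduce it correctly to the claim $U_\Theta^{\geq 0}\cap \mathcal{V}=\Utheta$ and your $\SL(3,\R)$ computation is a good sanity check, but the mechanism you invoke -- a Lusztig-type cell decomposition of $U_\Theta^{\geq 0}$ indexed by $W(\Theta)$, together with a Schubert-cell argument that the nonopen cells produce flags non-transverse to $E^o$ -- is not available in \cite{GuichardWienhard18}. That paper constructs $\Utheta$ and proves it is a semigroup whose definition is independent of the reduced word, but it does not develop the boundary stratification of the nonnegative monoid; that theory, and the fine Bruhat/Schubert analysis you need on top of it, belongs to the more recent \cite{GuichardWienhard_GeneralizingLusztigTotalPositivity}. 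As written, the closedness step therefore rests on an unproved structural assertion. The direction is the right one, and this is essentially the route the actual references take, but you would need to import (or reprove) the cell structure before the argument is complete. A second, smaller gap is your claim that $\overline{\Utheta}\subset U_\Theta^{\geq 0}$ follows ``by continuity of coordinate functions'': this is not automatic -- the image of the closed cone under $\Phi^{+}_{\omega_0}$ equals the closure of the image of the open cone only because of a properness statement (equivalently, because $U_\Theta^{\geq 0}$ is closed in $U_\Theta$), which in the general $\Theta$-setting is again part of the machinery whose absence is precisely the gap.
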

In particular, we can specify a diamond by giving two transverse flags $x$ and $z$, and a point $y$ in the interior of $\Omega_F\cap \Omega_E$. We will denote such a diamond by $V_y(x,z)$.

We will now see how diamonds are related to positive representations
\begin{proposition}[{\cite[Proposition 2.7]{GLW21}}]\label{prop: relationship between diamonds and positivity}
Assume $\xi\colon S^1\to F_\Theta$ sends positive triples in $S^1$ to positive triples in $F_\Theta$. Then for any oriented positive tuple $(x_1,\dots,x_n)$ in $S^1$, there exist diamonds $V_{ij}$ such that
\begin{enumerate}
    \item $\xi(x_j)\in V_{ik}$ if $(x_i,x_j,x_k)$ is positive, and
    \item\label{condition: diamonds and positivity condition two} $V_{ij}\subset V_{km}$ if $(x_k,x_i,x_j,x_m)$ is positive.
\end{enumerate}
\end{proposition}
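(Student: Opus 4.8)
I would prove Proposition~\ref{prop: relationship between diamonds and positivity} by unwinding the definitions and invoking the characterization of diamonds in Proposition~\ref{prop: diamonds are intersections of simultaneously transverse flags}. Fix an oriented positive tuple $(x_1,\dots,x_n)$ in $S^1$. The first observation is that positivity of the limit map $\xi$ implies that the images $\xi(x_1),\dots,\xi(x_n)$ are pairwise transverse in $F_\Theta$: indeed, each pair $(x_i,x_j)$ can be completed to a positive triple, which by Lemma~\ref{FLAGS:lem:GW:generic}-type arguments (genericity of positive triples, established earlier for flags) forces transversality of $\xi(x_i)$ and $\xi(x_j)$. So for any $i\neq k$, both $\Omega_{\xi(x_i)}\cap\Omega_{\xi(x_k)}$ and the diamonds it decomposes into make sense.

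First I would \emph{define} the diamonds $V_{ik}$. For a pair $i<k$ (in the cyclic order), pick any index $j$ with $(x_i,x_j,x_k)$ positive — such $j$ exists as long as there is a point of the tuple strictly between $x_i$ and $x_k$; when there is no such point one takes instead any auxiliary point of $S^1$ lying strictly between $x_i$ and $x_k$ and uses its $\xi$-image, or more precisely uses a point in the interior of $\Omega_{\xi(x_i)}\cap\Omega_{\xi(x_k)}$ arising from positivity. Set $V_{ik}:=V_{\xi(x_j)}(\xi(x_i),\xi(x_k))$, the connected component of $\Omega_{\xi(x_i)}\cap\Omega_{\xi(x_k)}$ containing $\xi(x_j)$, using Proposition~\ref{prop: diamonds are intersections of simultaneously transverse flags}. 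The key point to check is that this is well-defined, i.e. independent of the auxiliary choice of $j$: if $(x_i,x_j,x_k)$ and $(x_i,x_{j'},x_k)$ are both positive, then $\xi(x_j)$ and $\xi(x_{j'})$ lie in the same connected component. This should follow by transitivity of the $G$-action on transverse pairs: after moving $(\xi(x_i),\xi(x_k))$ to the standard pair $(F^o,E^o)$, both $\xi(x_j)$ and $\xi(x_{j'})$ lie in the set $\{uE^o:u\in U^{>0}\}$, which is connected (it is the continuous image of the connected set $\prod(c^\circ_{\alpha_{i_k}})_{\geq 0}$ under $\Phi^+_{\omega_0}$, or by Lusztig one knows it is a cell).

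Next, property~(1): if $(x_i,x_j,x_k)$ is positive then by the very construction $\xi(x_j)\in V_{ik}$, since we may take $j$ itself as the auxiliary index (well-definedness having been established). For property~(2): suppose $(x_k,x_i,x_j,x_m)$ is positive. We must show $V_{ij}\subset V_{km}$. Again normalize so that $\xi(x_k)=F^o$, $\xi(x_m)=E^o$; then $V_{km}=\{uE^o:u\in U^{>0}\}$. From positivity of the $4$-tuple and positivity of $\xi$ we get that $(F^o,\xi(x_i),E^o)$ and $(F^o,\xi(x_j),E^o)$ are positive triples with $\xi(x_i),\xi(x_j)\in V_{km}$, and moreover the finer positivity relation $(x_k,x_i,x_j,x_m)$ translates, via the semigroup structure of $U^{>0}$ (the product of two positive unipotents is positive, Theorem~4.5 of \cite{GuichardWienhard18}), into $\xi(x_j)=\xi(x_i)\cdot u'$ for a suitable $u'$ — concretely the element of $U^{>0}$ realizing $(\xi(x_i),\xi(x_j))$ inside the chart at $F^o,E^o$. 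Then for any $y\in V_{ij}$, writing $y$ relative to the pair $(\xi(x_i),\xi(x_j))$ via an element of $U^{>0}$ and composing, one expresses $y$ as $uE^o$ with $u\in U^{>0}$, hence $y\in V_{km}$; connectedness of diamonds guarantees this containment is of the whole component $V_{ij}$.

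\textbf{The main obstacle.} The delicate point is the compatibility of the several normalizations and the precise bookkeeping that turns the combinatorial positivity of a $4$-tuple in $S^1$ into an \emph{inclusion} of subsemigroup-orbits — i.e. verifying that the "positive coordinates" on the diamond $V_{ij}$ really do sit inside the positive coordinates on $V_{km}$. This is exactly where one needs that $U^{>0}$ is a semigroup and that positive triples compose correctly; concretely it amounts to a statement of the form $U^{>0}\cdot U^{>0}\subset U^{>0}$ combined with the transitivity properties of Proposition~\ref{FLAGS:prop:action} (generalized to $F_\Theta$). I would isolate this as a lemma — "if $(F,T,E)$ and $(F,T',T)$ are positive then $(F,T',E)$ is positive, and $V(F,E)\supset V(F,T)$" — prove it in the standard-flag normalization by a direct semigroup computation, and then feed it into the cyclic-order argument above. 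Everything else is a matter of carefully translating the cyclic order on $S^1$ into the partial order on nested diamonds.
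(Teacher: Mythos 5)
The workshop notes do not reproduce a proof of this statement --- they cite it directly from GLW21 and move on --- so there is no internal proof to compare against. Judging your proposal on its own: the strategy is the natural one, and your treatment of transversality, the well-definedness of $V_{ik}$, and part~(1) is essentially correct. But part~(2) has a real gap that you half-diagnose and then wave past, and it is worth spelling out why the ``direct semigroup computation'' you defer is not a formality.

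After normalising $\xi(x_k)=F^o$, $\xi(x_m)=E^o$, you write $\xi(x_i)=u_iE^o$, $\xi(x_j)=u_jE^o$ with $u_i,u_j\in U^{>0}$ and then assert that the positivity of the $4$-tuple, ``via the semigroup structure,'' yields $u_j=u_iu'$ with $u'\in U^{>0}$. This factorisation is exactly the content of a positivity-of-quadruples result, and it does not follow merely from the hypothesis that $\xi$ carries positive \emph{triples} to positive triples: knowing $u_i,u_j\in U^{>0}$ separately says nothing about $u_i^{-1}u_j$. One must first show that the four triples available from $(x_k,x_i,x_j,x_m)$ map to a genuinely positive $4$-tuple in the sense of Guichard--Wienhard (where the factorisation is built into the definition), and that step is a theorem, not an observation. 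Beyond that, even granting $u_j=u_iu'$, the parametrisation of $V_{ij}$ by $U^{>0}$ lives in the chart at the \emph{pair} $(\xi(x_i),\xi(x_j))$: one needs $g\in G$ with $gF^o=u_iE^o$ and $gE^o=u_ju'E^o$, and then $V_{ij}=\{gwE^o:w\in U^{>0}\}$. Relating such a $g$ to $u_i$ involves conjugation by a representative of $w_0$ (which swaps $P_\Theta$ and $P_\Theta^{\mathrm{opp}}$), and the resulting element $gw$ is not of the form $u_iw$ --- so ``composing'' is not just $U^{>0}\cdot U^{>0}\subset U^{>0}$. The nesting lemma you propose is the right intermediate statement to isolate, but its proof is precisely where the Weyl-element bookkeeping and the quadruple-positivity input enter; as written your proposal does not supply them.

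One smaller point: in the definition of $V_{ik}$ when $x_i$ and $x_k$ are cyclically adjacent in the tuple you punt to ``a point arising from positivity.'' That can be made precise (take any $y\in S^1$ strictly between them, form the positive triple $(x_i,y,x_k)$, and use $\xi(y)$ --- the hypothesis on $\xi$ applies to arbitrary positive triples in $S^1$, not only those supported on the tuple), and since you already observe that the $U^{>0}$-orbit is connected, well-definedness follows. But state it rather than gesture at it, since the diamond on the wrong side of $\Omega_{\xi(x_i)}\cap\Omega_{\xi(x_k)}$ fails the conclusion.
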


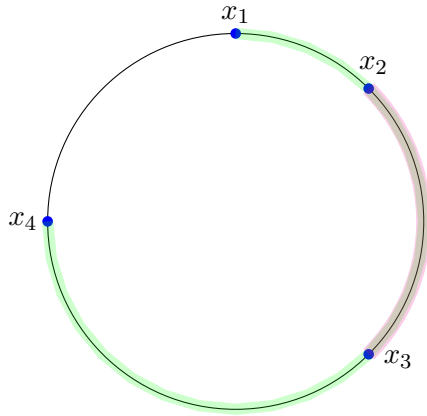
\begin{figure}[H]
    \centering
    \begin{tikzpicture}
    \coordinate (center) at (0,0);
  \def\radius{2.5cm}
  \draw (center) circle[radius=\radius];
  \fill[blue] (center) ++(90:\radius) circle[radius=2pt];
  \node[above] at (0,2.5) {$x_1$};
    \fill[blue] (center) ++(45:\radius) circle[radius=2pt];
  \node[above] at (1.3*1.41,1.3*1.41) {$x_2$};
    \fill[blue] (center) ++(180:\radius) circle[radius=2pt];
  \node[left] at (-2.5,0) {$x_4$};
    \fill[blue] (center) ++(315:\radius) circle[radius=2pt];
  \node[right] at (1.3*1.41,-1.3*1.41) {$x_3$};
  \draw[line width = 1.5mm, opacity = 0.2, green,domain=90:-180] plot ({2.5*cos(\x)}, {2.5*sin(\x)});
  \draw[line width = 2mm, opacity = 0.2, magenta,domain=45:-45] plot ({2.5*cos(\x)}, {2.5*sin(\x)});
        \end{tikzpicture}
        \caption{The circle with the clockwise orientation for positivity, and diamonds defined by $x_1,x_4$ in green, and $x_2,x_3$ in magenta.}
        \label{fig:prop positivity for diamonds}
\end{figure}
In figure \ref{fig:prop positivity for diamonds}, we think of assigning to the green and magenta arcs diamonds in the flag variety. Assertion \ref{condition: diamonds and positivity condition two} in proposition \ref{prop: relationship between diamonds and positivity} says that the containment of diamonds in the flag variety is exactly the containment of the arcs (diamonds) on the circle.

As we will see later on, we will need metrics on (subsets) of the flag varieties. For the rest of the section, we describe how to put metrics on diamonds, and state certain contraction properties.
\section{Metrics on diamonds}
We start with a particular class of positive triples.

\begin{definition}
Let $H<G$ be a subgroup isomorphic to $\PSL(2,\R)$. A \emph{positive circle} is a closed $H$-orbit in $F_\Theta$ parameterized by a $\PSL(2,\R)$-equivariant map
\[
    \mathbb{RP}^1\to F_\Theta.
\]
\end{definition}
Here we think of $\mathbb{RP}^1\cong S^1$ as the flag variety of $\PSL(2,\R)$ (which can also be identified with $\del_\infty\mathbb{H}^2$).
\begin{example}
Let $G=\PSL(d,\R)$, then the flag variety $F_\Theta$ can be identified with $G/P$, where $P$ is the group of upper triangular matrices. Thus the flag variety consists of honest full flags. We recall that a representation $\rho\colon\pi_1(\Sigma)\to G$ is Fuchsian (and in particular Hitchin) if it factors through the irreducible representation $\iota\colon\PSL(2,\R)\to\PSL(d,\R)$. We then obtain a $\rho$-equivariant continuous boundary map $\xi\colon \del \pi_1(\Sigma)\to F_\Theta$. Then setting $H=\iota(\PSL(2,\R))$, we have that the image of the boundary map $\xi$ is a positive circle.
\end{example}

Now we define a \emph{tripod} to be a triple of pairwise distinct points lying on a positive circle. Even though the definition of a positive circle seems quite restrictive, and there is no mention of positivity, we have the following.

\begin{proposition}[{\cite[Proposition 2.9, Proposition 2.10]{GLW21}}]\label{prop: two points are in a positive circle and positive circles are positive}
\begin{enumerate}
    \item Given two transverse points $a,b\in F_\Theta$, there exists a positive circle passing through $a$ and $b$.
    \item The arc on a positive circle from $a$ to $b$ is completely contained in a diamond with extremities $a$ and $b$.
    \item The equivariant map associated to the positive circle is positive, i.e.\ it sends positive triples to positive triples.
\end{enumerate} 
\end{proposition}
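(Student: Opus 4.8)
The statement to be proved, Proposition 2.9--2.10 of \cite{GLW21}, asserts three facts about positive circles: existence through any pair of transverse flags, confinement of the connecting arc in a diamond, and positivity of the equivariant parametrisation. The plan is to prove these in the order (1), (3), (2), since (2) will follow most naturally once we know the parametrising map is positive.

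For part (1): given transverse flags $a,b \in F_\Theta$, transitivity of $G$ on pairs of transverse flags lets us assume $a = F^o$, $b = E^o$, the two standard flags. Then I would exhibit a concrete copy of $\PSL(2,\R)$ inside $G$ whose Lie algebra contains a generator of the positive cone $c^\circ_{\alpha_i} \subset \mfu_{\alpha_i}$ for each $\alpha_i \in \Delta_\Theta$ together with the opposite generators: concretely, one should use the \emph{magical $\sl_2$-triple} associated to the $\Theta$-positive structure (these are the $\sl_2$-triples studied in \cite{BradlowCollierGarciaPradaGothen} that appear in the classification), which is built so that its upper-triangular nilpotent lies in $\mfu_\Theta$ and its Cartan element lies in $\mfz_\Theta$. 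Exponentiating this triple produces a subgroup $H \cong \PSL(2,\R)$ stabilising the pair $(F^o,E^o)$ in the sense that $F^o$ and $E^o$ are the images of $[1:0]$ and $[0:1]$ under the $H$-equivariant orbit map $\mathbb{RP}^1 \to F_\Theta$. One must check this orbit map is an embedding (injectivity follows since distinct points of $\mathbb{RP}^1$ are sent to mutually transverse flags, because the image of a hyperbolic element of $H$ has an attracting and a repelling flag which are transverse) and that its image is closed (it is compact, being the continuous image of $\mathbb{RP}^1$). This gives the positive circle through $a$ and $b$.

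For part (3): the equivariant map $\phi \colon \mathbb{RP}^1 \to F_\Theta$ associated to $H$ should send the standard positive triple $([1:0],[t:1],[0:1])$ with $t > 0$ to $(F^o, u_t E^o, E^o)$ where $u_t = \exp$ of a positive multiple of the chosen cone generator, hence $u_t \in \Utheta$; this is exactly the defining condition for $(F^o,\phi([t:1]),E^o)$ to be a positive triple. Since $H$ acts transitively on positively-oriented triples of $\mathbb{RP}^1$ and $\phi$ is $H$-equivariant, every positively-oriented triple of $\mathbb{RP}^1$ maps to a positive triple of $F_\Theta$. The one point requiring care is matching the orientation of $\mathbb{RP}^1$ with the semigroup $\Utheta$ (as opposed to its inverse), i.e.\ verifying that the one-parameter subgroup $\exp(t \cdot \text{generator})$ for $t>0$ really does trace the positively oriented arc from $[1:0]$ to $[0:1]$ rather than its complement; this is a sign bookkeeping matter settled by examining the $H$-action on $\mathbb{RP}^1$ explicitly.

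For part (2): once $\phi$ is known to be positive, the arc from $a$ to $b$ on the positive circle is $\{\phi([t:1]) : t > 0\} = \{u_t E^o : u_t \in \Utheta\}$ after normalising $a = F^o$, $b = E^o$, which is literally a diamond with extremities $F^o$ and $E^o$ by definition. For a general pair we transport by the group element realising the normalisation. By Proposition \ref{prop: diamonds are intersections of simultaneously transverse flags} this diamond is a connected component of $\Omega_a \cap \Omega_b$, so the confinement statement is immediate. The main obstacle in the whole argument is part (1): producing the right $\PSL(2,\R)$ and verifying that the resulting orbit map is genuinely an embedding with closed image whose endpoints are the prescribed transverse flags --- this is where the structure theory of the $\Theta$-positive Levi and the magical $\sl_2$-triple does the real work, and it is the step I would expect to expand most carefully, citing \cite{BradlowCollierGarciaPradaGothen} and the computations of Chapter \ref{chap4} for the cone generators.
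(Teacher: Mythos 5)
The paper does not prove this proposition; it cites it directly to \cite[Propositions 2.9, 2.10]{GLW21}, so there is no in-text proof to compare against. Evaluating your proposal on its own, the overall strategy — produce a copy of $\PSL(2,\R)$ in $G$ via a magical $\sl_2$-triple whose nilpositive lies in $\mfu_\Theta$ and whose Cartan element lies in $\mfz_\Theta$, then use $H$-equivariance and transitivity of $\PSL(2,\R)$ on positively-oriented triples of $\mathbb{RP}^1$ — is indeed the right plan, and matches the structure of the argument in \cite{GLW21}.

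However, there is a genuine gap in parts (2) and (3) where you write $u_t=\exp(tX)\in\Utheta$ ``hence.'' The nilpositive $X$ of the magical $\sl_2$-triple is a \emph{sum} of generators of the cones $c_{\alpha_i}^\circ$ across the distinct (and generally non-commuting) slots $\mfu_{\alpha_i}$, while $\Utheta$ is defined as the image of the reduced-word product map $\Phi^+_{\omega_0}(a_1,\dots,a_p)=x_{\alpha_{i_1}}(a_1)\cdots x_{\alpha_{i_p}}(a_p)$. That the one-parameter subgroup $\exp(tX)$ for $t>0$ factors through such a product with all arguments in the respective open cones is precisely the non-trivial content of the proposition — it is the $\Theta$-positive analogue of Lusztig's theorem that the exponential of the principal nilpotent lies in the totally positive semigroup, and it requires a braid-relation/factorization argument (indeed this is where the $B_2$ and $G_2$ computations of the type in Chapter~\ref{chap4} enter). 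You cannot take it for granted, and your ``hence'' papers over exactly the step that does the real work. Relatedly, in part (2) the assertion that the arc $\{\phi([t:1]):t>0\}$ \emph{is} ``literally a diamond'' is false whenever $\dim\mfu_\Theta>1$: the arc is one-dimensional while the diamond $\{uE^o:u\in\Utheta\}$ has the dimension of $\mfu_\Theta$. The correct statement (containment) would follow once (3) is established, but it cannot be obtained by the equality you propose.
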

The first part of the proposition implies that given any diamond $V_y(x,z)\subset F_\Theta$, there is a positive circle going through $x$ and $z$. However, it is important to note that there might not be a positive circle passing through $x,y,$ and $z$. Moreover, the third part gives that a tripod is positive.

To put a metric on a diamond, we look for a way to parameterize them. Recall that from \cite[Theorem 4.5]{GuichardWienhard18}, the positive subsemigroup $U^{>0}$ can be parameterized by an open cone $C\subset \mfg^N$
(where $N$ depends on $\Theta$). That is, there is a diffeomorphism
\[
    F\colon C\to U^{>0}.
\]
Let $(x,y,z)$ be a tripod and choose some positive circle passing through these points as well as an isomorphism $\sigma\colon \to U_z$, where $U_z$ is the unipotent radical of the stabilizer of $z$ in $G$. Then the map
\begin{align*}
    C&\to F_\Theta\\
    u&\mapsto \sigma\circ F(u).x
\end{align*}
parametrizes the diamond $V_y(x,z)$. There is a special way of choosing the isomorphism $\sigma$ such that $h\mapsto y$ under the above parametrization, and where $h\in C$ is a preferred unipotent associated to the positive circle (see section 4.1 of \cite{GLW21} for details).

Pulling back the Euclidean metric on the cone $C$, we get a complete Riemannian metric $g_{(x,y,z)}$ on the diamond $V_y(x,z)$.

\begin{warning}
Assume we have two tripods $(x,y,z)$ and $(x,y',z)$ so that the diamonds $V_y(x,z)$ and $V_{y'}(x,z)$ agree and $y$ and $y'$ lie on the same positive circle through $x$ and $z$. If $y\neq y'$, then the metrics $g_{(x,y,z)}$ and $g_{(x,y',z)}$ need not be the same.
\end{warning}
Given an arbitrary positive triple $(x,y,z)$ (not necessarily a tripod), it is still possible to put a Riemannian metric $g_{(x,y,z)}$ on the diamond $V_{y}(x,z)$. The metric is defined by taking the average of the metrics coming from tripods that are "as close as possible" to the triple $(x,y,z)$. We stress again that the metric depends on the triple $(x,y,z)$ as opposed to only on the diamond defined by the triple.\\

The next proposition is about how metrics for diamonds contained in each other are related.
\begin{proposition}[{\cite[Proposition 4.11]{GLW21}}]\label{prop: containment in diamonds means metrics expand}
Let $(x_m,y_m,z_m)$ be a sequence of positive triples such that
\begin{itemize}
    \item $V_{y_m}(x_m,z_m)\subset V_{y_{m+1}(x_{m+1},z_{m+1}})$, and 
    \item $\bigcap_{m\in\N}V_{y_m}(x_m,z_m)=\{\bullet\}$.
\end{itemize}
Then 
\[
    g_{(x_0,y_0,z_0)}\leq k_m g_{(x_m,y_m,z_m)},
\]
with $k_m\to 0$ as $m\to\infty$.
\end{proposition}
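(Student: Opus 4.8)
\textbf{Proof plan for Proposition \ref{prop: containment in diamonds means metrics expand}.}

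The plan is to reduce everything to a statement about the parametrizing cone $C \subset \mfg^N$ and the behaviour of the preferred unipotents, and then to use a compactness/normalization argument. First I would set up notation carefully: for each $m$, pick a tripod-approximation data $(\sigma_m, F, h_m)$ realizing the metric $g_{(x_m,y_m,z_m)}$ as the pullback (or average of pullbacks) of the Euclidean metric on $C$ under the parametrization $u \mapsto \sigma_m \circ F(u) \cdot x_m$ of $V_{y_m}(x_m,z_m)$. Since $V_{y_m}(x_m,z_m) \subset V_{y_{m+1}}(x_{m+1},z_{m+1})$, the two parametrizing maps differ by precomposition with a map $\Psi_m$ between (sub)domains of $C$ induced by the inclusion of diamonds; concretely $\Psi_m$ is conjugation by an element $g_m \in G$ that carries the outer standard data to the inner one, expressed in the cone coordinates via $F$. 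So the heart of the matter is to control the operator norm of $d\Psi_m$ on the region of $C$ corresponding to $V_{y_0}(x_0,z_0)$.

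Next I would exploit the hypothesis $\bigcap_m V_{y_m}(x_m,z_m) = \{\bullet\}$. In the cone coordinates this says that the preimages of the point $\bullet$ under the successive parametrizations are nested and that the whole diamond $V_{y_0}(x_0,z_0)$, pushed into the coordinates of $V_{y_m}(x_m,z_m)$, shrinks to a single point of $C$ as $m \to \infty$. Because the parametrization sends the preferred unipotent $h_m$ to $y_m$ and the metric is built by averaging over the tripods closest to $(x_m, y_m, z_m)$, the relevant comparison becomes: the image of a fixed-radius ball (in $g_{(x_0,y_0,z_0)}$) inside $V_{y_m}(x_m,z_m)$ is contained in an ever-smaller neighbourhood of $\bullet$, on which the inverse parametrization is increasingly ``zoomed in'', hence increasingly contracting; dually, $g_{(x_0,y_0,z_0)} = \Psi_m^\ast g_{(x_m,y_m,z_m)}$ up to the averaging, and $\|d\Psi_m\| \le k_m$ with $k_m \to 0$ precisely because a fixed Euclidean ball in the outer coordinates maps into a vanishingly small Euclidean ball in the inner coordinates. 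To make this rigorous I would argue by contradiction: if $k_m \not\to 0$, extract a subsequence along which unit tangent vectors at $\bullet$ in $g_{(x_0,y_0,z_0)}$ have $g_{(x_m,y_m,z_m)}$-norm bounded below; using properness of the $G$-action on transverse triples and compactness of the space of tripods ``closest'' to a given triple, pass to a limit of the data $(x_m,y_m,z_m)$ (after acting by suitable $g_m \in G$ to renormalize), and derive that the nested intersection would then contain an open set, contradicting $\bigcap_m V_{y_m}(x_m,z_m) = \{\bullet\}$.

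The main obstacle, I expect, is the dependence of the metric on the \emph{triple} rather than just on the diamond (as flagged in the Warning preceding the statement): one cannot simply compare intrinsic metrics of the diamonds, but must track the averaging procedure over nearby tripods and show it behaves well under the inclusions. Handling this requires knowing that the set of tripods within a bounded ``distance'' of $(x_m,y_m,z_m)$ varies continuously and with uniformly bounded geometry as $m$ varies, which in turn rests on the facts from Proposition \ref{prop: two points are in a positive circle and positive circles are positive} (existence and positivity of positive circles through two transverse flags) together with properness of the relevant $G$-actions. Once that uniformity is in hand, the contraction estimate follows from the ``zooming in'' picture described above, and the constants $k_m$ can be taken, e.g., comparable to the $g_{(x_m,y_m,z_m)}$-diameter of the image of $V_{y_0}(x_0,z_0)$, which tends to $0$ by the vanishing-intersection hypothesis.
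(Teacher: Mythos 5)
The paper does not actually give a proof of this proposition: it explicitly writes ``We do not reproduce the proof here,'' refers to \cite[Proposition 4.11]{GLW21}, and offers only a one-sentence intuition (that the metrics are Euclidean pullbacks from the cone $C$, so containment of diamonds forces a metric comparison). Your plan is consistent with and elaborates that intuition, but there is no proof in this paper against which I can check the details of your normalization/compactness argument or your treatment of the tripod averaging.

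One concrete internal issue you should sort out before trying to make this rigorous: the hypotheses as stated --- $V_{y_m}(x_m,z_m) \subset V_{y_{m+1}}(x_{m+1},z_{m+1})$ together with $\bigcap_m V_{y_m}(x_m,z_m) = \{\bullet\}$ --- are incompatible for a nondegenerate diamond $V_{y_0}(x_0,z_0)$, because a nested \emph{increasing} family intersects in $V_{y_0}$, not in a single point. The way the proposition is applied in the proof sketch of Theorem~\ref{thm: positive reps are Anosov} (there $\mathcal{Y}_{(x,y_{-s},z)} \subset \mathcal{Y}_{(x,y,z)}$ and $\mathcal{Y}_{(x,y_{-s},z)} \to \{\xi(z)\}$) shows the intended indexing is the opposite: the outer triple is $m=0$ and the diamonds \emph{decrease} to $\{\bullet\}$. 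Your narrative --- ``the whole diamond $V_{y_0}(x_0,z_0)$, pushed into the coordinates of $V_{y_m}(x_m,z_m)$, shrinks to a single point'' --- implicitly assumes $V_{y_m}$ is getting large relative to $V_{y_0}$, which again does not square with the vanishing intersection. If you first fix the containment direction ($V_{y_{m+1}} \subset V_{y_m}$) and work with the transition map $\phi_0^{-1}\circ\phi_m\colon C \to \phi_0^{-1}(V_{y_m}(x_m,z_m)) \subset C$ whose image collapses to a point of $C$, the contraction estimate $g_0 \le k_m g_m$ falls out more cleanly and your renormalization-by-$G$ contradiction argument has a chance of working. You are also right that the real technical obstruction is the averaging over nearby tripods, and that this is where one needs uniform geometry and a properness input; that part would need to be spelled out, and I cannot check it here since the paper's proof is absent.
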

We do not reproduce the proof here, but remark that the vague intuition behind this statement is as follows. The metric on diamonds is defined by the pullback to the cone $C$. Thus, if a diamond $V$ is contained in another diamond $W$, the metric in $W$ has to be larger.

\section{Basics of Anosov representations}\label{section:anosov representations definition for GLW21}
In this section, we give a definition of Anosov representations which will be useful for us, but do not give the full details.

Begin by choosing an auxiliary hyperbolic metric on $\Sigma$ and with this there is an identification
\[
    T^1\mathbb{H}^2/\pi_1(\Sigma)\cong T^1\Sigma,
\]
where $T^1\mathbb{H}^2$ is the unit tangent bundle of the hyperbolic plane, and $T^1\Sigma$ is the unit tangent bundle to $\Sigma$. Then we identify $T^1\mathbb{H}^2$ with the set of positive triples in $\del \mathbb{H}^2$ as follows. Positivity in $\del \mathbb{H}^2$ is inherited from the identification with the circle $S^1$. Let $(x,y,z)$ be a positive triple and consider the oriented geodesic $c$ from $x$ to $z$ in $\mathbb{H}^2$. Then there is a unique geodesic with $y$ as an endpoint intersecting $c$ orthogonally. Thus $y$ determines a point on the geodesic, and with that, a vector in the unit tangent bundle. Note that the triple $(z,y,x)$ is also positive and gives the negative of the vector determined by $(x,y,z)$. Moreover, the geodesic flow will be denoted by $(x,y,z)\mapsto (x,y_s,z)$, where $s$ is the time parameter. See figure \ref{fig:parametrization of UH with positive triples}.

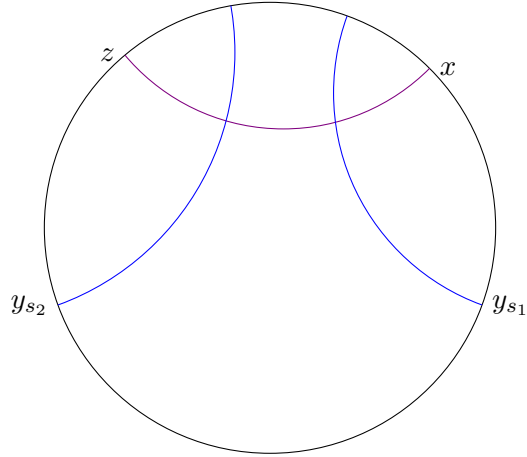
\begin{figure}[H]
    \centering
\begin{tikzpicture}[scale=3]
\node[left] at ({cos(130)},{sin(130)}) {$z$};
\node[right] at ({cos(45)},{sin(45)}) {$x$};
\node[right] at ({cos(-20)},{sin(-20)}) {$y_{s_1}$};
\node[left] at ({cos(200)},{sin(200)}) {$y_{s_2}$};
\draw (0,0) circle (1);
\clip (0,0) circle (1);
\hgline{130}{45}{violet};
\hgline{70}{-20}{blue};
\hgline{100}{200}{blue};
\end{tikzpicture}
\caption{Parametrization of unit tangent bundle with positive triples and the clockwise orientation on the circle. Here the violet geodesic with $x$ as the forward endpoint, and $z$ as the backward endpoint. The blue geodesics meet it orthogonally to determine a vector in $T^1\mathbb{H}^2$. In the picture, $s_2<s_1$.}
\label{fig:parametrization of UH with positive triples}
\end{figure}

The following is an equivalent definition of Anosov representations given by Guichard and Wienhard in \cite[Proposition 2.7]{GW12:AnosovDiscontinuity}.

\begin{definition}
A representation $\rho\colon \pi_1(\Sigma)\to G$ is $\Theta$-Anosov if the following conditions hold.
\begin{enumerate}
    \item \label{def:condition 1 of Anosov representations, equivariant continuous map}There exists a continuous $\rho$-equivariant map $\xi\colon \del\pi_1(T^1\Sigma)\to F_\Theta $ sending distinct points $x,y\in\del\pi_1(T^1\Sigma)$ to transverse points in $F_\Theta$.
    \item\label{def: condition 2 of Anosov representation, metrics contracting} There exists a continuous $\pi_1(\Sigma)$-equivariant family of norms on 
    \[
        \left\{(T_{\xi(z)}F_\Theta, (x,y,z)):(x,y,z)\in T^1\mathbb{H}^2\right\}
    \]
    satisfying the following. There exist positive constants $A$ and $a$ such that for all $t>0$, and $(x,y,z)\in T^1\mathbb{H}^2$ 
    \[
        \norm{\cdot}_{(x,y,z)}\leq Ae^{-at}\norm{\cdot}_{(x,y_{-s},z)},\quad \textnormal{for all } s\geq 0.
    \]
\end{enumerate}
\end{definition}
To be precise, the definition should include a second boundary map to the flag variety $F/P_\Theta^{opp}$, but we only work with one boundary map. Moreover, the continuity for the family of norms in condition \ref{def: condition 2 of Anosov representation, metrics contracting} is with respect to taking the trivial bundle $T^1\mathbb{H}^2\times F_\Theta$.

\section{Proof sketch of Theorem \ref{thm: positive reps are Anosov}}
Part \ref{def:condition 1 of Anosov representations, equivariant continuous map} of the definition of Anosov representations is immediate from the definition of positive representations. For part \ref{def: condition 2 of Anosov representation, metrics contracting}, let $\xi\colon\del\mathbb{H}^2\to F_\Theta$ be the boundary map associated to the positive representation. The goal is to define diamonds with $\xi(z)$ in its interior, so that we can assign the norm on $T_{\xi(z)}F_\Theta$ coming from the Riemannian metric on the diamond.

We begin by assigning to $(x,y,z)\in T^1\mathbb{H}^2$ a fourth element $w(x,y,z)\in\del\mathbb{H}^2$ satisfying
\[
    \mathrm{cr}_{\mathbb{RP}^1}(x,y,z,w(x,y,z))=-1,
\]
where $\mathrm{cr}_{\mathbb{RP}^1}$ is a cross ratio on $\mathbb{RP}^1\cong\del\mathbb{H}^2$. In the literature, such a quadruple is said to be harmonic. One can easily check that the quadruple $(x,y,z,w(x,y,z))$ is a positive quadruple, see figure \ref{fig: harmonic quadruples on the circle}. For example, taking the usual cross ratio on $\mathbb{RP}^1$, the quadruple $(0,1,\infty,-1)$ (in coordinates defined by normalizing the first coordinate of the line) is harmonic. We then assign to the triple $(x,y,z)$ the diamond
\[
    \mathcal{Y}_{(x,y,z)}\coloneqq V_{\xi(z)}(\xi(y),\xi(w(x,y,z)))\subset F_\Theta.
\]
The point $\xi(z)$ lies in the interior of the above diamond. This means that we can denote by $\norm{\cdot}_{(x,y,z)}$ the norm on $T_{\xi(z)}F_\Theta$ coming from the Riemannian metric $g_{(\xi(y),\xi(z),\xi(w(x,y,z))}$. To see that the metrics are expanding along the geodesic flow, we use proposition \ref{prop: containment in diamonds means metrics expand}. The conditions that need to be satisfied are
\begin{enumerate}
    \item \label{condition to satisfy containment} $\mathcal{Y}_{(x,y_{-s},z)}\subset \mathcal{Y}_{(x,y,z)}$ for all $s\geq 0$, and
    \item \label{condition to satisfy limit to a point} $\mathcal{Y}_{(x,y_{-s},z)}\rightarrow \{\xi(z)\}$ as $s\to +\infty$.
\end{enumerate}
For condition \ref{condition to satisfy containment}, one can check with a small computation that if $(x,y_1,y_0,z)$ is a positive quadruple, then $(x,y_1,y_0,z,w(x,y_0,z),w(x,y_1,z))$ is a positive tuple as well, see figure \ref{fig: harmonic quadruples on the circle}. Since the boundary map is positive, it follows from proposition \ref{prop: relationship between diamonds and positivity} that indeed $\mathcal{Y}_{(x,y_{-s},z)}\subset \mathcal{Y}_{(x,y,z)}$ for all $s\geq 0$.

\begin{figure}[H]
    \centering
    \begin{tikzpicture}
    \coordinate (center) at (0,0);
  \def\radius{2.5cm}
  \draw (center) circle[radius=\radius];
  \fill[blue] (center) ++(90:\radius) circle[radius=2pt];
  \node[above] at (0,2.5) {$x$};
    \fill[blue] (center) ++(45:\radius) circle[radius=2pt];
  \node[above] at (1.3*1.41,1.3*1.41) {$y_1$};
    \fill[blue] (center) ++(180:\radius) circle[radius=2pt];
  \node[left] at (-2.5,0) {$z$};
    \fill[blue] (center) ++(315:\radius) circle[radius=2pt];
  \node[right] at (1.3*1.41,-1.3*1.41) {$y_0$};
      \fill[blue] (center) ++(150:\radius) circle[radius=2pt];
    \node[left] at ({2.5*cos(150)},{2.5*sin(150)} ) {$w(x,y_0,z)$};
    \fill[blue] (center) ++(120:\radius) circle[radius=2pt];
    \node[left] at ({2.5*cos(120)},{2.5*sin(120)} ) {$w(x,y_1,z)$};
        \end{tikzpicture}
        \caption{Harmonic quadruples on $S^1$.}
        \label{fig: harmonic quadruples on the circle}
\end{figure}
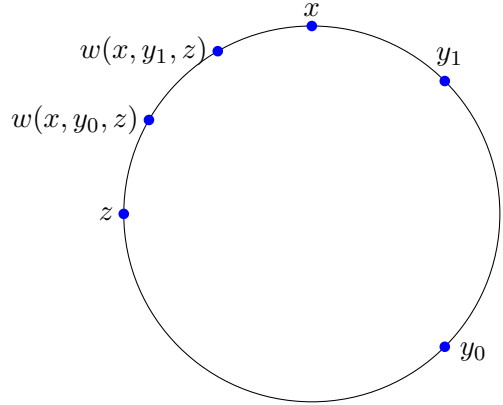

For condition \ref{condition to satisfy limit to a point}, observe (from another calculation) that \[
    \lim_{y\to z} w(x,y,z) = z.
\]
Then using the continuity of the boundary  map, and that $y_{-s}\to z$ as $s\to\infty$, we get condition \ref{condition to satisfy limit to a point}. Now we can apply proposition \ref{prop: containment in diamonds means metrics expand} to get that 
\[
    \norm{\cdot}_{(x,y,z)}\leq k_s\norm{\cdot}_{(x,y_{-s},z)}
\]
with $k_s\to 0$ as $s\to\infty$. One can then finish the proof by using a compactness argument to get the constants $A$ and $a$ in the definition of Anosov representations.

\newpage

\thispagestyle{empty}

\chapter[Exercises]{Exercises\\ {\Large\textnormal{\textit{by Merik Niemeyer}}}}
\addtocontents{toc}{\quad\quad\quad \textit{Merik Niemeyer}\par}

Our goal in this exercise is to find $\Theta$-positive structures on both $\SO(2,3)$ and $\Sp(4,\mathbb{R})$.

\section{The indefinite special orthogonal group}

Let us first introduce the setting. A lot of this you have seen already in the talk on $\SO(p,q)$.

Define a quadratic form on $\mathbb{R}^5$ by $x\mapsto \tran{x}Qx$ with
\begin{equation*}
	Q=
	\left(\begin{smallmatrix}
		0 & 0 & 0 & 0 & -1\\
		0 & 0 & 0 & 1 & 0\\
		0 & 0 & -1 & 0 & 0\\
		0 & 1 & 0 & 0 & 0\\
		-1 & 0 & 0 & 0 & 0
	\end{smallmatrix}\right).
\end{equation*}
Then $G:=\SO(Q)=\{X\in\mathrm{SL}(5,\mathbb{R}): \tran{X}QX=Q\}\cong \SO(2,3)$.
Recall that matrices in the Lie algebra $\mathfrak{g}=\mathfrak{so}(2,3)$ are of the form
\begin{equation*}
	\left(\begin{array}{cc|c|cc}
		a_{11} & a_{12} & v & b & 0\\
		a_{21} & a_{22} & w & 0 & b\\
		\hline
		r & s & 0 & w & -v\\
		\hline
		c & 0 & s & -a_{22} & a_{12}\\
		0 & c & -r & a_{21} & -a_{11}
	\end{array}\right).
\end{equation*}
with real entries.
You have also seen already that a Cartan subspace is given by
\begin{equation*}
	\mathfrak{h}=\{\mathrm{diag}(\lambda,\mu,0,-\mu,-\lambda) : \lambda,\mu\in\mathbb{R}\}=\mathbb{R}\underbrace{\mathrm{diag}(1,0,0,0,-1)}_{=:h_1}\oplus\;\mathbb{R}\underbrace{\mathrm{diag}(0,1,0,-1,0)}_{:=h_2},
\end{equation*}
i.e.\ the diagonal matrices in $\mathfrak{g}$. Define the matrices
\begin{align*}
	e_1&:=\left(\begin{array}{cc|c|cc}
		0 & 1 & 0 & 0 & 0\\
		0 & 0 & 0 & 0 & 0\\
		\hline
		0 & 0 & 0 & 0 & 0\\
		\hline
		0 & 0 & 0 & 0 & 1\\
		0 & 0 & 0 & 0 & 0\\
	\end{array}\right), \hspace{5.5mm}
	e_2:=\left(\begin{array}{cc|c|cc}
		0 & 0 & 0 & 0 & 0\\
		0 & 0 & 1 & 0 & 0\\
		\hline
		0 & 0 & 0 & 1 & 0\\
		\hline
		0 & 0 & 0 & 0 & 0\\
		0 & 0 & 0 & 0 & 0\\
	\end{array}\right),\\
	e_3&:=\left(\begin{array}{cc|c|cc}
		0 & 0 & 1 & 0 & 0\\
		0 & 0 & 0 & 0 & 0\\
		\hline
		0 & 0 & 0 & 0 & -1\\
		\hline
		0 & 0 & 0 & 0 & 0\\
		0 & 0 & 0 & 0 & 0\\
	\end{array}\right), \hspace{3mm}
	e_4:=\left(\begin{array}{cc|c|cc}
		0 & 0 & 0 & 1 & 0\\
		0 & 0 & 0 & 0 & 1\\
		\hline
		0 & 0 & 0 & 0 & 0\\
		\hline
		0 & 0 & 0 & 0 & 0\\
		0 & 0 & 0 & 0 & 0\\
	\end{array}\right),
\end{align*}
and $f_i:=\tran{e_i}$. Together with $h_1$ and $h_2$ these span the Lie algebra.
\begin{remark}
	Note that $\mathfrak{g}$ is generated by $h_1,h_2,e_1,e_2,f_1,f_2$.
\end{remark}
Define two functions $\alpha_1,\alpha_2\in\mathfrak{h^*}$ by
\begin{align*}
	&\alpha_1(\mathrm{diag}(\lambda,\mu,0,-\mu,-\lambda)):=\lambda-\mu,\\
	&\alpha_2(\mathrm{diag}(\lambda,\mu,0,-\mu,-\lambda)):=\mu.
\end{align*}
\begin{exercise}
	Compute the root space decomposition of $\mathfrak{g}$ and prove that $\mathfrak{g}$ has a $B_2$ root system.
\end{exercise}
In case you do not want to spend time on this repetition, here are the positive root spaces:
\begin{equation*}
	\mathfrak{g}_{\alpha_1}=\mathbb{R}e_1,\hspace{2mm}
	\mathfrak{g}_{\alpha_2}=\mathbb{R}e_2,\hspace{2mm}
	\mathfrak{g}_{\alpha_1+\alpha_2}=\mathbb{R}e_3,\hspace{2mm}
	\mathfrak{g}_{\alpha_1+2\alpha_2}=\mathbb{R}e_4.
\end{equation*}
The corresponding negative root spaces are spanned by the $f_i$ and $\mathfrak{g}_0=\mathfrak{h}$ since $G$ is split. Also, we see that $\Delta=\{\alpha_1,\alpha_2\}$ is a set of simple roots.

\subsection{The totally positive structure}
The first $\Theta$-positive structure we want to investigate is the totally positive structure, i.e.\ pick $\Theta=\Delta$.
However, we will look at this from the viewpoint of $\Theta$-positivity.
\begin{exercise}
	For this choice of $\Theta$ compute $\mathfrak{u}_\Theta,\mathfrak{u}_\Theta^{\mathrm{opp}}$ and $\mathfrak{l}_\Theta$. Find the center $\mathfrak{z}_\Theta$ of $\mathfrak{l}_\Theta$ and the weight spaces $\mathfrak{u}_\alpha$ for the (adjoint) action of this on $\mathfrak{u}_\Theta$.
\end{exercise}

\begin{remark}
	Note that the weight spaces coincide with the root spaces and that all of these are $1$-dimensional.
\end{remark}
In a second step, we wish to prove that $G$ actually admits a $\Theta$-positive structure for this choice of $\Theta$, i.e.\ we need to find invariant cones in the weight spaces:
\begin{exercise}
	By exponentiating, find $L_\Theta^\circ$ (the identity component of the Levi subgroup) and compute the action on $\mathfrak{u}_\alpha$ for $\alpha\in\Theta$. Prove that there exists a sharp convex cone in each weight space, which is invariant under this action. 
\end{exercise}
With the $\Theta$-positive structure in place, we will compute the $\Theta$-positive semigroup.
\begin{exercise}
	Observe that the Weyl group $W(\Theta)$ is just the full Weyl group $W$.
	Use the longest word to give a parametrization $U_\Theta^{>0}$.
\end{exercise}
\begin{hint}
	The longest word in the Weyl group of a $B_2$ root system is $\sigma_1\sigma_2\sigma_1\sigma_2=\sigma_2\sigma_1\sigma_2\sigma_1$. It might not be worth it to explicitly compute a matrix form of an element in $U_\Theta^{>0}$.
\end{hint}

Now you could compute $U_\Theta^{\mathrm{opp},>0}$ in much the same way and finally get $G_\Theta^{>0}$.

\begin{remark}
	Note that this $\Theta$-positive semigroup coincides with Lusztig's totally positive semigroup.
\end{remark}

\begin{exercise}
As a non-example, show that $\SO(2,q)$ for $q>3$ does not admit a $\Theta$-positive structure if we chose $\Theta=\Delta$, the set of simple (restricted) roots.	
\end{exercise}

\subsubsection*{Another $\mathbf{\Theta}$-positive structure}
Now we will define a second $\Theta$-positive structure on $G$ by choosing $\Theta=\{\alpha_1\}$.
\begin{remark}
	Recall the classification of Lie groups admitting a $\Theta$-positive structure. As $G$ is both split real and (locally isomorphic to) $\SO(p,q)$, we find two different $\Theta$-positive structures on $G$.
\end{remark}
We can proceed in much the same way as before.
\begin{exercise}
	Prove that $G$ admits a $\Theta$-positive structure by explicitly calculating the $L_\Theta^\circ$-invariant cone in $\mathfrak{u}_{\alpha_1}$.
\end{exercise}
\begin{hint}
	You should find that elements of $L_\Theta^\circ$ are of the form
	\begin{equation*}
	\begin{pmatrix}
		a & 0 & 0\\
		0 & M & 0\\
		0 & 0 & a^{-1}
	\end{pmatrix}
\end{equation*}
where $a\in\mathbb{R}_{>0}$ and $M\in\SO(J)^\circ$ (here we denote by $J$ the center $3\times 3$-block in $Q$) and that elements of $\mathfrak{u}_{\alpha_1}=\mathfrak{u}_\Theta$ are of the form
\begin{equation*}
	\begin{pmatrix}
		0 &\tran{v} & 0\\
		0 & 0 & Jv\\
		0 & 0 & 0
	\end{pmatrix}
\end{equation*}
for some $v\in\mathbb{R}^3$.
The sharp convex cone will be defined by the conditions $\tran{v}Jv\geq 0$ and $v_1\geq 0$.
\end{hint}
We can also compute the Weyl group $W(\Theta)$ in order to define $U_\Theta^{>0}$ (and from this $G_\Theta^{>0}$ like before):
\begin{exercise}
	Compute the Weyl group $W(\Theta)$ and show that this is isomorphic to the Weyl group of an $A_1$ root system and give a parametrization of $U_\Theta^{>0}$.
\end{exercise}
\begin{remark}
	We have also learned that $G$ is of Hermitian type of tube type. The $\Theta$-positive structure it admits as such a group is the same one we just described.
\end{remark}
	
\subsection{Another $\mathbf{\Theta}$-positive structure?}
\begin{exercise}
	Show that for the choice $\Theta=\{\alpha_2\}$, G does \textbf{not} admit a $\Theta$-positive structure.
\end{exercise}

\section{The symplectic group}

Define $J_{2,2}$ as the block matrix
\begin{equation*}
	J_{2,2}=
	\begin{pmatrix}
		0 & I_2\\
		-I_2 & 0
	\end{pmatrix}
\end{equation*}
and recall that the symplectic group is $G':=\Sp(4,\mathbb{R})=\{M\in \GL(4,\R): \tran{M}J_{2,2}M=J_{2,2}\}$.
Its Lie algebra is $\mathfrak{g}'=\mathfrak{sp}(4,\mathbb{R})=\{X \mathfrak{gl}(4,\R): \tran{X}J_{2,2}+J_{2,2}X=0\}$, so an element in $\mathfrak{g}'$ is of the form
\begin{equation*}
	X=
	\begin{pmatrix}
		A & B\\
		C & D\\
	\end{pmatrix},
\end{equation*}
where $C,B$ are symmetric ($2\times 2$)-matrices and $D=-\tran{A}$.
A Cartan subspace is given by
\begin{equation*}
	\mathfrak{h}'=\{\mathrm{diag}(\lambda,\mu,-\lambda,-\mu): \lambda,\mu\in\mathbb{R}\}
\end{equation*}
\begin{exercise}
	Compute the root decomposition of $\mathfrak{g}'$ and prove that $\mathfrak{g}'$ has a $B_2$ root system.
\end{exercise}

\subsection{A local isomorphism with $\SO(2,3)$}
We observed that the root systems of $G$ and $G'$ are both of type $B_2$ and thus isomorphic.
It is not hard to write this isomorphism down explicitly and starting from this we can construct an isomorphism of the Lie algebras $\mathfrak{g}$ and $\mathfrak{g}'$.
The resulting isomorphism $\psi \from \mathfrak{g}\to\mathfrak{g}'$ is defined on the generators of $\mathfrak{g}$ by:
\begin{align*}
	\psi(h_1)&=\mathrm{diag}(1/2,1/2,-1/2,-1/2)=:h_1',\\
	\psi(h_2)&=\mathrm{diag}(1/2,-1/2,1/2,-1/2)=:h_2',\\
	\psi(e_1)&=
		\left(\begin{array}{cc|cc}
		0 & 0 & 0 & 0\\
		0 & 0 & 0 & 1\\
		\hline
		0 & 0 & 0 & 0\\
		0 & 0 & 0 & 0\\
		\end{array}\right)=:e_1',\\
	\psi(e_2)&=
		\left(\begin{array}{cc|cc}
		0 & 1 & 0 & 0\\
		0 & 0 & 0 & 0\\
		\hline
		0 & 0 & 0 & 0\\
		0 & 0 & -1 & 0\\
		\end{array}\right)=:e_2',\\
	\psi(f_1)&=(e_1')^T=:f_1',\\
	\psi(f_2)&=(e_2')^T/2=:f_2'.
\end{align*}
Under this isomorphism the simple roots $\alpha_1$ and $\alpha_2$ correspond to $\alpha_1',\alpha_2'\in\mathfrak{h}'$, respectively, which are defined by
\begin{align*}
	&\alpha_1'(\mathrm{diag}(\lambda,\mu,-\lambda,-\mu))=2\mu,\\
	&\alpha_2'(\mathrm{diag}(\lambda,\mu,-\lambda,-\mu))=\lambda-\mu.
\end{align*}
\begin{remark}
	This isomorphism of Lie algebras induces a local isomorphism between the Lie groups $G$ and $G'$.
\end{remark}

\subsection{$\Theta$-positive structures on $\Sp(4,\mathbb{R})$}
Here we only consider the $\Theta$-positive structure, which $G'$ carries as a group locally isomorphic to $\SO(2,3)$ (see above), which is actually the same as the one which it carries as a Hermitian group of tube type.
We can easily carry over our previous calculations using the above isomorphism.
\begin{exercise}
	Use the isomorphism $\psi \from \mathfrak{g}\to\mathfrak{g}'$ to prove that $G'$ admits a $\Theta$-positive structure for the choice $\Theta=\{\alpha_1'\}$.
\end{exercise}
\begin{hint}
	You should find that the weight space $\mathfrak{u}_{\alpha_1'}$ is isomorphic to the space of symmetric matrices and that the invariant cone is given by the positive semi-definite matrices.
\end{hint}

Some notes (almost solutions) on the exercises.

\section{Partial solutions}
\subsection{$\SO(2,3)$}
\subsubsection{The totally positive structure}
Define a quadratic form on $\mathbb{R}^5$ by $x\mapsto \tran{x}Qx$ with
\begin{equation*}
	Q=
	\left(\begin{smallmatrix}
		0 & 0 & 0 & 0 & -1\\
		0 & 0 & 0 & 1 & 0\\
		0 & 0 & -1 & 0 & 0\\
		0 & 1 & 0 & 0 & 0\\
		-1 & 0 & 0 & 0 & 0
	\end{smallmatrix}\right).
\end{equation*}
Then $G:=\SO(Q)=\{M\in\mathrm{SL}(5,\mathbb{R}): \tran{M}QM=Q\}\cong \SO(2,3)$ and matrices in the Lie algebra $\mathfrak{g}=\mathfrak{so}(2,3)$ are of the form
\begin{equation*}
	\left(\begin{array}{cc|c|cc}
		a_{11} & a_{12} & v & b & 0\\
		a_{21} & a_{22} & w & 0 & b\\
		\hline
		r & s & 0 & w & -v\\
		\hline
		c & 0 & s & -a_{22} & a_{12}\\
		0 & c & -r & a_{21} & -a_{11}
	\end{array}\right).
\end{equation*}

To work out the root decomposition we pick the maximal toral subalgebra
\begin{equation*}
	\mathfrak{h}=\{\mathrm{diag}(\lambda,\mu,0,-\mu,-\lambda):\lambda,\mu\in\mathbb{R}\}=\mathbb{R}\underbrace{\mathrm{diag}(1,0,0,0,-1)}_{=:h_1}\oplus\;\mathbb{R}\underbrace{\mathrm{diag}(0,1,0,-1,0)}_{:=h_2},
\end{equation*}
i.e.\ the diagonal matrices in $\mathfrak{g}$.

Now we will investigate the root spaces: For this, define the following matrices:
\begin{align*}
	e_1&:=\left(\begin{array}{cc|c|cc}
		0 & 1 & 0 & 0 & 0\\
		0 & 0 & 0 & 0 & 0\\
		\hline
		0 & 0 & 0 & 0 & 0\\
		\hline
		0 & 0 & 0 & 0 & 1\\
		0 & 0 & 0 & 0 & 0\\
	\end{array}\right), \hspace{5.5mm}
	e_2:=\left(\begin{array}{cc|c|cc}
		0 & 0 & 0 & 0 & 0\\
		0 & 0 & 1 & 0 & 0\\
		\hline
		0 & 0 & 0 & 1 & 0\\
		\hline
		0 & 0 & 0 & 0 & 0\\
		0 & 0 & 0 & 0 & 0\\
	\end{array}\right),\\
	e_3&:=\left(\begin{array}{cc|c|cc}
		0 & 0 & 1 & 0 & 0\\
		0 & 0 & 0 & 0 & 0\\
		\hline
		0 & 0 & 0 & 0 & -1\\
		\hline
		0 & 0 & 0 & 0 & 0\\
		0 & 0 & 0 & 0 & 0\\
	\end{array}\right), \hspace{3mm}
	e_4:=\left(\begin{array}{cc|c|cc}
		0 & 0 & 0 & 1 & 0\\
		0 & 0 & 0 & 0 & 1\\
		\hline
		0 & 0 & 0 & 0 & 0\\
		\hline
		0 & 0 & 0 & 0 & 0\\
		0 & 0 & 0 & 0 & 0\\
	\end{array}\right).
\end{align*}
These form a linear basis for the subspace of strictly upper triangular matrices in $\mathfrak{g}$. In fact this subspace is a sub\textit{algebra} and as such is generated by $e_1$ and $e_2$.
 
Moreover, each $e_i$ spans a positive root spaces: Define the functions $\alpha_1,\alpha_2\in\mathfrak{h^*}$ by
\begin{align*}
	&\alpha_1(\mathrm{diag}(\lambda,\mu,0,-\mu,-\lambda)):=\lambda-\mu,\\
	&\alpha_2(\mathrm{diag}(\lambda,\mu,0,-\mu,-\lambda)):=\mu,
\end{align*}
and hence the positive root spaces are
\begin{equation*}
	\mathfrak{g}_{\alpha_1}=\mathbb{R}e_1,\hspace{2mm}
	\mathfrak{g}_{\alpha_2}=\mathbb{R}e_2,\hspace{2mm}
	\mathfrak{g}_{\alpha_1+\alpha_2}=\mathbb{R}e_3,\hspace{2mm}
	\mathfrak{g}_{\alpha_1+2\alpha_2}=\mathbb{R}e_4.
\end{equation*}
The corresponding negative root spaces are spanned by the matrices $f_i:=\tran{e_i}$ for $i\in\{1,2,3,4\}$. We thus find that this is a $B_2$ root system with $\alpha_1$ and $\alpha_2$ a pair of simple roots.

We first pick $\Theta=\Delta$, so that
\begin{align*}
	\mathfrak{u}_\Theta &=\sum_{\alpha\in\Sigma^+}\mathfrak{g}_\alpha,\\
	\mathfrak{u}_\Theta^\mathrm{opp}&=\sum_{\alpha\in\Sigma^+}\mathfrak{g}_{-\alpha},
\end{align*}
and clearly in this case $\mathfrak{l}_\Theta=\mathfrak{z}_\Theta=\mathfrak{g}_0=\mathfrak{h}$. Consequently, the weight spaces for the action $\mathfrak{z}_\Theta\curvearrowright\mathfrak{u}_\Theta$ coincide with the root spaces and we find $\mathfrak{u}_{\alpha_i}=\mathfrak{g}_{\alpha_i}$. By exponentiating elements in $\mathfrak{l}_\Theta$ we find that the identity component of the Levi subgroup is
\begin{equation*}
	L_\Theta^\circ=\{\mathrm{diag}(a,b,1,b^{-1},a^{-1}): a,b\in\mathbb{R}_+\}.
\end{equation*}
If we let this act on $\mathfrak{u}_{\alpha_1}$ by the adjoint action, we find that $X=\mathrm{diag}(a,b,1,b^{-1},a^{-1})$ maps $e_1$ to $ab^{-1}e_1$. Thus, if we identify the one-dimensional space $\mathfrak{u}_{\alpha_1}$ with $\mathbb{R}$, we find that the sharp convex cone $\mathbb{R}_+$ is preserved by this action ($a,b>0$). Analogously for $\mathfrak{u}_{\alpha_2}$, and we proved that $G$ admits a $\Theta$-positive structure.

Exponentiating the root spaces for roots in $\Theta$ yields two maps
\begin{align*}
	x_i:\mathbb{R}&\to G\\
	t&\mapsto\exp(te_i),
\end{align*}
for $i\in\{1,2\}$. More explicitly, these map to the subgroup $U_\Theta$ of $G$ of unipotent upper triangular matrices. As we choose $\Theta=\Delta$, we find that $W(\Theta)=W$, i.e.\ the Weyl group of the root system of $G$, which is of type $B_2$. The longest word in such a root system can be written as $\sigma_1\sigma_2\sigma_1\sigma_2$, and therefore we obtain a parametrization of the totally positive subsemigroup $U_\Theta^{>0}$ of $U_\Theta$, namely
\begin{equation*}
	U_\Theta^{>0}:=\left\{x_1(x)x_2(v)x_1(y)x_2(w)=:F_{1212}(x,v,y,w): x,v,y,w\in\mathbb{R}^{>0}\right\}.
\end{equation*}
An element in this can be explicitly computed to be
\begin{equation*}
	F_{1212}(x,v,y,w)=
	\begin{pmatrix}
		1 & x+y & xv+(x+y)w & x(v+w)^2/2+yw^2/2 & xyv^2/2\\
		0 & 1 & v+w & (v+w)^2/2 & yv^2/2\\
		0 & 0 & 1 & v+w & yv\\
		0 & 0 & 0 & 1 & x+y\\
		0 & 0 & 0 & 0 & 1
	\end{pmatrix}.
\end{equation*}

Analogously one can compute the positive part $O^{>0}$ of the subgroup $O$ of unipotent lower triangular matrices. The totally positive subsemigroup $G^{>0}$ is then
\begin{equation*}
	G^{>0}=O^{>0}A^\circ U^{>0},
\end{equation*}
where $A^\circ$ is the connected component of the identity in the subgroup $A<G$ of diagonal matrices in $G$.

It is not hard to see that the positive semigroup defined this way is just Lusztig's totally positive subgroup and much of the discussion actually mirrors Lusztig's description. However, this is only possible because we are dealing with a split real group.

If we naively tried to extend this notion to non-split groups we run into problems as the following example shows: Consider $G_2=\SO(2,q)$ with $q>3$. The Lie algebra then has the form
\begin{equation*}
	\mathfrak{so}(2,q)=\left\{
	\left(\begin{array}{cc|c|cc}
		a_{11} & a_{12} & \tran{v} & b & 0\\
		a_{21} & a_{22} & \tran{w} & 0 & b\\
		\hline
		r & s & 0 & w & -v\\
		\hline
		c & 0 & \tran{s} & -a_{22} & a_{12}\\
		0 & c & -\tran{r} & a_{21} & -a_{11}
	\end{array}\right): a_{ij},b,c\in\mathbb{R},v,w,r,s\in\mathbb{R}^{q-2}
	\right\},
\end{equation*}
and we recall that the restricted root system of this group is also of type $B_2$ and indeed that calculation is very similar to the above but we find that the (restricted) root space $g_{\alpha_2}$ is ($q-2$)-dimensional in this case and contains all elements of the Lie algebra of the above form with all but the $w$-entries 0. Now, if we take $\Theta=\Delta$, $\mathfrak{u}_\Theta$ and $\mathfrak{u}_\Theta^\mathrm{opp}$ are again just the sum of all positive and negative root spaces, respectively, and we also find $\mathfrak{l}_\Theta=\mathfrak{g}_0$ again. However, this takes a slightly different form since $G_2$ is not split.
\begin{equation*}
	\mathfrak{l}_\Theta=\mathfrak{g}_0=\mathfrak{a}\oplus C_{\mathfrak{k}}(\mathfrak{a})=\left\{\left(
	\begin{smallmatrix}
		\lambda &&&&\\
		& \mu &&&\\
		&& M &&\\
		&&& -\mu &\\
		&&&& -\lambda
	\end{smallmatrix}\right)
	: M=-\tran{M}\right\},
\end{equation*}
where $\mathfrak{a}$ is a Cartan subspace and $\mathfrak{g}=\mathfrak{k}\oplus\mathfrak{p}$ the Cartan decomposition. The center clearly is $\mathfrak{z}_\Theta=\mathfrak{a}$ and thus the weight spaces for the action $\mathfrak{z}_\Theta\curvearrowright\mathfrak{u}_\Theta$ are just the root spaces again. Exponentiating the above expression, we find that a general element in $L_\Theta^\circ$ has the form
\begin{equation*}
	Y=
	\left(\begin{smallmatrix}
		a &&&&\\
		& b &&&\\
		&& S &&\\
		&&& b^{-1} &\\
		&&&& a^{-1}\\
	\end{smallmatrix}\right)
\end{equation*}
with $S\in\SO(q-2)$.
If we consider the adjoint action on the weight space $\mathfrak{u}_{\alpha_2}=\mathfrak{g}_{\alpha_2}\cong\mathbb{R}^{q-2}$, we find $Y.v=bSv$, so that we are effectively dealing with an $\SO(q-2)$-action on $\mathbb{R}^{q-2}$, which clearly has \underline{no} invariant sharp convex cone, so that $G_2$ does not admit a $\Theta$-positive structure for this particular choice of $\Theta$.

\subsubsection{Another $\mathbf{\Theta}$-positive structure}
The group $G=\SO(2,3)$ can also be endowed with another $\Theta$-positive structure. For this, we mostly use the same setup as before and have $\Delta=\{\alpha_1,\alpha_2\}$. Now we pick $\Theta=\{\alpha_1\}$ and thus have (with $\Sigma_\Theta^+=\Sigma^+\setminus(\mathrm{Span}(\Delta-\Theta))$, i.e.\ all positive roots that contain some contribution from $\Theta$)
\begin{align*}
	\mathfrak{u}_\Theta &:=\sum_{\alpha\in\Sigma_\Theta^+}\mathfrak{g}_\alpha=\mathfrak{g}_{\alpha_1}\oplus\mathfrak{g}_{\alpha_1+\alpha_2}\oplus\mathfrak{g}_{\alpha_1+2\alpha_2}=\mathbb{R}e_1\oplus\mathbb{R}e_3\oplus\mathbb{R}e_4,\\
	\mathfrak{u}_\Theta^{\mathrm{opp}}&:=\sum_{\alpha\in\Sigma_\Theta^+}\mathfrak{g}_{-\alpha}=\mathfrak{g}_{-\alpha_1}\oplus\mathfrak{g}_{-\alpha_1-\alpha_2}\oplus\mathfrak{g}_{-\alpha_1-2\alpha_2}=\mathbb{R}f_1\oplus\mathbb{R}f_3\oplus\mathbb{R}f_4,\\
	\mathfrak{l}_\Theta &:=\mathfrak{g}_0\oplus\sum_{\alpha\in\mathrm{Span}(\Delta-\Theta)\cap\Sigma^+}(\mathfrak{g}_\alpha\oplus\mathfrak{g}_{-\alpha})=\mathfrak{h}\oplus\mathfrak{g}_{\alpha_2}\oplus\mathfrak{g}_{-\alpha_2}.
\end{align*}
In order to determine the $\Theta$-positive subsemigroup, we need to consider the adjoint action of the center $\mathfrak{z}_\Theta$ of $\mathfrak{l}_\Theta$ on $\mathfrak{u}_\Theta$.
In this simple setup, we clearly have
\begin{equation*}
	\mathfrak{z}_\Theta=\{\mathrm{diag}(\lambda,0,0,0,-\lambda): \lambda\in\mathbb{R}\},
\end{equation*}
and thus $\mathfrak{u}_\Theta=\mathfrak{u}_{\alpha_1}$.

By exponentiating an element in $\mathfrak{l}_\Theta$ we see that a general element in the identity component of the Levi subgroup $L_\Theta$ is of the form
\begin{equation*}
	\begin{pmatrix}
		a & 0 & 0\\
		0 & M & 0\\
		0 & 0 & a^{-1}
	\end{pmatrix}
\end{equation*}
where $a\in\mathbb{R}^{>0}$ and $M\in\SO(J)^\circ$. Here we denote by $J$ the center ($3\times 3$)-block in $Q$, i.e.\
\begin{equation*}
	J=
	\begin{pmatrix}
		0 & 0 & 1\\
		0 & -1 & 0\\
		1 & 0 & 0^1
	\end{pmatrix}.
\end{equation*}
Now, elements of $\mathfrak{u}_{\alpha_1}=\mathfrak{u}_\Theta$ are of the form
\begin{equation*}
	\begin{pmatrix}
		0 & \tran{v} & 0\\
		0 & 0 & Jv\\
		0 & 0 & 0
	\end{pmatrix}
\end{equation*}
for some $v\in\mathbb{R}^3$.
Therefore, we identify this space with $\mathbb{R}^3$ and letting $L_\Theta^\circ$ act on $\mathfrak{u}_{\alpha_1}$ by conjugation, we find that the sharp convex cone $c$ defined by the conditions $\tran{v}Jv\geq 0$ and $v_1\geq 0$ is invariant under this action.
So $G$ admits a $\Theta$-positive structure for the above choice of $\Theta$.

Our goal is to find the $\Theta$-positive subsemigroup $G_\Theta^{>0}$ and for this we need to consider the Weyl group $W(\Theta)$. In our case this is very simple as we have $\Theta=\{\alpha_1\}$, which only contains a single element. Thus $\beta_\Theta=\alpha_1$ and $W(\Theta)$ is the subgroup of the full Weyl group $W$ that is generated by the longest element $s$ of the Weyl group $W_{\{\beta_\Theta\}\cup(\Delta-\Theta)}=W$. This is isomorphic to the Weyl group of an $A_1$ root system and thus the longest element in $W(\Theta)$ is simply $s$.

Accordingly the $\Theta$-positive subsemigroup of $U_\Theta=\exp(\mathfrak{u}_\Theta)$ is the image of the map
\begin{align*}
	F:c^\circ &\to U_\Theta\\
	v&\mapsto\exp(v),
\end{align*}
i.e. $U_\Theta^{>0}=F(c^\circ)$.

Similarly, we can compute $U_\Theta^{\mathrm{opp},>0}$ and thus fin $G_\Theta^{>0}$, which is generated by $U_\Theta^{>0},U_\Theta^{\mathrm{opp},>0}$ and $L_\Theta^\circ$.

\subsubsection{Another $\mathbf{\Theta}$-positive structure?}
Finally, we want to see whether we can find another $\Theta$-positive structure on $G$ by taking $\Theta=\{\alpha_2\}$ (keep the notation from before). The calculation works much the same as in the previous case but we find $\mathfrak{u}_\Theta=\mathfrak{u}_{\alpha_2}\oplus\mathfrak{u}_{2\alpha_2}$, where $\mathfrak{u}_{\alpha_2}=\mathbb{R}e_2\oplus\mathbb{R}e_3\cong\mathbb{R}^2$. A calculation shows that the action $L_\Theta^\circ\curvearrowright\mathfrak{u}_{\alpha_2}$ corresponds to an $\mathrm{SL}(2,\mathbb{R})$-action on $\mathbb{R}^2$, which has no sharp invariant cone and so $G$ does not admit a $\Theta$-positive structure for this choice of $\Theta$.

\subsection{$\mathbf{Sp(4,\mathbb{R})}$}
\subsubsection{A local isomorphism with $\mathbf{G}$}
Define $J_{2,2}$ as the block matrix
\begin{equation*}
	J_{2,2}=
	\begin{pmatrix}
		0 & I_2\\
		-I_2 & 0
	\end{pmatrix}
\end{equation*}
and recall that the symplectic group is $G':=\Sp(4,\mathbb{R})=\{M\in \GL(4,\R):\tran{M}J_{2,2} M=J_{2,2}\}$. Its Lie algebra is $\mathfrak{g}'=\mathfrak{sp}(4,\mathbb{R})=\{X\in \mathfrak{gl}(4,\R) : \tran{X}Q+QX=0\}$, so an element in $\mathfrak{g}'$ is of the form
\begin{equation*}
	X=
	\begin{pmatrix}
		A & B\\
		C & D\\
	\end{pmatrix},
\end{equation*}
where $C,B$ are symmetric ($2\times 2$)-matrices and $D=-\tran{A}$.
A maximal toral subalgebra is given by
\begin{equation*}
	\mathfrak{h}'=\{\mathrm{diag}(\lambda,\mu,-\lambda,-\mu):\lambda,\mu\in\mathbb{R}\}=\mathbb{R}\underbrace{\mathrm{diag}(1,0,-1,0)}_{=:\tilde{h}_1}\oplus\,\mathbb{R}\underbrace{\mathrm{diag}(0,1,0,-1)}_{=:\tilde{h}_2}.
\end{equation*}
As a vector space $\mathfrak{g}'$ is spanned by the matrices
\begin{align*}
	\tilde{e}_1&:=\left(\begin{array}{cc|cc}
		0 & 1 & 0 & 0\\
		0 & 0 & 0 & 0\\
		\hline
		0 & 0 & 0 & 0\\
		0 & 0 & -1 & 0\\
	\end{array}\right), \hspace{3mm}
	\tilde{e}_2:=\left(\begin{array}{cc|cc}
		0 & 0 & 1 & 0\\
		0 & 0 & 0 & 0\\
		\hline
		0 & 0 & 0 & 0\\
		0 & 0 & 0 & 0\\
	\end{array}\right),\\
	\tilde{e}_3&:=\left(\begin{array}{cc|cc}
		0 & 0 & 0 & 0\\
		0 & 0 & 0 & 1\\
		\hline
		0 & 0 & 0 & 0\\
		0 & 0 & 0 & 0\\
	\end{array}\right), \hspace{5.5mm}
	\tilde{e}_4:=\left(\begin{array}{cc|cc}
		0 & 0 & 0 & 1\\
		0 & 0 & 1 & 0\\
		\hline
		0 & 0 & 0 & 0\\
		0 & 0 & 0 & 0\\
	\end{array}\right),
\end{align*}
and their transposed matrices $\tilde{f}_i=\tilde{e}_i^T$.
If we set $X=\lambda\tilde{h}_1+\mu\tilde{h}_2=\mathrm{diag}(\lambda,\mu,-\lambda,-\mu)$ and compute the bracket of $X$ with the $\tilde{e}_i$, we find
\begin{align*}
	[X,\tilde{e}_1]&=(\lambda-\mu)\tilde{e}_1,\\
	[X,\tilde{e}_2]&=2\lambda\tilde{e}_2,\\
	[X,\tilde{e}_3]&=2\mu\tilde{e}_3,\\
	[X,\tilde{e}_4]&=(\lambda+\mu)\tilde{e}_4,
\end{align*}
and thus if we set
\begin{align*}
	\tilde{\alpha}_1(X)&=\lambda-\mu,\\
	\tilde{\alpha}_2(X)&=2\mu,
\end{align*}
we have the following root spaces
\begin{equation*}
	\mathfrak{g}_{\tilde{\alpha}_1}=\mathbb{R}\tilde{e}_1,\hspace{2mm}
	\mathfrak{g}_{\tilde{\alpha}_2}=\mathbb{R}\tilde{e}_3,\hspace{2mm}
	\mathfrak{g}_{\tilde{\alpha}_1+\tilde{\alpha}_2}=\mathbb{R}\tilde{e}_4,\hspace{2mm}
	\mathfrak{g}_{2\tilde{\alpha}_1+\tilde{\alpha}_2}=\mathbb{R}\tilde{e}_2.
\end{equation*}
The corresponding negative root spaces are spanned by the $\tilde{f}_i$ and we are clearly dealing with another root system of type $B_2$.

Therefore the root systems of $G$ and $G'$ are isomorphic, which means that the Lie algebras $\mathfrak{g}$ and $\mathfrak{g}'$ are isomorphic. The isomorphism of the root systems is given in terms of the simple roots as
\begin{align*}
	\alpha_1\mapsto\tilde{\alpha}_2=:\alpha_1',\\
	\alpha_2\mapsto\tilde{\alpha}_1=:\alpha_2',
\end{align*}
which can be seen by comparing the expressions for the positive roots in terms of the simple roots in both cases. In order to construct a Lie algebra isomorphism $\psi:\mathfrak{g}\to\mathfrak{g}'$, recall that $\mathfrak{g}$ is generated by $h_1,h_2,e_1,e_2,f_1,f_2$. In order to define $\psi$ it suffices to map these generators in such a way that the bracket is preserved (this gives as a Lie algebra homomorphism).

We begin by mapping $e_1,e_2$: These span the root spaces $\mathfrak{g}_{\alpha_1}$ and $\mathfrak{g}_{\alpha_2}$, respectively, and thus we need to map these to elements in $\mathfrak{g}'_{\tilde{\alpha}_2}$ and $\mathfrak{g}'_{\tilde{\alpha}_1}$, respectively. Thus we define
\begin{align*}
	\psi(e_1)=\tilde{e}_3=:e_1',\\
	\psi(e_2)=\tilde{e}_1=:e_2'.
\end{align*}
Next, let us choose images for $h_1,h_2$. We want $[h_i,e_j]=[\psi(h_i),\psi(e_j)]$ and by a quick calculation, we find
\begin{align*}
	\psi(h_1)&=\mathrm{diag}(1/2,1/2,-1/2,-1/2)=:h_1',\\
	\psi(h_2)&=\mathrm{diag}(1/2,-1/2,1/2,-1/2)=:h_2'.
\end{align*}
Finally, we find images for $f_1,f_2$. These again need to span the corresponding root spaces and need to be chosen in such a way that we get $[e_i,f_j]=[\psi(e_i),\psi(f_j)]$. Another round of calculations leads us to
\begin{align*}
	\psi(f_1)&=\tilde{f}_3=:f_1',\\
	\psi(f_2)&=\frac{\tilde{f}_1}{2}=:f_2'.
\end{align*}
If we furthermore observe that $h_1',h_2',e_1',e_2',f_1',f_2'$ span the Lie algebra $\mathfrak{g}'$, we conclude that $\psi$ is an isomorphism of Lie algebras as desired. As such it induces through exponentiating a local isomorphism of the Lie groups $G=\SO(2,3)$ and $G'=\Sp(4,\mathbb{R})$.

\subsubsection{$\mathbf{\Theta}$-positive structures on $\mathbf{G}'$}
Finally, let us consider $\Theta$-positive structures on $G'$. Due to the local isomorphism with $G$, which we just described, we expect to find two such structures. Indeed, $G'$ is both split real and of Hermitian type of tube type and by the above also locally isomorphic to $\SO(2,3)$, which are three cases for which a $\Theta$-positive structure can be found but it is not hard to see that the last two give the same structure in this case.

We will not investigate the totally positive structure on $G'$, instead let us focus on the other $\Theta$-positive structure on $G'$, which is obtained by picking $\Theta'=\{\alpha_1'\}$. Using the Lie algebra isomorphism $\psi:\mathfrak{g}\to\mathfrak{g}'$, we can basically copy the above formulas (for $G$, in our notation we just need to add a lot of 's) and find
\begin{align*}
	\mathfrak{u'}_{\Theta'} &:=\sum_{\alpha\in\Sigma_\Theta'^+}\mathfrak{g'}_\alpha=\mathfrak{g'}_{\alpha_1'}\oplus\mathfrak{g'}_{\alpha_1'+\alpha_2'}\oplus\mathfrak{g'}_{\alpha_1'+2\alpha_2'}=\mathbb{R}e_1'\oplus\mathbb{R}e_3'\oplus\mathbb{R}e_4',\\
	\mathfrak{u'}_{\Theta'}^{\mathrm{opp}}&:=\sum_{\alpha\in\Sigma_\Theta'^+}\mathfrak{g'}_{-\alpha}=\mathfrak{g'}_{-\alpha_1'}\oplus\mathfrak{g'}_{-\alpha_1'-\alpha_2'}\oplus\mathfrak{g'}_{-\alpha_1'-2\alpha_2'}=\mathbb{R}f_1'\oplus\mathbb{R}f_3'\oplus\mathbb{R}f_4',\\
	\mathfrak{l'}_{\Theta'} &:=\mathfrak{g'}_0\oplus\sum_{\alpha\in\mathrm{Span}(\Delta'-\Theta')\cap\Sigma'^+}(\mathfrak{g'}_\alpha\oplus\mathfrak{g'}_{-\alpha})=\mathfrak{h'}\oplus\mathfrak{g'}_{\alpha_2'}\oplus\mathfrak{g'}_{-\alpha_2'}.
\end{align*}
The center $\mathfrak{z'}_{\Theta'}$ of $\mathfrak{l'}_{\Theta'}$ can simply be read of or obtained using $\psi$ and we find
\begin{equation*}
	\mathfrak{z'}_{\Theta'}=\{\mathrm{diag}(\lambda,\lambda,-\lambda,-\lambda): \lambda\in\mathbb{R}\}
\end{equation*}
and thus $\mathfrak{u'}_{\Theta'}=\mathfrak{u'}_{\alpha_1'}$ again only consists of a single weight space.

By exponentiating an element in $\mathfrak{l'}_{\Theta'}$, we see that a general element $X\in L_\Theta'^\circ$ has the form
\begin{equation*}
	X=
	\begin{pmatrix}
		A & 0\\
		0 & \tran{A}^{-1}
	\end{pmatrix}
\end{equation*}
for $A\in\mathrm{GL}(2,\mathbb{R})^\circ$. Now $Y\in\mathfrak{u'}_{\alpha_1'}$ is of the form
\begin{equation*}
	Y=
	\begin{pmatrix}
		0 & M\\
		0 & 0
	\end{pmatrix},
\end{equation*}
for some symmetric matrix $M$. Therefore we identify this weight space with the space of symmetric matrices.

As we want to consider the action of $L_\Theta'^\circ$ on $\mathfrak{u'}_{\alpha_1'}$ by conjugation, we calculate
\begin{equation*}
	XYX^{-1}=
	\begin{pmatrix}
		0 & AM\tran{A}\\
		0 & 0
	\end{pmatrix}.
\end{equation*}
Thus we see that the cone $c'$ of positive (semi-)definite matrices in $\mathfrak{u'}_{\alpha_1'}$ is preserved by the action of $L_\Theta'^\circ$.

From here the computation of the $\Theta$-positive semigroup is analogous to the one for $G$.

\newpage

\bibliographystyle{amsalpha}
\bibliography{bibliography.bib}

\newpage 

\end{document}